\newcommand{\alert}[2][magenta]%
{{\color{#1}\mbox{[\hspace{-0.4ex}[}#2\mbox{]\hspace{-0.4ex}]}}}
\title{Generalized ovals, 2.5-dimensional additive codes, and multispreads}
\author[$\dagger\S$]{Denis~S.~Krotov\thanks{
D.S.~Krotov is supported
  by the Natural Science Foundation of Hebei Province (A2023205045)
and within the framework of the state contract of the Sobolev Institute of Mathematics (FWNF-2026-0011).}}
\author[$\ddag$]{Sascha Kurz}
\affil[$\dagger$]{School of Mathematical Sciences, Hebei Key Laboratory of Computational Mathematics and Applications, Hebei
Normal University, Shijiazhuang, China}
\affil[$\S$]{Sobolev Institute of Mathematics, Novosibirsk, Russia}
\affil[$\ddag$]{Mathematisches Institut, Universit\"at Bayreuth, Bayreuth, Germany}
\date{December 2025}
\def\FF{\mathbb{F}}
\def\PG{\mathrm{PG}}
\newtheorem{corollary}{Corollary}
\newtheorem{theorem}{Theorem}
\newtheorem{definition}{Definition}
\newtheorem{proposition}{Proposition}
\newtheorem{lemma}{Lemma}
\newtheorem{example}{Example}
\theoremstyle{remark}
\newtheorem{remark}{Remark}
\newcommand\PGF[2]{\PG\big(\FF_{#2}^{#1}\big)}
\begin{document}

\maketitle

\begin{abstract}
    We present constructions and bounds for additive codes over a finite field in terms of their geometric counterpart, i.e., projective systems. It is known that the maximum number of $(h-1)$-spaces in PG$(2,q)$, such that no hyperplane contains three, is given by $q^h+1$ if $q$ is odd. Those geometric objects are called generalized ovals. We show that cardinality $q^h+2$ is possible if we decrease the dimension a bit. We completely determine the minimum possible lengths of additive codes over GF$(9)$ of dimension $2.5$ and give improved constructions for other small parameters, including codes outperforming the best linear codes. As an application, we consider multispreads in PG$(4,q)$, in particular, completing the characterization of parameters of GF$(4)$-linear $64$-ary one-weight codes.

    \smallskip

    \noindent
    \textbf{Keywords:} additive code, projective system, generalized oval, multispread, one-weight code, two-weight code
\end{abstract}

\section{Introduction}
An additive code $C$ is a subset of $\mathcal{R}^n$ that is closed under addition, where $\mathcal{R}$ is some mathematical structure that allows addition like e.g.\ a ring, a field, or an additive group. Here we will always assume that $\mathcal{R}$ is a field and that the distances between the codewords, i.e.\ the elements of $C$, are measured by the Hamming metric. In computer science additive codes are studied as a subclass of general block codes, containing linear codes, that has better algorithmic properties. In \cite{guruswami2008explicit} folded Reed--Solomon codes were introduced with the property that they  admit much better list-decoding algorithms than the original Guruswami--Sudan algorithm \cite{guruswami1999improved}. Another subclass of additive codes
allowing improved decoding algorithms, see e.g.~\cite{tamo2024tighter}, and also obtained by some folding (or grouping) construction is given by so-called multiplicity codes
\cite{kopparty2014high,rosenbloom1997codes}. There is some renewed interest in additive codes due to applications in the construction of quantum codes, see e.g.\ \cite{dastbasteh2024new,polynomialRepresentation,grassl2021algebraic,li2024ternary}. Also, for classical codes additive codes are interesting since they can have better parameters than linear codes, see e.g.\ \cite{guan2023some}. It is well known that additive codes over $\FF_4$ can geometrically be described by multisets of lines in a projective space over $\FF_2$, see e.g.\ \cite{blokhuis2004small}. In general, additive codes are in one-to-one correspondence to multisets of subspaces in projective spaces, see \cite{ball2025griesmer}, which generalizes the well-known relation between linear codes and multisets of points. Here we will mainly use the geometric language.

A classical geometric result considers the maximum number of points in a projective plane over $\FF_q$ such that every line contains at most two of them. For odd field sizes $q$ the maximum number is given by $q+1$ and the geometric objects are called ovals. For even $q$ size $q+2$ is possible, and the geometric objects are called hyperovals. Via the subfield construction we also obtain sets of subspaces, no more than two contained in a hyperplane, with those cardinalities. For odd~$q$ we can slightly improve the construction, see Proposition~\ref{prop_generalized_construction_oval} and Theorem~\ref{prop_generalized_construction_oval_s}.

For linear codes, it is a classical and important problem to determine the minimum possible length $n$ for each minimum Hamming distance $d$, given the field size $q$ and the dimension~$k$. Due to the Griesmer bound and the Solomon--Stiffler construction, this is a finite problem for each pair of parameters $q$ and~$k$. So far it has been completely solved for $k\le 8$ when $q=2$, for $k\le 5$ when $q=3$, for $k\le 4$ when $q=4$, for $k\le 3$ when $q\le 9$, and for all $k\le 2$. For additive codes there is also a Griesmer type bound, see \cite[Theorem 12]{ball2025griesmer}, which can always be attained with equality if the minimum distance $d$ is sufficiently large \cite{kurz2024additive}. Here we determine the minimum lengths of $5/2$-dimensional additive codes over $\FF_9$ for all minimum distances, see Theorem~\ref{thm_n_3_5_2}. 
For $5/2$-dimensional additive codes over $\FF_{25}$ and $\FF_4$-linear codes over $\FF_{16}$ we give partial results. Additionally, we find two $3$-dimensional additive codes over~$\FF_{4}$ outperforming the best linear codes of the same length and dimension. 

For each prime power $q$ there exists a line spread in $\PG(3,q)$, i.e.\ a set of lines such that each point is contained in exactly one element. The notion of $t$-spreads has been generalized to multispreads \cite{KroMog:multispread}. Here we are e.g.\ interested in sets of $q^3+1$ planes and $q^2+1$ lines in $\PG(4,q)$ such that each point is either contained in $q+1$ planes and no line or, alternatively, in one plane and one line. For $q=3$ we classify all such systems and for $q=4,5$ we give partial results. In coding theoretic terms multispreads correspond to additive one-weight codes.

The remaining part of this paper is structured as follows. In Section~\ref{sec_preliminaries} we introduce the necessary preliminaries. Our results on oval-like structures
are presented in Section~\ref{sec_oval}.
In Section~\ref{sec_ILP} we use prescribed automorphisms and integer linear programming to obtain improved constructions for additive codes with small parameters. Multispreads are treated in Section~\ref{sec_multispreads}. For the mentioned oval-like structure we give an alternative construction in Appendix~\ref{appendixC}, which is a rare example of a non-computational construction of a code with prescribed automorphism group. Multispreads with new parameters are listed in Appendix~\ref{appendixB}. Explicit representations for the $5/2$-dimensional additive codes over $\FF_{16}$ and $\FF_{25}$ mentioned in Theorem~\ref{thm_n_4_5_2} and Theorem~\ref{thm_n_5_5_2} are listed in Appendix~\ref{sec_explicit_4} and Appendix~\ref{sec_explicit_5}, respectively.


\section{Preliminaries}
\label{sec_preliminaries}
Let $\FF_q$ denote the finite field with $q$ elements, where $q=p^l$ is a prime power and the prime is called the characteristic of $\FF_q$. An \emph{additive code} $C$ of length $n$
over the alphabet $\mathcal{A}=\FF_{q'}$ is a subset of $\FF_{q'}^n$ such that $u+v\in C$
for all $u,v\in C$. It turns out that each code $C$ that is additive over $\FF_{q'}$ is
linear over some subfield $\FF_{q}\le\FF_{q'}$, i.e., $\alpha u+\beta v\in C$ for all
$u,v\in C$ and all $\alpha,\beta\in\FF_{q}$ \cite{ball2023additive,ball2025griesmer}.
So, we use the notation $[n,r/h,d]_q^h$ for an additive code $C$ that is linear
over $\FF_q$ and has length $n$, minimum distance $d$,  alphabet
$\mathcal{A}=\FF_{q^h}$, and size $q^r$, where $r\in \mathbb{N}$. We also call
$k=r/h\in\mathbb{Q}$ the \emph{dimension} of~$C$, so that $|C|=\left|\mathcal{A}^k\right|$
and an $[n,k,d]_q^1$ additive code is an $[n,k,d]_q$ linear code. Note that $k$ can be fractional. An $[n,k,d]_q$ linear code $C$ can be defined as the rowspace of a $k\times n$ matrix
with entries in $\FF_q$, called a generator matrix for~$C$. Similarly,  an
$[n,r/h,d]_q^h$ additive code $C$ can be defined as the $\FF_q$-space spanned by the
rows of an $r\times n$ matrix $G$ with entries in $\FF_{q^h}$,  again called a
\emph{generator matrix} for~$C$. Let $\mathcal{B}$ be a basis for $\FF_{q^h}$ over
$\FF_q$ and write out the elements of $G$ over the basis $\mathcal{B}$ to obtain
an $r\times nh$ matrix $\widetilde{G}$ with entries from $\FF_q$. Here we assume that the
columns of $\widetilde{G}$ are grouped together into $n$ groups of $h$ columns and say
that $\widetilde{G}$ is a subfield generator matrix for $C$.

Let $\PGF{v}{q}$ denote the projective geometry over $\FF_q^v$. While we will use the geometric language, we will use the algebraic dimension for subspaces, i.e., points are $1$-dimensional, lines are $2$-dimensional, planes are $3$-dimensional, 
and hyperplanes are $(v-1)$-dimensional subspaces of~$\FF_q^v$. In general, a $t$-space is a $t$-dimensional subspace. By $[t]_q:=\tfrac{q^t-1}{q-1}$ we denote the number of points of a $t$-space. Given a generator matrix $G$ of a linear $[n,k,d]_q$ code, the columns of $G$ span $1$-dimensional subspaces, i.e.\ points in $\PGF{k}{q}$ (allowing the $0$-dimensional space as a degenerated case or forbidding zero columns in $G$). This induces the well-known correspondence between linear codes and multisets of points in projective spaces, see e.g.\ \cite{dodunekov1998codes}. More precisely, a minimum distance $d$ of the code relates to the geometric property that at most $n-d$ elements are contained in a hyperplane. Similarly, the groups of $h$ subsequent columns in a subfield generator matrix $\widetilde{G}$ span subspaces of dimension at most $h$ such that at most $n-d$ elements are contained in each hyperplane, if the additive code has length $n$ and minimum distance $d$. By $\mathcal{X}_G(C)$ we define the multiset of the $n$ subspaces spanned by the
$n$ blocks of $h$ columns of $\widetilde{G}$ in this way.
We say that $C$ is faithful if all elements of $\mathcal{X}_G(C)$ have dimension
$h$ and unfaithful otherwise.
This is indeed a property of the code $C$ and does not
depend on the choice of a generator matrix $G$ or the choice of a basis
$\mathcal{B}$, see e.g.\ \cite{ball2025griesmer}, so that we also write $\mathcal{X}(C)$. We remark that a linear code $C$, i.e.\ an additive code with
$h=1$ is unfaithful iff an arbitrary generator matrix for $C$ contains a column
consisting entirely of zeroes.

\begin{example}
\label{ex_additive_code}
Writing $\FF_4\simeq \FF_2[\omega]/\left(\omega^2+\omega+1\right)$,
we can start with the generator matrix of a linear $[5,2,4]_4$ code, interpret it as the generator matrix of an additive code, and use the basis $\mathcal{B}$ to obtain the example
$$
  \begin{pmatrix}
      0  & 1 & 1 & 1      & 1        \\
      1  & 0 & 1 & \omega & \omega^2
  \end{pmatrix}
  \,\,\rightarrow\,\,
  \begin{pmatrix}
      0  & 1 & 1 & 1      & 1        \\
      0  & \omega & \omega & \omega      & \omega        \\
      1  & 0 & 1 & \omega & \omega^2 \\
      \omega  & 0 & \omega & \omega^2 & 1
  \end{pmatrix}
  \,\,\rightarrow\,\,
  \begin{pmatrix}
      00  & 10 & 10 & 10 & 10 \\
      00  & 01 & 01 & 01 & 01 \\
      10  & 00 & 10 & 01 & 11 \\
      01  & 00 & 01 & 11 & 10
  \end{pmatrix}\!.
$$
The first matrix is a generator matrix of a $[5,2,4]_4$ code, which geometrically is the set of all five points in the projective line $\PGF{2}{4}$. The second matrix is a generator matrix of a $[5,4/2,4]_2^2$ (additive) code, which is linear over $\FF_4$ by construction (called subfield construction). The third matrix is the corresponding subfield generator matrix, where the five pairs of columns span lines, i.e.\ we geometrically have a line spread of $\PGF{4}{2}$.
\end{example}

\begin{definition}
  A projective $h-(n, r, s)_q$ system is a multiset $\mathcal{S}$ of $n$ subspaces of $\PGF{r}{q}$ of dimension at most $h$ such that each hyperplane contains at most~$s$ elements of $\mathcal{S}$ and some hyperplane contains exactly~$s$ elements of $\mathcal{S}$. We say that $\mathcal{S}$ is faithful if all elements have dimension~$h$. A projective $h-(n, r, s)_q$ system $\mathcal{S}$ is a projective $h-(n, r, s, \mu)_q$ system if each point is contained in at most~$\mu$ elements from $\mathcal{S}$ and there is some point that is contained in exactly $\mu$ elements from $\mathcal{S}$.
\end{definition}

Note that the elements of $\mathcal{S}$ span the entire ambient space $\PGF{r}{q}$ iff $s<n$. If $\mathcal{S}$ is a projective $h-(n, r, s)_q$ system that is not faithful, then we can easily construct a projective $h-(n, r, {\le} s)_q$ system $\mathcal{S}'$ by replacing each element $S\in\mathcal{S}$ with dimension smaller than $h$ by an arbitrary $h$-space containing $S$.

\begin{definition}
  By $n_q(r,h;s)$ we denote the maximum number $n$ such that a projective $h-(n, r, s)_q$ system exists.
\end{definition}

From the subfield construction we conclude $n_q(rh,h;s)\ge n_{q^h}(r,1;s)$.
In general, projective $h-(n,r,s)_q$ systems (with $s<n$) are in one-to-one correspondence
to additive codes:
\begin{theorem}(\cite[Theorem 5]{ball2025griesmer})
   \label{thm_connection}
   If $C$ is an additive $[n,r/h,d]_q^h$ code with generator matrix $G$, then
   $\mathcal{X}_G(C)$ is a projective $h-(n, r, n-d)_q$ system $\mathcal{S}$, and conversely,  each projective $h-(n,r, s)_q$ system $\mathcal{S}$ defines an additive $[n,r/h,n-s]_q^h$ code~$C$.
\end{theorem}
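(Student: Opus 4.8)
The plan is to establish the two directions of the correspondence by carefully translating between the algebraic description of an additive code and the combinatorial structure of its associated system of subspaces. For the forward direction, suppose $C$ is an additive $[n,r/h,d]_q^h$ code with generator matrix $G$, an $r\times n$ matrix over $\FF_{q^h}$. First I would fix a basis $\mathcal{B}$ of $\FF_{q^h}$ over $\FF_q$ and form the subfield generator matrix $\widetilde G$, an $r\times nh$ matrix over $\FF_q$; its columns, grouped into $n$ blocks of $h$, span the $n$ subspaces constituting $\mathcal{X}_G(C)$, each of dimension at most $h$. The key computation is to show that a hyperplane $H\subset \PGF{r}{q}$, given as the kernel of a nonzero linear functional $\lambda\in(\FF_q^r)^*$, contains the $i$-th block precisely when the corresponding $h$ coordinates of the codeword $\lambda\widetilde G$ (equivalently, the codeword $c\in C$ obtained from the $\FF_q$-linear combination of rows of $G$ encoded by $\lambda$) all vanish — i.e.\ that block is "empty" in $c$. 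Hence the number of blocks \emph{not} contained in $H$ equals the Hamming weight of the corresponding codeword, measured in the $\FF_{q^h}$-alphabet. Since $C$ has minimum distance $d$, every nonzero codeword has weight $\ge d$, so every hyperplane misses $\ge d$ blocks, i.e.\ contains $\le n-d$ of them; and some codeword has weight exactly $d$, so some hyperplane contains exactly $n-d$. This shows $\mathcal{X}_G(C)$ is a projective $h-(n,r,n-d)_q$ system.

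For the converse, given a projective $h-(n,r,s)_q$ system $\mathcal{S}=\{S_1,\dots,S_n\}$ with $s<n$ (so the $S_i$ span $\PGF{r}{q}$), I would build a subfield generator matrix by choosing, for each $i$, a set of $h$ vectors in $\FF_q^r$ spanning (a vector space of the same dimension as) $S_i$ — padding with zero columns when $\dim S_i<h$ — and concatenating these $n$ blocks into an $r\times nh$ matrix $\widetilde G$. Running the same argument in reverse, the $\FF_q$-row space of $\widetilde G$, reinterpreted blockwise over $\FF_{q^h}$ via $\mathcal{B}$, is an additive code $C$ whose nonzero codewords have $\FF_{q^h}$-weight equal to $n$ minus the number of blocks annihilated by the defining functional; the bound $s$ on hyperplane incidences translates to minimum distance exactly $n-s$. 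One should check that the row space has the right size, namely $q^r$: this is where $s<n$ is used — the spanning property of $\mathcal{S}$ forces $\widetilde G$ to have $\FF_q$-rank $r$, so $|C|=q^r$ and the dimension is $r/h$ as claimed.

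The main obstacle I anticipate is the bookkeeping around \emph{unfaithful} systems and the blockwise weight notion: one must be careful that "weight" on the code side is counted per block of $h$ coordinates (the $\FF_{q^h}$-symbol weight), that a zero block genuinely corresponds to the zero subspace being contained in every hyperplane, and that these conventions match the definition of $\mathcal{X}_G(C)$ and of projective systems given above. A secondary point to verify cleanly is well-definedness: the resulting code, and in particular its parameters, should not depend on the choice of basis $\mathcal{B}$ or on the choice of spanning vectors for each $S_i$ — this follows because a change of basis or of block representatives amounts to multiplying $\widetilde G$ on the right by a block-diagonal invertible matrix over $\FF_q$ (within each block), which preserves both the row space as an additive code and all hyperplane-incidence counts. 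Once these conventions are pinned down, both directions reduce to the single identity relating Hamming weight to the number of blocks outside a hyperplane, and the theorem follows; I would also remark that this generalizes the classical linear case $h=1$, recovering the standard code–multiset-of-points dictionary.
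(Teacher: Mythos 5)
Your argument is correct: the single weight-counting identity (the $\FF_{q^h}$-Hamming weight of the codeword attached to a functional $\lambda$ equals the number of blocks of $\widetilde G$ not contained in the hyperplane $\ker\lambda$) is exactly the standard proof of this correspondence, and you correctly identify $s<n$ as what forces $\widetilde G$ to have rank $r$, hence $|C|=q^r$, in the converse. The paper does not reprove the statement but cites it from Ball--Lavrauw--Popatia, and your proposal matches that standard argument; the side remark on independence from the basis $\mathcal{B}$ and the chosen spanning vectors is not needed for the theorem itself and, as stated, only yields an equivalent code rather than literally the same row space.
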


We have that $C$ is faithful iff $\mathcal{S}$ is faithful. Whenever the specific choice of a generator matrix $G$ or a subfield generator matrix $\widetilde{G}$ is irrelevant, we write $\mathcal{S}=\mathcal{X}(C)$ or $C=\mathcal{X}^{-1}(\mathcal{S})$. Unfaithful projective systems can arise by projection. If we e.g.\ start with the faithful projective $2-(5,4;1)_2$ system from Example~\ref{ex_additive_code}, then projection through an arbitrary point yields an unfaithful projective $2-(5,3;1)_2$ system consisting of four lines and one point (obtained from the line containing the projection point). So clearly, we have $n_q(r,h;s)\ge n_q(r',h;s)$ for all $r\le r'$.

\begin{definition}
  \begin{equation*}
    \overline{n}_q(r,h;s):=n_{q^h}(\left\lceil r/h\right\rceil,1;s)
  \end{equation*}
\end{definition}
In words, $\overline{n}_q(r,h;s)$ is the size of the largest projective $h-(n,r,s)_{q}$ system that we can obtain starting from a linear code over $\FF_{q^h}$ interpreted as a multiset of points via the subfield construction by an iterative application of projection. Whenever $\overline{n}_q(r,h;s)<n_q(r,h;s)$ we say that additive codes \emph{outperform} linear codes for the corresponding parameters, which is especially interesting if $r/h$ is integral.

We can also utilize linear codes to obtain upper bounds for $n_q(r,h;s)$. 

\begin{definition}
  For a faithful projective  $h-(n,r,s)_q$ system $\mathcal{S}$ let $\mathcal{P}(\mathcal{S})$ denote the multiset of points that we obtain
  by replacing each element of $\mathcal{S}$ by its contained $[h]_q$ points.
\end{definition}
\begin{lemma}(\cite[Lemma 4]{kurz2024additive})
  \label{lemma_linear_code}
  Let $\mathcal{S}$ be a faithful projective  $h-(n,r,s,\mu)_q$ system.
  Then $\mathcal{P}(\mathcal{S})$ is a faithful projective $1-(n',r,s',\mu)_q$ system,
  where $n'=n[h]_q$ and $s'=n\cdot[h-1]_q+s\cdot  q^{h-1}$.
  Moreover,
  $C:=\mathcal{X}^{-1}(\mathcal{P}(\mathcal{S}))$ is a  
  $q^{h-1}$-divisible linear $[n',r,d']_q$ code $C$ with maximum weight at most $n\cdot q^{h-1}$, where $d'=q^{h-1}\cdot (n-s)$.
\end{lemma}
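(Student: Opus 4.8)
The plan is to make the subfield construction completely explicit at the level of generator matrices and then read off all the claimed properties. Fix a faithful projective $h-(n,r,s,\mu)_q$ system $\mathcal{S}$ with generator matrix $G$, an $r\times n$ matrix over $\FF_q$ whose $i$-th column spans the $h$-space $S_i$ (here each column is really an $h$-dimensional block, so $G$ is best thought of as having $n$ blocks of $h$ columns each). For each block, the $[h]_q$ points of $S_i$ are spanned by the $[h]_q$ representatives of the $1$-dimensional subspaces of the column space of that block; collecting all of these as columns produces an $r\times n[h]_q$ matrix $G'$ over $\FF_q$, which is by construction a generator matrix for $C:=\mathcal{X}^{-1}(\mathcal{P}(\mathcal{S}))$, and $\mathcal{P}(\mathcal{S})$ is precisely $\mathcal{X}(C)$. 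This immediately gives the length $n'=n[h]_q$ and shows $\mathcal{P}(\mathcal{S})$ is a point system (a $1$-system) in $\PGF{r}{q}$.

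Next I would compute the hyperplane incidences. Let $H$ be a hyperplane of $\PGF{r}{q}$, i.e.\ the kernel of a nonzero linear functional. For each $i$, the block $S_i$ is an $h$-space, so $S_i\cap H$ is either all of $S_i$ (if $S_i\subseteq H$) or a hyperplane of $S_i$, i.e.\ an $(h-1)$-space containing exactly $[h-1]_q$ points. Hence the number of points of $\mathcal{P}(\mathcal{S})$ in $H$ equals $\sum_i |S_i\cap H \text{ points}| = a\cdot[h]_q + (n-a)\cdot[h-1]_q$, where $a$ is the number of elements of $\mathcal{S}$ contained in $H$. Since $\mathcal{S}$ has parameter $s$, we have $a\le s$, and $[h]_q-[h-1]_q = q^{h-1}$, so this count is at most $n\cdot[h-1]_q + s\cdot q^{h-1} =: s'$, with equality achieved by the hyperplane attaining $s$ in $\mathcal{S}$. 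This establishes that $\mathcal{P}(\mathcal{S})$ is a $1-(n',r,s',\mu)_q$ system once we also check the point multiplicity $\mu$ is preserved: a point $P$ of $\PGF{r}{q}$ lies in the point set $\mathcal{P}(\mathcal{S})$ with multiplicity equal to the number of $S_i$ containing $P$ (since each such $S_i$ contributes $P$ exactly once among its $[h]_q$ points, and distinct $S_i$ contribute to disjoint blocks of columns), which is exactly the multiplicity of $P$ in the multiset sense inherited from $\mathcal{S}$; the maximum is $\mu$ by definition. So the point-multiplicity parameter carries over verbatim.

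For the coding-theoretic conclusions, translate via Theorem~\ref{thm_connection} with $h=1$: the minimum distance of $C$ is $d'=n'-s' = n[h]_q - n[h-1]_q - s q^{h-1} = q^{h-1}(n-s)$, which is the stated value. The maximum weight of $C$ is $n'$ minus the minimum number of points in a hyperplane; the minimum number of elements of $\mathcal{S}$ in a hyperplane is $\ge 0$, giving minimum hyperplane point-count $\ge n[h-1]_q$, hence maximum weight $\le n'-n[h-1]_q = n q^{h-1}$, as claimed. Finally, $q^{h-1}$-divisibility: every codeword of $C$ corresponds to the functional-evaluation vector over all $[h]_q$ points of all blocks; the weight contributed by block $i$ is either $0$ (if the functional vanishes on $S_i$) or $q^{h-1}$ (the number of points of $S_i$ on which a nonzero functional does not vanish, since the zero locus is an $(h-1)$-space). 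Summing over blocks, every weight is a multiple of $q^{h-1}$.

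The routine parts are the counting identities $[h]_q-[h-1]_q=q^{h-1}$ and $[h]_q - q^{h-1}\cdot 1 = [h-1]_q\cdot$(something) — really just manipulations of $\tfrac{q^t-1}{q-1}$. The one place that needs a little care, and which I'd flag as the main (if mild) obstacle, is the divisibility/weight argument: one must see clearly that for a fixed nonzero functional $\varphi$ and a fixed $h$-space $S_i$, the number of $1$-spaces in $S_i$ on which $\varphi$ is nonzero is $[h]_q - [h-1]_q = q^{h-1}$ when $\varphi|_{S_i}\ne 0$, and $0$ otherwise — i.e.\ that within a block the weight is $\{0,q^{h-1}\}$-valued, independent of $i$. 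That local computation, plus additivity of weight over the $n$ disjoint blocks, simultaneously yields $q^{h-1}$-divisibility, the maximum-weight bound $n q^{h-1}$, and reconfirms $d' = q^{h-1}(n-s)$, so it is worth doing once, cleanly.
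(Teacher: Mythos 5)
Your proof is correct, and it is the standard argument: the paper does not prove this lemma itself but cites it from \cite[Lemma~4]{kurz2024additive}, and your derivation (each $h$-space meets a hyperplane in all $[h]_q$ or exactly $[h-1]_q$ of its points, so per-block weights are $\{0,q^{h-1}\}$-valued, which simultaneously gives $s'$, $d'$, the maximum-weight bound, and the divisibility) is precisely the expected one. The only point worth flagging is the implicit assumption that the elements of $\mathcal{S}$ span $\FF_q^r$ (equivalently $s<n$), which is needed for $C$ to have dimension exactly $r$; this is inherited from the lemma statement itself rather than a gap in your argument.
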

Here a linear code is called $\Delta$-divisible if the weights of all codewords are divisible by~$\Delta$.

\begin{definition}
  The strong coding upper bound for $n_q(r,h;s)$ is the largest integer~$n$ such that a linear code with properties as specified in Lemma~\ref{lemma_linear_code} exists. Ignoring the conditions on divisibility and the maximum weight, the maximum possible length $n$ is called the weak coding upper bound.
\end{definition}

\begin{example}
  Since the existence of a $[84,8,40]_2$ code and the non-existence of a $[87,8,42]_2$
  code are known, we obtain $n_2(8,2;8)\le 28$, i.e.\ the weak coding upper bound for $n_2(8,2;8)$ is $28$.
\end{example}
One specific lower bound for the length $n$ of an $[n,k,d]_q$ code is the so-called
Griesmer bound \cite{griesmer1960bound,solomon1965algebraically}
\begin{equation}
  \label{eq_griesmer_bound}
  n\ge \sum_{i=0}^{k-1} \left\lceil\frac{d}{q^i}\right\rceil=:g_q(k,d).
\end{equation}
Interestingly enough, this bound can always be attained with equality if the minimum distance $d$ is sufficiently large and a nice geometric construction was given by Solomon and Stiffler \cite{solomon1965algebraically}. For additive codes we also have a Griesmer bound via the reformulation as a projective system $\mathcal{S}$ and Lemma~\ref{lemma_linear_code} by only using the Griesmer bound for linear codes
\begin{eqnarray*}
  n&\ge& \left\lceil
  \frac{g_q\!\left(r,d\cdot q^{h-1}\right)}{[h]_q}
  \right\rceil
  =
  \left \lceil
  \raisebox{-1.5ex}{$\displaystyle
  \frac{ \sum\limits_{i=0}^{r-1} \left\lceil d\cdot q^{h-1-i}
  \right\rceil}{[h]_q}
  $}
  \right \rceil \\
  &=&d+
  \left \lceil
  \raisebox{-1.8ex}{$\displaystyle
  \frac{\sum\limits_{i=1}^{r-h} \bigl \lceil\frac{d}{q^i} \bigr \rceil}{[h]_q}
  $}
  \right \rceil
  =
  d+ \left\lceil \frac{g_q(r-h+1,d)-d   }{[h]_q}\right\rceil,
  \end{eqnarray*}
see e.g.\ \cite[Theorem 12]{ball2025griesmer} or \cite[Lemma 15]{kurz2024additive}. Here we also speak of the Griesmer upper bound, which is $30$ in our example since the Griesmer bound implies that no $[93,8,46]_2$ code exists but cannot rule out the existence of a $[90,8,44]_2$ code. One of the main results of \cite{kurz2024additive} is that the mentioned Griesmer bound for additive codes can always be attained if the minimum distance $d$ is sufficiently large. Thus, also the determination of $n_q(r,h;s)$ is a finite problem for each triple of parameters $(q,r,h)$.

\section{At most two subspaces in a hyperplane -- generalized ovals}
\label{sec_oval}
An oval in the projective plane $\PGF{3}{q}$ is a set $\mathcal{O}$ of $q+1$ points such that no three points are on a line. If the field size $q$ is odd, then each oval is a conic and each set of $q+2$ points does contain a line incident with at least three points \cite{segre1955ovals}. This is different if $q$ is even, where other constructions exist and cardinality $q+2$ can indeed be reached, where the geometric object then is called hyperoval. Given an arbitrary set of points $\mathcal{S}$ in $\PGF{3}{q}$, a line that contains two points from $\mathcal{S}$ is called a secant and a line that contains exactly one point from $\mathcal{S}$ is called a tangent. Given an oval $\mathcal{O}$ in $\PGF{3}{q}$, where $q$ is odd, each point in $\mathcal{O}$ is contained in $q$ secants and a unique tangent. Applying the subfield construction to a hyperoval in $\PGF{3}{q^2}$, where $q$ is even, gives a faithful projective $2-\left(q^2+2,6,2,1\right)_q$ system $\mathcal{S}$. Projection through a point~$P$ then gives a projective $2-\left(q^2+2,5,2\right)_q$ system that is faithful iff $P$ is not contained in one of the elements from $\mathcal{S}$.
Arcs, ovals, and hyperovals were generalized by replacing their points with $h$-spaces and their maximum possible cardinality in $\PGF{3h}{q}$ can indeed be obtained by applying the subfield construction in $\PGF{3}{q^h}$ \cite{thas1971m}. For the relation to (translation) generalised quadrangles we refer to \cite{thas19734,thas1974translation}.

Starting from an oval in $\PGF{3}{q^2}$, applying the subfield construction and projection through a point, yields a projective $2-\left(q^2+1,5,2\right)_q$ system.
The following result shows that we can improve upon that.
\begin{proposition}
  \label{prop_construction_oval}
  $$n_q(5,2;2)\ge q^2+2$$
\end{proposition}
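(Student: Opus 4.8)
The goal is to construct a projective $2$-$(q^2+2,5,2)_q$ system, i.e., $q^2+2$ subspaces of dimension at most $2$ in $\PGF{5}{q}$ with no hyperplane (a $4$-space) containing three of them. My plan is to take the known construction of an oval $\mathcal{O}$ with $q^2+1$ points in $\PGF{3}{q^2}$, apply the subfield construction to obtain a faithful projective $2$-$(q^2+1,6,2,1)_q$ system $\mathcal{S}_0$ of lines in $\PGF{6}{q}$ whose union is a "dual hyperoval"-type structure, and then cleverly project to $\PGF{5}{q}$ while simultaneously adjoining one extra element — so that the projection point is chosen to turn a would-be degeneration into a genuine $(q^2+2)$-nd element. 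Concretely, I expect the right object to add is the extra line/point that plays the role of the "missing" point of a hyperoval: in the subfield model, the $q^2+1$ oval lines together miss a certain structure, and one projects through a point lying on that structure so that the projected image picks up exactly one new line, keeping the "no three in a hyperplane" condition.

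The key steps, in order, are: (1) Recall (from \cite{segre1955ovals}, \cite{thas1971m}) that an oval $\mathcal{O}\subseteq\PGF{3}{q^2}$ gives, via the subfield construction, a faithful $2$-$(q^2+1,6,2,1)_q$ system $\mathcal{S}_0$; here "no hyperplane contains three lines" translates the fact that no line of $\PGF{3}{q^2}$ meets $\mathcal{O}$ in three points, via the standard correspondence between $\FF_q$-hyperplanes of $\FF_{q^2}^3$ and $\FF_{q^2}$-points of the dual (each $\FF_{q^2}$-hyperplane of $\FF_{q^2}^3$ is an intersection of $\FF_q$-hyperplanes). (2) Choose a point $P$ of $\PGF{6}{q}$ with a carefully specified relation to $\mathcal{S}_0$ — roughly, $P$ should lie in a $3$-space spanned by two of the oval-lines but in none of the lines themselves — and project $\mathcal{S}_0$ from $P$ into $\PGF{5}{q}$. (3) Verify that the $q^2+1$ projected images are still lines and still have no three in a hyperplane; this uses that pre-images of hyperplanes of $\PGF{5}{q}$ through $P$ are hyperplanes of $\PGF{6}{q}$ through $P$, plus the bound from step (1). (4) Exhibit one additional line $L^*$ in $\PGF{5}{q}$ (the image of the geometry "hidden" at $P$, or a tangent structure of the oval) and check that appending $L^*$ still yields a valid system: no hyperplane of $\PGF{5}{q}$ contains $L^*$ together with two projected oval-lines.

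The main obstacle will be step (2)–(4): pinning down the exact choice of projection point $P$ and the extra line $L^*$ so that the count goes up by one rather than down, and then the case analysis in step (4) verifying the "no three in a hyperplane" property for the $\binom{q^2+2}{3}$ potential violations. Equivalently, in coding-theoretic language (via Theorem~\ref{thm_connection}), one is looking for an additive $[q^2+2,\,5/2,\,q^2]_q^2$ code; the delicate part is that it must simultaneously beat the linear bound, so the argument cannot be a pure projection of a linear code and genuinely needs the extra adjoined element. I would expect the cleanest route is to write down an explicit generator matrix: take the $3\times(q^2+1)$ generator matrix of the conic over $\FF_{q^2}$, pass to a $6\times(q^2+1)$ subfield matrix, delete one coordinate row (projection) to get a $5\times(q^2+1)$ matrix, and then append one explicit column-pair whose span is $L^*$; the verification then becomes a finite check of when five generic rows of the augmented matrix drop rank, which one hopes to reduce to a property of the conic (e.g., that $q$ odd forbids a Frobenius-type collision). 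The appendix reference (Appendix~\ref{appendixC}) strongly suggests such an explicit, automorphism-respecting description is exactly what the authors do.
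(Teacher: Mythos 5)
Your plan diverges from the paper's in a way that is not just cosmetic but fatal: you keep all $q^2+1$ subfield images of the oval points, project once into $\PGF{5}{q}$, and then try to adjoin a \emph{single} extra line $L^*$. For odd $q$ (the only interesting case, since for even $q$ one simply uses a hyperoval) no such $L^*$ exists, regardless of how the projection point is chosen. To see this, view $V=\FF_{q^2}^3$ as $\FF_q^6$ and let $\tilde L\le V$ be the $3$-dimensional preimage of $L^*$ (it contains the projection point). For oval points $A\ne B$ the images of $A$, $B$ together with $L^*$ avoid a common hyperplane iff $\tilde L+S_{AB}=V$, where $S_{AB}$ is the $4$-dimensional $\FF_q$-space underlying the secant $AB$; equivalently $\dim_{\FF_q}(\tilde L\cap S_{AB})=1$ for \emph{every} secant. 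Since for odd $q$ every point of $\PGF{3}{q^2}$ lies on at least one secant of the conic, $\tilde L$ can meet no $\FF_{q^2}$-point (a $2$-dimensional $\FF_q$-space) in dimension $2$, so the set $T$ of $\FF_{q^2}$-points touched by $\tilde L$ has exactly $q^2+q+1$ elements and meets every line of the plane; the requirement then forces every secant to contain exactly one point of $T$. Counting flags (secant, point of $T$ on it) with $a,b,c$ the numbers of conic, external, internal points in $T$ gives $2aq^2+b(q^2-1)+c(q^2+1)=q^2(q^2+1)$ and $a+b+c=q^2+q+1$, hence $b-c=aq^2+q^3\ge q^3$, contradicting $b\le q^2+q+1<q^3$. (The same computation rules out the unfaithful variant in which one oval line collapses under the projection; also note that two oval-line images span a $4$-space, not a $3$-space as in your step (2).) So step (4) of your plan cannot be completed by any choice of $P$ and $L^*$, and in coding terms no single column-pair can be appended to the projected subfield matrix of the full conic.

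This obstruction is precisely what the paper's proof is designed to evade: it \emph{deletes} one oval point $P$, keeps only the remaining $q^2$ lines, projects through a point contained in the subfield image of $P$, and then adds \emph{two} lines $\alpha_1,\alpha_2$ that both pass through the projected image $P''$ of the deleted point and together span the projected image $L''$ of the tangent at $P$. The three needed facts are then exactly the three cases of the paper's proof: three oval images span everything; $\alpha_i$ plus two oval images span everything because $P''\subset\alpha_i$ and $P$ with two other oval points spans the plane; and $\alpha_1,\alpha_2$ plus one oval image span everything because $\alpha_1+\alpha_2=L''$ and the tangent together with a point off it spans the plane. So the tangent line at the removed point, which is absent from your outline, is the essential ingredient, and the correct bookkeeping is ``remove one, add two intersecting lines'' (the explicit construction in Appendix~\ref{appendixC} likewise adds two intersecting lines $D$, $D'$), not ``keep all, add one''.
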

In Section~\ref{ss:constr} we will prove this in a more general form (and in Appendix~\ref{appendixC}, in a less general form, but with a completely different approach), but firstly we derive some upper bounds and discuss what we know about the values for small~$q$.
\subsection{An upper bound}\label{ss:ub}

In order to obtain an upper bound for $m_q(5,2;2)$ we can mimic the argument for the maximum cardinality of arcs in the projective plane $\PGF{3}{q}$.
\begin{lemma}
  \label{lemma_ub_oval_projection_parameters}
  We have $n_q(5,2;s)\le (s-1)(q^2+q+1)+1$ for each $s\ge 2$. If a projective system misses this number by less than $q$, then it has to be faithful and no two lines intersect in a point.
\end{lemma}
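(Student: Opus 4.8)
The plan is to mimic the classical ``pivot on a point of the arc'' argument, but pivoting on a \emph{line} of the system and keeping track of how many hyperplanes through that line contain each other element.

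\emph{The bound.} First I would reduce to the faithful case: given any projective $2-(n,5,s)_q$ system $\mathcal{S}$, replace every element of dimension $<2$ by an arbitrary line through it to obtain a faithful $2-(n,5,{\le}s)_q$ system $\mathcal{S}'$ with the same $n$ (enlarging an element does not increase the number of elements contained in any hyperplane). Now pick any line $S\in\mathcal{S}'$. There are exactly $[3]_q=q^2+q+1$ hyperplanes of $\PGF{5}{q}$ containing $S$, each containing at most $s-1$ elements of $\mathcal{S}'\setminus\{S\}$. On the other hand, every $T\in\mathcal{S}'\setminus\{S\}$ satisfies $\dim\langle S,T\rangle\le 4<5$, hence lies in at least one such hyperplane. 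Double-counting the incidences $(H,T)$ with $S\subsetneq H$ and $T\subseteq H$ then gives $n-1\le (s-1)(q^2+q+1)$, which is the asserted bound.

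\emph{The near-extremal case.} Suppose $n> (s-1)(q^2+q+1)+1-q$. Here I would refine the incidence count for a pivot line $S$ in a faithful system: a copy of $S$ lies in all $q^2+q+1$ hyperplanes through $S$, a line meeting $S$ in a point lies in exactly $q+1$ of them, and a line skew to $S$ lies in exactly one. Letting $a_0$ (resp.\ $a_1$) be the number of elements of $\mathcal{S}'\setminus\{S\}$ equal to $S$ (resp.\ meeting $S$ in a point), the double count becomes
\[
  (n-1)+(q^2+q)a_0+q\,a_1\le (s-1)(q^2+q+1),
\]
so $(q^2+q)a_0+q\,a_1\le q-1$, forcing $a_0=a_1=0$. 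Thus in any faithful system in this range every line has multiplicity one and is skew to every other line. It remains to exclude unfaithfulness. If $\mathcal{S}$ contained a point $P$ together with at least one line $L$, I would pass to a faithful system using a \emph{deliberately bad} replacement, taking the line through $P$ to be some $\langle P,Q\rangle\neq L$ that meets $L$ in a single point (possible since $L$ does not span the ambient space); the result is still a faithful system in the admissible range but has two distinct lines meeting in a point, contradicting the previous sentence. If instead $\mathcal{S}$ consisted only of points, a short separate count pivoting on a point $P$ (there are $[4]_q$ hyperplanes through $P$, any other point lying in exactly $[3]_q$ of them, and $[4]_q<(q+1)[3]_q$) yields $n<(q+1)(s-1)+1$, again strictly below the admissible range. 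Hence $\mathcal{S}$ is faithful, and applying the refined count directly to $\mathcal{S}$ gives multiplicity one and pairwise skewness.

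\emph{Main obstacle.} The one genuinely delicate point is the non-faithful case in the near-extremal regime: the reduction to faithful systems used for the bound is lossy (it only preserves ``$\le s$''), so the structural conclusion cannot simply be transported back. The remedy is the bad-replacement trick above, together with the auxiliary all-points estimate to handle the degenerate situation where $\mathcal{S}$ has no line at all; the remaining computations with the hyperplane counts $[2]_q,[3]_q,[4]_q$ through a point or a line are routine.
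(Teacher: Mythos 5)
Your argument is correct, and it is essentially the quotient-free version of the computation the paper performs elsewhere, though it differs from the paper's own proof of this particular lemma. The paper proves the bound by projecting through a line $L$ of the system (or through the span of two distinct point-elements if no line exists), obtaining a projective $2-(n-c,3,s-c)_q$ system in the quotient plane and quoting $n_q(3,2;s')=s'(q^2+q+1)$; the second assertion then comes from a sharpened planar bound for projected systems that are unfaithful or have concurrent lines. Your direct double count over the $[3]_q=q^2+q+1$ hyperplanes through a pivot line, refined by the observation that a repeated line lies in all of them, a concurrent line in $q+1$, and a skew line in exactly one, is the same count seen without passing to the quotient; indeed it coincides with the paper's later, more general argument (Lemma~\ref{l:generalized_bound} with $t=h=2$, $j=1$, $i\in\{0,1\}$, which reproduces both parts of this lemma). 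What is genuinely different is your handling of the near-extremal unfaithful case: the deliberate ``bad replacement'' of a low-dimensional element by a line meeting a system line $L$ in a point, so that unfaithfulness is refuted by the skewness conclusion itself, plus the separate pivot-on-a-point estimate $n<(s-1)(q+1)+1$ for line-free systems; the paper instead lets the projected system inherit the defect (a point-element projects to a point or to the zero space) and applies the improved planar bound there. Both routes work; yours is self-contained and avoids citing $n_q(3,2;\cdot)$, at the cost of the replacement trick. For completeness you should mention two degenerate situations that your case split leaves implicit but which are trivial: elements of dimension $0$ (allowed by the definition; in your counts they behave like extra copies of the pivot, and in the replacement step any line meeting $L$ in a point works), and systems containing no $1$-dimensional element to pivot on in the line-free case (then all elements lie in a common hyperplane, so $n\le s$, far below the range).
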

\begin{proof}
  Let $\mathcal{S}$ be a projective $2-(n,5,s)_q$ system. Assume that $L\in \mathcal{S}$ is an arbitrary $2$-dimensional element. Projection through $L$ yields a projective $2-(n-c,3,s-c)_q$ system $\mathcal{S}'$, where $c$ denotes the multiplicity of $L$ in $\mathcal{S}$. Since $n_q(3,2;s')=s'\cdot(q^2+q+1)$ we conclude the statement. If no element of $\mathcal{S}'$ has dimension $2$ then we assume that $L$ is spanned by two different  arbitrary $1$-dimensional elements in $\mathcal{S}$. Projection through $L$ yields a projective $2-(n-c,3,s-c)_q$ system $\mathcal{S}''$, where $c$ denotes the sum of the multiplicities of the points in $L$ w.r.t.\ $\mathcal{S}$. Here we can proceed as before. Otherwise, we clearly have $|\mathcal{S}|\le s$.

  For the second part we remark that each projective $2-(n',3,s')_q$ system satisfies $n'\le (q^2+q+1)s'-(q-1)$ if it is unfaithful or contains two lines that intersect in a point.
\end{proof}
So we e.g.\ have $n_q(5,2;2)\le q^2+q+2$ and each projective $2-(n,5,2,\mu)_q$ system with $n>q^2+2$ satisfies $\mu=1$ and is faithful.
These are generalized in p.(i) and p.(ii),
respectively, of the following theorem.

\begin{theorem}\label{th:generalized_bound}
  If integers $s$, $t$, $h$, $j$ satisfy
  $s\ge t\ge 2$, $h,j\ge 1$,
  then
  \begin{itemize}
      \item[\rm (i)] it holds
      \begin{equation}\label{eq:generalized_ubound}
          n_q(th{+}j,h;s)\le (s{-}t{+}1)\cdot\frac{[h+j]_q}{[j]_q} +t-1;
      \end{equation}
      \item[\rm (ii)] if there is a projective
      $h-(n,th+j,s)$ system $\mathcal{S}$
      such that
      $$\displaystyle n> \frac{(s{-}t{+}1)[h+j]_q - [j+1]_q}{[j]_q}+t,$$ then every $t$ elements of $\mathcal S$ span a $th$-space; in particular, $\mathcal S$ is a faithful $h-(n,th{+}j,s,1)_q$ system.
  \end{itemize}
\end{theorem}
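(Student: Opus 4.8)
The plan is to prove both parts simultaneously by an induction on $t$ (for fixed $h$, $j$, $q$), where the base case $t=2$ generalizes the ``projection through a line'' argument used in Lemma~\ref{lemma_ub_oval_projection_parameters}, and the inductive step reduces the ambient dimension $th+j$ down to $(t-1)h+j$ by projecting through an appropriate $h$-space. First I would record the anchor fact for the smallest nontrivial ambient space: a projective $h-(n,h+j,s)_q$ system lives in $\PGF{h+j}{q}$, and since every hyperplane has codimension~$1$, a dimension count shows each $h$-space element that is not entirely contained in a given hyperplane meets it in an $(h-1)$-space; counting incidences between elements and hyperplanes through a fixed point, or directly bounding via the number of points $[h+j]_q$ versus the $[j]_q$ ``directions'' an $h$-space can point in, should give $n_q(h+j,h;s)\le s\cdot\frac{[h+j]_q}{[j]_q}$, with a strengthened bound $n_q(h+j,h;s)\le \frac{s[h+j]_q-[j+1]_q}{[j]_q}$ whenever the system is unfaithful or two elements meet in a point. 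This is exactly the $t=1$ specialization of (i) and (ii) (reading $t-1=0$), so it is the right base to induct from; I expect the verification of this second, ``strengthened'' estimate to be the main obstacle, since it requires carefully tracking how a single incidence between two elements, or a drop in dimension of one element, forces a deficiency of order $q$ in the count, paralleling the ``$-(q-1)$'' phenomenon in Lemma~\ref{lemma_ub_oval_projection_parameters}.

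For the inductive step of (i): given a projective $h-(n,th+j,s)_q$ system $\mathcal S$ with $t\ge 2$, pick any element $S_0\in\mathcal S$; if $S_0$ is faithful (an $h$-space), project $\mathcal S$ through $S_0$. The image $\mathcal S'$ is a projective $h-(n-c,(t-1)h+j,s-c)_q$ system, where $c$ is the multiplicity of $S_0$ in $\mathcal S$, because a hyperplane in the quotient pulls back to a hyperplane containing $S_0$, and a hyperplane containing $S_0$ contains at most $s$ elements of $\mathcal S$, hence at most $s-c$ elements other than the copies of $S_0$. By induction $n-c\le (s-c-(t-1)+1)\cdot\frac{[h+j]_q}{[j]_q}+(t-2)$; since $\frac{[h+j]_q}{[j]_q}>1$ and $c\ge 1$, rearranging gives $n\le (s-t+1)\cdot\frac{[h+j]_q}{[j]_q}+t-1$, as desired. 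If instead no element of $\mathcal S$ is faithful one is in the trivial regime and $|\mathcal S|\le s$, which is weaker than the claimed bound; more care is needed in the genuinely unfaithful-but-large case, and there I would, as in Lemma~\ref{lemma_ub_oval_projection_parameters}, replace $S_0$ by the $h$-space spanned by two distinct lower-dimensional elements (or by an $h$-space spanned by enough of them), absorbing their total multiplicity into $c$ and again projecting.

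For (ii), suppose $n$ exceeds the stated threshold $\frac{(s-t+1)[h+j]_q-[j+1]_q}{[j]_q}+t$. Comparing with (i), the gap between this threshold and the bound in (i) is $\frac{[h+j]_q-[j+1]_q}{[j]_q}=q\cdot\frac{[h]_q-1}{[j]_q}$ up to the additive constant, which is of order $q^h$ — large. Fix any $t$ elements $S_1,\dots,S_t$ of $\mathcal S$; I want to show $\dim\langle S_1,\dots,S_t\rangle=th$, i.e.\ they are in ``general position.'' Project through $\langle S_1,\dots,S_{t-1}\rangle$: if this span has dimension $(t-1)h$ (which induction on the analogous statement for $t-1$ delivers, once the base case is done), the image is a projective $h-(n',h+j,s')_q$-type system in $\PGF{h+j}{q}$ with $n'=n-c$ and $s'=s-c$ where $c\le t-1$ is the combined multiplicity; if moreover $\langle S_1,\dots,S_t\rangle$ had dimension less than $th$, then in the quotient the image of $S_t$ is either low-dimensional or meets some other element in a point, and the strengthened base-case bound $n'\le \frac{s'[h+j]_q-[j+1]_q}{[j]_q}$ kicks in, which after substituting $c\le t-1$ contradicts the assumed lower bound on $n$. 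Thus every $t$ elements span a $th$-space; applying this with $t$ replaced by $1$ (two equal or two distinct-but-meeting elements are the degenerate cases it rules out, via the $s'$-strengthening) shows $\mathcal S$ is simple and faithful with $\mu=1$. I expect the bookkeeping of the multiplicities $c$ and the precise form of the strengthened estimate in the base case to be where the real work lies; everything above it is a clean double induction.
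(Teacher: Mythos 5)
Your central counting step -- project through a space spanned by $t-1$ of the chosen elements and compare, among the $[h+j]_q$ hyperplanes through it, those that additionally contain the $t$-th element with those that do not -- is exactly the engine of the paper's proof (the paper phrases it as counting hyperplanes through a $(t-1)h$-space $L'$ rather than as a quotient, and gets both (i) and (ii) from one lemma, with no induction). But as written your argument for (ii) has a genuine gap: you project through $\langle S_1,\dots,S_{t-1}\rangle$ and assert that this span is $(t-1)h$-dimensional ``by induction on the analogous statement for $t-1$''. That statement is not available. The theorem for $t-1$ concerns systems in ambient dimension $(t-1)h+j$, not $th+j$; to apply it to $\mathcal S$ itself you must re-parametrize with $j'=h+j$, and the hypothesis you would then need is $n>\bigl((s-t+2)[2h+j]_q-[h+j+1]_q\bigr)/[h+j]_q+t-1$, whose main term has coefficient $s-t+2$ rather than $s-t+1$ and so is strictly larger (by roughly $q^h$) than the threshold you assume; for $t=2$ there is no $t'=1$ case to invoke at all. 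The repair is to \emph{not} insist on projecting through the exact span: since the $t$ elements lie in some $(th-1)$-space $L''$, choose an arbitrary $(t-1)h$-space $L'$ with $S_1,\dots,S_{t-1}\subseteq L'\subseteq L''$ and run the same count (each remaining element lies in at least $[j]_q$ hyperplanes through $L'$; the $[j+1]_q$ hyperplanes through $L''$ carry at most $s-t$ remaining elements, the others at most $s-t+1$). This is the paper's lemma, and with $L''$ a $th$-space it also yields (i) directly, avoiding your separate induction for (i) together with its unresolved edge cases (all elements of dimension $<h$; $s-c<t-1$ after projection).

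Two further bookkeeping points would need fixing even after that repair. The strengthened base-case estimate should read $n'\le\bigl(s'[h+j]_q-[j+1]_q\bigr)/[j]_q+1$ (the low-dimensional image is itself one of the $n'$ elements), and the total multiplicity $c$ of elements inside the projection centre satisfies $c\ge t-1$, not $c\le t-1$; with both corrected the arithmetic closes to exactly the stated threshold $+t$, whereas as written the two slips merely happen to point in opposite directions. Also, the alternative degeneracy ``the image of $S_t$ meets some other element in a point'' is neither needed nor usable in the quotient $\PGF{h+j}{q}$ when $h>j$ (two meeting $h$-spaces can span the whole quotient, so no hyperplane sees both); the only fact you need, and the one your hypothesis delivers, is that the image of $S_t$ has dimension below $h$. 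The deduction of faithfulness and $\mu=1$ from ``every $t$ elements span a $th$-space'' is fine.
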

Assertions~(i) and (ii) of Theorem~\ref{th:generalized_bound} are straightforward from subcases $i=0$ and $i=1$ of the following lemma.
\begin{lemma}\label{l:generalized_bound}
  If $\mathcal{S}$ is a projective
  $h-(n,th+j,s)_q$ system,
  $s\ge t\ge 2$, $h,j\ge 1$, such that
  some $(th-i)$-space, $i\in \{0,...,h\}$, includes at least~$t$ elements of~$\mathcal{S}$,
  then
  \begin{equation}\label{eq:l1i1+2}
      n\le \frac{(s{-}t{+}1)[h+j]_q - [i+j]_q}{[j]_q}+t
      .
  \end{equation}
\end{lemma}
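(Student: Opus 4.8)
The plan is to prove \eqref{eq:l1i1+2} by a \emph{single} projection, choosing the subspace we project through so that it (a)~swallows $t-1$ of the (at least $t$) elements of $\mathcal S$ lying in the given $(th{-}i)$-space $U$, and (b)~has a complement of the ``nice'' dimension $h+j$ occurring on the right-hand side. Concretely: write $V=\FF_q^{th+j}$, let $c\ge t$ be the number of elements of $\mathcal S$ (with multiplicity) contained in $U$, pick any $t-1$ of them $S_1,\dots,S_{t-1}$, and let $W'\subseteq U$ be a $(t-1)h$-dimensional subspace containing $S_1+\dots+S_{t-1}$. Such a $W'$ exists precisely because $(t-1)h\le th-i=\dim U$, i.e.\ because $i\le h$ --- this is the only place the hypothesis $i\le h$ enters. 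Also, $U$ has dimension $th-i<th+j$, hence lies in some hyperplane, so $c\le s$. I then pass to the quotient $V/W'$, of dimension $h+j$, and let $c'$ be the number of elements of $\mathcal S$ contained in $W'$, so $t-1\le c'\le c$. Three facts drive the count: (i)~the $c'$ elements inside $W'$ project to $0$, and the remaining $n-c'$ elements have images of dimension between $1$ and $h$; (ii)~every hyperplane of $V/W'$ comes from a hyperplane of $V$ through $W'$, which contains all $c'$ elements inside $W'$, hence at most $s-c'$ of the surviving projected elements; (iii)~an element of $\mathcal S$ lying in $U$ but not in $W'$ projects into the $(h{-}i)$-dimensional image $U/W'$ of $U$, so its image has dimension at most $h-i$, and there are $c-c'$ such elements.

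The numerical heart is a double count of incidences between the $n-c'$ surviving projected subspaces and the $[h+j]_q$ hyperplanes of $V/W'$. A $d$-dimensional subspace lies in $[h+j-d]_q$ hyperplanes, and $h+j-d\ge j\ge 1$ always, so no term degenerates. Splitting the survivors into the $c-c'$ ones lying in $U/W'$ (each in at least $[i+j]_q$ hyperplanes, by (iii)) and the $n-c$ ones not contained in $U$ (each in at least $[j]_q$ hyperplanes), and bounding the total above by $(s-c')[h+j]_q$ via (ii), I obtain the master inequality
\[ (n-c)[j]_q+(c-c')[i+j]_q\ \le\ (s-c')[h+j]_q . \]
Adding $(c-t)[j]_q$ to both sides and using $[h+j]_q-[j]_q=q^j[h]_q$ and $[i+j]_q-[j]_q=q^j[i]_q$, the target \eqref{eq:l1i1+2} reduces to the elementary inequality
\[ (c'-t+1)\,[h]_q+(c-c'-1)\,[i]_q\ \ge\ 0 . \]

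This last inequality is the only genuine obstacle, and it is where $c\ge t$, $t-1\le c'\le c$, and $i\le h$ interact. If $c>c'$ both summands are nonnegative. If $c=c'$, then \emph{all} $c\ge t$ elements of $\mathcal S$ in $U$ already lie in $W'$, so $c'\ge t$, the first summand is at least $[h]_q$, and $[h]_q\ge[i]_q$ because $i\le h$, so the sum is nonnegative again. It remains to verify the two boundary cases: $i=h$ forces $W'=U$ (so $U/W'=0$, $c-c'=0$, and the $[i+j]_q$-term is absent), while $i=0$ makes the $[i+j]_q$-term coincide with the generic $[j]_q$-term and recovers exactly assertion~(i) of Theorem~\ref{th:generalized_bound}. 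I expect the fiddly parts to be the careful multiset bookkeeping in (i)--(iii) and the small case analysis on $c$ versus $c'$; the rest is routine.
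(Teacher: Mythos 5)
Your proof is correct and is essentially the paper's own argument: passing to the quotient $V/W'$ is just the language of counting the $[h+j]_q$ hyperplanes through a $(t-1)h$-space sandwiched between $S_1+\dots+S_{t-1}$ and the given $(th-i)$-space, with the same incidence lower bounds $[j]_q$ and $[i+j]_q$ and the same per-hyperplane upper bounds coming from the swallowed elements. The only difference is that the paper fixes exactly $t$ elements (so hyperplanes through the $(th-i)$-space contain at most $s-t$ of the rest and the others at most $s-t+1$), which yields the bound directly and avoids your extra bookkeeping with $c$, $c'$ and the closing case analysis.
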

\begin{proof}
Let $L''$ be a $(th-i)$-space containing as subspaces $t$ elements
$L_1$, \ldots, $L_t$ of~$\mathcal{S}$.
Consider a $(t-1)h$-space~$L'$ such that
$L_1,\ldots,L_{t-1}\subseteq L'\subseteq L''$.
Any element of
$\mathcal{S}\setminus \{L_1,\ldots,L_{t}\}$
lies in at least $[j]_q$ hyperplanes containing~$L'$.
There are $[h+j]_q$ such hyperplanes,
but $[i+j]_q$ of them are superspaces of~$L''$
and hence include all $L_1$, \ldots, $L_t$.
By the definition of a projective system,
each of the last $[i+j]_q$ hyperplanes
includes at most $s-t$ elements of $ \mathcal{S}\setminus\left\{L_1,\dots , L_t\right\}$, while each of the remaining $[h+j]_q-[i+j]_q$ includes at most $s-t+1$ elements of $ \mathcal{S}\setminus\left\{L_1,\dots, L_t\right\}$.
It follows that $\left|\mathcal{S}\setminus \left\{L_1, \dots, L_t\right\}\right|\cdot [j]_q\le (s-t+1)[h+j]_q - [i+j]_q$,
which proves~\eqref{eq:l1i1+2}.
\end{proof}

We remark that the proof of Lemma~\ref{lemma_ub_oval_projection_parameters} also shows that a hypothetical faithful projective $2-\left(q^2+q+2,5,2\right)_q$ system $\mathcal{S}$ has the property that the number of lines per hyperplane is always even, i.e.\ all $q^2+q+1$ lines through an element of $\mathcal{S}$ contain a second (disjoint) line. Considering the $q^2+q+1$ hyperplanes through a line that is not contained in $\mathcal{S}$ gives the condition that $q^2+q+2=q(q+1)+2$ is even, which is always satisfied -- different as in the situation for arcs in $\PGF{3}{q}$.

\begin{theorem}
  \label{thm_no_gen_hyperoval}
  If $s\ge 2$, $h,j\ge 1$, and $s$ contains a factor that is coprime to $q$, then
      \begin{equation}\label{eq:strict_ubound}
          n_q(2h{+}j,h;s) < (s-1)\cdot\frac{[h+j]_q}{[j]_q} +1;
      \end{equation}
  in particular,
$n_q(5,2;s)<(s-1)\cdot\left(q^2+q+1\right)+1$.
\end{theorem}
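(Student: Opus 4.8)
The plan is to show that equality in Theorem~\ref{th:generalized_bound}(i) (case $t=2$) is impossible. Theorem~\ref{th:generalized_bound}(i) already gives $n_q(2h+j,h;s)\le(s-1)\frac{[h+j]_q}{[j]_q}+1$, which in particular yields $n_q(5,2;s)\le(s-1)(q^2+q+1)+1$. If $(s-1)\frac{[h+j]_q}{[j]_q}\notin\mathbb{Z}$, the right-hand side of \eqref{eq:strict_ubound} is itself non-integral and we are done, so assume $[j]_q\mid(s-1)[h+j]_q$, put $n:=(s-1)\frac{[h+j]_q}{[j]_q}+1\in\mathbb{N}$, and suppose for contradiction that a projective $h$-$(n,2h+j,s)_q$ system $\mathcal{S}$ exists. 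Since $n-\big(\tfrac{(s-1)[h+j]_q-[j+1]_q}{[j]_q}+2\big)=\tfrac{[j+1]_q-[j]_q}{[j]_q}=\tfrac{q^j}{[j]_q}>0$, Theorem~\ref{th:generalized_bound}(ii) applies with $t=2$, so $\mathcal{S}$ is a faithful $h$-$(n,2h+j,s,1)_q$ system in which any two elements span a $2h$-space (hence are disjoint as projective subspaces).

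Next I would extract the extra rigidity contained in the equality case. For a hyperplane $\pi$ write $a_\pi:=|\{L\in\mathcal{S}:L\subseteq\pi\}|$. A fixed $h$-space lies in $[h+j]_q$ hyperplanes, and by the previous paragraph a fixed (unordered) pair of elements spans a $2h$-space that lies in exactly $[j]_q$ hyperplanes; double counting therefore gives $\sum_\pi a_\pi=n[h+j]_q$ and $\sum_\pi a_\pi(a_\pi-1)=n(n-1)[j]_q$. Subtracting,
\[
\sum_\pi a_\pi(s-a_\pi)=n\big((s-1)[h+j]_q-(n-1)[j]_q\big)=0 ,
\]
and since $0\le a_\pi\le s$ every summand is non-negative, so $a_\pi\in\{0,s\}$ for every hyperplane $\pi$. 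Equivalently, $\mathcal{P}(\mathcal{S})$ is an $n[h]_q$-point set whose only hyperplane intersection numbers are $m_1=n[h-1]_q$ and $m_2=n[h-1]_q+sq^{h-1}$, so $C:=\mathcal{X}^{-1}(\mathcal{P}(\mathcal{S}))$ is a projective $q^{h-1}$-divisible two-weight $[\,n[h]_q,\,2h+j\,]_q$ code with weights $nq^{h-1}$ and $(n-s)q^{h-1}$; equivalently a strongly regular graph on $q^{2h+j}$ vertices with restricted eigenvalues $-n$ and $sq^h-n$ is forced to exist.

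The decisive step is to turn the arithmetic hypothesis into a contradiction. Fix a prime $\ell\mid s$ with $\gcd(\ell,q)=1$. Then $m_1\equiv m_2\pmod\ell$, i.e.\ every hyperplane of $\PG(2h+j-1,q)$ meets $\mathcal{P}(\mathcal{S})$ in the same residue mod~$\ell$; equivalently, summing over $L\in\mathcal{S}$ the incidence vectors recording which hyperplanes contain~$L$ gives the zero vector mod~$\ell$. I would play this against the rigid structure just obtained, pursuing one of two routes: (a)~show that the forced strongly regular graph / divisible two-weight code cannot exist precisely when $s$ has a prime factor coprime to~$q$ — for $s=2$ (equivalently $q$ odd) this is the classical obstruction already responsible for ovals in $\PG(2,q)$ having no extension, a "lemma of tangents''/Segre-type determinant argument, here transported to $\PG(2h+j-1,q)$; or (b)~argue directly by a finer count: for $L\in\mathcal{S}$ and a codimension-$2$ subspace $\Pi\supseteq L$ (when $j=1$ such a $\Pi$ cannot contain a second element of $\mathcal{S}$, since two elements span a $2h$-space while $\dim\Pi=2h-1$), the number of incidences of $\mathcal{S}$ with the $q+1$ hyperplanes through $\Pi$ is, by Step~3, a multiple of $s$, and evaluating it modulo $\ell$ produces the contradiction.

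I expect the last step to be the main obstacle. Everything up to and including "$a_\pi\in\{0,s\}$ for all $\pi$" is a routine consequence of Theorem~\ref{th:generalized_bound} together with two double counts; but all first- and second-order counts of the extremal configuration are automatically integral (one checks, e.g., that the relevant eigenvalue multiplicity $n(q^{h+j}-1)/s$ is an integer under our hypotheses), so the non-existence is not visible at the level of naive divisibility and must be read off from a finer invariant — a modular-rank computation for the $h$-space/hyperplane incidence matrix over $\FF_\ell$, or the Segre-type argument in the $s=2$ specialization. Producing a single argument that works uniformly for all admissible $(q,h,j,s)$, and that isolates exactly the hypothesis "$s$ has a prime factor coprime to $q$'', is the delicate point.
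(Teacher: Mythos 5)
Your reduction matches the paper's proof up to its last step: assume equality, use Theorem~\ref{th:generalized_bound}(ii) to get a faithful system with $\mu=1$ in which any two elements span a $2h$-space, and conclude that every hyperplane contains either $0$ or $s$ elements of $\mathcal{S}$ (your double count $\sum_\pi a_\pi(s-a_\pi)=0$ is a perfectly valid alternative to the paper's equality analysis in the proof of Lemma~\ref{l:generalized_bound}), so that $\mathcal{P}(\mathcal{S})$ yields a projective two-weight $[\,n[h]_q,\,2h+j\,]_q$ code whose two nonzero weights differ by $sq^{h-1}$. But at exactly the point you flag as ``the main obstacle'' there is a genuine gap: you never produce the contradiction, and the two routes you sketch are not carried out. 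Route (b) cannot work as stated, since --- as you yourself observe --- all first- and second-order counts of the extremal configuration, including the incidence counts through a codimension-$2$ space taken modulo a prime $\ell\mid s$ with $\gcd(\ell,q)=1$, are consistent; and route (a) amounts to reproving from scratch a nonexistence statement that is in fact a known theorem.

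The missing ingredient is Delsarte's classical result on two-weight codes (\cite[Corollary~2]{delsarte1972weights}): for a projective two-weight code over $\FF_q$ with $q=p^l$, the difference of the two nonzero weights is a power of the characteristic $p$. Applied to the code you constructed, this forces $sq^{h-1}$ to be a power of $p$, hence $s$ to be a power of $p$, contradicting the hypothesis that $s$ has a factor coprime to $q$; this is exactly how the paper concludes, in one line. So no Segre-type ``lemma of tangents'' or modular-rank computation is needed --- the uniform argument over all admissible $(q,h,j,s)$ that you were looking for is precisely this citation --- but without it (or an equivalent substitute) your proposal does not prove the strict inequality, only the non-strict bound already contained in Theorem~\ref{th:generalized_bound}(i).
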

We remark that the case $h=j=1$, $s=2$ corresponds to the well-known fact that for odd $q$ there are no hyperovals.
\begin{proof}
  Assume that $\mathcal{S}$ is a faithful projective
  $h-\left(n,r,s,\mu'\right)_q$ system, where $n=(s-1)\cdot\frac{[h+j]_q}{[j]_q} +1$ and $r=2h+j$.
  Due to Theorem~\ref{th:generalized_bound}(ii),
  we have $\mu'=1$. Replacing each element of $\mathcal{S}$ by its contained $[h]_q$ points, we obtain a faithful projective $1-(n[h]_q,r,s',1)_q$ system, $s'=n[h{-}1]_q{+}sq^{h-1}$, i.e., a set $\mathcal{M}$ of $n[l]_q$ points in $\PGF{r}{q}$ such that every hyperplane contains at most $s'$ elements from~$\mathcal{M}$. Since, by the proof of Lemma~\ref{l:generalized_bound} ($t=2$, $i=0$), each hyperplane~$H$ contains either~$s$ or no elements from $\mathcal{S}$, we see that $H$ intersects~$\mathcal{M}$ in $n[h{-}1]_q+sq^{h-1}$ or $n[h{-}1]_q$ points. In other words, the linear code corresponding to~$\mathcal{M}$ is a projective two-weight $\left[n[h]_q,r,\{w_1,w_2\}\right]_q$ code, where $w_1=w_2+sq^{h-1}$.
  However, the difference of the occurring non-zero weights equals $sq^{h-1}$. Since it has to be a power of the characteristic of~$\FF_q$, see \cite[Corollary 2]{delsarte1972weights}, we obtain a contradiction if $s$ contains a factor that is coprime to~$q$.
\end{proof}

\subsection{Discussing small \texorpdfstring{$q$}{q}}\label{ss:smallQ}

So we e.g.\ have $n_q(5,2;2)< q^2+q+2$ for odd~$q$. For $s,h=2$, $j=1$,
the strongly regular graph corresponding to the utilized linear code has parameters $\left(q^5,k,\lambda,\mu\right)$, where $k=(q^2-1)\cdot\left(q^2+q+2\right)$, $\lambda=q^3+2q^2+q-2$, and $\mu=q^3+2q^2+3q+2$ (for the definition of strongly regular graphs and their connection with projective two-weight codes, we refer, e.g., to~\cite{BvM:SRG}). Interestingly enough, the case for $q=3$ is marked as \textit{open} at \url{https://aeb.win.tue.nl/graphs/srg/srgtab201-250.html}, \cite{BvM:SRG}.

 For $q=2$, the upper bound from Lemma~\ref{lemma_ub_oval_projection_parameters}
 can be attained. To this end, we remark that a vector space partition of type $3^1 2^{q^3}$ of $\PGF{5}{q}$, i.e.\ a faithful projective $2-\left(q^3,5,s,1\right)_q$ system whose elements all are disjoint to some plane $\pi$, has the property that each hyperplane contains either no or $q$ lines (depending on whether $\pi$ is contained in the hyperplane or not), i.e.\ we have $s=q$. Such a partition can e.g.\ be constructed by lifting MRD codes
or by retracting a spread, a partition of $\PGF{6}{q}$ into $q^3+1$ planes.

The case of field size $q=3$ can be solved exactly; we have $n_3(5,2;2)=12=q^2+q$. The nonequivalent faithful projective $2-(n,5,2)_3$ systems can be easily enumerated. There are exactly six faithful projective $2-(12,5,2)_3$ systems and no faithful projective $2-(13,5,2)_3$ system. More precisely, we have verified the numbers stated in \cite[Table 3, line $r=5$]{ball2025griesmer} and \cite[Remark 27]{ball2025griesmer}. The number of unfaithful projective $2-(n,5,2)_3$ systems is given by $1,5,21,26,9,1,1$ for $n=4,\dots,10$ and there is no unfaithful $2-(11,5,2)_3$ system. In principle, the multiset of points covered by a faithful projective system can admit several partitions. Interestingly enough, for our parameters all partitions are equivalent except for two cases for $n=10$, where trading switches eight lines. For more details on trades we refer e.g.\ to \cite{krotov2017minimum}.

For $q=4$ a strongly regular graph with parameters $(1024,330,98,110)$ indeed exists \cite{polhill2008generalizations}, even as linear codes, i.e.\ as a projective $[110,5,\{80,88\}]_4$ code \cite{dissett2000combinatorial} and as a projective $[330,10,\{160,176\}]_2$ code \cite{momihara2013strongly,momihara2014certain}. We have computationally checked that none of the ten $[110,5,\{80,88\}]_4$ codes listed in \cite[Appendix B]{dissett2000combinatorial} can be partitioned into $22$ lines, when considered as a set of points in $\PGF{5}{4}$. An example showing $n_4(5,2;2)\ge 20=q^2+q$ is given
in Appendix~\ref{sec_explicit_4}.
We have computationally checked that a hypothetical $2-(21,5;2)_4$ or a $2-(22,5;2)_4$ system cannot have a non-trivial automorphism group.

For $q=5$, we could not find a projective system better
than the $2-(27,5,2)_5$ system from Proposition~\ref{prop_generalized_construction_oval}. A (non-exhaustive) search with prescribed automorphism group of order at least~$4$ gives thousands of projective $2-(26,5,2)_5$ systems from only four equivalence classes,
none of them can be completed to a $2-(28,5,2)_5$ system, see also the discussion in Section~\ref{sec_multispreads}.

\subsection{Constructions}
\label{ss:constr}
A projective system from Proposition~\ref{prop_construction_oval}
can be constructed in more general form, starting from ovals in the projective plane $\PGF{3}{q^h}$ for $h\ge 2$.

\begin{proposition}
  \label{prop_generalized_construction_oval}
  For $h\ge 2$ we have $n_q(\lfloor2.5h \rfloor,h;2)\ge q^h+2$.
\end{proposition}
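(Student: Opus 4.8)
The plan is to build the desired $h$-$(q^h+2,\lfloor 2.5h\rfloor,2)_q$ system by the subfield construction applied to an oval in $\PG(3,q^h)$, then pass to a quotient geometry to save on dimension. Start with a conic (or, if $q^h$ is even, a hyperoval) $\mathcal O=\{P_0,P_1,\dots\}$ in $\PG(3,q^h)$; via the $\FF_q$-subfield construction each $P_i$ becomes an $h$-space $L_i$ in $\PG(3h,q^h$-over-$\FF_q) = \PG(3h,q)$, and since no three points of $\mathcal O$ are collinear, no three of the $L_i$ lie in a common hyperplane of $\PG(3h,q)$ — this gives a faithful $h$-$(q^h+1,3h,2)_q$ system (for even $q^h$, even $q^h+2$ elements). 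So the starting cardinality is already $q^h+1$; the point of the construction is to (a) gain one more element and (b) compress the dimension from $3h$ down to $\lfloor 2.5h\rfloor = \lfloor h/2\rfloor + 2h$.

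Next I would engineer the extra element. The idea mirrors the $h=2$ discussion before Proposition~\ref{prop_construction_oval}: a tangent space phenomenon. For an oval in odd characteristic, every point carries a unique tangent line; for a hyperoval the nucleus structure is different. The key move is to choose an $h$-space $L_\infty$ in $\PG(3h,q)$ — coming from the union of the tangent lines, or from the nucleus in the even case, appropriately $\FF_q$-structured — that together with the $L_i$ keeps the "no three in a hyperplane" property, yielding $q^h+2$ elements in $\PG(3h,q)$. One must check that no hyperplane of $\PG(3h,q)$ contains $L_\infty$ and two of the $L_i$, and that no hyperplane contains three $L_i$ (already known). This is the combinatorial heart: translating the tangent/secant incidence count of the planar oval into hyperplane-incidence statements for the lifted configuration.

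Then comes the dimension reduction. I would pick a subspace $U$ of dimension $3h - \lfloor 2.5h\rfloor = \lceil h/2\rceil$ that is disjoint from each of the $q^h+2$ elements $L_\infty, L_0, L_1, \dots$, and project through $U$. Projection through $U$ sends the system to an $h$-$(q^h+2,\lfloor 2.5h\rfloor,2)_q$ system, and it stays faithful precisely because $U$ meets no element; the upper bound $s=2$ on hyperplane incidences can only improve under projection (the excerpt already records $n_q(r,h;s)\ge n_q(r',h;s)$ for $r\le r'$ via projection, but here we need the disjointness to preserve faithfulness and the value $q^h+2$). The obstacle here is a dimension count: a random $\lceil h/2\rceil$-space avoiding $q^h+2$ fixed $h$-spaces exists only if $q$ is large enough relative to the number of spaces, so for small $q$ one needs an explicit choice of $U$ adapted to the algebraic structure of the oval, not a counting argument.

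I expect the main obstacle to be step two — producing and verifying the $(q^h+2)$-th element $L_\infty$ — together with making step three work for all $q$ rather than just large $q$. The cleanest route is probably to give everything in explicit coordinates: write the conic as $\{(1:t:t^2:0):t\in\FF_{q^h}\}\cup\{(0:0:1:0),(0:0:0:1)\text{-type points}\}$ with the fourth coordinate providing room, realize the tangent-space element concretely, and then exhibit $U$ as a coordinate subspace that is manifestly disjoint from all lifted elements. Since the statement only claims a lower bound, one fully worked explicit family suffices, and I would trade generality of the construction for transparency of the disjointness and hyperplane-incidence checks.
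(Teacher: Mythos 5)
Your step~2 is where the argument breaks down, and it cannot be repaired in the form you propose. For odd $q$ no element at all can be adjoined to the lifted oval inside $\PG(3h,q)$: the required property is that $L_\infty$ meets the span $\langle L_i,L_j\rangle$ trivially for every pair, and these spans are exactly the subfield lifts of the secant lines of $\mathcal{O}$. In a plane of odd order every point lies on at least one secant of an oval (internal points lie on $(q^h+1)/2$ secants, external points on $(q^h-1)/2$, oval points on $q^h$), so the union of the lifted secants is the whole space $\FF_q^{3h}$; hence not even a single projective point, let alone an $h$-space $L_\infty$, avoids all of them. This is just the subfield-construction shadow of Segre's theorem that an oval of odd order is inextendable, so ``gaining one more element'' before the dimension drop is impossible. (Your fallback objects do not help: the union of the tangent lines is not a subspace, and the nucleus exists only in even characteristic, where the hyperoval already gives $q^h+2$ and the whole issue is moot.)

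The paper's proof gets the extra cardinality by a different exchange, performed \emph{after} the dimension reduction. One deletes a point $P$ of the oval, keeps the tangent line $L$ at $P$, and projects through an $\lceil h/2\rceil$-dimensional subspace of the lift $P'$ of the deleted point; since the remaining lifted oval elements are disjoint from $P'$, they project to $q^h$ honest $h$-spaces, and the images $P''$ (of dimension $\lfloor h/2\rfloor$) and $L''$ (of dimension $\lfloor 1.5h\rfloor$) survive. In the quotient one then adds \emph{two} $h$-spaces $\alpha_1,\alpha_2$ with $P''\subseteq\alpha_1\cap\alpha_2$ and $\alpha_1+\alpha_2=L''$; the three spanning checks reduce to ``three oval points span the plane'', ``$P$ plus two other oval points span'', and ``the tangent $L$ plus an oval point $\ne P$ span''. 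So the net $+1$ comes from a $(-1,+2)$ trade inside the image of the tangent line, not from extending the oval. Note also that this removes your step-3 worry: the projection centre is chosen inside the deleted element, so no genericity or counting over small $q$ is needed, and the ``at most two per hyperplane'' condition is preserved under any projection because three elements already spanning the ambient space can never lie in a hyperplane through the centre.
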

\begin{proof}
  W.l.o.g.\ we assume that $q$ is odd.
  We construct a required projective system in the following four steps.

  Step~1. Consider an oval $\mathcal{O}$ from $q^h+1$ points of the projective plane $\PGF{3}{q^h}$ and fix some point $P\in\mathcal{O}$.
  Let $L$ be the unique tangent line through~$P$.
  Denote $\mathcal{S}=\mathcal{O}\backslash\{P\}$.

  Step~2. Applying the subfield construction, i.e., treating the objects above as
  objects in $\PGF{3h}{q}$, we have a collection
  $\mathcal{O}'=\mathcal{S}'\cup\{P'\}$ of $q^h+1$ disjoint $h$-spaces
  and a $2h$-space~$L'$ including $P'$ as a subspace and disjoint
  with each $h$-space in~$\mathcal{S}'$.

  Step~3. Applying projection through an arbitrary $\lceil h/2 \rceil$-subspace of~$P'$, we get the following in $\PGF{\lfloor 2.5h \rfloor}{q}$:
  a collection $\mathcal{S}''$ of $q^h$ disjoint $h$-spaces,
 a $\lfloor  0.5h \rfloor$-space~$P''$,
  a $\lfloor  1.5h \rfloor$-space~$L''$.

  Step~4. Finally, we consider
  two $h$-spaces $\alpha_1$, $\alpha_2$ spanning $L''$ and each including $P''$
  (this is possible because $\dim(\alpha_1)+\dim(\alpha_2)=2h\ge \lfloor  0.5h \rfloor+\lfloor  1.5h \rfloor=\dim(P'')+\dim(L'')$).
  We claim that $\mathcal{S}'' \cup\{\alpha_1,\alpha_2\}$ is a required
  projective
  $h-(q^h,\lfloor 2.5h\rfloor,2)_q $ system. Indeed,
    \begin{itemize}
  \item[(i)]
  Any three $h$-spaces of $\mathcal{S}''$ span the whole space
  (they are images of tree elements of an oval);
  \item[(ii)]
  For $i\in\{1,2\}$, $a_i$ and any two $h$-spaces $\beta_1$, $\beta_2$ of $\mathcal{S}''$ span the whole space
  (indeed, $P''\subset a_i$ and the subspaces $P''$, $\beta_1$, $\beta_2$ are images of tree elements of an oval).
  \item[(iii)] Finally, $\alpha_1$, $\alpha_2$, and any $h$-space $\beta$ from $\mathcal{S}''$
  span the whole space (indeed, $\alpha_1$ and $\alpha_2$ span $L''$, while $L''$ and $\beta$ are images of the line $L$ and a point not in~$L$ in a projective plane).
  \end{itemize}
  So, any three elements of $\mathcal{S}'' \cup\{\alpha_1,\alpha_2\}$ span the whole space;
  therefore, each hyperplane includes
  at most two of them.
\end{proof}

We remark that it is known that for odd $q$ an arc of size $q$ in $\PGF{3}{q}$ can always be extended to an oval i.e.\ a conic \cite{segre1955curve,segre1955ovals}.

Underlying our geometric constructions seems the following recipe: Start with a linear code of a large field and consider it as a multiset of points. After removing a point (or possibly several) we apply the subfield construction and project through some subspaces contained in the image of a removed point. Afterwards it is sometimes possible to add more subspaces than removed before. Of course this approach is not always successful. Starting from an ovoid over $\FF_4$ we obtain $n_2(8, 2; 5)\ge 17$, which is tight. However, we also have $n_2(7, 2; 5)=17$, see e.g.~\cite[Theorem 14]{kurz2024additive}, i.e.\ no improvement possible for these parameters. In the proof of the following construction the {\lq\lq}removed{\rq\rq} point is denoted as~$S_1$.
If we add it as the last column to a generator matrix of the corresponding linear code we get  the generator matrix of the doubly-extended Reed--Solomon code -- an $s$-arc of size $q^h+1$ in geometrical terms, where $s$ and $h$ are parameters of our construction. The subspace $S_2$ is the analog of the tangent line through~$S_1$. But now we need that the following generalization of the {\lq\lq}tangent{\rq\rq} property: any $s-1$ points of the arc are disjoint with~$S_2$. For our situation we will present an algebraic proof.

\begin{theorem}
\label{prop_generalized_construction_oval_s}
  For $h,s\ge 2$ we have $n_q(hs+\lfloor0.5h \rfloor,h;s)\ge q^h+2$.
\end{theorem}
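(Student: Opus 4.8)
The plan is to mimic the four-step recipe behind Proposition~\ref{prop_generalized_construction_oval}, but replacing the oval by a doubly-extended Reed--Solomon code, which is an $s$-arc of size $q^h+1$ in $\PGF{3}{q^h}$ (meaning a multiset of $q^h+1$ points such that every hyperplane contains at most $s$ of them, equivalently every line in the plane contains at most $s$). First I would fix such an arc $\mathcal{R}$ of $q^h+1$ points and single out one point $S_1$ corresponding to the ``extra'' column, together with a line $S_2$ playing the role of the tangent. The crucial property to establish algebraically is the generalized tangent property: any $s-1$ of the $q^h$ points of $\mathcal{R}\setminus\{S_1\}$ together with $S_2$ span the whole plane $\PGF{3}{q^h}$ --- equivalently, no $s-1$ of those points lie on the line $S_2$. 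Concretely, with the Reed--Solomon parametrization the points are indexed by $\FF_{q^h}\cup\{\infty\}$ with $S_1$ one of the ``points at infinity'', so a line through two of the finite points is cut out by a polynomial condition of degree $s-1$ in the evaluation points; this should force at most $s-2$ finite points on any line that is ``completed'' appropriately by $S_2$. I would verify this by an explicit Vandermonde-type determinant computation.

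Next, Steps~2 and 3 proceed exactly as in Proposition~\ref{prop_generalized_construction_oval}: apply the subfield construction to pass from $\PGF{3}{q^h}$ to $\PGF{3h}{q}$, obtaining a collection $\mathcal{S}'$ of $q^h$ disjoint $h$-spaces, the image $S_1'$ (an $h$-space), and the image $S_2'$ (a $2h$-space containing $S_1'$ and disjoint from every member of $\mathcal{S}'$). Then project through an $\lceil h/2\rceil$-subspace of $S_1'$, landing in $\PGF{hs+\lfloor 0.5h\rfloor}{q}$: here I should check the ambient dimension is right, namely $3h-\lceil h/2\rceil=\lfloor 2.5h\rfloor$ --- wait, this does not immediately match $hs+\lfloor 0.5h\rfloor$ unless $s=2$, so the projection in the general case must instead be arranged so that the residual ambient space has dimension $hs+\lfloor 0.5h\rfloor$; I would project through a subspace of $S_1'$ of the appropriate dimension $3h-hs-\lfloor 0.5h\rfloor$ (which is $\lceil 0.5h\rceil$ when $s=2$ and is larger for bigger... no, negative for $s>2$), so in fact the construction should live inside $\PGF{3h}{q}$ directly for $s\ge 3$ and the projection step is only needed to shave off dimension when $s=2$; I expect the paper intends that $S_2'$ after suitable reduction is a $\lfloor 1.5h\rfloor$-space and $S_1'$-image a $\lfloor 0.5h\rfloor$-space. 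The cleanest route is: after the subfield construction, project through an $\bigl(3h-hs-\lfloor 0.5h\rfloor\bigr)$-subspace of $S_1'$ when that quantity is positive, obtaining $\mathcal{S}''$ ($q^h$ disjoint $h$-spaces), $S_1''$ of dimension $\lfloor 0.5h\rfloor$, and $S_2''$ of dimension $\lfloor 1.5h\rfloor$; and just keep the subfield picture unprojected otherwise.

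In Step~4 I would pick two $h$-spaces $\alpha_1,\alpha_2$ each containing $S_1''$ and together spanning $S_2''$ --- feasible because $\dim\alpha_1+\dim\alpha_2=2h\ge \lfloor 0.5h\rfloor+\lfloor 1.5h\rfloor$ --- and claim $\mathcal{S}''\cup\{\alpha_1,\alpha_2\}$ is the desired $h\text{-}(q^h,\,hs+\lfloor 0.5h\rfloor,\,s)_q$ system. To see that no hyperplane contains $s+1$ of its members one argues as in Proposition~\ref{prop_generalized_construction_oval}: any $s+1$ of the members must span the ambient space. There are three cases. (a) If all $s+1$ come from $\mathcal{S}''$, they are images of $s+1$ points of the $s$-arc, which by the arc property are not all contained in a hyperplane of $\PGF{3}{q^h}$ and hence span --- this needs the pullback of a hyperplane argument, that an $\FF_q$-hyperplane missing the projection center corresponds under the subfield/projection correspondence to a condition forcing at most $s$ arc points. (b) If one $\alpha_i$ and $s$ members of $\mathcal{S}''$ are chosen, use $S_1''\subset\alpha_i$ and the arc property applied to $S_1$ together with those $s$ points, invoking the tangent-type property from Step~1. (c) If both $\alpha_1,\alpha_2$ and $s-1$ members of $\mathcal{S}''$ are chosen, then $\alpha_1,\alpha_2$ span $S_2''$, which is the image of the tangent line $S_2$, and by the generalized tangent property $S_2$ together with any $s-1$ points of $\mathcal{R}\setminus\{S_1\}$ spans $\PGF{3}{q^h}$. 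I expect the main obstacle to be case~(c): proving cleanly that the generalized tangent property of $S_2$ (any $s-1$ finite points plus $S_2$ span the plane) holds for the doubly-extended Reed--Solomon arc, and tracking how this property survives the subfield construction and projection so that it yields the full span in the projected space. This is precisely the ``algebraic proof'' the authors flag, and it amounts to a careful Vandermonde/MDS-rank argument together with bookkeeping of dimensions under projection through a subspace of $S_1'$.
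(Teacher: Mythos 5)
There is a genuine gap: you set the construction up in the wrong ambient space. For general $s$ the starting object is not an $(q^h+1)$-point arc in the projective plane $\PGF{3}{q^h}$ but the (doubly) extended Reed--Solomon code of \emph{dimension $s+1$}, i.e.\ an arc in $\PGF{s+1}{q^h}$ in which any $s+1$ points span the whole space (so at most $s$ lie in a hyperplane). Only for $s=2$ does this collapse to the planar picture of Proposition~\ref{prop_generalized_construction_oval}. You noticed the resulting dimension mismatch ($3h-\lceil h/2\rceil\neq hs+\lfloor 0.5h\rfloor$ for $s\ge 3$) but resolved it the wrong way: staying inside $\PGF{3h}{q}$ cannot work, because the target ambient dimension $hs+\lfloor 0.5h\rfloor$ \emph{exceeds} $3h$ whenever $s\ge 3$, so no projection or ``unprojected'' variant of a $3h$-dimensional configuration can produce an $h$-$(q^h+2,\,hs+\lfloor 0.5h\rfloor,\,s)_q$ system. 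The correct bookkeeping is: arc in vector dimension $s+1$ over $\FF_{q^h}$, subfield construction to dimension $(s+1)h$ over $\FF_q$, then projection through an $\lceil h/2\rceil$-dimensional subspace of the image of the removed point $S_1$, giving exactly $sh+\lfloor h/2\rfloor$.

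The second casualty of the planar setup is your case~(c). What the proof actually needs is not ``no $s-1$ arc points lie on the tangent line'' but the stronger general-position statement that the \emph{span} of any $s-1$ (resp.\ $s$) of the $q^h$ points is an $(s-1)$-space (resp.\ $s$-space) \emph{disjoint} from the distinguished $2$-space $S_2$ (resp.\ $1$-space $S_1$); in the paper this is claim~(*), proved by a Vandermonde determinant on the top coordinates, and it is what survives projection to give that $b$, $b'$ together with any $s-1$ projected points span the full $\bigl(sh+\lfloor h/2\rfloor\bigr)$-space. In a projective plane such a disjointness statement is impossible for $s\ge 3$ (two lines in a plane always meet), so the ``generalized tangent property'' you propose to verify cannot even be formulated correctly in $\PGF{3}{q^h}$. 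Your cases (a) and (b) and the choice of $\alpha_1,\alpha_2$ (the paper's $b,b'$) are in the right spirit, but the argument only becomes correct once the whole construction is moved to the $(s+1)$-dimensional setting.
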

\begin{proof}
We start with the extended Reed--Solomon code $R(s)$ with the generator matrix
\begin{equation*}G_s =
\left(
    \begin{array}{cccccc}
      1 & 1& 1& \ldots   & 1   \\
      0 & \alpha^1& \alpha^2& \ldots   & \alpha^{q^h-1}   \\
      0 & \alpha^2& \alpha^4& \ldots   & \alpha^{2(q^h-1)}  \\
       \vdots&\vdots&\vdots& \ddots   &\vdots\\
      0 & \alpha^{s-1}& \alpha^{2s-2}& \ldots   & \alpha^{(s-1)(q^h-1)}  \\
      0 & \alpha^s& \alpha^{2s}& \ldots   & \alpha^{s(q^h-1)}
    \end{array}
    \right)\!,
\end{equation*}
where $\alpha$ is a primitive element of $\FF_{q^h}$. It is a linear $\left[q^h,s+1,q^h-s\right]_{q^h}$ code.
We consider the $(s+1)$-dimensional column space $V_{s+1}$ of the matrix above; the columns of the matrix~$G_s$ are treated as points ($1$-dimensional subspaces) $a_0$, \ldots,  $a_{q^h-1}$ in this space.

We denote by $S_i$ the $i$-dimensional subspace of the column space consisting of all columns with first $s+1-i$ zeros. We claim the following:

\begin{itemize}
    \item[(*)] \emph{The span of any $s+1-i$
    columns of $G_s$ is an $(s+1-i)$-dimensional space disjoint with $S_i$, $i=1,\ldots,s$.}
The claim follows from the non-zero determinant of the Vandermonde matrix formed by the upper $s+1-i$
positions of the chosen $s+1-i$ columns.
\end{itemize}

Now, we consider the $\FF_{q^h}$-space $V_{s+1}$
as an $(sh+h)$-dimensional $\FF_{q}$-space, points $a_0$, \ldots, $a_{q^h-1}$ as $h$-subspaces.
Let $\pi$ be projecting of $V_{s+1}$ onto an $(sh+\lfloor h/2 \rfloor)$-dimensional $\FF_{q}$-space such that
$\dim_{\FF_q}\pi(S_1) =\dim_{\FF_q} S_1-\lceil h/2 \rceil = \lfloor h/2 \rfloor$.
We choose two arbitrary $h$-subspaces $b$, $b'$ of $\pi(V_{s+1})$ such that $\pi(S_1)\subseteq b\cap b'$ and $ b+b' = \pi(S_2)$ (we can do this because $\dim b + \dim b' = h+h \ge \lfloor h/2 \rfloor + \lfloor h/2 \rfloor + h = \dim \pi(S_1) + \dim \pi(S_2)$).

To complete the proof, we have to show that
the $h$-subspaces
$\pi(a_0)$, \ldots, $\pi(a_{q^h-1})$, $b$, $b'$ form a projective $h-\left(q^h+2,hs+\lfloor0.5h \rfloor,s\right)_q$ system;
that is, any $s+1$ of them span the space $\pi(V_{s+1})$.

a) Any $s+1$ of $a_0$, \ldots, $a_{q^h-1}$ span $V_{s+1}$;
therefore, any $s+1$ of $\pi(a_0)$, \ldots, $\pi(a_{q^h-1})$ span $\pi(V_{s+1})$.

b) From (*), $i=1$, any $s$ of $\pi(a_0)$, \ldots, $\pi(a_{q^h-1})$ span an $sh$-space disjoint with $\pi(S_1)$.
Since $\pi(S_1)\subset b$, we have that $b$ (similarly, $b'$) and any $s$ of $\pi(a_0)$, \ldots, $\pi(a_{q^h-1})$ span
an $(sh+\lfloor h/2 \rfloor)$-space, i.e.,
$\pi(V_{s+1})$.

c) Finally, from (*), $i=2$, any $s-1$ of $\pi(a_0)$, \ldots, $\pi(a_{q^h-1})$ span an $(s-1)h$-space disjoint with $\pi(S_2)$.
Since $\pi(S_2) = b+b'$, we have that $b$, $b'$, and any $s-1$ of $\pi(a_0)$, \ldots, $\pi(a_{q^h-1})$ span an $((s-1)h+h+\lfloor h/2 \rfloor)$-space, i.e., again $\pi(V_{s+1})$.
\end{proof}

To compare with the upper bound, we mention here the following special case $t=s$ of Theorem~\ref{th:generalized_bound}. 

\begin{corollary}
\label{cor_ub_oval_projection_parameters}
  $ n_q(sh+j,h;s)\le
  \frac{q^{h+j}-1}{q^j-1}+s-1$.
  Moreover, if an $h-(n,sh+j,s;\mu)_q$ system is unfaithful or $\mu>1$, then $ n\le \frac{q^{h+j}-q^{j+1}}{q^j-1}+s$. 
  The last bound is tight for $h=s=2$, $j=1$.
\end{corollary}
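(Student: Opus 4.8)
The plan is to obtain the first inequality as the special case $t=s$ of Theorem~\ref{th:generalized_bound}(i), and the second (conditional) inequality from Lemma~\ref{l:generalized_bound} with $t=s$ and an appropriate choice of~$i$; the tightness claim then follows from the $q=2$ construction already discussed in Section~\ref{ss:smallQ}. Let me spell this out.

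\begin{proof}
Apply Theorem~\ref{th:generalized_bound}(i) with $t=s$: since $s\ge t=s\ge 2$, the hypotheses are met, and \eqref{eq:generalized_ubound} reads
\begin{equation*}
  n_q(sh+j,h;s)\le (s-s+1)\cdot\frac{[h+j]_q}{[j]_q}+s-1=\frac{q^{h+j}-1}{q^j-1}+s-1,
\end{equation*}
using $[m]_q=(q^m-1)/(q-1)$ so that $[h+j]_q/[j]_q=(q^{h+j}-1)/(q^j-1)$. This proves the first bound.

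For the second bound, suppose $\mathcal S$ is an $h-(n,sh+j,s;\mu)_q$ system that is either unfaithful or has $\mu>1$. In either case some subspace of dimension at most $sh-1$ contains at least $s$ elements of $\mathcal S$: if $\mu>1$, some point lies in at least two (hence some $(\le(s-1)h)$-space contains $s$ of them, being spanned by at most $(s-2)h$ extra dimensions together with the shared point and one of the two --- and certainly $\le sh-1$); if $\mathcal S$ is unfaithful, some element has dimension $\le h-1$, so $s$ elements together span a space of dimension $\le sh-1$. Thus we may apply Lemma~\ref{l:generalized_bound} with $t=s$ and $i=1$ (the lemma requires a $(sh-i)$-space containing $\ge s$ elements, and $i=1$ is admissible since $1\in\{0,\dots,h\}$ as $h\ge 1$), which gives
\begin{equation*}
  n\le \frac{(s-s+1)[h+j]_q-[1+j]_q}{[j]_q}+s=\frac{(q^{h+j}-1)-(q^{j+1}-1)}{q^j-1}+s=\frac{q^{h+j}-q^{j+1}}{q^j-1}+s.
\end{equation*}
This proves the second bound.

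For the last assertion, set $h=s=2$, $j=1$. The first part gives $n_q(5,2;2)\le (q^4-1)/(q-1)+1$ for the unconditional bound, but the relevant tightness is for the refined bound $n\le (q^4-q^3)/(q-1)+2 = q^3+2$ in the unfaithful or $\mu>1$ case. For $q=2$ this reads $n\le 2^3+2=10$; wait --- more carefully, the tight instance referred to is $n_2(5,2;2)\ge q^3=8$ via the vector space partition of type $3^1 2^{q^3}$ of $\PGF{5}{q}$ described in Section~\ref{ss:smallQ}, which is a faithful $2-(q^3,5,q,1)_q$ system; for $q=2$ this is a $2-(8,5,2,1)_2$ system attaining $s=q=2$, and by Lemma~\ref{lemma_ub_oval_projection_parameters} together with the above refined bound one checks $n_2(5,2;2)=8$, which is exactly $(q^4-q^3)/(q-1)+2$ evaluated at $q=2$ minus the slack, i.e.\ the bound $\frac{q^{h+j}-q^{j+1}}{q^j-1}+s=\frac{16-8}{1}+2=10$...
\end{proof}

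\noindent\textbf{Where the plan is delicate.} The main obstacle, as the garbled last paragraph above reveals, is getting the \emph{tightness} bookkeeping exactly right for $h=s=2$, $j=1$: one must correctly identify which of the two stated upper bounds the $q=2$ partition construction matches, and verify the arithmetic $\frac{q^{h+j}-q^{j+1}}{q^j-1}+s$ at $q=2$ against the actual value $n_2(5,2;2)$ coming from the type-$3^12^{q^3}$ vector space partition (equivalently, from retracting a plane spread of $\PGF{6}{2}$). Concretely I expect $n_2(5,2;2)$ equals this refined bound, so the construction in Section~\ref{ss:smallQ} certifies tightness; but the clean write-up requires care about whether the extremal example is faithful (so that the \emph{refined} bound is the one being compared) or whether one instead compares against the \emph{first} bound $\frac{q^{h+j}-1}{q^j-1}+s-1$. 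The rest --- the two inequalities --- is a direct, essentially mechanical, specialization of Theorem~\ref{th:generalized_bound} and Lemma~\ref{l:generalized_bound}, with the only real content being the observation that ``unfaithful or $\mu>1$'' forces some $(\le sh-1)$-space to contain $s$ elements, licensing the use of $i\ge 1$ in Lemma~\ref{l:generalized_bound}.
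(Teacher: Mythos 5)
Your derivation of the two inequalities is correct and is essentially the paper's own route: the first bound is Theorem~\ref{th:generalized_bound}(i) at $t=s$, and the second is Lemma~\ref{l:generalized_bound} at $t=s$, $i=1$, where the only content is your (correct) observation that ``unfaithful or $\mu>1$'' produces $s$ elements whose span has dimension at most $sh-1$, hence lies in some $(sh-1)$-space; this is exactly what underlies the paper's assertion~(ii), whose contrapositive is the ``moreover'' clause. (Minor slip: in the $\mu>1$ case the span of the $s$ chosen elements can have dimension up to $sh-1$, not ``$\le(s-1)h$'' as you wrote in passing, but the bound $sh-1$ you actually use is the right one, and the trivial case $n<s$ is harmless since the right-hand side is at least $s$.)

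The tightness claim, however, is where your proof genuinely fails, and not only in presentation. With $h=s=2$, $j=1$ you have $h+j=3$, so the two bounds read $q^2+q+2$ and $q^2+2$ --- not $(q^4-1)/(q-1)+1$ and $q^3+2$; your final numerical check $\frac{16-8}{1}+2=10$ is meaningless. More importantly, you reach for the wrong witness: the vector space partition of type $3^1 2^{q^3}$ from Section~\ref{ss:smallQ} is a \emph{faithful} $2-(q^3,5,q,1)_q$ system (so $\mu=1$), it has $s=q$ rather than $s=2$ for $q>2$, and at $q=2$ it attains the \emph{first} bound $q^2+q+2=8$; it says nothing about the refined bound. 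The intended certificates are the constructions of Proposition~\ref{prop_generalized_construction_oval} and Theorem~\ref{prop_generalized_construction_oval_s} with $h=s=2$: they give projective $2-(q^2+2,5,2)_q$ systems in which the two added lines both contain the point $P''$ (respectively $\pi(S_1)$), so the system has $\mu>1$ and therefore falls under the ``moreover'' hypothesis while having exactly $\frac{q^{3}-q^{2}}{q-1}+2=q^2+2$ elements, i.e.\ it meets the refined bound with equality. Replacing your last paragraph by this observation completes the proof.
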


The tightness of the last bound, which turns to $ n\le q^2+2$ in that special case, follows from constructions in the proofs of Proposition~\ref{prop_generalized_construction_oval} and Theorem~\ref{prop_generalized_construction_oval_s}.


\section{Prescribing automorphisms and integer linear programming}
\label{sec_ILP}

The determination of $n_q(r,h;s)$ is a hard problem in general, since it is already for the special case $h=1$, i.e.\ for linear codes. So, it makes sense to consider small parameters. For $h=2$ and $r\le 2$ we technically have $n_q(r,2;s)=\infty$ since we can take as many copies of the ambient space as elements, which are not contained in any hyperplane. In $PG(2,q)$ hyperplanes and lines coincide, so that $n_q(3,2;s)=[3]_q\cdot s$, i.e.\ we can take each of the $[3]_q$ lines $s$ times. Also the situation in $\PGF{4}{q}$ still allows a complete analytical solution, i.e.\ we have $n_q(4,2;s)=\tfrac{[4]_q}{[2]_q}\cdot s=\left(q^2+1\right)\cdot s$, see \cite[Theorem 8]{kurz2024additive}. The determination of $n_2(5,2;s)$, $n_2(6,2;s)$ was completed in \cite{bierbrauer2021optimal} and the determination of $n_2(7,2;s)$ was completed in \cite{kurz2024optimal}. Here we complete the determination of $n_3(5,2;s)$.

Projective systems can be easily formulated as feasible solutions of integer linear programs. As variables we use $x_S\in\mathbb{N}$ for each $h$-space in $\PGF{r}{q}$ (assuming a faithful projective system). The conditions are given by $\sum_{S\le H} x_S\le s$ for each hyperplane $H$. Since those ILPs are rather large even for small parameters $q$, $r$, and $h$, a common technique to reduce the problem size is to assume a subgroup of the automorphism group. Via this approach we can only obtain lower bounds for $n_q(r,h;s)$. In our subsequent lists of generator matrices for the subspaces of a projective system exponents mark orbit representatives and the value in the exponent is the orbit size.

\begin{theorem}
  \label{thm_n_3_5_2}
  We have
  \begin{itemize}
\item $n_3(5,2;13t)=121t$ for $t\ge 1$;
\item $n_3(5,2;13t-1)=121t-10$ for $t\ge 1$;
\item $n_3(5,2;13t-2)=121t-20$ for $t\ge 1$;
\item $n_3(5,2;13t-3)=121t-30$ for $t\ge 1$;
\item $n_3(5,2;13t-4)=121t-40$ for $t\ge 1$;
\item $n_3(5,2;13t-5)=121t-50$ for $t\ge 1$;
\item $n_3(5,2;13t-6)=121t-60$ for $t\ge 1$;
\item $n_3(5,2;13t-7)=121t-67$ for $t\ge 1$;
\item $n_3(5,2;13t-8)=121t-77$ for $t\ge 1$;
\item $n_3(5,2;13t-9)=121t-87$ for $t\ge 1$;
\item $n_3(5,2;13t-10)=121t-94$ for $t\ge 1$;
\item $n_3(5,2;13t-11)=121t-104$ for $t\ge 2$ and $n_3(5,2;2)=12$;
\item $n_3(5,2;13t-12)=121t-114$ for $t\ge 2$ and  $n_3(5,2;1)=1$.
\end{itemize}
\end{theorem}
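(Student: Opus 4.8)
The statement of Theorem~\ref{thm_n_3_5_2} is a complete determination of $n_3(5,2;s)$ for all $s$, so the proof must combine a general periodicity/scaling principle with a finite set of base cases that are each pinned down by matching lower and upper bounds. The first step is to establish the easy direction of periodicity: if a projective $2$-$(n,5,s)_3$ system exists, then taking a disjoint union of $t$ copies gives a $2$-$(tn,5,ts)_3$ system, so $n_3(5,2;ts)\ge t\cdot n_3(5,2;s)$; combined with the other inequality this will reduce everything to the residues of $s$ modulo~$13$ (note $13=[3]_3=q^2+q+1$, which is exactly the parameter appearing in Lemma~\ref{lemma_ub_oval_projection_parameters} and Corollary~\ref{cor_ub_oval_projection_parameters} for $q=3$). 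So I would first argue that it suffices to prove the thirteen listed formulas, i.e.\ to exhibit, for each residue class, an optimal projective system and a matching upper bound, and then show the pattern propagates by the additive/scaling argument together with a ``filling'' argument (adding copies of all $13$ lines through a point, or all $121$ lines of the ambient plane, to step $s$ up by $13$ while increasing $n$ by $121$).

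**The upper bounds.** The main analytic input is Corollary~\ref{cor_ub_oval_projection_parameters} (equivalently Theorem~\ref{th:generalized_bound}) with $h=2$, $j=1$: for $t=s$ this gives $n_3(5,2;s)\le 13s-12$ and, when the system is unfaithful or $\mu>1$, the sharper $n_3(5,2;s)\le 13s-24+\text{(something)}$; more generally Lemma~\ref{l:generalized_bound} applied with the various $t<s$ and $i\in\{0,1,2\}$ yields a family of linear bounds $n\le \frac{(s-t+1)\cdot 13 - [i+1]_3}{1}+t$ whose lower envelope, taken over all admissible $(t,i)$, should reproduce the thirteen constants $-0,-10,-20,\dots,-60,-67,-77,-87,-94,-104,-114$. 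Concretely: the regime $s\equiv 0,\dots,-6\pmod{13}$ should come from the ``generic'' bounds with slope $10$ per unit of $s$ (this is where deleting a point, i.e.\ $13s-3s=10s$ type behaviour, shows up), while the jumps at residues $-7,-10,-11$ (where the decrement increases by $3$ instead of $10$) need the finer counting with $i\ge1$ or the two-weight/divisibility obstruction of Theorem~\ref{thm_no_gen_hyperoval}. I expect the cleanest route is: (a) state the LP/counting bound in full generality, (b) take the minimum over the finitely many relevant $(t,i)$ for each residue, (c) for the handful of residues where a counting bound alone is off by one, invoke a divisibility argument on the associated linear code (Lemma~\ref{lemma_linear_code} plus \cite[Corollary 2]{delsarte1972weights}) or a direct exhaustive computer search, exactly as was done for $n_3(5,2;2)=12$ in Section~\ref{ss:smallQ}.

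**The lower bounds / constructions.** For each of the thirteen residues I would give an explicit projective $2$-$(n,5,s)_3$ system meeting the bound. The anchor cases are the small ones: $n_3(5,2;2)=12$ (one of the six systems enumerated in Section~\ref{ss:smallQ}), and then for larger residues one builds up by combining an optimal small system with unions of complete ``line pencils'' (all $13$ lines through a fixed point) or complete line sets of sub-planes, which raise $s$ in controlled increments. The systems realizing the base residues $s=2,3,\dots,14$ (or however many base values are needed before periodicity with period $13$ kicks in) I would present as orbit lists under a prescribed automorphism group, found by the ILP-with-prescribed-automorphisms method of Section~\ref{sec_ILP}; the $t\ge 2$ caveats in the last two lines of the theorem reflect that the pure scaling construction for residues $-11$ and $-12$ only works once $s\ge 15$ resp.\ $s\ge 14$, the $s=1,2$ cases being genuinely exceptional and handled separately.

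**Main obstacle.** The hard part is not the periodicity (routine) nor the generic upper bounds (Lemma~\ref{l:generalized_bound} does it), but the \emph{tightness at the irregular residues} $s\equiv -7,-8,-9,-10,-11,-12\pmod{13}$: there the naive counting bound and the naive construction differ, so one must on the upper-bound side push the LP/divisibility analysis (or a finite computer classification) to shave off the last unit, and on the lower-bound side find the non-obvious small systems. I expect those optimal small systems to be produced by the prescribed-automorphism ILP search rather than by a closed-form construction, and verifying that no larger system exists in each such class — i.e.\ proving the matching upper bound computationally for the base cases — is the real labour, mirroring the $q=2$ work of \cite{bierbrauer2021optimal,kurz2024optimal}.
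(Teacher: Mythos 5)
There is a genuine gap, and it sits exactly where you locate the main work: the upper bounds. Your plan is to recover the thirteen constants $-0,-10,\dots,-114$ from the hyperplane-counting bounds of Lemma~\ref{l:generalized_bound}/Corollary~\ref{cor_ub_oval_projection_parameters} by varying $t$ and $i$. This cannot work: in $\PGF{5}{3}$ with $h=2$, $j=1$ the relation $r=th+j$ forces $t=2$, so the only bounds those results yield are of the shape $n\le 13(s-1)+1$ (with small refinements for $i=1,2$), i.e.\ they grow like $13s$ and are nowhere near the true values --- for $s=13$ they give $n\le 157$ while $n_3(5,2;13)=121$. The upper bounds in Theorem~\ref{thm_n_3_5_2} are Griesmer-type bounds for additive codes: one expands the line system into a multiset of points via Lemma~\ref{lemma_linear_code} and applies the Griesmer bound (and, for the exceptional small $s$, known results on ternary linear codes); this is precisely what the paper imports from \cite[Theorem 19]{kurz2024additive}. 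Relatedly, your ``routine periodicity'' only goes in the lower-bound direction: the upper-bound recursion $n_3(5,2;s+13)\le n_3(5,2;s)+121$ is not proved by your argument and is in fact false at the exceptional values (e.g.\ $n_3(5,2;14)=128>n_3(5,2;1)+121$), so the reduction to residues mod $13$ has to be replaced by the statement that the additive Griesmer bound is attained for all $s$ apart from an explicitly controlled finite set.

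For comparison, the paper's proof is very short: all upper bounds and all lower bounds except $n_3(5,2;4)\ge 34$ and $n_3(5,2;5)\ge 44$ are quoted from \cite[Theorem 19]{kurz2024additive}, and these two remaining lower bounds are supplied by explicit faithful $2$-$(34,5,4)_3$ and $2$-$(44,5,5)_3$ systems found by integer linear programming with a prescribed cyclic automorphism of order three. You did correctly anticipate the ILP-with-prescribed-automorphisms constructions for the hard base cases, and your scaling/Solomon--Stiffler-style lower-bound propagation matches the spirit of the constructions behind the general formulas; but you misidentify where the residual labour lies (it is only these two lower bounds, not a computational non-existence classification at the irregular residues), and the mechanism you propose for the matching upper bounds is the wrong one and would leave the theorem unproved.
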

\begin{proof}
  Due to \cite[Theorem 19]{kurz2024additive} it remains to show the lower bounds $n_3(5,2;4)\ge 34$ and $n_3(5,2;5)\ge 44$. To this end, we have prescribed the cyclic group of order three  
  generated by
  $$
  \begin{pmatrix}
    1 & 1 & 0 & 0 & 0 \\
    0 & 1 & 0 & 0 & 0 \\
    0 & 0 & 1 & 1 & 0 \\
    0 & 0 & 0 & 1 & 1 \\
    0 & 0 & 0 & 0 & 1
  \end{pmatrix}
  $$
  as an automorphism for the desired faithful $2-(n,5,s)_3$ systems and utilized an integer linear programming formulation. Explicit lists of lines are given as follows.

\noindent
$n_3(5,2;4)\ge 34$:
\def\subspace#1#2{\left(\begin{smallmatrix}#1\end{smallmatrix}\right)#2}
$\subspace{00010\\00001}{^1}$,
$\subspace{10002\\00110}{^3}$,
$\subspace{11002\\00121}{}$,
$\subspace{12002\\00100}{}$,
$\subspace{10011\\00120}{^3}$,
$\subspace{11012\\00102}{}$,
$\subspace{12010\\00112}{}$,
$\subspace{10021\\00122}{^3}$,
$\subspace{11020\\00101}{}$,
$\subspace{12022\\00111}{}$,
$\subspace{10000\\01021}{^3}$,
$\subspace{10010\\01020}{}$,
$\subspace{10022\\01022}{}$,
$\subspace{10010\\01101}{^3}$,
$\subspace{10101\\01122}{}$,
$\subspace{10200\\01111}{}$,
$\subspace{10020\\01100}{^3}$,
$\subspace{10112\\01121}{}$,
$\subspace{10212\\01110}{}$,
$\subspace{10020\\01102}{^3}$,
$\subspace{10111\\01120}{}$,
$\subspace{10210\\01112}{}$,
$\subspace{10012\\01200}{^3}$,
$\subspace{10120\\01220}{}$,
$\subspace{10220\\01212}{}$,
$\subspace{10012\\01222}{^3}$,
$\subspace{10102\\01211}{}$,
$\subspace{10210\\01202}{}$,
$\subspace{10021\\01200}{^3}$,
$\subspace{10100\\01220}{}$,
$\subspace{10201\\01212}{}$,
$\subspace{10121\\01010}{^3}$,
$\subspace{10122\\01011}{}$,
$\subspace{10122\\01012}{}$.

\medskip

\noindent
$n_3(5,2;5)\ge 44$:
$\subspace{00100\\00010}{^3}$,
$\subspace{00100\\00012}{}$,
$\subspace{00102\\00011}{}$,
$\subspace{01010\\00001}{^1}$,
$\subspace{01020\\00001}{^1}$,
$\subspace{10010\\00101}{^3}$,
$\subspace{11011\\00111}{}$,
$\subspace{12012\\00122}{}$,
$\subspace{10020\\00101}{^3}$,
$\subspace{11022\\00111}{}$,
$\subspace{12021\\00122}{}$,
$\subspace{10000\\01000}{^1}$,
$\subspace{10012\\01001}{^1}$,
$\subspace{10021\\01002}{^1}$,
$\subspace{10001\\01110}{^3}$,
$\subspace{10101\\01100}{}$,
$\subspace{10210\\01121}{}$,
$\subspace{10002\\01120}{^3}$,
$\subspace{10111\\01112}{}$,
$\subspace{10200\\01102}{}$,
$\subspace{10011\\01101}{^3}$,
$\subspace{10102\\01122}{}$,
$\subspace{10201\\01111}{}$,
$\subspace{10020\\01101}{^3}$,
$\subspace{10110\\01122}{}$,
$\subspace{10211\\01111}{}$,
$\subspace{10021\\01102}{^3}$,
$\subspace{10112\\01120}{}$,
$\subspace{10211\\01112}{}$,
$\subspace{10001\\01210}{^3}$,
$\subspace{10100\\01201}{}$,
$\subspace{10222\\01221}{}$,
$\subspace{10002\\01220}{^3}$,
$\subspace{10120\\01212}{}$,
$\subspace{10202\\01200}{}$,
$\subspace{10010\\01202}{^3}$,
$\subspace{10122\\01222}{}$,
$\subspace{10220\\01211}{}$,
$\subspace{10012\\01201}{^3}$,
$\subspace{10122\\01221}{}$,
$\subspace{10221\\01210}{}$,
$\subspace{10022\\01202}{^3}$,
$\subspace{10102\\01222}{}$,
$\subspace{10201\\01211}{}$.
\end{proof}


Our next aim is to obtain bounds for $n_4(5,2;s)$ and $n_5(5,2;s)$. As mentioned before, the determination of $n_q(r,h;s)$ as a function of $s$ is a finite problem since the Griesmer bound is always attained if $s$ is sufficiently large, see \cite{kurz2024additive}. In order to indicate the underlying constructions we have to introduce some notation. For each subspace $S$ in $\PGF{r}{q}$ we denote its characteristic function by $\chi_S$, i.e.\ we have $\chi_S(P)=1$ if $P\le S$ and $\chi_S(P)=0$ otherwise.
\begin{definition}
  We say that a multiset of points $\mathcal{M}$ in $\PGF{r}{q}$ is $h$-partitionable if there exist $h$-spaces $S_1,\dots,S_l$, for some integer $l$, such that $\mathcal{M}=\sum_{i=1}^l \chi_{S_i}$, i.e.\ $\mathcal{M}$ can be partitioned into $h$-spaces.
\end{definition}
In order to describe Solomon--Stiffler type constructions we choose a chain of subspaces $S_1\le S_2\le\dots\le S_r$ in $\PGF{r}{q}$, where $S_i$ has dimension $i$.
\begin{definition}
  We say that $\sum_{i=1}^r a_i[i]$ is $h$-partitionable over $\FF_q$ if the multiset of
  points $\sum_{i=1}^r a_i\chi_{S_i}$ in $\PGF{r}{q}$ is $h$-partitionable, where $a_i\in\mathbb{Z}$ for all $1\le i\le r$.
\end{definition}
So, we trivially have that $[2]$ is $2$-partitionable over $\FF_q$ and the existence of line spreads in $\PGF{4}{q}$ implies that $[4]$ is $2$-partitionable over $\FF_q$. Due to the number of points $[5]$ is not $2$-partitionable over $\FF_q$ while $(q+1)\cdot [5]$ is. For the details of this and the other constructions mentioned in the two subsequent theorems we refer the reader to \cite{kurz2024additive}.

\begin{theorem}
  \label{thm_n_4_5_2}
  For $n_4(5,2;s)$ we have the bounds and exact values stated in Table~\ref{table_n_4_5_2}.
\end{theorem}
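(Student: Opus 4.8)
The plan is to treat Theorem~\ref{thm_n_4_5_2} as a bookkeeping problem over the residue classes of $s$ modulo the divisibility parameter governing the ambient geometry. Since $h=2$ and $q=4$, the relevant modulus is $[5]_4/\gcd = q^4+q^3+q^2+q+1 = 341$ (the number of points of $\PGF{5}{4}$), because the number of lines needed to partition a multiset that is ``full rank'' scales with $[5]_4$. Concretely, I would first record the exact threshold result from \cite{kurz2024additive}: for $s$ sufficiently large, $n_4(5,2;s)$ equals the Griesmer-type value $s + \lceil (g_4(4,s)-s)/[2]_4\rceil$, which is eventually linear in $s$ with periodic corrections. So the statement of the theorem is really a finite list of (a) the eventual linear formula in each residue class modulo $341$, (b) the small-$s$ exceptions where constructions fall short of or the bounds do not yet meet that formula, and (c) the genuinely open ranges. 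Table~\ref{table_n_4_5_2} is the repository of all three.

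Second, I would establish the upper bounds entry by entry. The backbone is the Griesmer upper bound for additive codes derived in the Preliminaries (the displayed chain of inequalities ending in $d + \lceil (g_q(r-h+1,d)-d)/[h]_q\rceil$), applied with $r=5$, $h=2$, $q=4$, $d=n-s$; this already pins down most rows. For the residue classes where a factor of $s$ is coprime to $4$, Theorem~\ref{thm_no_gen_hyperoval} gives the strict improvement $n_4(5,2;s) < (s-1)\cdot 21 + 1$, killing the ``hyperoval-type'' extremal value; the two-weight/strongly-regular-graph obstruction (Delsarte, \cite{delsarte1972weights}) is the engine here, exactly as in the proof of that theorem. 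For the remaining stubborn small cases ($s=2$, and perhaps a couple more) I would invoke the computational non-existence results already discussed in Section~\ref{ss:smallQ} — e.g.\ that no $2\text{-}(21,5;2)_4$ or $2\text{-}(22,5;2)_4$ system admits a nontrivial automorphism, and that none of the known $[110,5,\{80,88\}]_4$ two-weight codes partitions into $22$ lines — to justify the $\le$ entries (or the ``open'' label) in those rows.

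Third, I would supply the lower bounds (the constructions). The generic lower bound comes from Solomon--Stiffler-type $2$-partitionable combinations $\sum a_i[i]$ over $\FF_4$, using the facts recorded just before the theorem: $[2]$, $[4]$, and $(q+1)[5]$ are $2$-partitionable over $\FF_q$, and these building blocks can be assembled (with the chain $S_1\le\cdots\le S_5$) to realize the Griesmer value in each residue class once $s$ is large enough — this is the content of \cite[Theorem~19]{kurz2024additive} transported to $q=4$. For the exceptional small $s$, I would cite the explicit examples: $n_4(5,2;2)\ge 20$ via the generalized-oval construction of Proposition~\ref{prop_generalized_construction_oval}/Theorem~\ref{prop_generalized_construction_oval_s} (the matrix in Appendix~\ref{sec_explicit_4}), and any additional ILP-with-prescribed-automorphism constructions that match the table entries, with the explicit generator matrices deferred to Appendix~\ref{sec_explicit_4}. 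Each such lower bound is verified simply by checking that every hyperplane of $\PGF{5}{4}$ meets the listed line-multiset in at most $s$ lines.

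The main obstacle I expect is the handful of ``tight or open'' rows for small $s$ (around $s=2,3$): there the Griesmer bound and the elementary projection bound of Lemma~\ref{lemma_ub_oval_projection_parameters} leave a gap of one or two, the two-weight argument of Theorem~\ref{thm_no_gen_hyperoval} only excludes the very top value, and closing the remaining slack rests entirely on exhaustive or automorphism-restricted computer search whose scope must be honestly delimited in the table (hence some entries will be inequalities rather than equalities). Everything else — the eventual-linearity, the periodic Griesmer corrections, and the matching Solomon--Stiffler constructions — is routine once the residue-class bookkeeping modulo $341$ is set up carefully, so I would organize the proof as a single table-driven case analysis, proving the uniform formula first and then ticking off the finitely many deviations with the cited computational and geometric inputs.
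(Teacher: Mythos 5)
Your plan is essentially the paper's proof: Griesmer-type upper bounds for all but the smallest $s$, Lemma~\ref{lemma_ub_oval_projection_parameters} for $s=2$ and Theorem~\ref{thm_no_gen_hyperoval} for $s=3$, Solomon--Stiffler $2$-partitionable combinations (building blocks such as $5[5]$, $[4]$, $[2]$, $[5]-[3]$, \dots) for the periodic rows, the generalized-oval example for $n_4(5,2;2)\ge 20$, and ILP constructions with prescribed automorphisms listed in Appendix~\ref{sec_explicit_4} for the exceptional $s$, with the same rows left as intervals. Two minor corrections: the period of the table is $21$ in $s$ (with $n$ increasing by $341=[5]_4$ per period), not $341$ in $s$, and the paper fills in all remaining lower bounds not by further constructions but via the superadditivity $n_q(5,2;s_1{+}s_2)\ge n_q(5,2;s_1)+n_q(5,2;s_2)$, using the explicit $2$-partitionable constructions only for $s\in\{4,17,20,21\}$.
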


\begin{table}[htp!]
\def\lp#1{#1^{\mathrm{ilp}}}																			
$$\begin{array}{||r|c||r|c||l@{\,}c@{\,}l|c||l||}																			\hline
s	&	n_4(5,2;s)	&	s	&	n_4(\ldots)	&	s	&&		&	n_4(5,2;s)	&	\mbox{construction}	\\\hline
1	&	-	&	22	&	354	&		21t-20	& \ge &	22	&	341t-328	&	t\ge 4:\ (5t{-}4){\cdot}[5]{-}3[4]{-}[3]	\\
2	&	\lp{20}-22^{\text{L.\ref{lemma_ub_oval_projection_parameters}}}	&	23	&	{371}	&	21t-19	& \ge &	23	&	341t-311	&	t\ge 3:\ (5t{-}4){\cdot}[5]{-}2[4]{-}[3]	\\
3	&	\lp{39}-42^{\text{T.\ref{thm_no_gen_hyperoval}}}	&	24	&	388	&	21t-18	& \ge &	24	&	341t-294	&	t\ge 2:\ (5t{-}4){\cdot}[5]{-}[4]{-}[3]	\\
4	&	64	&	25	&	405	&	21t-17	& \ge &	25	&	341t-277	&	t\ge 1:\ (5t{-}4){\cdot}[5]{-}[3]	\\
5	&	\lp{75}-77	&	26	&	418	&	21t-16	& \ge &	26	&	341t-264	&	t\ge 4:\ (5t{-}3){\cdot}[5]{-}3[4]{-}2[3]	\\
6	&	\lp{90}-94	&	27	&	435	&	21t-15	& \ge &	27	&	341t-247	&	t\ge 3:\ (5t{-}3){\cdot}[5]{-}2[4]{-}2[3]	\\
7	&	\lp{107}-111	&	28	&	452	&	21t-14	& \ge &	28	&	341t-230	&	t\ge 2:\ (5t{-}3){\cdot}[5]{-}[4]{-}2[3]	\\
8	&	128	&	29	&	469	&		21t-13	& \ge &	29	&	341t-213	&	t\ge 1:\ (5t{-}3){\cdot}[5]{-}2[3]	\\
9	&	\lp{141}	&	30	&	482	&	21t-12	& \ge &	30	&	341t-200	&	t\ge 4:\ (5t{-}2){\cdot}[5]{-}3[4]{-}3[3]	\\
10	&	\lp{156}-158	&	31	&	499	&	21t-11	& \ge &	31	&	341t-183	&	t\ge 3:\ (5t{-}2){\cdot}[5]{-}2[4]{-}3[3]	\\
11	&	\lp{175}	&	32	&	516	&	21t-10	& \ge &	32	&	341t-166	&	t\ge 2:\ (5t{-}2){\cdot}[5]{-}[4]{-}3[3]	\\
12	&	192	&	33	&	533	&		21t-9	& \ge &	33	&	341t-149	&	t\ge 1:\ (5t{-}2){\cdot}[5]{-}3[3]	\\
13	&	205	&	34	&	546	&	21t-8	& \ge &	34	&	341t-136	&	t\ge 4:\ 5t{\cdot}[5]{-}3[4]{-}4[3]	\\
14	&	\lp{222}	&	35	&	563	&	21t-7	& \ge &	35	&	341t-119	&	t\ge 3:\ 5t{\cdot}[5]{-}2[4]{-}4[3]	\\
15	&	239	&	36	&	580	&	21t-6	& \ge &	36	&	341t-102	&	t\ge 2:\ 5t{\cdot}[5]{-}[4]{-}4[3]	\\
16	&	256	&	37	&	597	&	21t-5	& \ge &	37	&	341t-85	&	t\ge 1:\ (5t{-}1){\cdot}[5]{-}4[3]	\\
17	&	273	&	38	&	614		&	21t-4	& \ge &	38	&	341t-68	&	t\ge 1:\ (5t{-}1){\cdot}[5]+[2]{-}4[1]	\\
18	&	\lp{290}	&	39	&	631	&	21t-3	& \ge &	39	&	341t-51	&	t\ge 3:\ 5t{\cdot}[5]{-}3[4]	\\
19	&	\lp{307}	&	40	&	648	&		21t-2	& \ge &	40	&	341t-34	&	t\ge 2:\ 5t{\cdot}[5]{-}2[4]	\\
20	&	324	&	41	&	665	&	21t-1	& \ge &	41	&	341t-17	&	t\ge 1:\ 5t{\cdot}[5]{-}[4]	\\
21	&	341	&	42	&	682	&	21t	& \ge &	42	&	341t	&	t\ge 1:\ 5t{\cdot}[5]	\\
\hline\end{array}$$							
\caption{Bounds for $n_4(5,2;s).$}
\label{table_n_4_5_2}
\end{table}
\begin{proof}
  The upper bound $n_4(5,2;2)\le 22$ follows from Lemma~\ref{lemma_ub_oval_projection_parameters}, $n_4(5,2;3)\le 42$ follows from Theorem~\ref{thm_no_gen_hyperoval}, and all other upper bounds follow from the Griesmer bound.
  We have that $5[5]$, $[4]$, $[2]$, $[5]-[3]$, $5[5]-[4]-4[2]$, $4[5]+[2]-4[1]$,
  and $[5]+4[1]$ are $2$-partitionable over $\FF_4$. These constructions imply that the expressions mentioned in the rightmost column of Table~\ref{table_n_4_5_2} are $2$-partitionable over $\FF_4$. For more details we refer to \cite{kurz2024additive}. Here we only use those constructions for $s\in\{4,17,20,21\}$.
  In Section~\ref{sec_oval} we have stated an example showing $n_4(5,2;2)\ge 20$.
  Via integer linear programming computations we have found the following examples, referring to Section~\ref{sec_explicit_4} in the appendix for explicit lists of generator matrices and used automorphism groups: $n_4(5,2;3)\ge 39$, $n_4(5,2;5)\ge 75$, $n_4(5,2;6)\ge 90$, $n_4(5,2;7)\ge 107$, $n_4(5,2;9)\ge 141$, $n_4(5,2;11)\ge 175$, $n_4(5,2;14)\ge 222$, $n_4(5,2;18)\ge 290$, and $n_4(5,2;19)\ge 307$.
  All other lower bounds follow from 
  $n_q(5,2;s_1+s_2)\ge n_q(5,2;s_1)+n_q(5,2;s_2)$.
\end{proof}

\begin{theorem}
  \label{thm_n_5_5_2}
  For $n_5(5,2;s)$ we have the bounds and exact values stated in Table~\ref{table_n_5_5_2}.
\end{theorem}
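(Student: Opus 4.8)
The plan is to follow the same three-part scheme as in the proof of Theorem~\ref{thm_n_4_5_2}: analytic upper bounds, explicit lower-bound constructions, and superadditivity to fill in the rest of each residue class.

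For the upper bounds I would argue as follows. For $s=2$ and $s=3$, and more generally for every $s$ that is not a power of~$5$ and that is small enough that the resulting estimate beats the additive Griesmer bound, Theorem~\ref{thm_no_gen_hyperoval} gives $n_5(5,2;s)<(s-1)\cdot 31+1$ (note $31=[3]_5$). In the crucial case $s=2$ this reads $n_5(5,2;2)\le 31$, and Corollary~\ref{cor_ub_oval_projection_parameters} additionally shows that any unfaithful system, or one with a point of multiplicity larger than~$1$, has at most $27$ elements. For all remaining (larger) values of~$s$ the stated upper bounds are just the additive Griesmer bound specialized to $q=5$, $r=5$, $h=2$, which is tight for $s$ large by \cite{kurz2024additive}; this is why the column structure of Table~\ref{table_n_5_5_2} is periodic with period $[3]_5=31$ in~$s$.

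For the lower bounds I would first record the building blocks over~$\FF_5$, all established in \cite{kurz2024additive}: $[2]$, $[4]$ (a line spread of $\PGF{4}{5}$), and $(q+1)\cdot[5]=6\cdot[5]$ are $2$-partitionable over~$\FF_5$; applying the Solomon--Stiffler deletion trick exactly as in the $\FF_4$ case but with the $\FF_5$ point counts substituted, one obtains $2$-partitionability of the concrete combinations $a\cdot[5]-b[4]-c[3]-\dots$ listed in the rightmost column of the table, which account for the generic range of each residue class. The exact values and the sporadic lower bounds for small~$s$ I would obtain by the integer-linear-programming method of Section~\ref{sec_ILP}: prescribe a cyclic (or slightly larger) automorphism group, pass to orbit variables in the hyperplane-covering ILP, and solve; the entry $n_5(5,2;2)\ge 27$ is already delivered by Proposition~\ref{prop_generalized_construction_oval}. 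Finally, superadditivity $n_5(5,2;s_1+s_2)\ge n_5(5,2;s_1)+n_5(5,2;s_2)$ — a disjoint union of projective systems in a common ambient space — propagates the small-$s$ constructions to all larger~$s$ and is what makes the problem finite.

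The main obstacle is the cluster of small~$s$ — above all $s=2$ — where the Griesmer bound is not yet tight and the best construction ($n=27$ from Proposition~\ref{prop_generalized_construction_oval}) falls short of the analytic bound $n\le 31$: here the ILP instances are large, an automorphism-free exhaustive search is infeasible, and prescribing symmetry can only certify lower bounds, so some entries of Table~\ref{table_n_5_5_2} must honestly be displayed as intervals, exactly as in the $\FF_4$ table. A secondary, purely bookkeeping, difficulty is to verify case by case, for all thirty-one residue classes, that each Solomon--Stiffler expression really is $2$-partitionable over~$\FF_5$ and that its point count together with its maximal hyperplane intersection yields precisely the claimed pair $(s,n)$.
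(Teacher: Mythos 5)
Your proposal matches the paper's proof in essentially every respect: the upper bounds for $s=2,3,4$ come from Theorem~\ref{thm_no_gen_hyperoval} (with all larger $s$ handled by the Griesmer bound), the lower bounds combine the $2$-partitionable Solomon--Stiffler-type expressions over $\FF_5$ from \cite{kurz2024additive}, the oval-based construction giving $n_5(5,2;2)\ge 27$, ILP searches with prescribed automorphism groups for the sporadic small-$s$ entries, and superadditivity $n_5(5,2;s_1+s_2)\ge n_5(5,2;s_1)+n_5(5,2;s_2)$ to fill the residue classes. This is the same route the paper takes, so no further comparison is needed.
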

\begin{proof}

  The upper bounds $n_5(5,2;2)\le 31$, $n_5(5,2;3)\le 62$, $n_5(5,2;3)\le 93$ follow from Theorem~\ref{thm_no_gen_hyperoval} and all other upper bounds follow from the Griesmer bound.
  We have that $6[5]$, $[4]$, $[2]$, $[5]-[3]$, $6[5]-[4]-5[2]$, $6[5]+[2]-5[1]$,
  and $[5]+5[1]$ are $2$-partitionable over $\FF_5$. These constructions imply that the expressions mentioned in the rightmost column of Table~\ref{table_n_5_5_2} are $2$-partitionable over $\FF_5$. For more details we refer to \cite{kurz2024additive}. Here we only use those constructions for $s\in\{5,25,26,30,31,35,60\}$.
  In Proposition~\ref{prop_construction_oval} we have described a general construction showing $n_5(5,2;2)\ge 27$.
  Via integer linear programming computations we have found the following examples, referring to Section~\ref{sec_explicit_5} in the appendix for explicit lists of generator matrices. Prescribing the cyclic group of order
  five generated by
  $$
    \begin{pmatrix}
    1 & 0 & 0 & 0 & 0 \\
    0 & 1 & 1 & 0 & 0 \\
    0 & 0 & 1 & 1 & 0 \\
    0 & 0 & 0 & 1 & 1 \\
    0 & 0 & 0 & 0 & 1
    \end{pmatrix}
  $$
yields $n_5(5,2;3)\ge 50$.
  The cyclic group of order eleven generated by
  $$
    \begin{pmatrix}
    0 & 0 & 0 & 0 & 1\\
    1 & 0 & 0 & 0 & 4\\
    0 & 1 & 0 & 0 & 4\\
    0 & 0 & 1 & 0 & 1\\
    0 & 0 & 0 & 1 & 3
    \end{pmatrix}
  $$
  gives $n_5(5,2;4)\ge 77$
  and $n_5(5,2;6)\ge 132$.
  The cyclic group of order fifteen generated by
  $$
    \begin{pmatrix}
    1 & 0 & 0 & 0 & 0 \\
    0 & 0 & 1 & 0 & 0 \\
    0 & 4 & 4 & 1 & 0 \\
    0 & 0 & 0 & 0 & 1 \\
    0 & 0 & 0 & 4 & 4
    \end{pmatrix}
  $$
  gives $n_5(5,2;7)\ge 157$, $n_5(5,2;11)\ge 260$, $n_5(5,2;16)\ge 391$, $n_5(5,2;17)\ge 412$,  $n_5(5,2;21)\ge 521$, $n_5(5,2;22)\ge 542$, $n_5(5,2;37)\ge 927$,
  $n_5(5,2;39)\ge 969$,
  $n_5(5,2;44)\ge 1096$,
  $n_5(5,2;48)\ge 1203$,
  $n_5(5,2;49)\ge 1224$,
  $n_5(5,2;54)\ge 1354$,
  $n_5(5,2;58)\ge 1458$, and
  $n_5(5,2;59)\ge 1484$.
  The non-commutative group of order fifty-five generated by
  $$
    \begin{pmatrix}
    0 & 0 & 0 & 0 & 1\\
    1 & 0 & 0 & 0 & 4\\
    0 & 1 & 0 & 0 & 4\\
    0 & 0 & 1 & 0 & 1\\
    0 & 0 & 0 & 1 & 3
    \end{pmatrix}
    \quad\text{and}\quad
    \begin{pmatrix}
    0 & 0 & 2 & 1 & 0\\
    2 & 1 & 3 & 3 & 4\\
    3 & 3 & 2 & 2 & 3\\
    2 & 2 & 0 & 0 & 4\\
    0 & 0 & 0 & 0 & 2
    \end{pmatrix}
  $$
  gives 
  $n_5(5,2;12)\ge 286$,
  $n_5(5,2;8)\ge 176$,
  $n_5(5,2;13)\ge 308$,  
  $n_5(5,2;14)\ge 330$,
  and $n_5(5,2;24)\ge 594$.
  The cyclic group of order $71$ generated by
  $$
    \begin{pmatrix}
    0 & 0 & 0 & 0 & 1\\
    1 & 0 & 0 & 0 & 3\\
    0 & 1 & 0 & 0 & 4\\
    0 & 0 & 1 & 0 & 0\\
    0 & 0 & 0 & 1 & 0
    \end{pmatrix}
  $$
  gives $n_5(5,2;23)\ge 568$.
  All other lower bounds follow from $n_q(5,2;s_1+s_2)\ge n_q(5,2;s_1)+n_q(5,2;s_2)$.
\end{proof}
Bounds on the sizes of arcs in $\PGF{3}{25}$ can e.g.\ be found in \cite{braun2019new}.

\begin{table}[htp!]
\def\lp#1{#1^{\mathrm{ilp}}}																			
\footnotesize$$\begin{array}{||r|c||r|c||c|c||l||}																			
\hline 
s	&	n_5(5,2;s)	&	s	&	n_5(5,2;s)	&			s	&	n_5(5,2;s)	&	\mbox{construction}	\\\hline
1	&	-	&	32	&	792-802	&	31t-30	 \ge 	63	&	781t-760	&	t\ge 5:\ (6t{-}5) {\cdot}[5] {-}4[4]{-} [3]	\\
2	&	27^{\text{P.\ref{prop_construction_oval}}}-31^{\text{T,\ref{thm_no_gen_hyperoval}}}	&	33	&	818-828	&	31t-29	\ge	64	&	781t-734	&	t\ge 4:\  (6t{-}5) {\cdot}[5]{-}3[4]{-}[3]	\\
3	&	\lp{50}-62^{\text{T,\ref{thm_no_gen_hyperoval}}}	&	34	&	844-854	&	31t-28	\ge	65	&	781t-708	&	t\ge 3:\ (6t{-}5) {\cdot}[5]{-}2[4]{-}[3]	\\
4	&	\lp{77}-93^{\text{T,\ref{thm_no_gen_hyperoval}}}	&	35	&	880	&		31t-27	\ge	35	&	781t-682	&	t\ge 2:\ (6t{-}5){\cdot} [5]{-}[4]{-}[3]	\\
5	&	125	&	36	&	906	&		31t-26	\ge	5	&	781t-656	&	t\ge 1:\ (6t{-}5){\cdot}[5]{-}[3]	\\
6	&	\lp{132}-146	&	37	&	\lp{927}	&		31t-25	\ge	37	&	781t-635	&	t\ge 5:\ (6t{-}4){\cdot}[5]{-}4[4]{-}2[3]	\\
7	&	\lp{157}-172	&	38	&	941-953	&	31t-24	\ge	69	&	781t-609	&	t\ge 4:\ (6t{-}4){\cdot}[5]{-}3[4]{-}2[3]	\\
8	&	\lp{176}-198	&	39	&	\lp{969}-979	&	31t-23	\ge	70	&	781t-583	&	t\ge 3:\ (6t{-}4){\cdot}[5]{-}2[4]{-}2[3]	\\
9	&	202-224	&	40	&	1005	&		31t-22	\ge	40	&	781t-557	&	t\ge 2:\ (6t{-}4){\cdot}[5]{-}[4]{-}2[3]	\\
10	&	250	&	41	&	1031	&	31t-21	\ge	10	&	781t-531	&	t\ge 1:\ (6t{-}4){\cdot}[5]{-}2[3]	\\
11	&	\lp{260}-271	&	42	&	1052	&	31t-20	\ge	42	&	781t-510	&	t\ge 5:\ (6t{-}3){\cdot}[5]{-}4[4]{-}3[3]	\\
12	&	\lp{286}-297	&	43	&	1067-1078	&	31t-19	\ge	74	&	781t-484	&	t\ge 4:\ (6t{-}3){\cdot}[5]{-}3[4]{-}3[3]	\\
13	&	\lp{308}-323	&	44	&	\lp{1096}-1104	&	31t-18	\ge	75	&	781t-458	&	t\ge 3:\ (6t{-}3) {\cdot}[5]{-}2[4]{-}3[3]	\\
14	&	\lp{330}-349	&	45	&	1130	&	31t-17	\ge	45	&	781t-432	&	t\ge 2:\ (6t{-}3){\cdot}[5]{-}[4]{-}3[3]	\\
15	&	375	&	46	&	1156	&	31t-16	\ge	15	&	781t-406	&	t\ge 1:\ (6t{-}3){\cdot}[5]{-}3[3]	\\
16	&	\lp{391}-396	&	47	&	1177	&	31t-15	\ge	47	&	781t-385	&	t\ge 5:\ (6t{-}2){\cdot}[5]{-}4[4]{-}4[3]	\\
17	&	\lp{412}-422	&	48	&	\lp{1203}	&	31t-14	\ge	48	&	781t-359	&	t\ge 4:\ (6t{-}2){\cdot}[5]{-}3[4]{-}4[3]	\\
18	&	433-448	&	49	&	\lp{1224}-1229	&	31t-13	\ge	80	&	781t-333	&	t\ge 3:\ (6t{-}2){\cdot}[5]{-}2[4]{-}4[3]	\\
19	&	455-474	&	50	&	1255	&	31t-12	\ge	50	&	781t-307	&	t\ge 2:\ (6t{-}2){\cdot}[5]{-}[4]{-}4[3]	\\
20	&	500	&	51	&	1281	&	31t-11	\ge	20	&	781t-281	&	t\ge 1:\ (6t{-}2){\cdot}[5]{-}4[3]	\\
21	&	\lp{521}	&	52	&	1302	&	31t-10	\ge	21	&	781t-260	&	t\ge 5:\ (6t{-}1){\cdot}[5]{-}4[4]{-}5[3]	\\
22	&	\lp{542}-547	&	53	&	1328	&	31t-9	\ge	53	&	781t-234	&	t\ge 4:\ (6t{-}1){\cdot} [5]{-}3[4]{-}5[3]	\\
23	&	\lp{568}-573	&	54	&	\lp{1354}	&	31t-8	\ge	54	&	781t-208	&	t\ge 3:\ (6t{-}1){\cdot}[5]{-}2[4]{-}5[3]	\\
24	&	\lp{594}-599	&	55	&	1380	&	31t-7	\ge	55	&	781t-182	&	t\ge 2:\ (6t{-}1){\cdot}[5]{-}[4]{-}5[3]	\\
25	&	625	&	56	&	1406	&	31t-6	\ge	25	&	781t-156	&	t\ge 1:\ (6t{-}1){\cdot}[5]{-}5[3]	\\
26	&	651	&	57	&	1432	&	31t-5	\ge	26	&	781t-130	&	t\ge 1:\ (6t{-}1) {\cdot}[5]{+}[2]{-}5[1]	\\
27	&	667-677	&	58	&	\lp{1458}	&	31t-4	\ge	58	&	781t-104	&	t\ge 4:\ 6t{\cdot}[5]{-}4[4]	\\
28	&	693-703	&	59	&	\lp{1484}	&	31t-3	\ge	59	&	781t-78	&	t\ge3:\ 6t{\cdot}[5]{-}3[4]	\\
29	&	719-729	&	60	&	1510	&	31t-2	\ge	60	&	781t-52 	&	t\ge 2:\ 6t{\cdot}[5]{-}2[4];	\\
30	&	755	&	61	&	1536	&	31t-1	\ge	30	&	781t-26	&	t\ge 1:\ 6t{\cdot}[5]{-} [4]	\\
31	&	781	&	62	&	1562	&	31t	\ge	31	&	781t	&	t\ge 1:\ 5t{\cdot}[5]	\\
\hline\end{array}$$																			

\caption{Bounds for $n_5(5,2;s).$}
\label{table_n_5_5_2}
\end{table}

\bigskip

We would like to remark that instead of searching projective systems directly using integer linear programming we can also search for linear codes or multisets of points associated to a projective system, see Lemma~\ref{lemma_linear_code}.
The nonexistence of such a multiset, with a prescribed automorphism group, guarantees
the nonexistence of a projective system with the same automorphism group 
(any automorphism of a projective system $\mathcal{S}$ is necessarily an automorphism
of the multiset of points $\mathcal{P}(\mathcal{S})$, but not vice versa); 
sometimes this gives a fast way to reject putative automorphism groups.
However, the existence of such a multiset
does not imply the existence of a projective system (to get a projective system, we additionally need to partition the multiset of points into $h$-spaces, which is not always possible); it only means that the corresponding value cannot be cannot be rejected by the strong coding upper bound.

In the end of this section, we establish the existence of $8$-ary additive 
$[51,3,44]_2^3$ and $[51,3,44]_2^3$
codes, which outperform the best linear codes
with the corresponding parameters.
\begin{theorem}
  $n_2(9,3;7)\ge 51>49=\overline n_2(9,3;7)$,  $n_2(9,3;10)\ge 76>74=\overline n_2(9,3;10)$.
\end{theorem}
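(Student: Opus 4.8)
The plan is to reduce the two lower bounds to the existence of two explicit projective systems, which are then produced by integer linear programming with a prescribed automorphism group, and separately to identify the comparison values $\overline n_2(9,3;7)$ and $\overline n_2(9,3;10)$. For the reduction I would invoke Theorem~\ref{thm_connection}: a projective $3-(51,9,7)_2$ system corresponds to an additive $[51,3,44]_2^3$ code and a projective $3-(76,9,10)_2$ system corresponds to an additive $[76,3,66]_2^3$ code, while the inequalities $n_2(9,3;7)\ge 51$ and $n_2(9,3;10)\ge 76$ are by definition exactly the assertions that such systems exist. So it suffices to construct in $\PGF{9}{2}$ a multiset of $51$ planes no eight of which lie in a common hyperplane, and a multiset of $76$ planes no eleven of which lie in a common hyperplane; since $s<n$ in both cases such a system automatically spans $\PGF{9}{2}$, so the associated code has dimension $9/3=3$ as stated.

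Next I would pin down the two comparison values, where $\overline n_2(9,3;s)=n_8(3,1;s)$ is the largest weight of a point multiset in $\PGF{3}{8}$ meeting every line in at most $s$ points. The case $s=10$ is elementary: assigning multiplicity one to all $73$ points of $\PGF{3}{8}$ and an extra unit to one point gives such a multiset of weight $74$, whereas a weight-$75$ multiset would yield a linear $[75,3,65]_8$ code, contradicting the Griesmer bound $g_8(3,65)=65+9+2=76$, and any heavier multiset restricts to a weight-$75$ one; hence $\overline n_2(9,3;10)=74$. For $s=7$ the value $\overline n_2(9,3;7)=49$ is the known maximum size of a $7$-arc in $\PG(2,8)$ (the lower bound coming from the complement of a triangle, the upper bound from the theory of $(k,n)$-arcs or from a short exhaustive computation). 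In particular $49<51$ and $74<76$, so both additive codes outperform every linear code with the same alphabet, length and dimension.

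The core of the proof is the construction of the two systems. The unrestricted integer linear program has a variable $x_S\in\mathbb{N}$ for each plane $S$ of $\PGF{9}{2}$, the constraints $\sum_{S\le H}x_S\le s$ over all hyperplanes $H$, and objective $\sum_S x_S$; since $\PGF{9}{2}$ has far too many planes and hyperplanes for this to be handled directly, I would fix a small subgroup $G\le\GL(9,2)$, restrict to $G$-invariant systems -- one variable per $G$-orbit of planes, one constraint per $G$-orbit of hyperplanes -- and solve the much smaller reduced program, whose every feasible value is a valid lower bound for $n_2(9,3;s)$. For a suitable $G$, for instance a cyclic group of small prime order acting fixed-point-freely on a complement of its fixed space, the reduced program becomes tractable and returns a $G$-invariant system of $51$ planes for $s=7$ and of $76$ planes for $s=10$; I would then list orbit representatives of these planes and check faithfulness and the hyperplane inequalities directly. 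One may also first search, via Lemma~\ref{lemma_linear_code}, for the cheaper associated divisible linear code and only afterwards attempt to partition its point multiset into planes.

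The only real difficulty is computational and sits in this last step: the plain integer linear program is hopeless, so the construction hinges on choosing an automorphism group small enough to keep the $G$-reduced program solvable yet still compatible with a system of the target cardinality, which in practice means trying several candidate groups. Once an explicit system has been produced, the translation back to a code, the faithfulness check, and the comparison with $\overline n_2(9,3;s)$ are all routine.
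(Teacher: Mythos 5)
Your overall strategy is the same as the paper's: translate the lower bounds into the existence of projective $3-(51,9,7)_2$ and $3-(76,9,10)_2$ systems via Theorem~\ref{thm_connection}, and search for these systems by integer linear programming with a prescribed automorphism group. Your handling of the comparison values is fine and in part nicer than the paper's: the paper simply retrieves $\overline n_2(9,3;7)=49$ and $\overline n_2(9,3;10)=74$ from \url{https://www.codetables.de/}, whereas you give a self-contained Griesmer argument for $74$ (correct: a $[75,3,65]_8$ code would violate $g_8(3,65)=76$, and the all-points-plus-one-doubled-point multiset attains $74$) and invoke the known value $49$ for the $s=7$ case (only note that $\overline n$ is defined via multisets of points, i.e.\ codes with repeated columns, so strictly speaking you need the coding-theoretic value $n_8(3,1;7)=49$ rather than just the maximum size of a $(k,7)$-arc; the two agree, but the justification should be the nonexistence of a $[50,3,43]_8$ code).

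The genuine gap is in the core of the theorem: the existence of the two systems is never actually established. You describe the orbit-reduced ILP and then assert that ``for a suitable $G$ \dots the reduced program \dots returns'' systems of sizes $51$ and $76$, but feasibility of that reduced program at exactly these cardinalities is precisely the nontrivial content, and nothing in your argument guarantees it for any particular group; your suggested choice (a cyclic group of small prime order acting fixed-point-freely) is a guess, while the paper in fact needed a noncommutative group of order $21$ generated by two explicit matrices and certifies the result by listing orbit representatives of the $51$ and $76$ planes in $\PGF{9}{2}$, which can then be verified directly. Without such explicit data (or some other existence argument), the statement $n_2(9,3;7)\ge 51$, $n_2(9,3;10)\ge 76$ remains unproved; your proposal is a search plan rather than a proof, and it would only become one once a concrete witness produced by that search is exhibited and checked.
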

\begin{proof}
Projective $3-(51,9,7)_2$ and $3-(76,9,10)_2$
were found with the prescribed noncommutative
automorphism group of order~$21$ generated by
$$\left(\begin{matrix}0&0&1&0&0&0&0&0&0\\1&0&0&0&0&0&0&0&0\\0&1&1&0&0&0&0&0&0\\0&0&0&0&0&1&0&0&0\\0&0&0&1&0&1&0&0&0\\0&0&0&0&1&0&0&0&0\\0&0&0&0&0&0&0&0&1\\0&0&0&0&0&0&1&0&1\\0&0&0&0&0&0&0&1&0\end{matrix}\right), \quad \left(\begin{matrix}1&1&0&0&0&0&0&0&0\\0&1&1&0&0&0&0&0&0\\0&1&0&0&0&0&0&0&0\\0&0&0&1&0&0&0&0&0\\0&0&0&0&1&1&0&0&0\\0&0&0&0&1&0&0&0&0\\0&0&0&0&0&0&0&1&1\\0&0&0&0&0&0&0&0&1\\0&0&0&0&0&0&1&1&0\end{matrix}\right);$$
the orbit representatives are
\def\subspace#1#2{\left(\!\begin{smallmatrix}#1\end{smallmatrix}\!\right)^{\!\makebox[0mm][l]{$\scriptscriptstyle #2$}}}
$\subspace{000000100\\000000010\\000000001}{1}$,
$\subspace{010011111\\001001011\\000100111}{21}$,
$\subspace{010010011\\001011001\\000101111}{21}$,
$\subspace{100000000\\010000000\\001000000}{1}$,
$\subspace{100011101\\010010000\\001001000}{7}$ and
$\subspace{000100000\\000010000\\000001000}{1}$,
$\subspace{000100100\\000010010\\000001001}{3}$,
$\subspace{000100101\\000010111\\000001011}{1}$,
$\subspace{010001000\\001011000\\000000101}{7}$,
$\subspace{010001010\\001000101\\000101111}{21}$,
$\subspace{100000000\\010000000\\001000000}{1}$,
$\subspace{100011010\\010001110\\001001111}{21}$,
$\subspace{100001001\\010011111\\001010011}{7}$,
$\subspace{100111100\\010010111\\001001011}{7}$,
$\subspace{100110001\\010001111\\001011011}{7}$, respectively.

The values for $\overline n_2(\ldots)$
are retrieved from \url{https://www.codetables.de/}. 
\end{proof}

\section{Multispreads in PG\texorpdfstring{$(4,q)$}{(4,q)}}
\label{sec_multispreads}

Multispreads are a special type
of projective $h-(n,k,s)_q$ systems $\mathcal S$ (not necessarily faithful)
that correspond to one-weight $q^h$-ary codes (to be exact, $\FF_q$-linear one-weight $[n,k/h,n-s]_q^h$-codes), that is, having the property that every hyperplane includes exactly~$s$ spaces from~$\mathcal S$.
We will indicate this ``exactly'' by framing $s$ with braces as follows:
$h-(n,k,\{s\})_q$,
Multispreads can be alternatively defined as follows.
\begin{definition}\label{def:multispread}
A collection $\mathcal S$ of subspaces of~$\FF^k$ of dimension~$h$ is called a $(\lambda,\mu)_q^{h,k}$-multispread if for every point (non-zero vector) $v$ it holds
\begin{equation*}
  \sum_{S\in\mathcal S,\, v\in S}q^{h-\dim S} = \mu,
  \qquad
  \sum_{S\in\mathcal S}(q^{h-\dim S}-1) = \lambda,
\end{equation*}
i.e., every nonzero vector is covered $\mu$ times and  the all-zero vector is covered $\lambda+|\mathcal{S}|$ times, where a subspace~$S$ contributes $q^{h-\dim S}$ to the multiplicity of covering.
\end{definition}
The parameters of the projective $h-(n,k,\{s\})_q$ system corresponding to  a $(\lambda,\mu)_q^{h,k}$-multispread can be found from the following equations \cite[Theorem~3]{KroMog:multispread}:
\begin{equation*}
    s = n - q^{k-h}\mu, \qquad
    (q^h - 1)s = (q^{k-h}- 1)\mu+\lambda.
\end{equation*}
As shown in \cite{KroMog:multispread}, the natural necessary condition
\begin{equation*}
    \lambda \equiv -\mu(q^k-1) \bmod q^h-1
\end{equation*}
is sufficient for the existence of
$(\lambda,\mu)_q^{h,k}$-multispreads for $h=2$,
$\mu\ge q$,
which completely characterizes the parameters of $\FF_q$-linear $q^2$-ary one-weight codes.
To show the same for $h=3$ (which corresponds to $\FF_q$-linear $q^3$-ary one-weight codes), it is sufficient, for a given prime power~$q$, to find $q-1$ of $(\lambda,\mu)_q^{3,5}$ multispreads
($\mu = q+1, q+2, \ldots,$)
that correspond to
projective $3-(n,5,\{2\})_q$-systems,
starting with $n=(q^2+1)+(q^3+1)$
($q^2+1$ $2$-spaces and $q^3+1$ $3$-spaces) and with the step $(-q-1)+(q^2+q+1)$.
The next four paragraphs show what we know for $q=2,3,4,5$.

{\boldmath $q=2$.} Projective $3-(5{+}9,5,\{2\})_2$ systems ($(5,3)^{3,5}_2$-multispreads) were found and classified up to equivalence in~\cite[Appendix A.1]{KroMog:multispread}. The most symmetric are four multispreads with automorphism group of order~$12$ generated by $$\footnotesize
\left(\begin{array}{@{\ }c@{\ \ }c@{\ }c@{\ \ }c@{\ }c@{\ }}
1&0&0&0&0\\
0&0&1&0&0\\[-0.2em]
0&1&1&0&0\\
0&0&0&0&1\\[-0.2em]
0&0&0&1&1
\end{array}\right)
,\quad
\left(\begin{array}{@{\ }c@{\ \ }c@{\ }c@{\ \ }c@{\ }c@{\ }}
1&0&0&0&0\\
0&0&1&0&0\\[-0.2em]
0&1&0&0&0\\
0&0&0&0&1\\[-0.2em]
0&0&0&1&0
\end{array} \right)
,\quad
\left(\begin{array}{@{\ }c@{\ \ }c@{\ }c@{\ \ }c@{\ }c@{\ }}
1&0&0&0&0\\
0&0&0&1&0\\[-0.2em]
0&0&0&0&1\\
0&1&0&0&0\\[-0.2em]
0&0&1&0&0
\end{array}\right).$$

{\boldmath $q=3$.} Projective $3-(10{+}28,5,\{2\})_3$ and $3-(6{+}41,5,\{2\})_3$ systems ($(20,4)^{3,5}_3$- and $(12,5)^{3,5}_3$-multispreads, respectively) were found in~\cite[Appendix A.1]{KroMog:multispread}. In the current work, we classify all projective $3-(38,5,\{2\})_3$ and systems up to equivalence. The search is rather straightforward. Firstly, by adding $2$-spaces one-by-one and keeping only nonequivalent collections at each step, we find all nonequivalent faithful $2-(n,5,2)_3$ systems for all~$n$ (repeating the results in \cite{ball2025griesmer}, Table~3 ($r=5$) and Remark~27).
Then, we complete each of the $91720$ $2-(10,5,2,1)_3$ systems to a $3-(38,5,\{2\})$-system
by adding $28$ $3$-subspaces (some of them can coincide); at this step, we solve the corresponding exact covering problem (it happens that every $2-(10,5,2,1)_3$ system can be completed in such a way). Although the automorphism group orders of $2-(10,5,2,1)_3$ systems possess values
$1$ ($91488$ classes), $2$ ($169$), $3$ ($40$), $4$ ($2$), $5$ ($15$), $6$ ($4$), $10$ ($1$), $36$ ($1$ class),
a multispread can have only $1$ ($39857065$ equivalence classes), $2$ ($1889$), $3$ ($243$), or $6$ ($15$ classes) automorphisms. $14$ of the last $15$ multispreads have an automorphism of order $6$ conjugate to $ A_2A_3 $;
the remaining one has an automorphism group
conjugate to $\langle A_3,A_2' \rangle\sim S_3$, where
$$\footnotesize
A_2=\left(\begin{array}{@{\,}c@{\ \,}c@{\ \,}c@{\ \,}c@{\ \,}c@{\,}}
2&0&0&0&0\\[-0.5ex]
0&2&0&0&0\\[-0.5ex]
0&0&1&0&0\\[-0.5ex]
0&0&0&1&0\\[-0.5ex]
0&0&0&0&1
\end{array}\right),\quad
A_3=\left(\begin{array}{@{\,}c@{\ \,}c@{\ \,}c@{\ \,}c@{\ \,}c@{\,}}
0&1&0&0&0\\[-0.5ex]
2&2&0&0&0\\[-0.5ex]
0&0&0&0&1\\[-0.5ex]
0&0&1&0&0\\[-0.5ex]
0&0&0&1&0
\end{array}\right),\quad
A_2'=
\left(\begin{array}{@{\,}c@{\ \,}c@{\ \,}c@{\ \,}c@{\ \,}c@{\,}}
0&2&0&0&0\\[-0.5ex]
2&0&0&0&0\\[-0.5ex]
0&0&0&1&0\\[-0.5ex]
0&0&1&0&0\\[-0.5ex]
0&0&0&0&1
\end{array}\right).
$$

{\boldmath $q=4$.} By prescribing different automorphism groups,
we have found
projective
$3-(17{+}65,5,\{2\})_4$,
$3-(12{+}86,5,\{2\})_4$,
$3-(7{+}107,5,\{2\})_4$ systems
(multispreads with parameters
$(51,5)_4^{3,5}$,
$(36,6)_4^{3,5}$,
$(21,7)_4^{3,5}$, respectively), with the full automorphism group of order~$120$, $40$, and~$7$, respectively (the basis matrices are listed in Appendix~\ref{appendixB}).
By \cite[Section~7.2]{KroMog:multispread}, this completes the characterization
of the parameters of $(\lambda,\mu)_4^{3,k}$-multispreads
and $\FF_4$-linear $64$-ary one-weight codes.

{\boldmath $q=5$.}
To complete the characterization of parameters of additive $5^3$-ary one-weight codes, similarly to the case $q=4$ considered above, it is sufficient to find
projective
$3-(26{+}126,5,\{2\})_5$,
$3-(20{+}157,5,\{2\})_5$,
$3-(14{+}188,5,\{2\})_5$,
$3-(8{+}219,5,\{2\})_5$ systems
($(104,6)_5^{(3,5)}$-,
$(80,7)_5^{(3,5)}$-,
$(56,8 )_5^{(3,5)}$-,
$(32,9)_5^{(3,5)}$-mul\-ti\-spreads). The last $3$ types of systems are relatively easy to find.
We first search for faithful
$2-(n,5,2,1)_5$ systems, $n=8$, $14$, $20$, respectively, with the prescribed automorphism group of order six generated by
$$
\left(
\begin{array}{c@{\ }c@{\ \ }c@{\ }c@{\ }c}
0&1&0&0&0\\[-0.5ex]
1&4&0&0&0\\[-0.0ex]
0&0&0&1&0\\[-0.5ex]
0&0&1&4&0\\[-0.0ex]
0&0&0&0&1
\end{array}\right)
,\quad\left(
\begin{array}{c@{\ }c@{\ \ }c@{\ }c@{\ }c}
0&1&0&0&0\\[-0.5ex]
1&0&0&0&0\\[-0.0ex]
0&0&0&1&0\\[-0.5ex]
0&0&1&0&0\\[-0.0ex]
0&0&0&0&1
\end{array}\right).
$$
In each case,
one of the first several solutions is extendable to a required multispread (see examples in Appendix~\ref{appendixB}).
However, we have not succeeded in finding a projective
$3-(26{+}126,5,\{2\})_5$ system. By prescribing different automorphism groups of order at least~$4$, we have found thousands of faithful $2-(26,5,2,1)_5$ system from only~$4$ equivalence classes. No one of them can be completed to a multispread.
We conjecture that if a projective
$3-(26{+}126,5,\{2\})_5$ system exists, then it has at most~$3$
automorphisms.
If it does not exist, then
the following multispreads cannot be constructed recursively and should be considered separately:
\begin{itemize}
    \item Next $\lambda$:
  a $(228,6)_5^{3,5}$-multispread (projective $3-(57{+}96,5,3)_5$ system).
    \item Next $k$:
  a $(104,6)_5^{3,8}$-multispread
  (projective $3-(26{+}18876,5,152)_5$ system).
\end{itemize}

\section*{Acknowledgments}
D.~Krotov thanks Aleksandr Buturlakin for consulting in the field of representation theory.
S.~Kurz thanks the organizers of the Seventh Irsee Conference
{\lq\lq}Finite Geometries 2025{\rq\rq} for the invitation and the organization of the workshop.

\section*{Statements and Declarations}
There are no competing interests.



\appendix

\section{An alternative construction for sets of subspaces with at most two in each hyperplane}
\label{appendixC}
 Here we give an alternative construction
of projective $2-(q^2,5,2)_q$ systems, for the special case when the field size $q$ is an odd prime. Although the construction gives projective systems (and codes) equivalent to those constructed in Section~\ref{sec_oval} in more general settings, the approach is completely different, and we believe it is of independent interests, as a rare example of a non-computational construction of a code with prescribed automorphism group. Based on this we give an alternative proof of  Theorem~\ref{prop_generalized_construction_oval_s} for the case $h=s=2$.

For an odd prime~$q$, we will construct a faithful $2-(q^2+2,5,2)_q$ system with a prescribed automorphism group generated
by the following two matrices (only the non-zero elements are shown), where $\beta$ is a non-square in $\FF_q$:

\begin{equation}
    \label{eq:AB}
A = 
\raisebox{-8ex}{
\begin{tikzpicture}[ scale=0.6,
    cell/.style={minimum size=4mm, inner sep=0pt, font=\Large\bfseries}, 
    arrow/.style={-{Stealth[length=2mm, width=1.2mm]}, line width=0.4pt, black!60}
]
\foreach \i in {0,...,4} {
    \node[cell] at (\i, -\i) {1};
}
\node[cell] at (2, 0) {1}; 
\node[cell] at (3, -1) {1}; 
\node[cell] at (4, -2) {1}; 
\draw[arrow, blue] (0.25, 0) to[out=0, in=180] (1.75, 0);    
\draw[arrow, green!70!black] (1.25, -1) to[out=0, in=180] (2.75, -1);  
\draw[arrow, blue] (2.25, -2) to[out=0, in=180] (3.75, -2);  
\draw[arrow, blue] (2, -0.25) to[out=-90, in=90] (2, -1.75);  
\draw[arrow, green!70!black] (3, -1.25) to[out=-90, in=90] (3, -2.75);  
\draw[arrow, blue] (4, -2.25) to[out=-90, in=90] (4, -3.75);  
\draw[black!40!white] (-0.5, 0.5) rectangle (4.5, -4.5);
\end{tikzpicture}
}
\qquad B=
\raisebox{-8ex}{
\begin{tikzpicture}[ scale=0.6,
    cell/.style={minimum size=4mm, inner sep=0pt, font=\Large\bfseries}, 
    arrow/.style={-{Stealth[length=2mm, width=1.2mm]}, line width=0.4pt, black!60}
]
\foreach \i in {0,...,4} {
    \node[cell] at (\i, -\i) {1};
}
\node[cell] at (3, 0) {$\bm\beta$}; 
\node[cell] at (2, -1) {1}; 
\node[cell] at (4, -3) {1}; 

\draw[arrow, blue] (0.25, 0) to[out=0, in=180] (2.75, 0);    
\draw[arrow, green!70!black] (1.25, -1) to[out=0, in=180] (1.75, -1);  
\draw[arrow, blue] (3.25, -3) to[out=0, in=180] (3.75, -3);  

\draw[arrow, blue] (3, -0.25) to[out=-90, in=90] (3, -2.75);  
\draw[arrow, green!70!black] (2, -1.25) to[out=-90, in=90] (2, -1.75);  
\draw[arrow, blue] (4, -3.25) to[out=-90, in=90] (4, -3.75);  

\draw[black!40!white] (-0.5, 0.5) rectangle (4.5, -4.5);

\end{tikzpicture}
}
\end{equation}
These matrices commute and each of them has order~$q$ as a group element, which can be seen from the proof of the following lemma.
\begin{lemma}
\label{lemma_workhorse}
In $\PGF{5}{q}$,
where $q$ is an odd prime,
consider the $q^4$ lines with generator 
matrices of the form
\begin{equation}\label{eq:xyz}
 \left[\begin{array}{lllll}
    1 & 0 & x & y & z \\
    0 & 1 & x'& y'& z'
\end{array}\right]
\qquad\mbox{where} \quad
x-y'=\frac12 , \quad
 y - \beta x' =\frac{\beta}2.
\end{equation}
\begin{itemize}
    \item[\rm (i)]
All these $q^4$ lines 
are disjoint to the $3$-space $$S=\langle (0,0,0,0,1),(0,0,0,1,0),(0,0,1,0,0) \rangle.$$
    \item[\rm (ii)]
The set of these $q^4$ lines is partitioned into $q^2$ orbits under 
$\langle A,B\rangle$, where
each orbit is a faithful projective
$2-(q^2,5,2,1)_q$ system.
    \item[\rm (iii)] Moreover, the affine span of the set of generator matrices, in the form~\eqref{eq:xyz}, 
of each of the $q^2$ orbits coincides with the $4$-dimensional affine space 
of all matrices defined by equations~\eqref{eq:xyz}.
\end{itemize}
\end{lemma}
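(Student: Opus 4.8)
The plan is to make the action of $\langle A,B\rangle$ on the lines \eqref{eq:xyz} completely explicit and to read off all three assertions from it. Part~(i) needs nothing beyond the shape of a generator matrix: if its first two columns form the identity, a nonzero combination of its rows never has both first coordinates equal to zero, so the corresponding line meets $S=\{v\colon v_1=v_2=0\}$ trivially, and the defining equations in \eqref{eq:xyz} play no role here. For the rest I would first compute how $A$ and $B$ transform the parameter tuple $(x,y,z,x',y',z')$ of a generator matrix in the normalized form $\left[\begin{smallmatrix}1&0&x&y&z\\0&1&x'&y'&z'\end{smallmatrix}\right]$: one finds that $A$ sends it to $(x{+}1,\,y,\,z{+}x,\,x',\,y'{+}1,\,z'{+}x')$ and $B$ to $(x,\,y{+}\beta,\,z{+}y,\,x'{+}1,\,y',\,z'{+}y')$. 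In both cases the defining relations $x-y'=\tfrac12$ and $y-\beta x'=\tfrac\beta2$ are preserved, so $A$ and $B$ permute the set $\mathcal L$ of the $q^4$ lines; writing $A=I+N$ with $N^3=0$ gives $A^q=I$ (for the odd prime $q$ the only surviving term $\binom{q}{2}N^2$ vanishes, since $q\mid\binom{q}{2}$), likewise $B^q=I$, and $A,B$ commute. Iterating, the line obtained from a fixed base line with parameters $(x_0,\dots,z_0')$ by $A^iB^j$ has parameters $(x_0{+}i,\ y_0{+}j\beta,\ z_0{+}ix_0{+}jy_0{+}\binom{i}{2}{+}\binom{j}{2}\beta,\ x_0'{+}j,\ y_0'{+}i,\ z_0'{+}ix_0'{+}jy_0'{+}ij)$; since the first two entries recover $i$ and $j$, the action of $\langle A,B\rangle$ on $\mathcal L$ is free, hence $|\langle A,B\rangle|=q^2$, every orbit has exactly $q^2$ lines, and there are $q^2$ orbits.

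For part~(iii) I would subtract the parameter tuples of the orbit representatives at $(i,j)\in\{(0,0),(1,0),(0,1),(2,0),(1,1)\}$: in the coordinates $(x,y,z,z')$ one gets $\Delta(2,0)-2\Delta(1,0)=(0,0,1,0)$ and $\Delta(1,1)-\Delta(1,0)-\Delta(0,1)=(0,0,0,1)$, and then $\Delta(1,0)$ and $\Delta(0,1)$ (using $\beta\neq0$) yield $(1,0,0,0)$ and $(0,1,0,0)$; so these differences already span the $4$-dimensional direction space of \eqref{eq:xyz}. Thus the $q^2$ parameter points of an orbit affinely span the whole space.

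Part~(ii): faithfulness is immediate, each of the $q^2$ lines having rank~$2$. Two lines in normalized form are disjoint iff the $2\times3$ matrix of the coordinate-wise differences of their last three columns has rank~$2$; for two lines of one orbit this matrix has left $2\times2$ block $\left[\begin{smallmatrix}a&b\beta\\b&a\end{smallmatrix}\right]$ with $(a,b)=(i_2{-}i_1,\,j_2{-}j_1)\neq(0,0)$, whose determinant $a^2-\beta b^2$ is nonzero precisely because $\beta$ is a non-square — this gives $\mu=1$. For $s=2$ one shows that any three lines of an orbit span $\FF_q^5$: applying an element of $\langle A,B\rangle\subseteq\GL(5,q)$ we may take them to be $L_0$, $A^aB^bL_0$, $A^{a'}B^{b'}L_0$ with $(a,b),(a',b')$ nonzero and distinct, and their span is $\FF_q^5$ iff the stacked $4\times3$ matrix of differences from $L_0$ has rank~$3$. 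A column dependence not involving the last column is trivial since $\left[\begin{smallmatrix}a&b\beta\\b&a\end{smallmatrix}\right]$ is invertible; a dependence with last coefficient normalized to $-1$ forces, upon solving the two equations coming from the $A^aB^b$-rows, $\lambda=x_0+\tfrac{a-1}2$ and $\mu=x_0'+\tfrac b2$, while the $A^{a'}B^{b'}$-rows give $\lambda=x_0+\tfrac{a'-1}2$, $\mu=x_0'+\tfrac{b'}2$, so $(a,b)=(a',b')$, contradicting distinctness. Hence no hyperplane carries three lines of an orbit, while any two of them are disjoint and so span a hyperplane; therefore each orbit is a faithful $2$-$(q^2,5,2,1)_q$ system.

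The step I expect to be the real work is that last computation: solving the $2\times2$ linear system with coefficient matrix $\left[\begin{smallmatrix}a&b\beta\\b&a\end{smallmatrix}\right]$ and right-hand side the two $z$-components of the difference of the parameter tuples of $A^aB^bL_0$ and $L_0$, then simplifying the solution with the constraints $x_0'=y_0/\beta-\tfrac12$, $y_0'=x_0-\tfrac12$ so that the base-line parameters $x_0,y_0,z_0,z_0'$ cancel and only the combinatorial data $(a,b)$ versus $(a',b')$ survives. Everything else is bookkeeping once the parameter action of $A$ and $B$ is in hand, and the sole structural ingredient is that $\beta$ is a non-square, which is exactly what keeps $a^2-\beta b^2$ away from zero.
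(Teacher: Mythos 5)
Your argument is correct, and its first half (the parameter action of $A$ and $B$, the composite formula for $A^iB^j$, the freeness of the action giving $q^2$ orbits of size $q^2$, and the affine-span computation via the differences at $(1,0),(0,1),(2,0),(1,1)$, i.e.\ parts (i) and (iii)) coincides with the paper's; but your treatment of part (ii) takes a genuinely different route. The paper dualizes: it works with the $3$-spaces $T_{i,j}^\perp$, passes to the canonical orbit representatives $x=-x'=\tfrac12$, $y=y'=0$, and shows that a fixed dual point of $T_{0,0}^\perp$ lies in at most one further $T_{i,j}^\perp$; with the lines varying, the resulting system is quadratic in $(i,j)$, the non-square $\beta$ enters through the quadratic $X^2-(K^2+\beta)X+\beta K^2$ having only one square root among its roots, and disjointness ($\mu=1$) is handled separately (the $m=0$ case plus a counting footnote). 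You instead fix the triple of lines and test the primal rank condition on the stacked $4\times 3$ difference matrix, so the consistency check becomes linear in $(\lambda,\mu)$: invertibility of $\left[\begin{smallmatrix}a&b\beta\\ b&a\end{smallmatrix}\right]$ (i.e.\ $a^2-\beta b^2\neq 0$, the only place the non-square hypothesis is used) determines $(\lambda,\mu)$ from each pair of rows, and the two affine constraints make the base-line parameters cancel, so the two determinations disagree unless $(a,b)=(a',b')$; I checked that the solution of the first pair of rows is indeed $\lambda=x_0+\tfrac{a-1}{2}$, $\mu=x_0'+\tfrac{b}{2}$, using $y_0=\beta x_0'+\tfrac{\beta}{2}$ in the first row and $y_0'=x_0-\tfrac12$ in the second. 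Your version buys a direct, one-line proof of $\mu=1$, avoids the quadratic case analysis, and works for an arbitrary base line instead of a chosen representative; the paper's dual bookkeeping is not wasted, though, since the matrices $M_{i,j}$ of $T_{i,j}^\perp$ are exactly what the subsequent theorem reuses to adjoin the two extra invariant lines $D$ and $D'$.
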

\begin{proof}
The action of $A$ and $B$ on subspaces with generator 
matrix of the form in Equation~\eqref{eq:xyz} is the following:
\begin{equation*}\label{eq:i}
A^i: 
\left[\begin{array}{lllll}
    1 & 0 & x & y & z \\
    0 & 1 & x'& y'& z'
\end{array}\right]
\to 
\left[\begin{array}{lllll}
    1\tikzmark{a} & 0\tikzmark{b} & x\tikzmark{aa}+\tikzmark{a_a}i & y\phantom{+1}\tikzmark{bb} & z+x\tikzmark{aaa}i +\tikzmark{aa_a}\binom{i}{2} \\
    0 & 1 & x'& y'+i& z'+x'i
\end{array}\right]\!,
    \begin{tikzpicture}[overlay, remember picture]
        \draw[->, line width=0.8pt, -Stealth, blue, out=30, in=150, looseness=1.0]
            ([yshift=3mm] pic cs:a) to ([yshift=3mm] pic cs:a_a);
        \draw[->, line width=0.8pt, -Stealth, blue, out=20, in=160, looseness=1.0]
            ([yshift=3mm] pic cs:aa) to ([yshift=3mm] pic cs:aaa);
        \draw[->, line width=0.8pt, -Stealth, blue, out=20, in=160, looseness=1.0]
            ([yshift=3mm] pic cs:a_a) to ([yshift=3mm] pic cs:aa_a);
        \draw[->, line width=0.8pt, -Stealth, green!70!black, out=25, in=155, looseness=1.0]
            ([yshift=3mm] pic cs:b) to ([yshift=3mm] pic cs:bb);
    \end{tikzpicture}
\end{equation*}
\begin{equation*}\label{eq:j}
    B^j: 
\left[\begin{array}{lllll}
    1 & 0 & x & y & z \\
    0 & 1 & x'& y'& z'
\end{array}\right]
\to 
\left[\begin{array}{lllll}
    1\tikzmark{a_} & 0\tikzmark{b_} & x\phantom{+j}\tikzmark{bb_} & y\tikzmark{a_a_} +\beta\tikzmark{aa_} j & z+y\tikzmark{aa_a_}j+\beta\tikzmark{aaa_}\binom{j}{2} \\
    0 & 1 & x'+j& y'& z'+y'j
\end{array}\right]\!,
    \begin{tikzpicture}[overlay, remember picture]
    \coordinate (ab) at ($(pic cs:a_)!0.7!(pic cs:aa_)$);
        \draw[->, line width=0.8pt, -Stealth, blue, out=30, in=160, looseness=1.0]
            ([yshift=3mm] pic cs:a_) to ([yshift=3mm] pic cs:aa_);
        \draw[->, line width=0.8pt, -Stealth, blue, out=30, in=150, looseness=1.0]
            ([yshift=3mm] pic cs:aa_) to ([yshift=3mm] pic cs:aaa_);
        \draw[->, line width=0.8pt, -Stealth, blue, out=30, in=150, looseness=1.0]
            ([yshift=3mm] pic cs:a_a_) to ([yshift=3mm] pic cs:aa_a_);
        \draw[->, line width=0.8pt, -Stealth, green!70!black, out=40, in=140, looseness=1.0]
            ([yshift=3mm] pic cs:b_) to ([yshift=3mm] pic cs:bb_);
        \node[above=3mm of ab, blue, fill=white] {$\beta$};
    \end{tikzpicture}
\end{equation*}
\begin{multline} \label{eq:ij}
    A^iB^j: 
\left[\begin{array}{lllll}
    1 & 0 & x & y & z \\
    0 & 1 & x'& y'& z'
\end{array}\right]
\\ \to 
\left[\begin{array}{lllll}
    1 & 0 & x+i & y+\beta j & z+ix+jy+\underline{\binom{i}{2}+\beta\binom{j}{2}} \\
    0 & 1 & x'+j& y'+i& z'+ix'+jy'+\underline{ij}
\end{array}\right]\!.
\end{multline}

We start with two auxiliary statements.

(a) \emph{If a line (to be exact, its generator 
matrix in form~\eqref{eq:xyz}) belongs to the $4$-di\-men\-si\-o\-nal affine 
subspace defined by equations~\eqref{eq:xyz},
then the same is true for all lines from its orbit.} Indeed, as we see from~\eqref{eq:ij}, the transform $A^iB^j$ keeps the differences 
$x-y'$ and $y-\beta x'$.

(b) \emph{The span of the generator matrices in form~\eqref{eq:xyz} of the subspaces of an orbit (of one of subspaces in such form) is a $4$-dimensional affine space.}
From the argument in (a) we see that 
the dimension is at most~$4$.
For a matrix $M$ defined as in~\eqref{eq:xyz},
the three matrices $M$, $MA$, and $MB$
generate a $2$-dimensional affine space
of matrices of form~\eqref{eq:ij} without the underlined parts. The matrices $MA^2$ and $MAB$
have form~\eqref{eq:ij} with nonzero underlined part and add two more dimensions. So,
the affine span of $M$, $MA$, $MB$, $MA^2$, $MAB$ has dimension~$4$.

Next, we want to find $x$, $y$, $z$, $x'$, $y'$, $z'$ such that all $q^2$
subspaces $T_{i,j}$ spanned by the matrix~\eqref{eq:ij} for different~$i$, $j$ are mutually disjoint (actually, this condition holds automatically if the next one holds\footnote{If a line $T$ from the system intersects 
with $T' = A^{i}B^{j} T$, then it intersects 
with $T'' = A^{-i}B^{-j} T$; among $q^2+q+1$ hyperplanes including $T$, $2q+2$ include $T'$ or $T''$; the remaining $q^2-q-1$ hyperplanes include the remaining $q^2-3$ system lines, which means that some hyperplanes includes more that two system lines, a contradiction.}) and, moreover, every hyperplane includes at most two of them.

The last condition is equivalent to the fact that the dual $3$-subspaces $T_{i,j}^\perp$
generated by
\begin{equation}\label{eq:dual}
    M_{i,j}=
\left[\begin{array}{rrrrr}
     x+i & x'+j & -1 & 0 & 0 \\
    y+\beta j & y'+i & 0 & -1 & 0 \\
    z+ix+jy+\binom{i}{2}+\beta\binom{j}{2} & z'+ix'+jy'+ij & 0 & 0 & -1 
\end{array}\right]
\end{equation}
covers each point with multiplicity  $0$, $1$, or $2$. It is sufficient to check that every point in~$T_{0,0}^\perp$ belongs to at most one other~$T_{i,j}^\perp$. Note that if $T_{0,0}^\perp$ and $T_{i,j}^\perp$ have a common vector $v=(v,v',-k,-l,-m)$, then it is the linear combination of the rows of each of~$M_{0,0}$, $M_{i,j}$ with the same coefficients $k$, $l$, $m$:
$v=(k,l,m)M_{0,0}=(k,l,m)M_{i,j}$.

 Let us first check the case where a nontrivial linear combination of the first two rows, with coefficients $k$ and $l$, coincides, i.e., $m=0$.
We have 
$$
\left\{ \begin{array}{rcl}
    k(x+i) + l(y+\beta j)& = & kx+ly \\[0.5ex]
    k(x'+j) + l(y'+i) & = &  kx' + ly'
\end{array} \right. \!,
$$
$$
\left\{ \begin{array}{r@{\ }c@{\ }r@{\ }c@{\ }l}
    ki &+& l\beta j& = & 0 \\[0.5ex]
    li &+& kj & = &  0
\end{array} \right. \!.
$$
The last system has a non-zero solution $(i,j)$ only if $l^2\beta = k^{2}$, which is impossible because $\beta$ is not a square. Hence, the corresponding 
points are covered only once, for any
$x$, $y$, $x'$, $y'$, $z$, $z'$.
Linear combinations including the last row
are considered differently because the system becomes non-linear in $i$, $j$.
We consider only the $q^2$ orbits
with the representatives 
$x=-x'=\frac12$, $y=y'=0$,
$z$ and $z'$ arbitrary. In particular,
$$ M_{i,j}=
\left[\begin{array}{r@{\ \ \ \ \ }r@{\ \ \ \ }r@{\ \ \ \ }r@{\ \ \ \ }r}
     \frac12+i & -\frac12+j & -1 & 0 & 0 \\
    \beta j & i & 0 & -1 & 0  \\
z+\frac{i}2+\binom{i}{2}+\beta\binom{j}{2} & z'-\frac{i}2+ij & 0 & 0 & -1 
\end{array}\right]\!.
$$
For these values, the linear combinations of the rows with 
coefficients $k$, $l$, $1$ coincide for $M_{0,0}$ and~$M_{i,j}$ if and only if
$$
\left\{ \begin{array}{r@{\ }c@{\ }l}
    ki+l\beta j+\frac{i}{2} +\binom{i}{2}+\beta \binom{j}{2}& = & 0 \\[0.5ex]
    kj+li-\frac{i}{2}+ij& = &  0
\end{array} \right.\!,
$$
$$
\left\{ \begin{array}{r@{\ }c@{\ }l}
    i^2+2ki +\beta j^2+2L\beta j& = & 0 \\[0.5ex]
    kj+Li+ij& = &  0
\end{array} \right.\!, \quad\mbox{where }
L=l-\frac12,
$$
$$
\left\{ \begin{array}{r@{\ }c@{\ }l}
    (i+k)^2 +\beta (j+L)^2& = & k^2+\beta L^2 \\[0.5ex]
    (i+k)(j+L)& = &kL
\end{array} \right.\!. \quad\mbox{ }
$$

If $k=0$ and $L=0$, we have no non-zero solutions. 
If $k=0$ and $L\ne 0 $, then either $i=0$ and we have two solutions for $j$ ($0$ and $-2L$)
or $j=-L$ and $i^2-L^2\beta=0$, which has no solutions because $\beta$ is not a square.

Similarly, if $k\ne 0$ and $L= 0 $, then either $j=0$ and we have two solutions for $i$ ($0$ and $-2k$)
or $i=-k$ and $\beta j^2=k^2$, which has no solutions because $\beta$ is not a square.

If $k\ne 0$ and $L\ne 0 $, we have
$$
\left\{ \begin{array}{r@{\ }c@{\ }l}
    I^2 +\beta J^2& = & K^2+\beta \\[0.5ex]
    IJ& = &K
\end{array} \right.\!, \quad\mbox{where }
K=\frac kL,\ I=\frac{i+k}L,\ J=\frac{j+L}L,
$$
$$ X^2 - (K^2+\beta) X + \beta K^2, \quad\mbox{where }X=I^2\!.$$
If the last quadratic equation in $X$ has two solutions, then one of them is square and one non-square because their product is the non-square $\beta K^2$. Hence, only one of them is resolvable in~$I$, and we have only two solutions for $(i,j)$. We know that one of them is $(0,0)$, and we can easily find the other.

We have proved that

(c) \emph{for any $z$, $z'$, the orbit of the line with generator 
matrix
 $\left[\begin{array}{rrrrl}
    1 & 0 & ^1\!/_2 & 0 & z \\
    0 & 1 & -^1\!/_2 & 0 & z'
\end{array}\right]$  is a faithful $2-(q^2,5,2,1)_q$ system.}

From \eqref{eq:ij}, it if easy to see that all these $q^2$ orbits are different. Combining claims (a), (b), and (c) completes the proof.
\end{proof}

We note that experiments with small~$q$ show that other orbits under $\langle A,B\rangle$ do not give $2-(q^2,5,2)_q$ systems.

\begin{theorem}
    For any odd prime $q$,
    there is a faithful $2-(q^2+2,5,2)_q$ system (and hence, an additive $[q^2+2,2.5,q^2]_q^2$ code) invariant under the actions of $A$, $B$ defined in~\eqref{eq:AB}.
\end{theorem}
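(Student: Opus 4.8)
The plan is to obtain the required system by enlarging one of the orbits furnished by Lemma~\ref{lemma_workhorse} with two carefully chosen lines; this simultaneously gives an alternative proof of the case $h=s=2$ of Theorem~\ref{prop_generalized_construction_oval_s}. First I would fix an orbit $\mathcal{O}$ under $\langle A,B\rangle$ as in Lemma~\ref{lemma_workhorse}(ii): a faithful $2-(q^2,5,2,1)_q$ system consisting of $q^2$ pairwise disjoint lines, each disjoint from the $3$-space $S=\langle e_3,e_4,e_5\rangle$ by part~(i). Next I would locate the $\langle A,B\rangle$-invariant lines. Since $\langle A,B\rangle$ acts on the row spaces of generator matrices by right multiplication, a direct computation with the explicit matrices in~\eqref{eq:AB} shows that the unique point fixed by $\langle A,B\rangle$ is $P_0=\langle e_5\rangle$ and that the invariant lines form a pencil of $q+1$ lines, all passing through $P_0$ and all contained in $S$ (namely $\langle e_5,w\rangle$ for $w$ a point of $\langle e_3,e_4\rangle$). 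I then choose two distinct members of this pencil, say $\alpha_1=\langle e_3,e_5\rangle$ and $\alpha_2=\langle e_4,e_5\rangle$, so that $\alpha_1\cap\alpha_2=P_0$ and $\alpha_1+\alpha_2=S$.

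I would then set $\mathcal{S}:=\mathcal{O}\cup\{\alpha_1,\alpha_2\}$, a collection of $q^2+2$ lines that is $\langle A,B\rangle$-invariant (as $\mathcal{O}$ is an orbit and $\alpha_1,\alpha_2$ are fixed) and faithful. To prove that $\mathcal{S}$ is a $2-(q^2+2,5,2)_q$ system it suffices to show that no hyperplane contains three of its elements, i.e.\ that any three of them span $\FF_q^5$, and I would split this into three cases. (a) Three lines of $\mathcal{O}$: impossible because $\mathcal{O}$ is a $2$-system. (b) $\alpha_1$, $\alpha_2$, and one line $\beta\in\mathcal{O}$: here $\alpha_1+\alpha_2=S$ is disjoint from $\beta$ by Lemma~\ref{lemma_workhorse}(i), so $S+\beta=\FF_q^5$. (c) One of $\alpha_1,\alpha_2$, say $\alpha_i$, together with two distinct lines $\beta_1,\beta_2\in\mathcal{O}$: since $\beta_1,\beta_2$ are disjoint, $H:=\beta_1+\beta_2$ is a hyperplane and $H\cap S$ is a $2$-space (it cannot be all of $S$, since then $S+\beta_1$ would be $5$-dimensional inside the $4$-space $H$). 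Because $\alpha_i\subseteq S$, the inclusion $\alpha_i\subseteq H$ would force $\alpha_i=H\cap S$; hence it is enough to check $P_0=\langle e_5\rangle\not\subseteq H$, as $P_0\subseteq\alpha_i$ and $P_0\not\subseteq H\cap S$ then contradict $\alpha_i=H\cap S$.

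The only genuine computation, and the hard part, is this last claim $\langle e_5\rangle\not\subseteq\beta_1+\beta_2$; it is where the defining equations~\eqref{eq:xyz} and the choice of $\beta$ as a non-square enter. Writing $\beta_1,\beta_2$ via generator matrices in the form~\eqref{eq:xyz} and eliminating in the first two coordinates, one finds that $e_5\in\beta_1+\beta_2$ forces the $2\times 2$ matrix with rows $(x_1-x_2,\;y_1-y_2)$ and $(x_1'-x_2',\;y_1'-y_2')$ to be singular. Substituting $x_k-y_k'=\tfrac12$ and $y_k-\beta x_k'=\tfrac{\beta}{2}$ rewrites its determinant as $(y_1'-y_2')^2-\beta(x_1'-x_2')^2$, which vanishes only when $x_1'=x_2'$ and $y_1'=y_2'$, since $\beta$ is a non-square. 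But inside a single orbit the quadruple $(x,y,x',y')$ determines the line, because $A^iB^j$ shifts it by $(i,\beta j,j,i)$ according to~\eqref{eq:ij}; hence $\beta_1=\beta_2$, contradicting their distinctness. This settles case~(c) and shows that $\mathcal{S}$ is a faithful $2-(q^2+2,5,2)_q$ system (a hyperplane containing $S$ witnesses that some hyperplane meets $\mathcal{S}$ in exactly two elements); the associated additive $[q^2+2,2.5,q^2]_q^2$ code then follows from Theorem~\ref{thm_connection}. Everything outside the determinant step is routine bookkeeping with dimensions and with the explicit form of $A$ and $B$.
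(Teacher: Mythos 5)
Your proof is correct, and the construction is the one the paper itself uses: you adjoin to a single $\langle A,B\rangle$-orbit from Lemma~\ref{lemma_workhorse} the two invariant lines $\alpha_1=\langle e_3,e_5\rangle$ and $\alpha_2=\langle e_4,e_5\rangle$, which are exactly the lines $D'$ and $D$ of the paper's proof, and you reduce to the same three spanning checks (three orbit lines; both added lines plus one orbit line; one added line plus two orbit lines). Where you genuinely differ is the verification of the last case. The paper argues dually: hyperplanes through $D$ correspond to points of $D^\perp$, and the block structure of the dual matrices~\eqref{eq:dual} (the last three columns form $-I$) shows such a point can only be proportional to the first row of a single $M_{i,j}$, so every hyperplane through $D$ (and, via the second row, through $D'$) contains at most one orbit line, and hyperplanes through both contain none -- with no further use of the non-square $\beta$. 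You instead prove the primal statement that $\langle e_5\rangle\not\subseteq\beta_1+\beta_2$ for distinct orbit lines, via the determinant $(y_1'-y_2')^2-\beta(x_1'-x_2')^2$ extracted from~\eqref{eq:xyz}, invoking non-squareness once more together with the injectivity of $(x,y,x',y')$ on an orbit (the shift by $(i,\beta j,j,i)$ in~\eqref{eq:ij}); the same-orbit hypothesis is essential, since lines differing only in $(z,z')$ do make this determinant vanish, and you use it at exactly the right place. Your route is slightly longer but buys something: since every $\langle A,B\rangle$-invariant line passes through the unique fixed point $\langle e_5\rangle$, your claim shows at once that no hyperplane spanned by two orbit lines contains \emph{any} invariant line, which is precisely the paper's subsequent Remark that any two of the $q+1$ invariant lines may be adjoined. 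Two cosmetic points: the detour through $H\cap S$ in case (c) is unnecessary, since $P_0\subseteq\alpha_i\subseteq H$ already places $e_5$ in $\beta_1+\beta_2$; and the full classification of invariant lines is not needed -- only the (trivially checked) invariance of the two chosen lines, their common point $\langle e_5\rangle$, and $\alpha_1+\alpha_2=S$ enter the argument.
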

\begin{proof}
We will add two more lines to the faithful projective $2-(q^2,5,2)_q$ system $C$
constructed in Lemma~\ref{lemma_workhorse}. 
One of them is the $2$-space $D$ generated by 
 $\left(\begin{smallmatrix}
    0 & 0 & 0 & 1 & 0 \\
    0 & 0 & 0 & 0 & 1
\end{smallmatrix}\right)$.
To ensure that we can add it,
we need to check that every
point from the dual space~$D^\perp$, with basis 
 $\left(\begin{smallmatrix}
    1 & 0 & 0 & 0 & 0 \\
    0 & 1 & 0 & 0 & 0 \\
    0 & 0 & 1 & 0 & 0
\end{smallmatrix}\right)$,
belongs to only one $T_{i,j}^\perp$. This is obvious
from the basis matrix~\eqref{eq:dual} of~$T_{i,j}^\perp$. Moreover,
this point can only be the first line of~\eqref{eq:dual}.

The same arguments work for the line $D'$ generated by
$\left(\begin{smallmatrix}
    0 & 0 & 1 & 0 & 0 \\
    0 & 0 & 0 & 0 & 1
\end{smallmatrix}\right)$,
with intersecting points of dual $3$-subspaces coinciding with the second line of~\eqref{eq:dual}.

Since the common points of~$D^\perp$ and~$D'^\perp$ do not belong to any~$T_{i,j}^\perp$, we can add both~$D$ and~$D'$ to get a 
faithful projective $2-(q^2+2,5,2)_q$ system. Finally, it is easy to see that $DA = DB=D$ and $D'A = D'B=D'$, i.e., adding $D$ and $D'$ keeps the system $\langle A,B \rangle$-invariant.
\end{proof}

\begin{remark}
(i) Actually, there are $q+1$ $\langle A,B \rangle$-invariant lines, $D$ and $q$ lines
with basis matrix of the form $\left(\begin{smallmatrix}
    0 & 0 & 1 & y & 0 \\
    0 & 0 & 0 & 0 & 1
\end{smallmatrix}\right)$.
So, any two of them can be chosen as the two added extra lines. Different choices can result in non-equivalent systems and additive codes.
(ii) Since $D$ and~$D'$ intersect, 
the resulting projective system does not have parameters $2-(q^2+2,5,2,1)_q$.
However, by adding only one of them, we get a faithful projective  $2-(q^2+1,5,2,1)_q$ system. 
\end{remark}

\section{Examples of multispreads}
\label{appendixB}
 Here, we list examples of projective systems corresponding to one-weight codes with new parameters. The elements of $\FF_4$ are denoted $0$, $1$, $\omega$, $\upsilon=\omega^2$.

A projective $3-(82,5,\{2\})_4$ system, i.e., a $(51,5)^{3,5}_4$-multispread:
\def\subspace#1#2{\left(\!\begin{smallmatrix}#1\end{smallmatrix}\!\right)^{\!\makebox[0mm][l]{$\scriptscriptstyle #2$}}}
\def\2{\omega} \def\3{\upsilon}
$\subspace{1101{\2}\\00110}{}$,
$\subspace{1001{\2}\\00100}{}$,
$\subspace{0110{\2}\\00010}{}$,
$\subspace{1{\2}000\\001{\2}0}{}$,
$\subspace{1{\3}000\\001{\3}0}{}$,
$\subspace{100{\2}1\\01000}{}$,
$\subspace{10{\2}{\2}1\\01{\2}{\2}1}{}$,
$\subspace{10000\\01{\2}01}{}$,
$\subspace{10{\3}{\2}1\\01{\2}{\3}1}{}$,
$\subspace{101{\2}1\\01010}{}$,
$\subspace{10100\\01{\2}11}{}$,
$\subspace{10{\2}{\3}{\3}\\010{\2}0}{}$,
$\subspace{10{\2}00\\01{\3}{\2}{\3}}{}$,
$\subspace{101{\3}{\3}\\01{\3}1{\3}}{}$,
$\subspace{10{\3}00\\01{\3}{\3}{\3}}{}$,
$\subspace{100{\3}{\3}\\01{\3}0{\3}}{}$,
$\subspace{10{\3}{\3}{\3}\\010{\3}0}{}$,
$\subspace{01{\2}00\\00010\\00001}{}$,
$\subspace{10010\\010{\2}1\\001{\3}{\2}}{}$,
$\subspace{1000{\3}\\0101{\3}\\001{\2}{\3}}{}$,
$\subspace{1000{\2}\\01000\\0001{\3}}{}$,
$\subspace{1{\2}00{\3}\\0010{\3}\\0001{\2}}{}$,
$\subspace{1000{\2}\\010{\3}{\3}\\0011{\3}}{}$,
$\subspace{1000{\3}\\0100{\2}\\001{\2}1}{}$,
$\subspace{1010{\2}\\0100{\3}\\0001{\3}}{}$,
$\subspace{100{\3}1\\010{\3}{\3}\\0010{\3}}{}$,
$\subspace{10011\\010{\2}{\3}\\0011{\3}}{}$,
$\subspace{100{\2}0\\0100{\3}\\001{\2}{\3}}{}$,
$\subspace{10101\\01{\3}0{\2}\\00011}{}$,
$\subspace{10{\3}0{\3}\\0100{\2}\\0001{\3}}{}$,
$\subspace{1000{\3}\\010{\2}1\\001{\2}1}{}$,
$\subspace{100{\2}{\2}\\010{\3}1\\001{\3}{\2}}{}$,
$\subspace{100{\3}0\\010{\2}{\3}\\001{\3}{\2}}{}$,
$\subspace{1000{\2}\\010{\3}{\3}\\001{\3}{\2}}{}$,
$\subspace{1001{\3}\\0100{\2}\\001{\3}{\3}}{}$,
$\subspace{10{\3}01\\01{\3}00\\00011}{}$,
$\subspace{100{\3}1\\010{\3}{\3}\\001{\2}{\2}}{}$,
$\subspace{100{\2}{\3}\\010{\3}{\2}\\00111}{}$,
$\subspace{10011\\010{\3}{\2}\\00101}{}$,
$\subspace{100{\2}{\2}\\010{\3}1\\001{\2}1}{}$,
$\subspace{10011\\0100{\2}\\001{\2}{\3}}{}$,
$\subspace{1000{\2}\\010{\3}{\2}\\0010{\3}}{}$,
$\subspace{1001{\3}\\01001\\0011{\2}}{}$,
$\subspace{100{\3}0\\010{\3}{\2}\\0011{\3}}{}$,
$\subspace{100{\2}1\\01001\\00101}{}$,
$\subspace{1001{\3}\\010{\3}1\\001{\2}{\2}}{}$,
$\subspace{10001\\01{\3}0{\3}\\00011}{}$,
$\subspace{100{\3}1\\01011\\00111}{}$,
$\subspace{100{\2}0\\01011\\00101}{}$,
$\subspace{1001{\2}\\01001\\0010{\2}}{}$,
$\subspace{10{\2}01\\01{\3}01\\00011}{}$,
$\subspace{100{\2}0\\01001\\00111}{}$,
$\subspace{100{\2}{\3}\\010{\2}{\2}\\0011{\3}}{}$,
$\subspace{100{\2}{\2}\\0101{\2}\\001{\2}1}{}$,
$\subspace{1{\3}00{\3}\\00101\\0001{\2}}{}$,
$\subspace{100{\3}0\\0101{\2}\\001{\3}1}{}$,
$\subspace{100{\2}{\2}\\0101{\3}\\0011{\2}}{}$,
$\subspace{100{\2}{\3}\\0100{\3}\\001{\3}{\3}}{}$,
$\subspace{10010\\01011\\0010{\2}}{}$,
$\subspace{10010\\0101{\3}\\001{\3}{\3}}{}$,
$\subspace{1001{\3}\\010{\3}1\\001{\3}1}{}$,
$\subspace{100{\3}1\\010{\2}1\\001{\2}{\2}}{}$,
$\subspace{100{\3}{\3}\\0100{\3}\\0010{\2}}{}$,
$\subspace{1000{\3}\\00100\\0001{\2}}{}$,
$\subspace{10001\\010{\2}{\2}\\0010{\3}}{}$,
$\subspace{1100{\3}\\0010{\2}\\0001{\2}}{}$,
$\subspace{1001{\3}\\010{\2}{\3}\\001{\2}{\2}}{}$,
$\subspace{100{\3}{\2}\\010{\2}{\3}\\0010{\3}}{}$,
$\subspace{10010\\010{\2}{\2}\\00101}{}$,
$\subspace{100{\3}{\2}\\010{\2}{\2}\\00111}{}$,
$\subspace{100{\3}0\\0100{\3}\\0011{\2}}{}$,
$\subspace{100{\2}0\\0101{\2}\\001{\3}{\3}}{}$,
$\subspace{100{\2}{\2}\\010{\2}1\\001{\3}1}{}$,
$\subspace{100{\3}1\\0101{\3}\\0010{\2}}{}$,
$\subspace{10001\\01011\\0011{\2}}{}$,
$\subspace{100{\3}{\2}\\0101{\2}\\001{\2}{\3}}{}$,
$\subspace{10{\2}0{\3}\\01001\\0001{\3}}{}$,
$\subspace{10001\\0100{\2}\\001{\3}1}{}$,
$\subspace{10000\\01110\\00001}{}$,
$\subspace{10100\\01100\\00001}{}$,
$\subspace{10{\2}{\2}0\\011{\3}0\\00001}{}$,
$\subspace{10{\3}{\2}0\\011{\2}0\\00001}{}$.

A projective $3-(98,5,\{2\})_4$ system, i.e., a $(36,6)^{3,5}_4$-multispread:
\def\subspace#1#2{\left(\!\begin{smallmatrix}#1\end{smallmatrix}\!\right)^{\!\makebox[0mm][l]{$\scriptscriptstyle #2$}}}
\def\2{\omega} \def\3{\upsilon}
$\subspace{01{\3}0{\3}\\00010}{}$,
$\subspace{110{\3}{\3}\\00110}{}$,
$\subspace{100{\3}{\3}\\00100}{}$,
$\subspace{1011{\2}\\0111{\2}}{}$,
$\subspace{10000\\0110{\2}}{}$,
$\subspace{1001{\2}\\01000}{}$,
$\subspace{10{\3}{\2}1\\01{\2}{\3}1}{}$,
$\subspace{101{\2}1\\01010}{}$,
$\subspace{10100\\01{\2}11}{}$,
$\subspace{10{\2}{\2}1\\010{\2}0}{}$,
$\subspace{10{\2}00\\01{\2}{\2}1}{}$,
$\subspace{100{\2}1\\01{\2}01}{}$,
$\subspace{100{\2}{\2}\\010{\2}1\\0011{\2}}{}$,
$\subspace{10{\3}01\\01{\2}0{\2}\\00011}{}$,
$\subspace{1000{\3}\\01000\\0001{\3}}{}$,
$\subspace{100{\2}{\2}\\010{\2}1\\0010{\2}}{}$,
$\subspace{10001\\0110{\2}\\00011}{}$,
$\subspace{1010{\2}\\01001\\00011}{}$,
$\subspace{10{\2}0{\2}\\01{\3}01\\00011}{}$,
$\subspace{100{\2}{\2}\\010{\2}1\\001{\2}1}{}$,
$\subspace{100{\3}1\\0100{\3}\\001{\3}{\2}}{}$,
$\subspace{0100{\2}\\00101\\00010}{}$,
$\subspace{1001{\3}\\0101{\2}\\00111}{}$,
$\subspace{10001\\0101{\2}\\00111}{}$,
$\subspace{1001{\3}\\0100{\3}\\001{\2}{\3}}{}$,
$\subspace{1100{\2}\\00101\\00011}{}$,
$\subspace{100{\3}{\2}\\0101{\2}\\00111}{}$,
$\subspace{1000{\3}\\0100{\3}\\0011{\3}}{}$,
$\subspace{10{\2}01\\0100{\3}\\0001{\2}}{}$,
$\subspace{100{\2}{\2}\\010{\2}1\\001{\3}1}{}$,
$\subspace{100{\2}0\\0101{\2}\\00111}{}$,
$\subspace{100{\2}1\\0100{\3}\\0010{\2}}{}$,
$\subspace{100{\3}0\\010{\3}{\3}\\00101}{}$,
$\subspace{1000{\2}\\00100\\00010}{}$,
$\subspace{100{\3}1\\0101{\2}\\001{\2}0}{}$,
$\subspace{100{\2}1\\01000\\00111}{}$,
$\subspace{100{\3}{\3}\\0100{\3}\\00101}{}$,
$\subspace{10{\2}0{\3}\\01{\2}0{\3}\\00011}{}$,
$\subspace{100{\3}0\\01010\\0010{\3}}{}$,
$\subspace{1000{\3}\\010{\2}0\\0010{\2}}{}$,
$\subspace{10011\\010{\3}{\3}\\001{\3}0}{}$,
$\subspace{100{\3}{\2}\\010{\2}1\\0010{\3}}{}$,
$\subspace{1{\3}0{\2}0\\001{\2}0\\00001}{}$,
$\subspace{1{\2}0{\2}0\\001{\3}0\\00001}{}$,
$\subspace{1000{\2}\\010{\3}{\3}\\001{\2}{\2}}{}$,
$\subspace{100{\3}{\2}\\010{\2}{\3}\\001{\2}{\2}}{}$,
$\subspace{1{\3}00{\3}\\00101\\0001{\2}}{}$,
$\subspace{100{\2}{\3}\\01001\\001{\2}{\2}}{}$,
$\subspace{1001{\3}\\01011\\001{\2}{\2}}{}$,
$\subspace{10010\\010{\2}{\2}\\001{\3}{\3}}{}$,
$\subspace{100{\3}{\2}\\010{\3}{\3}\\001{\3}{\3}}{}$,
$\subspace{100{\2}0\\0100{\2}\\001{\3}{\3}}{}$,
$\subspace{1000{\2}\\0101{\3}\\001{\3}{\3}}{}$,
$\subspace{1{\2}00{\2}\\00101\\0001{\3}}{}$,
$\subspace{10001\\010{\3}0\\0010{\2}}{}$,
$\subspace{100{\3}1\\010{\3}0\\0011{\2}}{}$,
$\subspace{10{\3}00\\01001\\0001{\2}}{}$,
$\subspace{100{\2}0\\010{\3}0\\001{\3}0}{}$,
$\subspace{10010\\010{\3}0\\001{\2}0}{}$,
$\subspace{100{\2}{\3}\\010{\3}1\\001{\3}0}{}$,
$\subspace{10{\3}01\\0100{\2}\\0001{\3}}{}$,
$\subspace{10000\\010{\3}{\2}\\00100}{}$,
$\subspace{1001{\3}\\010{\3}{\2}\\001{\2}0}{}$,
$\subspace{100{\3}{\3}\\010{\3}1\\0011{\3}}{}$,
$\subspace{10100\\01110\\00001}{}$,
$\subspace{10000\\01{\3}00\\00001}{}$,
$\subspace{10{\2}10\\011{\3}0\\00001}{}$,
$\subspace{10{\3}10\\011{\2}0\\00001}{}$,
$\subspace{100{\3}0\\010{\2}{\3}\\0011{\3}}{}$,
$\subspace{10{\2}0{\3}\\01101\\0001{\2}}{}$,
$\subspace{10010\\010{\3}{\3}\\0010{\3}}{}$,
$\subspace{1000{\3}\\0100{\2}\\001{\2}{\3}}{}$,
$\subspace{100{\2}{\3}\\01011\\001{\2}1}{}$,
$\subspace{100{\3}0\\0101{\3}\\001{\3}{\2}}{}$,
$\subspace{100{\2}0\\010{\3}{\3}\\0011{\2}}{}$,
$\subspace{1000{\3}\\01001\\001{\3}1}{}$,
$\subspace{10{\3}0{\2}\\01100\\0001{\3}}{}$,
$\subspace{1001{\3}\\010{\2}{\2}\\0010{\3}}{}$,
$\subspace{1001{\2}\\0100{\2}\\0010{\3}}{}$,
$\subspace{10001\\010{\2}{\2}\\001{\2}{\3}}{}$,
$\subspace{1001{\3}\\010{\2}{\3}\\0011{\2}}{}$,
$\subspace{100{\2}{\3}\\0101{\3}\\0010{\2}}{}$,
$\subspace{100{\2}0\\01011\\001{\3}1}{}$,
$\subspace{10{\3}0{\3}\\01{\3}01\\0001{\3}}{}$,
$\subspace{1000{\2}\\01{\2}01\\0001{\3}}{}$,
$\subspace{10011\\01001\\001{\3}{\2}}{}$,
$\subspace{100{\2}0\\010{\3}1\\0010{\2}}{}$,
$\subspace{1010{\3}\\01{\2}0{\3}\\0001{\2}}{}$,
$\subspace{10011\\010{\3}1\\0011{\2}}{}$,
$\subspace{10011\\010{\2}{\3}\\001{\2}{\3}}{}$,
$\subspace{10101\\01{\3}01\\0001{\2}}{}$,
$\subspace{1000{\2}\\010{\2}{\2}\\0011{\3}}{}$,
$\subspace{10010\\01011\\001{\3}{\2}}{}$,
$\subspace{10011\\0101{\3}\\0010{\3}}{}$,
$\subspace{100{\3}1\\01001\\001{\2}1}{}$,
$\subspace{10001\\010{\3}1\\001{\3}1}{}$,
$\subspace{100{\3}0\\010{\3}1\\001{\2}1}{}$,
$\subspace{100{\3}1\\0100{\2}\\0011{\3}}{}$.

A projective $3-(114,5,\{2\})_4$ system, i.e., a $(21,7)^{3,7}_4$-multispread, with prescribed automorphism 
$
\left[\!
\begin{smallmatrix}00100\\10000\\01100\\00010\\00001\end{smallmatrix}
\!\right]
$:
\def\subspace#1#2{\left(\!\begin{smallmatrix}#1\end{smallmatrix}\!\right)^{\!\makebox[0mm][l]{$\scriptscriptstyle #2$}}}
\def\2{\omega} \def\3{\upsilon}
$\subspace{010{\2}{\2}\\001{\2}0}{7}$,
$\subspace{100{\2}0\\0010{\2}}{}$,
$\subspace{1100{\2}\\001{\2}{\2}}{}$,
$\subspace{1000{\2}\\010{\2}0}{}$,
$\subspace{101{\2}0\\0100{\2}}{}$,
$\subspace{100{\2}{\2}\\011{\2}0}{}$,
$\subspace{1010{\2}\\011{\2}{\2}}{}$,
$\subspace{01010\\00110\\00001}{7}$,
$\subspace{10010\\00100\\00001}{}$,
$\subspace{11000\\00110\\00001}{}$,
$\subspace{10000\\01010\\00001}{}$,
$\subspace{10110\\01000\\00001}{}$,
$\subspace{10010\\01110\\00001}{}$,
$\subspace{10100\\01110\\00001}{}$,
$\subspace{1{\2}00{\2}\\0010{\3}\\00010}{7}$,
$\subspace{10{\2}01\\0100{\3}\\00010}{}$,
$\subspace{10{\3}01\\0110{\3}\\00010}{}$,
$\subspace{1000{\3}\\01{\2}0{\3}\\00010}{}$,
$\subspace{1010{\3}\\01{\2}00\\00010}{}$,
$\subspace{10{\2}00\\01{\2}0{\3}\\00010}{}$,
$\subspace{10{\2}01\\01{\3}0{\2}\\00010}{}$,
$\subspace{1{\2}00{\2}\\00101\\00011}{7}$,
$\subspace{10{\2}00\\01001\\00011}{}$,
$\subspace{10{\3}01\\01101\\00011}{}$,
$\subspace{10001\\01{\2}00\\00011}{}$,
$\subspace{10101\\01{\2}0{\2}\\00011}{}$,
$\subspace{10{\2}0{\3}\\01{\2}0{\2}\\00011}{}$,
$\subspace{10{\2}0{\3}\\01{\3}0{\2}\\00011}{}$,
$\subspace{1{\3}00{\3}\\0010{\3}\\0001{\2}}{7}$,
$\subspace{10{\3}0{\3}\\0100{\3}\\0001{\2}}{}$,
$\subspace{10{\2}01\\0110{\3}\\0001{\2}}{}$,
$\subspace{10{\3}00\\01{\2}0{\3}\\0001{\2}}{}$,
$\subspace{1000{\3}\\01{\3}01\\0001{\2}}{}$,
$\subspace{1010{\3}\\01{\3}01\\0001{\2}}{}$,
$\subspace{10{\3}0{\3}\\01{\3}00\\0001{\2}}{}$,
$\subspace{1{\3}000\\0010{\3}\\0001{\3}}{7}$,
$\subspace{10{\3}01\\0100{\3}\\0001{\3}}{}$,
$\subspace{10{\2}00\\0110{\3}\\0001{\3}}{}$,
$\subspace{10{\3}0{\3}\\01{\2}00\\0001{\3}}{}$,
$\subspace{1000{\3}\\01{\3}0{\2}\\0001{\3}}{}$,
$\subspace{1010{\3}\\01{\3}0{\3}\\0001{\3}}{}$,
$\subspace{10{\3}0{\2}\\01{\3}01\\0001{\3}}{}$,
$\subspace{10000\\01000\\00100}{1}$,
$\subspace{10000\\01000\\00100}{1}$,
$\subspace{100{\3}0\\01000\\00101}{7}$,
$\subspace{10001\\010{\3}0\\00101}{}$,
$\subspace{100{\3}1\\010{\3}0\\00100}{}$,
$\subspace{10001\\01001\\001{\3}1}{}$,
$\subspace{100{\3}1\\01001\\001{\3}0}{}$,
$\subspace{10000\\010{\3}1\\001{\3}0}{}$,
$\subspace{100{\3}0\\010{\3}1\\001{\3}1}{}$,
$\subspace{100{\3}{\3}\\0100{\3}\\0010{\3}}{7}$,
$\subspace{1000{\3}\\010{\3}{\3}\\00100}{}$,
$\subspace{100{\3}0\\010{\3}0\\0010{\3}}{}$,
$\subspace{10000\\0100{\3}\\001{\3}{\3}}{}$,
$\subspace{100{\3}{\3}\\01000\\001{\3}0}{}$,
$\subspace{1000{\3}\\010{\3}0\\001{\3}{\3}}{}$,
$\subspace{100{\3}0\\010{\3}{\3}\\001{\3}0}{}$,
$\subspace{100{\2}1\\0101{\2}\\0010{\2}}{7}$,
$\subspace{1000{\2}\\010{\2}1\\00110}{}$,
$\subspace{100{\2}{\3}\\010{\3}{\3}\\0011{\2}}{}$,
$\subspace{1001{\2}\\010{\2}{\3}\\001{\2}1}{}$,
$\subspace{100{\3}{\3}\\010{\3}1\\001{\2}{\3}}{}$,
$\subspace{10010\\0100{\2}\\001{\3}1}{}$,
$\subspace{100{\3}1\\01010\\001{\3}{\3}}{}$,
$\subspace{100{\3}1\\0101{\2}\\00101}{7}$,
$\subspace{100{\3}0\\010{\2}{\3}\\0011{\2}}{}$,
$\subspace{10001\\010{\3}1\\0011{\3}}{}$,
$\subspace{1001{\3}\\01001\\001{\2}{\2}}{}$,
$\subspace{100{\2}{\2}\\0101{\3}\\001{\2}{\3}}{}$,
$\subspace{100{\2}{\3}\\010{\2}{\2}\\001{\3}0}{}$,
$\subspace{1001{\2}\\010{\3}0\\001{\3}1}{}$,
$\subspace{100{\3}{\2}\\01011\\0010{\2}}{7}$,
$\subspace{100{\3}0\\010{\2}{\3}\\00111}{}$,
$\subspace{1000{\2}\\010{\3}{\2}\\0011{\3}}{}$,
$\subspace{1001{\3}\\0100{\2}\\001{\2}1}{}$,
$\subspace{100{\2}1\\0101{\3}\\001{\2}{\3}}{}$,
$\subspace{100{\2}{\3}\\010{\2}1\\001{\3}0}{}$,
$\subspace{10011\\010{\3}0\\001{\3}{\2}}{}$,
$\subspace{1001{\3}\\010{\2}0\\00101}{7}$,
$\subspace{100{\2}0\\0101{\2}\\0011{\3}}{}$,
$\subspace{100{\3}{\3}\\010{\3}{\2}\\0011{\2}}{}$,
$\subspace{10001\\0101{\3}\\001{\2}1}{}$,
$\subspace{1001{\2}\\010{\3}{\3}\\001{\2}0}{}$,
$\subspace{100{\2}1\\01001\\001{\3}{\2}}{}$,
$\subspace{100{\3}{\2}\\010{\2}1\\001{\3}{\3}}{}$,
$\subspace{100{\3}1\\010{\2}1\\00100}{7}$,
$\subspace{100{\2}1\\01000\\00110}{}$,
$\subspace{10010\\010{\2}1\\00110}{}$,
$\subspace{100{\3}1\\01010\\001{\2}1}{}$,
$\subspace{10000\\010{\3}1\\001{\2}1}{}$,
$\subspace{10010\\01010\\001{\3}1}{}$,
$\subspace{100{\2}1\\010{\3}1\\001{\3}1}{}$,
$\subspace{100{\3}{\2}\\010{\2}{\3}\\00100}{7}$,
$\subspace{100{\2}{\3}\\01000\\00111}{}$,
$\subspace{10011\\010{\2}{\3}\\00111}{}$,
$\subspace{100{\3}{\2}\\01011\\001{\2}{\3}}{}$,
$\subspace{10000\\010{\3}{\2}\\001{\2}{\3}}{}$,
$\subspace{10011\\01011\\001{\3}{\2}}{}$,
$\subspace{100{\2}{\3}\\010{\3}{\2}\\001{\3}{\2}}{}$,
$\subspace{10011\\010{\3}{\3}\\0010{\3}}{7}$,
$\subspace{100{\3}{\3}\\0101{\2}\\00111}{}$,
$\subspace{100{\2}{\2}\\010{\2}1\\0011{\2}}{}$,
$\subspace{100{\3}0\\0100{\3}\\001{\2}1}{}$,
$\subspace{100{\2}1\\010{\3}0\\001{\2}{\2}}{}$,
$\subspace{1000{\3}\\01011\\001{\3}0}{}$,
$\subspace{1001{\2}\\010{\2}{\2}\\001{\3}{\3}}{}$,
$\subspace{100{\2}0\\010{\3}{\2}\\00101}{7}$,
$\subspace{100{\3}{\3}\\01001\\0011{\3}}{}$,
$\subspace{1001{\3}\\010{\3}{\3}\\0011{\2}}{}$,
$\subspace{1001{\2}\\0101{\3}\\001{\2}1}{}$,
$\subspace{100{\3}{\2}\\010{\2}1\\001{\2}0}{}$,
$\subspace{100{\2}1\\0101{\2}\\001{\3}{\2}}{}$,
$\subspace{10001\\010{\2}0\\001{\3}{\3}}{}$.

A projective $3-(177,5,\{2\})_5$ system, i.e., a $(80,7)^{3,5}_5$-multispread, with prescribed automorphisms 
$
\left[\!
\begin{smallmatrix}10000\\00100\\04400\\00001\\00044\end{smallmatrix}
\!\right]$, $\left[\!
\begin{smallmatrix}10000\\00100\\01000\\00001\\00010\end{smallmatrix}
\!\right]
$:
\def\subspace#1#2{\left(\!\begin{smallmatrix}#1\end{smallmatrix}\!\right)^{\!\makebox[0mm][l]{$\scriptscriptstyle #2$}}}
$\subspace{01003\\00122}{2}$,
$\subspace{01022\\00130}{}$,
$\subspace{10231\\01011}{6}$,
$\subspace{10223\\01110}{}$,
$\subspace{10322\\01004}{}$,
$\subspace{10330\\01101}{}$,
$\subspace{12013\\00111}{}$,
$\subspace{13022\\00140}{}$,
$\subspace{10243\\01024}{6}$,
$\subspace{10214\\01123}{}$,
$\subspace{10324\\01031}{}$,
$\subspace{10332\\01132}{}$,
$\subspace{12034\\00142}{}$,
$\subspace{13042\\00113}{}$,
$\subspace{10311\\01310}{6}$,
$\subspace{10340\\01311}{}$,
$\subspace{10410\\01202}{}$,
$\subspace{10443\\01222}{}$,
$\subspace{10414\\01420}{}$,
$\subspace{10444\\01403}{}$,
$\subspace{10000\\00010\\00001}{1}$,
$\subspace{10001\\00102\\00013}{3}$,
$\subspace{10003\\01001\\00012}{}$,
$\subspace{10003\\01104\\00014}{}$,
$\subspace{10003\\01004\\00102}{3}$,
$\subspace{10022\\01011\\00111}{}$,
$\subspace{10030\\01020\\00140}{}$,
$\subspace{10004\\01001\\00103}{3}$,
$\subspace{10011\\01044\\00144}{}$,
$\subspace{10040\\01030\\00110}{}$,
$\subspace{10002\\01001\\00130}{6}$,
$\subspace{10002\\01032\\00122}{}$,
$\subspace{10020\\01003\\00110}{}$,
$\subspace{10020\\01022\\00123}{}$,
$\subspace{10033\\01013\\00144}{}$,
$\subspace{10033\\01044\\00131}{}$,
$\subspace{10004\\01000\\00133}{6}$,
$\subspace{10004\\01030\\00120}{}$,
$\subspace{10011\\01000\\00103}{}$,
$\subspace{10011\\01030\\00100}{}$,
$\subspace{10040\\01002\\00103}{}$,
$\subspace{10040\\01033\\00100}{}$,
$\subspace{10001\\01014\\00103}{3}$,
$\subspace{10010\\01030\\00141}{}$,
$\subspace{10044\\01021\\00112}{}$,
$\subspace{10004\\01014\\00103}{3}$,
$\subspace{10011\\01021\\00112}{}$,
$\subspace{10040\\01030\\00141}{}$,
$\subspace{10000\\01014\\00112}{2}$,
$\subspace{10000\\01021\\00141}{}$,
$\subspace{10002\\01010\\00124}{6}$,
$\subspace{10002\\01034\\00132}{}$,
$\subspace{10020\\01023\\00143}{}$,
$\subspace{10020\\01042\\00101}{}$,
$\subspace{10033\\01010\\00114}{}$,
$\subspace{10033\\01041\\00101}{}$,
$\subspace{10000\\01012\\00134}{2}$,
$\subspace{10000\\01043\\00121}{}$,
$\subspace{10000\\01012\\00134}{2}$,
$\subspace{10000\\01043\\00121}{}$,
$\subspace{10001\\01023\\00101}{3}$,
$\subspace{10010\\01010\\00132}{}$,
$\subspace{10044\\01042\\00124}{}$,
$\subspace{10002\\01020\\00110}{6}$,
$\subspace{10002\\01033\\00144}{}$,
$\subspace{10020\\01001\\00102}{}$,
$\subspace{10020\\01044\\00133}{}$,
$\subspace{10033\\01002\\00102}{}$,
$\subspace{10033\\01020\\00120}{}$,
$\subspace{10004\\01022\\00120}{6}$,
$\subspace{10004\\01042\\00133}{}$,
$\subspace{10011\\01003\\00132}{}$,
$\subspace{10011\\01023\\00130}{}$,
$\subspace{10040\\01002\\00122}{}$,
$\subspace{10040\\01033\\00124}{}$,
$\subspace{10001\\01034\\00100}{3}$,
$\subspace{10010\\01000\\00143}{}$,
$\subspace{10044\\01041\\00114}{}$,
$\subspace{10001\\01040\\00104}{3}$,
$\subspace{10010\\01040\\00104}{}$,
$\subspace{10044\\01040\\00104}{}$,
$\subspace{10003\\01041\\00102}{3}$,
$\subspace{10022\\01034\\00143}{}$,
$\subspace{10030\\01020\\00114}{}$,
$\subspace{10013\\01000\\00112}{6}$,
$\subspace{10024\\01014\\00141}{}$,
$\subspace{10023\\01021\\00100}{}$,
$\subspace{10032\\01000\\00112}{}$,
$\subspace{10031\\01021\\00100}{}$,
$\subspace{10042\\01014\\00141}{}$,
$\subspace{10013\\01002\\00133}{6}$,
$\subspace{10023\\01002\\00133}{}$,
$\subspace{10024\\01002\\00133}{}$,
$\subspace{10031\\01033\\00120}{}$,
$\subspace{10032\\01033\\00120}{}$,
$\subspace{10042\\01033\\00120}{}$,
$\subspace{10013\\01004\\00141}{6}$,
$\subspace{10023\\01001\\00111}{}$,
$\subspace{10024\\01021\\00144}{}$,
$\subspace{10032\\01011\\00110}{}$,
$\subspace{10031\\01014\\00140}{}$,
$\subspace{10042\\01044\\00112}{}$,
$\subspace{10014\\01012\\00101}{6}$,
$\subspace{10012\\01032\\00134}{}$,
$\subspace{10021\\01043\\00123}{}$,
$\subspace{10034\\01010\\00131}{}$,
$\subspace{10043\\01013\\00101}{}$,
$\subspace{10041\\01010\\00121}{}$,
$\subspace{10014\\01024\\00114}{6}$,
$\subspace{10012\\01030\\00113}{}$,
$\subspace{10021\\01031\\00103}{}$,
$\subspace{10034\\01034\\00103}{}$,
$\subspace{10043\\01030\\00143}{}$,
$\subspace{10041\\01041\\00142}{}$,
$\subspace{10014\\01032\\00102}{6}$,
$\subspace{10012\\01041\\00131}{}$,
$\subspace{10021\\01013\\00114}{}$,
$\subspace{10034\\01020\\00143}{}$,
$\subspace{10041\\01020\\00123}{}$,
$\subspace{10043\\01034\\00102}{}$,
$\subspace{10013\\01032\\00142}{6}$,
$\subspace{10024\\01031\\00132}{}$,
$\subspace{10023\\01042\\00131}{}$,
$\subspace{10032\\01013\\00124}{}$,
$\subspace{10031\\01024\\00123}{}$,
$\subspace{10042\\01023\\00113}{}$,
$\subspace{10013\\01043\\00110}{6}$,
$\subspace{10023\\01033\\00121}{}$,
$\subspace{10024\\01044\\00120}{}$,
$\subspace{10031\\01001\\00134}{}$,
$\subspace{10032\\01012\\00133}{}$,
$\subspace{10042\\01002\\00144}{}$,
$\subspace{10012\\01042\\00121}{6}$,
$\subspace{10014\\01043\\00131}{}$,
$\subspace{10021\\01012\\00124}{}$,
$\subspace{10034\\01032\\00132}{}$,
$\subspace{10041\\01013\\00134}{}$,
$\subspace{10043\\01023\\00123}{}$,
$\subspace{10013\\01040\\00121}{6}$,
$\subspace{10023\\01010\\00104}{}$,
$\subspace{10024\\01043\\00101}{}$,
$\subspace{10031\\01012\\00104}{}$,
$\subspace{10032\\01040\\00101}{}$,
$\subspace{10042\\01010\\00134}{}$,
$\subspace{10003\\01200\\00012}{3}$,
$\subspace{10001\\01300\\00013}{}$,
$\subspace{10003\\01400\\00014}{}$,
$\subspace{10104\\01000\\00014}{6}$,
$\subspace{10104\\01100\\00012}{}$,
$\subspace{10403\\01000\\00013}{}$,
$\subspace{10403\\01100\\00013}{}$,
$\subspace{11004\\00100\\00014}{}$,
$\subspace{14004\\00100\\00012}{}$,
$\subspace{10110\\01040\\00001}{6}$,
$\subspace{10140\\01140\\00001}{}$,
$\subspace{10400\\01001\\00011}{}$,
$\subspace{10400\\01104\\00010}{}$,
$\subspace{11001\\00104\\00010}{}$,
$\subspace{14000\\00104\\00011}{}$,
$\subspace{10102\\01204\\00010}{3}$,
$\subspace{10102\\01401\\00011}{}$,
$\subspace{10200\\01320\\00001}{}$,
$\subspace{10103\\01201\\00010}{3}$,
$\subspace{10103\\01404\\00011}{}$,
$\subspace{10200\\01330\\00001}{}$,
$\subspace{10120\\01320\\00001}{6}$,
$\subspace{10130\\01320\\00001}{}$,
$\subspace{10303\\01204\\00010}{}$,
$\subspace{10304\\01204\\00010}{}$,
$\subspace{10303\\01401\\00011}{}$,
$\subspace{10304\\01401\\00011}{}$,
$\subspace{10203\\01203\\00014}{6}$,
$\subspace{10204\\01204\\00013}{}$,
$\subspace{10203\\01402\\00012}{}$,
$\subspace{10204\\01401\\00013}{}$,
$\subspace{10400\\01301\\00012}{}$,
$\subspace{10400\\01304\\00014}{}$.

A projective $3-(202,5,\{2\})_5$ system, i.e., a $(56,8)^{3,5}_5$-multispread:
\def\subspace#1#2{\left(\!\begin{smallmatrix}#1\end{smallmatrix}\!\right)^{\!\makebox[0mm][l]{$\scriptscriptstyle #2$}}}
$\subspace{01002\\00133}{2}$,
$\subspace{01033\\00120}{}$,
$\subspace{10110\\01004}{6}$,
$\subspace{10144\\01101}{}$,
$\subspace{10421\\01011}{}$,
$\subspace{10434\\01110}{}$,
$\subspace{11001\\00140}{}$,
$\subspace{14012\\00111}{}$,
$\subspace{10114\\01023}{6}$,
$\subspace{10143\\01124}{}$,
$\subspace{10413\\01042}{}$,
$\subspace{10442\\01142}{}$,
$\subspace{11041\\00132}{}$,
$\subspace{14031\\00124}{}$,
$\subspace{10001\\00102\\00013}{3}$,
$\subspace{10003\\01001\\00012}{}$,
$\subspace{10003\\01104\\00014}{}$,
$\subspace{10001\\00102\\00013}{3}$,
$\subspace{10003\\01001\\00012}{}$,
$\subspace{10003\\01104\\00014}{}$,
$\subspace{10002\\01001\\00102}{6}$,
$\subspace{10002\\01002\\00102}{}$,
$\subspace{10020\\01020\\00110}{}$,
$\subspace{10020\\01020\\00120}{}$,
$\subspace{10033\\01033\\00144}{}$,
$\subspace{10033\\01044\\00133}{}$,
$\subspace{10002\\01000\\00112}{6}$,
$\subspace{10002\\01014\\00141}{}$,
$\subspace{10020\\01014\\00141}{}$,
$\subspace{10020\\01021\\00100}{}$,
$\subspace{10033\\01000\\00112}{}$,
$\subspace{10033\\01021\\00100}{}$,
$\subspace{10000\\01003\\00122}{2}$,
$\subspace{10000\\01022\\00130}{}$,
$\subspace{10004\\01001\\00130}{6}$,
$\subspace{10004\\01032\\00122}{}$,
$\subspace{10011\\01013\\00144}{}$,
$\subspace{10011\\01044\\00131}{}$,
$\subspace{10040\\01003\\00110}{}$,
$\subspace{10040\\01022\\00123}{}$,
$\subspace{10002\\01001\\00142}{6}$,
$\subspace{10002\\01041\\00113}{}$,
$\subspace{10020\\01024\\00110}{}$,
$\subspace{10020\\01031\\00114}{}$,
$\subspace{10033\\01034\\00144}{}$,
$\subspace{10033\\01044\\00143}{}$,
$\subspace{10001\\01014\\00103}{3}$,
$\subspace{10010\\01030\\00141}{}$,
$\subspace{10044\\01021\\00112}{}$,
$\subspace{10000\\01014\\00112}{2}$,
$\subspace{10000\\01021\\00141}{}$,
$\subspace{10003\\01012\\00114}{6}$,
$\subspace{10003\\01021\\00143}{}$,
$\subspace{10022\\01014\\00134}{}$,
$\subspace{10022\\01043\\00141}{}$,
$\subspace{10030\\01034\\00112}{}$,
$\subspace{10030\\01041\\00121}{}$,
$\subspace{10000\\01013\\00123}{2}$,
$\subspace{10000\\01032\\00131}{}$,
$\subspace{10002\\01010\\00122}{6}$,
$\subspace{10002\\01031\\00130}{}$,
$\subspace{10020\\01003\\00113}{}$,
$\subspace{10020\\01022\\00101}{}$,
$\subspace{10033\\01010\\00142}{}$,
$\subspace{10033\\01024\\00101}{}$,
$\subspace{10003\\01011\\00133}{6}$,
$\subspace{10003\\01040\\00120}{}$,
$\subspace{10022\\01004\\00104}{}$,
$\subspace{10022\\01040\\00140}{}$,
$\subspace{10030\\01002\\00104}{}$,
$\subspace{10030\\01033\\00111}{}$,
$\subspace{10004\\01022\\00103}{3}$,
$\subspace{10011\\01003\\00130}{}$,
$\subspace{10040\\01030\\00122}{}$,
$\subspace{10004\\01020\\00111}{6}$,
$\subspace{10004\\01032\\00140}{}$,
$\subspace{10011\\01013\\00102}{}$,
$\subspace{10011\\01020\\00131}{}$,
$\subspace{10040\\01004\\00123}{}$,
$\subspace{10040\\01011\\00102}{}$,
$\subspace{10001\\01022\\00121}{6}$,
$\subspace{10001\\01041\\00134}{}$,
$\subspace{10010\\01012\\00122}{}$,
$\subspace{10010\\01043\\00114}{}$,
$\subspace{10044\\01003\\00143}{}$,
$\subspace{10044\\01034\\00130}{}$,
$\subspace{10001\\01030\\00103}{3}$,
$\subspace{10010\\01030\\00103}{}$,
$\subspace{10044\\01030\\00103}{}$,
$\subspace{10000\\01034\\00114}{2}$,
$\subspace{10000\\01041\\00143}{}$,
$\subspace{10004\\01043\\00103}{3}$,
$\subspace{10011\\01012\\00121}{}$,
$\subspace{10040\\01030\\00134}{}$,
$\subspace{10013\\01000\\00123}{6}$,
$\subspace{10024\\01013\\00142}{}$,
$\subspace{10023\\01031\\00100}{}$,
$\subspace{10032\\01000\\00113}{}$,
$\subspace{10031\\01032\\00100}{}$,
$\subspace{10042\\01024\\00131}{}$,
$\subspace{10014\\01000\\00121}{6}$,
$\subspace{10012\\01014\\00100}{}$,
$\subspace{10021\\01000\\00141}{}$,
$\subspace{10034\\01043\\00112}{}$,
$\subspace{10041\\01012\\00100}{}$,
$\subspace{10043\\01021\\00134}{}$,
$\subspace{10014\\01001\\00141}{6}$,
$\subspace{10012\\01023\\00144}{}$,
$\subspace{10021\\01044\\00132}{}$,
$\subspace{10034\\01021\\00124}{}$,
$\subspace{10041\\01014\\00110}{}$,
$\subspace{10043\\01042\\00112}{}$,
$\subspace{10013\\01010\\00110}{6}$,
$\subspace{10023\\01003\\00101}{}$,
$\subspace{10024\\01044\\00122}{}$,
$\subspace{10031\\01001\\00101}{}$,
$\subspace{10032\\01010\\00130}{}$,
$\subspace{10042\\01022\\00144}{}$,
$\subspace{10014\\01010\\00114}{6}$,
$\subspace{10012\\01042\\00101}{}$,
$\subspace{10021\\01010\\00124}{}$,
$\subspace{10034\\01034\\00132}{}$,
$\subspace{10043\\01023\\00143}{}$,
$\subspace{10041\\01041\\00101}{}$,
$\subspace{10013\\01012\\00132}{6}$,
$\subspace{10024\\01042\\00104}{}$,
$\subspace{10023\\01040\\00134}{}$,
$\subspace{10031\\01023\\00121}{}$,
$\subspace{10032\\01043\\00104}{}$,
$\subspace{10042\\01040\\00124}{}$,
$\subspace{10014\\01024\\00104}{6}$,
$\subspace{10012\\01031\\00113}{}$,
$\subspace{10021\\01031\\00113}{}$,
$\subspace{10034\\01040\\00142}{}$,
$\subspace{10043\\01024\\00104}{}$,
$\subspace{10041\\01040\\00142}{}$,
$\subspace{10013\\01020\\00114}{6}$,
$\subspace{10024\\01034\\00143}{}$,
$\subspace{10023\\01041\\00102}{}$,
$\subspace{10032\\01020\\00114}{}$,
$\subspace{10031\\01041\\00102}{}$,
$\subspace{10042\\01034\\00143}{}$,
$\subspace{10013\\01022\\00121}{6}$,
$\subspace{10023\\01034\\00130}{}$,
$\subspace{10024\\01043\\00114}{}$,
$\subspace{10032\\01003\\00143}{}$,
$\subspace{10031\\01012\\00122}{}$,
$\subspace{10042\\01041\\00134}{}$,
$\subspace{10013\\01030\\00101}{6}$,
$\subspace{10023\\01013\\00103}{}$,
$\subspace{10024\\01010\\00123}{}$,
$\subspace{10031\\01010\\00103}{}$,
$\subspace{10032\\01030\\00131}{}$,
$\subspace{10042\\01032\\00101}{}$,
$\subspace{10013\\01044\\00141}{6}$,
$\subspace{10023\\01002\\00110}{}$,
$\subspace{10024\\01021\\00133}{}$,
$\subspace{10032\\01001\\00120}{}$,
$\subspace{10031\\01014\\00144}{}$,
$\subspace{10042\\01033\\00112}{}$,
$\subspace{10001\\01200\\00010}{6}$,
$\subspace{10004\\01200\\00010}{}$,
$\subspace{10010\\01300\\00001}{}$,
$\subspace{10040\\01300\\00001}{}$,
$\subspace{10001\\01400\\00011}{}$,
$\subspace{10004\\01400\\00011}{}$,
$\subspace{10100\\01203\\00011}{6}$,
$\subspace{10130\\01230\\00001}{}$,
$\subspace{10100\\01402\\00010}{}$,
$\subspace{10120\\01430\\00001}{}$,
$\subspace{10204\\01301\\00011}{}$,
$\subspace{10204\\01304\\00010}{}$,
$\subspace{10100\\01320\\00001}{3}$,
$\subspace{10301\\01204\\00010}{}$,
$\subspace{10301\\01401\\00011}{}$,
$\subspace{10100\\01340\\00001}{3}$,
$\subspace{10302\\01203\\00010}{}$,
$\subspace{10302\\01402\\00011}{}$,
$\subspace{10201\\01000\\00012}{6}$,
$\subspace{10201\\01100\\00014}{}$,
$\subspace{10301\\01000\\00012}{}$,
$\subspace{10301\\01100\\00014}{}$,
$\subspace{12002\\00100\\00013}{}$,
$\subspace{13002\\00100\\00013}{}$,
$\subspace{10203\\01003\\00013}{6}$,
$\subspace{10203\\01102\\00013}{}$,
$\subspace{10302\\01004\\00014}{}$,
$\subspace{10302\\01101\\00012}{}$,
$\subspace{12004\\00104\\00012}{}$,
$\subspace{13002\\00104\\00014}{}$,
$\subspace{10201\\01201\\00010}{3}$,
$\subspace{10201\\01404\\00011}{}$,
$\subspace{10400\\01330\\00001}{}$,
$\subspace{10202\\01202\\00010}{3}$,
$\subspace{10202\\01403\\00011}{}$,
$\subspace{10400\\01310\\00001}{}$,
$\subspace{10303\\01301\\00014}{6}$,
$\subspace{10303\\01304\\00012}{}$,
$\subspace{10402\\01202\\00014}{}$,
$\subspace{10403\\01201\\00013}{}$,
$\subspace{10402\\01403\\00012}{}$,
$\subspace{10403\\01404\\00013}{}$.

A projective $3-(227,5,\{2\})_5$ system, i.e., a $(32,9)^{3,5}_5$-multispread:
\def\subspace#1#2{\left(\!\begin{smallmatrix}#1\end{smallmatrix}\!\right)^{\!\makebox[0mm][l]{$\scriptscriptstyle #2$}}}
$\subspace{01013\\00123}{2}$,
$\subspace{01032\\00131}{}$,
$\subspace{10141\\01212}{3}$,
$\subspace{10112\\01414}{}$,
$\subspace{10201\\01313}{}$,
$\subspace{10302\\01334}{3}$,
$\subspace{10432\\01231}{}$,
$\subspace{10424\\01432}{}$,
$\subspace{10003\\00100\\00013}{3}$,
$\subspace{10004\\01000\\00012}{}$,
$\subspace{10004\\01100\\00014}{}$,
$\subspace{10004\\00100\\00013}{3}$,
$\subspace{10002\\01000\\00012}{}$,
$\subspace{10002\\01100\\00014}{}$,
$\subspace{10002\\01001\\00103}{3}$,
$\subspace{10020\\01030\\00110}{}$,
$\subspace{10033\\01044\\00144}{}$,
$\subspace{10002\\01004\\00102}{3}$,
$\subspace{10020\\01020\\00140}{}$,
$\subspace{10033\\01011\\00111}{}$,
$\subspace{10001\\01002\\00110}{6}$,
$\subspace{10001\\01014\\00144}{}$,
$\subspace{10010\\01001\\00120}{}$,
$\subspace{10010\\01044\\00141}{}$,
$\subspace{10044\\01021\\00133}{}$,
$\subspace{10044\\01033\\00112}{}$,
$\subspace{10000\\01003\\00122}{2}$,
$\subspace{10000\\01022\\00130}{}$,
$\subspace{10003\\01001\\00130}{6}$,
$\subspace{10003\\01032\\00122}{}$,
$\subspace{10022\\01013\\00144}{}$,
$\subspace{10022\\01044\\00131}{}$,
$\subspace{10030\\01003\\00110}{}$,
$\subspace{10030\\01022\\00123}{}$,
$\subspace{10004\\01004\\00132}{6}$,
$\subspace{10004\\01032\\00124}{}$,
$\subspace{10011\\01013\\00111}{}$,
$\subspace{10011\\01011\\00131}{}$,
$\subspace{10040\\01023\\00140}{}$,
$\subspace{10040\\01042\\00123}{}$,
$\subspace{10001\\01001\\00141}{6}$,
$\subspace{10001\\01042\\00112}{}$,
$\subspace{10010\\01014\\00110}{}$,
$\subspace{10010\\01021\\00124}{}$,
$\subspace{10044\\01023\\00144}{}$,
$\subspace{10044\\01044\\00132}{}$,
$\subspace{10001\\01004\\00143}{6}$,
$\subspace{10001\\01042\\00114}{}$,
$\subspace{10010\\01034\\00140}{}$,
$\subspace{10010\\01041\\00124}{}$,
$\subspace{10044\\01011\\00132}{}$,
$\subspace{10044\\01023\\00111}{}$,
$\subspace{10000\\01010\\00101}{1}$,
$\subspace{10000\\01014\\00114}{6}$,
$\subspace{10000\\01021\\00113}{}$,
$\subspace{10000\\01024\\00143}{}$,
$\subspace{10000\\01031\\00112}{}$,
$\subspace{10000\\01034\\00142}{}$,
$\subspace{10000\\01041\\00141}{}$,
$\subspace{10002\\01014\\00113}{6}$,
$\subspace{10002\\01020\\00142}{}$,
$\subspace{10020\\01024\\00102}{}$,
$\subspace{10020\\01031\\00141}{}$,
$\subspace{10033\\01021\\00102}{}$,
$\subspace{10033\\01020\\00112}{}$,
$\subspace{10004\\01011\\00133}{6}$,
$\subspace{10004\\01040\\00120}{}$,
$\subspace{10011\\01004\\00104}{}$,
$\subspace{10011\\01040\\00140}{}$,
$\subspace{10040\\01002\\00104}{}$,
$\subspace{10040\\01033\\00111}{}$,
$\subspace{10002\\01021\\00100}{3}$,
$\subspace{10020\\01000\\00112}{}$,
$\subspace{10033\\01014\\00141}{}$,
$\subspace{10001\\01022\\00121}{6}$,
$\subspace{10001\\01041\\00134}{}$,
$\subspace{10010\\01012\\00122}{}$,
$\subspace{10010\\01043\\00114}{}$,
$\subspace{10044\\01003\\00143}{}$,
$\subspace{10044\\01034\\00130}{}$,
$\subspace{10004\\01020\\00122}{6}$,
$\subspace{10004\\01042\\00130}{}$,
$\subspace{10011\\01023\\00102}{}$,
$\subspace{10011\\01020\\00132}{}$,
$\subspace{10040\\01003\\00124}{}$,
$\subspace{10040\\01022\\00102}{}$,
$\subspace{10003\\01033\\00102}{3}$,
$\subspace{10022\\01002\\00120}{}$,
$\subspace{10030\\01020\\00133}{}$,
$\subspace{10003\\01031\\00111}{6}$,
$\subspace{10003\\01042\\00140}{}$,
$\subspace{10022\\01024\\00132}{}$,
$\subspace{10022\\01023\\00142}{}$,
$\subspace{10030\\01004\\00124}{}$,
$\subspace{10030\\01011\\00113}{}$,
$\subspace{10001\\01041\\00102}{3}$,
$\subspace{10010\\01020\\00114}{}$,
$\subspace{10044\\01034\\00143}{}$,
$\subspace{10003\\01040\\00104}{3}$,
$\subspace{10022\\01040\\00104}{}$,
$\subspace{10030\\01040\\00104}{}$,
$\subspace{10012\\01002\\00103}{6}$,
$\subspace{10014\\01030\\00100}{}$,
$\subspace{10021\\01030\\00120}{}$,
$\subspace{10034\\01000\\00133}{}$,
$\subspace{10041\\01000\\00103}{}$,
$\subspace{10043\\01033\\00100}{}$,
$\subspace{10012\\01001\\00114}{6}$,
$\subspace{10014\\01034\\00111}{}$,
$\subspace{10021\\01041\\00110}{}$,
$\subspace{10034\\01004\\00144}{}$,
$\subspace{10041\\01011\\00143}{}$,
$\subspace{10043\\01044\\00140}{}$,
$\subspace{10013\\01000\\00111}{6}$,
$\subspace{10024\\01004\\00101}{}$,
$\subspace{10023\\01010\\00100}{}$,
$\subspace{10032\\01000\\00101}{}$,
$\subspace{10031\\01011\\00100}{}$,
$\subspace{10042\\01010\\00140}{}$,
$\subspace{10012\\01001\\00132}{6}$,
$\subspace{10014\\01042\\00103}{}$,
$\subspace{10021\\01023\\00110}{}$,
$\subspace{10034\\01030\\00144}{}$,
$\subspace{10041\\01030\\00124}{}$,
$\subspace{10043\\01044\\00103}{}$,
$\subspace{10012\\01011\\00101}{6}$,
$\subspace{10014\\01010\\00141}{}$,
$\subspace{10021\\01010\\00111}{}$,
$\subspace{10034\\01021\\00140}{}$,
$\subspace{10043\\01004\\00112}{}$,
$\subspace{10041\\01014\\00101}{}$,
$\subspace{10013\\01012\\00134}{6}$,
$\subspace{10023\\01012\\00134}{}$,
$\subspace{10024\\01012\\00134}{}$,
$\subspace{10031\\01043\\00121}{}$,
$\subspace{10032\\01043\\00121}{}$,
$\subspace{10042\\01043\\00121}{}$,
$\subspace{10014\\01014\\00143}{6}$,
$\subspace{10012\\01022\\00112}{}$,
$\subspace{10021\\01021\\00122}{}$,
$\subspace{10034\\01041\\00130}{}$,
$\subspace{10043\\01003\\00114}{}$,
$\subspace{10041\\01034\\00141}{}$,
$\subspace{10014\\01021\\00101}{6}$,
$\subspace{10012\\01020\\00141}{}$,
$\subspace{10021\\01014\\00102}{}$,
$\subspace{10034\\01010\\00102}{}$,
$\subspace{10041\\01010\\00112}{}$,
$\subspace{10043\\01020\\00101}{}$,
$\subspace{10013\\01023\\00121}{6}$,
$\subspace{10024\\01043\\00104}{}$,
$\subspace{10023\\01040\\00124}{}$,
$\subspace{10031\\01012\\00132}{}$,
$\subspace{10032\\01042\\00104}{}$,
$\subspace{10042\\01040\\00134}{}$,
$\subspace{10012\\01031\\00121}{6}$,
$\subspace{10014\\01043\\00130}{}$,
$\subspace{10021\\01012\\00113}{}$,
$\subspace{10034\\01022\\00142}{}$,
$\subspace{10041\\01003\\00134}{}$,
$\subspace{10043\\01024\\00122}{}$,
$\subspace{10013\\01033\\00143}{6}$,
$\subspace{10023\\01014\\00120}{}$,
$\subspace{10024\\01041\\00112}{}$,
$\subspace{10032\\01002\\00141}{}$,
$\subspace{10031\\01034\\00133}{}$,
$\subspace{10042\\01021\\00114}{}$,
$\subspace{10013\\01041\\00103}{6}$,
$\subspace{10024\\01030\\00104}{}$,
$\subspace{10023\\01040\\00143}{}$,
$\subspace{10032\\01034\\00104}{}$,
$\subspace{10031\\01030\\00114}{}$,
$\subspace{10042\\01040\\00103}{}$,
$\subspace{10013\\01044\\00113}{6}$,
$\subspace{10023\\01022\\00110}{}$,
$\subspace{10024\\01024\\00130}{}$,
$\subspace{10032\\01001\\00122}{}$,
$\subspace{10031\\01031\\00144}{}$,
$\subspace{10042\\01003\\00142}{}$,
$\subspace{10002\\01202\\00011}{6}$,
$\subspace{10030\\01220\\00001}{}$,
$\subspace{10003\\01301\\00010}{}$,
$\subspace{10003\\01304\\00011}{}$,
$\subspace{10002\\01403\\00010}{}$,
$\subspace{10020\\01420\\00001}{}$,
$\subspace{10100\\01002\\00010}{6}$,
$\subspace{10100\\01103\\00011}{}$,
$\subspace{10403\\01003\\00010}{}$,
$\subspace{10403\\01102\\00011}{}$,
$\subspace{11000\\00120\\00001}{}$,
$\subspace{14030\\00130\\00001}{}$,
$\subspace{10102\\01001\\00014}{6}$,
$\subspace{10102\\01104\\00012}{}$,
$\subspace{10401\\01002\\00013}{}$,
$\subspace{10401\\01103\\00013}{}$,
$\subspace{11002\\00101\\00014}{}$,
$\subspace{14003\\00101\\00012}{}$,
$\subspace{10104\\01200\\00012}{3}$,
$\subspace{10104\\01400\\00014}{}$,
$\subspace{10203\\01300\\00013}{}$,
$\subspace{10101\\01300\\00013}{3}$,
$\subspace{10303\\01200\\00012}{}$,
$\subspace{10303\\01400\\00014}{}$,
$\subspace{10100\\01320\\00001}{3}$,
$\subspace{10301\\01204\\00010}{}$,
$\subspace{10301\\01401\\00011}{}$,
$\subspace{10202\\01000\\00011}{6}$,
$\subspace{10202\\01100\\00010}{}$,
$\subspace{10330\\01000\\00001}{}$,
$\subspace{10320\\01100\\00001}{}$,
$\subspace{12003\\00100\\00011}{}$,
$\subspace{13003\\00100\\00010}{}$,
$\subspace{10202\\01002\\00011}{6}$,
$\subspace{10202\\01103\\00010}{}$,
$\subspace{10340\\01030\\00001}{}$,
$\subspace{10310\\01130\\00001}{}$,
$\subspace{12003\\00103\\00011}{}$,
$\subspace{13004\\00103\\00010}{}$,
$\subspace{10203\\01202\\00012}{6}$,
$\subspace{10204\\01203\\00012}{}$,
$\subspace{10203\\01403\\00014}{}$,
$\subspace{10204\\01402\\00014}{}$,
$\subspace{10402\\01302\\00013}{}$,
$\subspace{10402\\01303\\00013}{}$,
$\subspace{10304\\01300\\00013}{3}$,
$\subspace{10402\\01200\\00012}{}$,
$\subspace{10402\\01400\\00014}{}$.

\section{Explicit lists of generator matrices for lower bounds for \texorpdfstring{$n_4(5,2;s)$}{n4(5,2;s)}}
\label{sec_explicit_4}
 In this section we explicitly list the generator matrices of the projective $2-(n,5,s)_4$ systems found by ILP computations. For each system, we also list generators of the prescribed automorphism group of the system (which is not necessarily the full automorphism group), where $\omega\leftrightarrow\upsilon$ is the Frobenius automorphism of the field $\FF_4=\{0,1,\omega,\upsilon\}$; representatives of orbits under this group are marked by an index whose value indicates the orbit size.

\def\subspace#1#2{\left(\!\begin{smallmatrix}#1\end{smallmatrix}\!\right)^{\!\raisebox{0mm}{\makebox[0mm][l]{$\scriptscriptstyle #2$}}}}
\def\2{\scalebox{0.75}[1]{$\scriptstyle\omega$}}
\def\3{\upsilon}
$n_4(5,2;2)\ge 20$, $\left[\!\begin{smallmatrix}01000\\1{\2}000\\00010\\001{\3}0\\00001\end{smallmatrix}\!\right]$, $(\omega\leftrightarrow\upsilon)\cdot\left[\!\begin{smallmatrix}001{\3}0\\00{\3}{\3}0\\{\2}{\2}000\\{\2}1000\\00001\end{smallmatrix}\!\right]$:
$\subspace{01010\\00101}{10}$,
$\subspace{100{\2}{\2}\\001{\2}{\2}}{}$,
$\subspace{110{\3}{\2}\\001{\3}1}{}$,
$\subspace{1{\2}10{\3}\\00011}{}$,
$\subspace{1{\3}01{\3}\\0011{\2}}{}$,
$\subspace{1001{\2}\\01001}{}$,
$\subspace{10001\\01100}{}$,
$\subspace{10111\\0111{\2}}{}$,
$\subspace{10{\2}1{\2}\\011{\3}{\2}}{}$,
$\subspace{10{\3}10\\011{\2}1}{}$,
$\subspace{010{\3}{\3}\\00100}{10}$,
$\subspace{1001{\3}\\001{\2}0}{}$,
$\subspace{110{\2}{\2}\\001{\3}0}{}$,
$\subspace{1{\2}{\3}0{\3}\\00010}{}$,
$\subspace{1{\3}0{\3}{\2}\\00110}{}$,
$\subspace{100{\3}1\\01000}{}$,
$\subspace{10000\\01{\3}01}{}$,
$\subspace{101{\3}{\2}\\01{\3}{\2}1}{}$,
$\subspace{10{\2}{\3}1\\01{\3}1{\2}}{}$,
$\subspace{10{\3}{\3}{\2}\\01{\3}{\3}{\2}}{}$.

$n_4(5,2;3)\ge 39$,

$\left[\!
\begin{smallmatrix}10000\\0{\2}000\\00{\2}00\\000{\3}0\\0000{\3}\end{smallmatrix}
\!\right]$, $x\to x^2$:
$\subspace{00100\\00010}{1}$,
$\subspace{01000\\00001}{1}$,
$\subspace{01100\\00011}{1}$,
$\subspace{110{\2}{\3}\\001{\2}1}{6}$,
$\subspace{110{\3}{\2}\\001{\3}1}{}$,
$\subspace{1{\2}0{\2}1\\0011{\2}}{}$,
$\subspace{1{\2}01{\2}\\001{\3}{\2}}{}$,
$\subspace{1{\3}0{\3}1\\0011{\3}}{}$,
$\subspace{1{\3}01{\3}\\001{\2}{\3}}{}$,
$\subspace{101{\2}{\2}\\0101{\3}}{6}$,
$\subspace{101{\3}{\3}\\0101{\2}}{}$,
$\subspace{10{\2}11\\010{\2}1}{}$,
$\subspace{10{\2}{\2}{\2}\\010{\2}{\3}}{}$,
$\subspace{10{\3}11\\010{\3}1}{}$,
$\subspace{10{\3}{\3}{\3}\\010{\3}{\2}}{}$,
$\subspace{1011{\3}\\011{\2}{\3}}{6}$,
$\subspace{1011{\2}\\011{\3}{\2}}{}$,
$\subspace{10{\2}{\3}1\\0111{\3}}{}$,
$\subspace{10{\2}{\3}{\2}\\011{\3}1}{}$,
$\subspace{10{\3}{\2}1\\0111{\2}}{}$,
$\subspace{10{\3}{\2}{\3}\\011{\2}1}{}$,
$\subspace{101{\3}0\\01{\2}01}{6}$,
$\subspace{10{\2}{\2}0\\01{\2}0{\2}}{}$,
$\subspace{10{\3}10\\01{\2}0{\3}}{}$,
$\subspace{101{\2}0\\01{\3}01}{}$,
$\subspace{10{\2}10\\01{\3}0{\2}}{}$,
$\subspace{10{\3}{\3}0\\01{\3}0{\3}}{}$,
$\subspace{1010{\2}\\01{\2}10}{6}$,
$\subspace{10{\2}01\\01{\2}{\2}0}{}$,
$\subspace{10{\3}0{\3}\\01{\2}{\3}0}{}$,
$\subspace{1010{\3}\\01{\3}10}{}$,
$\subspace{10{\2}0{\2}\\01{\3}{\2}0}{}$,
$\subspace{10{\3}01\\01{\3}{\3}0}{}$,
$\subspace{101{\3}{\2}\\01{\2}11}{6}$,
$\subspace{10{\2}{\2}1\\01{\2}{\2}{\2}}{}$,
$\subspace{10{\3}1{\3}\\01{\2}{\3}{\3}}{}$,
$\subspace{101{\2}{\3}\\01{\3}11}{}$,
$\subspace{10{\2}1{\2}\\01{\3}{\2}{\2}}{}$,
$\subspace{10{\3}{\3}1\\01{\3}{\3}{\3}}{}$.

$n_4(5,2;5)\ge 75$, $\left[\!\begin{smallmatrix}01000\\1{\2}000\\00010\\001{\3}0\\00001\end{smallmatrix}\!\right]$:
$\subspace{01{\3}0{\3}\\0001{\3}}{5}$,
$\subspace{100{\3}{\2}\\0010{\3}}{}$,
$\subspace{110{\3}0\\00111}{}$,
$\subspace{1{\2}0{\3}{\3}\\001{\3}{\3}}{}$,
$\subspace{1{\3}0{\3}{\3}\\001{\2}1}{}$,
$\subspace{010{\2}1\\001{\2}{\2}}{5}$,
$\subspace{100{\2}{\2}\\0011{\2}}{}$,
$\subspace{11{\3}00\\00011}{}$,
$\subspace{1{\2}01{\3}\\00101}{}$,
$\subspace{1{\3}0{\3}0\\001{\3}1}{}$,
$\subspace{010{\3}{\2}\\001{\2}{\2}}{5}$,
$\subspace{100{\3}{\3}\\0011{\2}}{}$,
$\subspace{11100\\00011}{}$,
$\subspace{1{\2}0{\2}1\\00101}{}$,
$\subspace{1{\3}010\\001{\3}1}{}$,
$\subspace{01000\\001{\3}0}{5}$,
$\subspace{10000\\00010}{}$,
$\subspace{11000\\001{\2}0}{}$,
$\subspace{1{\2}000\\00110}{}$,
$\subspace{1{\3}000\\00100}{}$,
$\subspace{010{\3}{\2}\\001{\3}{\2}}{5}$,
$\subspace{10{\3}0{\2}\\0001{\2}}{}$,
$\subspace{110{\3}{\2}\\001{\2}{\3}}{}$,
$\subspace{1{\2}010\\0011{\3}}{}$,
$\subspace{1{\3}0{\2}{\3}\\0010{\2}}{}$,
$\subspace{1001{\3}\\0100{\3}}{5}$,
$\subspace{1000{\3}\\0110{\2}}{}$,
$\subspace{10110\\0111{\2}}{}$,
$\subspace{10{\2}1{\2}\\011{\3}{\3}}{}$,
$\subspace{10{\3}1{\2}\\011{\2}0}{}$,
$\subspace{10001\\010{\3}0}{5}$,
$\subspace{10{\3}{\2}{\2}\\01001}{}$,
$\subspace{10{\3}00\\01101}{}$,
$\subspace{10{\3}{\3}{\2}\\01{\2}{\2}{\2}}{}$,
$\subspace{10{\3}11\\01{\3}1{\2}}{}$,
$\subspace{1010{\2}\\010{\2}1}{5}$,
$\subspace{10{\2}01\\01010}{}$,
$\subspace{10{\2}{\3}{\2}\\0101{\2}}{}$,
$\subspace{100{\3}1\\01{\3}0{\2}}{}$,
$\subspace{10100\\01{\3}{\2}1}{}$,
$\subspace{101{\2}0\\010{\2}{\3}}{5}$,
$\subspace{100{\2}1\\01111}{}$,
$\subspace{10{\2}11\\011{\2}{\3}}{}$,
$\subspace{1011{\3}\\01{\2}00}{}$,
$\subspace{10{\2}0{\3}\\01{\2}11}{}$,
$\subspace{10101\\010{\3}{\3}}{5}$,
$\subspace{101{\2}{\3}\\010{\3}1}{}$,
$\subspace{10{\3}01\\01011}{}$,
$\subspace{100{\2}0\\01{\2}0{\3}}{}$,
$\subspace{10{\3}0{\3}\\01{\2}10}{}$,
$\subspace{10{\2}{\2}0\\01011}{5}$,
$\subspace{10101\\011{\2}{\2}}{}$,
$\subspace{1011{\2}\\011{\3}{\2}}{}$,
$\subspace{1001{\2}\\01{\2}{\3}1}{}$,
$\subspace{10{\2}{\2}1\\01{\2}{\3}0}{}$,
$\subspace{10{\2}00\\010{\2}{\2}}{5}$,
$\subspace{10{\2}{\2}{\2}\\010{\2}0}{}$,
$\subspace{10{\3}{\2}{\3}\\0101{\3}}{}$,
$\subspace{1010{\3}\\01{\2}{\3}{\2}}{}$,
$\subspace{10{\2}0{\2}\\01{\2}{\2}{\3}}{}$,
$\subspace{101{\2}0\\0111{\3}}{5}$,
$\subspace{10{\2}{\2}{\3}\\01{\2}00}{}$,
$\subspace{10{\2}1{\3}\\01{\2}{\3}1}{}$,
$\subspace{10{\3}{\3}1\\01{\2}{\3}{\3}}{}$,
$\subspace{10{\2}{\2}1\\01{\3}11}{}$,
$\subspace{10{\2}{\3}0\\011{\3}1}{5}$,
$\subspace{10011\\01{\2}0{\2}}{}$,
$\subspace{101{\3}{\2}\\01{\3}01}{}$,
$\subspace{101{\2}{\2}\\01{\3}{\3}{\2}}{}$,
$\subspace{10{\2}{\3}1\\01{\3}00}{}$,
$\subspace{10{\3}{\2}1\\011{\2}1}{5}$,
$\subspace{100{\2}{\3}\\01{\2}{\2}0}{}$,
$\subspace{10111\\01{\2}1{\3}}{}$,
$\subspace{101{\3}{\3}\\01{\2}{\2}1}{}$,
$\subspace{10{\3}{\2}0\\01{\3}{\3}{\3}}{}$.

$n_4(5,2;6)\ge 90$, $\left[\!\begin{smallmatrix}0{\2}000\\{\2}0000\\000{\3}0\\00{\3}00\\00001\end{smallmatrix}\!\right]$:
$\subspace{0100{\2}\\00011}{6}$,
$\subspace{01001\\0001{\2}}{}$,
$\subspace{0100{\3}\\0001{\3}}{}$,
$\subspace{1000{\2}\\00101}{}$,
$\subspace{10001\\0010{\2}}{}$,
$\subspace{1000{\3}\\0010{\3}}{}$,
$\subspace{01011\\001{\2}1}{6}$,
$\subspace{010{\2}{\3}\\001{\2}{\2}}{}$,
$\subspace{010{\3}{\2}\\001{\2}{\3}}{}$,
$\subspace{10011\\001{\3}1}{}$,
$\subspace{100{\2}{\3}\\001{\3}{\2}}{}$,
$\subspace{100{\3}{\2}\\001{\3}{\3}}{}$,
$\subspace{1100{\3}\\001{\2}1}{6}$,
$\subspace{1100{\2}\\001{\2}{\2}}{}$,
$\subspace{11001\\001{\2}{\3}}{}$,
$\subspace{1100{\2}\\001{\3}1}{}$,
$\subspace{11001\\001{\3}{\2}}{}$,
$\subspace{1100{\3}\\001{\3}{\3}}{}$,
$\subspace{1{\2}011\\00100}{6}$,
$\subspace{1{\2}0{\2}{\3}\\00100}{}$,
$\subspace{1{\2}0{\3}{\2}\\00100}{}$,
$\subspace{1{\3}10{\2}\\00010}{}$,
$\subspace{1{\3}{\2}01\\00010}{}$,
$\subspace{1{\3}{\3}0{\3}\\00010}{}$,
$\subspace{1{\2}01{\2}\\0010{\3}}{6}$,
$\subspace{1{\2}0{\2}1\\00101}{}$,
$\subspace{1{\2}0{\3}{\3}\\0010{\2}}{}$,
$\subspace{1{\3}10{\3}\\00011}{}$,
$\subspace{1{\3}{\2}0{\2}\\0001{\2}}{}$,
$\subspace{1{\3}{\3}01\\0001{\3}}{}$,
$\subspace{100{\2}1\\01010}{6}$,
$\subspace{100{\3}{\3}\\010{\2}0}{}$,
$\subspace{1001{\2}\\010{\3}0}{}$,
$\subspace{10{\3}00\\0110{\2}}{}$,
$\subspace{10100\\01{\2}01}{}$,
$\subspace{10{\2}00\\01{\3}0{\3}}{}$,
$\subspace{101{\3}1\\010{\3}{\3}}{6}$,
$\subspace{10{\2}1{\3}\\0101{\2}}{}$,
$\subspace{10{\3}{\2}{\2}\\010{\2}1}{}$,
$\subspace{1010{\2}\\011{\2}{\3}}{}$,
$\subspace{10{\2}01\\01{\2}{\3}{\2}}{}$,
$\subspace{10{\3}0{\3}\\01{\3}11}{}$,
$\subspace{100{\3}1\\0111{\2}}{6}$,
$\subspace{10{\2}{\2}1\\0110{\3}}{}$,
$\subspace{1001{\3}\\01{\2}{\2}1}{}$,
$\subspace{10{\3}{\3}{\3}\\01{\2}0{\2}}{}$,
$\subspace{100{\2}{\2}\\01{\3}{\3}{\3}}{}$,
$\subspace{1011{\2}\\01{\3}01}{}$,
$\subspace{100{\2}{\2}\\011{\2}1}{6}$,
$\subspace{101{\3}{\2}\\0110{\3}}{}$,
$\subspace{100{\3}1\\01{\2}{\3}{\3}}{}$,
$\subspace{10{\2}11\\01{\2}0{\2}}{}$,
$\subspace{1001{\3}\\01{\3}1{\2}}{}$,
$\subspace{10{\3}{\2}{\3}\\01{\3}01}{}$,
$\subspace{100{\3}0\\011{\2}{\2}}{6}$,
$\subspace{10{\3}{\2}1\\01100}{}$,
$\subspace{10010\\01{\2}{\3}1}{}$,
$\subspace{101{\3}{\3}\\01{\2}00}{}$,
$\subspace{100{\2}0\\01{\3}1{\3}}{}$,
$\subspace{10{\2}1{\2}\\01{\3}00}{}$,
$\subspace{10010\\011{\3}{\3}}{6}$,
$\subspace{10{\3}1{\3}\\01100}{}$,
$\subspace{100{\2}0\\01{\2}1{\2}}{}$,
$\subspace{101{\2}{\2}\\01{\2}00}{}$,
$\subspace{100{\3}0\\01{\3}{\2}1}{}$,
$\subspace{10{\2}{\3}1\\01{\3}00}{}$,
$\subspace{10111\\011{\2}{\3}}{6}$,
$\subspace{10{\2}1{\3}\\01111}{}$,
$\subspace{10{\2}{\2}{\3}\\01{\2}{\3}{\2}}{}$,
$\subspace{10{\3}{\2}{\2}\\01{\2}{\2}{\3}}{}$,
$\subspace{101{\3}1\\01{\3}{\3}{\2}}{}$,
$\subspace{10{\3}{\3}{\2}\\01{\3}11}{}$,
$\subspace{101{\3}0\\011{\2}0}{6}$,
$\subspace{10{\3}{\2}0\\011{\2}0}{}$,
$\subspace{101{\3}0\\01{\2}{\3}0}{}$,
$\subspace{10{\2}10\\01{\2}{\3}0}{}$,
$\subspace{10{\2}10\\01{\3}10}{}$,
$\subspace{10{\3}{\2}0\\01{\3}10}{}$,
$\subspace{10{\2}{\2}0\\0111{\2}}{6}$,
$\subspace{10{\3}{\3}{\3}\\01110}{}$,
$\subspace{1011{\2}\\01{\2}{\2}0}{}$,
$\subspace{10{\3}{\3}0\\01{\2}{\2}1}{}$,
$\subspace{10110\\01{\3}{\3}{\3}}{}$,
$\subspace{10{\2}{\2}1\\01{\3}{\3}0}{}$,
$\subspace{10{\2}1{\2}\\011{\3}1}{6}$,
$\subspace{10{\3}11\\011{\2}{\2}}{}$,
$\subspace{101{\2}{\3}\\01{\2}{\3}1}{}$,
$\subspace{10{\3}{\2}1\\01{\2}1{\3}}{}$,
$\subspace{101{\3}{\3}\\01{\3}{\2}{\2}}{}$,
$\subspace{10{\2}{\3}{\2}\\01{\3}1{\3}}{}$.

$n_4(5,2;7)\ge 107$, $\left[\!\begin{smallmatrix}00100\\10000\\01100\\00010\\00001\end{smallmatrix}\!\right]$:
$\subspace{00010\\00001}{1}$,
$\subspace{00010\\00001}{1}$,
$\subspace{0100{\2}\\00110}{7}$,
$\subspace{10010\\0011{\2}}{}$,
$\subspace{1101{\2}\\0010{\2}}{}$,
$\subspace{1001{\2}\\01010}{}$,
$\subspace{10110\\0101{\2}}{}$,
$\subspace{1000{\2}\\01110}{}$,
$\subspace{1011{\2}\\0110{\2}}{}$,
$\subspace{010{\2}0\\00110}{7}$,
$\subspace{10010\\001{\3}0}{}$,
$\subspace{110{\3}0\\001{\2}0}{}$,
$\subspace{100{\3}0\\01010}{}$,
$\subspace{10110\\010{\3}0}{}$,
$\subspace{100{\2}0\\01110}{}$,
$\subspace{101{\3}0\\011{\2}0}{}$,
$\subspace{0101{\3}\\001{\2}{\3}}{7}$,
$\subspace{100{\2}{\3}\\001{\3}0}{}$,
$\subspace{110{\3}0\\0011{\3}}{}$,
$\subspace{100{\3}0\\010{\2}{\3}}{}$,
$\subspace{101{\2}{\3}\\010{\3}0}{}$,
$\subspace{1001{\3}\\011{\2}{\3}}{}$,
$\subspace{101{\3}0\\0111{\3}}{}$,
$\subspace{01000\\001{\3}{\3}}{7}$,
$\subspace{100{\3}{\3}\\001{\3}{\3}}{}$,
$\subspace{110{\3}{\3}\\00100}{}$,
$\subspace{100{\3}{\3}\\010{\3}{\3}}{}$,
$\subspace{101{\3}{\3}\\010{\3}{\3}}{}$,
$\subspace{10000\\011{\3}{\3}}{}$,
$\subspace{101{\3}{\3}\\01100}{}$,
$\subspace{010{\3}1\\001{\3}1}{7}$,
$\subspace{100{\3}1\\00100}{}$,
$\subspace{11000\\001{\3}1}{}$,
$\subspace{10000\\010{\3}1}{}$,
$\subspace{101{\3}1\\01000}{}$,
$\subspace{100{\3}1\\011{\3}1}{}$,
$\subspace{10100\\011{\3}1}{}$,
$\subspace{1{\2}001\\0011{\3}}{7}$,
$\subspace{10{\2}{\3}0\\0101{\3}}{}$,
$\subspace{10{\3}0{\3}\\0111{\3}}{}$,
$\subspace{1001{\3}\\01{\2}{\2}0}{}$,
$\subspace{1011{\3}\\01{\2}11}{}$,
$\subspace{10{\2}{\2}{\2}\\01{\2}{\3}1}{}$,
$\subspace{10{\2}1{\2}\\01{\3}01}{}$,
$\subspace{1{\2}0{\2}0\\00111}{7}$,
$\subspace{10{\2}0{\3}\\01011}{}$,
$\subspace{10{\3}10\\01111}{}$,
$\subspace{10011\\01{\2}0{\2}}{}$,
$\subspace{10111\\01{\2}{\2}1}{}$,
$\subspace{10{\2}{\3}{\2}\\01{\2}{\2}{\3}}{}$,
$\subspace{10{\2}{\3}1\\01{\3}{\2}0}{}$,
$\subspace{1{\2}0{\3}{\2}\\00111}{7}$,
$\subspace{10{\2}{\2}0\\01011}{}$,
$\subspace{10{\3}{\2}1\\01111}{}$,
$\subspace{10011\\01{\2}10}{}$,
$\subspace{10111\\01{\2}0{\2}}{}$,
$\subspace{10{\2}0{\3}\\01{\2}1{\2}}{}$,
$\subspace{10{\2}{\2}{\3}\\01{\3}{\3}{\2}}{}$,
$\subspace{1{\2}0{\2}{\3}\\001{\3}{\2}}{7}$,
$\subspace{10{\2}10\\010{\3}{\2}}{}$,
$\subspace{10{\3}1{\2}\\011{\3}{\2}}{}$,
$\subspace{100{\3}{\2}\\01{\2}{\3}0}{}$,
$\subspace{101{\3}{\2}\\01{\2}0{\3}}{}$,
$\subspace{10{\2}01\\01{\2}{\3}{\3}}{}$,
$\subspace{10{\2}11\\01{\3}{\2}{\3}}{}$,
$\subspace{1{\2}0{\2}{\3}\\001{\3}{\2}}{7}$,
$\subspace{10{\2}10\\010{\3}{\2}}{}$,
$\subspace{10{\3}1{\2}\\011{\3}{\2}}{}$,
$\subspace{100{\3}{\2}\\01{\2}{\3}0}{}$,
$\subspace{101{\3}{\2}\\01{\2}0{\3}}{}$,
$\subspace{10{\2}01\\01{\2}{\3}{\3}}{}$,
$\subspace{10{\2}11\\01{\3}{\2}{\3}}{}$,
$\subspace{1{\3}001\\00101}{7}$,
$\subspace{10{\3}01\\01001}{}$,
$\subspace{10{\2}0{\2}\\01101}{}$,
$\subspace{10{\3}00\\01{\2}01}{}$,
$\subspace{10001\\01{\3}0{\2}}{}$,
$\subspace{10101\\01{\3}0{\2}}{}$,
$\subspace{10{\3}01\\01{\3}00}{}$,
$\subspace{1{\3}01{\3}\\0010{\3}}{7}$,
$\subspace{10{\3}{\3}{\3}\\0100{\3}}{}$,
$\subspace{10{\2}{\2}1\\0110{\3}}{}$,
$\subspace{10{\3}10\\01{\2}1{\3}}{}$,
$\subspace{1000{\3}\\01{\3}11}{}$,
$\subspace{1010{\3}\\01{\3}{\3}1}{}$,
$\subspace{10{\3}{\2}{\3}\\01{\3}{\2}0}{}$,
$\subspace{1{\3}0{\3}{\2}\\0011{\2}}{7}$,
$\subspace{10{\3}0{\2}\\0101{\2}}{}$,
$\subspace{10{\2}1{\3}\\0111{\2}}{}$,
$\subspace{10{\3}{\2}0\\01{\2}{\3}{\2}}{}$,
$\subspace{1001{\2}\\01{\3}0{\3}}{}$,
$\subspace{1011{\2}\\01{\3}{\3}{\3}}{}$,
$\subspace{10{\3}{\2}{\2}\\01{\3}{\3}0}{}$,
$\subspace{1{\3}001\\001{\2}{\2}}{7}$,
$\subspace{10{\3}{\3}0\\010{\2}{\2}}{}$,
$\subspace{10{\2}0{\2}\\011{\2}{\2}}{}$,
$\subspace{10{\3}{\2}{\3}\\01{\2}01}{}$,
$\subspace{100{\2}{\2}\\01{\3}10}{}$,
$\subspace{101{\2}{\2}\\01{\3}{\2}1}{}$,
$\subspace{10{\3}1{\3}\\01{\3}{\3}1}{}$,
$\subspace{1{\3}01{\3}\\001{\2}1}{7}$,
$\subspace{10{\3}00\\010{\2}1}{}$,
$\subspace{10{\2}{\2}1\\011{\2}1}{}$,
$\subspace{10{\3}{\3}{\2}\\01{\2}1{\3}}{}$,
$\subspace{100{\2}1\\01{\3}00}{}$,
$\subspace{101{\2}1\\01{\3}1{\3}}{}$,
$\subspace{10{\3}{\3}{\2}\\01{\3}1{\3}}{}$.

$n_4(5,2;9)\ge 141$, $\left[\!\begin{smallmatrix}00100\\10000\\01100\\00010\\00001\end{smallmatrix}\!\right]$:
$\subspace{00010\\00001}{1}$,
$\subspace{010{\2}{\3}\\0010{\2}}{7}$,
$\subspace{1000{\2}\\001{\2}1}{}$,
$\subspace{110{\2}1\\001{\2}{\3}}{}$,
$\subspace{100{\2}1\\0100{\2}}{}$,
$\subspace{1010{\2}\\010{\2}1}{}$,
$\subspace{100{\2}{\3}\\0110{\2}}{}$,
$\subspace{101{\2}1\\011{\2}{\3}}{}$,
$\subspace{010{\3}1\\00101}{7}$,
$\subspace{10001\\001{\3}0}{}$,
$\subspace{110{\3}0\\001{\3}1}{}$,
$\subspace{100{\3}0\\01001}{}$,
$\subspace{10101\\010{\3}0}{}$,
$\subspace{100{\3}1\\01101}{}$,
$\subspace{101{\3}0\\011{\3}1}{}$,
$\subspace{010{\3}1\\0010{\3}}{7}$,
$\subspace{1000{\3}\\001{\3}{\2}}{}$,
$\subspace{110{\3}{\2}\\001{\3}1}{}$,
$\subspace{100{\3}{\2}\\0100{\3}}{}$,
$\subspace{1010{\3}\\010{\3}{\2}}{}$,
$\subspace{100{\3}1\\0110{\3}}{}$,
$\subspace{101{\3}{\2}\\011{\3}1}{}$,
$\subspace{01000\\0011{\3}}{7}$,
$\subspace{1001{\3}\\0011{\3}}{}$,
$\subspace{1101{\3}\\00100}{}$,
$\subspace{1001{\3}\\0101{\3}}{}$,
$\subspace{1011{\3}\\0101{\3}}{}$,
$\subspace{10000\\0111{\3}}{}$,
$\subspace{1011{\3}\\01100}{}$,
$\subspace{010{\2}{\3}\\00111}{7}$,
$\subspace{10011\\001{\3}{\2}}{}$,
$\subspace{110{\3}{\2}\\001{\2}{\3}}{}$,
$\subspace{100{\3}{\2}\\01011}{}$,
$\subspace{10111\\010{\3}{\2}}{}$,
$\subspace{100{\2}{\3}\\01111}{}$,
$\subspace{101{\3}{\2}\\011{\2}{\3}}{}$,
$\subspace{010{\2}{\2}\\001{\2}1}{7}$,
$\subspace{100{\2}1\\0010{\3}}{}$,
$\subspace{1100{\3}\\001{\2}{\2}}{}$,
$\subspace{1000{\3}\\010{\2}1}{}$,
$\subspace{101{\2}1\\0100{\3}}{}$,
$\subspace{100{\2}{\2}\\011{\2}1}{}$,
$\subspace{1010{\3}\\011{\2}{\2}}{}$,
$\subspace{1{\2}00{\3}\\0010{\2}}{7}$,
$\subspace{10{\2}00\\0100{\2}}{}$,
$\subspace{10{\3}0{\2}\\0110{\2}}{}$,
$\subspace{1000{\2}\\01{\2}00}{}$,
$\subspace{1010{\2}\\01{\2}0{\3}}{}$,
$\subspace{10{\2}01\\01{\2}0{\3}}{}$,
$\subspace{10{\2}01\\01{\3}0{\3}}{}$,
$\subspace{1{\2}0{\3}0\\0011{\2}}{7}$,
$\subspace{10{\2}{\2}1\\0101{\2}}{}$,
$\subspace{10{\3}{\2}0\\0111{\2}}{}$,
$\subspace{1001{\2}\\01{\2}1{\3}}{}$,
$\subspace{1011{\2}\\01{\2}0{\2}}{}$,
$\subspace{10{\2}0{\3}\\01{\2}11}{}$,
$\subspace{10{\2}{\2}{\2}\\01{\3}{\3}0}{}$,
$\subspace{1{\2}00{\2}\\001{\2}0}{7}$,
$\subspace{10{\2}1{\3}\\010{\2}0}{}$,
$\subspace{10{\3}01\\011{\2}0}{}$,
$\subspace{100{\2}0\\01{\2}{\3}{\2}}{}$,
$\subspace{101{\2}0\\01{\2}{\2}{\3}}{}$,
$\subspace{10{\2}{\3}1\\01{\2}11}{}$,
$\subspace{10{\2}{\2}{\2}\\01{\3}0{\2}}{}$,
$\subspace{1{\2}0{\2}0\\001{\2}{\2}}{7}$,
$\subspace{10{\2}{\2}1\\010{\2}{\2}}{}$,
$\subspace{10{\3}10\\011{\2}{\2}}{}$,
$\subspace{100{\2}{\2}\\01{\2}1{\3}}{}$,
$\subspace{101{\2}{\2}\\01{\2}1{\2}}{}$,
$\subspace{10{\2}{\2}{\3}\\01{\2}01}{}$,
$\subspace{10{\2}0{\2}\\01{\3}{\2}0}{}$,
$\subspace{1{\2}00{\2}\\001{\3}0}{7}$,
$\subspace{10{\2}{\2}{\3}\\010{\3}0}{}$,
$\subspace{10{\3}01\\011{\3}0}{}$,
$\subspace{100{\3}0\\01{\2}1{\2}}{}$,
$\subspace{101{\3}0\\01{\2}{\3}{\3}}{}$,
$\subspace{10{\2}11\\01{\2}{\2}1}{}$,
$\subspace{10{\2}{\3}{\2}\\01{\3}0{\2}}{}$,
$\subspace{1{\2}0{\2}{\3}\\001{\3}{\3}}{7}$,
$\subspace{10{\2}1{\3}\\010{\3}{\3}}{}$,
$\subspace{10{\3}1{\2}\\011{\3}{\3}}{}$,
$\subspace{100{\3}{\3}\\01{\2}{\3}{\2}}{}$,
$\subspace{101{\3}{\3}\\01{\2}0{\2}}{}$,
$\subspace{10{\2}0{\3}\\01{\2}{\3}0}{}$,
$\subspace{10{\2}10\\01{\3}{\2}{\3}}{}$,
$\subspace{1{\3}010\\00100}{7}$,
$\subspace{10{\3}{\3}0\\01000}{}$,
$\subspace{10{\2}{\2}0\\01100}{}$,
$\subspace{10{\3}10\\01{\2}10}{}$,
$\subspace{10000\\01{\3}10}{}$,
$\subspace{10100\\01{\3}{\3}0}{}$,
$\subspace{10{\3}{\2}0\\01{\3}{\2}0}{}$,
$\subspace{1{\3}0{\2}0\\00101}{7}$,
$\subspace{10{\3}1{\2}\\01001}{}$,
$\subspace{10{\2}{\3}0\\01101}{}$,
$\subspace{10{\3}{\2}1\\01{\2}{\2}0}{}$,
$\subspace{10001\\01{\3}{\2}{\3}}{}$,
$\subspace{10101\\01{\3}11}{}$,
$\subspace{10{\3}{\3}{\3}\\01{\3}{\3}{\2}}{}$,
$\subspace{1{\3}000\\00111}{7}$,
$\subspace{10{\3}{\2}{\2}\\01011}{}$,
$\subspace{10{\2}00\\01111}{}$,
$\subspace{10{\3}11\\01{\2}00}{}$,
$\subspace{10011\\01{\3}{\3}{\3}}{}$,
$\subspace{10111\\01{\3}11}{}$,
$\subspace{10{\3}{\3}{\3}\\01{\3}{\2}{\2}}{}$,
$\subspace{1{\3}0{\2}{\2}\\00110}{7}$,
$\subspace{10{\3}{\3}1\\01010}{}$,
$\subspace{10{\2}{\3}{\3}\\01110}{}$,
$\subspace{10{\3}{\3}{\2}\\01{\2}{\2}{\2}}{}$,
$\subspace{10010\\01{\3}1{\2}}{}$,
$\subspace{10110\\01{\3}01}{}$,
$\subspace{10{\3}0{\3}\\01{\3}1{\3}}{}$,
$\subspace{1{\3}0{\2}{\2}\\00110}{7}$,
$\subspace{10{\3}{\3}1\\01010}{}$,
$\subspace{10{\2}{\3}{\3}\\01110}{}$,
$\subspace{10{\3}{\3}{\2}\\01{\2}{\2}{\2}}{}$,
$\subspace{10010\\01{\3}1{\2}}{}$,
$\subspace{10110\\01{\3}01}{}$,
$\subspace{10{\3}0{\3}\\01{\3}1{\3}}{}$,
$\subspace{1{\3}0{\3}1\\0011{\2}}{7}$,
$\subspace{10{\3}00\\0101{\2}}{}$,
$\subspace{10{\2}1{\2}\\0111{\2}}{}$,
$\subspace{10{\3}{\2}{\3}\\01{\2}{\3}1}{}$,
$\subspace{1001{\2}\\01{\3}00}{}$,
$\subspace{1011{\2}\\01{\3}{\3}1}{}$,
$\subspace{10{\3}{\2}{\3}\\01{\3}{\3}1}{}$,
$\subspace{1{\3}0{\3}1\\001{\2}0}{7}$,
$\subspace{10{\3}1{\3}\\010{\2}0}{}$,
$\subspace{10{\2}1{\2}\\011{\2}0}{}$,
$\subspace{10{\3}11\\01{\2}{\3}1}{}$,
$\subspace{100{\2}0\\01{\3}{\2}1}{}$,
$\subspace{101{\2}0\\01{\3}0{\3}}{}$,
$\subspace{10{\3}0{\2}\\01{\3}{\2}{\2}}{}$,
$\subspace{1{\3}0{\2}{\3}\\001{\3}{\3}}{7}$,
$\subspace{10{\3}0{\3}\\010{\3}{\3}}{}$,
$\subspace{10{\2}{\3}1\\011{\3}{\3}}{}$,
$\subspace{10{\3}10\\01{\2}{\2}{\3}}{}$,
$\subspace{100{\3}{\3}\\01{\3}01}{}$,
$\subspace{101{\3}{\3}\\01{\3}{\2}1}{}$,
$\subspace{10{\3}1{\3}\\01{\3}{\2}0}{}$.

$n_4(5,2;10)\ge 156$, $\left[\!\begin{smallmatrix}0{\2}000\\{\2}0000\\000{\3}0\\00{\3}00\\00001\end{smallmatrix}\!\right]$, $\omega\leftrightarrow\upsilon$:
$\subspace{001{\2}0\\00001}{2}$,
$\subspace{001{\3}0\\00001}{}$,
$\subspace{01000\\00010}{2}$,
$\subspace{10000\\00100}{}$,
$\subspace{01000\\00010}{2}$,
$\subspace{10000\\00100}{}$,
$\subspace{0101{\2}\\00101}{12}$,
$\subspace{0101{\3}\\00101}{}$,
$\subspace{010{\2}1\\0010{\2}}{}$,
$\subspace{010{\2}{\2}\\0010{\2}}{}$,
$\subspace{010{\3}1\\0010{\3}}{}$,
$\subspace{010{\3}{\3}\\0010{\3}}{}$,
$\subspace{1010{\2}\\00011}{}$,
$\subspace{1010{\3}\\00011}{}$,
$\subspace{10{\2}01\\0001{\2}}{}$,
$\subspace{10{\2}0{\2}\\0001{\2}}{}$,
$\subspace{10{\3}01\\0001{\3}}{}$,
$\subspace{10{\3}0{\3}\\0001{\3}}{}$,
$\subspace{1{\2}01{\3}\\0011{\2}}{6}$,
$\subspace{1{\2}0{\2}{\2}\\0011{\3}}{}$,
$\subspace{1{\2}0{\3}1\\00111}{}$,
$\subspace{1{\3}01{\2}\\0011{\3}}{}$,
$\subspace{1{\3}0{\2}1\\00111}{}$,
$\subspace{1{\3}0{\3}{\3}\\0011{\2}}{}$,
$\subspace{1{\2}01{\2}\\001{\2}{\2}}{6}$,
$\subspace{1{\2}0{\2}1\\001{\2}{\3}}{}$,
$\subspace{1{\2}0{\3}{\3}\\001{\2}1}{}$,
$\subspace{1{\3}01{\3}\\001{\3}{\3}}{}$,
$\subspace{1{\3}0{\2}{\2}\\001{\3}1}{}$,
$\subspace{1{\3}0{\3}1\\001{\3}{\2}}{}$,
$\subspace{100{\2}0\\0101{\2}}{12}$,
$\subspace{100{\3}0\\0101{\3}}{}$,
$\subspace{10010\\010{\2}{\2}}{}$,
$\subspace{100{\3}0\\010{\2}1}{}$,
$\subspace{10010\\010{\3}{\3}}{}$,
$\subspace{100{\2}0\\010{\3}1}{}$,
$\subspace{10{\2}0{\2}\\01100}{}$,
$\subspace{10{\3}0{\3}\\01100}{}$,
$\subspace{1010{\2}\\01{\2}00}{}$,
$\subspace{10{\3}01\\01{\2}00}{}$,
$\subspace{1010{\3}\\01{\3}00}{}$,
$\subspace{10{\2}01\\01{\3}00}{}$,
$\subspace{101{\2}0\\01001}{12}$,
$\subspace{101{\3}0\\01001}{}$,
$\subspace{10{\2}10\\0100{\3}}{}$,
$\subspace{10{\2}{\3}0\\0100{\3}}{}$,
$\subspace{10{\3}10\\0100{\2}}{}$,
$\subspace{10{\3}{\2}0\\0100{\2}}{}$,
$\subspace{1000{\3}\\011{\2}0}{}$,
$\subspace{1000{\2}\\011{\3}0}{}$,
$\subspace{10001\\01{\2}10}{}$,
$\subspace{1000{\2}\\01{\2}{\3}0}{}$,
$\subspace{10001\\01{\3}10}{}$,
$\subspace{1000{\3}\\01{\3}{\2}0}{}$,
$\subspace{10111\\01011}{6}$,
$\subspace{10{\2}{\2}{\3}\\010{\2}{\3}}{}$,
$\subspace{10{\3}{\3}{\2}\\010{\3}{\2}}{}$,
$\subspace{10101\\01111}{}$,
$\subspace{10{\2}0{\3}\\01{\2}{\2}{\3}}{}$,
$\subspace{10{\3}0{\2}\\01{\3}{\3}{\2}}{}$,
$\subspace{10111\\01011}{6}$,
$\subspace{10{\2}{\2}{\3}\\010{\2}{\3}}{}$,
$\subspace{10{\3}{\3}{\2}\\010{\3}{\2}}{}$,
$\subspace{10101\\01111}{}$,
$\subspace{10{\2}0{\3}\\01{\2}{\2}{\3}}{}$,
$\subspace{10{\3}0{\2}\\01{\3}{\3}{\2}}{}$,
$\subspace{101{\2}0\\010{\2}{\2}}{12}$,
$\subspace{101{\3}0\\010{\3}{\3}}{}$,
$\subspace{10{\2}10\\0101{\2}}{}$,
$\subspace{10{\2}{\3}0\\010{\3}1}{}$,
$\subspace{10{\3}10\\0101{\3}}{}$,
$\subspace{10{\3}{\2}0\\010{\2}1}{}$,
$\subspace{1010{\2}\\011{\2}0}{}$,
$\subspace{1010{\3}\\011{\3}0}{}$,
$\subspace{10{\2}0{\2}\\01{\2}10}{}$,
$\subspace{10{\2}01\\01{\2}{\3}0}{}$,
$\subspace{10{\3}0{\3}\\01{\3}10}{}$,
$\subspace{10{\3}01\\01{\3}{\2}0}{}$,
$\subspace{1001{\3}\\0110{\2}}{6}$,
$\subspace{1001{\2}\\0110{\3}}{}$,
$\subspace{100{\2}{\2}\\01{\2}01}{}$,
$\subspace{100{\2}1\\01{\2}0{\2}}{}$,
$\subspace{100{\3}{\3}\\01{\3}01}{}$,
$\subspace{100{\3}1\\01{\3}0{\3}}{}$,
$\subspace{100{\2}{\2}\\0110{\2}}{6}$,
$\subspace{100{\3}{\3}\\0110{\3}}{}$,
$\subspace{1001{\2}\\01{\2}0{\2}}{}$,
$\subspace{100{\3}1\\01{\2}01}{}$,
$\subspace{1001{\3}\\01{\3}0{\3}}{}$,
$\subspace{100{\2}1\\01{\3}01}{}$,
$\subspace{1001{\3}\\011{\2}{\2}}{12}$,
$\subspace{1001{\2}\\011{\3}{\3}}{}$,
$\subspace{10{\2}1{\2}\\0110{\3}}{}$,
$\subspace{10{\3}1{\3}\\0110{\2}}{}$,
$\subspace{100{\2}1\\01{\2}1{\2}}{}$,
$\subspace{100{\2}{\2}\\01{\2}{\3}1}{}$,
$\subspace{101{\2}{\2}\\01{\2}01}{}$,
$\subspace{10{\3}{\2}1\\01{\2}0{\2}}{}$,
$\subspace{100{\3}1\\01{\3}1{\3}}{}$,
$\subspace{100{\3}{\3}\\01{\3}{\2}1}{}$,
$\subspace{101{\3}{\3}\\01{\3}01}{}$,
$\subspace{10{\2}{\3}1\\01{\3}0{\3}}{}$,
$\subspace{100{\3}{\2}\\011{\2}1}{12}$,
$\subspace{100{\2}{\3}\\011{\3}1}{}$,
$\subspace{10{\2}{\3}{\2}\\01101}{}$,
$\subspace{10{\3}{\2}{\3}\\01101}{}$,
$\subspace{100{\3}{\2}\\01{\2}1{\3}}{}$,
$\subspace{10011\\01{\2}{\3}{\3}}{}$,
$\subspace{101{\3}{\2}\\01{\2}0{\3}}{}$,
$\subspace{10{\3}11\\01{\2}0{\3}}{}$,
$\subspace{100{\2}{\3}\\01{\3}1{\2}}{}$,
$\subspace{10011\\01{\3}{\2}{\2}}{}$,
$\subspace{101{\2}{\3}\\01{\3}0{\2}}{}$,
$\subspace{10{\2}11\\01{\3}0{\2}}{}$,
$\subspace{101{\2}{\3}\\011{\2}1}{6}$,
$\subspace{101{\3}{\2}\\011{\3}1}{}$,
$\subspace{10{\2}11\\01{\2}1{\3}}{}$,
$\subspace{10{\2}{\3}{\2}\\01{\2}{\3}{\3}}{}$,
$\subspace{10{\3}11\\01{\3}1{\2}}{}$,
$\subspace{10{\3}{\2}{\3}\\01{\3}{\2}{\2}}{}$,
$\subspace{101{\3}1\\011{\2}{\3}}{12}$,
$\subspace{101{\2}1\\011{\3}{\2}}{}$,
$\subspace{10{\2}{\3}{\3}\\011{\3}{\2}}{}$,
$\subspace{10{\3}{\2}{\2}\\011{\2}{\3}}{}$,
$\subspace{101{\3}1\\01{\2}{\3}{\2}}{}$,
$\subspace{10{\2}{\3}{\3}\\01{\2}11}{}$,
$\subspace{10{\2}1{\3}\\01{\2}{\3}{\2}}{}$,
$\subspace{10{\3}1{\2}\\01{\2}11}{}$,
$\subspace{101{\2}1\\01{\3}{\2}{\3}}{}$,
$\subspace{10{\2}1{\3}\\01{\3}11}{}$,
$\subspace{10{\3}{\2}{\2}\\01{\3}11}{}$,
$\subspace{10{\3}1{\2}\\01{\3}{\2}{\3}}{}$,
$\subspace{10{\2}{\3}1\\01110}{12}$,
$\subspace{10{\2}{\2}0\\011{\3}{\3}}{}$,
$\subspace{10{\3}{\2}1\\01110}{}$,
$\subspace{10{\3}{\3}0\\011{\2}{\2}}{}$,
$\subspace{101{\3}{\3}\\01{\2}{\2}0}{}$,
$\subspace{10110\\01{\2}{\3}1}{}$,
$\subspace{10{\3}{\3}0\\01{\2}1{\2}}{}$,
$\subspace{10{\3}1{\3}\\01{\2}{\2}0}{}$,
$\subspace{10110\\01{\3}{\2}1}{}$,
$\subspace{101{\2}{\2}\\01{\3}{\3}0}{}$,
$\subspace{10{\2}{\2}0\\01{\3}1{\3}}{}$,
$\subspace{10{\2}1{\2}\\01{\3}{\3}0}{}$,
$\subspace{10{\2}{\3}{\2}\\0111{\2}}{12}$,
$\subspace{10{\2}{\2}1\\011{\3}1}{}$,
$\subspace{10{\3}{\2}{\3}\\0111{\3}}{}$,
$\subspace{10{\3}{\3}1\\011{\2}1}{}$,
$\subspace{101{\3}{\2}\\01{\2}{\2}{\2}}{}$,
$\subspace{1011{\3}\\01{\2}{\3}{\3}}{}$,
$\subspace{10{\3}{\3}{\3}\\01{\2}1{\3}}{}$,
$\subspace{10{\3}11\\01{\2}{\2}1}{}$,
$\subspace{1011{\2}\\01{\3}{\2}{\2}}{}$,
$\subspace{101{\2}{\3}\\01{\3}{\3}{\3}}{}$,
$\subspace{10{\2}{\2}{\2}\\01{\3}1{\2}}{}$,
$\subspace{10{\2}11\\01{\3}{\3}1}{}$.

$n_4(5,2;11)\ge 175$, $\left[\!\begin{smallmatrix}00100\\10000\\01100\\00010\\00001\end{smallmatrix}\!\right]$:
$\subspace{0101{\2}\\00101}{7}$,
$\subspace{10001\\0011{\3}}{}$,
$\subspace{1101{\3}\\0011{\2}}{}$,
$\subspace{1001{\3}\\01001}{}$,
$\subspace{10101\\0101{\3}}{}$,
$\subspace{1001{\2}\\01101}{}$,
$\subspace{1011{\3}\\0111{\2}}{}$,
$\subspace{010{\2}0\\0010{\2}}{7}$,
$\subspace{1000{\2}\\001{\2}{\2}}{}$,
$\subspace{110{\2}{\2}\\001{\2}0}{}$,
$\subspace{100{\2}{\2}\\0100{\2}}{}$,
$\subspace{1010{\2}\\010{\2}{\2}}{}$,
$\subspace{100{\2}0\\0110{\2}}{}$,
$\subspace{101{\2}{\2}\\011{\2}0}{}$,
$\subspace{010{\2}{\3}\\0010{\3}}{7}$,
$\subspace{1000{\3}\\001{\2}0}{}$,
$\subspace{110{\2}0\\001{\2}{\3}}{}$,
$\subspace{100{\2}0\\0100{\3}}{}$,
$\subspace{1010{\3}\\010{\2}0}{}$,
$\subspace{100{\2}{\3}\\0110{\3}}{}$,
$\subspace{101{\2}0\\011{\2}{\3}}{}$,
$\subspace{010{\3}0\\0010{\2}}{7}$,
$\subspace{1000{\2}\\001{\3}{\2}}{}$,
$\subspace{110{\3}{\2}\\001{\3}0}{}$,
$\subspace{100{\3}{\2}\\0100{\2}}{}$,
$\subspace{1010{\2}\\010{\3}{\2}}{}$,
$\subspace{100{\3}0\\0110{\2}}{}$,
$\subspace{101{\3}{\2}\\011{\3}0}{}$,
$\subspace{01001\\00110}{7}$,
$\subspace{10010\\00111}{}$,
$\subspace{11011\\00101}{}$,
$\subspace{10011\\01010}{}$,
$\subspace{10110\\01011}{}$,
$\subspace{10001\\01110}{}$,
$\subspace{10111\\01101}{}$,
$\subspace{01010\\00111}{7}$,
$\subspace{10011\\00101}{}$,
$\subspace{11001\\00110}{}$,
$\subspace{10001\\01011}{}$,
$\subspace{10111\\01001}{}$,
$\subspace{10010\\01111}{}$,
$\subspace{10101\\01110}{}$,
$\subspace{010{\2}{\2}\\0011{\2}}{7}$,
$\subspace{1001{\2}\\001{\3}0}{}$,
$\subspace{110{\3}0\\001{\2}{\2}}{}$,
$\subspace{100{\3}0\\0101{\2}}{}$,
$\subspace{1011{\2}\\010{\3}0}{}$,
$\subspace{100{\2}{\2}\\0111{\2}}{}$,
$\subspace{101{\3}0\\011{\2}{\2}}{}$,
$\subspace{010{\3}1\\0011{\2}}{7}$,
$\subspace{1001{\2}\\001{\2}{\3}}{}$,
$\subspace{110{\2}{\3}\\001{\3}1}{}$,
$\subspace{100{\2}{\3}\\0101{\2}}{}$,
$\subspace{1011{\2}\\010{\2}{\3}}{}$,
$\subspace{100{\3}1\\0111{\2}}{}$,
$\subspace{101{\2}{\3}\\011{\3}1}{}$,
$\subspace{0100{\3}\\001{\2}{\3}}{7}$,
$\subspace{100{\2}{\3}\\001{\2}0}{}$,
$\subspace{110{\2}0\\0010{\3}}{}$,
$\subspace{100{\2}0\\010{\2}{\3}}{}$,
$\subspace{101{\2}{\3}\\010{\2}0}{}$,
$\subspace{1000{\3}\\011{\2}{\3}}{}$,
$\subspace{101{\2}0\\0110{\3}}{}$,
$\subspace{0100{\3}\\001{\3}1}{7}$,
$\subspace{100{\3}1\\001{\3}{\2}}{}$,
$\subspace{110{\3}{\2}\\0010{\3}}{}$,
$\subspace{100{\3}{\2}\\010{\3}1}{}$,
$\subspace{101{\3}1\\010{\3}{\2}}{}$,
$\subspace{1000{\3}\\011{\3}1}{}$,
$\subspace{101{\3}{\2}\\0110{\3}}{}$,
$\subspace{0101{\3}\\001{\3}{\2}}{7}$,
$\subspace{100{\3}{\2}\\001{\2}1}{}$,
$\subspace{110{\2}1\\0011{\3}}{}$,
$\subspace{100{\2}1\\010{\3}{\2}}{}$,
$\subspace{101{\3}{\2}\\010{\2}1}{}$,
$\subspace{1001{\3}\\011{\3}{\2}}{}$,
$\subspace{101{\2}1\\0111{\3}}{}$,
$\subspace{1{\2}00{\2}\\00100}{7}$,
$\subspace{10{\2}0{\3}\\01000}{}$,
$\subspace{10{\3}01\\01100}{}$,
$\subspace{10000\\01{\2}0{\2}}{}$,
$\subspace{10100\\01{\2}0{\3}}{}$,
$\subspace{10{\2}01\\01{\2}01}{}$,
$\subspace{10{\2}0{\2}\\01{\3}0{\2}}{}$,
$\subspace{1{\2}0{\3}{\3}\\00100}{7}$,
$\subspace{10{\2}11\\01000}{}$,
$\subspace{10{\3}{\2}{\2}\\01100}{}$,
$\subspace{10000\\01{\2}{\3}{\3}}{}$,
$\subspace{10100\\01{\2}11}{}$,
$\subspace{10{\2}{\2}{\2}\\01{\2}{\2}{\2}}{}$,
$\subspace{10{\2}{\3}{\3}\\01{\3}{\3}{\3}}{}$,
$\subspace{1{\2}01{\3}\\00110}{7}$,
$\subspace{10{\2}11\\01010}{}$,
$\subspace{10{\3}{\3}{\2}\\01110}{}$,
$\subspace{10010\\01{\2}{\3}{\3}}{}$,
$\subspace{10110\\01{\2}{\3}1}{}$,
$\subspace{10{\2}1{\2}\\01{\2}0{\2}}{}$,
$\subspace{10{\2}0{\3}\\01{\3}1{\3}}{}$,
$\subspace{1{\2}0{\2}{\2}\\0011{\3}}{7}$,
$\subspace{10{\2}01\\0101{\3}}{}$,
$\subspace{10{\3}11\\0111{\3}}{}$,
$\subspace{1001{\3}\\01{\2}0{\3}}{}$,
$\subspace{1011{\3}\\01{\2}{\2}0}{}$,
$\subspace{10{\2}{\3}0\\01{\2}{\2}{\3}}{}$,
$\subspace{10{\2}{\3}1\\01{\3}{\2}{\2}}{}$,
$\subspace{1{\2}0{\3}0\\00111}{7}$,
$\subspace{10{\2}{\2}{\3}\\01011}{}$,
$\subspace{10{\3}{\2}0\\01111}{}$,
$\subspace{10011\\01{\2}1{\2}}{}$,
$\subspace{10111\\01{\2}01}{}$,
$\subspace{10{\2}0{\2}\\01{\2}1{\3}}{}$,
$\subspace{10{\2}{\2}1\\01{\3}{\3}0}{}$,
$\subspace{1{\2}00{\3}\\001{\2}{\2}}{7}$,
$\subspace{10{\2}10\\010{\2}{\2}}{}$,
$\subspace{10{\3}0{\2}\\011{\2}{\2}}{}$,
$\subspace{100{\2}{\2}\\01{\2}{\3}0}{}$,
$\subspace{101{\2}{\2}\\01{\2}{\2}{\3}}{}$,
$\subspace{10{\2}{\3}1\\01{\2}1{\3}}{}$,
$\subspace{10{\2}{\2}1\\01{\3}0{\3}}{}$,
$\subspace{1{\2}001\\001{\3}{\3}}{7}$,
$\subspace{10{\2}{\2}0\\010{\3}{\3}}{}$,
$\subspace{10{\3}0{\3}\\011{\3}{\3}}{}$,
$\subspace{100{\3}{\3}\\01{\2}10}{}$,
$\subspace{101{\3}{\3}\\01{\2}{\3}1}{}$,
$\subspace{10{\2}1{\2}\\01{\2}{\2}1}{}$,
$\subspace{10{\2}{\3}{\2}\\01{\3}01}{}$,
$\subspace{1{\2}001\\001{\3}{\3}}{7}$,
$\subspace{10{\2}{\2}0\\010{\3}{\3}}{}$,
$\subspace{10{\3}0{\3}\\011{\3}{\3}}{}$,
$\subspace{100{\3}{\3}\\01{\2}10}{}$,
$\subspace{101{\3}{\3}\\01{\2}{\3}1}{}$,
$\subspace{10{\2}1{\2}\\01{\2}{\2}1}{}$,
$\subspace{10{\2}{\3}{\2}\\01{\3}01}{}$,
$\subspace{1{\2}00{\3}\\001{\3}{\3}}{7}$,
$\subspace{10{\2}{\2}{\3}\\010{\3}{\3}}{}$,
$\subspace{10{\3}0{\2}\\011{\3}{\3}}{}$,
$\subspace{100{\3}{\3}\\01{\2}1{\2}}{}$,
$\subspace{101{\3}{\3}\\01{\2}{\3}{\2}}{}$,
$\subspace{10{\2}1{\3}\\01{\2}{\2}0}{}$,
$\subspace{10{\2}{\3}0\\01{\3}0{\3}}{}$,
$\subspace{1{\2}0{\2}0\\001{\3}0}{7}$,
$\subspace{10{\2}10\\010{\3}0}{}$,
$\subspace{10{\3}10\\011{\3}0}{}$,
$\subspace{100{\3}0\\01{\2}{\3}0}{}$,
$\subspace{101{\3}0\\01{\2}00}{}$,
$\subspace{10{\2}00\\01{\2}{\3}0}{}$,
$\subspace{10{\2}10\\01{\3}{\2}0}{}$,
$\subspace{1{\2}0{\3}{\2}\\001{\3}1}{7}$,
$\subspace{10{\2}{\3}0\\010{\3}1}{}$,
$\subspace{10{\3}{\2}1\\011{\3}1}{}$,
$\subspace{100{\3}1\\01{\2}{\2}0}{}$,
$\subspace{101{\3}1\\01{\2}{\2}{\2}}{}$,
$\subspace{10{\2}{\3}{\3}\\01{\2}0{\2}}{}$,
$\subspace{10{\2}0{\3}\\01{\3}{\3}{\2}}{}$,
$\subspace{1{\3}01{\3}\\0010{\2}}{7}$,
$\subspace{10{\3}{\3}1\\0100{\2}}{}$,
$\subspace{10{\2}{\2}1\\0110{\2}}{}$,
$\subspace{10{\3}11\\01{\2}1{\3}}{}$,
$\subspace{1000{\2}\\01{\3}1{\2}}{}$,
$\subspace{1010{\2}\\01{\3}{\3}0}{}$,
$\subspace{10{\3}{\2}0\\01{\3}{\2}{\2}}{}$,
$\subspace{1{\3}00{\3}\\001{\2}1}{7}$,
$\subspace{10{\3}{\3}0\\010{\2}1}{}$,
$\subspace{10{\2}01\\011{\2}1}{}$,
$\subspace{10{\3}{\2}{\2}\\01{\2}0{\3}}{}$,
$\subspace{100{\2}1\\01{\3}10}{}$,
$\subspace{101{\2}1\\01{\3}{\2}{\3}}{}$,
$\subspace{10{\3}1{\2}\\01{\3}{\3}{\3}}{}$,
$\subspace{1{\3}011\\001{\2}1}{7}$,
$\subspace{10{\3}01\\010{\2}1}{}$,
$\subspace{10{\2}{\2}{\2}\\011{\2}1}{}$,
$\subspace{10{\3}{\3}0\\01{\2}11}{}$,
$\subspace{100{\2}1\\01{\3}0{\2}}{}$,
$\subspace{101{\2}1\\01{\3}1{\2}}{}$,
$\subspace{10{\3}{\3}1\\01{\3}10}{}$.

$n_4(5,2;14)\ge 222$, $\left[\!\begin{smallmatrix}0{\2}000\\{\2}0000\\000{\3}0\\00{\3}00\\00001\end{smallmatrix}\!\right]$, $\left[\!\begin{smallmatrix}00100\\00010\\10000\\01000\\00001\end{smallmatrix}\!\right]$, $\omega\leftrightarrow\upsilon$:
$\subspace{00010\\00001}{4}$,
$\subspace{00100\\00001}{}$,
$\subspace{01000\\00001}{}$,
$\subspace{10000\\00001}{}$,
$\subspace{01000\\00010}{2}$,
$\subspace{10000\\00100}{}$,
$\subspace{01001\\00011}{6}$,
$\subspace{0100{\3}\\0001{\2}}{}$,
$\subspace{0100{\2}\\0001{\3}}{}$,
$\subspace{10001\\00101}{}$,
$\subspace{1000{\3}\\0010{\2}}{}$,
$\subspace{1000{\2}\\0010{\3}}{}$,
$\subspace{01001\\00111}{12}$,
$\subspace{0100{\3}\\0011{\2}}{}$,
$\subspace{0100{\2}\\0011{\3}}{}$,
$\subspace{10001\\00111}{}$,
$\subspace{1000{\3}\\0011{\2}}{}$,
$\subspace{1000{\2}\\0011{\3}}{}$,
$\subspace{11001\\00011}{}$,
$\subspace{1100{\3}\\0001{\2}}{}$,
$\subspace{1100{\2}\\0001{\3}}{}$,
$\subspace{11001\\00101}{}$,
$\subspace{1100{\3}\\0010{\2}}{}$,
$\subspace{1100{\2}\\0010{\3}}{}$,
$\subspace{11001\\00110}{6}$,
$\subspace{1100{\2}\\00110}{}$,
$\subspace{1100{\3}\\00110}{}$,
$\subspace{11000\\00111}{}$,
$\subspace{11000\\0011{\2}}{}$,
$\subspace{11000\\0011{\3}}{}$,
$\subspace{1{\2}000\\001{\2}0}{2}$,
$\subspace{1{\3}000\\001{\3}0}{}$,
$\subspace{1{\2}000\\001{\2}0}{2}$,
$\subspace{1{\3}000\\001{\3}0}{}$,
$\subspace{1{\2}000\\001{\2}0}{2}$,
$\subspace{1{\3}000\\001{\3}0}{}$,
$\subspace{1{\2}00{\2}\\001{\2}1}{6}$,
$\subspace{1{\2}001\\001{\2}{\2}}{}$,
$\subspace{1{\2}00{\3}\\001{\2}{\3}}{}$,
$\subspace{1{\3}00{\3}\\001{\3}1}{}$,
$\subspace{1{\3}00{\2}\\001{\3}{\2}}{}$,
$\subspace{1{\3}001\\001{\3}{\3}}{}$,
$\subspace{1{\2}010\\001{\2}{\2}}{24}$,
$\subspace{1{\2}011\\001{\2}{\2}}{}$,
$\subspace{1{\2}0{\2}0\\001{\2}{\3}}{}$,
$\subspace{1{\2}0{\2}{\3}\\001{\2}{\3}}{}$,
$\subspace{1{\2}0{\3}0\\001{\2}1}{}$,
$\subspace{1{\2}0{\3}{\2}\\001{\2}1}{}$,
$\subspace{1{\3}010\\001{\3}{\3}}{}$,
$\subspace{1{\3}011\\001{\3}{\3}}{}$,
$\subspace{1{\3}0{\2}0\\001{\3}1}{}$,
$\subspace{1{\3}0{\2}{\3}\\001{\3}1}{}$,
$\subspace{1{\3}0{\3}0\\001{\3}{\2}}{}$,
$\subspace{1{\3}0{\3}{\2}\\001{\3}{\2}}{}$,
$\subspace{10{\2}{\3}{\2}\\011{\2}0}{}$,
$\subspace{10{\2}{\3}0\\011{\2}1}{}$,
$\subspace{10{\3}{\2}{\3}\\011{\3}0}{}$,
$\subspace{10{\3}{\2}0\\011{\3}1}{}$,
$\subspace{101{\3}{\2}\\01{\2}10}{}$,
$\subspace{101{\3}0\\01{\2}1{\3}}{}$,
$\subspace{10{\3}11\\01{\2}{\3}0}{}$,
$\subspace{10{\3}10\\01{\2}{\3}{\3}}{}$,
$\subspace{101{\2}{\3}\\01{\3}10}{}$,
$\subspace{101{\2}0\\01{\3}1{\2}}{}$,
$\subspace{10{\2}11\\01{\3}{\2}0}{}$,
$\subspace{10{\2}10\\01{\3}{\2}{\2}}{}$,
$\subspace{1010{\2}\\01010}{12}$,
$\subspace{1010{\3}\\01010}{}$,
$\subspace{10100\\0101{\2}}{}$,
$\subspace{10100\\0101{\3}}{}$,
$\subspace{10{\2}01\\010{\2}0}{}$,
$\subspace{10{\2}0{\2}\\010{\2}0}{}$,
$\subspace{10{\2}00\\010{\2}1}{}$,
$\subspace{10{\2}00\\010{\2}{\2}}{}$,
$\subspace{10{\3}01\\010{\3}0}{}$,
$\subspace{10{\3}0{\3}\\010{\3}0}{}$,
$\subspace{10{\3}00\\010{\3}1}{}$,
$\subspace{10{\3}00\\010{\3}{\3}}{}$,
$\subspace{1010{\3}\\010{\2}1}{12}$,
$\subspace{1010{\2}\\010{\2}{\2}}{}$,
$\subspace{1010{\2}\\010{\3}1}{}$,
$\subspace{1010{\3}\\010{\3}{\3}}{}$,
$\subspace{10{\2}0{\2}\\0101{\2}}{}$,
$\subspace{10{\2}01\\0101{\3}}{}$,
$\subspace{10{\2}01\\010{\3}1}{}$,
$\subspace{10{\2}0{\2}\\010{\3}{\3}}{}$,
$\subspace{10{\3}01\\0101{\2}}{}$,
$\subspace{10{\3}0{\3}\\0101{\3}}{}$,
$\subspace{10{\3}01\\010{\2}1}{}$,
$\subspace{10{\3}0{\3}\\010{\2}{\2}}{}$,
$\subspace{10010\\01100}{3}$,
$\subspace{100{\2}0\\01{\2}00}{}$,
$\subspace{100{\3}0\\01{\3}00}{}$,
$\subspace{10010\\01100}{3}$,
$\subspace{100{\2}0\\01{\2}00}{}$,
$\subspace{100{\3}0\\01{\3}00}{}$,
$\subspace{1001{\3}\\0110{\2}}{6}$,
$\subspace{1001{\2}\\0110{\3}}{}$,
$\subspace{100{\2}{\2}\\01{\2}01}{}$,
$\subspace{100{\2}1\\01{\2}0{\2}}{}$,
$\subspace{100{\3}{\3}\\01{\3}01}{}$,
$\subspace{100{\3}1\\01{\3}0{\3}}{}$,
$\subspace{10011\\011{\2}0}{24}$,
$\subspace{10011\\011{\2}{\2}}{}$,
$\subspace{10011\\011{\3}0}{}$,
$\subspace{10011\\011{\3}{\3}}{}$,
$\subspace{10{\2}10\\01101}{}$,
$\subspace{10{\2}1{\2}\\01101}{}$,
$\subspace{10{\3}10\\01101}{}$,
$\subspace{10{\3}1{\3}\\01101}{}$,
$\subspace{100{\2}{\3}\\01{\2}10}{}$,
$\subspace{100{\2}{\3}\\01{\2}1{\2}}{}$,
$\subspace{100{\2}{\3}\\01{\2}{\3}0}{}$,
$\subspace{100{\2}{\3}\\01{\2}{\3}1}{}$,
$\subspace{101{\2}0\\01{\2}0{\3}}{}$,
$\subspace{101{\2}{\2}\\01{\2}0{\3}}{}$,
$\subspace{10{\3}{\2}0\\01{\2}0{\3}}{}$,
$\subspace{10{\3}{\2}1\\01{\2}0{\3}}{}$,
$\subspace{100{\3}{\2}\\01{\3}10}{}$,
$\subspace{100{\3}{\2}\\01{\3}1{\3}}{}$,
$\subspace{100{\3}{\2}\\01{\3}{\2}0}{}$,
$\subspace{100{\3}{\2}\\01{\3}{\2}1}{}$,
$\subspace{101{\3}0\\01{\3}0{\2}}{}$,
$\subspace{101{\3}{\3}\\01{\3}0{\2}}{}$,
$\subspace{10{\2}{\3}0\\01{\3}0{\2}}{}$,
$\subspace{10{\2}{\3}1\\01{\3}0{\2}}{}$,
$\subspace{1001{\3}\\011{\2}{\2}}{24}$,
$\subspace{1001{\3}\\011{\2}{\3}}{}$,
$\subspace{1001{\2}\\011{\3}{\2}}{}$,
$\subspace{1001{\2}\\011{\3}{\3}}{}$,
$\subspace{10{\2}1{\2}\\0110{\3}}{}$,
$\subspace{10{\2}1{\3}\\0110{\3}}{}$,
$\subspace{10{\3}1{\2}\\0110{\2}}{}$,
$\subspace{10{\3}1{\3}\\0110{\2}}{}$,
$\subspace{100{\2}1\\01{\2}11}{}$,
$\subspace{100{\2}1\\01{\2}1{\2}}{}$,
$\subspace{100{\2}{\2}\\01{\2}{\3}1}{}$,
$\subspace{100{\2}{\2}\\01{\2}{\3}{\2}}{}$,
$\subspace{101{\2}1\\01{\2}01}{}$,
$\subspace{101{\2}{\2}\\01{\2}01}{}$,
$\subspace{10{\3}{\2}1\\01{\2}0{\2}}{}$,
$\subspace{10{\3}{\2}{\2}\\01{\2}0{\2}}{}$,
$\subspace{100{\3}1\\01{\3}11}{}$,
$\subspace{100{\3}1\\01{\3}1{\3}}{}$,
$\subspace{100{\3}{\3}\\01{\3}{\2}1}{}$,
$\subspace{100{\3}{\3}\\01{\3}{\2}{\3}}{}$,
$\subspace{101{\3}1\\01{\3}01}{}$,
$\subspace{101{\3}{\3}\\01{\3}01}{}$,
$\subspace{10{\2}{\3}1\\01{\3}0{\3}}{}$,
$\subspace{10{\2}{\3}{\3}\\01{\3}0{\3}}{}$,
$\subspace{101{\2}0\\01111}{24}$,
$\subspace{101{\2}1\\0111{\3}}{}$,
$\subspace{101{\3}0\\01111}{}$,
$\subspace{101{\3}1\\0111{\2}}{}$,
$\subspace{10{\2}{\2}{\3}\\011{\2}0}{}$,
$\subspace{10{\2}{\2}1\\011{\2}{\3}}{}$,
$\subspace{10{\3}{\3}{\2}\\011{\3}0}{}$,
$\subspace{10{\3}{\3}1\\011{\3}{\2}}{}$,
$\subspace{10111\\01{\2}10}{}$,
$\subspace{1011{\3}\\01{\2}11}{}$,
$\subspace{10{\2}1{\3}\\01{\2}{\2}1}{}$,
$\subspace{10{\2}10\\01{\2}{\2}{\3}}{}$,
$\subspace{10{\2}{\3}{\3}\\01{\2}{\2}{\2}}{}$,
$\subspace{10{\2}{\3}0\\01{\2}{\2}{\3}}{}$,
$\subspace{10{\3}{\3}{\2}\\01{\2}{\3}0}{}$,
$\subspace{10{\3}{\3}{\3}\\01{\2}{\3}{\2}}{}$,
$\subspace{10111\\01{\3}10}{}$,
$\subspace{1011{\2}\\01{\3}11}{}$,
$\subspace{10{\2}{\2}{\3}\\01{\3}{\2}0}{}$,
$\subspace{10{\2}{\2}{\2}\\01{\3}{\2}{\3}}{}$,
$\subspace{10{\3}1{\2}\\01{\3}{\3}1}{}$,
$\subspace{10{\3}10\\01{\3}{\3}{\2}}{}$,
$\subspace{10{\3}{\2}0\\01{\3}{\3}{\2}}{}$,
$\subspace{10{\3}{\2}{\2}\\01{\3}{\3}{\3}}{}$,
$\subspace{1011{\2}\\011{\2}{\2}}{24}$,
$\subspace{10110\\011{\2}{\3}}{}$,
$\subspace{10110\\011{\3}{\2}}{}$,
$\subspace{1011{\3}\\011{\3}{\3}}{}$,
$\subspace{10{\2}1{\3}\\01110}{}$,
$\subspace{10{\2}1{\2}\\0111{\2}}{}$,
$\subspace{10{\3}1{\2}\\01110}{}$,
$\subspace{10{\3}1{\3}\\0111{\3}}{}$,
$\subspace{101{\2}1\\01{\2}{\2}0}{}$,
$\subspace{101{\2}{\2}\\01{\2}{\2}{\2}}{}$,
$\subspace{10{\2}{\2}0\\01{\2}11}{}$,
$\subspace{10{\2}{\2}{\2}\\01{\2}1{\2}}{}$,
$\subspace{10{\2}{\2}1\\01{\2}{\3}1}{}$,
$\subspace{10{\2}{\2}0\\01{\2}{\3}{\2}}{}$,
$\subspace{10{\3}{\2}{\2}\\01{\2}{\2}0}{}$,
$\subspace{10{\3}{\2}1\\01{\2}{\2}1}{}$,
$\subspace{101{\3}1\\01{\3}{\3}0}{}$,
$\subspace{101{\3}{\3}\\01{\3}{\3}{\3}}{}$,
$\subspace{10{\2}{\3}{\3}\\01{\3}{\3}0}{}$,
$\subspace{10{\2}{\3}1\\01{\3}{\3}1}{}$,
$\subspace{10{\3}{\3}0\\01{\3}11}{}$,
$\subspace{10{\3}{\3}{\3}\\01{\3}1{\3}}{}$,
$\subspace{10{\3}{\3}1\\01{\3}{\2}1}{}$,
$\subspace{10{\3}{\3}0\\01{\3}{\2}{\3}}{}$,
$\subspace{101{\3}{\2}\\011{\2}1}{24}$,
$\subspace{101{\2}{\3}\\011{\3}1}{}$,
$\subspace{10{\2}{\3}0\\0111{\3}}{}$,
$\subspace{10{\2}{\2}{\2}\\011{\3}0}{}$,
$\subspace{10{\2}{\3}{\2}\\011{\3}1}{}$,
$\subspace{10{\3}{\2}0\\0111{\2}}{}$,
$\subspace{10{\3}{\2}{\3}\\011{\2}1}{}$,
$\subspace{10{\3}{\3}{\3}\\011{\2}0}{}$,
$\subspace{101{\3}0\\01{\2}{\2}1}{}$,
$\subspace{1011{\2}\\01{\2}{\3}0}{}$,
$\subspace{101{\3}{\2}\\01{\2}{\3}{\3}}{}$,
$\subspace{10{\2}{\3}{\2}\\01{\2}1{\3}}{}$,
$\subspace{10{\2}11\\01{\2}{\3}{\3}}{}$,
$\subspace{10{\3}11\\01{\2}1{\3}}{}$,
$\subspace{10{\3}{\3}1\\01{\2}10}{}$,
$\subspace{10{\3}10\\01{\2}{\2}{\2}}{}$,
$\subspace{1011{\3}\\01{\3}{\2}0}{}$,
$\subspace{101{\2}{\3}\\01{\3}{\2}{\2}}{}$,
$\subspace{101{\2}0\\01{\3}{\3}1}{}$,
$\subspace{10{\2}11\\01{\3}1{\2}}{}$,
$\subspace{10{\2}{\2}1\\01{\3}10}{}$,
$\subspace{10{\2}10\\01{\3}{\3}{\3}}{}$,
$\subspace{10{\3}{\2}{\3}\\01{\3}1{\2}}{}$,
$\subspace{10{\3}11\\01{\3}{\2}{\2}}{}$.

$n_4(5,2;18)\ge 290$, $\left[\!\begin{smallmatrix}0{\2}000\\{\2}0000\\000{\3}0\\00{\3}00\\00001\end{smallmatrix}\!\right]$, $\left[\!\begin{smallmatrix}00100\\00010\\10000\\01000\\00001\end{smallmatrix}\!\right]$:
$\subspace{001{\2}0\\00001}{4}$,
$\subspace{001{\3}0\\00001}{}$,
$\subspace{1{\2}000\\00001}{}$,
$\subspace{1{\3}000\\00001}{}$,
$\subspace{00101\\00010}{12}$,
$\subspace{0010{\2}\\00010}{}$,
$\subspace{0010{\3}\\00010}{}$,
$\subspace{00100\\00011}{}$,
$\subspace{00100\\0001{\2}}{}$,
$\subspace{00100\\0001{\3}}{}$,
$\subspace{10001\\01000}{}$,
$\subspace{1000{\2}\\01000}{}$,
$\subspace{1000{\3}\\01000}{}$,
$\subspace{10000\\01001}{}$,
$\subspace{10000\\0100{\2}}{}$,
$\subspace{10000\\0100{\3}}{}$,
$\subspace{0100{\3}\\00101}{6}$,
$\subspace{0100{\2}\\0010{\2}}{}$,
$\subspace{01001\\0010{\3}}{}$,
$\subspace{1000{\3}\\00011}{}$,
$\subspace{1000{\2}\\0001{\2}}{}$,
$\subspace{10001\\0001{\3}}{}$,
$\subspace{0101{\2}\\0010{\2}}{12}$,
$\subspace{010{\2}1\\0010{\3}}{}$,
$\subspace{010{\3}{\3}\\00101}{}$,
$\subspace{1010{\2}\\0001{\2}}{}$,
$\subspace{10{\2}01\\0001{\3}}{}$,
$\subspace{10{\3}0{\3}\\00011}{}$,
$\subspace{1000{\2}\\0101{\2}}{}$,
$\subspace{10001\\010{\2}1}{}$,
$\subspace{1000{\3}\\010{\3}{\3}}{}$,
$\subspace{1010{\2}\\0100{\2}}{}$,
$\subspace{10{\2}01\\01001}{}$,
$\subspace{10{\3}0{\3}\\0100{\3}}{}$,
$\subspace{0101{\2}\\0010{\2}}{12}$,
$\subspace{010{\2}1\\0010{\3}}{}$,
$\subspace{010{\3}{\3}\\00101}{}$,
$\subspace{1010{\2}\\0001{\2}}{}$,
$\subspace{10{\2}01\\0001{\3}}{}$,
$\subspace{10{\3}0{\3}\\00011}{}$,
$\subspace{1000{\2}\\0101{\2}}{}$,
$\subspace{10001\\010{\2}1}{}$,
$\subspace{1000{\3}\\010{\3}{\3}}{}$,
$\subspace{1010{\2}\\0100{\2}}{}$,
$\subspace{10{\2}01\\01001}{}$,
$\subspace{10{\3}0{\3}\\0100{\3}}{}$,
$\subspace{01000\\00110}{4}$,
$\subspace{10000\\00110}{}$,
$\subspace{11000\\00010}{}$,
$\subspace{11000\\00100}{}$,
$\subspace{1{\2}011\\0011{\3}}{12}$,
$\subspace{1{\2}0{\2}{\3}\\00111}{}$,
$\subspace{1{\2}0{\3}{\2}\\0011{\2}}{}$,
$\subspace{1{\3}01{\3}\\00111}{}$,
$\subspace{1{\3}0{\2}{\2}\\0011{\2}}{}$,
$\subspace{1{\3}0{\3}1\\0011{\3}}{}$,
$\subspace{101{\2}{\2}\\011{\2}1}{}$,
$\subspace{101{\3}{\2}\\011{\3}{\3}}{}$,
$\subspace{10{\2}11\\01{\2}1{\2}}{}$,
$\subspace{10{\2}{\3}1\\01{\2}{\3}{\3}}{}$,
$\subspace{10{\3}1{\3}\\01{\3}1{\2}}{}$,
$\subspace{10{\3}{\2}{\3}\\01{\3}{\2}1}{}$,
$\subspace{1{\2}01{\2}\\001{\2}{\3}}{12}$,
$\subspace{1{\2}0{\2}1\\001{\2}1}{}$,
$\subspace{1{\2}0{\3}{\3}\\001{\2}{\2}}{}$,
$\subspace{1{\3}010\\001{\3}1}{}$,
$\subspace{1{\3}0{\2}0\\001{\3}{\2}}{}$,
$\subspace{1{\3}0{\3}0\\001{\3}{\3}}{}$,
$\subspace{10{\2}{\3}0\\011{\2}{\2}}{}$,
$\subspace{10{\3}{\2}1\\011{\3}0}{}$,
$\subspace{101{\3}{\3}\\01{\2}10}{}$,
$\subspace{10{\3}10\\01{\2}{\3}1}{}$,
$\subspace{101{\2}0\\01{\3}1{\3}}{}$,
$\subspace{10{\2}1{\2}\\01{\3}{\2}0}{}$,
$\subspace{1{\2}000\\001{\3}0}{2}$,
$\subspace{1{\3}000\\001{\2}0}{}$,
$\subspace{1{\2}000\\001{\3}0}{2}$,
$\subspace{1{\3}000\\001{\2}0}{}$,
$\subspace{1{\2}000\\001{\3}0}{2}$,
$\subspace{1{\3}000\\001{\2}0}{}$,
$\subspace{1010{\3}\\01011}{6}$,
$\subspace{10101\\0101{\3}}{}$,
$\subspace{10{\2}0{\3}\\010{\2}{\2}}{}$,
$\subspace{10{\2}0{\2}\\010{\2}{\3}}{}$,
$\subspace{10{\3}0{\2}\\010{\3}1}{}$,
$\subspace{10{\3}01\\010{\3}{\2}}{}$,
$\subspace{1010{\3}\\0101{\2}}{6}$,
$\subspace{1010{\2}\\0101{\3}}{}$,
$\subspace{10{\2}0{\2}\\010{\2}1}{}$,
$\subspace{10{\2}01\\010{\2}{\2}}{}$,
$\subspace{10{\3}0{\3}\\010{\3}1}{}$,
$\subspace{10{\3}01\\010{\3}{\3}}{}$,
$\subspace{101{\2}{\2}\\0101{\3}}{12}$,
$\subspace{101{\2}{\3}\\0101{\3}}{}$,
$\subspace{10{\2}{\3}1\\010{\2}{\2}}{}$,
$\subspace{10{\2}{\3}{\2}\\010{\2}{\2}}{}$,
$\subspace{10{\3}11\\010{\3}1}{}$,
$\subspace{10{\3}1{\3}\\010{\3}1}{}$,
$\subspace{10{\3}01\\011{\3}1}{}$,
$\subspace{10{\3}01\\011{\3}{\3}}{}$,
$\subspace{1010{\3}\\01{\2}1{\2}}{}$,
$\subspace{1010{\3}\\01{\2}1{\3}}{}$,
$\subspace{10{\2}0{\2}\\01{\3}{\2}1}{}$,
$\subspace{10{\2}0{\2}\\01{\3}{\2}{\2}}{}$,
$\subspace{101{\3}1\\01011}{12}$,
$\subspace{101{\3}{\2}\\01011}{}$,
$\subspace{10{\2}11\\010{\2}{\3}}{}$,
$\subspace{10{\2}1{\3}\\010{\2}{\3}}{}$,
$\subspace{10{\3}{\2}{\2}\\010{\3}{\2}}{}$,
$\subspace{10{\3}{\2}{\3}\\010{\3}{\2}}{}$,
$\subspace{10{\2}0{\3}\\011{\2}1}{}$,
$\subspace{10{\2}0{\3}\\011{\2}{\3}}{}$,
$\subspace{10{\3}0{\2}\\01{\2}{\3}{\2}}{}$,
$\subspace{10{\3}0{\2}\\01{\2}{\3}{\3}}{}$,
$\subspace{10101\\01{\3}11}{}$,
$\subspace{10101\\01{\3}1{\2}}{}$,
$\subspace{1011{\3}\\010{\2}0}{12}$,
$\subspace{101{\3}{\3}\\010{\3}0}{}$,
$\subspace{10{\2}1{\2}\\01010}{}$,
$\subspace{10{\2}{\2}{\2}\\010{\3}0}{}$,
$\subspace{10{\3}{\3}1\\01010}{}$,
$\subspace{10{\3}{\2}1\\010{\2}0}{}$,
$\subspace{10100\\011{\2}{\2}}{}$,
$\subspace{10{\2}00\\0111{\3}}{}$,
$\subspace{10{\2}00\\01{\2}{\3}1}{}$,
$\subspace{10{\3}00\\01{\2}{\2}{\2}}{}$,
$\subspace{10100\\01{\3}{\3}1}{}$,
$\subspace{10{\3}00\\01{\3}1{\3}}{}$,
$\subspace{1011{\3}\\010{\2}{\3}}{12}$,
$\subspace{101{\3}1\\010{\3}{\2}}{}$,
$\subspace{10{\2}1{\3}\\01011}{}$,
$\subspace{10{\2}{\2}{\2}\\010{\3}{\2}}{}$,
$\subspace{10{\3}{\3}1\\01011}{}$,
$\subspace{10{\3}{\2}{\2}\\010{\2}{\3}}{}$,
$\subspace{10101\\011{\2}{\3}}{}$,
$\subspace{10{\2}0{\3}\\0111{\3}}{}$,
$\subspace{10{\2}0{\3}\\01{\2}{\3}{\2}}{}$,
$\subspace{10{\3}0{\2}\\01{\2}{\2}{\2}}{}$,
$\subspace{10101\\01{\3}{\3}1}{}$,
$\subspace{10{\3}0{\2}\\01{\3}11}{}$,
$\subspace{101{\2}1\\010{\2}0}{12}$,
$\subspace{10111\\010{\3}0}{}$,
$\subspace{10{\2}{\2}{\3}\\01010}{}$,
$\subspace{10{\2}{\3}{\3}\\010{\3}0}{}$,
$\subspace{10{\3}1{\2}\\01010}{}$,
$\subspace{10{\3}{\3}{\2}\\010{\2}0}{}$,
$\subspace{10100\\011{\3}{\2}}{}$,
$\subspace{10{\3}00\\01111}{}$,
$\subspace{10100\\01{\2}{\2}{\3}}{}$,
$\subspace{10{\2}00\\01{\2}11}{}$,
$\subspace{10{\2}00\\01{\3}{\3}{\2}}{}$,
$\subspace{10{\3}00\\01{\3}{\2}{\3}}{}$,
$\subspace{10011\\01100}{6}$,
$\subspace{10010\\01101}{}$,
$\subspace{100{\2}{\3}\\01{\2}00}{}$,
$\subspace{100{\2}0\\01{\2}0{\3}}{}$,
$\subspace{100{\3}{\2}\\01{\3}00}{}$,
$\subspace{100{\3}0\\01{\3}0{\2}}{}$,
$\subspace{1001{\2}\\01100}{6}$,
$\subspace{10010\\0110{\2}}{}$,
$\subspace{100{\2}1\\01{\2}00}{}$,
$\subspace{100{\2}0\\01{\2}01}{}$,
$\subspace{100{\3}{\3}\\01{\3}00}{}$,
$\subspace{100{\3}0\\01{\3}0{\3}}{}$,
$\subspace{1001{\3}\\0110{\3}}{3}$,
$\subspace{100{\2}{\2}\\01{\2}0{\2}}{}$,
$\subspace{100{\3}1\\01{\3}01}{}$,
$\subspace{1001{\3}\\0110{\3}}{3}$,
$\subspace{100{\2}{\2}\\01{\2}0{\2}}{}$,
$\subspace{100{\3}1\\01{\3}01}{}$,
$\subspace{100{\3}1\\01111}{12}$,
$\subspace{100{\2}{\2}\\011{\2}{\3}}{}$,
$\subspace{101{\3}1\\0110{\3}}{}$,
$\subspace{10{\2}{\2}{\3}\\0110{\3}}{}$,
$\subspace{1001{\3}\\01{\2}{\2}{\3}}{}$,
$\subspace{100{\3}1\\01{\2}{\3}{\2}}{}$,
$\subspace{10{\2}1{\3}\\01{\2}0{\2}}{}$,
$\subspace{10{\3}{\3}{\2}\\01{\2}0{\2}}{}$,
$\subspace{1001{\3}\\01{\3}11}{}$,
$\subspace{100{\2}{\2}\\01{\3}{\3}{\2}}{}$,
$\subspace{10111\\01{\3}01}{}$,
$\subspace{10{\3}{\2}{\2}\\01{\3}01}{}$,
$\subspace{10010\\011{\2}1}{6}$,
$\subspace{10{\2}11\\01100}{}$,
$\subspace{100{\2}0\\01{\2}{\3}{\3}}{}$,
$\subspace{10{\3}{\2}{\3}\\01{\2}00}{}$,
$\subspace{100{\3}0\\01{\3}1{\2}}{}$,
$\subspace{101{\3}{\2}\\01{\3}00}{}$,
$\subspace{100{\3}{\3}\\011{\2}0}{12}$,
$\subspace{100{\2}1\\011{\3}{\2}}{}$,
$\subspace{10{\2}{\3}{\3}\\0110{\2}}{}$,
$\subspace{10{\3}{\2}0\\0110{\2}}{}$,
$\subspace{100{\3}{\3}\\01{\2}11}{}$,
$\subspace{1001{\2}\\01{\2}{\3}0}{}$,
$\subspace{101{\3}0\\01{\2}01}{}$,
$\subspace{10{\3}1{\2}\\01{\2}01}{}$,
$\subspace{100{\2}1\\01{\3}10}{}$,
$\subspace{1001{\2}\\01{\3}{\2}{\3}}{}$,
$\subspace{101{\2}1\\01{\3}0{\3}}{}$,
$\subspace{10{\2}10\\01{\3}0{\3}}{}$,
$\subspace{10011\\011{\3}0}{12}$,
$\subspace{10011\\011{\3}{\3}}{}$,
$\subspace{10{\3}10\\01101}{}$,
$\subspace{10{\3}1{\3}\\01101}{}$,
$\subspace{100{\2}{\3}\\01{\2}10}{}$,
$\subspace{100{\2}{\3}\\01{\2}1{\2}}{}$,
$\subspace{101{\2}0\\01{\2}0{\3}}{}$,
$\subspace{101{\2}{\2}\\01{\2}0{\3}}{}$,
$\subspace{100{\3}{\2}\\01{\3}{\2}0}{}$,
$\subspace{100{\3}{\2}\\01{\3}{\2}1}{}$,
$\subspace{10{\2}{\3}0\\01{\3}0{\2}}{}$,
$\subspace{10{\2}{\3}1\\01{\3}0{\2}}{}$,
$\subspace{10011\\011{\3}0}{12}$,
$\subspace{10011\\011{\3}{\3}}{}$,
$\subspace{10{\3}10\\01101}{}$,
$\subspace{10{\3}1{\3}\\01101}{}$,
$\subspace{100{\2}{\3}\\01{\2}10}{}$,
$\subspace{100{\2}{\3}\\01{\2}1{\2}}{}$,
$\subspace{101{\2}0\\01{\2}0{\3}}{}$,
$\subspace{101{\2}{\2}\\01{\2}0{\3}}{}$,
$\subspace{100{\3}{\2}\\01{\3}{\2}0}{}$,
$\subspace{100{\3}{\2}\\01{\3}{\2}1}{}$,
$\subspace{10{\2}{\3}0\\01{\3}0{\2}}{}$,
$\subspace{10{\2}{\3}1\\01{\3}0{\2}}{}$,
$\subspace{101{\2}{\3}\\0111{\2}}{12}$,
$\subspace{101{\2}0\\0111{\3}}{}$,
$\subspace{10{\3}{\3}1\\011{\3}0}{}$,
$\subspace{10{\3}{\3}{\3}\\011{\3}1}{}$,
$\subspace{1011{\3}\\01{\2}10}{}$,
$\subspace{1011{\2}\\01{\2}1{\3}}{}$,
$\subspace{10{\2}{\3}{\2}\\01{\2}{\2}1}{}$,
$\subspace{10{\2}{\3}0\\01{\2}{\2}{\2}}{}$,
$\subspace{10{\2}{\2}{\2}\\01{\3}{\2}0}{}$,
$\subspace{10{\2}{\2}1\\01{\3}{\2}{\2}}{}$,
$\subspace{10{\3}10\\01{\3}{\3}1}{}$,
$\subspace{10{\3}11\\01{\3}{\3}{\3}}{}$,
$\subspace{101{\3}0\\01111}{12}$,
$\subspace{101{\3}1\\0111{\2}}{}$,
$\subspace{10{\2}{\2}{\3}\\011{\2}0}{}$,
$\subspace{10{\2}{\2}1\\011{\2}{\3}}{}$,
$\subspace{10{\2}1{\3}\\01{\2}{\2}1}{}$,
$\subspace{10{\2}10\\01{\2}{\2}{\3}}{}$,
$\subspace{10{\3}{\3}{\2}\\01{\2}{\3}0}{}$,
$\subspace{10{\3}{\3}{\3}\\01{\2}{\3}{\2}}{}$,
$\subspace{10111\\01{\3}10}{}$,
$\subspace{1011{\2}\\01{\3}11}{}$,
$\subspace{10{\3}{\2}0\\01{\3}{\3}{\2}}{}$,
$\subspace{10{\3}{\2}{\2}\\01{\3}{\3}{\3}}{}$,
$\subspace{10111\\011{\2}1}{12}$,
$\subspace{10110\\011{\2}{\2}}{}$,
$\subspace{10{\2}1{\2}\\01110}{}$,
$\subspace{10{\2}11\\01111}{}$,
$\subspace{10{\2}{\2}0\\01{\2}{\3}1}{}$,
$\subspace{10{\2}{\2}{\3}\\01{\2}{\3}{\3}}{}$,
$\subspace{10{\3}{\2}1\\01{\2}{\2}0}{}$,
$\subspace{10{\3}{\2}{\3}\\01{\2}{\2}{\3}}{}$,
$\subspace{101{\3}{\3}\\01{\3}{\3}0}{}$,
$\subspace{101{\3}{\2}\\01{\3}{\3}{\2}}{}$,
$\subspace{10{\3}{\3}{\2}\\01{\3}1{\2}}{}$,
$\subspace{10{\3}{\3}0\\01{\3}1{\3}}{}$,
$\subspace{101{\3}0\\011{\2}{\3}}{12}$,
$\subspace{10{\2}{\3}{\2}\\0111{\3}}{}$,
$\subspace{10{\2}{\2}{\2}\\011{\3}1}{}$,
$\subspace{10{\3}{\2}{\2}\\011{\2}0}{}$,
$\subspace{101{\3}1\\01{\2}{\3}0}{}$,
$\subspace{10{\2}10\\01{\2}{\3}{\2}}{}$,
$\subspace{10{\3}{\3}1\\01{\2}1{\3}}{}$,
$\subspace{10{\3}11\\01{\2}{\2}{\2}}{}$,
$\subspace{1011{\3}\\01{\3}{\2}{\2}}{}$,
$\subspace{101{\2}{\3}\\01{\3}{\3}1}{}$,
$\subspace{10{\2}1{\3}\\01{\3}10}{}$,
$\subspace{10{\3}{\2}0\\01{\3}11}{}$,
$\subspace{1011{\2}\\011{\3}{\3}}{6}$,
$\subspace{10{\3}1{\3}\\0111{\2}}{}$,
$\subspace{101{\2}{\2}\\01{\2}{\2}1}{}$,
$\subspace{10{\2}{\2}1\\01{\2}1{\2}}{}$,
$\subspace{10{\2}{\3}1\\01{\3}{\3}{\3}}{}$,
$\subspace{10{\3}{\3}{\3}\\01{\3}{\2}1}{}$,
$\subspace{101{\2}{\3}\\011{\3}1}{12}$,
$\subspace{10{\2}{\3}{\2}\\011{\3}1}{}$,
$\subspace{10{\3}{\2}0\\0111{\2}}{}$,
$\subspace{10{\3}{\3}{\3}\\011{\2}0}{}$,
$\subspace{101{\3}0\\01{\2}{\2}1}{}$,
$\subspace{1011{\2}\\01{\2}{\3}0}{}$,
$\subspace{10{\2}{\3}{\2}\\01{\2}1{\3}}{}$,
$\subspace{10{\3}11\\01{\2}1{\3}}{}$,
$\subspace{101{\2}{\3}\\01{\3}{\2}{\2}}{}$,
$\subspace{10{\2}{\2}1\\01{\3}10}{}$,
$\subspace{10{\2}10\\01{\3}{\3}{\3}}{}$,
$\subspace{10{\3}11\\01{\3}{\2}{\2}}{}$.

\def\subspace#1#2{\left(\!\begin{smallmatrix}#1\end{smallmatrix}\!\right)^{\!\raisebox{0mm}{\makebox[0mm][l]{$\scriptscriptstyle {#2}$}}}}
$n_4(5,2;19)\ge 307$, $\left[\!\begin{smallmatrix}0{\2}000\\{\2}0000\\000{\3}0\\00{\3}00\\00001\end{smallmatrix}\!\right]$, $\omega\leftrightarrow\upsilon$:
$\subspace{00100\\00010}{1}$,
$\subspace{00100\\00010}{1}$,
$\subspace{00100\\00010}{1}$,
$\subspace{01000\\00001}{2}$,
$\subspace{10000\\00001}{}$,
$\subspace{01000\\00011}{6}$,
$\subspace{01000\\0001{\2}}{}$,
$\subspace{01000\\0001{\3}}{}$,
$\subspace{10000\\00101}{}$,
$\subspace{10000\\0010{\2}}{}$,
$\subspace{10000\\0010{\3}}{}$,
$\subspace{0100{\2}\\00101}{12}$,
$\subspace{0100{\3}\\00101}{}$,
$\subspace{01001\\0010{\2}}{}$,
$\subspace{0100{\2}\\0010{\2}}{}$,
$\subspace{01001\\0010{\3}}{}$,
$\subspace{0100{\3}\\0010{\3}}{}$,
$\subspace{1000{\2}\\00011}{}$,
$\subspace{1000{\3}\\00011}{}$,
$\subspace{10001\\0001{\2}}{}$,
$\subspace{1000{\2}\\0001{\2}}{}$,
$\subspace{10001\\0001{\3}}{}$,
$\subspace{1000{\3}\\0001{\3}}{}$,
$\subspace{0101{\3}\\001{\2}1}{12}$,
$\subspace{010{\2}{\2}\\001{\2}{\2}}{}$,
$\subspace{010{\3}1\\001{\2}{\3}}{}$,
$\subspace{0101{\2}\\001{\3}1}{}$,
$\subspace{010{\2}1\\001{\3}{\2}}{}$,
$\subspace{010{\3}{\3}\\001{\3}{\3}}{}$,
$\subspace{10010\\001{\2}1}{}$,
$\subspace{100{\2}0\\001{\2}{\2}}{}$,
$\subspace{100{\3}0\\001{\2}{\3}}{}$,
$\subspace{10010\\001{\3}1}{}$,
$\subspace{100{\2}0\\001{\3}{\2}}{}$,
$\subspace{100{\3}0\\001{\3}{\3}}{}$,
$\subspace{11000\\00001}{1}$,
$\subspace{11000\\00111}{3}$,
$\subspace{11000\\0011{\2}}{}$,
$\subspace{11000\\0011{\3}}{}$,
$\subspace{11010\\00111}{6}$,
$\subspace{11011\\00111}{}$,
$\subspace{110{\2}0\\0011{\2}}{}$,
$\subspace{110{\2}{\3}\\0011{\2}}{}$,
$\subspace{110{\3}0\\0011{\3}}{}$,
$\subspace{110{\3}{\2}\\0011{\3}}{}$,
$\subspace{1100{\3}\\001{\2}1}{6}$,
$\subspace{1100{\2}\\001{\2}{\2}}{}$,
$\subspace{11001\\001{\2}{\3}}{}$,
$\subspace{1100{\2}\\001{\3}1}{}$,
$\subspace{11001\\001{\3}{\2}}{}$,
$\subspace{1100{\3}\\001{\3}{\3}}{}$,
$\subspace{10000\\01000}{1}$,
$\subspace{10011\\01001}{6}$,
$\subspace{100{\2}{\3}\\0100{\3}}{}$,
$\subspace{100{\3}{\2}\\0100{\2}}{}$,
$\subspace{10001\\01101}{}$,
$\subspace{1000{\3}\\01{\2}0{\3}}{}$,
$\subspace{1000{\2}\\01{\3}0{\2}}{}$,
$\subspace{10011\\01001}{6}$,
$\subspace{100{\2}{\3}\\0100{\3}}{}$,
$\subspace{100{\3}{\2}\\0100{\2}}{}$,
$\subspace{10001\\01101}{}$,
$\subspace{1000{\3}\\01{\2}0{\3}}{}$,
$\subspace{1000{\2}\\01{\3}0{\2}}{}$,
$\subspace{10100\\01010}{3}$,
$\subspace{10{\2}00\\010{\2}0}{}$,
$\subspace{10{\3}00\\010{\3}0}{}$,
$\subspace{10100\\01010}{3}$,
$\subspace{10{\2}00\\010{\2}0}{}$,
$\subspace{10{\3}00\\010{\3}0}{}$,
$\subspace{10100\\01010}{3}$,
$\subspace{10{\2}00\\010{\2}0}{}$,
$\subspace{10{\3}00\\010{\3}0}{}$,
$\subspace{101{\2}1\\01011}{12}$,
$\subspace{101{\3}1\\01011}{}$,
$\subspace{10{\2}1{\3}\\010{\2}{\3}}{}$,
$\subspace{10{\2}{\3}{\3}\\010{\2}{\3}}{}$,
$\subspace{10{\3}1{\2}\\010{\3}{\2}}{}$,
$\subspace{10{\3}{\2}{\2}\\010{\3}{\2}}{}$,
$\subspace{10{\2}0{\3}\\011{\2}{\3}}{}$,
$\subspace{10{\3}0{\2}\\011{\3}{\2}}{}$,
$\subspace{10101\\01{\2}11}{}$,
$\subspace{10{\3}0{\2}\\01{\2}{\3}{\2}}{}$,
$\subspace{10101\\01{\3}11}{}$,
$\subspace{10{\2}0{\3}\\01{\3}{\2}{\3}}{}$,
$\subspace{101{\2}{\3}\\0101{\2}}{12}$,
$\subspace{101{\3}{\2}\\0101{\3}}{}$,
$\subspace{10{\2}11\\010{\2}{\2}}{}$,
$\subspace{10{\2}{\3}{\2}\\010{\2}1}{}$,
$\subspace{10{\3}11\\010{\3}{\3}}{}$,
$\subspace{10{\3}{\2}{\3}\\010{\3}1}{}$,
$\subspace{10{\2}0{\2}\\011{\2}1}{}$,
$\subspace{10{\3}0{\3}\\011{\3}1}{}$,
$\subspace{1010{\2}\\01{\2}1{\3}}{}$,
$\subspace{10{\3}01\\01{\2}{\3}{\3}}{}$,
$\subspace{1010{\3}\\01{\3}1{\2}}{}$,
$\subspace{10{\2}01\\01{\3}{\2}{\2}}{}$,
$\subspace{1010{\2}\\010{\2}{\2}}{6}$,
$\subspace{1010{\3}\\010{\3}{\3}}{}$,
$\subspace{10{\2}0{\2}\\0101{\2}}{}$,
$\subspace{10{\2}01\\010{\3}1}{}$,
$\subspace{10{\3}0{\3}\\0101{\3}}{}$,
$\subspace{10{\3}01\\010{\2}1}{}$,
$\subspace{1011{\3}\\010{\2}1}{12}$,
$\subspace{1011{\2}\\010{\3}1}{}$,
$\subspace{10{\2}{\2}1\\0101{\3}}{}$,
$\subspace{10{\2}{\2}{\2}\\010{\3}{\3}}{}$,
$\subspace{10{\3}{\3}1\\0101{\2}}{}$,
$\subspace{10{\3}{\3}{\3}\\010{\2}{\2}}{}$,
$\subspace{10{\2}01\\0111{\3}}{}$,
$\subspace{10{\3}01\\0111{\2}}{}$,
$\subspace{1010{\3}\\01{\2}{\2}1}{}$,
$\subspace{10{\3}0{\3}\\01{\2}{\2}{\2}}{}$,
$\subspace{1010{\2}\\01{\3}{\3}1}{}$,
$\subspace{10{\2}0{\2}\\01{\3}{\3}{\3}}{}$,
$\subspace{101{\2}{\3}\\010{\2}0}{12}$,
$\subspace{101{\3}{\2}\\010{\3}0}{}$,
$\subspace{10{\2}11\\01010}{}$,
$\subspace{10{\2}{\3}{\2}\\010{\3}0}{}$,
$\subspace{10{\3}11\\01010}{}$,
$\subspace{10{\3}{\2}{\3}\\010{\2}0}{}$,
$\subspace{10100\\011{\2}1}{}$,
$\subspace{10100\\011{\3}1}{}$,
$\subspace{10{\2}00\\01{\2}1{\3}}{}$,
$\subspace{10{\2}00\\01{\2}{\3}{\3}}{}$,
$\subspace{10{\3}00\\01{\3}1{\2}}{}$,
$\subspace{10{\3}00\\01{\3}{\2}{\2}}{}$,
$\subspace{101{\3}1\\010{\2}{\3}}{12}$,
$\subspace{101{\2}1\\010{\3}{\2}}{}$,
$\subspace{10{\2}{\3}{\3}\\01011}{}$,
$\subspace{10{\2}1{\3}\\010{\3}{\2}}{}$,
$\subspace{10{\3}{\2}{\2}\\01011}{}$,
$\subspace{10{\3}1{\2}\\010{\2}{\3}}{}$,
$\subspace{10{\2}0{\3}\\011{\3}{\2}}{}$,
$\subspace{10{\3}0{\2}\\011{\2}{\3}}{}$,
$\subspace{10101\\01{\2}{\3}{\2}}{}$,
$\subspace{10{\3}0{\2}\\01{\2}11}{}$,
$\subspace{10101\\01{\3}{\2}{\3}}{}$,
$\subspace{10{\2}0{\3}\\01{\3}11}{}$,
$\subspace{10011\\01100}{6}$,
$\subspace{10010\\01101}{}$,
$\subspace{100{\2}{\3}\\01{\2}00}{}$,
$\subspace{100{\2}0\\01{\2}0{\3}}{}$,
$\subspace{100{\3}{\2}\\01{\3}00}{}$,
$\subspace{100{\3}0\\01{\3}0{\2}}{}$,
$\subspace{1001{\3}\\0110{\2}}{6}$,
$\subspace{1001{\2}\\0110{\3}}{}$,
$\subspace{100{\2}{\2}\\01{\2}01}{}$,
$\subspace{100{\2}1\\01{\2}0{\2}}{}$,
$\subspace{100{\3}{\3}\\01{\3}01}{}$,
$\subspace{100{\3}1\\01{\3}0{\3}}{}$,
$\subspace{100{\2}{\2}\\01101}{12}$,
$\subspace{100{\2}{\3}\\0110{\2}}{}$,
$\subspace{100{\3}{\3}\\01101}{}$,
$\subspace{100{\3}{\2}\\0110{\3}}{}$,
$\subspace{10011\\01{\2}0{\2}}{}$,
$\subspace{1001{\2}\\01{\2}0{\3}}{}$,
$\subspace{100{\3}{\2}\\01{\2}01}{}$,
$\subspace{100{\3}1\\01{\2}0{\3}}{}$,
$\subspace{1001{\3}\\01{\3}0{\2}}{}$,
$\subspace{10011\\01{\3}0{\3}}{}$,
$\subspace{100{\2}{\3}\\01{\3}01}{}$,
$\subspace{100{\2}1\\01{\3}0{\2}}{}$,
$\subspace{100{\2}1\\011{\2}0}{12}$,
$\subspace{100{\3}1\\011{\3}0}{}$,
$\subspace{101{\2}0\\0110{\3}}{}$,
$\subspace{101{\3}0\\0110{\2}}{}$,
$\subspace{1001{\3}\\01{\2}10}{}$,
$\subspace{100{\3}{\3}\\01{\2}{\3}0}{}$,
$\subspace{10{\2}10\\01{\2}01}{}$,
$\subspace{10{\2}{\3}0\\01{\2}0{\2}}{}$,
$\subspace{1001{\2}\\01{\3}10}{}$,
$\subspace{100{\2}{\2}\\01{\3}{\2}0}{}$,
$\subspace{10{\3}10\\01{\3}01}{}$,
$\subspace{10{\3}{\2}0\\01{\3}0{\3}}{}$,
$\subspace{10111\\01110}{6}$,
$\subspace{10110\\01111}{}$,
$\subspace{10{\2}{\2}{\3}\\01{\2}{\2}0}{}$,
$\subspace{10{\2}{\2}0\\01{\2}{\2}{\3}}{}$,
$\subspace{10{\3}{\3}{\2}\\01{\3}{\3}0}{}$,
$\subspace{10{\3}{\3}0\\01{\3}{\3}{\2}}{}$,
$\subspace{101{\2}{\3}\\01110}{12}$,
$\subspace{101{\3}{\2}\\01110}{}$,
$\subspace{10{\2}{\2}0\\011{\2}1}{}$,
$\subspace{10{\3}{\3}0\\011{\3}1}{}$,
$\subspace{10110\\01{\2}1{\3}}{}$,
$\subspace{10{\2}11\\01{\2}{\2}0}{}$,
$\subspace{10{\2}{\3}{\2}\\01{\2}{\2}0}{}$,
$\subspace{10{\3}{\3}0\\01{\2}{\3}{\3}}{}$,
$\subspace{10110\\01{\3}1{\2}}{}$,
$\subspace{10{\2}{\2}0\\01{\3}{\2}{\2}}{}$,
$\subspace{10{\3}11\\01{\3}{\3}0}{}$,
$\subspace{10{\3}{\2}{\3}\\01{\3}{\3}0}{}$,
$\subspace{101{\2}{\3}\\0111{\3}}{12}$,
$\subspace{101{\3}{\2}\\0111{\2}}{}$,
$\subspace{10{\2}{\2}1\\011{\2}1}{}$,
$\subspace{10{\3}{\3}1\\011{\3}1}{}$,
$\subspace{1011{\3}\\01{\2}1{\3}}{}$,
$\subspace{10{\2}11\\01{\2}{\2}1}{}$,
$\subspace{10{\2}{\3}{\2}\\01{\2}{\2}{\2}}{}$,
$\subspace{10{\3}{\3}{\3}\\01{\2}{\3}{\3}}{}$,
$\subspace{1011{\2}\\01{\3}1{\2}}{}$,
$\subspace{10{\2}{\2}{\2}\\01{\3}{\2}{\2}}{}$,
$\subspace{10{\3}11\\01{\3}{\3}1}{}$,
$\subspace{10{\3}{\2}{\3}\\01{\3}{\3}{\3}}{}$,
$\subspace{1011{\2}\\011{\2}1}{12}$,
$\subspace{1011{\3}\\011{\3}1}{}$,
$\subspace{10{\2}11\\0111{\2}}{}$,
$\subspace{10{\3}11\\0111{\3}}{}$,
$\subspace{101{\2}{\3}\\01{\2}{\2}{\2}}{}$,
$\subspace{10{\2}{\2}{\2}\\01{\2}1{\3}}{}$,
$\subspace{10{\2}{\2}1\\01{\2}{\3}{\3}}{}$,
$\subspace{10{\3}{\2}{\3}\\01{\2}{\2}1}{}$,
$\subspace{101{\3}{\2}\\01{\3}{\3}{\3}}{}$,
$\subspace{10{\2}{\3}{\2}\\01{\3}{\3}1}{}$,
$\subspace{10{\3}{\3}{\3}\\01{\3}1{\2}}{}$,
$\subspace{10{\3}{\3}1\\01{\3}{\2}{\2}}{}$,
$\subspace{10110\\011{\2}{\2}}{12}$,
$\subspace{10110\\011{\3}{\3}}{}$,
$\subspace{10{\2}1{\2}\\01110}{}$,
$\subspace{10{\3}1{\3}\\01110}{}$,
$\subspace{101{\2}{\2}\\01{\2}{\2}0}{}$,
$\subspace{10{\2}{\2}0\\01{\2}1{\2}}{}$,
$\subspace{10{\2}{\2}0\\01{\2}{\3}1}{}$,
$\subspace{10{\3}{\2}1\\01{\2}{\2}0}{}$,
$\subspace{101{\3}{\3}\\01{\3}{\3}0}{}$,
$\subspace{10{\2}{\3}1\\01{\3}{\3}0}{}$,
$\subspace{10{\3}{\3}0\\01{\3}1{\3}}{}$,
$\subspace{10{\3}{\3}0\\01{\3}{\2}1}{}$,
$\subspace{10111\\011{\2}{\2}}{12}$,
$\subspace{10111\\011{\3}{\3}}{}$,
$\subspace{10{\2}1{\2}\\01111}{}$,
$\subspace{10{\3}1{\3}\\01111}{}$,
$\subspace{101{\2}{\2}\\01{\2}{\2}{\3}}{}$,
$\subspace{10{\2}{\2}{\3}\\01{\2}1{\2}}{}$,
$\subspace{10{\2}{\2}{\3}\\01{\2}{\3}1}{}$,
$\subspace{10{\3}{\2}1\\01{\2}{\2}{\3}}{}$,
$\subspace{101{\3}{\3}\\01{\3}{\3}{\2}}{}$,
$\subspace{10{\2}{\3}1\\01{\3}{\3}{\2}}{}$,
$\subspace{10{\3}{\3}{\2}\\01{\3}1{\3}}{}$,
$\subspace{10{\3}{\3}{\2}\\01{\3}{\2}1}{}$,
$\subspace{101{\3}1\\011{\2}{\2}}{12}$,
$\subspace{101{\2}1\\011{\3}{\3}}{}$,
$\subspace{10{\2}{\3}1\\011{\3}{\2}}{}$,
$\subspace{10{\3}{\2}1\\011{\2}{\3}}{}$,
$\subspace{101{\3}{\3}\\01{\2}{\3}{\2}}{}$,
$\subspace{10{\2}{\3}{\3}\\01{\2}1{\2}}{}$,
$\subspace{10{\2}1{\3}\\01{\2}{\3}1}{}$,
$\subspace{10{\3}1{\3}\\01{\2}11}{}$,
$\subspace{101{\2}{\2}\\01{\3}{\2}{\3}}{}$,
$\subspace{10{\2}1{\2}\\01{\3}11}{}$,
$\subspace{10{\3}{\2}{\2}\\01{\3}1{\3}}{}$,
$\subspace{10{\3}1{\2}\\01{\3}{\2}1}{}$,
$\subspace{10{\2}{\2}{\2}\\0111{\2}}{6}$,
$\subspace{10{\3}{\3}{\3}\\0111{\3}}{}$,
$\subspace{1011{\2}\\01{\2}{\2}{\2}}{}$,
$\subspace{10{\3}{\3}1\\01{\2}{\2}1}{}$,
$\subspace{1011{\3}\\01{\3}{\3}{\3}}{}$,
$\subspace{10{\2}{\2}1\\01{\3}{\3}1}{}$,
$\subspace{10{\2}{\3}0\\01111}{12}$,
$\subspace{10{\2}{\2}{\3}\\011{\3}0}{}$,
$\subspace{10{\3}{\2}0\\01111}{}$,
$\subspace{10{\3}{\3}{\2}\\011{\2}0}{}$,
$\subspace{101{\3}0\\01{\2}{\2}{\3}}{}$,
$\subspace{10111\\01{\2}{\3}0}{}$,
$\subspace{10{\3}{\3}{\2}\\01{\2}10}{}$,
$\subspace{10{\3}10\\01{\2}{\2}{\3}}{}$,
$\subspace{10111\\01{\3}{\2}0}{}$,
$\subspace{101{\2}0\\01{\3}{\3}{\2}}{}$,
$\subspace{10{\2}{\2}{\3}\\01{\3}10}{}$,
$\subspace{10{\2}10\\01{\3}{\3}{\2}}{}$,
$\subspace{10{\2}1{\3}\\011{\2}0}{12}$,
$\subspace{10{\2}10\\011{\2}{\3}}{}$,
$\subspace{10{\3}1{\2}\\011{\3}0}{}$,
$\subspace{10{\3}10\\011{\3}{\2}}{}$,
$\subspace{101{\2}1\\01{\2}10}{}$,
$\subspace{101{\2}0\\01{\2}11}{}$,
$\subspace{10{\3}{\2}{\2}\\01{\2}{\3}0}{}$,
$\subspace{10{\3}{\2}0\\01{\2}{\3}{\2}}{}$,
$\subspace{101{\3}1\\01{\3}10}{}$,
$\subspace{101{\3}0\\01{\3}11}{}$,
$\subspace{10{\2}{\3}{\3}\\01{\3}{\2}0}{}$,
$\subspace{10{\2}{\3}0\\01{\3}{\2}{\3}}{}$,
$\subspace{10{\2}{\3}0\\011{\2}0}{6}$,
$\subspace{10{\3}{\2}0\\011{\3}0}{}$,
$\subspace{101{\3}0\\01{\2}10}{}$,
$\subspace{10{\3}10\\01{\2}{\3}0}{}$,
$\subspace{101{\2}0\\01{\3}10}{}$,
$\subspace{10{\2}10\\01{\3}{\2}0}{}$,
$\subspace{10{\2}1{\2}\\011{\3}0}{12}$,
$\subspace{10{\2}10\\011{\3}{\3}}{}$,
$\subspace{10{\3}1{\3}\\011{\2}0}{}$,
$\subspace{10{\3}10\\011{\2}{\2}}{}$,
$\subspace{101{\2}{\2}\\01{\2}{\3}0}{}$,
$\subspace{101{\2}0\\01{\2}{\3}1}{}$,
$\subspace{10{\3}{\2}1\\01{\2}10}{}$,
$\subspace{10{\3}{\2}0\\01{\2}1{\2}}{}$,
$\subspace{101{\3}{\3}\\01{\3}{\2}0}{}$,
$\subspace{101{\3}0\\01{\3}{\2}1}{}$,
$\subspace{10{\2}{\3}1\\01{\3}10}{}$,
$\subspace{10{\2}{\3}0\\01{\3}1{\3}}{}$,
$\subspace{10{\2}1{\2}\\011{\3}{\3}}{6}$,
$\subspace{10{\3}1{\3}\\011{\2}{\2}}{}$,
$\subspace{101{\2}{\2}\\01{\2}{\3}1}{}$,
$\subspace{10{\3}{\2}1\\01{\2}1{\2}}{}$,
$\subspace{101{\3}{\3}\\01{\3}{\2}1}{}$,
$\subspace{10{\2}{\3}1\\01{\3}1{\3}}{}$.

\section{Explicit lists of generator matrices for lower bounds for \texorpdfstring{$n_5(5,2;s)$}{n5(5,2;s)}}
\label{sec_explicit_5}
In this section we explicitly list the generator matrices of the projective $2-(n,5,s)_5$ systems found by ILP computations. For each system, we also list generators of the prescribed automorphism group of the system (which is not necessarily the full automorphism group); representatives of orbits under this group are marked by an index whose value indicates the orbit size.

\smallskip

\def\subspace#1#2{\left(\!\begin{smallmatrix}#1\end{smallmatrix}\!\right)^{\!\makebox[0mm][l]{$\scriptscriptstyle #2$}}}

$n_5(5,2;3)\ge 50$,$\left[\!\begin{smallmatrix}10000\\01100\\00110\\00011\\00001\end{smallmatrix}\!\right]$:
$\subspace{10012\\01044}{5}$,
$\subspace{10013\\01143}{}$,
$\subspace{10014\\01202}{}$,
$\subspace{10010\\01322}{}$,
$\subspace{10011\\01404}{}$,
$\subspace{10020\\01022}{5}$,
$\subspace{10022\\01124}{}$,
$\subspace{10024\\01231}{}$,
$\subspace{10021\\01304}{}$,
$\subspace{10023\\01434}{}$,
$\subspace{10040\\01020}{5}$,
$\subspace{10044\\01122}{}$,
$\subspace{10043\\01234}{}$,
$\subspace{10042\\01302}{}$,
$\subspace{10041\\01432}{}$,
$\subspace{10102\\01000}{5}$,
$\subspace{10112\\01100}{}$,
$\subspace{10123\\01210}{}$,
$\subspace{10130\\01331}{}$,
$\subspace{10143\\01414}{}$,
$\subspace{10104\\01033}{5}$,
$\subspace{10114\\01131}{}$,
$\subspace{10120\\01244}{}$,
$\subspace{10132\\01313}{}$,
$\subspace{10140\\01444}{}$,
$\subspace{10144\\01024}{5}$,
$\subspace{10103\\01121}{}$,
$\subspace{10113\\01233}{}$,
$\subspace{10124\\01301}{}$,
$\subspace{10131\\01431}{}$,
$\subspace{10201\\01003}{5}$,
$\subspace{10221\\01103}{}$,
$\subspace{10243\\01213}{}$,
$\subspace{10212\\01334}{}$,
$\subspace{10233\\01412}{}$,
$\subspace{10303\\01002}{5}$,
$\subspace{10333\\01102}{}$,
$\subspace{10311\\01212}{}$,
$\subspace{10342\\01333}{}$,
$\subspace{10321\\01411}{}$,
$\subspace{10302\\01032}{5}$,
$\subspace{10332\\01130}{}$,
$\subspace{10310\\01243}{}$,
$\subspace{10341\\01312}{}$,
$\subspace{10320\\01443}{}$,
$\subspace{10323\\01031}{5}$,
$\subspace{10300\\01134}{}$,
$\subspace{10330\\01242}{}$,
$\subspace{10313\\01311}{}$,
$\subspace{10344\\01442}{}$.

\smallskip

$n_5(5,2;4)\ge 77$,$\left[\!\begin{smallmatrix}00001\\10004\\01004\\00101\\00013\end{smallmatrix}\!\right]$:
$\subspace{01041\\00133}{11}$,
$\subspace{10021\\00104}{}$,
$\subspace{10020\\01032}{}$,
$\subspace{10133\\01320}{}$,
$\subspace{10140\\01444}{}$,
$\subspace{10200\\01240}{}$,
$\subspace{10312\\01332}{}$,
$\subspace{10341\\01304}{}$,
$\subspace{10340\\01404}{}$,
$\subspace{10402\\01002}{}$,
$\subspace{10411\\01331}{}$,
$\subspace{10000\\01134}{11}$,
$\subspace{10044\\01122}{}$,
$\subspace{10122\\01133}{}$,
$\subspace{10112\\01223}{}$,
$\subspace{10442\\01120}{}$,
$\subspace{10443\\01213}{}$,
$\subspace{10434\\01302}{}$,
$\subspace{11000\\00113}{}$,
$\subspace{11340\\00001}{}$,
$\subspace{13024\\00130}{}$,
$\subspace{13401\\00013}{}$,
$\subspace{10030\\01111}{11}$,
$\subspace{10033\\01430}{}$,
$\subspace{10111\\01022}{}$,
$\subspace{10143\\01033}{}$,
$\subspace{10231\\01003}{}$,
$\subspace{10222\\01114}{}$,
$\subspace{10301\\01113}{}$,
$\subspace{10331\\01433}{}$,
$\subspace{10400\\01123}{}$,
$\subspace{11014\\00103}{}$,
$\subspace{14020\\00111}{}$,
$\subspace{10040\\01103}{11}$,
$\subspace{10041\\01434}{}$,
$\subspace{10220\\01224}{}$,
$\subspace{10240\\01343}{}$,
$\subspace{10324\\01004}{}$,
$\subspace{10311\\01144}{}$,
$\subspace{10332\\01204}{}$,
$\subspace{10314\\01424}{}$,
$\subspace{10421\\01232}{}$,
$\subspace{11032\\00100}{}$,
$\subspace{14024\\00142}{}$,
$\subspace{10032\\01220}{11}$,
$\subspace{10114\\01102}{}$,
$\subspace{10123\\01101}{}$,
$\subspace{10110\\01342}{}$,
$\subspace{10124\\01403}{}$,
$\subspace{10213\\01140}{}$,
$\subspace{10223\\01244}{}$,
$\subspace{10244\\01242}{}$,
$\subspace{10330\\01231}{}$,
$\subspace{10431\\01143}{}$,
$\subspace{12024\\00140}{}$,
$\subspace{10014\\01303}{11}$,
$\subspace{10101\\01321}{}$,
$\subspace{10130\\01341}{}$,
$\subspace{10102\\01440}{}$,
$\subspace{10134\\01414}{}$,
$\subspace{10232\\01013}{}$,
$\subspace{10214\\01301}{}$,
$\subspace{10342\\01221}{}$,
$\subspace{10420\\01021}{}$,
$\subspace{10433\\01340}{}$,
$\subspace{13030\\00144}{}$,
$\subspace{10043\\01314}{11}$,
$\subspace{10104\\01234}{}$,
$\subspace{10211\\01030}{}$,
$\subspace{10313\\01042}{}$,
$\subspace{10322\\01043}{}$,
$\subspace{10302\\01334}{}$,
$\subspace{10304\\01423}{}$,
$\subspace{10413\\01214}{}$,
$\subspace{10424\\01211}{}$,
$\subspace{10432\\01442}{}$,
$\subspace{13023\\00121}{}$.

\smallskip

$n_5(5,2;6)\ge 132$,$\left[\!\begin{smallmatrix}00001\\10004\\01004\\00101\\00013\end{smallmatrix}\!\right]$:
$\subspace{01002\\00130}{11}$,
$\subspace{10200\\00013}{}$,
$\subspace{10032\\01141}{}$,
$\subspace{10020\\01302}{}$,
$\subspace{10000\\01403}{}$,
$\subspace{10442\\01041}{}$,
$\subspace{10411\\01133}{}$,
$\subspace{10441\\01444}{}$,
$\subspace{11000\\00140}{}$,
$\subspace{13022\\00104}{}$,
$\subspace{14030\\00001}{}$,
$\subspace{01012\\00113}{11}$,
$\subspace{10044\\00101}{}$,
$\subspace{10003\\01202}{}$,
$\subspace{10044\\01332}{}$,
$\subspace{10120\\01120}{}$,
$\subspace{10121\\01134}{}$,
$\subspace{10133\\01210}{}$,
$\subspace{10320\\01134}{}$,
$\subspace{10443\\01400}{}$,
$\subspace{12003\\00010}{}$,
$\subspace{13041\\00113}{}$,
$\subspace{01011\\00131}{11}$,
$\subspace{10013\\00114}{}$,
$\subspace{10002\\01104}{}$,
$\subspace{10111\\01240}{}$,
$\subspace{10113\\01310}{}$,
$\subspace{10233\\01114}{}$,
$\subspace{10203\\01443}{}$,
$\subspace{10312\\01130}{}$,
$\subspace{10422\\01044}{}$,
$\subspace{10440\\01330}{}$,
$\subspace{11001\\00011}{}$,
$\subspace{01042\\00132}{11}$,
$\subspace{10034\\00120}{}$,
$\subspace{10024\\01034}{}$,
$\subspace{10011\\01230}{}$,
$\subspace{10012\\01412}{}$,
$\subspace{10141\\01001}{}$,
$\subspace{10103\\01100}{}$,
$\subspace{10300\\01314}{}$,
$\subspace{10424\\01323}{}$,
$\subspace{12020\\00110}{}$,
$\subspace{14004\\00123}{}$,
$\subspace{10022\\01110}{11}$,
$\subspace{10144\\01300}{}$,
$\subspace{10124\\01414}{}$,
$\subspace{10210\\01144}{}$,
$\subspace{10232\\01421}{}$,
$\subspace{10332\\01244}{}$,
$\subspace{10321\\01431}{}$,
$\subspace{10333\\01441}{}$,
$\subspace{10420\\01121}{}$,
$\subspace{10412\\01301}{}$,
$\subspace{11042\\00112}{}$,
$\subspace{10030\\01113}{11}$,
$\subspace{10033\\01311}{}$,
$\subspace{10131\\01003}{}$,
$\subspace{10142\\01123}{}$,
$\subspace{10303\\01033}{}$,
$\subspace{10331\\01014}{}$,
$\subspace{10314\\01420}{}$,
$\subspace{10400\\01213}{}$,
$\subspace{10434\\01232}{}$,
$\subspace{11030\\00103}{}$,
$\subspace{13004\\00111}{}$,
$\subspace{10041\\01131}{11}$,
$\subspace{10040\\01211}{}$,
$\subspace{10334\\01004}{}$,
$\subspace{10343\\01112}{}$,
$\subspace{10313\\01312}{}$,
$\subspace{10410\\01221}{}$,
$\subspace{10413\\01402}{}$,
$\subspace{10433\\01424}{}$,
$\subspace{10444\\01423}{}$,
$\subspace{11043\\00142}{}$,
$\subspace{12011\\00100}{}$,
$\subspace{10023\\01214}{11}$,
$\subspace{10043\\01224}{}$,
$\subspace{10001\\01442}{}$,
$\subspace{10023\\01411}{}$,
$\subspace{10311\\01432}{}$,
$\subspace{10322\\01411}{}$,
$\subspace{11031\\00143}{}$,
$\subspace{12004\\00121}{}$,
$\subspace{12004\\00141}{}$,
$\subspace{14023\\00141}{}$,
$\subspace{14402\\00014}{}$,
$\subspace{10031\\01204}{11}$,
$\subspace{10114\\01324}{}$,
$\subspace{10143\\01343}{}$,
$\subspace{10110\\01422}{}$,
$\subspace{10132\\01434}{}$,
$\subspace{10234\\01103}{}$,
$\subspace{10220\\01243}{}$,
$\subspace{10240\\01433}{}$,
$\subspace{10324\\01101}{}$,
$\subspace{10421\\01143}{}$,
$\subspace{12044\\00124}{}$,
$\subspace{10004\\01320}{11}$,
$\subspace{10100\\01334}{}$,
$\subspace{10112\\01413}{}$,
$\subspace{10211\\01010}{}$,
$\subspace{10211\\01032}{}$,
$\subspace{10202\\01122}{}$,
$\subspace{10341\\01000}{}$,
$\subspace{10323\\01334}{}$,
$\subspace{10402\\01222}{}$,
$\subspace{13203\\00012}{}$,
$\subspace{14010\\00122}{}$,
$\subspace{10014\\01344}{11}$,
$\subspace{10042\\01321}{}$,
$\subspace{10102\\01013}{}$,
$\subspace{10134\\01021}{}$,
$\subspace{10130\\01231}{}$,
$\subspace{10101\\01313}{}$,
$\subspace{10123\\01440}{}$,
$\subspace{10212\\01303}{}$,
$\subspace{10214\\01340}{}$,
$\subspace{13003\\00134}{}$,
$\subspace{13031\\00144}{}$,
$\subspace{10021\\01401}{11}$,
$\subspace{10204\\01040}{}$,
$\subspace{10230\\01024}{}$,
$\subspace{10204\\01241}{}$,
$\subspace{10230\\01322}{}$,
$\subspace{10243\\01401}{}$,
$\subspace{10403\\01023}{}$,
$\subspace{10423\\01023}{}$,
$\subspace{10401\\01322}{}$,
$\subspace{10423\\01331}{}$,
$\subspace{14013\\00133}{}$.

\smallskip

$n_5(5,2;7)\ge 157$,$\left[\!\begin{smallmatrix}10000\\00100\\04410\\00001\\00044\end{smallmatrix}\!\right]$:
$\subspace{00010\\00001}{1}$,
$\subspace{00010\\00001}{1}$,
$\subspace{01000\\00124}{5}$,
$\subspace{01014\\00131}{}$,
$\subspace{01023\\00143}{}$,
$\subspace{01032\\00100}{}$,
$\subspace{01041\\00112}{}$,
$\subspace{10030\\00103}{15}$,
$\subspace{10030\\00110}{}$,
$\subspace{10030\\00122}{}$,
$\subspace{10030\\00134}{}$,
$\subspace{10030\\00141}{}$,
$\subspace{10022\\01003}{}$,
$\subspace{10022\\01012}{}$,
$\subspace{10022\\01021}{}$,
$\subspace{10022\\01030}{}$,
$\subspace{10022\\01044}{}$,
$\subspace{10003\\01102}{}$,
$\subspace{10003\\01110}{}$,
$\subspace{10003\\01123}{}$,
$\subspace{10003\\01131}{}$,
$\subspace{10003\\01144}{}$,
$\subspace{10002\\01202}{15}$,
$\subspace{10002\\01213}{}$,
$\subspace{10002\\01224}{}$,
$\subspace{10002\\01230}{}$,
$\subspace{10002\\01241}{}$,
$\subspace{10033\\01301}{}$,
$\subspace{10033\\01311}{}$,
$\subspace{10033\\01321}{}$,
$\subspace{10033\\01331}{}$,
$\subspace{10033\\01341}{}$,
$\subspace{10020\\01410}{}$,
$\subspace{10020\\01411}{}$,
$\subspace{10020\\01412}{}$,
$\subspace{10020\\01413}{}$,
$\subspace{10020\\01414}{}$,
$\subspace{10100\\01004}{15}$,
$\subspace{10112\\01013}{}$,
$\subspace{10124\\01022}{}$,
$\subspace{10131\\01031}{}$,
$\subspace{10143\\01040}{}$,
$\subspace{10403\\01134}{}$,
$\subspace{10410\\01113}{}$,
$\subspace{10422\\01142}{}$,
$\subspace{10434\\01121}{}$,
$\subspace{10441\\01100}{}$,
$\subspace{14003\\00130}{}$,
$\subspace{14012\\00123}{}$,
$\subspace{14021\\00111}{}$,
$\subspace{14030\\00104}{}$,
$\subspace{14044\\00142}{}$,
$\subspace{10104\\01140}{15}$,
$\subspace{10111\\01111}{}$,
$\subspace{10123\\01132}{}$,
$\subspace{10130\\01103}{}$,
$\subspace{10142\\01124}{}$,
$\subspace{10400\\01000}{}$,
$\subspace{10412\\01041}{}$,
$\subspace{10424\\01032}{}$,
$\subspace{10431\\01023}{}$,
$\subspace{10443\\01014}{}$,
$\subspace{11004\\00112}{}$,
$\subspace{11013\\00124}{}$,
$\subspace{11022\\00131}{}$,
$\subspace{11031\\00143}{}$,
$\subspace{11040\\00100}{}$,
$\subspace{10102\\01210}{15}$,
$\subspace{10114\\01243}{}$,
$\subspace{10121\\01221}{}$,
$\subspace{10133\\01204}{}$,
$\subspace{10140\\01232}{}$,
$\subspace{10104\\01430}{}$,
$\subspace{10111\\01432}{}$,
$\subspace{10123\\01434}{}$,
$\subspace{10130\\01431}{}$,
$\subspace{10142\\01433}{}$,
$\subspace{10204\\01332}{}$,
$\subspace{10211\\01302}{}$,
$\subspace{10223\\01322}{}$,
$\subspace{10230\\01342}{}$,
$\subspace{10242\\01312}{}$,
$\subspace{10100\\01320}{15}$,
$\subspace{10112\\01310}{}$,
$\subspace{10124\\01300}{}$,
$\subspace{10131\\01340}{}$,
$\subspace{10143\\01330}{}$,
$\subspace{10301\\01222}{}$,
$\subspace{10313\\01233}{}$,
$\subspace{10320\\01244}{}$,
$\subspace{10332\\01200}{}$,
$\subspace{10344\\01211}{}$,
$\subspace{10301\\01441}{}$,
$\subspace{10313\\01440}{}$,
$\subspace{10320\\01444}{}$,
$\subspace{10332\\01443}{}$,
$\subspace{10344\\01442}{}$,
$\subspace{10204\\01013}{15}$,
$\subspace{10211\\01040}{}$,
$\subspace{10223\\01022}{}$,
$\subspace{10230\\01004}{}$,
$\subspace{10242\\01031}{}$,
$\subspace{10304\\01100}{}$,
$\subspace{10311\\01142}{}$,
$\subspace{10323\\01134}{}$,
$\subspace{10330\\01121}{}$,
$\subspace{10342\\01113}{}$,
$\subspace{13004\\00130}{}$,
$\subspace{13013\\00104}{}$,
$\subspace{13022\\00123}{}$,
$\subspace{13031\\00142}{}$,
$\subspace{13040\\00111}{}$,
$\subspace{10202\\01101}{15}$,
$\subspace{10214\\01114}{}$,
$\subspace{10221\\01122}{}$,
$\subspace{10233\\01130}{}$,
$\subspace{10240\\01143}{}$,
$\subspace{10300\\01001}{}$,
$\subspace{10312\\01024}{}$,
$\subspace{10324\\01042}{}$,
$\subspace{10331\\01010}{}$,
$\subspace{10343\\01033}{}$,
$\subspace{12002\\00144}{}$,
$\subspace{12011\\00120}{}$,
$\subspace{12020\\00101}{}$,
$\subspace{12034\\00132}{}$,
$\subspace{12043\\00113}{}$,
$\subspace{10201\\01224}{15}$,
$\subspace{10213\\01213}{}$,
$\subspace{10220\\01202}{}$,
$\subspace{10232\\01241}{}$,
$\subspace{10244\\01230}{}$,
$\subspace{10203\\01413}{}$,
$\subspace{10210\\01414}{}$,
$\subspace{10222\\01410}{}$,
$\subspace{10234\\01411}{}$,
$\subspace{10241\\01412}{}$,
$\subspace{10400\\01331}{}$,
$\subspace{10412\\01341}{}$,
$\subspace{10424\\01301}{}$,
$\subspace{10431\\01311}{}$,
$\subspace{10443\\01321}{}$,
$\subspace{10302\\01340}{15}$,
$\subspace{10314\\01320}{}$,
$\subspace{10321\\01300}{}$,
$\subspace{10333\\01330}{}$,
$\subspace{10340\\01310}{}$,
$\subspace{10401\\01211}{}$,
$\subspace{10413\\01233}{}$,
$\subspace{10420\\01200}{}$,
$\subspace{10432\\01222}{}$,
$\subspace{10444\\01244}{}$,
$\subspace{10403\\01443}{}$,
$\subspace{10410\\01441}{}$,
$\subspace{10422\\01444}{}$,
$\subspace{10434\\01442}{}$,
$\subspace{10441\\01440}{}$.

\smallskip

$n_5(5,2;8)\ge 176$,$\left[\!\begin{smallmatrix}00001\\10004\\01004\\00101\\00013\end{smallmatrix}\!\right]$,$\left[\!\begin{smallmatrix}00130\\13442\\44114\\11002\\00001\end{smallmatrix}\!\right]$:
$\subspace{00110\\00001}{11}$,
$\subspace{01100\\00013}{}$,
$\subspace{10000\\00011}{}$,
$\subspace{10002\\01003}{}$,
$\subspace{10034\\01302}{}$,
$\subspace{10210\\01120}{}$,
$\subspace{10443\\01133}{}$,
$\subspace{10400\\01300}{}$,
$\subspace{10442\\01323}{}$,
$\subspace{11003\\00130}{}$,
$\subspace{13022\\00132}{}$,
$\subspace{01014\\00142}{55}$,
$\subspace{10041\\00124}{}$,
$\subspace{10031\\01004}{}$,
$\subspace{10012\\01111}{}$,
$\subspace{10003\\01231}{}$,
$\subspace{10023\\01221}{}$,
$\subspace{10021\\01241}{}$,
$\subspace{10040\\01240}{}$,
$\subspace{10030\\01314}{}$,
$\subspace{10024\\01422}{}$,
$\subspace{10104\\01012}{}$,
$\subspace{10104\\01040}{}$,
$\subspace{10121\\01042}{}$,
$\subspace{10123\\01123}{}$,
$\subspace{10123\\01141}{}$,
$\subspace{10131\\01140}{}$,
$\subspace{10142\\01220}{}$,
$\subspace{10132\\01304}{}$,
$\subspace{10131\\01324}{}$,
$\subspace{10141\\01311}{}$,
$\subspace{10112\\01410}{}$,
$\subspace{10114\\01420}{}$,
$\subspace{10142\\01424}{}$,
$\subspace{10222\\01001}{}$,
$\subspace{10244\\01014}{}$,
$\subspace{10223\\01122}{}$,
$\subspace{10221\\01201}{}$,
$\subspace{10234\\01231}{}$,
$\subspace{10212\\01331}{}$,
$\subspace{10243\\01314}{}$,
$\subspace{10300\\01022}{}$,
$\subspace{10320\\01033}{}$,
$\subspace{10301\\01132}{}$,
$\subspace{10340\\01143}{}$,
$\subspace{10303\\01221}{}$,
$\subspace{10330\\01213}{}$,
$\subspace{10312\\01311}{}$,
$\subspace{10331\\01400}{}$,
$\subspace{10403\\01042}{}$,
$\subspace{10424\\01024}{}$,
$\subspace{10433\\01102}{}$,
$\subspace{10424\\01210}{}$,
$\subspace{10441\\01230}{}$,
$\subspace{10433\\01313}{}$,
$\subspace{10401\\01411}{}$,
$\subspace{10414\\01420}{}$,
$\subspace{10434\\01412}{}$,
$\subspace{11044\\00123}{}$,
$\subspace{12001\\00100}{}$,
$\subspace{12030\\00141}{}$,
$\subspace{12042\\00133}{}$,
$\subspace{12303\\00010}{}$,
$\subspace{13041\\00103}{}$,
$\subspace{14033\\00120}{}$,
$\subspace{14040\\00101}{}$,
$\subspace{01032\\00104}{55}$,
$\subspace{10020\\00123}{}$,
$\subspace{10012\\01002}{}$,
$\subspace{10043\\01232}{}$,
$\subspace{10013\\01341}{}$,
$\subspace{10001\\01441}{}$,
$\subspace{10014\\01410}{}$,
$\subspace{10024\\01444}{}$,
$\subspace{10101\\01034}{}$,
$\subspace{10110\\01021}{}$,
$\subspace{10122\\01114}{}$,
$\subspace{10114\\01204}{}$,
$\subspace{10141\\01242}{}$,
$\subspace{10102\\01312}{}$,
$\subspace{10134\\01333}{}$,
$\subspace{10144\\01322}{}$,
$\subspace{10103\\01412}{}$,
$\subspace{10111\\01401}{}$,
$\subspace{10130\\01402}{}$,
$\subspace{10200\\01001}{}$,
$\subspace{10214\\01101}{}$,
$\subspace{10220\\01211}{}$,
$\subspace{10233\\01313}{}$,
$\subspace{10240\\01321}{}$,
$\subspace{10204\\01430}{}$,
$\subspace{10212\\01442}{}$,
$\subspace{10230\\01421}{}$,
$\subspace{10231\\01432}{}$,
$\subspace{10241\\01423}{}$,
$\subspace{10321\\01023}{}$,
$\subspace{10323\\01041}{}$,
$\subspace{10344\\01013}{}$,
$\subspace{10313\\01143}{}$,
$\subspace{10322\\01110}{}$,
$\subspace{10341\\01112}{}$,
$\subspace{10342\\01214}{}$,
$\subspace{10300\\01320}{}$,
$\subspace{10324\\01303}{}$,
$\subspace{10333\\01310}{}$,
$\subspace{10314\\01431}{}$,
$\subspace{10343\\01434}{}$,
$\subspace{10410\\01103}{}$,
$\subspace{10414\\01233}{}$,
$\subspace{10423\\01223}{}$,
$\subspace{10444\\01201}{}$,
$\subspace{10413\\01340}{}$,
$\subspace{10411\\01413}{}$,
$\subspace{10412\\01443}{}$,
$\subspace{10431\\01440}{}$,
$\subspace{11023\\00143}{}$,
$\subspace{12013\\00121}{}$,
$\subspace{13044\\00131}{}$,
$\subspace{14011\\00120}{}$,
$\subspace{14012\\00144}{}$,
$\subspace{14403\\00014}{}$,
$\subspace{01043\\00102}{55}$,
$\subspace{10010\\00134}{}$,
$\subspace{10042\\01030}{}$,
$\subspace{10040\\01103}{}$,
$\subspace{10004\\01241}{}$,
$\subspace{10020\\01301}{}$,
$\subspace{10032\\01444}{}$,
$\subspace{10041\\01434}{}$,
$\subspace{10100\\01024}{}$,
$\subspace{10140\\01011}{}$,
$\subspace{10120\\01104}{}$,
$\subspace{10122\\01130}{}$,
$\subspace{10133\\01223}{}$,
$\subspace{10124\\01332}{}$,
$\subspace{10143\\01341}{}$,
$\subspace{10113\\01404}{}$,
$\subspace{10202\\01040}{}$,
$\subspace{10232\\01041}{}$,
$\subspace{10243\\01000}{}$,
$\subspace{10201\\01234}{}$,
$\subspace{10203\\01244}{}$,
$\subspace{10213\\01233}{}$,
$\subspace{10220\\01224}{}$,
$\subspace{10241\\01212}{}$,
$\subspace{10200\\01342}{}$,
$\subspace{10240\\01343}{}$,
$\subspace{10224\\01433}{}$,
$\subspace{10242\\01403}{}$,
$\subspace{10244\\01421}{}$,
$\subspace{10324\\01004}{}$,
$\subspace{10330\\01031}{}$,
$\subspace{10310\\01102}{}$,
$\subspace{10311\\01144}{}$,
$\subspace{10304\\01200}{}$,
$\subspace{10321\\01203}{}$,
$\subspace{10332\\01204}{}$,
$\subspace{10343\\01202}{}$,
$\subspace{10342\\01222}{}$,
$\subspace{10314\\01424}{}$,
$\subspace{10403\\01010}{}$,
$\subspace{10410\\01044}{}$,
$\subspace{10420\\01002}{}$,
$\subspace{10432\\01020}{}$,
$\subspace{10401\\01124}{}$,
$\subspace{10421\\01232}{}$,
$\subspace{10430\\01220}{}$,
$\subspace{10404\\01312}{}$,
$\subspace{10411\\01414}{}$,
$\subspace{10440\\01402}{}$,
$\subspace{11032\\00100}{}$,
$\subspace{12013\\00122}{}$,
$\subspace{12402\\00012}{}$,
$\subspace{13014\\00104}{}$,
$\subspace{14024\\00142}{}$,
$\subspace{14031\\00140}{}$.

\smallskip

$n_5(5,2;11)\ge 260$,$\left[\!\begin{smallmatrix}10000\\00100\\04410\\00001\\00044\end{smallmatrix}\!\right]$:
$\subspace{00102\\00012}{3}$,
$\subspace{01002\\00014}{}$,
$\subspace{01104\\00013}{}$,
$\subspace{01003\\00141}{5}$,
$\subspace{01012\\00103}{}$,
$\subspace{01021\\00110}{}$,
$\subspace{01030\\00122}{}$,
$\subspace{01044\\00134}{}$,
$\subspace{01200\\00011}{3}$,
$\subspace{01300\\00010}{}$,
$\subspace{01440\\00001}{}$,
$\subspace{10001\\00012}{3}$,
$\subspace{10001\\00014}{}$,
$\subspace{10002\\00013}{}$,
$\subspace{10002\\00012}{3}$,
$\subspace{10002\\00014}{}$,
$\subspace{10004\\00013}{}$,
$\subspace{10003\\00013}{3}$,
$\subspace{10004\\00012}{}$,
$\subspace{10004\\00014}{}$,
$\subspace{10103\\01004}{15}$,
$\subspace{10110\\01013}{}$,
$\subspace{10122\\01022}{}$,
$\subspace{10134\\01031}{}$,
$\subspace{10141\\01040}{}$,
$\subspace{10402\\01142}{}$,
$\subspace{10414\\01121}{}$,
$\subspace{10421\\01100}{}$,
$\subspace{10433\\01134}{}$,
$\subspace{10440\\01113}{}$,
$\subspace{14002\\00104}{}$,
$\subspace{14011\\00142}{}$,
$\subspace{14020\\00130}{}$,
$\subspace{14034\\00123}{}$,
$\subspace{14043\\00111}{}$,
$\subspace{10101\\01024}{15}$,
$\subspace{10113\\01033}{}$,
$\subspace{10120\\01042}{}$,
$\subspace{10132\\01001}{}$,
$\subspace{10144\\01010}{}$,
$\subspace{10403\\01130}{}$,
$\subspace{10410\\01114}{}$,
$\subspace{10422\\01143}{}$,
$\subspace{10434\\01122}{}$,
$\subspace{10441\\01101}{}$,
$\subspace{14003\\00120}{}$,
$\subspace{14012\\00113}{}$,
$\subspace{14021\\00101}{}$,
$\subspace{14030\\00144}{}$,
$\subspace{14044\\00132}{}$,
$\subspace{10104\\01103}{15}$,
$\subspace{10111\\01124}{}$,
$\subspace{10123\\01140}{}$,
$\subspace{10130\\01111}{}$,
$\subspace{10142\\01132}{}$,
$\subspace{10401\\01014}{}$,
$\subspace{10413\\01000}{}$,
$\subspace{10420\\01041}{}$,
$\subspace{10432\\01032}{}$,
$\subspace{10444\\01023}{}$,
$\subspace{11004\\00143}{}$,
$\subspace{11013\\00100}{}$,
$\subspace{11022\\00112}{}$,
$\subspace{11031\\00124}{}$,
$\subspace{11040\\00131}{}$,
$\subspace{10102\\01113}{15}$,
$\subspace{10114\\01134}{}$,
$\subspace{10121\\01100}{}$,
$\subspace{10133\\01121}{}$,
$\subspace{10140\\01142}{}$,
$\subspace{10403\\01031}{}$,
$\subspace{10410\\01022}{}$,
$\subspace{10422\\01013}{}$,
$\subspace{10434\\01004}{}$,
$\subspace{10441\\01040}{}$,
$\subspace{11002\\00123}{}$,
$\subspace{11011\\00130}{}$,
$\subspace{11020\\00142}{}$,
$\subspace{11034\\00104}{}$,
$\subspace{11043\\00111}{}$,
$\subspace{10102\\01203}{15}$,
$\subspace{10114\\01231}{}$,
$\subspace{10121\\01214}{}$,
$\subspace{10133\\01242}{}$,
$\subspace{10140\\01220}{}$,
$\subspace{10101\\01424}{}$,
$\subspace{10113\\01421}{}$,
$\subspace{10120\\01423}{}$,
$\subspace{10132\\01420}{}$,
$\subspace{10144\\01422}{}$,
$\subspace{10203\\01344}{}$,
$\subspace{10210\\01314}{}$,
$\subspace{10222\\01334}{}$,
$\subspace{10234\\01304}{}$,
$\subspace{10241\\01324}{}$,
$\subspace{10104\\01210}{15}$,
$\subspace{10111\\01243}{}$,
$\subspace{10123\\01221}{}$,
$\subspace{10130\\01204}{}$,
$\subspace{10142\\01232}{}$,
$\subspace{10100\\01431}{}$,
$\subspace{10112\\01433}{}$,
$\subspace{10124\\01430}{}$,
$\subspace{10131\\01432}{}$,
$\subspace{10143\\01434}{}$,
$\subspace{10201\\01322}{}$,
$\subspace{10213\\01342}{}$,
$\subspace{10220\\01312}{}$,
$\subspace{10232\\01332}{}$,
$\subspace{10244\\01302}{}$,
$\subspace{10100\\01300}{15}$,
$\subspace{10112\\01340}{}$,
$\subspace{10124\\01330}{}$,
$\subspace{10131\\01320}{}$,
$\subspace{10143\\01310}{}$,
$\subspace{10302\\01244}{}$,
$\subspace{10314\\01200}{}$,
$\subspace{10321\\01211}{}$,
$\subspace{10333\\01222}{}$,
$\subspace{10340\\01233}{}$,
$\subspace{10300\\01440}{}$,
$\subspace{10312\\01444}{}$,
$\subspace{10324\\01443}{}$,
$\subspace{10331\\01442}{}$,
$\subspace{10343\\01441}{}$,
$\subspace{10103\\01300}{15}$,
$\subspace{10110\\01340}{}$,
$\subspace{10122\\01330}{}$,
$\subspace{10134\\01320}{}$,
$\subspace{10141\\01310}{}$,
$\subspace{10301\\01211}{}$,
$\subspace{10313\\01222}{}$,
$\subspace{10320\\01233}{}$,
$\subspace{10332\\01244}{}$,
$\subspace{10344\\01200}{}$,
$\subspace{10303\\01442}{}$,
$\subspace{10310\\01441}{}$,
$\subspace{10322\\01440}{}$,
$\subspace{10334\\01444}{}$,
$\subspace{10341\\01443}{}$,
$\subspace{10204\\01001}{15}$,
$\subspace{10211\\01033}{}$,
$\subspace{10223\\01010}{}$,
$\subspace{10230\\01042}{}$,
$\subspace{10242\\01024}{}$,
$\subspace{10300\\01143}{}$,
$\subspace{10312\\01130}{}$,
$\subspace{10324\\01122}{}$,
$\subspace{10331\\01114}{}$,
$\subspace{10343\\01101}{}$,
$\subspace{13000\\00120}{}$,
$\subspace{13014\\00144}{}$,
$\subspace{13023\\00113}{}$,
$\subspace{13032\\00132}{}$,
$\subspace{13041\\00101}{}$,
$\subspace{10204\\01023}{15}$,
$\subspace{10211\\01000}{}$,
$\subspace{10223\\01032}{}$,
$\subspace{10230\\01014}{}$,
$\subspace{10242\\01041}{}$,
$\subspace{10301\\01140}{}$,
$\subspace{10313\\01132}{}$,
$\subspace{10320\\01124}{}$,
$\subspace{10332\\01111}{}$,
$\subspace{10344\\01103}{}$,
$\subspace{13001\\00131}{}$,
$\subspace{13010\\00100}{}$,
$\subspace{13024\\00124}{}$,
$\subspace{13033\\00143}{}$,
$\subspace{13042\\00112}{}$,
$\subspace{10202\\01120}{15}$,
$\subspace{10214\\01133}{}$,
$\subspace{10221\\01141}{}$,
$\subspace{10233\\01104}{}$,
$\subspace{10240\\01112}{}$,
$\subspace{10303\\01034}{}$,
$\subspace{10310\\01002}{}$,
$\subspace{10322\\01020}{}$,
$\subspace{10334\\01043}{}$,
$\subspace{10341\\01011}{}$,
$\subspace{12002\\00121}{}$,
$\subspace{12011\\00102}{}$,
$\subspace{12020\\00133}{}$,
$\subspace{12034\\00114}{}$,
$\subspace{12043\\00140}{}$,
$\subspace{10200\\01144}{15}$,
$\subspace{10212\\01102}{}$,
$\subspace{10224\\01110}{}$,
$\subspace{10231\\01123}{}$,
$\subspace{10243\\01131}{}$,
$\subspace{10304\\01021}{}$,
$\subspace{10311\\01044}{}$,
$\subspace{10323\\01012}{}$,
$\subspace{10330\\01030}{}$,
$\subspace{10342\\01003}{}$,
$\subspace{12000\\00110}{}$,
$\subspace{12014\\00141}{}$,
$\subspace{12023\\00122}{}$,
$\subspace{12032\\00103}{}$,
$\subspace{12041\\00134}{}$,
$\subspace{10201\\01224}{15}$,
$\subspace{10213\\01213}{}$,
$\subspace{10220\\01202}{}$,
$\subspace{10232\\01241}{}$,
$\subspace{10244\\01230}{}$,
$\subspace{10203\\01413}{}$,
$\subspace{10210\\01414}{}$,
$\subspace{10222\\01410}{}$,
$\subspace{10234\\01411}{}$,
$\subspace{10241\\01412}{}$,
$\subspace{10400\\01331}{}$,
$\subspace{10412\\01341}{}$,
$\subspace{10424\\01301}{}$,
$\subspace{10431\\01311}{}$,
$\subspace{10443\\01321}{}$,
$\subspace{10202\\01220}{15}$,
$\subspace{10214\\01214}{}$,
$\subspace{10221\\01203}{}$,
$\subspace{10233\\01242}{}$,
$\subspace{10240\\01231}{}$,
$\subspace{10200\\01424}{}$,
$\subspace{10212\\01420}{}$,
$\subspace{10224\\01421}{}$,
$\subspace{10231\\01422}{}$,
$\subspace{10243\\01423}{}$,
$\subspace{10400\\01314}{}$,
$\subspace{10412\\01324}{}$,
$\subspace{10424\\01334}{}$,
$\subspace{10431\\01344}{}$,
$\subspace{10443\\01304}{}$,
$\subspace{10302\\01312}{15}$,
$\subspace{10314\\01342}{}$,
$\subspace{10321\\01322}{}$,
$\subspace{10333\\01302}{}$,
$\subspace{10340\\01332}{}$,
$\subspace{10404\\01210}{}$,
$\subspace{10411\\01232}{}$,
$\subspace{10423\\01204}{}$,
$\subspace{10430\\01221}{}$,
$\subspace{10442\\01243}{}$,
$\subspace{10404\\01432}{}$,
$\subspace{10411\\01430}{}$,
$\subspace{10423\\01433}{}$,
$\subspace{10430\\01431}{}$,
$\subspace{10442\\01434}{}$,
$\subspace{10304\\01343}{15}$,
$\subspace{10311\\01323}{}$,
$\subspace{10323\\01303}{}$,
$\subspace{10330\\01333}{}$,
$\subspace{10342\\01313}{}$,
$\subspace{10401\\01201}{}$,
$\subspace{10413\\01223}{}$,
$\subspace{10420\\01240}{}$,
$\subspace{10432\\01212}{}$,
$\subspace{10444\\01234}{}$,
$\subspace{10402\\01401}{}$,
$\subspace{10414\\01404}{}$,
$\subspace{10421\\01402}{}$,
$\subspace{10433\\01400}{}$,
$\subspace{10440\\01403}{}$.

\smallskip

$n_5(5,2;12)\ge 286$,$\left[\!\begin{smallmatrix}00001\\10004\\01004\\00101\\00013\end{smallmatrix}\!\right]$,$\left[\!\begin{smallmatrix}00130\\13442\\44114\\11002\\00001\end{smallmatrix}\!\right]$:
$\subspace{00110\\00001}{11}$,
$\subspace{01100\\00013}{}$,
$\subspace{10000\\00011}{}$,
$\subspace{10002\\01003}{}$,
$\subspace{10034\\01302}{}$,
$\subspace{10210\\01120}{}$,
$\subspace{10443\\01133}{}$,
$\subspace{10400\\01300}{}$,
$\subspace{10442\\01323}{}$,
$\subspace{11003\\00130}{}$,
$\subspace{13022\\00132}{}$,
$\subspace{01004\\00120}{55}$,
$\subspace{10024\\00100}{}$,
$\subspace{10040\\01042}{}$,
$\subspace{10010\\01124}{}$,
$\subspace{10012\\01204}{}$,
$\subspace{10020\\01203}{}$,
$\subspace{10032\\01220}{}$,
$\subspace{10041\\01201}{}$,
$\subspace{10010\\01313}{}$,
$\subspace{10032\\01412}{}$,
$\subspace{10122\\01031}{}$,
$\subspace{10131\\01020}{}$,
$\subspace{10104\\01103}{}$,
$\subspace{10114\\01102}{}$,
$\subspace{10123\\01101}{}$,
$\subspace{10141\\01144}{}$,
$\subspace{10142\\01200}{}$,
$\subspace{10110\\01342}{}$,
$\subspace{10124\\01403}{}$,
$\subspace{10200\\01031}{}$,
$\subspace{10213\\01040}{}$,
$\subspace{10240\\01001}{}$,
$\subspace{10213\\01140}{}$,
$\subspace{10201\\01221}{}$,
$\subspace{10220\\01230}{}$,
$\subspace{10223\\01244}{}$,
$\subspace{10244\\01242}{}$,
$\subspace{10201\\01311}{}$,
$\subspace{10212\\01420}{}$,
$\subspace{10224\\01444}{}$,
$\subspace{10234\\01403}{}$,
$\subspace{10243\\01410}{}$,
$\subspace{10310\\01002}{}$,
$\subspace{10303\\01144}{}$,
$\subspace{10310\\01223}{}$,
$\subspace{10332\\01203}{}$,
$\subspace{10330\\01231}{}$,
$\subspace{10342\\01241}{}$,
$\subspace{10332\\01341}{}$,
$\subspace{10300\\01424}{}$,
$\subspace{10414\\01024}{}$,
$\subspace{10430\\01014}{}$,
$\subspace{10433\\01020}{}$,
$\subspace{10430\\01041}{}$,
$\subspace{10431\\01143}{}$,
$\subspace{10411\\01200}{}$,
$\subspace{10403\\01342}{}$,
$\subspace{10401\\01422}{}$,
$\subspace{11043\\00102}{}$,
$\subspace{12010\\00142}{}$,
$\subspace{12024\\00140}{}$,
$\subspace{12031\\00104}{}$,
$\subspace{12044\\00123}{}$,
$\subspace{13034\\00102}{}$,
$\subspace{14032\\00140}{}$,
$\subspace{01021\\00121}{55}$,
$\subspace{01034\\00133}{}$,
$\subspace{01032\\00144}{}$,
$\subspace{01300\\00010}{}$,
$\subspace{10003\\00131}{}$,
$\subspace{10014\\00112}{}$,
$\subspace{10021\\00132}{}$,
$\subspace{10043\\00114}{}$,
$\subspace{10013\\01013}{}$,
$\subspace{10022\\01012}{}$,
$\subspace{10034\\01023}{}$,
$\subspace{10001\\01132}{}$,
$\subspace{10011\\01131}{}$,
$\subspace{10021\\01120}{}$,
$\subspace{10044\\01141}{}$,
$\subspace{10002\\01303}{}$,
$\subspace{10003\\01321}{}$,
$\subspace{10033\\01431}{}$,
$\subspace{10111\\01211}{}$,
$\subspace{10130\\01210}{}$,
$\subspace{10101\\01400}{}$,
$\subspace{10121\\01442}{}$,
$\subspace{10221\\01113}{}$,
$\subspace{10231\\01134}{}$,
$\subspace{10214\\01214}{}$,
$\subspace{10204\\01304}{}$,
$\subspace{10221\\01423}{}$,
$\subspace{10322\\01012}{}$,
$\subspace{10313\\01132}{}$,
$\subspace{10340\\01100}{}$,
$\subspace{10320\\01240}{}$,
$\subspace{10334\\01330}{}$,
$\subspace{10344\\01331}{}$,
$\subspace{10312\\01413}{}$,
$\subspace{10323\\01440}{}$,
$\subspace{10333\\01432}{}$,
$\subspace{10412\\01003}{}$,
$\subspace{10400\\01141}{}$,
$\subspace{10434\\01110}{}$,
$\subspace{10443\\01114}{}$,
$\subspace{10441\\01213}{}$,
$\subspace{10422\\01322}{}$,
$\subspace{10423\\01333}{}$,
$\subspace{10441\\01331}{}$,
$\subspace{10413\\01430}{}$,
$\subspace{11023\\00113}{}$,
$\subspace{11034\\00110}{}$,
$\subspace{11044\\00133}{}$,
$\subspace{11304\\00014}{}$,
$\subspace{13002\\00011}{}$,
$\subspace{13001\\00101}{}$,
$\subspace{13044\\00143}{}$,
$\subspace{13201\\00010}{}$,
$\subspace{14023\\00101}{}$,
$\subspace{14033\\00111}{}$,
$\subspace{01034\\00101}{55}$,
$\subspace{10103\\00010}{}$,
$\subspace{10003\\01011}{}$,
$\subspace{10021\\01222}{}$,
$\subspace{10042\\01320}{}$,
$\subspace{10004\\01441}{}$,
$\subspace{10043\\01422}{}$,
$\subspace{10120\\01044}{}$,
$\subspace{10144\\01000}{}$,
$\subspace{10142\\01240}{}$,
$\subspace{10100\\01314}{}$,
$\subspace{10121\\01333}{}$,
$\subspace{10131\\01330}{}$,
$\subspace{10104\\01421}{}$,
$\subspace{10111\\01420}{}$,
$\subspace{10140\\01400}{}$,
$\subspace{10202\\01042}{}$,
$\subspace{10234\\01141}{}$,
$\subspace{10203\\01210}{}$,
$\subspace{10221\\01212}{}$,
$\subspace{10242\\01214}{}$,
$\subspace{10231\\01304}{}$,
$\subspace{10232\\01312}{}$,
$\subspace{10241\\01343}{}$,
$\subspace{10302\\01032}{}$,
$\subspace{10312\\01014}{}$,
$\subspace{10323\\01043}{}$,
$\subspace{10344\\01012}{}$,
$\subspace{10311\\01114}{}$,
$\subspace{10320\\01104}{}$,
$\subspace{10314\\01234}{}$,
$\subspace{10341\\01213}{}$,
$\subspace{10340\\01233}{}$,
$\subspace{10304\\01344}{}$,
$\subspace{10322\\01301}{}$,
$\subspace{10321\\01331}{}$,
$\subspace{10303\\01430}{}$,
$\subspace{10343\\01442}{}$,
$\subspace{10402\\01030}{}$,
$\subspace{10424\\01010}{}$,
$\subspace{10410\\01132}{}$,
$\subspace{10422\\01224}{}$,
$\subspace{10434\\01221}{}$,
$\subspace{10433\\01232}{}$,
$\subspace{10440\\01202}{}$,
$\subspace{10421\\01311}{}$,
$\subspace{10404\\01404}{}$,
$\subspace{10420\\01402}{}$,
$\subspace{10432\\01413}{}$,
$\subspace{10441\\01414}{}$,
$\subspace{11002\\00122}{}$,
$\subspace{12014\\00133}{}$,
$\subspace{13040\\00134}{}$,
$\subspace{14031\\00121}{}$,
$\subspace{14400\\00012}{}$,
$\subspace{01041\\00120}{55}$,
$\subspace{10024\\00104}{}$,
$\subspace{10020\\01022}{}$,
$\subspace{10030\\01104}{}$,
$\subspace{10023\\01222}{}$,
$\subspace{10031\\01204}{}$,
$\subspace{10004\\01313}{}$,
$\subspace{10012\\01441}{}$,
$\subspace{10042\\01420}{}$,
$\subspace{10122\\01044}{}$,
$\subspace{10144\\01001}{}$,
$\subspace{10141\\01014}{}$,
$\subspace{10104\\01130}{}$,
$\subspace{10113\\01123}{}$,
$\subspace{10131\\01122}{}$,
$\subspace{10100\\01221}{}$,
$\subspace{10114\\01324}{}$,
$\subspace{10112\\01344}{}$,
$\subspace{10143\\01343}{}$,
$\subspace{10110\\01422}{}$,
$\subspace{10132\\01434}{}$,
$\subspace{10142\\01412}{}$,
$\subspace{10212\\01000}{}$,
$\subspace{10234\\01103}{}$,
$\subspace{10200\\01230}{}$,
$\subspace{10220\\01243}{}$,
$\subspace{10203\\01314}{}$,
$\subspace{10202\\01341}{}$,
$\subspace{10232\\01444}{}$,
$\subspace{10241\\01411}{}$,
$\subspace{10240\\01433}{}$,
$\subspace{10301\\01002}{}$,
$\subspace{10314\\01042}{}$,
$\subspace{10331\\01011}{}$,
$\subspace{10324\\01101}{}$,
$\subspace{10342\\01233}{}$,
$\subspace{10343\\01242}{}$,
$\subspace{10303\\01312}{}$,
$\subspace{10300\\01414}{}$,
$\subspace{10404\\01033}{}$,
$\subspace{10433\\01010}{}$,
$\subspace{10421\\01143}{}$,
$\subspace{10411\\01201}{}$,
$\subspace{10424\\01232}{}$,
$\subspace{10440\\01223}{}$,
$\subspace{10410\\01311}{}$,
$\subspace{10414\\01402}{}$,
$\subspace{10431\\01410}{}$,
$\subspace{11040\\00103}{}$,
$\subspace{12004\\00122}{}$,
$\subspace{12044\\00124}{}$,
$\subspace{12043\\00141}{}$,
$\subspace{13100\\00012}{}$,
$\subspace{14030\\00123}{}$,
$\subspace{14044\\00134}{}$,
$\subspace{01202\\00014}{55}$,
$\subspace{10001\\00124}{}$,
$\subspace{10031\\01043}{}$,
$\subspace{10013\\01111}{}$,
$\subspace{10014\\01212}{}$,
$\subspace{10040\\01224}{}$,
$\subspace{10030\\01322}{}$,
$\subspace{10023\\01431}{}$,
$\subspace{10041\\01443}{}$,
$\subspace{10101\\01024}{}$,
$\subspace{10133\\01021}{}$,
$\subspace{10102\\01110}{}$,
$\subspace{10112\\01131}{}$,
$\subspace{10132\\01140}{}$,
$\subspace{10130\\01224}{}$,
$\subspace{10134\\01241}{}$,
$\subspace{10133\\01324}{}$,
$\subspace{10140\\01303}{}$,
$\subspace{10120\\01411}{}$,
$\subspace{10140\\01432}{}$,
$\subspace{10204\\01030}{}$,
$\subspace{10211\\01022}{}$,
$\subspace{10233\\01004}{}$,
$\subspace{10230\\01043}{}$,
$\subspace{10243\\01013}{}$,
$\subspace{10242\\01122}{}$,
$\subspace{10242\\01234}{}$,
$\subspace{10214\\01332}{}$,
$\subspace{10222\\01334}{}$,
$\subspace{10223\\01404}{}$,
$\subspace{10302\\01023}{}$,
$\subspace{10330\\01030}{}$,
$\subspace{10302\\01102}{}$,
$\subspace{10304\\01123}{}$,
$\subspace{10311\\01202}{}$,
$\subspace{10304\\01332}{}$,
$\subspace{10311\\01310}{}$,
$\subspace{10334\\01340}{}$,
$\subspace{10301\\01424}{}$,
$\subspace{10333\\01404}{}$,
$\subspace{10331\\01423}{}$,
$\subspace{10413\\01033}{}$,
$\subspace{10444\\01040}{}$,
$\subspace{10403\\01112}{}$,
$\subspace{10423\\01212}{}$,
$\subspace{10412\\01321}{}$,
$\subspace{10401\\01440}{}$,
$\subspace{10432\\01401}{}$,
$\subspace{11031\\00131}{}$,
$\subspace{12023\\00143}{}$,
$\subspace{12030\\00144}{}$,
$\subspace{12041\\00100}{}$,
$\subspace{13023\\00103}{}$,
$\subspace{14020\\00142}{}$,
$\subspace{14043\\00141}{}$.

\smallskip

$n_5(5,2;13)\ge 308$,$\left[\!\begin{smallmatrix}00001\\10004\\01004\\00101\\00013\end{smallmatrix}\!\right]$,$\left[\!\begin{smallmatrix}00130\\13442\\44114\\11002\\00001\end{smallmatrix}\!\right]$:
$\subspace{00110\\00001}{11}$,
$\subspace{01100\\00013}{}$,
$\subspace{10000\\00011}{}$,
$\subspace{10002\\01003}{}$,
$\subspace{10034\\01302}{}$,
$\subspace{10210\\01120}{}$,
$\subspace{10443\\01133}{}$,
$\subspace{10400\\01300}{}$,
$\subspace{10442\\01323}{}$,
$\subspace{11003\\00130}{}$,
$\subspace{13022\\00132}{}$,
$\subspace{00110\\00001}{11}$,
$\subspace{01100\\00013}{}$,
$\subspace{10000\\00011}{}$,
$\subspace{10002\\01003}{}$,
$\subspace{10034\\01302}{}$,
$\subspace{10210\\01120}{}$,
$\subspace{10443\\01133}{}$,
$\subspace{10400\\01300}{}$,
$\subspace{10442\\01323}{}$,
$\subspace{11003\\00130}{}$,
$\subspace{13022\\00132}{}$,
$\subspace{00110\\00001}{11}$,
$\subspace{01100\\00013}{}$,
$\subspace{10000\\00011}{}$,
$\subspace{10002\\01003}{}$,
$\subspace{10034\\01302}{}$,
$\subspace{10210\\01120}{}$,
$\subspace{10443\\01133}{}$,
$\subspace{10400\\01300}{}$,
$\subspace{10442\\01323}{}$,
$\subspace{11003\\00130}{}$,
$\subspace{13022\\00132}{}$,
$\subspace{01014\\00101}{55}$,
$\subspace{10303\\00010}{}$,
$\subspace{10003\\01032}{}$,
$\subspace{10014\\01114}{}$,
$\subspace{10043\\01322}{}$,
$\subspace{10014\\01401}{}$,
$\subspace{10021\\01440}{}$,
$\subspace{10142\\01012}{}$,
$\subspace{10101\\01221}{}$,
$\subspace{10123\\01211}{}$,
$\subspace{10134\\01233}{}$,
$\subspace{10144\\01214}{}$,
$\subspace{10104\\01333}{}$,
$\subspace{10111\\01334}{}$,
$\subspace{10131\\01320}{}$,
$\subspace{10143\\01340}{}$,
$\subspace{10101\\01422}{}$,
$\subspace{10103\\01431}{}$,
$\subspace{10121\\01420}{}$,
$\subspace{10134\\01430}{}$,
$\subspace{10231\\01013}{}$,
$\subspace{10241\\01013}{}$,
$\subspace{10230\\01141}{}$,
$\subspace{10204\\01231}{}$,
$\subspace{10204\\01340}{}$,
$\subspace{10211\\01330}{}$,
$\subspace{10221\\01322}{}$,
$\subspace{10234\\01331}{}$,
$\subspace{10211\\01433}{}$,
$\subspace{10230\\01442}{}$,
$\subspace{10312\\01042}{}$,
$\subspace{10322\\01023}{}$,
$\subspace{10340\\01034}{}$,
$\subspace{10314\\01110}{}$,
$\subspace{10320\\01311}{}$,
$\subspace{10333\\01314}{}$,
$\subspace{10344\\01304}{}$,
$\subspace{10313\\01441}{}$,
$\subspace{10321\\01401}{}$,
$\subspace{10341\\01431}{}$,
$\subspace{10441\\01023}{}$,
$\subspace{10433\\01132}{}$,
$\subspace{10412\\01210}{}$,
$\subspace{10423\\01213}{}$,
$\subspace{10422\\01232}{}$,
$\subspace{10424\\01240}{}$,
$\subspace{10412\\01334}{}$,
$\subspace{10402\\01402}{}$,
$\subspace{10410\\01400}{}$,
$\subspace{10423\\01440}{}$,
$\subspace{10434\\01421}{}$,
$\subspace{11002\\00144}{}$,
$\subspace{13032\\00121}{}$,
$\subspace{14013\\00144}{}$,
$\subspace{14032\\00133}{}$,
$\subspace{01010\\00142}{55}$,
$\subspace{01044\\00101}{}$,
$\subspace{01040\\00140}{}$,
$\subspace{10032\\00131}{}$,
$\subspace{10041\\00133}{}$,
$\subspace{10203\\00010}{}$,
$\subspace{10003\\01024}{}$,
$\subspace{10013\\01011}{}$,
$\subspace{10021\\01004}{}$,
$\subspace{10041\\01124}{}$,
$\subspace{10040\\01140}{}$,
$\subspace{10004\\01220}{}$,
$\subspace{10010\\01304}{}$,
$\subspace{10042\\01303}{}$,
$\subspace{10040\\01321}{}$,
$\subspace{10001\\01414}{}$,
$\subspace{10121\\01104}{}$,
$\subspace{10102\\01344}{}$,
$\subspace{10100\\01424}{}$,
$\subspace{10130\\01424}{}$,
$\subspace{10224\\01004}{}$,
$\subspace{10244\\01000}{}$,
$\subspace{10232\\01141}{}$,
$\subspace{10201\\01240}{}$,
$\subspace{10214\\01213}{}$,
$\subspace{10223\\01222}{}$,
$\subspace{10243\\01301}{}$,
$\subspace{10221\\01432}{}$,
$\subspace{10312\\01020}{}$,
$\subspace{10332\\01040}{}$,
$\subspace{10310\\01112}{}$,
$\subspace{10340\\01131}{}$,
$\subspace{10334\\01200}{}$,
$\subspace{10330\\01210}{}$,
$\subspace{10420\\01024}{}$,
$\subspace{10434\\01021}{}$,
$\subspace{10440\\01012}{}$,
$\subspace{10444\\01031}{}$,
$\subspace{10404\\01102}{}$,
$\subspace{10401\\01132}{}$,
$\subspace{10403\\01144}{}$,
$\subspace{10422\\01203}{}$,
$\subspace{10441\\01241}{}$,
$\subspace{10421\\01330}{}$,
$\subspace{10430\\01343}{}$,
$\subspace{10403\\01403}{}$,
$\subspace{11002\\00100}{}$,
$\subspace{11013\\00142}{}$,
$\subspace{11022\\00122}{}$,
$\subspace{12204\\00012}{}$,
$\subspace{13011\\00100}{}$,
$\subspace{13030\\00134}{}$,
$\subspace{13043\\00102}{}$,
$\subspace{14033\\00143}{}$,
$\subspace{14102\\00014}{}$,
$\subspace{01033\\00103}{55}$,
$\subspace{10030\\00103}{}$,
$\subspace{10030\\01043}{}$,
$\subspace{10023\\01214}{}$,
$\subspace{10031\\01202}{}$,
$\subspace{10043\\01224}{}$,
$\subspace{10013\\01333}{}$,
$\subspace{10001\\01442}{}$,
$\subspace{10023\\01411}{}$,
$\subspace{10031\\01423}{}$,
$\subspace{10102\\01022}{}$,
$\subspace{10112\\01030}{}$,
$\subspace{10130\\01032}{}$,
$\subspace{10140\\01030}{}$,
$\subspace{10103\\01122}{}$,
$\subspace{10111\\01123}{}$,
$\subspace{10120\\01114}{}$,
$\subspace{10112\\01234}{}$,
$\subspace{10120\\01243}{}$,
$\subspace{10133\\01224}{}$,
$\subspace{10140\\01243}{}$,
$\subspace{10132\\01332}{}$,
$\subspace{10132\\01404}{}$,
$\subspace{10222\\01021}{}$,
$\subspace{10233\\01043}{}$,
$\subspace{10242\\01022}{}$,
$\subspace{10242\\01034}{}$,
$\subspace{10214\\01111}{}$,
$\subspace{10222\\01212}{}$,
$\subspace{10331\\01033}{}$,
$\subspace{10304\\01112}{}$,
$\subspace{10302\\01122}{}$,
$\subspace{10331\\01123}{}$,
$\subspace{10344\\01212}{}$,
$\subspace{10304\\01310}{}$,
$\subspace{10301\\01320}{}$,
$\subspace{10311\\01324}{}$,
$\subspace{10323\\01303}{}$,
$\subspace{10341\\01321}{}$,
$\subspace{10302\\01404}{}$,
$\subspace{10301\\01413}{}$,
$\subspace{10311\\01432}{}$,
$\subspace{10322\\01411}{}$,
$\subspace{10402\\01111}{}$,
$\subspace{10444\\01202}{}$,
$\subspace{10413\\01324}{}$,
$\subspace{10432\\01443}{}$,
$\subspace{11031\\00143}{}$,
$\subspace{12004\\00121}{}$,
$\subspace{12004\\00141}{}$,
$\subspace{12023\\00124}{}$,
$\subspace{13040\\00131}{}$,
$\subspace{14023\\00141}{}$,
$\subspace{14043\\00124}{}$,
$\subspace{14402\\00014}{}$,
$\subspace{01030\\00123}{55}$,
$\subspace{10012\\00134}{}$,
$\subspace{10042\\01001}{}$,
$\subspace{10020\\01143}{}$,
$\subspace{10004\\01242}{}$,
$\subspace{10012\\01223}{}$,
$\subspace{10024\\01234}{}$,
$\subspace{10024\\01301}{}$,
$\subspace{10114\\01002}{}$,
$\subspace{10114\\01010}{}$,
$\subspace{10110\\01042}{}$,
$\subspace{10122\\01001}{}$,
$\subspace{10140\\01044}{}$,
$\subspace{10100\\01143}{}$,
$\subspace{10123\\01130}{}$,
$\subspace{10131\\01101}{}$,
$\subspace{10141\\01104}{}$,
$\subspace{10120\\01231}{}$,
$\subspace{10104\\01313}{}$,
$\subspace{10110\\01343}{}$,
$\subspace{10113\\01341}{}$,
$\subspace{10133\\01412}{}$,
$\subspace{10142\\01444}{}$,
$\subspace{10212\\01222}{}$,
$\subspace{10212\\01230}{}$,
$\subspace{10232\\01201}{}$,
$\subspace{10200\\01311}{}$,
$\subspace{10202\\01314}{}$,
$\subspace{10234\\01313}{}$,
$\subspace{10203\\01410}{}$,
$\subspace{10240\\01410}{}$,
$\subspace{10242\\01414}{}$,
$\subspace{10303\\01041}{}$,
$\subspace{10324\\01043}{}$,
$\subspace{10342\\01011}{}$,
$\subspace{10300\\01204}{}$,
$\subspace{10300\\01212}{}$,
$\subspace{10302\\01230}{}$,
$\subspace{10311\\01420}{}$,
$\subspace{10411\\01014}{}$,
$\subspace{10431\\01000}{}$,
$\subspace{10424\\01101}{}$,
$\subspace{10414\\01201}{}$,
$\subspace{10414\\01202}{}$,
$\subspace{10421\\01224}{}$,
$\subspace{10404\\01332}{}$,
$\subspace{10420\\01422}{}$,
$\subspace{10432\\01434}{}$,
$\subspace{10440\\01404}{}$,
$\subspace{11030\\00104}{}$,
$\subspace{12032\\00120}{}$,
$\subspace{12042\\00122}{}$,
$\subspace{12042\\00123}{}$,
$\subspace{12403\\00012}{}$,
$\subspace{13014\\00120}{}$,
$\subspace{01403\\00014}{55}$,
$\subspace{10001\\00140}{}$,
$\subspace{10032\\01041}{}$,
$\subspace{10013\\01143}{}$,
$\subspace{10012\\01233}{}$,
$\subspace{10043\\01223}{}$,
$\subspace{10024\\01333}{}$,
$\subspace{10010\\01402}{}$,
$\subspace{10020\\01410}{}$,
$\subspace{10130\\01031}{}$,
$\subspace{10143\\01032}{}$,
$\subspace{10124\\01101}{}$,
$\subspace{10144\\01114}{}$,
$\subspace{10103\\01230}{}$,
$\subspace{10102\\01242}{}$,
$\subspace{10110\\01232}{}$,
$\subspace{10111\\01342}{}$,
$\subspace{10122\\01310}{}$,
$\subspace{10114\\01442}{}$,
$\subspace{10241\\01001}{}$,
$\subspace{10220\\01144}{}$,
$\subspace{10214\\01200}{}$,
$\subspace{10233\\01214}{}$,
$\subspace{10201\\01312}{}$,
$\subspace{10212\\01320}{}$,
$\subspace{10200\\01441}{}$,
$\subspace{10213\\01432}{}$,
$\subspace{10224\\01423}{}$,
$\subspace{10240\\01413}{}$,
$\subspace{10343\\01020}{}$,
$\subspace{10342\\01034}{}$,
$\subspace{10300\\01124}{}$,
$\subspace{10332\\01131}{}$,
$\subspace{10341\\01103}{}$,
$\subspace{10322\\01244}{}$,
$\subspace{10334\\01204}{}$,
$\subspace{10310\\01303}{}$,
$\subspace{10324\\01313}{}$,
$\subspace{10344\\01341}{}$,
$\subspace{10314\\01443}{}$,
$\subspace{10323\\01433}{}$,
$\subspace{10414\\01002}{}$,
$\subspace{10430\\01021}{}$,
$\subspace{10402\\01112}{}$,
$\subspace{10413\\01201}{}$,
$\subspace{10431\\01203}{}$,
$\subspace{10410\\01321}{}$,
$\subspace{10444\\01434}{}$,
$\subspace{11012\\00131}{}$,
$\subspace{12020\\00123}{}$,
$\subspace{12041\\00121}{}$,
$\subspace{13023\\00120}{}$,
$\subspace{14002\\00104}{}$,
$\subspace{14021\\00102}{}$,
$\subspace{14034\\00143}{}$.

\smallskip

$n_5(5,2;14)\ge 330$,$\left[\!\begin{smallmatrix}00001\\10004\\01004\\00101\\00013\end{smallmatrix}\!\right]$,$\left[\!\begin{smallmatrix}00130\\13442\\44114\\11002\\00001\end{smallmatrix}\!\right]$:
$\subspace{00102\\00014}{55}$,
$\subspace{01021\\00114}{}$,
$\subspace{01020\\00143}{}$,
$\subspace{01100\\00012}{}$,
$\subspace{10001\\00014}{}$,
$\subspace{10004\\00110}{}$,
$\subspace{10102\\00011}{}$,
$\subspace{10001\\01003}{}$,
$\subspace{10002\\01011}{}$,
$\subspace{10011\\01000}{}$,
$\subspace{10013\\01121}{}$,
$\subspace{10022\\01144}{}$,
$\subspace{10012\\01203}{}$,
$\subspace{10034\\01222}{}$,
$\subspace{10013\\01310}{}$,
$\subspace{10032\\01301}{}$,
$\subspace{10044\\01313}{}$,
$\subspace{10110\\01010}{}$,
$\subspace{10100\\01101}{}$,
$\subspace{10102\\01321}{}$,
$\subspace{10130\\01410}{}$,
$\subspace{10203\\01021}{}$,
$\subspace{10214\\01044}{}$,
$\subspace{10202\\01112}{}$,
$\subspace{10213\\01120}{}$,
$\subspace{10214\\01142}{}$,
$\subspace{10233\\01300}{}$,
$\subspace{10233\\01343}{}$,
$\subspace{10201\\01432}{}$,
$\subspace{10212\\01423}{}$,
$\subspace{10210\\01443}{}$,
$\subspace{10232\\01403}{}$,
$\subspace{10300\\01031}{}$,
$\subspace{10310\\01001}{}$,
$\subspace{10334\\01323}{}$,
$\subspace{10332\\01443}{}$,
$\subspace{10414\\01104}{}$,
$\subspace{10413\\01134}{}$,
$\subspace{10444\\01131}{}$,
$\subspace{10400\\01200}{}$,
$\subspace{10430\\01230}{}$,
$\subspace{10404\\01303}{}$,
$\subspace{10413\\01342}{}$,
$\subspace{10421\\01310}{}$,
$\subspace{10440\\01321}{}$,
$\subspace{10420\\01423}{}$,
$\subspace{10443\\01414}{}$,
$\subspace{11001\\00112}{}$,
$\subspace{11003\\00122}{}$,
$\subspace{11001\\00131}{}$,
$\subspace{12011\\00132}{}$,
$\subspace{12031\\00123}{}$,
$\subspace{13014\\00140}{}$,
$\subspace{13023\\00113}{}$,
$\subspace{13021\\00131}{}$,
$\subspace{01001\\00014}{55}$,
$\subspace{01000\\00123}{}$,
$\subspace{10001\\00103}{}$,
$\subspace{10012\\00143}{}$,
$\subspace{10030\\01040}{}$,
$\subspace{10013\\01102}{}$,
$\subspace{10023\\01140}{}$,
$\subspace{10004\\01230}{}$,
$\subspace{10024\\01222}{}$,
$\subspace{10042\\01220}{}$,
$\subspace{10041\\01311}{}$,
$\subspace{10040\\01343}{}$,
$\subspace{10031\\01410}{}$,
$\subspace{10114\\01011}{}$,
$\subspace{10123\\01044}{}$,
$\subspace{10131\\01004}{}$,
$\subspace{10110\\01104}{}$,
$\subspace{10113\\01143}{}$,
$\subspace{10102\\01324}{}$,
$\subspace{10132\\01301}{}$,
$\subspace{10142\\01313}{}$,
$\subspace{10112\\01423}{}$,
$\subspace{10130\\01412}{}$,
$\subspace{10141\\01422}{}$,
$\subspace{10212\\01014}{}$,
$\subspace{10232\\01021}{}$,
$\subspace{10203\\01111}{}$,
$\subspace{10222\\01131}{}$,
$\subspace{10234\\01243}{}$,
$\subspace{10233\\01344}{}$,
$\subspace{10244\\01310}{}$,
$\subspace{10214\\01414}{}$,
$\subspace{10223\\01443}{}$,
$\subspace{10334\\01022}{}$,
$\subspace{10303\\01122}{}$,
$\subspace{10301\\01130}{}$,
$\subspace{10330\\01221}{}$,
$\subspace{10404\\01112}{}$,
$\subspace{10444\\01101}{}$,
$\subspace{10401\\01201}{}$,
$\subspace{10440\\01231}{}$,
$\subspace{10414\\01321}{}$,
$\subspace{10420\\01303}{}$,
$\subspace{10413\\01424}{}$,
$\subspace{10421\\01420}{}$,
$\subspace{10433\\01411}{}$,
$\subspace{11024\\00131}{}$,
$\subspace{11043\\00141}{}$,
$\subspace{12030\\00120}{}$,
$\subspace{12041\\00134}{}$,
$\subspace{12300\\00012}{}$,
$\subspace{13023\\00142}{}$,
$\subspace{13034\\00100}{}$,
$\subspace{14003\\00122}{}$,
$\subspace{14032\\00124}{}$,
$\subspace{01031\\00133}{55}$,
$\subspace{01034\\00140}{}$,
$\subspace{10021\\00140}{}$,
$\subspace{10032\\00121}{}$,
$\subspace{10032\\01020}{}$,
$\subspace{10043\\01041}{}$,
$\subspace{10010\\01141}{}$,
$\subspace{10043\\01103}{}$,
$\subspace{10003\\01320}{}$,
$\subspace{10020\\01330}{}$,
$\subspace{10010\\01430}{}$,
$\subspace{10122\\01020}{}$,
$\subspace{10121\\01032}{}$,
$\subspace{10103\\01244}{}$,
$\subspace{10111\\01244}{}$,
$\subspace{10124\\01203}{}$,
$\subspace{10124\\01204}{}$,
$\subspace{10221\\01031}{}$,
$\subspace{10220\\01114}{}$,
$\subspace{10224\\01144}{}$,
$\subspace{10201\\01223}{}$,
$\subspace{10213\\01203}{}$,
$\subspace{10213\\01240}{}$,
$\subspace{10231\\01213}{}$,
$\subspace{10200\\01442}{}$,
$\subspace{10231\\01413}{}$,
$\subspace{10240\\01442}{}$,
$\subspace{10310\\01132}{}$,
$\subspace{10320\\01124}{}$,
$\subspace{10323\\01200}{}$,
$\subspace{10323\\01210}{}$,
$\subspace{10324\\01214}{}$,
$\subspace{10344\\01200}{}$,
$\subspace{10310\\01331}{}$,
$\subspace{10312\\01333}{}$,
$\subspace{10322\\01342}{}$,
$\subspace{10341\\01341}{}$,
$\subspace{10322\\01430}{}$,
$\subspace{10332\\01413}{}$,
$\subspace{10332\\01434}{}$,
$\subspace{10342\\01400}{}$,
$\subspace{10344\\01403}{}$,
$\subspace{10402\\01012}{}$,
$\subspace{10422\\01002}{}$,
$\subspace{10430\\01032}{}$,
$\subspace{10430\\01034}{}$,
$\subspace{10434\\01144}{}$,
$\subspace{10402\\01342}{}$,
$\subspace{10441\\01333}{}$,
$\subspace{11012\\00102}{}$,
$\subspace{11032\\00121}{}$,
$\subspace{13002\\00104}{}$,
$\subspace{13203\\00010}{}$,
$\subspace{14002\\00102}{}$,
$\subspace{14010\\00101}{}$,
$\subspace{01040\\00123}{55}$,
$\subspace{01042\\00142}{}$,
$\subspace{10012\\00111}{}$,
$\subspace{10041\\00113}{}$,
$\subspace{10033\\01001}{}$,
$\subspace{10044\\01004}{}$,
$\subspace{10022\\01141}{}$,
$\subspace{10002\\01221}{}$,
$\subspace{10024\\01241}{}$,
$\subspace{10040\\01210}{}$,
$\subspace{10003\\01314}{}$,
$\subspace{10034\\01311}{}$,
$\subspace{10011\\01420}{}$,
$\subspace{10021\\01420}{}$,
$\subspace{10114\\01014}{}$,
$\subspace{10104\\01132}{}$,
$\subspace{10121\\01120}{}$,
$\subspace{10123\\01140}{}$,
$\subspace{10142\\01143}{}$,
$\subspace{10110\\01323}{}$,
$\subspace{10141\\01331}{}$,
$\subspace{10131\\01412}{}$,
$\subspace{10131\\01422}{}$,
$\subspace{10142\\01410}{}$,
$\subspace{10223\\01121}{}$,
$\subspace{10234\\01100}{}$,
$\subspace{10221\\01231}{}$,
$\subspace{10243\\01240}{}$,
$\subspace{10210\\01314}{}$,
$\subspace{10244\\01300}{}$,
$\subspace{10300\\01003}{}$,
$\subspace{10312\\01024}{}$,
$\subspace{10330\\01042}{}$,
$\subspace{10303\\01142}{}$,
$\subspace{10303\\01213}{}$,
$\subspace{10414\\01014}{}$,
$\subspace{10443\\01012}{}$,
$\subspace{10424\\01102}{}$,
$\subspace{10434\\01101}{}$,
$\subspace{10403\\01230}{}$,
$\subspace{10422\\01201}{}$,
$\subspace{10441\\01220}{}$,
$\subspace{10400\\01330}{}$,
$\subspace{10433\\01311}{}$,
$\subspace{10424\\01424}{}$,
$\subspace{11022\\00112}{}$,
$\subspace{11030\\00114}{}$,
$\subspace{12003\\00100}{}$,
$\subspace{12033\\00101}{}$,
$\subspace{12034\\00120}{}$,
$\subspace{12201\\00011}{}$,
$\subspace{13033\\00132}{}$,
$\subspace{13104\\00010}{}$,
$\subspace{14032\\00110}{}$,
$\subspace{14041\\00133}{}$,
$\subspace{01401\\00010}{55}$,
$\subspace{10003\\00141}{}$,
$\subspace{10023\\01023}{}$,
$\subspace{10021\\01122}{}$,
$\subspace{10014\\01213}{}$,
$\subspace{10031\\01340}{}$,
$\subspace{10014\\01431}{}$,
$\subspace{10030\\01433}{}$,
$\subspace{10134\\01110}{}$,
$\subspace{10101\\01233}{}$,
$\subspace{10134\\01211}{}$,
$\subspace{10132\\01233}{}$,
$\subspace{10144\\01210}{}$,
$\subspace{10101\\01331}{}$,
$\subspace{10121\\01322}{}$,
$\subspace{10143\\01322}{}$,
$\subspace{10112\\01440}{}$,
$\subspace{10143\\01401}{}$,
$\subspace{10211\\01023}{}$,
$\subspace{10221\\01033}{}$,
$\subspace{10204\\01123}{}$,
$\subspace{10211\\01240}{}$,
$\subspace{10222\\01312}{}$,
$\subspace{10230\\01334}{}$,
$\subspace{10241\\01324}{}$,
$\subspace{10204\\01400}{}$,
$\subspace{10241\\01440}{}$,
$\subspace{10313\\01013}{}$,
$\subspace{10333\\01013}{}$,
$\subspace{10343\\01022}{}$,
$\subspace{10320\\01110}{}$,
$\subspace{10331\\01132}{}$,
$\subspace{10340\\01111}{}$,
$\subspace{10314\\01232}{}$,
$\subspace{10301\\01330}{}$,
$\subspace{10313\\01304}{}$,
$\subspace{10314\\01340}{}$,
$\subspace{10343\\01334}{}$,
$\subspace{10312\\01402}{}$,
$\subspace{10321\\01441}{}$,
$\subspace{10333\\01421}{}$,
$\subspace{10423\\01012}{}$,
$\subspace{10423\\01141}{}$,
$\subspace{10410\\01211}{}$,
$\subspace{10412\\01232}{}$,
$\subspace{10434\\01243}{}$,
$\subspace{10410\\01312}{}$,
$\subspace{10422\\01402}{}$,
$\subspace{10441\\01433}{}$,
$\subspace{11010\\00133}{}$,
$\subspace{12043\\00144}{}$,
$\subspace{13024\\00124}{}$,
$\subspace{14013\\00101}{}$,
$\subspace{14033\\00103}{}$,
$\subspace{14044\\00144}{}$,
$\subspace{10014\\01224}{55}$,
$\subspace{10020\\01202}{}$,
$\subspace{10020\\01234}{}$,
$\subspace{10120\\01002}{}$,
$\subspace{10140\\01041}{}$,
$\subspace{10122\\01110}{}$,
$\subspace{10134\\01103}{}$,
$\subspace{10140\\01103}{}$,
$\subspace{10101\\01224}{}$,
$\subspace{10120\\01204}{}$,
$\subspace{10133\\01242}{}$,
$\subspace{10140\\01202}{}$,
$\subspace{10120\\01334}{}$,
$\subspace{10122\\01332}{}$,
$\subspace{10133\\01404}{}$,
$\subspace{10133\\01434}{}$,
$\subspace{10200\\01043}{}$,
$\subspace{10204\\01041}{}$,
$\subspace{10220\\01013}{}$,
$\subspace{10200\\01212}{}$,
$\subspace{10211\\01223}{}$,
$\subspace{10240\\01202}{}$,
$\subspace{10242\\01211}{}$,
$\subspace{10242\\01212}{}$,
$\subspace{10240\\01341}{}$,
$\subspace{10242\\01340}{}$,
$\subspace{10220\\01404}{}$,
$\subspace{10230\\01444}{}$,
$\subspace{10304\\01043}{}$,
$\subspace{10311\\01043}{}$,
$\subspace{10302\\01234}{}$,
$\subspace{10304\\01242}{}$,
$\subspace{10311\\01204}{}$,
$\subspace{10324\\01212}{}$,
$\subspace{10333\\01234}{}$,
$\subspace{10302\\01322}{}$,
$\subspace{10304\\01341}{}$,
$\subspace{10313\\01332}{}$,
$\subspace{10324\\01332}{}$,
$\subspace{10302\\01431}{}$,
$\subspace{10311\\01434}{}$,
$\subspace{10342\\01401}{}$,
$\subspace{10342\\01440}{}$,
$\subspace{10411\\01023}{}$,
$\subspace{10412\\01030}{}$,
$\subspace{10423\\01030}{}$,
$\subspace{10432\\01002}{}$,
$\subspace{10432\\01030}{}$,
$\subspace{10431\\01223}{}$,
$\subspace{10432\\01224}{}$,
$\subspace{10411\\01404}{}$,
$\subspace{10431\\01444}{}$,
$\subspace{12013\\00144}{}$,
$\subspace{12023\\00104}{}$,
$\subspace{12040\\00104}{}$.

\smallskip

$n_5(5,2;16)\ge 391$,$\left[\!\begin{smallmatrix}10000\\00100\\04410\\00001\\00044\end{smallmatrix}\!\right]$:
$\subspace{00010\\00001}{1}$,
$\subspace{01001\\00120}{15}$,
$\subspace{01001\\00142}{}$,
$\subspace{01004\\00144}{}$,
$\subspace{01010\\00104}{}$,
$\subspace{01013\\00101}{}$,
$\subspace{01010\\00132}{}$,
$\subspace{01022\\00113}{}$,
$\subspace{01024\\00111}{}$,
$\subspace{01024\\00144}{}$,
$\subspace{01033\\00101}{}$,
$\subspace{01031\\00120}{}$,
$\subspace{01033\\00123}{}$,
$\subspace{01042\\00113}{}$,
$\subspace{01040\\00132}{}$,
$\subspace{01042\\00130}{}$,
$\subspace{10000\\00012}{3}$,
$\subspace{10000\\00013}{}$,
$\subspace{10000\\00014}{}$,
$\subspace{10001\\00012}{3}$,
$\subspace{10001\\00014}{}$,
$\subspace{10002\\00013}{}$,
$\subspace{10001\\00013}{3}$,
$\subspace{10003\\00012}{}$,
$\subspace{10003\\00014}{}$,
$\subspace{10002\\00012}{3}$,
$\subspace{10002\\00014}{}$,
$\subspace{10004\\00013}{}$,
$\subspace{10003\\00013}{3}$,
$\subspace{10004\\00012}{}$,
$\subspace{10004\\00014}{}$,
$\subspace{10101\\01000}{15}$,
$\subspace{10113\\01014}{}$,
$\subspace{10120\\01023}{}$,
$\subspace{10132\\01032}{}$,
$\subspace{10144\\01041}{}$,
$\subspace{10404\\01111}{}$,
$\subspace{10411\\01140}{}$,
$\subspace{10423\\01124}{}$,
$\subspace{10430\\01103}{}$,
$\subspace{10442\\01132}{}$,
$\subspace{14004\\00100}{}$,
$\subspace{14013\\00143}{}$,
$\subspace{14022\\00131}{}$,
$\subspace{14031\\00124}{}$,
$\subspace{14040\\00112}{}$,
$\subspace{10100\\01011}{15}$,
$\subspace{10112\\01020}{}$,
$\subspace{10124\\01034}{}$,
$\subspace{10131\\01043}{}$,
$\subspace{10143\\01002}{}$,
$\subspace{10400\\01141}{}$,
$\subspace{10412\\01120}{}$,
$\subspace{10424\\01104}{}$,
$\subspace{10431\\01133}{}$,
$\subspace{10443\\01112}{}$,
$\subspace{14000\\00140}{}$,
$\subspace{14014\\00133}{}$,
$\subspace{14023\\00121}{}$,
$\subspace{14032\\00114}{}$,
$\subspace{14041\\00102}{}$,
$\subspace{10103\\01020}{15}$,
$\subspace{10110\\01034}{}$,
$\subspace{10122\\01043}{}$,
$\subspace{10134\\01002}{}$,
$\subspace{10141\\01011}{}$,
$\subspace{10402\\01133}{}$,
$\subspace{10414\\01112}{}$,
$\subspace{10421\\01141}{}$,
$\subspace{10433\\01120}{}$,
$\subspace{10440\\01104}{}$,
$\subspace{14002\\00133}{}$,
$\subspace{14011\\00121}{}$,
$\subspace{14020\\00114}{}$,
$\subspace{14034\\00102}{}$,
$\subspace{14043\\00140}{}$,
$\subspace{10102\\01124}{15}$,
$\subspace{10114\\01140}{}$,
$\subspace{10121\\01111}{}$,
$\subspace{10133\\01132}{}$,
$\subspace{10140\\01103}{}$,
$\subspace{10401\\01000}{}$,
$\subspace{10413\\01041}{}$,
$\subspace{10420\\01032}{}$,
$\subspace{10432\\01023}{}$,
$\subspace{10444\\01014}{}$,
$\subspace{11002\\00124}{}$,
$\subspace{11011\\00131}{}$,
$\subspace{11020\\00143}{}$,
$\subspace{11034\\00100}{}$,
$\subspace{11043\\00112}{}$,
$\subspace{10103\\01143}{15}$,
$\subspace{10110\\01114}{}$,
$\subspace{10122\\01130}{}$,
$\subspace{10134\\01101}{}$,
$\subspace{10141\\01122}{}$,
$\subspace{10401\\01024}{}$,
$\subspace{10413\\01010}{}$,
$\subspace{10420\\01001}{}$,
$\subspace{10432\\01042}{}$,
$\subspace{10444\\01033}{}$,
$\subspace{11003\\00144}{}$,
$\subspace{11012\\00101}{}$,
$\subspace{11021\\00113}{}$,
$\subspace{11030\\00120}{}$,
$\subspace{11044\\00132}{}$,
$\subspace{10104\\01144}{15}$,
$\subspace{10111\\01110}{}$,
$\subspace{10123\\01131}{}$,
$\subspace{10130\\01102}{}$,
$\subspace{10142\\01123}{}$,
$\subspace{10401\\01030}{}$,
$\subspace{10413\\01021}{}$,
$\subspace{10420\\01012}{}$,
$\subspace{10432\\01003}{}$,
$\subspace{10444\\01044}{}$,
$\subspace{11004\\00122}{}$,
$\subspace{11013\\00134}{}$,
$\subspace{11022\\00141}{}$,
$\subspace{11031\\00103}{}$,
$\subspace{11040\\00110}{}$,
$\subspace{10104\\01203}{15}$,
$\subspace{10111\\01231}{}$,
$\subspace{10123\\01214}{}$,
$\subspace{10130\\01242}{}$,
$\subspace{10142\\01220}{}$,
$\subspace{10102\\01420}{}$,
$\subspace{10114\\01422}{}$,
$\subspace{10121\\01424}{}$,
$\subspace{10133\\01421}{}$,
$\subspace{10140\\01423}{}$,
$\subspace{10200\\01334}{}$,
$\subspace{10212\\01304}{}$,
$\subspace{10224\\01324}{}$,
$\subspace{10231\\01344}{}$,
$\subspace{10243\\01314}{}$,
$\subspace{10100\\01213}{15}$,
$\subspace{10112\\01241}{}$,
$\subspace{10124\\01224}{}$,
$\subspace{10131\\01202}{}$,
$\subspace{10143\\01230}{}$,
$\subspace{10104\\01410}{}$,
$\subspace{10111\\01412}{}$,
$\subspace{10123\\01414}{}$,
$\subspace{10130\\01411}{}$,
$\subspace{10142\\01413}{}$,
$\subspace{10203\\01301}{}$,
$\subspace{10210\\01321}{}$,
$\subspace{10222\\01341}{}$,
$\subspace{10234\\01311}{}$,
$\subspace{10241\\01331}{}$,
$\subspace{10101\\01223}{15}$,
$\subspace{10113\\01201}{}$,
$\subspace{10120\\01234}{}$,
$\subspace{10132\\01212}{}$,
$\subspace{10144\\01240}{}$,
$\subspace{10101\\01400}{}$,
$\subspace{10113\\01402}{}$,
$\subspace{10120\\01404}{}$,
$\subspace{10132\\01401}{}$,
$\subspace{10144\\01403}{}$,
$\subspace{10201\\01323}{}$,
$\subspace{10213\\01343}{}$,
$\subspace{10220\\01313}{}$,
$\subspace{10232\\01333}{}$,
$\subspace{10244\\01303}{}$,
$\subspace{10100\\01300}{15}$,
$\subspace{10112\\01340}{}$,
$\subspace{10124\\01330}{}$,
$\subspace{10131\\01320}{}$,
$\subspace{10143\\01310}{}$,
$\subspace{10302\\01244}{}$,
$\subspace{10314\\01200}{}$,
$\subspace{10321\\01211}{}$,
$\subspace{10333\\01222}{}$,
$\subspace{10340\\01233}{}$,
$\subspace{10300\\01440}{}$,
$\subspace{10312\\01444}{}$,
$\subspace{10324\\01443}{}$,
$\subspace{10331\\01442}{}$,
$\subspace{10343\\01441}{}$,
$\subspace{10103\\01302}{15}$,
$\subspace{10110\\01342}{}$,
$\subspace{10122\\01332}{}$,
$\subspace{10134\\01322}{}$,
$\subspace{10141\\01312}{}$,
$\subspace{10303\\01232}{}$,
$\subspace{10310\\01243}{}$,
$\subspace{10322\\01204}{}$,
$\subspace{10334\\01210}{}$,
$\subspace{10341\\01221}{}$,
$\subspace{10304\\01430}{}$,
$\subspace{10311\\01434}{}$,
$\subspace{10323\\01433}{}$,
$\subspace{10330\\01432}{}$,
$\subspace{10342\\01431}{}$,
$\subspace{10102\\01310}{15}$,
$\subspace{10114\\01300}{}$,
$\subspace{10121\\01340}{}$,
$\subspace{10133\\01330}{}$,
$\subspace{10140\\01320}{}$,
$\subspace{10300\\01211}{}$,
$\subspace{10312\\01222}{}$,
$\subspace{10324\\01233}{}$,
$\subspace{10331\\01244}{}$,
$\subspace{10343\\01200}{}$,
$\subspace{10300\\01441}{}$,
$\subspace{10312\\01440}{}$,
$\subspace{10324\\01444}{}$,
$\subspace{10331\\01443}{}$,
$\subspace{10343\\01442}{}$,
$\subspace{10204\\01012}{15}$,
$\subspace{10211\\01044}{}$,
$\subspace{10223\\01021}{}$,
$\subspace{10230\\01003}{}$,
$\subspace{10242\\01030}{}$,
$\subspace{10303\\01144}{}$,
$\subspace{10310\\01131}{}$,
$\subspace{10322\\01123}{}$,
$\subspace{10334\\01110}{}$,
$\subspace{10341\\01102}{}$,
$\subspace{13003\\00103}{}$,
$\subspace{13012\\00122}{}$,
$\subspace{13021\\00141}{}$,
$\subspace{13030\\00110}{}$,
$\subspace{13044\\00134}{}$,
$\subspace{10204\\01023}{15}$,
$\subspace{10211\\01000}{}$,
$\subspace{10223\\01032}{}$,
$\subspace{10230\\01014}{}$,
$\subspace{10242\\01041}{}$,
$\subspace{10301\\01140}{}$,
$\subspace{10313\\01132}{}$,
$\subspace{10320\\01124}{}$,
$\subspace{10332\\01111}{}$,
$\subspace{10344\\01103}{}$,
$\subspace{13001\\00131}{}$,
$\subspace{13010\\00100}{}$,
$\subspace{13024\\00124}{}$,
$\subspace{13033\\00143}{}$,
$\subspace{13042\\00112}{}$,
$\subspace{10204\\01044}{15}$,
$\subspace{10211\\01021}{}$,
$\subspace{10223\\01003}{}$,
$\subspace{10230\\01030}{}$,
$\subspace{10242\\01012}{}$,
$\subspace{10301\\01131}{}$,
$\subspace{10313\\01123}{}$,
$\subspace{10320\\01110}{}$,
$\subspace{10332\\01102}{}$,
$\subspace{10344\\01144}{}$,
$\subspace{13001\\00110}{}$,
$\subspace{13010\\00134}{}$,
$\subspace{13024\\00103}{}$,
$\subspace{13033\\00122}{}$,
$\subspace{13042\\00141}{}$,
$\subspace{10200\\01104}{15}$,
$\subspace{10212\\01112}{}$,
$\subspace{10224\\01120}{}$,
$\subspace{10231\\01133}{}$,
$\subspace{10243\\01141}{}$,
$\subspace{10302\\01020}{}$,
$\subspace{10314\\01043}{}$,
$\subspace{10321\\01011}{}$,
$\subspace{10333\\01034}{}$,
$\subspace{10340\\01002}{}$,
$\subspace{12000\\00121}{}$,
$\subspace{12014\\00102}{}$,
$\subspace{12023\\00133}{}$,
$\subspace{12032\\00114}{}$,
$\subspace{12041\\00140}{}$,
$\subspace{10203\\01134}{15}$,
$\subspace{10210\\01142}{}$,
$\subspace{10222\\01100}{}$,
$\subspace{10234\\01113}{}$,
$\subspace{10241\\01121}{}$,
$\subspace{10304\\01031}{}$,
$\subspace{10311\\01004}{}$,
$\subspace{10323\\01022}{}$,
$\subspace{10330\\01040}{}$,
$\subspace{10342\\01013}{}$,
$\subspace{12003\\00111}{}$,
$\subspace{12012\\00142}{}$,
$\subspace{12021\\00123}{}$,
$\subspace{12030\\00104}{}$,
$\subspace{12044\\00130}{}$,
$\subspace{10203\\01134}{15}$,
$\subspace{10210\\01142}{}$,
$\subspace{10222\\01100}{}$,
$\subspace{10234\\01113}{}$,
$\subspace{10241\\01121}{}$,
$\subspace{10304\\01031}{}$,
$\subspace{10311\\01004}{}$,
$\subspace{10323\\01022}{}$,
$\subspace{10330\\01040}{}$,
$\subspace{10342\\01013}{}$,
$\subspace{12003\\00111}{}$,
$\subspace{12012\\00142}{}$,
$\subspace{12021\\00123}{}$,
$\subspace{12030\\00104}{}$,
$\subspace{12044\\00130}{}$,
$\subspace{10201\\01213}{15}$,
$\subspace{10213\\01202}{}$,
$\subspace{10220\\01241}{}$,
$\subspace{10232\\01230}{}$,
$\subspace{10244\\01224}{}$,
$\subspace{10201\\01414}{}$,
$\subspace{10213\\01410}{}$,
$\subspace{10220\\01411}{}$,
$\subspace{10232\\01412}{}$,
$\subspace{10244\\01413}{}$,
$\subspace{10402\\01311}{}$,
$\subspace{10414\\01321}{}$,
$\subspace{10421\\01331}{}$,
$\subspace{10433\\01341}{}$,
$\subspace{10440\\01301}{}$,
$\subspace{10202\\01212}{15}$,
$\subspace{10214\\01201}{}$,
$\subspace{10221\\01240}{}$,
$\subspace{10233\\01234}{}$,
$\subspace{10240\\01223}{}$,
$\subspace{10200\\01401}{}$,
$\subspace{10212\\01402}{}$,
$\subspace{10224\\01403}{}$,
$\subspace{10231\\01404}{}$,
$\subspace{10243\\01400}{}$,
$\subspace{10404\\01303}{}$,
$\subspace{10411\\01313}{}$,
$\subspace{10423\\01323}{}$,
$\subspace{10430\\01333}{}$,
$\subspace{10442\\01343}{}$,
$\subspace{10202\\01244}{15}$,
$\subspace{10214\\01233}{}$,
$\subspace{10221\\01222}{}$,
$\subspace{10233\\01211}{}$,
$\subspace{10240\\01200}{}$,
$\subspace{10202\\01441}{}$,
$\subspace{10214\\01442}{}$,
$\subspace{10221\\01443}{}$,
$\subspace{10233\\01444}{}$,
$\subspace{10240\\01440}{}$,
$\subspace{10404\\01340}{}$,
$\subspace{10411\\01300}{}$,
$\subspace{10423\\01310}{}$,
$\subspace{10430\\01320}{}$,
$\subspace{10442\\01330}{}$,
$\subspace{10301\\01303}{15}$,
$\subspace{10313\\01333}{}$,
$\subspace{10320\\01313}{}$,
$\subspace{10332\\01343}{}$,
$\subspace{10344\\01323}{}$,
$\subspace{10403\\01201}{}$,
$\subspace{10410\\01223}{}$,
$\subspace{10422\\01240}{}$,
$\subspace{10434\\01212}{}$,
$\subspace{10441\\01234}{}$,
$\subspace{10403\\01400}{}$,
$\subspace{10410\\01403}{}$,
$\subspace{10422\\01401}{}$,
$\subspace{10434\\01404}{}$,
$\subspace{10441\\01402}{}$,
$\subspace{10302\\01334}{15}$,
$\subspace{10314\\01314}{}$,
$\subspace{10321\\01344}{}$,
$\subspace{10333\\01324}{}$,
$\subspace{10340\\01304}{}$,
$\subspace{10402\\01214}{}$,
$\subspace{10414\\01231}{}$,
$\subspace{10421\\01203}{}$,
$\subspace{10433\\01220}{}$,
$\subspace{10440\\01242}{}$,
$\subspace{10400\\01421}{}$,
$\subspace{10412\\01424}{}$,
$\subspace{10424\\01422}{}$,
$\subspace{10431\\01420}{}$,
$\subspace{10443\\01423}{}$,
$\subspace{10303\\01330}{15}$,
$\subspace{10310\\01310}{}$,
$\subspace{10322\\01340}{}$,
$\subspace{10334\\01320}{}$,
$\subspace{10341\\01300}{}$,
$\subspace{10403\\01200}{}$,
$\subspace{10410\\01222}{}$,
$\subspace{10422\\01244}{}$,
$\subspace{10434\\01211}{}$,
$\subspace{10441\\01233}{}$,
$\subspace{10400\\01443}{}$,
$\subspace{10412\\01441}{}$,
$\subspace{10424\\01444}{}$,
$\subspace{10431\\01442}{}$,
$\subspace{10443\\01440}{}$.

\smallskip

$n_5(5,2;17)\ge 412$,$\left[\!\begin{smallmatrix}10000\\00100\\04410\\00001\\00044\end{smallmatrix}\!\right]$:
$\subspace{00104\\00012}{3}$,
$\subspace{01004\\00014}{}$,
$\subspace{01100\\00013}{}$,
$\subspace{01001\\00113}{5}$,
$\subspace{01010\\00120}{}$,
$\subspace{01024\\00132}{}$,
$\subspace{01033\\00144}{}$,
$\subspace{01042\\00101}{}$,
$\subspace{01003\\00141}{5}$,
$\subspace{01012\\00103}{}$,
$\subspace{01021\\00110}{}$,
$\subspace{01030\\00122}{}$,
$\subspace{01044\\00134}{}$,
$\subspace{01203\\00011}{3}$,
$\subspace{01304\\00010}{}$,
$\subspace{01420\\00001}{}$,
$\subspace{10001\\00011}{3}$,
$\subspace{10004\\00010}{}$,
$\subspace{10010\\00001}{}$,
$\subspace{10003\\00013}{3}$,
$\subspace{10004\\00012}{}$,
$\subspace{10004\\00014}{}$,
$\subspace{10003\\01204}{15}$,
$\subspace{10003\\01210}{}$,
$\subspace{10003\\01221}{}$,
$\subspace{10003\\01232}{}$,
$\subspace{10003\\01243}{}$,
$\subspace{10022\\01302}{}$,
$\subspace{10022\\01312}{}$,
$\subspace{10022\\01322}{}$,
$\subspace{10022\\01332}{}$,
$\subspace{10022\\01342}{}$,
$\subspace{10030\\01430}{}$,
$\subspace{10030\\01431}{}$,
$\subspace{10030\\01432}{}$,
$\subspace{10030\\01433}{}$,
$\subspace{10030\\01434}{}$,
$\subspace{10033\\01204}{15}$,
$\subspace{10033\\01210}{}$,
$\subspace{10033\\01221}{}$,
$\subspace{10033\\01232}{}$,
$\subspace{10033\\01243}{}$,
$\subspace{10020\\01302}{}$,
$\subspace{10020\\01312}{}$,
$\subspace{10020\\01322}{}$,
$\subspace{10020\\01332}{}$,
$\subspace{10020\\01342}{}$,
$\subspace{10002\\01430}{}$,
$\subspace{10002\\01431}{}$,
$\subspace{10002\\01432}{}$,
$\subspace{10002\\01433}{}$,
$\subspace{10002\\01434}{}$,
$\subspace{10102\\01002}{15}$,
$\subspace{10114\\01011}{}$,
$\subspace{10121\\01020}{}$,
$\subspace{10133\\01034}{}$,
$\subspace{10140\\01043}{}$,
$\subspace{10403\\01104}{}$,
$\subspace{10410\\01133}{}$,
$\subspace{10422\\01112}{}$,
$\subspace{10434\\01141}{}$,
$\subspace{10441\\01120}{}$,
$\subspace{14003\\00102}{}$,
$\subspace{14012\\00140}{}$,
$\subspace{14021\\00133}{}$,
$\subspace{14030\\00121}{}$,
$\subspace{14044\\00114}{}$,
$\subspace{10100\\01014}{15}$,
$\subspace{10112\\01023}{}$,
$\subspace{10124\\01032}{}$,
$\subspace{10131\\01041}{}$,
$\subspace{10143\\01000}{}$,
$\subspace{10404\\01124}{}$,
$\subspace{10411\\01103}{}$,
$\subspace{10423\\01132}{}$,
$\subspace{10430\\01111}{}$,
$\subspace{10442\\01140}{}$,
$\subspace{14004\\00131}{}$,
$\subspace{14013\\00124}{}$,
$\subspace{14022\\00112}{}$,
$\subspace{14031\\00100}{}$,
$\subspace{14040\\00143}{}$,
$\subspace{10103\\01041}{15}$,
$\subspace{10110\\01000}{}$,
$\subspace{10122\\01014}{}$,
$\subspace{10134\\01023}{}$,
$\subspace{10141\\01032}{}$,
$\subspace{10402\\01124}{}$,
$\subspace{10414\\01103}{}$,
$\subspace{10421\\01132}{}$,
$\subspace{10433\\01111}{}$,
$\subspace{10440\\01140}{}$,
$\subspace{14002\\00112}{}$,
$\subspace{14011\\00100}{}$,
$\subspace{14020\\00143}{}$,
$\subspace{14034\\00131}{}$,
$\subspace{14043\\00124}{}$,
$\subspace{10101\\01113}{15}$,
$\subspace{10113\\01134}{}$,
$\subspace{10120\\01100}{}$,
$\subspace{10132\\01121}{}$,
$\subspace{10144\\01142}{}$,
$\subspace{10402\\01040}{}$,
$\subspace{10414\\01031}{}$,
$\subspace{10421\\01022}{}$,
$\subspace{10433\\01013}{}$,
$\subspace{10440\\01004}{}$,
$\subspace{11001\\00130}{}$,
$\subspace{11010\\00142}{}$,
$\subspace{11024\\00104}{}$,
$\subspace{11033\\00111}{}$,
$\subspace{11042\\00123}{}$,
$\subspace{10101\\01113}{15}$,
$\subspace{10113\\01134}{}$,
$\subspace{10120\\01100}{}$,
$\subspace{10132\\01121}{}$,
$\subspace{10144\\01142}{}$,
$\subspace{10402\\01040}{}$,
$\subspace{10414\\01031}{}$,
$\subspace{10421\\01022}{}$,
$\subspace{10433\\01013}{}$,
$\subspace{10440\\01004}{}$,
$\subspace{11001\\00130}{}$,
$\subspace{11010\\00142}{}$,
$\subspace{11024\\00104}{}$,
$\subspace{11033\\00111}{}$,
$\subspace{11042\\00123}{}$,
$\subspace{10104\\01144}{15}$,
$\subspace{10111\\01110}{}$,
$\subspace{10123\\01131}{}$,
$\subspace{10130\\01102}{}$,
$\subspace{10142\\01123}{}$,
$\subspace{10401\\01030}{}$,
$\subspace{10413\\01021}{}$,
$\subspace{10420\\01012}{}$,
$\subspace{10432\\01003}{}$,
$\subspace{10444\\01044}{}$,
$\subspace{11004\\00122}{}$,
$\subspace{11013\\00134}{}$,
$\subspace{11022\\00141}{}$,
$\subspace{11031\\00103}{}$,
$\subspace{11040\\00110}{}$,
$\subspace{10104\\01212}{15}$,
$\subspace{10111\\01240}{}$,
$\subspace{10123\\01223}{}$,
$\subspace{10130\\01201}{}$,
$\subspace{10142\\01234}{}$,
$\subspace{10104\\01400}{}$,
$\subspace{10111\\01402}{}$,
$\subspace{10123\\01404}{}$,
$\subspace{10130\\01401}{}$,
$\subspace{10142\\01403}{}$,
$\subspace{10200\\01313}{}$,
$\subspace{10212\\01333}{}$,
$\subspace{10224\\01303}{}$,
$\subspace{10231\\01323}{}$,
$\subspace{10243\\01343}{}$,
$\subspace{10103\\01221}{15}$,
$\subspace{10110\\01204}{}$,
$\subspace{10122\\01232}{}$,
$\subspace{10134\\01210}{}$,
$\subspace{10141\\01243}{}$,
$\subspace{10103\\01432}{}$,
$\subspace{10110\\01434}{}$,
$\subspace{10122\\01431}{}$,
$\subspace{10134\\01433}{}$,
$\subspace{10141\\01430}{}$,
$\subspace{10204\\01322}{}$,
$\subspace{10211\\01342}{}$,
$\subspace{10223\\01312}{}$,
$\subspace{10230\\01332}{}$,
$\subspace{10242\\01302}{}$,
$\subspace{10100\\01233}{15}$,
$\subspace{10112\\01211}{}$,
$\subspace{10124\\01244}{}$,
$\subspace{10131\\01222}{}$,
$\subspace{10143\\01200}{}$,
$\subspace{10102\\01444}{}$,
$\subspace{10114\\01441}{}$,
$\subspace{10121\\01443}{}$,
$\subspace{10133\\01440}{}$,
$\subspace{10140\\01442}{}$,
$\subspace{10202\\01300}{}$,
$\subspace{10214\\01320}{}$,
$\subspace{10221\\01340}{}$,
$\subspace{10233\\01310}{}$,
$\subspace{10240\\01330}{}$,
$\subspace{10100\\01320}{15}$,
$\subspace{10112\\01310}{}$,
$\subspace{10124\\01300}{}$,
$\subspace{10131\\01340}{}$,
$\subspace{10143\\01330}{}$,
$\subspace{10301\\01222}{}$,
$\subspace{10313\\01233}{}$,
$\subspace{10320\\01244}{}$,
$\subspace{10332\\01200}{}$,
$\subspace{10344\\01211}{}$,
$\subspace{10301\\01441}{}$,
$\subspace{10313\\01440}{}$,
$\subspace{10320\\01444}{}$,
$\subspace{10332\\01443}{}$,
$\subspace{10344\\01442}{}$,
$\subspace{10102\\01331}{15}$,
$\subspace{10114\\01321}{}$,
$\subspace{10121\\01311}{}$,
$\subspace{10133\\01301}{}$,
$\subspace{10140\\01341}{}$,
$\subspace{10300\\01202}{}$,
$\subspace{10312\\01213}{}$,
$\subspace{10324\\01224}{}$,
$\subspace{10331\\01230}{}$,
$\subspace{10343\\01241}{}$,
$\subspace{10304\\01411}{}$,
$\subspace{10311\\01410}{}$,
$\subspace{10323\\01414}{}$,
$\subspace{10330\\01413}{}$,
$\subspace{10342\\01412}{}$,
$\subspace{10101\\01340}{15}$,
$\subspace{10113\\01330}{}$,
$\subspace{10120\\01320}{}$,
$\subspace{10132\\01310}{}$,
$\subspace{10144\\01300}{}$,
$\subspace{10303\\01244}{}$,
$\subspace{10310\\01200}{}$,
$\subspace{10322\\01211}{}$,
$\subspace{10334\\01222}{}$,
$\subspace{10341\\01233}{}$,
$\subspace{10303\\01441}{}$,
$\subspace{10310\\01440}{}$,
$\subspace{10322\\01444}{}$,
$\subspace{10334\\01443}{}$,
$\subspace{10341\\01442}{}$,
$\subspace{10204\\01010}{15}$,
$\subspace{10211\\01042}{}$,
$\subspace{10223\\01024}{}$,
$\subspace{10230\\01001}{}$,
$\subspace{10242\\01033}{}$,
$\subspace{10301\\01122}{}$,
$\subspace{10313\\01114}{}$,
$\subspace{10320\\01101}{}$,
$\subspace{10332\\01143}{}$,
$\subspace{10344\\01130}{}$,
$\subspace{13001\\00144}{}$,
$\subspace{13010\\00113}{}$,
$\subspace{13024\\00132}{}$,
$\subspace{13033\\00101}{}$,
$\subspace{13042\\00120}{}$,
$\subspace{10204\\01012}{15}$,
$\subspace{10211\\01044}{}$,
$\subspace{10223\\01021}{}$,
$\subspace{10230\\01003}{}$,
$\subspace{10242\\01030}{}$,
$\subspace{10303\\01144}{}$,
$\subspace{10310\\01131}{}$,
$\subspace{10322\\01123}{}$,
$\subspace{10334\\01110}{}$,
$\subspace{10341\\01102}{}$,
$\subspace{13003\\00103}{}$,
$\subspace{13012\\00122}{}$,
$\subspace{13021\\00141}{}$,
$\subspace{13030\\00110}{}$,
$\subspace{13044\\00134}{}$,
$\subspace{10203\\01043}{15}$,
$\subspace{10210\\01020}{}$,
$\subspace{10222\\01002}{}$,
$\subspace{10234\\01034}{}$,
$\subspace{10241\\01011}{}$,
$\subspace{10302\\01112}{}$,
$\subspace{10314\\01104}{}$,
$\subspace{10321\\01141}{}$,
$\subspace{10333\\01133}{}$,
$\subspace{10340\\01120}{}$,
$\subspace{13002\\00114}{}$,
$\subspace{13011\\00133}{}$,
$\subspace{13020\\00102}{}$,
$\subspace{13034\\00121}{}$,
$\subspace{13043\\00140}{}$,
$\subspace{10201\\01124}{15}$,
$\subspace{10213\\01132}{}$,
$\subspace{10220\\01140}{}$,
$\subspace{10232\\01103}{}$,
$\subspace{10244\\01111}{}$,
$\subspace{10304\\01041}{}$,
$\subspace{10311\\01014}{}$,
$\subspace{10323\\01032}{}$,
$\subspace{10330\\01000}{}$,
$\subspace{10342\\01023}{}$,
$\subspace{12001\\00112}{}$,
$\subspace{12010\\00143}{}$,
$\subspace{12024\\00124}{}$,
$\subspace{12033\\00100}{}$,
$\subspace{12042\\00131}{}$,
$\subspace{10201\\01133}{15}$,
$\subspace{10213\\01141}{}$,
$\subspace{10220\\01104}{}$,
$\subspace{10232\\01112}{}$,
$\subspace{10244\\01120}{}$,
$\subspace{10304\\01020}{}$,
$\subspace{10311\\01043}{}$,
$\subspace{10323\\01011}{}$,
$\subspace{10330\\01034}{}$,
$\subspace{10342\\01002}{}$,
$\subspace{12001\\00133}{}$,
$\subspace{12010\\00114}{}$,
$\subspace{12024\\00140}{}$,
$\subspace{12033\\00121}{}$,
$\subspace{12042\\00102}{}$,
$\subspace{10203\\01130}{15}$,
$\subspace{10210\\01143}{}$,
$\subspace{10222\\01101}{}$,
$\subspace{10234\\01114}{}$,
$\subspace{10241\\01122}{}$,
$\subspace{10302\\01001}{}$,
$\subspace{10314\\01024}{}$,
$\subspace{10321\\01042}{}$,
$\subspace{10333\\01010}{}$,
$\subspace{10340\\01033}{}$,
$\subspace{12003\\00101}{}$,
$\subspace{12012\\00132}{}$,
$\subspace{12021\\00113}{}$,
$\subspace{12030\\00144}{}$,
$\subspace{12044\\00120}{}$,
$\subspace{10200\\01201}{15}$,
$\subspace{10212\\01240}{}$,
$\subspace{10224\\01234}{}$,
$\subspace{10231\\01223}{}$,
$\subspace{10243\\01212}{}$,
$\subspace{10202\\01404}{}$,
$\subspace{10214\\01400}{}$,
$\subspace{10221\\01401}{}$,
$\subspace{10233\\01402}{}$,
$\subspace{10240\\01403}{}$,
$\subspace{10404\\01313}{}$,
$\subspace{10411\\01323}{}$,
$\subspace{10423\\01333}{}$,
$\subspace{10430\\01343}{}$,
$\subspace{10442\\01303}{}$,
$\subspace{10200\\01201}{15}$,
$\subspace{10212\\01240}{}$,
$\subspace{10224\\01234}{}$,
$\subspace{10231\\01223}{}$,
$\subspace{10243\\01212}{}$,
$\subspace{10202\\01404}{}$,
$\subspace{10214\\01400}{}$,
$\subspace{10221\\01401}{}$,
$\subspace{10233\\01402}{}$,
$\subspace{10240\\01403}{}$,
$\subspace{10404\\01313}{}$,
$\subspace{10411\\01323}{}$,
$\subspace{10423\\01333}{}$,
$\subspace{10430\\01343}{}$,
$\subspace{10442\\01303}{}$,
$\subspace{10201\\01203}{15}$,
$\subspace{10213\\01242}{}$,
$\subspace{10220\\01231}{}$,
$\subspace{10232\\01220}{}$,
$\subspace{10244\\01214}{}$,
$\subspace{10203\\01422}{}$,
$\subspace{10210\\01423}{}$,
$\subspace{10222\\01424}{}$,
$\subspace{10234\\01420}{}$,
$\subspace{10241\\01421}{}$,
$\subspace{10403\\01314}{}$,
$\subspace{10410\\01324}{}$,
$\subspace{10422\\01334}{}$,
$\subspace{10434\\01344}{}$,
$\subspace{10441\\01304}{}$,
$\subspace{10302\\01301}{15}$,
$\subspace{10314\\01331}{}$,
$\subspace{10321\\01311}{}$,
$\subspace{10333\\01341}{}$,
$\subspace{10340\\01321}{}$,
$\subspace{10400\\01213}{}$,
$\subspace{10412\\01230}{}$,
$\subspace{10424\\01202}{}$,
$\subspace{10431\\01224}{}$,
$\subspace{10443\\01241}{}$,
$\subspace{10401\\01410}{}$,
$\subspace{10413\\01413}{}$,
$\subspace{10420\\01411}{}$,
$\subspace{10432\\01414}{}$,
$\subspace{10444\\01412}{}$,
$\subspace{10300\\01332}{15}$,
$\subspace{10312\\01312}{}$,
$\subspace{10324\\01342}{}$,
$\subspace{10331\\01322}{}$,
$\subspace{10343\\01302}{}$,
$\subspace{10400\\01232}{}$,
$\subspace{10412\\01204}{}$,
$\subspace{10424\\01221}{}$,
$\subspace{10431\\01243}{}$,
$\subspace{10443\\01210}{}$,
$\subspace{10400\\01432}{}$,
$\subspace{10412\\01430}{}$,
$\subspace{10424\\01433}{}$,
$\subspace{10431\\01431}{}$,
$\subspace{10443\\01434}{}$,
$\subspace{10300\\01334}{15}$,
$\subspace{10312\\01314}{}$,
$\subspace{10324\\01344}{}$,
$\subspace{10331\\01324}{}$,
$\subspace{10343\\01304}{}$,
$\subspace{10401\\01203}{}$,
$\subspace{10413\\01220}{}$,
$\subspace{10420\\01242}{}$,
$\subspace{10432\\01214}{}$,
$\subspace{10444\\01231}{}$,
$\subspace{10403\\01420}{}$,
$\subspace{10410\\01423}{}$,
$\subspace{10422\\01421}{}$,
$\subspace{10434\\01424}{}$,
$\subspace{10441\\01422}{}$.

\smallskip

$n_5(5,2;21)\ge 521$,$\left[\!\begin{smallmatrix}10000\\00100\\04410\\00001\\00044\end{smallmatrix}\!\right]$:
$\subspace{01002\\00102}{5}$,
$\subspace{01011\\00114}{}$,
$\subspace{01020\\00121}{}$,
$\subspace{01034\\00133}{}$,
$\subspace{01043\\00140}{}$,
$\subspace{01002\\00102}{5}$,
$\subspace{01011\\00114}{}$,
$\subspace{01020\\00121}{}$,
$\subspace{01034\\00133}{}$,
$\subspace{01043\\00140}{}$,
$\subspace{01000\\00124}{5}$,
$\subspace{01014\\00131}{}$,
$\subspace{01023\\00143}{}$,
$\subspace{01032\\00100}{}$,
$\subspace{01041\\00112}{}$,
$\subspace{01004\\00130}{5}$,
$\subspace{01013\\00142}{}$,
$\subspace{01022\\00104}{}$,
$\subspace{01031\\00111}{}$,
$\subspace{01040\\00123}{}$,
$\subspace{10000\\00001}{3}$,
$\subspace{10000\\00010}{}$,
$\subspace{10000\\00011}{}$,
$\subspace{10000\\00012}{3}$,
$\subspace{10000\\00013}{}$,
$\subspace{10000\\00014}{}$,
$\subspace{10001\\00010}{3}$,
$\subspace{10004\\00011}{}$,
$\subspace{10040\\00001}{}$,
$\subspace{10001\\00011}{3}$,
$\subspace{10004\\00010}{}$,
$\subspace{10010\\00001}{}$,
$\subspace{10001\\00012}{3}$,
$\subspace{10001\\00014}{}$,
$\subspace{10002\\00013}{}$,
$\subspace{10001\\00013}{3}$,
$\subspace{10003\\00012}{}$,
$\subspace{10003\\00014}{}$,
$\subspace{10003\\00013}{3}$,
$\subspace{10004\\00012}{}$,
$\subspace{10004\\00014}{}$,
$\subspace{10100\\01003}{15}$,
$\subspace{10112\\01012}{}$,
$\subspace{10124\\01021}{}$,
$\subspace{10131\\01030}{}$,
$\subspace{10143\\01044}{}$,
$\subspace{10400\\01123}{}$,
$\subspace{10412\\01102}{}$,
$\subspace{10424\\01131}{}$,
$\subspace{10431\\01110}{}$,
$\subspace{10443\\01144}{}$,
$\subspace{14000\\00103}{}$,
$\subspace{14014\\00141}{}$,
$\subspace{14023\\00134}{}$,
$\subspace{14032\\00122}{}$,
$\subspace{14041\\00110}{}$,
$\subspace{10101\\01021}{15}$,
$\subspace{10113\\01030}{}$,
$\subspace{10120\\01044}{}$,
$\subspace{10132\\01003}{}$,
$\subspace{10144\\01012}{}$,
$\subspace{10404\\01102}{}$,
$\subspace{10411\\01131}{}$,
$\subspace{10423\\01110}{}$,
$\subspace{10430\\01144}{}$,
$\subspace{10442\\01123}{}$,
$\subspace{14004\\00134}{}$,
$\subspace{14013\\00122}{}$,
$\subspace{14022\\00110}{}$,
$\subspace{14031\\00103}{}$,
$\subspace{14040\\00141}{}$,
$\subspace{10100\\01040}{15}$,
$\subspace{10112\\01004}{}$,
$\subspace{10124\\01013}{}$,
$\subspace{10131\\01022}{}$,
$\subspace{10143\\01031}{}$,
$\subspace{10400\\01100}{}$,
$\subspace{10412\\01134}{}$,
$\subspace{10424\\01113}{}$,
$\subspace{10431\\01142}{}$,
$\subspace{10443\\01121}{}$,
$\subspace{14000\\00111}{}$,
$\subspace{14014\\00104}{}$,
$\subspace{14023\\00142}{}$,
$\subspace{14032\\00130}{}$,
$\subspace{14041\\00123}{}$,
$\subspace{10100\\01040}{15}$,
$\subspace{10112\\01004}{}$,
$\subspace{10124\\01013}{}$,
$\subspace{10131\\01022}{}$,
$\subspace{10143\\01031}{}$,
$\subspace{10400\\01100}{}$,
$\subspace{10412\\01134}{}$,
$\subspace{10424\\01113}{}$,
$\subspace{10431\\01142}{}$,
$\subspace{10443\\01121}{}$,
$\subspace{14000\\00111}{}$,
$\subspace{14014\\00104}{}$,
$\subspace{14023\\00142}{}$,
$\subspace{14032\\00130}{}$,
$\subspace{14041\\00123}{}$,
$\subspace{10102\\01103}{15}$,
$\subspace{10114\\01124}{}$,
$\subspace{10121\\01140}{}$,
$\subspace{10133\\01111}{}$,
$\subspace{10140\\01132}{}$,
$\subspace{10404\\01032}{}$,
$\subspace{10411\\01023}{}$,
$\subspace{10423\\01014}{}$,
$\subspace{10430\\01000}{}$,
$\subspace{10442\\01041}{}$,
$\subspace{11002\\00112}{}$,
$\subspace{11011\\00124}{}$,
$\subspace{11020\\00131}{}$,
$\subspace{11034\\00143}{}$,
$\subspace{11043\\00100}{}$,
$\subspace{10103\\01110}{15}$,
$\subspace{10110\\01131}{}$,
$\subspace{10122\\01102}{}$,
$\subspace{10134\\01123}{}$,
$\subspace{10141\\01144}{}$,
$\subspace{10402\\01012}{}$,
$\subspace{10414\\01003}{}$,
$\subspace{10421\\01044}{}$,
$\subspace{10433\\01030}{}$,
$\subspace{10440\\01021}{}$,
$\subspace{11003\\00141}{}$,
$\subspace{11012\\00103}{}$,
$\subspace{11021\\00110}{}$,
$\subspace{11030\\00122}{}$,
$\subspace{11044\\00134}{}$,
$\subspace{10102\\01122}{15}$,
$\subspace{10114\\01143}{}$,
$\subspace{10121\\01114}{}$,
$\subspace{10133\\01130}{}$,
$\subspace{10140\\01101}{}$,
$\subspace{10403\\01010}{}$,
$\subspace{10410\\01001}{}$,
$\subspace{10422\\01042}{}$,
$\subspace{10434\\01033}{}$,
$\subspace{10441\\01024}{}$,
$\subspace{11002\\00144}{}$,
$\subspace{11011\\00101}{}$,
$\subspace{11020\\00113}{}$,
$\subspace{11034\\00120}{}$,
$\subspace{11043\\00132}{}$,
$\subspace{10101\\01143}{15}$,
$\subspace{10113\\01114}{}$,
$\subspace{10120\\01130}{}$,
$\subspace{10132\\01101}{}$,
$\subspace{10144\\01122}{}$,
$\subspace{10404\\01042}{}$,
$\subspace{10411\\01033}{}$,
$\subspace{10423\\01024}{}$,
$\subspace{10430\\01010}{}$,
$\subspace{10442\\01001}{}$,
$\subspace{11001\\00113}{}$,
$\subspace{11010\\00120}{}$,
$\subspace{11024\\00132}{}$,
$\subspace{11033\\00144}{}$,
$\subspace{11042\\00101}{}$,
$\subspace{10103\\01200}{15}$,
$\subspace{10110\\01233}{}$,
$\subspace{10122\\01211}{}$,
$\subspace{10134\\01244}{}$,
$\subspace{10141\\01222}{}$,
$\subspace{10103\\01441}{}$,
$\subspace{10110\\01443}{}$,
$\subspace{10122\\01440}{}$,
$\subspace{10134\\01442}{}$,
$\subspace{10141\\01444}{}$,
$\subspace{10203\\01300}{}$,
$\subspace{10210\\01320}{}$,
$\subspace{10222\\01340}{}$,
$\subspace{10234\\01310}{}$,
$\subspace{10241\\01330}{}$,
$\subspace{10104\\01220}{15}$,
$\subspace{10111\\01203}{}$,
$\subspace{10123\\01231}{}$,
$\subspace{10130\\01214}{}$,
$\subspace{10142\\01242}{}$,
$\subspace{10104\\01423}{}$,
$\subspace{10111\\01420}{}$,
$\subspace{10123\\01422}{}$,
$\subspace{10130\\01424}{}$,
$\subspace{10142\\01421}{}$,
$\subspace{10203\\01324}{}$,
$\subspace{10210\\01344}{}$,
$\subspace{10222\\01314}{}$,
$\subspace{10234\\01334}{}$,
$\subspace{10241\\01304}{}$,
$\subspace{10101\\01231}{15}$,
$\subspace{10113\\01214}{}$,
$\subspace{10120\\01242}{}$,
$\subspace{10132\\01220}{}$,
$\subspace{10144\\01203}{}$,
$\subspace{10101\\01423}{}$,
$\subspace{10113\\01420}{}$,
$\subspace{10120\\01422}{}$,
$\subspace{10132\\01424}{}$,
$\subspace{10144\\01421}{}$,
$\subspace{10204\\01334}{}$,
$\subspace{10211\\01304}{}$,
$\subspace{10223\\01324}{}$,
$\subspace{10230\\01344}{}$,
$\subspace{10242\\01314}{}$,
$\subspace{10104\\01240}{15}$,
$\subspace{10111\\01223}{}$,
$\subspace{10123\\01201}{}$,
$\subspace{10130\\01234}{}$,
$\subspace{10142\\01212}{}$,
$\subspace{10102\\01402}{}$,
$\subspace{10114\\01404}{}$,
$\subspace{10121\\01401}{}$,
$\subspace{10133\\01403}{}$,
$\subspace{10140\\01400}{}$,
$\subspace{10202\\01323}{}$,
$\subspace{10214\\01343}{}$,
$\subspace{10221\\01313}{}$,
$\subspace{10233\\01333}{}$,
$\subspace{10240\\01303}{}$,
$\subspace{10100\\01301}{15}$,
$\subspace{10112\\01341}{}$,
$\subspace{10124\\01331}{}$,
$\subspace{10131\\01321}{}$,
$\subspace{10143\\01311}{}$,
$\subspace{10303\\01202}{}$,
$\subspace{10310\\01213}{}$,
$\subspace{10322\\01224}{}$,
$\subspace{10334\\01230}{}$,
$\subspace{10341\\01241}{}$,
$\subspace{10303\\01414}{}$,
$\subspace{10310\\01413}{}$,
$\subspace{10322\\01412}{}$,
$\subspace{10334\\01411}{}$,
$\subspace{10341\\01410}{}$,
$\subspace{10102\\01304}{15}$,
$\subspace{10114\\01344}{}$,
$\subspace{10121\\01334}{}$,
$\subspace{10133\\01324}{}$,
$\subspace{10140\\01314}{}$,
$\subspace{10302\\01214}{}$,
$\subspace{10314\\01220}{}$,
$\subspace{10321\\01231}{}$,
$\subspace{10333\\01242}{}$,
$\subspace{10340\\01203}{}$,
$\subspace{10304\\01424}{}$,
$\subspace{10311\\01423}{}$,
$\subspace{10323\\01422}{}$,
$\subspace{10330\\01421}{}$,
$\subspace{10342\\01420}{}$,
$\subspace{10103\\01330}{15}$,
$\subspace{10110\\01320}{}$,
$\subspace{10122\\01310}{}$,
$\subspace{10134\\01300}{}$,
$\subspace{10141\\01340}{}$,
$\subspace{10302\\01233}{}$,
$\subspace{10314\\01244}{}$,
$\subspace{10321\\01200}{}$,
$\subspace{10333\\01211}{}$,
$\subspace{10340\\01222}{}$,
$\subspace{10302\\01441}{}$,
$\subspace{10314\\01440}{}$,
$\subspace{10321\\01444}{}$,
$\subspace{10333\\01443}{}$,
$\subspace{10340\\01442}{}$,
$\subspace{10104\\01331}{15}$,
$\subspace{10111\\01321}{}$,
$\subspace{10123\\01311}{}$,
$\subspace{10130\\01301}{}$,
$\subspace{10142\\01341}{}$,
$\subspace{10301\\01230}{}$,
$\subspace{10313\\01241}{}$,
$\subspace{10320\\01202}{}$,
$\subspace{10332\\01213}{}$,
$\subspace{10344\\01224}{}$,
$\subspace{10301\\01414}{}$,
$\subspace{10313\\01413}{}$,
$\subspace{10320\\01412}{}$,
$\subspace{10332\\01411}{}$,
$\subspace{10344\\01410}{}$,
$\subspace{10202\\01023}{15}$,
$\subspace{10214\\01000}{}$,
$\subspace{10221\\01032}{}$,
$\subspace{10233\\01014}{}$,
$\subspace{10240\\01041}{}$,
$\subspace{10300\\01124}{}$,
$\subspace{10312\\01111}{}$,
$\subspace{10324\\01103}{}$,
$\subspace{10331\\01140}{}$,
$\subspace{10343\\01132}{}$,
$\subspace{13000\\00143}{}$,
$\subspace{13014\\00112}{}$,
$\subspace{13023\\00131}{}$,
$\subspace{13032\\00100}{}$,
$\subspace{13041\\00124}{}$,
$\subspace{10202\\01023}{15}$,
$\subspace{10214\\01000}{}$,
$\subspace{10221\\01032}{}$,
$\subspace{10233\\01014}{}$,
$\subspace{10240\\01041}{}$,
$\subspace{10300\\01124}{}$,
$\subspace{10312\\01111}{}$,
$\subspace{10324\\01103}{}$,
$\subspace{10331\\01140}{}$,
$\subspace{10343\\01132}{}$,
$\subspace{13000\\00143}{}$,
$\subspace{13014\\00112}{}$,
$\subspace{13023\\00131}{}$,
$\subspace{13032\\00100}{}$,
$\subspace{13041\\00124}{}$,
$\subspace{10201\\01033}{15}$,
$\subspace{10213\\01010}{}$,
$\subspace{10220\\01042}{}$,
$\subspace{10232\\01024}{}$,
$\subspace{10244\\01001}{}$,
$\subspace{10304\\01101}{}$,
$\subspace{10311\\01143}{}$,
$\subspace{10323\\01130}{}$,
$\subspace{10330\\01122}{}$,
$\subspace{10342\\01114}{}$,
$\subspace{13004\\00120}{}$,
$\subspace{13013\\00144}{}$,
$\subspace{13022\\00113}{}$,
$\subspace{13031\\00132}{}$,
$\subspace{13040\\00101}{}$,
$\subspace{10200\\01040}{15}$,
$\subspace{10212\\01022}{}$,
$\subspace{10224\\01004}{}$,
$\subspace{10231\\01031}{}$,
$\subspace{10243\\01013}{}$,
$\subspace{10300\\01100}{}$,
$\subspace{10312\\01142}{}$,
$\subspace{10324\\01134}{}$,
$\subspace{10331\\01121}{}$,
$\subspace{10343\\01113}{}$,
$\subspace{13000\\00111}{}$,
$\subspace{13014\\00130}{}$,
$\subspace{13023\\00104}{}$,
$\subspace{13032\\00123}{}$,
$\subspace{13041\\00142}{}$,
$\subspace{10200\\01102}{15}$,
$\subspace{10212\\01110}{}$,
$\subspace{10224\\01123}{}$,
$\subspace{10231\\01131}{}$,
$\subspace{10243\\01144}{}$,
$\subspace{10301\\01030}{}$,
$\subspace{10313\\01003}{}$,
$\subspace{10320\\01021}{}$,
$\subspace{10332\\01044}{}$,
$\subspace{10344\\01012}{}$,
$\subspace{12000\\00141}{}$,
$\subspace{12014\\00122}{}$,
$\subspace{12023\\00103}{}$,
$\subspace{12032\\00134}{}$,
$\subspace{12041\\00110}{}$,
$\subspace{10201\\01112}{15}$,
$\subspace{10213\\01120}{}$,
$\subspace{10220\\01133}{}$,
$\subspace{10232\\01141}{}$,
$\subspace{10244\\01104}{}$,
$\subspace{10300\\01002}{}$,
$\subspace{10312\\01020}{}$,
$\subspace{10324\\01043}{}$,
$\subspace{10331\\01011}{}$,
$\subspace{10343\\01034}{}$,
$\subspace{12001\\00121}{}$,
$\subspace{12010\\00102}{}$,
$\subspace{12024\\00133}{}$,
$\subspace{12033\\00114}{}$,
$\subspace{12042\\00140}{}$,
$\subspace{10203\\01112}{15}$,
$\subspace{10210\\01120}{}$,
$\subspace{10222\\01133}{}$,
$\subspace{10234\\01141}{}$,
$\subspace{10241\\01104}{}$,
$\subspace{10302\\01043}{}$,
$\subspace{10314\\01011}{}$,
$\subspace{10321\\01034}{}$,
$\subspace{10333\\01002}{}$,
$\subspace{10340\\01020}{}$,
$\subspace{12003\\00114}{}$,
$\subspace{12012\\00140}{}$,
$\subspace{12021\\00121}{}$,
$\subspace{12030\\00102}{}$,
$\subspace{12044\\00133}{}$,
$\subspace{10204\\01122}{15}$,
$\subspace{10211\\01130}{}$,
$\subspace{10223\\01143}{}$,
$\subspace{10230\\01101}{}$,
$\subspace{10242\\01114}{}$,
$\subspace{10301\\01010}{}$,
$\subspace{10313\\01033}{}$,
$\subspace{10320\\01001}{}$,
$\subspace{10332\\01024}{}$,
$\subspace{10344\\01042}{}$,
$\subspace{12004\\00144}{}$,
$\subspace{12013\\00120}{}$,
$\subspace{12022\\00101}{}$,
$\subspace{12031\\00132}{}$,
$\subspace{12040\\00113}{}$,
$\subspace{10200\\01221}{15}$,
$\subspace{10212\\01210}{}$,
$\subspace{10224\\01204}{}$,
$\subspace{10231\\01243}{}$,
$\subspace{10243\\01232}{}$,
$\subspace{10203\\01433}{}$,
$\subspace{10210\\01434}{}$,
$\subspace{10222\\01430}{}$,
$\subspace{10234\\01431}{}$,
$\subspace{10241\\01432}{}$,
$\subspace{10402\\01312}{}$,
$\subspace{10414\\01322}{}$,
$\subspace{10421\\01332}{}$,
$\subspace{10433\\01342}{}$,
$\subspace{10440\\01302}{}$,
$\subspace{10201\\01221}{15}$,
$\subspace{10213\\01210}{}$,
$\subspace{10220\\01204}{}$,
$\subspace{10232\\01243}{}$,
$\subspace{10244\\01232}{}$,
$\subspace{10201\\01432}{}$,
$\subspace{10213\\01433}{}$,
$\subspace{10220\\01434}{}$,
$\subspace{10232\\01430}{}$,
$\subspace{10244\\01431}{}$,
$\subspace{10403\\01322}{}$,
$\subspace{10410\\01332}{}$,
$\subspace{10422\\01342}{}$,
$\subspace{10434\\01302}{}$,
$\subspace{10441\\01312}{}$,
$\subspace{10204\\01230}{15}$,
$\subspace{10211\\01224}{}$,
$\subspace{10223\\01213}{}$,
$\subspace{10230\\01202}{}$,
$\subspace{10242\\01241}{}$,
$\subspace{10204\\01414}{}$,
$\subspace{10211\\01410}{}$,
$\subspace{10223\\01411}{}$,
$\subspace{10230\\01412}{}$,
$\subspace{10242\\01413}{}$,
$\subspace{10401\\01331}{}$,
$\subspace{10413\\01341}{}$,
$\subspace{10420\\01301}{}$,
$\subspace{10432\\01311}{}$,
$\subspace{10444\\01321}{}$,
$\subspace{10202\\01241}{15}$,
$\subspace{10214\\01230}{}$,
$\subspace{10221\\01224}{}$,
$\subspace{10233\\01213}{}$,
$\subspace{10240\\01202}{}$,
$\subspace{10200\\01410}{}$,
$\subspace{10212\\01411}{}$,
$\subspace{10224\\01412}{}$,
$\subspace{10231\\01413}{}$,
$\subspace{10243\\01414}{}$,
$\subspace{10402\\01331}{}$,
$\subspace{10414\\01341}{}$,
$\subspace{10421\\01301}{}$,
$\subspace{10433\\01311}{}$,
$\subspace{10440\\01321}{}$,
$\subspace{10303\\01310}{15}$,
$\subspace{10310\\01340}{}$,
$\subspace{10322\\01320}{}$,
$\subspace{10334\\01300}{}$,
$\subspace{10341\\01330}{}$,
$\subspace{10401\\01222}{}$,
$\subspace{10413\\01244}{}$,
$\subspace{10420\\01211}{}$,
$\subspace{10432\\01233}{}$,
$\subspace{10444\\01200}{}$,
$\subspace{10402\\01442}{}$,
$\subspace{10414\\01440}{}$,
$\subspace{10421\\01443}{}$,
$\subspace{10433\\01441}{}$,
$\subspace{10440\\01444}{}$,
$\subspace{10303\\01313}{15}$,
$\subspace{10310\\01343}{}$,
$\subspace{10322\\01323}{}$,
$\subspace{10334\\01303}{}$,
$\subspace{10341\\01333}{}$,
$\subspace{10400\\01201}{}$,
$\subspace{10412\\01223}{}$,
$\subspace{10424\\01240}{}$,
$\subspace{10431\\01212}{}$,
$\subspace{10443\\01234}{}$,
$\subspace{10404\\01404}{}$,
$\subspace{10411\\01402}{}$,
$\subspace{10423\\01400}{}$,
$\subspace{10430\\01403}{}$,
$\subspace{10442\\01401}{}$,
$\subspace{10304\\01330}{15}$,
$\subspace{10311\\01310}{}$,
$\subspace{10323\\01340}{}$,
$\subspace{10330\\01320}{}$,
$\subspace{10342\\01300}{}$,
$\subspace{10401\\01233}{}$,
$\subspace{10413\\01200}{}$,
$\subspace{10420\\01222}{}$,
$\subspace{10432\\01244}{}$,
$\subspace{10444\\01211}{}$,
$\subspace{10401\\01441}{}$,
$\subspace{10413\\01444}{}$,
$\subspace{10420\\01442}{}$,
$\subspace{10432\\01440}{}$,
$\subspace{10444\\01443}{}$,
$\subspace{10304\\01342}{15}$,
$\subspace{10311\\01322}{}$,
$\subspace{10323\\01302}{}$,
$\subspace{10330\\01332}{}$,
$\subspace{10342\\01312}{}$,
$\subspace{10403\\01243}{}$,
$\subspace{10410\\01210}{}$,
$\subspace{10422\\01232}{}$,
$\subspace{10434\\01204}{}$,
$\subspace{10441\\01221}{}$,
$\subspace{10403\\01432}{}$,
$\subspace{10410\\01430}{}$,
$\subspace{10422\\01433}{}$,
$\subspace{10434\\01431}{}$,
$\subspace{10441\\01434}{}$.

\smallskip

$n_5(5,2;22)\ge 542$,$\left[\!\begin{smallmatrix}10000\\00100\\04410\\00001\\00044\end{smallmatrix}\!\right]$:
$\subspace{01002\\00102}{5}$,
$\subspace{01011\\00114}{}$,
$\subspace{01020\\00121}{}$,
$\subspace{01034\\00133}{}$,
$\subspace{01043\\00140}{}$,
$\subspace{01001\\00113}{5}$,
$\subspace{01010\\00120}{}$,
$\subspace{01024\\00132}{}$,
$\subspace{01033\\00144}{}$,
$\subspace{01042\\00101}{}$,
$\subspace{01001\\00113}{5}$,
$\subspace{01010\\00120}{}$,
$\subspace{01024\\00132}{}$,
$\subspace{01033\\00144}{}$,
$\subspace{01042\\00101}{}$,
$\subspace{01003\\00141}{5}$,
$\subspace{01012\\00103}{}$,
$\subspace{01021\\00110}{}$,
$\subspace{01030\\00122}{}$,
$\subspace{01044\\00134}{}$,
$\subspace{10001\\00012}{3}$,
$\subspace{10001\\00014}{}$,
$\subspace{10002\\00013}{}$,
$\subspace{10001\\00012}{3}$,
$\subspace{10001\\00014}{}$,
$\subspace{10002\\00013}{}$,
$\subspace{10003\\00013}{3}$,
$\subspace{10004\\00012}{}$,
$\subspace{10004\\00014}{}$,
$\subspace{10003\\00013}{3}$,
$\subspace{10004\\00012}{}$,
$\subspace{10004\\00014}{}$,
$\subspace{10014\\01203}{15}$,
$\subspace{10014\\01214}{}$,
$\subspace{10014\\01220}{}$,
$\subspace{10014\\01231}{}$,
$\subspace{10014\\01242}{}$,
$\subspace{10012\\01304}{}$,
$\subspace{10012\\01314}{}$,
$\subspace{10012\\01324}{}$,
$\subspace{10012\\01334}{}$,
$\subspace{10012\\01344}{}$,
$\subspace{10034\\01420}{}$,
$\subspace{10034\\01421}{}$,
$\subspace{10034\\01422}{}$,
$\subspace{10034\\01423}{}$,
$\subspace{10034\\01424}{}$,
$\subspace{10043\\01200}{15}$,
$\subspace{10043\\01211}{}$,
$\subspace{10043\\01222}{}$,
$\subspace{10043\\01233}{}$,
$\subspace{10043\\01244}{}$,
$\subspace{10021\\01300}{}$,
$\subspace{10021\\01310}{}$,
$\subspace{10021\\01320}{}$,
$\subspace{10021\\01330}{}$,
$\subspace{10021\\01340}{}$,
$\subspace{10041\\01440}{}$,
$\subspace{10041\\01441}{}$,
$\subspace{10041\\01442}{}$,
$\subspace{10041\\01443}{}$,
$\subspace{10041\\01444}{}$,
$\subspace{10101\\01003}{15}$,
$\subspace{10113\\01012}{}$,
$\subspace{10120\\01021}{}$,
$\subspace{10132\\01030}{}$,
$\subspace{10144\\01044}{}$,
$\subspace{10403\\01144}{}$,
$\subspace{10410\\01123}{}$,
$\subspace{10422\\01102}{}$,
$\subspace{10434\\01131}{}$,
$\subspace{10441\\01110}{}$,
$\subspace{14003\\00141}{}$,
$\subspace{14012\\00134}{}$,
$\subspace{14021\\00122}{}$,
$\subspace{14030\\00110}{}$,
$\subspace{14044\\00103}{}$,
$\subspace{10103\\01032}{15}$,
$\subspace{10110\\01041}{}$,
$\subspace{10122\\01000}{}$,
$\subspace{10134\\01014}{}$,
$\subspace{10141\\01023}{}$,
$\subspace{10404\\01140}{}$,
$\subspace{10411\\01124}{}$,
$\subspace{10423\\01103}{}$,
$\subspace{10430\\01132}{}$,
$\subspace{10442\\01111}{}$,
$\subspace{14004\\00143}{}$,
$\subspace{14013\\00131}{}$,
$\subspace{14022\\00124}{}$,
$\subspace{14031\\00112}{}$,
$\subspace{14040\\00100}{}$,
$\subspace{10104\\01043}{15}$,
$\subspace{10111\\01002}{}$,
$\subspace{10123\\01011}{}$,
$\subspace{10130\\01020}{}$,
$\subspace{10142\\01034}{}$,
$\subspace{10401\\01112}{}$,
$\subspace{10413\\01141}{}$,
$\subspace{10420\\01120}{}$,
$\subspace{10432\\01104}{}$,
$\subspace{10444\\01133}{}$,
$\subspace{14001\\00114}{}$,
$\subspace{14010\\00102}{}$,
$\subspace{14024\\00140}{}$,
$\subspace{14033\\00133}{}$,
$\subspace{14042\\00121}{}$,
$\subspace{10104\\01043}{15}$,
$\subspace{10111\\01002}{}$,
$\subspace{10123\\01011}{}$,
$\subspace{10130\\01020}{}$,
$\subspace{10142\\01034}{}$,
$\subspace{10401\\01112}{}$,
$\subspace{10413\\01141}{}$,
$\subspace{10420\\01120}{}$,
$\subspace{10432\\01104}{}$,
$\subspace{10444\\01133}{}$,
$\subspace{14001\\00114}{}$,
$\subspace{14010\\00102}{}$,
$\subspace{14024\\00140}{}$,
$\subspace{14033\\00133}{}$,
$\subspace{14042\\00121}{}$,
$\subspace{10101\\01111}{15}$,
$\subspace{10113\\01132}{}$,
$\subspace{10120\\01103}{}$,
$\subspace{10132\\01124}{}$,
$\subspace{10144\\01140}{}$,
$\subspace{10404\\01000}{}$,
$\subspace{10411\\01041}{}$,
$\subspace{10423\\01032}{}$,
$\subspace{10430\\01023}{}$,
$\subspace{10442\\01014}{}$,
$\subspace{11001\\00100}{}$,
$\subspace{11010\\00112}{}$,
$\subspace{11024\\00124}{}$,
$\subspace{11033\\00131}{}$,
$\subspace{11042\\00143}{}$,
$\subspace{10103\\01113}{15}$,
$\subspace{10110\\01134}{}$,
$\subspace{10122\\01100}{}$,
$\subspace{10134\\01121}{}$,
$\subspace{10141\\01142}{}$,
$\subspace{10404\\01022}{}$,
$\subspace{10411\\01013}{}$,
$\subspace{10423\\01004}{}$,
$\subspace{10430\\01040}{}$,
$\subspace{10442\\01031}{}$,
$\subspace{11003\\00111}{}$,
$\subspace{11012\\00123}{}$,
$\subspace{11021\\00130}{}$,
$\subspace{11030\\00142}{}$,
$\subspace{11044\\00104}{}$,
$\subspace{10101\\01124}{15}$,
$\subspace{10113\\01140}{}$,
$\subspace{10120\\01111}{}$,
$\subspace{10132\\01132}{}$,
$\subspace{10144\\01103}{}$,
$\subspace{10400\\01014}{}$,
$\subspace{10412\\01000}{}$,
$\subspace{10424\\01041}{}$,
$\subspace{10431\\01032}{}$,
$\subspace{10443\\01023}{}$,
$\subspace{11001\\00131}{}$,
$\subspace{11010\\00143}{}$,
$\subspace{11024\\00100}{}$,
$\subspace{11033\\00112}{}$,
$\subspace{11042\\00124}{}$,
$\subspace{10101\\01142}{15}$,
$\subspace{10113\\01113}{}$,
$\subspace{10120\\01134}{}$,
$\subspace{10132\\01100}{}$,
$\subspace{10144\\01121}{}$,
$\subspace{10400\\01022}{}$,
$\subspace{10412\\01013}{}$,
$\subspace{10424\\01004}{}$,
$\subspace{10431\\01040}{}$,
$\subspace{10443\\01031}{}$,
$\subspace{11001\\00123}{}$,
$\subspace{11010\\00130}{}$,
$\subspace{11024\\00142}{}$,
$\subspace{11033\\00104}{}$,
$\subspace{11042\\00111}{}$,
$\subspace{10102\\01201}{15}$,
$\subspace{10114\\01234}{}$,
$\subspace{10121\\01212}{}$,
$\subspace{10133\\01240}{}$,
$\subspace{10140\\01223}{}$,
$\subspace{10102\\01400}{}$,
$\subspace{10114\\01402}{}$,
$\subspace{10121\\01404}{}$,
$\subspace{10133\\01401}{}$,
$\subspace{10140\\01403}{}$,
$\subspace{10204\\01303}{}$,
$\subspace{10211\\01323}{}$,
$\subspace{10223\\01343}{}$,
$\subspace{10230\\01313}{}$,
$\subspace{10242\\01333}{}$,
$\subspace{10102\\01222}{15}$,
$\subspace{10114\\01200}{}$,
$\subspace{10121\\01233}{}$,
$\subspace{10133\\01211}{}$,
$\subspace{10140\\01244}{}$,
$\subspace{10102\\01441}{}$,
$\subspace{10114\\01443}{}$,
$\subspace{10121\\01440}{}$,
$\subspace{10133\\01442}{}$,
$\subspace{10140\\01444}{}$,
$\subspace{10200\\01320}{}$,
$\subspace{10212\\01340}{}$,
$\subspace{10224\\01310}{}$,
$\subspace{10231\\01330}{}$,
$\subspace{10243\\01300}{}$,
$\subspace{10100\\01232}{15}$,
$\subspace{10112\\01210}{}$,
$\subspace{10124\\01243}{}$,
$\subspace{10131\\01221}{}$,
$\subspace{10143\\01204}{}$,
$\subspace{10100\\01432}{}$,
$\subspace{10112\\01434}{}$,
$\subspace{10124\\01431}{}$,
$\subspace{10131\\01433}{}$,
$\subspace{10143\\01430}{}$,
$\subspace{10200\\01332}{}$,
$\subspace{10212\\01302}{}$,
$\subspace{10224\\01322}{}$,
$\subspace{10231\\01342}{}$,
$\subspace{10243\\01312}{}$,
$\subspace{10100\\01232}{15}$,
$\subspace{10112\\01210}{}$,
$\subspace{10124\\01243}{}$,
$\subspace{10131\\01221}{}$,
$\subspace{10143\\01204}{}$,
$\subspace{10100\\01432}{}$,
$\subspace{10112\\01434}{}$,
$\subspace{10124\\01431}{}$,
$\subspace{10131\\01433}{}$,
$\subspace{10143\\01430}{}$,
$\subspace{10200\\01332}{}$,
$\subspace{10212\\01302}{}$,
$\subspace{10224\\01322}{}$,
$\subspace{10231\\01342}{}$,
$\subspace{10243\\01312}{}$,
$\subspace{10103\\01313}{15}$,
$\subspace{10110\\01303}{}$,
$\subspace{10122\\01343}{}$,
$\subspace{10134\\01333}{}$,
$\subspace{10141\\01323}{}$,
$\subspace{10301\\01234}{}$,
$\subspace{10313\\01240}{}$,
$\subspace{10320\\01201}{}$,
$\subspace{10332\\01212}{}$,
$\subspace{10344\\01223}{}$,
$\subspace{10300\\01402}{}$,
$\subspace{10312\\01401}{}$,
$\subspace{10324\\01400}{}$,
$\subspace{10331\\01404}{}$,
$\subspace{10343\\01403}{}$,
$\subspace{10103\\01343}{15}$,
$\subspace{10110\\01333}{}$,
$\subspace{10122\\01323}{}$,
$\subspace{10134\\01313}{}$,
$\subspace{10141\\01303}{}$,
$\subspace{10302\\01201}{}$,
$\subspace{10314\\01212}{}$,
$\subspace{10321\\01223}{}$,
$\subspace{10333\\01234}{}$,
$\subspace{10340\\01240}{}$,
$\subspace{10304\\01401}{}$,
$\subspace{10311\\01400}{}$,
$\subspace{10323\\01404}{}$,
$\subspace{10330\\01403}{}$,
$\subspace{10342\\01402}{}$,
$\subspace{10104\\01344}{15}$,
$\subspace{10111\\01334}{}$,
$\subspace{10123\\01324}{}$,
$\subspace{10130\\01314}{}$,
$\subspace{10142\\01304}{}$,
$\subspace{10301\\01203}{}$,
$\subspace{10313\\01214}{}$,
$\subspace{10320\\01220}{}$,
$\subspace{10332\\01231}{}$,
$\subspace{10344\\01242}{}$,
$\subspace{10303\\01424}{}$,
$\subspace{10310\\01423}{}$,
$\subspace{10322\\01422}{}$,
$\subspace{10334\\01421}{}$,
$\subspace{10341\\01420}{}$,
$\subspace{10104\\01344}{15}$,
$\subspace{10111\\01334}{}$,
$\subspace{10123\\01324}{}$,
$\subspace{10130\\01314}{}$,
$\subspace{10142\\01304}{}$,
$\subspace{10301\\01203}{}$,
$\subspace{10313\\01214}{}$,
$\subspace{10320\\01220}{}$,
$\subspace{10332\\01231}{}$,
$\subspace{10344\\01242}{}$,
$\subspace{10303\\01424}{}$,
$\subspace{10310\\01423}{}$,
$\subspace{10322\\01422}{}$,
$\subspace{10334\\01421}{}$,
$\subspace{10341\\01420}{}$,
$\subspace{10201\\01002}{15}$,
$\subspace{10213\\01034}{}$,
$\subspace{10220\\01011}{}$,
$\subspace{10232\\01043}{}$,
$\subspace{10244\\01020}{}$,
$\subspace{10302\\01120}{}$,
$\subspace{10314\\01112}{}$,
$\subspace{10321\\01104}{}$,
$\subspace{10333\\01141}{}$,
$\subspace{10340\\01133}{}$,
$\subspace{13002\\00140}{}$,
$\subspace{13011\\00114}{}$,
$\subspace{13020\\00133}{}$,
$\subspace{13034\\00102}{}$,
$\subspace{13043\\00121}{}$,
$\subspace{10201\\01003}{15}$,
$\subspace{10213\\01030}{}$,
$\subspace{10220\\01012}{}$,
$\subspace{10232\\01044}{}$,
$\subspace{10244\\01021}{}$,
$\subspace{10303\\01131}{}$,
$\subspace{10310\\01123}{}$,
$\subspace{10322\\01110}{}$,
$\subspace{10334\\01102}{}$,
$\subspace{10341\\01144}{}$,
$\subspace{13003\\00122}{}$,
$\subspace{13012\\00141}{}$,
$\subspace{13021\\00110}{}$,
$\subspace{13030\\00134}{}$,
$\subspace{13044\\00103}{}$,
$\subspace{10201\\01004}{15}$,
$\subspace{10213\\01031}{}$,
$\subspace{10220\\01013}{}$,
$\subspace{10232\\01040}{}$,
$\subspace{10244\\01022}{}$,
$\subspace{10304\\01142}{}$,
$\subspace{10311\\01134}{}$,
$\subspace{10323\\01121}{}$,
$\subspace{10330\\01113}{}$,
$\subspace{10342\\01100}{}$,
$\subspace{13004\\00104}{}$,
$\subspace{13013\\00123}{}$,
$\subspace{13022\\00142}{}$,
$\subspace{13031\\00111}{}$,
$\subspace{13040\\00130}{}$,
$\subspace{10202\\01022}{15}$,
$\subspace{10214\\01004}{}$,
$\subspace{10221\\01031}{}$,
$\subspace{10233\\01013}{}$,
$\subspace{10240\\01040}{}$,
$\subspace{10304\\01113}{}$,
$\subspace{10311\\01100}{}$,
$\subspace{10323\\01142}{}$,
$\subspace{10330\\01134}{}$,
$\subspace{10342\\01121}{}$,
$\subspace{13004\\00111}{}$,
$\subspace{13013\\00130}{}$,
$\subspace{13022\\00104}{}$,
$\subspace{13031\\00123}{}$,
$\subspace{13040\\00142}{}$,
$\subspace{10202\\01111}{15}$,
$\subspace{10214\\01124}{}$,
$\subspace{10221\\01132}{}$,
$\subspace{10233\\01140}{}$,
$\subspace{10240\\01103}{}$,
$\subspace{10303\\01000}{}$,
$\subspace{10310\\01023}{}$,
$\subspace{10322\\01041}{}$,
$\subspace{10334\\01014}{}$,
$\subspace{10341\\01032}{}$,
$\subspace{12002\\00100}{}$,
$\subspace{12011\\00131}{}$,
$\subspace{12020\\00112}{}$,
$\subspace{12034\\00143}{}$,
$\subspace{12043\\00124}{}$,
$\subspace{10200\\01122}{15}$,
$\subspace{10212\\01130}{}$,
$\subspace{10224\\01143}{}$,
$\subspace{10231\\01101}{}$,
$\subspace{10243\\01114}{}$,
$\subspace{10302\\01033}{}$,
$\subspace{10314\\01001}{}$,
$\subspace{10321\\01024}{}$,
$\subspace{10333\\01042}{}$,
$\subspace{10340\\01010}{}$,
$\subspace{12000\\00113}{}$,
$\subspace{12014\\00144}{}$,
$\subspace{12023\\00120}{}$,
$\subspace{12032\\00101}{}$,
$\subspace{12041\\00132}{}$,
$\subspace{10204\\01123}{15}$,
$\subspace{10211\\01131}{}$,
$\subspace{10223\\01144}{}$,
$\subspace{10230\\01102}{}$,
$\subspace{10242\\01110}{}$,
$\subspace{10304\\01030}{}$,
$\subspace{10311\\01003}{}$,
$\subspace{10323\\01021}{}$,
$\subspace{10330\\01044}{}$,
$\subspace{10342\\01012}{}$,
$\subspace{12004\\00134}{}$,
$\subspace{12013\\00110}{}$,
$\subspace{12022\\00141}{}$,
$\subspace{12031\\00122}{}$,
$\subspace{12040\\00103}{}$,
$\subspace{10203\\01130}{15}$,
$\subspace{10210\\01143}{}$,
$\subspace{10222\\01101}{}$,
$\subspace{10234\\01114}{}$,
$\subspace{10241\\01122}{}$,
$\subspace{10302\\01001}{}$,
$\subspace{10314\\01024}{}$,
$\subspace{10321\\01042}{}$,
$\subspace{10333\\01010}{}$,
$\subspace{10340\\01033}{}$,
$\subspace{12003\\00101}{}$,
$\subspace{12012\\00132}{}$,
$\subspace{12021\\00113}{}$,
$\subspace{12030\\00144}{}$,
$\subspace{12044\\00120}{}$,
$\subspace{10204\\01202}{15}$,
$\subspace{10211\\01241}{}$,
$\subspace{10223\\01230}{}$,
$\subspace{10230\\01224}{}$,
$\subspace{10242\\01213}{}$,
$\subspace{10203\\01412}{}$,
$\subspace{10210\\01413}{}$,
$\subspace{10222\\01414}{}$,
$\subspace{10234\\01410}{}$,
$\subspace{10241\\01411}{}$,
$\subspace{10402\\01321}{}$,
$\subspace{10414\\01331}{}$,
$\subspace{10421\\01341}{}$,
$\subspace{10433\\01301}{}$,
$\subspace{10440\\01311}{}$,
$\subspace{10204\\01202}{15}$,
$\subspace{10211\\01241}{}$,
$\subspace{10223\\01230}{}$,
$\subspace{10230\\01224}{}$,
$\subspace{10242\\01213}{}$,
$\subspace{10203\\01412}{}$,
$\subspace{10210\\01413}{}$,
$\subspace{10222\\01414}{}$,
$\subspace{10234\\01410}{}$,
$\subspace{10241\\01411}{}$,
$\subspace{10402\\01321}{}$,
$\subspace{10414\\01331}{}$,
$\subspace{10421\\01341}{}$,
$\subspace{10433\\01301}{}$,
$\subspace{10440\\01311}{}$,
$\subspace{10202\\01210}{15}$,
$\subspace{10214\\01204}{}$,
$\subspace{10221\\01243}{}$,
$\subspace{10233\\01232}{}$,
$\subspace{10240\\01221}{}$,
$\subspace{10202\\01432}{}$,
$\subspace{10214\\01433}{}$,
$\subspace{10221\\01434}{}$,
$\subspace{10233\\01430}{}$,
$\subspace{10240\\01431}{}$,
$\subspace{10401\\01312}{}$,
$\subspace{10413\\01322}{}$,
$\subspace{10420\\01332}{}$,
$\subspace{10432\\01342}{}$,
$\subspace{10444\\01302}{}$,
$\subspace{10201\\01224}{15}$,
$\subspace{10213\\01213}{}$,
$\subspace{10220\\01202}{}$,
$\subspace{10232\\01241}{}$,
$\subspace{10244\\01230}{}$,
$\subspace{10203\\01413}{}$,
$\subspace{10210\\01414}{}$,
$\subspace{10222\\01410}{}$,
$\subspace{10234\\01411}{}$,
$\subspace{10241\\01412}{}$,
$\subspace{10400\\01331}{}$,
$\subspace{10412\\01341}{}$,
$\subspace{10424\\01301}{}$,
$\subspace{10431\\01311}{}$,
$\subspace{10443\\01321}{}$,
$\subspace{10300\\01320}{15}$,
$\subspace{10312\\01300}{}$,
$\subspace{10324\\01330}{}$,
$\subspace{10331\\01310}{}$,
$\subspace{10343\\01340}{}$,
$\subspace{10403\\01222}{}$,
$\subspace{10410\\01244}{}$,
$\subspace{10422\\01211}{}$,
$\subspace{10434\\01233}{}$,
$\subspace{10441\\01200}{}$,
$\subspace{10403\\01441}{}$,
$\subspace{10410\\01444}{}$,
$\subspace{10422\\01442}{}$,
$\subspace{10434\\01440}{}$,
$\subspace{10441\\01443}{}$,
$\subspace{10300\\01322}{15}$,
$\subspace{10312\\01302}{}$,
$\subspace{10324\\01332}{}$,
$\subspace{10331\\01312}{}$,
$\subspace{10343\\01342}{}$,
$\subspace{10404\\01243}{}$,
$\subspace{10411\\01210}{}$,
$\subspace{10423\\01232}{}$,
$\subspace{10430\\01204}{}$,
$\subspace{10442\\01221}{}$,
$\subspace{10401\\01434}{}$,
$\subspace{10413\\01432}{}$,
$\subspace{10420\\01430}{}$,
$\subspace{10432\\01433}{}$,
$\subspace{10444\\01431}{}$,
$\subspace{10300\\01344}{15}$,
$\subspace{10312\\01324}{}$,
$\subspace{10324\\01304}{}$,
$\subspace{10331\\01334}{}$,
$\subspace{10343\\01314}{}$,
$\subspace{10402\\01242}{}$,
$\subspace{10414\\01214}{}$,
$\subspace{10421\\01231}{}$,
$\subspace{10433\\01203}{}$,
$\subspace{10440\\01220}{}$,
$\subspace{10402\\01423}{}$,
$\subspace{10414\\01421}{}$,
$\subspace{10421\\01424}{}$,
$\subspace{10433\\01422}{}$,
$\subspace{10440\\01420}{}$,
$\subspace{10301\\01344}{15}$,
$\subspace{10313\\01324}{}$,
$\subspace{10320\\01304}{}$,
$\subspace{10332\\01334}{}$,
$\subspace{10344\\01314}{}$,
$\subspace{10400\\01220}{}$,
$\subspace{10412\\01242}{}$,
$\subspace{10424\\01214}{}$,
$\subspace{10431\\01231}{}$,
$\subspace{10443\\01203}{}$,
$\subspace{10403\\01421}{}$,
$\subspace{10410\\01424}{}$,
$\subspace{10422\\01422}{}$,
$\subspace{10434\\01420}{}$,
$\subspace{10441\\01423}{}$.

\smallskip

$n_5(5,2;23)\ge 568$,$\left[\!\begin{smallmatrix}00001\\10003\\01004\\00100\\00010\end{smallmatrix}\!\right]$:
$\subspace{00101\\00012}{71}$,
$\subspace{00104\\00011}{}$,
$\subspace{01014\\00120}{}$,
$\subspace{01044\\00110}{}$,
$\subspace{10001\\00011}{}$,
$\subspace{10003\\00012}{}$,
$\subspace{10001\\01002}{}$,
$\subspace{10003\\01000}{}$,
$\subspace{10011\\01202}{}$,
$\subspace{10012\\01333}{}$,
$\subspace{10041\\01421}{}$,
$\subspace{10100\\01000}{}$,
$\subspace{10101\\01010}{}$,
$\subspace{10103\\01010}{}$,
$\subspace{10103\\01030}{}$,
$\subspace{10113\\01011}{}$,
$\subspace{10144\\01032}{}$,
$\subspace{10140\\01041}{}$,
$\subspace{10113\\01130}{}$,
$\subspace{10131\\01114}{}$,
$\subspace{10141\\01121}{}$,
$\subspace{10101\\01212}{}$,
$\subspace{10114\\01201}{}$,
$\subspace{10143\\01204}{}$,
$\subspace{10100\\01310}{}$,
$\subspace{10131\\01303}{}$,
$\subspace{10134\\01301}{}$,
$\subspace{10140\\01340}{}$,
$\subspace{10222\\01130}{}$,
$\subspace{10240\\01112}{}$,
$\subspace{10201\\01313}{}$,
$\subspace{10200\\01400}{}$,
$\subspace{10201\\01432}{}$,
$\subspace{10213\\01411}{}$,
$\subspace{10231\\01430}{}$,
$\subspace{10231\\01432}{}$,
$\subspace{10240\\01431}{}$,
$\subspace{10242\\01444}{}$,
$\subspace{10303\\01011}{}$,
$\subspace{10301\\01032}{}$,
$\subspace{10303\\01030}{}$,
$\subspace{10323\\01021}{}$,
$\subspace{10321\\01141}{}$,
$\subspace{10331\\01121}{}$,
$\subspace{10331\\01140}{}$,
$\subspace{10332\\01141}{}$,
$\subspace{10310\\01201}{}$,
$\subspace{10324\\01213}{}$,
$\subspace{10340\\01223}{}$,
$\subspace{10332\\01310}{}$,
$\subspace{10302\\01410}{}$,
$\subspace{10310\\01411}{}$,
$\subspace{10324\\01431}{}$,
$\subspace{10323\\01440}{}$,
$\subspace{10401\\01043}{}$,
$\subspace{10433\\01043}{}$,
$\subspace{10424\\01123}{}$,
$\subspace{10434\\01103}{}$,
$\subspace{10440\\01103}{}$,
$\subspace{10443\\01104}{}$,
$\subspace{10420\\01202}{}$,
$\subspace{10433\\01223}{}$,
$\subspace{10440\\01214}{}$,
$\subspace{10401\\01342}{}$,
$\subspace{10420\\01303}{}$,
$\subspace{10424\\01313}{}$,
$\subspace{10413\\01400}{}$,
$\subspace{10422\\01423}{}$,
$\subspace{12021\\00111}{}$,
$\subspace{13022\\00121}{}$,
$\subspace{14031\\00144}{}$,
$\subspace{00102\\00012}{71}$,
$\subspace{01030\\00001}{}$,
$\subspace{01011\\00100}{}$,
$\subspace{01024\\00120}{}$,
$\subspace{01033\\00114}{}$,
$\subspace{10003\\00014}{}$,
$\subspace{10000\\00103}{}$,
$\subspace{10044\\00142}{}$,
$\subspace{10303\\00010}{}$,
$\subspace{10004\\01000}{}$,
$\subspace{10020\\01010}{}$,
$\subspace{10032\\01022}{}$,
$\subspace{10014\\01102}{}$,
$\subspace{10021\\01103}{}$,
$\subspace{10020\\01133}{}$,
$\subspace{10041\\01121}{}$,
$\subspace{10043\\01130}{}$,
$\subspace{10001\\01311}{}$,
$\subspace{10034\\01433}{}$,
$\subspace{10113\\01004}{}$,
$\subspace{10111\\01102}{}$,
$\subspace{10112\\01242}{}$,
$\subspace{10130\\01220}{}$,
$\subspace{10101\\01300}{}$,
$\subspace{10122\\01324}{}$,
$\subspace{10133\\01303}{}$,
$\subspace{10133\\01313}{}$,
$\subspace{10140\\01330}{}$,
$\subspace{10100\\01400}{}$,
$\subspace{10102\\01431}{}$,
$\subspace{10234\\01043}{}$,
$\subspace{10203\\01131}{}$,
$\subspace{10211\\01122}{}$,
$\subspace{10231\\01120}{}$,
$\subspace{10204\\01202}{}$,
$\subspace{10243\\01204}{}$,
$\subspace{10220\\01331}{}$,
$\subspace{10240\\01330}{}$,
$\subspace{10220\\01401}{}$,
$\subspace{10224\\01414}{}$,
$\subspace{10224\\01433}{}$,
$\subspace{10320\\01042}{}$,
$\subspace{10334\\01020}{}$,
$\subspace{10334\\01034}{}$,
$\subspace{10331\\01111}{}$,
$\subspace{10333\\01144}{}$,
$\subspace{10343\\01131}{}$,
$\subspace{10324\\01334}{}$,
$\subspace{10332\\01323}{}$,
$\subspace{10320\\01432}{}$,
$\subspace{10344\\01444}{}$,
$\subspace{10420\\01013}{}$,
$\subspace{10424\\01013}{}$,
$\subspace{10401\\01122}{}$,
$\subspace{10404\\01143}{}$,
$\subspace{10412\\01110}{}$,
$\subspace{10412\\01141}{}$,
$\subspace{10441\\01104}{}$,
$\subspace{10404\\01244}{}$,
$\subspace{10442\\01223}{}$,
$\subspace{10440\\01332}{}$,
$\subspace{10423\\01401}{}$,
$\subspace{10433\\01422}{}$,
$\subspace{11022\\00141}{}$,
$\subspace{11033\\00103}{}$,
$\subspace{11032\\00133}{}$,
$\subspace{11034\\00144}{}$,
$\subspace{11040\\00124}{}$,
$\subspace{12033\\00110}{}$,
$\subspace{13104\\00011}{}$,
$\subspace{14043\\00132}{}$,
$\subspace{01012\\00110}{71}$,
$\subspace{01030\\00134}{}$,
$\subspace{01043\\00101}{}$,
$\subspace{01103\\00013}{}$,
$\subspace{10002\\00112}{}$,
$\subspace{10010\\00114}{}$,
$\subspace{10042\\00111}{}$,
$\subspace{10202\\00011}{}$,
$\subspace{10001\\01021}{}$,
$\subspace{10011\\01010}{}$,
$\subspace{10033\\01033}{}$,
$\subspace{10044\\01023}{}$,
$\subspace{10023\\01201}{}$,
$\subspace{10003\\01311}{}$,
$\subspace{10020\\01303}{}$,
$\subspace{10013\\01423}{}$,
$\subspace{10021\\01441}{}$,
$\subspace{10100\\01034}{}$,
$\subspace{10123\\01104}{}$,
$\subspace{10101\\01203}{}$,
$\subspace{10143\\01223}{}$,
$\subspace{10104\\01330}{}$,
$\subspace{10132\\01334}{}$,
$\subspace{10144\\01312}{}$,
$\subspace{10142\\01343}{}$,
$\subspace{10113\\01433}{}$,
$\subspace{10133\\01432}{}$,
$\subspace{10231\\01013}{}$,
$\subspace{10200\\01131}{}$,
$\subspace{10242\\01113}{}$,
$\subspace{10210\\01231}{}$,
$\subspace{10212\\01302}{}$,
$\subspace{10232\\01301}{}$,
$\subspace{10220\\01420}{}$,
$\subspace{10300\\01041}{}$,
$\subspace{10310\\01002}{}$,
$\subspace{10330\\01040}{}$,
$\subspace{10343\\01000}{}$,
$\subspace{10301\\01102}{}$,
$\subspace{10322\\01141}{}$,
$\subspace{10312\\01230}{}$,
$\subspace{10341\\01204}{}$,
$\subspace{10303\\01344}{}$,
$\subspace{10324\\01304}{}$,
$\subspace{10332\\01314}{}$,
$\subspace{10340\\01313}{}$,
$\subspace{10302\\01431}{}$,
$\subspace{10314\\01403}{}$,
$\subspace{10334\\01412}{}$,
$\subspace{10331\\01434}{}$,
$\subspace{10402\\01003}{}$,
$\subspace{10413\\01001}{}$,
$\subspace{10412\\01011}{}$,
$\subspace{10433\\01014}{}$,
$\subspace{10403\\01101}{}$,
$\subspace{10410\\01121}{}$,
$\subspace{10414\\01130}{}$,
$\subspace{10420\\01212}{}$,
$\subspace{10424\\01241}{}$,
$\subspace{10404\\01321}{}$,
$\subspace{10430\\01320}{}$,
$\subspace{10400\\01440}{}$,
$\subspace{10421\\01430}{}$,
$\subspace{10444\\01413}{}$,
$\subspace{11032\\00130}{}$,
$\subspace{12013\\00120}{}$,
$\subspace{12011\\00143}{}$,
$\subspace{13030\\00103}{}$,
$\subspace{13103\\00012}{}$,
$\subspace{14022\\00131}{}$,
$\subspace{14042\\00133}{}$,
$\subspace{01013\\00112}{71}$,
$\subspace{01022\\00114}{}$,
$\subspace{01031\\00130}{}$,
$\subspace{01302\\00014}{}$,
$\subspace{10004\\00133}{}$,
$\subspace{10033\\00134}{}$,
$\subspace{10044\\00130}{}$,
$\subspace{10304\\00013}{}$,
$\subspace{10002\\01033}{}$,
$\subspace{10021\\01034}{}$,
$\subspace{10042\\01003}{}$,
$\subspace{10002\\01101}{}$,
$\subspace{10013\\01101}{}$,
$\subspace{10042\\01111}{}$,
$\subspace{10044\\01113}{}$,
$\subspace{10020\\01240}{}$,
$\subspace{10013\\01343}{}$,
$\subspace{10020\\01422}{}$,
$\subspace{10021\\01420}{}$,
$\subspace{10102\\01131}{}$,
$\subspace{10132\\01122}{}$,
$\subspace{10133\\01124}{}$,
$\subspace{10142\\01231}{}$,
$\subspace{10124\\01314}{}$,
$\subspace{10122\\01334}{}$,
$\subspace{10133\\01322}{}$,
$\subspace{10111\\01433}{}$,
$\subspace{10214\\01013}{}$,
$\subspace{10211\\01033}{}$,
$\subspace{10223\\01144}{}$,
$\subspace{10234\\01144}{}$,
$\subspace{10224\\01232}{}$,
$\subspace{10230\\01220}{}$,
$\subspace{10203\\01341}{}$,
$\subspace{10220\\01311}{}$,
$\subspace{10220\\01314}{}$,
$\subspace{10224\\01321}{}$,
$\subspace{10241\\01331}{}$,
$\subspace{10322\\01034}{}$,
$\subspace{10330\\01001}{}$,
$\subspace{10342\\01020}{}$,
$\subspace{10300\\01142}{}$,
$\subspace{10314\\01102}{}$,
$\subspace{10344\\01102}{}$,
$\subspace{10341\\01231}{}$,
$\subspace{10313\\01304}{}$,
$\subspace{10314\\01323}{}$,
$\subspace{10330\\01330}{}$,
$\subspace{10333\\01332}{}$,
$\subspace{10334\\01344}{}$,
$\subspace{10300\\01433}{}$,
$\subspace{10343\\01441}{}$,
$\subspace{10421\\01001}{}$,
$\subspace{10444\\01042}{}$,
$\subspace{10404\\01122}{}$,
$\subspace{10423\\01113}{}$,
$\subspace{10444\\01110}{}$,
$\subspace{10412\\01211}{}$,
$\subspace{10402\\01320}{}$,
$\subspace{10412\\01330}{}$,
$\subspace{10400\\01404}{}$,
$\subspace{10402\\01402}{}$,
$\subspace{11004\\00013}{}$,
$\subspace{11012\\00131}{}$,
$\subspace{11023\\00114}{}$,
$\subspace{11033\\00134}{}$,
$\subspace{12004\\00103}{}$,
$\subspace{13011\\00131}{}$,
$\subspace{13020\\00140}{}$,
$\subspace{14023\\00103}{}$,
$\subspace{14043\\00133}{}$,
$\subspace{01012\\00120}{71}$,
$\subspace{10202\\00012}{}$,
$\subspace{10003\\01023}{}$,
$\subspace{10001\\01243}{}$,
$\subspace{10012\\01203}{}$,
$\subspace{10033\\01230}{}$,
$\subspace{10034\\01312}{}$,
$\subspace{10041\\01412}{}$,
$\subspace{10104\\01030}{}$,
$\subspace{10100\\01040}{}$,
$\subspace{10113\\01023}{}$,
$\subspace{10123\\01003}{}$,
$\subspace{10123\\01043}{}$,
$\subspace{10101\\01124}{}$,
$\subspace{10104\\01241}{}$,
$\subspace{10123\\01204}{}$,
$\subspace{10131\\01230}{}$,
$\subspace{10142\\01201}{}$,
$\subspace{10104\\01312}{}$,
$\subspace{10103\\01412}{}$,
$\subspace{10140\\01442}{}$,
$\subspace{10232\\01010}{}$,
$\subspace{10230\\01032}{}$,
$\subspace{10201\\01121}{}$,
$\subspace{10202\\01141}{}$,
$\subspace{10221\\01104}{}$,
$\subspace{10233\\01103}{}$,
$\subspace{10233\\01130}{}$,
$\subspace{10210\\01243}{}$,
$\subspace{10232\\01203}{}$,
$\subspace{10212\\01303}{}$,
$\subspace{10212\\01341}{}$,
$\subspace{10231\\01310}{}$,
$\subspace{10202\\01444}{}$,
$\subspace{10212\\01412}{}$,
$\subspace{10221\\01420}{}$,
$\subspace{10232\\01413}{}$,
$\subspace{10303\\01040}{}$,
$\subspace{10331\\01132}{}$,
$\subspace{10312\\01203}{}$,
$\subspace{10310\\01324}{}$,
$\subspace{10332\\01302}{}$,
$\subspace{10342\\01313}{}$,
$\subspace{10322\\01403}{}$,
$\subspace{10323\\01402}{}$,
$\subspace{10324\\01400}{}$,
$\subspace{10403\\01000}{}$,
$\subspace{10400\\01012}{}$,
$\subspace{10401\\01012}{}$,
$\subspace{10403\\01011}{}$,
$\subspace{10431\\01040}{}$,
$\subspace{10440\\01023}{}$,
$\subspace{10410\\01132}{}$,
$\subspace{10403\\01243}{}$,
$\subspace{10410\\01223}{}$,
$\subspace{10431\\01214}{}$,
$\subspace{10433\\01230}{}$,
$\subspace{10431\\01312}{}$,
$\subspace{10414\\01432}{}$,
$\subspace{10410\\01442}{}$,
$\subspace{10424\\01403}{}$,
$\subspace{10420\\01441}{}$,
$\subspace{10430\\01403}{}$,
$\subspace{10430\\01411}{}$,
$\subspace{10430\\01431}{}$,
$\subspace{12020\\00112}{}$,
$\subspace{12031\\00121}{}$,
$\subspace{12044\\00110}{}$,
$\subspace{12403\\00011}{}$,
$\subspace{13043\\00132}{}$,
$\subspace{14030\\00144}{}$,
$\subspace{01021\\00104}{71}$,
$\subspace{01022\\00123}{}$,
$\subspace{01020\\00140}{}$,
$\subspace{10024\\00140}{}$,
$\subspace{10040\\00122}{}$,
$\subspace{10102\\00014}{}$,
$\subspace{10004\\01014}{}$,
$\subspace{10031\\01042}{}$,
$\subspace{10004\\01100}{}$,
$\subspace{10010\\01120}{}$,
$\subspace{10022\\01142}{}$,
$\subspace{10023\\01211}{}$,
$\subspace{10004\\01331}{}$,
$\subspace{10011\\01443}{}$,
$\subspace{10121\\01022}{}$,
$\subspace{10122\\01020}{}$,
$\subspace{10122\\01022}{}$,
$\subspace{10102\\01114}{}$,
$\subspace{10111\\01123}{}$,
$\subspace{10112\\01142}{}$,
$\subspace{10122\\01110}{}$,
$\subspace{10112\\01332}{}$,
$\subspace{10144\\01333}{}$,
$\subspace{10102\\01421}{}$,
$\subspace{10111\\01423}{}$,
$\subspace{10110\\01440}{}$,
$\subspace{10141\\01434}{}$,
$\subspace{10200\\01031}{}$,
$\subspace{10211\\01041}{}$,
$\subspace{10211\\01042}{}$,
$\subspace{10213\\01044}{}$,
$\subspace{10222\\01042}{}$,
$\subspace{10234\\01020}{}$,
$\subspace{10242\\01031}{}$,
$\subspace{10204\\01211}{}$,
$\subspace{10203\\01220}{}$,
$\subspace{10211\\01222}{}$,
$\subspace{10223\\01210}{}$,
$\subspace{10223\\01220}{}$,
$\subspace{10223\\01234}{}$,
$\subspace{10234\\01301}{}$,
$\subspace{10203\\01404}{}$,
$\subspace{10204\\01410}{}$,
$\subspace{10203\\01422}{}$,
$\subspace{10244\\01404}{}$,
$\subspace{10313\\01002}{}$,
$\subspace{10311\\01031}{}$,
$\subspace{10302\\01120}{}$,
$\subspace{10304\\01134}{}$,
$\subspace{10344\\01110}{}$,
$\subspace{10313\\01212}{}$,
$\subspace{10313\\01221}{}$,
$\subspace{10344\\01211}{}$,
$\subspace{10340\\01233}{}$,
$\subspace{10304\\01323}{}$,
$\subspace{10321\\01331}{}$,
$\subspace{10304\\01422}{}$,
$\subspace{10423\\01112}{}$,
$\subspace{10422\\01142}{}$,
$\subspace{10432\\01220}{}$,
$\subspace{10411\\01323}{}$,
$\subspace{10413\\01332}{}$,
$\subspace{10423\\01332}{}$,
$\subspace{10443\\01342}{}$,
$\subspace{11002\\00014}{}$,
$\subspace{11000\\00101}{}$,
$\subspace{11024\\00140}{}$,
$\subspace{11030\\00113}{}$,
$\subspace{12023\\00143}{}$,
$\subspace{13301\\00014}{}$,
$\subspace{14040\\00111}{}$,
$\subspace{01040\\00124}{71}$,
$\subspace{10043\\00111}{}$,
$\subspace{10011\\01004}{}$,
$\subspace{10023\\01123}{}$,
$\subspace{10000\\01224}{}$,
$\subspace{10030\\01212}{}$,
$\subspace{10010\\01322}{}$,
$\subspace{10031\\01300}{}$,
$\subspace{10032\\01340}{}$,
$\subspace{10040\\01301}{}$,
$\subspace{10014\\01434}{}$,
$\subspace{10022\\01412}{}$,
$\subspace{10024\\01441}{}$,
$\subspace{10033\\01442}{}$,
$\subspace{10110\\01012}{}$,
$\subspace{10121\\01024}{}$,
$\subspace{10123\\01100}{}$,
$\subspace{10134\\01132}{}$,
$\subspace{10130\\01213}{}$,
$\subspace{10144\\01221}{}$,
$\subspace{10142\\01243}{}$,
$\subspace{10114\\01410}{}$,
$\subspace{10120\\01423}{}$,
$\subspace{10143\\01403}{}$,
$\subspace{10141\\01420}{}$,
$\subspace{10200\\01044}{}$,
$\subspace{10233\\01003}{}$,
$\subspace{10202\\01112}{}$,
$\subspace{10214\\01133}{}$,
$\subspace{10242\\01124}{}$,
$\subspace{10244\\01140}{}$,
$\subspace{10222\\01242}{}$,
$\subspace{10230\\01222}{}$,
$\subspace{10232\\01234}{}$,
$\subspace{10243\\01210}{}$,
$\subspace{10221\\01342}{}$,
$\subspace{10212\\01413}{}$,
$\subspace{10213\\01443}{}$,
$\subspace{10312\\01041}{}$,
$\subspace{10301\\01143}{}$,
$\subspace{10311\\01203}{}$,
$\subspace{10322\\01230}{}$,
$\subspace{10302\\01333}{}$,
$\subspace{10342\\01414}{}$,
$\subspace{10340\\01424}{}$,
$\subspace{10400\\01023}{}$,
$\subspace{10410\\01014}{}$,
$\subspace{10411\\01021}{}$,
$\subspace{10443\\01002}{}$,
$\subspace{10442\\01134}{}$,
$\subspace{10403\\01244}{}$,
$\subspace{10413\\01241}{}$,
$\subspace{10434\\01200}{}$,
$\subspace{10430\\01341}{}$,
$\subspace{10441\\01312}{}$,
$\subspace{10414\\01430}{}$,
$\subspace{10422\\01402}{}$,
$\subspace{10431\\01440}{}$,
$\subspace{11001\\00143}{}$,
$\subspace{11200\\00010}{}$,
$\subspace{12002\\00100}{}$,
$\subspace{12024\\00102}{}$,
$\subspace{12240\\00001}{}$,
$\subspace{13000\\00122}{}$,
$\subspace{13010\\00104}{}$,
$\subspace{13023\\00101}{}$,
$\subspace{13042\\00142}{}$,
$\subspace{14011\\00113}{}$,
$\subspace{14021\\00141}{}$,
$\subspace{14031\\00112}{}$,
$\subspace{14032\\00123}{}$,
$\subspace{01200\\00013}{71}$,
$\subspace{10002\\00123}{}$,
$\subspace{10024\\01024}{}$,
$\subspace{10024\\01101}{}$,
$\subspace{10030\\01100}{}$,
$\subspace{10032\\01100}{}$,
$\subspace{10013\\01222}{}$,
$\subspace{10022\\01234}{}$,
$\subspace{10040\\01224}{}$,
$\subspace{10042\\01224}{}$,
$\subspace{10043\\01221}{}$,
$\subspace{10014\\01321}{}$,
$\subspace{10031\\01304}{}$,
$\subspace{10031\\01343}{}$,
$\subspace{10000\\01443}{}$,
$\subspace{10024\\01424}{}$,
$\subspace{10031\\01443}{}$,
$\subspace{10110\\01113}{}$,
$\subspace{10120\\01133}{}$,
$\subspace{10121\\01143}{}$,
$\subspace{10130\\01123}{}$,
$\subspace{10121\\01221}{}$,
$\subspace{10124\\01234}{}$,
$\subspace{10132\\01200}{}$,
$\subspace{10134\\01200}{}$,
$\subspace{10141\\01244}{}$,
$\subspace{10110\\01333}{}$,
$\subspace{10110\\01414}{}$,
$\subspace{10121\\01410}{}$,
$\subspace{10120\\01443}{}$,
$\subspace{10244\\01001}{}$,
$\subspace{10241\\01044}{}$,
$\subspace{10244\\01100}{}$,
$\subspace{10222\\01210}{}$,
$\subspace{10244\\01231}{}$,
$\subspace{10311\\01004}{}$,
$\subspace{10311\\01134}{}$,
$\subspace{10300\\01224}{}$,
$\subspace{10314\\01210}{}$,
$\subspace{10330\\01222}{}$,
$\subspace{10341\\01210}{}$,
$\subspace{10321\\01314}{}$,
$\subspace{10432\\01112}{}$,
$\subspace{10411\\01233}{}$,
$\subspace{10411\\01240}{}$,
$\subspace{10422\\01222}{}$,
$\subspace{10442\\01233}{}$,
$\subspace{10443\\01232}{}$,
$\subspace{10444\\01233}{}$,
$\subspace{10411\\01320}{}$,
$\subspace{10421\\01340}{}$,
$\subspace{10432\\01300}{}$,
$\subspace{10441\\01344}{}$,
$\subspace{10432\\01421}{}$,
$\subspace{11002\\00102}{}$,
$\subspace{11002\\00142}{}$,
$\subspace{11012\\00123}{}$,
$\subspace{11201\\00010}{}$,
$\subspace{12001\\00130}{}$,
$\subspace{12012\\00100}{}$,
$\subspace{12012\\00113}{}$,
$\subspace{12014\\00131}{}$,
$\subspace{12023\\00124}{}$,
$\subspace{12033\\00134}{}$,
$\subspace{12043\\00104}{}$,
$\subspace{13002\\00122}{}$,
$\subspace{13033\\00141}{}$,
$\subspace{13040\\00122}{}$,
$\subspace{14001\\00122}{}$,
$\subspace{14003\\00123}{}$,
$\subspace{14430\\00001}{}$.

\smallskip

$n_5(5,2;24)\ge 594$,$\left[\!\begin{smallmatrix}00001\\10004\\01004\\00101\\00013\end{smallmatrix}\!\right]$,$\left[\!\begin{smallmatrix}00130\\13442\\44114\\11002\\00001\end{smallmatrix}\!\right]$:
$\subspace{00110\\00001}{11}$,
$\subspace{01100\\00013}{}$,
$\subspace{10000\\00011}{}$,
$\subspace{10002\\01003}{}$,
$\subspace{10034\\01302}{}$,
$\subspace{10210\\01120}{}$,
$\subspace{10443\\01133}{}$,
$\subspace{10400\\01300}{}$,
$\subspace{10442\\01323}{}$,
$\subspace{11003\\00130}{}$,
$\subspace{13022\\00132}{}$,
$\subspace{00110\\00001}{11}$,
$\subspace{01100\\00013}{}$,
$\subspace{10000\\00011}{}$,
$\subspace{10002\\01003}{}$,
$\subspace{10034\\01302}{}$,
$\subspace{10210\\01120}{}$,
$\subspace{10443\\01133}{}$,
$\subspace{10400\\01300}{}$,
$\subspace{10442\\01323}{}$,
$\subspace{11003\\00130}{}$,
$\subspace{13022\\00132}{}$,
$\subspace{00110\\00001}{11}$,
$\subspace{01100\\00013}{}$,
$\subspace{10000\\00011}{}$,
$\subspace{10002\\01003}{}$,
$\subspace{10034\\01302}{}$,
$\subspace{10210\\01120}{}$,
$\subspace{10443\\01133}{}$,
$\subspace{10400\\01300}{}$,
$\subspace{10442\\01323}{}$,
$\subspace{11003\\00130}{}$,
$\subspace{13022\\00132}{}$,
$\subspace{00110\\00001}{11}$,
$\subspace{01100\\00013}{}$,
$\subspace{10000\\00011}{}$,
$\subspace{10002\\01003}{}$,
$\subspace{10034\\01302}{}$,
$\subspace{10210\\01120}{}$,
$\subspace{10443\\01133}{}$,
$\subspace{10400\\01300}{}$,
$\subspace{10442\\01323}{}$,
$\subspace{11003\\00130}{}$,
$\subspace{13022\\00132}{}$,
$\subspace{01004\\00113}{55}$,
$\subspace{01040\\00112}{}$,
$\subspace{10022\\00121}{}$,
$\subspace{10044\\00100}{}$,
$\subspace{10043\\01021}{}$,
$\subspace{10040\\01034}{}$,
$\subspace{10001\\01142}{}$,
$\subspace{10041\\01134}{}$,
$\subspace{10041\\01232}{}$,
$\subspace{10011\\01310}{}$,
$\subspace{10002\\01424}{}$,
$\subspace{10013\\01413}{}$,
$\subspace{10033\\01423}{}$,
$\subspace{10034\\01443}{}$,
$\subspace{10040\\01432}{}$,
$\subspace{10111\\01024}{}$,
$\subspace{10103\\01102}{}$,
$\subspace{10102\\01300}{}$,
$\subspace{10130\\01442}{}$,
$\subspace{10244\\01024}{}$,
$\subspace{10241\\01032}{}$,
$\subspace{10210\\01140}{}$,
$\subspace{10243\\01114}{}$,
$\subspace{10243\\01220}{}$,
$\subspace{10223\\01303}{}$,
$\subspace{10233\\01320}{}$,
$\subspace{10223\\01424}{}$,
$\subspace{10231\\01402}{}$,
$\subspace{10314\\01004}{}$,
$\subspace{10343\\01040}{}$,
$\subspace{10321\\01131}{}$,
$\subspace{10330\\01120}{}$,
$\subspace{10334\\01140}{}$,
$\subspace{10341\\01100}{}$,
$\subspace{10323\\01233}{}$,
$\subspace{10322\\01241}{}$,
$\subspace{10413\\01003}{}$,
$\subspace{10403\\01121}{}$,
$\subspace{10410\\01113}{}$,
$\subspace{10400\\01241}{}$,
$\subspace{10403\\01312}{}$,
$\subspace{10402\\01323}{}$,
$\subspace{10401\\01430}{}$,
$\subspace{10443\\01421}{}$,
$\subspace{11022\\00142}{}$,
$\subspace{11400\\00014}{}$,
$\subspace{12000\\00142}{}$,
$\subspace{13000\\00114}{}$,
$\subspace{13042\\00110}{}$,
$\subspace{14004\\00131}{}$,
$\subspace{14001\\00143}{}$,
$\subspace{14014\\00111}{}$,
$\subspace{14010\\00132}{}$,
$\subspace{14024\\00100}{}$,
$\subspace{14200\\00011}{}$,
$\subspace{01002\\00134}{55}$,
$\subspace{01010\\00122}{}$,
$\subspace{01010\\00131}{}$,
$\subspace{01011\\00143}{}$,
$\subspace{10013\\00122}{}$,
$\subspace{10042\\00134}{}$,
$\subspace{10202\\00012}{}$,
$\subspace{10404\\00014}{}$,
$\subspace{10004\\01021}{}$,
$\subspace{10001\\01044}{}$,
$\subspace{10042\\01012}{}$,
$\subspace{10004\\01103}{}$,
$\subspace{10003\\01112}{}$,
$\subspace{10001\\01331}{}$,
$\subspace{10013\\01301}{}$,
$\subspace{10021\\01321}{}$,
$\subspace{10020\\01344}{}$,
$\subspace{10102\\01000}{}$,
$\subspace{10100\\01222}{}$,
$\subspace{10100\\01310}{}$,
$\subspace{10102\\01343}{}$,
$\subspace{10113\\01304}{}$,
$\subspace{10122\\01303}{}$,
$\subspace{10113\\01432}{}$,
$\subspace{10130\\01432}{}$,
$\subspace{10220\\01000}{}$,
$\subspace{10203\\01104}{}$,
$\subspace{10203\\01130}{}$,
$\subspace{10233\\01112}{}$,
$\subspace{10214\\01343}{}$,
$\subspace{10232\\01310}{}$,
$\subspace{10232\\01443}{}$,
$\subspace{10312\\01044}{}$,
$\subspace{10320\\01041}{}$,
$\subspace{10340\\01011}{}$,
$\subspace{10334\\01213}{}$,
$\subspace{10342\\01414}{}$,
$\subspace{10421\\01021}{}$,
$\subspace{10404\\01130}{}$,
$\subspace{10420\\01131}{}$,
$\subspace{10421\\01223}{}$,
$\subspace{10440\\01240}{}$,
$\subspace{10434\\01341}{}$,
$\subspace{10444\\01301}{}$,
$\subspace{10411\\01400}{}$,
$\subspace{10420\\01414}{}$,
$\subspace{10444\\01444}{}$,
$\subspace{11001\\00012}{}$,
$\subspace{11004\\00101}{}$,
$\subspace{11013\\00143}{}$,
$\subspace{11100\\00010}{}$,
$\subspace{13000\\00133}{}$,
$\subspace{13014\\00131}{}$,
$\subspace{13041\\00104}{}$,
$\subspace{13303\\00014}{}$,
$\subspace{01012\\00102}{55}$,
$\subspace{01023\\00120}{}$,
$\subspace{10010\\00101}{}$,
$\subspace{10024\\00140}{}$,
$\subspace{10032\\01034}{}$,
$\subspace{10024\\01110}{}$,
$\subspace{10043\\01101}{}$,
$\subspace{10003\\01200}{}$,
$\subspace{10012\\01240}{}$,
$\subspace{10014\\01313}{}$,
$\subspace{10012\\01322}{}$,
$\subspace{10021\\01410}{}$,
$\subspace{10121\\01020}{}$,
$\subspace{10134\\01032}{}$,
$\subspace{10141\\01031}{}$,
$\subspace{10103\\01101}{}$,
$\subspace{10101\\01141}{}$,
$\subspace{10114\\01214}{}$,
$\subspace{10110\\01230}{}$,
$\subspace{10124\\01230}{}$,
$\subspace{10110\\01334}{}$,
$\subspace{10141\\01442}{}$,
$\subspace{10212\\01132}{}$,
$\subspace{10211\\01144}{}$,
$\subspace{10201\\01210}{}$,
$\subspace{10230\\01201}{}$,
$\subspace{10224\\01320}{}$,
$\subspace{10213\\01401}{}$,
$\subspace{10221\\01413}{}$,
$\subspace{10231\\01400}{}$,
$\subspace{10312\\01001}{}$,
$\subspace{10320\\01203}{}$,
$\subspace{10340\\01201}{}$,
$\subspace{10300\\01342}{}$,
$\subspace{10322\\01331}{}$,
$\subspace{10323\\01340}{}$,
$\subspace{10333\\01330}{}$,
$\subspace{10300\\01431}{}$,
$\subspace{10310\\01412}{}$,
$\subspace{10332\\01412}{}$,
$\subspace{10341\\01440}{}$,
$\subspace{10402\\01013}{}$,
$\subspace{10423\\01001}{}$,
$\subspace{10414\\01143}{}$,
$\subspace{10441\\01124}{}$,
$\subspace{10422\\01244}{}$,
$\subspace{10412\\01304}{}$,
$\subspace{10430\\01430}{}$,
$\subspace{11011\\00121}{}$,
$\subspace{11034\\00120}{}$,
$\subspace{12002\\00010}{}$,
$\subspace{12024\\00123}{}$,
$\subspace{13033\\00123}{}$,
$\subspace{13042\\00144}{}$,
$\subspace{14023\\00133}{}$,
$\subspace{01012\\00102}{55}$,
$\subspace{01023\\00120}{}$,
$\subspace{10010\\00101}{}$,
$\subspace{10024\\00140}{}$,
$\subspace{10032\\01034}{}$,
$\subspace{10024\\01110}{}$,
$\subspace{10043\\01101}{}$,
$\subspace{10003\\01200}{}$,
$\subspace{10012\\01240}{}$,
$\subspace{10014\\01313}{}$,
$\subspace{10012\\01322}{}$,
$\subspace{10021\\01410}{}$,
$\subspace{10121\\01020}{}$,
$\subspace{10134\\01032}{}$,
$\subspace{10141\\01031}{}$,
$\subspace{10103\\01101}{}$,
$\subspace{10101\\01141}{}$,
$\subspace{10114\\01214}{}$,
$\subspace{10110\\01230}{}$,
$\subspace{10124\\01230}{}$,
$\subspace{10110\\01334}{}$,
$\subspace{10141\\01442}{}$,
$\subspace{10212\\01132}{}$,
$\subspace{10211\\01144}{}$,
$\subspace{10201\\01210}{}$,
$\subspace{10230\\01201}{}$,
$\subspace{10224\\01320}{}$,
$\subspace{10213\\01401}{}$,
$\subspace{10221\\01413}{}$,
$\subspace{10231\\01400}{}$,
$\subspace{10312\\01001}{}$,
$\subspace{10320\\01203}{}$,
$\subspace{10340\\01201}{}$,
$\subspace{10300\\01342}{}$,
$\subspace{10322\\01331}{}$,
$\subspace{10323\\01340}{}$,
$\subspace{10333\\01330}{}$,
$\subspace{10300\\01431}{}$,
$\subspace{10310\\01412}{}$,
$\subspace{10332\\01412}{}$,
$\subspace{10341\\01440}{}$,
$\subspace{10402\\01013}{}$,
$\subspace{10423\\01001}{}$,
$\subspace{10414\\01143}{}$,
$\subspace{10441\\01124}{}$,
$\subspace{10422\\01244}{}$,
$\subspace{10412\\01304}{}$,
$\subspace{10430\\01430}{}$,
$\subspace{11011\\00121}{}$,
$\subspace{11034\\00120}{}$,
$\subspace{12002\\00010}{}$,
$\subspace{12024\\00123}{}$,
$\subspace{13033\\00123}{}$,
$\subspace{13042\\00144}{}$,
$\subspace{14023\\00133}{}$,
$\subspace{01014\\00103}{55}$,
$\subspace{01042\\00124}{}$,
$\subspace{10031\\00123}{}$,
$\subspace{10030\\00141}{}$,
$\subspace{10012\\01022}{}$,
$\subspace{10023\\01012}{}$,
$\subspace{10001\\01122}{}$,
$\subspace{10030\\01141}{}$,
$\subspace{10013\\01213}{}$,
$\subspace{10024\\01240}{}$,
$\subspace{10003\\01311}{}$,
$\subspace{10031\\01310}{}$,
$\subspace{10021\\01412}{}$,
$\subspace{10023\\01422}{}$,
$\subspace{10130\\01033}{}$,
$\subspace{10142\\01033}{}$,
$\subspace{10102\\01132}{}$,
$\subspace{10131\\01131}{}$,
$\subspace{10132\\01143}{}$,
$\subspace{10114\\01231}{}$,
$\subspace{10110\\01243}{}$,
$\subspace{10132\\01201}{}$,
$\subspace{10112\\01324}{}$,
$\subspace{10112\\01331}{}$,
$\subspace{10123\\01443}{}$,
$\subspace{10141\\01411}{}$,
$\subspace{10214\\01022}{}$,
$\subspace{10221\\01123}{}$,
$\subspace{10233\\01101}{}$,
$\subspace{10222\\01321}{}$,
$\subspace{10234\\01410}{}$,
$\subspace{10301\\01001}{}$,
$\subspace{10301\\01021}{}$,
$\subspace{10320\\01123}{}$,
$\subspace{10300\\01330}{}$,
$\subspace{10312\\01314}{}$,
$\subspace{10331\\01303}{}$,
$\subspace{10334\\01400}{}$,
$\subspace{10331\\01420}{}$,
$\subspace{10433\\01111}{}$,
$\subspace{10434\\01122}{}$,
$\subspace{10424\\01243}{}$,
$\subspace{10441\\01221}{}$,
$\subspace{10422\\01324}{}$,
$\subspace{10414\\01432}{}$,
$\subspace{11013\\00103}{}$,
$\subspace{11203\\00014}{}$,
$\subspace{12004\\00101}{}$,
$\subspace{12001\\00131}{}$,
$\subspace{12021\\00120}{}$,
$\subspace{12032\\00143}{}$,
$\subspace{13033\\00124}{}$,
$\subspace{13100\\00010}{}$,
$\subspace{14044\\00133}{}$,
$\subspace{14041\\00141}{}$,
$\subspace{01010\\00143}{55}$,
$\subspace{01023\\00134}{}$,
$\subspace{10042\\00131}{}$,
$\subspace{10202\\00014}{}$,
$\subspace{10001\\01024}{}$,
$\subspace{10013\\01041}{}$,
$\subspace{10020\\01140}{}$,
$\subspace{10041\\01110}{}$,
$\subspace{10014\\01303}{}$,
$\subspace{10040\\01314}{}$,
$\subspace{10004\\01434}{}$,
$\subspace{10113\\01014}{}$,
$\subspace{10131\\01044}{}$,
$\subspace{10142\\01130}{}$,
$\subspace{10101\\01321}{}$,
$\subspace{10104\\01343}{}$,
$\subspace{10130\\01341}{}$,
$\subspace{10100\\01432}{}$,
$\subspace{10102\\01440}{}$,
$\subspace{10122\\01424}{}$,
$\subspace{10134\\01414}{}$,
$\subspace{10223\\01002}{}$,
$\subspace{10232\\01013}{}$,
$\subspace{10240\\01040}{}$,
$\subspace{10203\\01112}{}$,
$\subspace{10233\\01220}{}$,
$\subspace{10200\\01322}{}$,
$\subspace{10214\\01301}{}$,
$\subspace{10230\\01344}{}$,
$\subspace{10244\\01401}{}$,
$\subspace{10303\\01011}{}$,
$\subspace{10324\\01000}{}$,
$\subspace{10333\\01004}{}$,
$\subspace{10313\\01104}{}$,
$\subspace{10334\\01211}{}$,
$\subspace{10342\\01221}{}$,
$\subspace{10413\\01042}{}$,
$\subspace{10420\\01021}{}$,
$\subspace{10404\\01131}{}$,
$\subspace{10403\\01204}{}$,
$\subspace{10401\\01222}{}$,
$\subspace{10412\\01223}{}$,
$\subspace{10444\\01242}{}$,
$\subspace{10433\\01340}{}$,
$\subspace{10440\\01311}{}$,
$\subspace{10424\\01423}{}$,
$\subspace{10421\\01431}{}$,
$\subspace{10423\\01443}{}$,
$\subspace{10431\\01420}{}$,
$\subspace{11001\\00104}{}$,
$\subspace{11012\\00142}{}$,
$\subspace{12030\\00122}{}$,
$\subspace{13030\\00144}{}$,
$\subspace{13044\\00100}{}$,
$\subspace{14302\\00012}{}$,
$\subspace{01020\\00141}{55}$,
$\subspace{01030\\00124}{}$,
$\subspace{10023\\00102}{}$,
$\subspace{10031\\00141}{}$,
$\subspace{10010\\01022}{}$,
$\subspace{10023\\01044}{}$,
$\subspace{10004\\01122}{}$,
$\subspace{10030\\01144}{}$,
$\subspace{10030\\01212}{}$,
$\subspace{10042\\01242}{}$,
$\subspace{10032\\01324}{}$,
$\subspace{10031\\01344}{}$,
$\subspace{10020\\01414}{}$,
$\subspace{10112\\01000}{}$,
$\subspace{10112\\01041}{}$,
$\subspace{10132\\01033}{}$,
$\subspace{10140\\01031}{}$,
$\subspace{10100\\01111}{}$,
$\subspace{10132\\01130}{}$,
$\subspace{10113\\01234}{}$,
$\subspace{10124\\01222}{}$,
$\subspace{10133\\01204}{}$,
$\subspace{10202\\01022}{}$,
$\subspace{10222\\01010}{}$,
$\subspace{10232\\01002}{}$,
$\subspace{10200\\01123}{}$,
$\subspace{10242\\01124}{}$,
$\subspace{10220\\01200}{}$,
$\subspace{10224\\01243}{}$,
$\subspace{10240\\01203}{}$,
$\subspace{10203\\01403}{}$,
$\subspace{10201\\01411}{}$,
$\subspace{10301\\01103}{}$,
$\subspace{10301\\01224}{}$,
$\subspace{10302\\01243}{}$,
$\subspace{10342\\01244}{}$,
$\subspace{10311\\01341}{}$,
$\subspace{10331\\01301}{}$,
$\subspace{10331\\01324}{}$,
$\subspace{10310\\01404}{}$,
$\subspace{10332\\01444}{}$,
$\subspace{10404\\01043}{}$,
$\subspace{10420\\01033}{}$,
$\subspace{10432\\01011}{}$,
$\subspace{10411\\01122}{}$,
$\subspace{10431\\01123}{}$,
$\subspace{10430\\01332}{}$,
$\subspace{11042\\00103}{}$,
$\subspace{11202\\00012}{}$,
$\subspace{12001\\00134}{}$,
$\subspace{12021\\00103}{}$,
$\subspace{12024\\00122}{}$,
$\subspace{13011\\00124}{}$,
$\subspace{13010\\00140}{}$,
$\subspace{14044\\00104}{}$,
$\subspace{10004\\01233}{55}$,
$\subspace{10042\\01212}{}$,
$\subspace{10014\\01421}{}$,
$\subspace{10113\\01030}{}$,
$\subspace{10140\\01014}{}$,
$\subspace{10123\\01104}{}$,
$\subspace{10100\\01224}{}$,
$\subspace{10104\\01222}{}$,
$\subspace{10120\\01343}{}$,
$\subspace{10134\\01301}{}$,
$\subspace{10143\\01311}{}$,
$\subspace{10143\\01332}{}$,
$\subspace{10144\\01340}{}$,
$\subspace{10101\\01441}{}$,
$\subspace{10131\\01433}{}$,
$\subspace{10133\\01431}{}$,
$\subspace{10142\\01404}{}$,
$\subspace{10144\\01421}{}$,
$\subspace{10241\\01000}{}$,
$\subspace{10241\\01042}{}$,
$\subspace{10203\\01234}{}$,
$\subspace{10230\\01221}{}$,
$\subspace{10202\\01334}{}$,
$\subspace{10204\\01344}{}$,
$\subspace{10211\\01314}{}$,
$\subspace{10242\\01312}{}$,
$\subspace{10232\\01441}{}$,
$\subspace{10234\\01440}{}$,
$\subspace{10304\\01011}{}$,
$\subspace{10311\\01010}{}$,
$\subspace{10313\\01043}{}$,
$\subspace{10314\\01044}{}$,
$\subspace{10303\\01110}{}$,
$\subspace{10302\\01130}{}$,
$\subspace{10314\\01231}{}$,
$\subspace{10333\\01202}{}$,
$\subspace{10343\\01312}{}$,
$\subspace{10343\\01322}{}$,
$\subspace{10321\\01414}{}$,
$\subspace{10321\\01422}{}$,
$\subspace{10420\\01013}{}$,
$\subspace{10433\\01023}{}$,
$\subspace{10404\\01232}{}$,
$\subspace{10424\\01233}{}$,
$\subspace{10432\\01211}{}$,
$\subspace{10440\\01232}{}$,
$\subspace{10410\\01401}{}$,
$\subspace{10410\\01402}{}$,
$\subspace{10412\\01420}{}$,
$\subspace{10423\\01402}{}$,
$\subspace{10421\\01433}{}$,
$\subspace{12040\\00134}{}$,
$\subspace{12303\\00012}{}$,
$\subspace{14032\\00144}{}$,
$\subspace{14040\\00122}{}$,
$\subspace{10040\\01203}{55}$,
$\subspace{10041\\01200}{}$,
$\subspace{10040\\01212}{}$,
$\subspace{10041\\01244}{}$,
$\subspace{10010\\01424}{}$,
$\subspace{10032\\01420}{}$,
$\subspace{10104\\01020}{}$,
$\subspace{10124\\01004}{}$,
$\subspace{10131\\01102}{}$,
$\subspace{10133\\01140}{}$,
$\subspace{10140\\01102}{}$,
$\subspace{10142\\01140}{}$,
$\subspace{10133\\01220}{}$,
$\subspace{10120\\01332}{}$,
$\subspace{10140\\01332}{}$,
$\subspace{10120\\01404}{}$,
$\subspace{10123\\01404}{}$,
$\subspace{10201\\01042}{}$,
$\subspace{10223\\01014}{}$,
$\subspace{10234\\01040}{}$,
$\subspace{10242\\01024}{}$,
$\subspace{10243\\01030}{}$,
$\subspace{10224\\01202}{}$,
$\subspace{10223\\01231}{}$,
$\subspace{10244\\01202}{}$,
$\subspace{10243\\01212}{}$,
$\subspace{10242\\01241}{}$,
$\subspace{10213\\01311}{}$,
$\subspace{10244\\01342}{}$,
$\subspace{10304\\01024}{}$,
$\subspace{10332\\01030}{}$,
$\subspace{10302\\01144}{}$,
$\subspace{10330\\01124}{}$,
$\subspace{10303\\01224}{}$,
$\subspace{10304\\01234}{}$,
$\subspace{10302\\01241}{}$,
$\subspace{10311\\01220}{}$,
$\subspace{10311\\01224}{}$,
$\subspace{10310\\01314}{}$,
$\subspace{10330\\01403}{}$,
$\subspace{10401\\01043}{}$,
$\subspace{10424\\01031}{}$,
$\subspace{10430\\01004}{}$,
$\subspace{10432\\01040}{}$,
$\subspace{10433\\01043}{}$,
$\subspace{10403\\01234}{}$,
$\subspace{10432\\01221}{}$,
$\subspace{10401\\01424}{}$,
$\subspace{10403\\01422}{}$,
$\subspace{12002\\00142}{}$,
$\subspace{12024\\00100}{}$,
$\subspace{12031\\00100}{}$,
$\subspace{12030\\00142}{}$,
$\subspace{14022\\00140}{}$,
$\subspace{14040\\00102}{}$,
$\subspace{10043\\01242}{55}$,
$\subspace{10020\\01433}{}$,
$\subspace{10020\\01444}{}$,
$\subspace{10040\\01422}{}$,
$\subspace{10041\\01442}{}$,
$\subspace{10104\\01041}{}$,
$\subspace{10143\\01002}{}$,
$\subspace{10131\\01114}{}$,
$\subspace{10144\\01223}{}$,
$\subspace{10111\\01341}{}$,
$\subspace{10122\\01320}{}$,
$\subspace{10123\\01333}{}$,
$\subspace{10103\\01434}{}$,
$\subspace{10122\\01421}{}$,
$\subspace{10142\\01430}{}$,
$\subspace{10200\\01041}{}$,
$\subspace{10231\\01014}{}$,
$\subspace{10220\\01204}{}$,
$\subspace{10234\\01233}{}$,
$\subspace{10241\\01214}{}$,
$\subspace{10240\\01231}{}$,
$\subspace{10200\\01314}{}$,
$\subspace{10220\\01312}{}$,
$\subspace{10244\\01341}{}$,
$\subspace{10223\\01420}{}$,
$\subspace{10240\\01402}{}$,
$\subspace{10243\\01413}{}$,
$\subspace{10322\\01004}{}$,
$\subspace{10323\\01024}{}$,
$\subspace{10324\\01034}{}$,
$\subspace{10314\\01102}{}$,
$\subspace{10324\\01103}{}$,
$\subspace{10343\\01103}{}$,
$\subspace{10342\\01140}{}$,
$\subspace{10303\\01220}{}$,
$\subspace{10321\\01241}{}$,
$\subspace{10341\\01221}{}$,
$\subspace{10342\\01232}{}$,
$\subspace{10330\\01311}{}$,
$\subspace{10344\\01434}{}$,
$\subspace{10403\\01032}{}$,
$\subspace{10402\\01040}{}$,
$\subspace{10411\\01002}{}$,
$\subspace{10411\\01042}{}$,
$\subspace{10410\\01204}{}$,
$\subspace{10433\\01223}{}$,
$\subspace{10431\\01242}{}$,
$\subspace{10401\\01441}{}$,
$\subspace{10424\\01444}{}$,
$\subspace{10431\\01424}{}$,
$\subspace{12043\\00121}{}$,
$\subspace{14012\\00142}{}$,
$\subspace{14023\\00100}{}$,
$\subspace{14030\\00104}{}$,
$\subspace{14040\\00104}{}$.

\smallskip

$n_5(5,2;37)\ge 927$,$\left[\!\begin{smallmatrix}10000\\00100\\04410\\00001\\00044\end{smallmatrix}\!\right]$:
$\subspace{01201\\00011}{3}$,
$\subspace{01303\\00010}{}$,
$\subspace{01400\\00001}{}$,
$\subspace{01201\\00011}{3}$,
$\subspace{01303\\00010}{}$,
$\subspace{01400\\00001}{}$,
$\subspace{01201\\00011}{3}$,
$\subspace{01303\\00010}{}$,
$\subspace{01400\\00001}{}$,
$\subspace{01201\\00011}{3}$,
$\subspace{01303\\00010}{}$,
$\subspace{01400\\00001}{}$,
$\subspace{10000\\00012}{3}$,
$\subspace{10000\\00013}{}$,
$\subspace{10000\\00014}{}$,
$\subspace{10013\\00102}{15}$,
$\subspace{10013\\00114}{}$,
$\subspace{10013\\00121}{}$,
$\subspace{10013\\00133}{}$,
$\subspace{10013\\00140}{}$,
$\subspace{10024\\01002}{}$,
$\subspace{10024\\01011}{}$,
$\subspace{10024\\01020}{}$,
$\subspace{10024\\01034}{}$,
$\subspace{10024\\01043}{}$,
$\subspace{10023\\01104}{}$,
$\subspace{10023\\01112}{}$,
$\subspace{10023\\01120}{}$,
$\subspace{10023\\01133}{}$,
$\subspace{10023\\01141}{}$,
$\subspace{10030\\00104}{15}$,
$\subspace{10030\\00111}{}$,
$\subspace{10030\\00123}{}$,
$\subspace{10030\\00130}{}$,
$\subspace{10030\\00142}{}$,
$\subspace{10022\\01004}{}$,
$\subspace{10022\\01013}{}$,
$\subspace{10022\\01022}{}$,
$\subspace{10022\\01031}{}$,
$\subspace{10022\\01040}{}$,
$\subspace{10003\\01100}{}$,
$\subspace{10003\\01113}{}$,
$\subspace{10003\\01121}{}$,
$\subspace{10003\\01134}{}$,
$\subspace{10003\\01142}{}$,
$\subspace{10101\\00012}{3}$,
$\subspace{11001\\00014}{}$,
$\subspace{14404\\00013}{}$,
$\subspace{10200\\00012}{3}$,
$\subspace{12000\\00014}{}$,
$\subspace{13304\\00013}{}$,
$\subspace{10302\\00012}{3}$,
$\subspace{12200\\00013}{}$,
$\subspace{13002\\00014}{}$,
$\subspace{10402\\00012}{3}$,
$\subspace{11102\\00013}{}$,
$\subspace{14002\\00014}{}$,
$\subspace{10002\\01202}{15}$,
$\subspace{10002\\01213}{}$,
$\subspace{10002\\01224}{}$,
$\subspace{10002\\01230}{}$,
$\subspace{10002\\01241}{}$,
$\subspace{10033\\01301}{}$,
$\subspace{10033\\01311}{}$,
$\subspace{10033\\01321}{}$,
$\subspace{10033\\01331}{}$,
$\subspace{10033\\01341}{}$,
$\subspace{10020\\01410}{}$,
$\subspace{10020\\01411}{}$,
$\subspace{10020\\01412}{}$,
$\subspace{10020\\01413}{}$,
$\subspace{10020\\01414}{}$,
$\subspace{10010\\01202}{15}$,
$\subspace{10010\\01213}{}$,
$\subspace{10010\\01224}{}$,
$\subspace{10010\\01230}{}$,
$\subspace{10010\\01241}{}$,
$\subspace{10001\\01301}{}$,
$\subspace{10001\\01311}{}$,
$\subspace{10001\\01321}{}$,
$\subspace{10001\\01331}{}$,
$\subspace{10001\\01341}{}$,
$\subspace{10044\\01410}{}$,
$\subspace{10044\\01411}{}$,
$\subspace{10044\\01412}{}$,
$\subspace{10044\\01413}{}$,
$\subspace{10044\\01414}{}$,
$\subspace{10014\\01200}{15}$,
$\subspace{10014\\01211}{}$,
$\subspace{10014\\01222}{}$,
$\subspace{10014\\01233}{}$,
$\subspace{10014\\01244}{}$,
$\subspace{10012\\01300}{}$,
$\subspace{10012\\01310}{}$,
$\subspace{10012\\01320}{}$,
$\subspace{10012\\01330}{}$,
$\subspace{10012\\01340}{}$,
$\subspace{10034\\01440}{}$,
$\subspace{10034\\01441}{}$,
$\subspace{10034\\01442}{}$,
$\subspace{10034\\01443}{}$,
$\subspace{10034\\01444}{}$,
$\subspace{10031\\01202}{15}$,
$\subspace{10031\\01213}{}$,
$\subspace{10031\\01224}{}$,
$\subspace{10031\\01230}{}$,
$\subspace{10031\\01241}{}$,
$\subspace{10042\\01301}{}$,
$\subspace{10042\\01311}{}$,
$\subspace{10042\\01321}{}$,
$\subspace{10042\\01331}{}$,
$\subspace{10042\\01341}{}$,
$\subspace{10032\\01410}{}$,
$\subspace{10032\\01411}{}$,
$\subspace{10032\\01412}{}$,
$\subspace{10032\\01413}{}$,
$\subspace{10032\\01414}{}$,
$\subspace{10040\\01200}{15}$,
$\subspace{10040\\01211}{}$,
$\subspace{10040\\01222}{}$,
$\subspace{10040\\01233}{}$,
$\subspace{10040\\01244}{}$,
$\subspace{10004\\01300}{}$,
$\subspace{10004\\01310}{}$,
$\subspace{10004\\01320}{}$,
$\subspace{10004\\01330}{}$,
$\subspace{10004\\01340}{}$,
$\subspace{10011\\01440}{}$,
$\subspace{10011\\01441}{}$,
$\subspace{10011\\01442}{}$,
$\subspace{10011\\01443}{}$,
$\subspace{10011\\01444}{}$,
$\subspace{10041\\01203}{15}$,
$\subspace{10041\\01214}{}$,
$\subspace{10041\\01220}{}$,
$\subspace{10041\\01231}{}$,
$\subspace{10041\\01242}{}$,
$\subspace{10043\\01304}{}$,
$\subspace{10043\\01314}{}$,
$\subspace{10043\\01324}{}$,
$\subspace{10043\\01334}{}$,
$\subspace{10043\\01344}{}$,
$\subspace{10021\\01420}{}$,
$\subspace{10021\\01421}{}$,
$\subspace{10021\\01422}{}$,
$\subspace{10021\\01423}{}$,
$\subspace{10021\\01424}{}$,
$\subspace{10100\\01012}{15}$,
$\subspace{10112\\01021}{}$,
$\subspace{10124\\01030}{}$,
$\subspace{10131\\01044}{}$,
$\subspace{10143\\01003}{}$,
$\subspace{10403\\01102}{}$,
$\subspace{10410\\01131}{}$,
$\subspace{10422\\01110}{}$,
$\subspace{10434\\01144}{}$,
$\subspace{10441\\01123}{}$,
$\subspace{14003\\00122}{}$,
$\subspace{14012\\00110}{}$,
$\subspace{14021\\00103}{}$,
$\subspace{14030\\00141}{}$,
$\subspace{14044\\00134}{}$,
$\subspace{10101\\01014}{15}$,
$\subspace{10113\\01023}{}$,
$\subspace{10120\\01032}{}$,
$\subspace{10132\\01041}{}$,
$\subspace{10144\\01000}{}$,
$\subspace{10402\\01140}{}$,
$\subspace{10414\\01124}{}$,
$\subspace{10421\\01103}{}$,
$\subspace{10433\\01132}{}$,
$\subspace{10440\\01111}{}$,
$\subspace{14002\\00124}{}$,
$\subspace{14011\\00112}{}$,
$\subspace{14020\\00100}{}$,
$\subspace{14034\\00143}{}$,
$\subspace{14043\\00131}{}$,
$\subspace{10100\\01023}{15}$,
$\subspace{10112\\01032}{}$,
$\subspace{10124\\01041}{}$,
$\subspace{10131\\01000}{}$,
$\subspace{10143\\01014}{}$,
$\subspace{10402\\01103}{}$,
$\subspace{10414\\01132}{}$,
$\subspace{10421\\01111}{}$,
$\subspace{10433\\01140}{}$,
$\subspace{10440\\01124}{}$,
$\subspace{14002\\00100}{}$,
$\subspace{14011\\00143}{}$,
$\subspace{14020\\00131}{}$,
$\subspace{14034\\00124}{}$,
$\subspace{14043\\00112}{}$,
$\subspace{10102\\01023}{15}$,
$\subspace{10114\\01032}{}$,
$\subspace{10121\\01041}{}$,
$\subspace{10133\\01000}{}$,
$\subspace{10140\\01014}{}$,
$\subspace{10403\\01140}{}$,
$\subspace{10410\\01124}{}$,
$\subspace{10422\\01103}{}$,
$\subspace{10434\\01132}{}$,
$\subspace{10441\\01111}{}$,
$\subspace{14003\\00131}{}$,
$\subspace{14012\\00124}{}$,
$\subspace{14021\\00112}{}$,
$\subspace{14030\\00100}{}$,
$\subspace{14044\\00143}{}$,
$\subspace{10103\\01020}{15}$,
$\subspace{10110\\01034}{}$,
$\subspace{10122\\01043}{}$,
$\subspace{10134\\01002}{}$,
$\subspace{10141\\01011}{}$,
$\subspace{10402\\01133}{}$,
$\subspace{10414\\01112}{}$,
$\subspace{10421\\01141}{}$,
$\subspace{10433\\01120}{}$,
$\subspace{10440\\01104}{}$,
$\subspace{14002\\00133}{}$,
$\subspace{14011\\00121}{}$,
$\subspace{14020\\00114}{}$,
$\subspace{14034\\00102}{}$,
$\subspace{14043\\00140}{}$,
$\subspace{10101\\01032}{15}$,
$\subspace{10113\\01041}{}$,
$\subspace{10120\\01000}{}$,
$\subspace{10132\\01014}{}$,
$\subspace{10144\\01023}{}$,
$\subspace{10403\\01103}{}$,
$\subspace{10410\\01132}{}$,
$\subspace{10422\\01111}{}$,
$\subspace{10434\\01140}{}$,
$\subspace{10441\\01124}{}$,
$\subspace{14003\\00112}{}$,
$\subspace{14012\\00100}{}$,
$\subspace{14021\\00143}{}$,
$\subspace{14030\\00131}{}$,
$\subspace{14044\\00124}{}$,
$\subspace{10101\\01033}{15}$,
$\subspace{10113\\01042}{}$,
$\subspace{10120\\01001}{}$,
$\subspace{10132\\01010}{}$,
$\subspace{10144\\01024}{}$,
$\subspace{10401\\01114}{}$,
$\subspace{10413\\01143}{}$,
$\subspace{10420\\01122}{}$,
$\subspace{10432\\01101}{}$,
$\subspace{10444\\01130}{}$,
$\subspace{14001\\00144}{}$,
$\subspace{14010\\00132}{}$,
$\subspace{14024\\00120}{}$,
$\subspace{14033\\00113}{}$,
$\subspace{14042\\00101}{}$,
$\subspace{10102\\01104}{15}$,
$\subspace{10114\\01120}{}$,
$\subspace{10121\\01141}{}$,
$\subspace{10133\\01112}{}$,
$\subspace{10140\\01133}{}$,
$\subspace{10403\\01002}{}$,
$\subspace{10410\\01043}{}$,
$\subspace{10422\\01034}{}$,
$\subspace{10434\\01020}{}$,
$\subspace{10441\\01011}{}$,
$\subspace{11002\\00102}{}$,
$\subspace{11011\\00114}{}$,
$\subspace{11020\\00121}{}$,
$\subspace{11034\\00133}{}$,
$\subspace{11043\\00140}{}$,
$\subspace{10104\\01121}{15}$,
$\subspace{10111\\01142}{}$,
$\subspace{10123\\01113}{}$,
$\subspace{10130\\01134}{}$,
$\subspace{10142\\01100}{}$,
$\subspace{10401\\01022}{}$,
$\subspace{10413\\01013}{}$,
$\subspace{10420\\01004}{}$,
$\subspace{10432\\01040}{}$,
$\subspace{10444\\01031}{}$,
$\subspace{11004\\00130}{}$,
$\subspace{11013\\00142}{}$,
$\subspace{11022\\00104}{}$,
$\subspace{11031\\00111}{}$,
$\subspace{11040\\00123}{}$,
$\subspace{10104\\01121}{15}$,
$\subspace{10111\\01142}{}$,
$\subspace{10123\\01113}{}$,
$\subspace{10130\\01134}{}$,
$\subspace{10142\\01100}{}$,
$\subspace{10401\\01022}{}$,
$\subspace{10413\\01013}{}$,
$\subspace{10420\\01004}{}$,
$\subspace{10432\\01040}{}$,
$\subspace{10444\\01031}{}$,
$\subspace{11004\\00130}{}$,
$\subspace{11013\\00142}{}$,
$\subspace{11022\\00104}{}$,
$\subspace{11031\\00111}{}$,
$\subspace{11040\\00123}{}$,
$\subspace{10104\\01121}{15}$,
$\subspace{10111\\01142}{}$,
$\subspace{10123\\01113}{}$,
$\subspace{10130\\01134}{}$,
$\subspace{10142\\01100}{}$,
$\subspace{10401\\01022}{}$,
$\subspace{10413\\01013}{}$,
$\subspace{10420\\01004}{}$,
$\subspace{10432\\01040}{}$,
$\subspace{10444\\01031}{}$,
$\subspace{11004\\00130}{}$,
$\subspace{11013\\00142}{}$,
$\subspace{11022\\00104}{}$,
$\subspace{11031\\00111}{}$,
$\subspace{11040\\00123}{}$,
$\subspace{10102\\01144}{15}$,
$\subspace{10114\\01110}{}$,
$\subspace{10121\\01131}{}$,
$\subspace{10133\\01102}{}$,
$\subspace{10140\\01123}{}$,
$\subspace{10404\\01003}{}$,
$\subspace{10411\\01044}{}$,
$\subspace{10423\\01030}{}$,
$\subspace{10430\\01021}{}$,
$\subspace{10442\\01012}{}$,
$\subspace{11002\\00141}{}$,
$\subspace{11011\\00103}{}$,
$\subspace{11020\\00110}{}$,
$\subspace{11034\\00122}{}$,
$\subspace{11043\\00134}{}$,
$\subspace{10104\\01144}{15}$,
$\subspace{10111\\01110}{}$,
$\subspace{10123\\01131}{}$,
$\subspace{10130\\01102}{}$,
$\subspace{10142\\01123}{}$,
$\subspace{10401\\01030}{}$,
$\subspace{10413\\01021}{}$,
$\subspace{10420\\01012}{}$,
$\subspace{10432\\01003}{}$,
$\subspace{10444\\01044}{}$,
$\subspace{11004\\00122}{}$,
$\subspace{11013\\00134}{}$,
$\subspace{11022\\00141}{}$,
$\subspace{11031\\00103}{}$,
$\subspace{11040\\00110}{}$,
$\subspace{10104\\01144}{15}$,
$\subspace{10111\\01110}{}$,
$\subspace{10123\\01131}{}$,
$\subspace{10130\\01102}{}$,
$\subspace{10142\\01123}{}$,
$\subspace{10401\\01030}{}$,
$\subspace{10413\\01021}{}$,
$\subspace{10420\\01012}{}$,
$\subspace{10432\\01003}{}$,
$\subspace{10444\\01044}{}$,
$\subspace{11004\\00122}{}$,
$\subspace{11013\\00134}{}$,
$\subspace{11022\\00141}{}$,
$\subspace{11031\\00103}{}$,
$\subspace{11040\\00110}{}$,
$\subspace{10100\\01211}{15}$,
$\subspace{10112\\01244}{}$,
$\subspace{10124\\01222}{}$,
$\subspace{10131\\01200}{}$,
$\subspace{10143\\01233}{}$,
$\subspace{10100\\01441}{}$,
$\subspace{10112\\01443}{}$,
$\subspace{10124\\01440}{}$,
$\subspace{10131\\01442}{}$,
$\subspace{10143\\01444}{}$,
$\subspace{10204\\01310}{}$,
$\subspace{10211\\01330}{}$,
$\subspace{10223\\01300}{}$,
$\subspace{10230\\01320}{}$,
$\subspace{10242\\01340}{}$,
$\subspace{10100\\01211}{15}$,
$\subspace{10112\\01244}{}$,
$\subspace{10124\\01222}{}$,
$\subspace{10131\\01200}{}$,
$\subspace{10143\\01233}{}$,
$\subspace{10100\\01441}{}$,
$\subspace{10112\\01443}{}$,
$\subspace{10124\\01440}{}$,
$\subspace{10131\\01442}{}$,
$\subspace{10143\\01444}{}$,
$\subspace{10204\\01310}{}$,
$\subspace{10211\\01330}{}$,
$\subspace{10223\\01300}{}$,
$\subspace{10230\\01320}{}$,
$\subspace{10242\\01340}{}$,
$\subspace{10102\\01222}{15}$,
$\subspace{10114\\01200}{}$,
$\subspace{10121\\01233}{}$,
$\subspace{10133\\01211}{}$,
$\subspace{10140\\01244}{}$,
$\subspace{10102\\01441}{}$,
$\subspace{10114\\01443}{}$,
$\subspace{10121\\01440}{}$,
$\subspace{10133\\01442}{}$,
$\subspace{10140\\01444}{}$,
$\subspace{10200\\01320}{}$,
$\subspace{10212\\01340}{}$,
$\subspace{10224\\01310}{}$,
$\subspace{10231\\01330}{}$,
$\subspace{10243\\01300}{}$,
$\subspace{10102\\01222}{15}$,
$\subspace{10114\\01200}{}$,
$\subspace{10121\\01233}{}$,
$\subspace{10133\\01211}{}$,
$\subspace{10140\\01244}{}$,
$\subspace{10102\\01441}{}$,
$\subspace{10114\\01443}{}$,
$\subspace{10121\\01440}{}$,
$\subspace{10133\\01442}{}$,
$\subspace{10140\\01444}{}$,
$\subspace{10200\\01320}{}$,
$\subspace{10212\\01340}{}$,
$\subspace{10224\\01310}{}$,
$\subspace{10231\\01330}{}$,
$\subspace{10243\\01300}{}$,
$\subspace{10101\\01232}{15}$,
$\subspace{10113\\01210}{}$,
$\subspace{10120\\01243}{}$,
$\subspace{10132\\01221}{}$,
$\subspace{10144\\01204}{}$,
$\subspace{10103\\01430}{}$,
$\subspace{10110\\01432}{}$,
$\subspace{10122\\01434}{}$,
$\subspace{10134\\01431}{}$,
$\subspace{10141\\01433}{}$,
$\subspace{10201\\01302}{}$,
$\subspace{10213\\01322}{}$,
$\subspace{10220\\01342}{}$,
$\subspace{10232\\01312}{}$,
$\subspace{10244\\01332}{}$,
$\subspace{10103\\01234}{15}$,
$\subspace{10110\\01212}{}$,
$\subspace{10122\\01240}{}$,
$\subspace{10134\\01223}{}$,
$\subspace{10141\\01201}{}$,
$\subspace{10103\\01400}{}$,
$\subspace{10110\\01402}{}$,
$\subspace{10122\\01404}{}$,
$\subspace{10134\\01401}{}$,
$\subspace{10141\\01403}{}$,
$\subspace{10202\\01333}{}$,
$\subspace{10214\\01303}{}$,
$\subspace{10221\\01323}{}$,
$\subspace{10233\\01343}{}$,
$\subspace{10240\\01313}{}$,
$\subspace{10101\\01303}{15}$,
$\subspace{10113\\01343}{}$,
$\subspace{10120\\01333}{}$,
$\subspace{10132\\01323}{}$,
$\subspace{10144\\01313}{}$,
$\subspace{10303\\01212}{}$,
$\subspace{10310\\01223}{}$,
$\subspace{10322\\01234}{}$,
$\subspace{10334\\01240}{}$,
$\subspace{10341\\01201}{}$,
$\subspace{10300\\01401}{}$,
$\subspace{10312\\01400}{}$,
$\subspace{10324\\01404}{}$,
$\subspace{10331\\01403}{}$,
$\subspace{10343\\01402}{}$,
$\subspace{10100\\01312}{15}$,
$\subspace{10112\\01302}{}$,
$\subspace{10124\\01342}{}$,
$\subspace{10131\\01332}{}$,
$\subspace{10143\\01322}{}$,
$\subspace{10301\\01204}{}$,
$\subspace{10313\\01210}{}$,
$\subspace{10320\\01221}{}$,
$\subspace{10332\\01232}{}$,
$\subspace{10344\\01243}{}$,
$\subspace{10304\\01431}{}$,
$\subspace{10311\\01430}{}$,
$\subspace{10323\\01434}{}$,
$\subspace{10330\\01433}{}$,
$\subspace{10342\\01432}{}$,
$\subspace{10103\\01311}{15}$,
$\subspace{10110\\01301}{}$,
$\subspace{10122\\01341}{}$,
$\subspace{10134\\01331}{}$,
$\subspace{10141\\01321}{}$,
$\subspace{10304\\01213}{}$,
$\subspace{10311\\01224}{}$,
$\subspace{10323\\01230}{}$,
$\subspace{10330\\01241}{}$,
$\subspace{10342\\01202}{}$,
$\subspace{10304\\01414}{}$,
$\subspace{10311\\01413}{}$,
$\subspace{10323\\01412}{}$,
$\subspace{10330\\01411}{}$,
$\subspace{10342\\01410}{}$,
$\subspace{10104\\01311}{15}$,
$\subspace{10111\\01301}{}$,
$\subspace{10123\\01341}{}$,
$\subspace{10130\\01331}{}$,
$\subspace{10142\\01321}{}$,
$\subspace{10302\\01202}{}$,
$\subspace{10314\\01213}{}$,
$\subspace{10321\\01224}{}$,
$\subspace{10333\\01230}{}$,
$\subspace{10340\\01241}{}$,
$\subspace{10300\\01413}{}$,
$\subspace{10312\\01412}{}$,
$\subspace{10324\\01411}{}$,
$\subspace{10331\\01410}{}$,
$\subspace{10343\\01414}{}$,
$\subspace{10104\\01333}{15}$,
$\subspace{10111\\01323}{}$,
$\subspace{10123\\01313}{}$,
$\subspace{10130\\01303}{}$,
$\subspace{10142\\01343}{}$,
$\subspace{10303\\01201}{}$,
$\subspace{10310\\01212}{}$,
$\subspace{10322\\01223}{}$,
$\subspace{10334\\01234}{}$,
$\subspace{10341\\01240}{}$,
$\subspace{10302\\01402}{}$,
$\subspace{10314\\01401}{}$,
$\subspace{10321\\01400}{}$,
$\subspace{10333\\01404}{}$,
$\subspace{10340\\01403}{}$,
$\subspace{10101\\01340}{15}$,
$\subspace{10113\\01330}{}$,
$\subspace{10120\\01320}{}$,
$\subspace{10132\\01310}{}$,
$\subspace{10144\\01300}{}$,
$\subspace{10303\\01244}{}$,
$\subspace{10310\\01200}{}$,
$\subspace{10322\\01211}{}$,
$\subspace{10334\\01222}{}$,
$\subspace{10341\\01233}{}$,
$\subspace{10303\\01441}{}$,
$\subspace{10310\\01440}{}$,
$\subspace{10322\\01444}{}$,
$\subspace{10334\\01443}{}$,
$\subspace{10341\\01442}{}$,
$\subspace{10204\\01010}{15}$,
$\subspace{10211\\01042}{}$,
$\subspace{10223\\01024}{}$,
$\subspace{10230\\01001}{}$,
$\subspace{10242\\01033}{}$,
$\subspace{10301\\01122}{}$,
$\subspace{10313\\01114}{}$,
$\subspace{10320\\01101}{}$,
$\subspace{10332\\01143}{}$,
$\subspace{10344\\01130}{}$,
$\subspace{13001\\00144}{}$,
$\subspace{13010\\00113}{}$,
$\subspace{13024\\00132}{}$,
$\subspace{13033\\00101}{}$,
$\subspace{13042\\00120}{}$,
$\subspace{10200\\01024}{15}$,
$\subspace{10212\\01001}{}$,
$\subspace{10224\\01033}{}$,
$\subspace{10231\\01010}{}$,
$\subspace{10243\\01042}{}$,
$\subspace{10300\\01114}{}$,
$\subspace{10312\\01101}{}$,
$\subspace{10324\\01143}{}$,
$\subspace{10331\\01130}{}$,
$\subspace{10343\\01122}{}$,
$\subspace{13000\\00132}{}$,
$\subspace{13014\\00101}{}$,
$\subspace{13023\\00120}{}$,
$\subspace{13032\\00144}{}$,
$\subspace{13041\\00113}{}$,
$\subspace{10202\\01022}{15}$,
$\subspace{10214\\01004}{}$,
$\subspace{10221\\01031}{}$,
$\subspace{10233\\01013}{}$,
$\subspace{10240\\01040}{}$,
$\subspace{10304\\01113}{}$,
$\subspace{10311\\01100}{}$,
$\subspace{10323\\01142}{}$,
$\subspace{10330\\01134}{}$,
$\subspace{10342\\01121}{}$,
$\subspace{13004\\00111}{}$,
$\subspace{13013\\00130}{}$,
$\subspace{13022\\00104}{}$,
$\subspace{13031\\00123}{}$,
$\subspace{13040\\00142}{}$,
$\subspace{10204\\01023}{15}$,
$\subspace{10211\\01000}{}$,
$\subspace{10223\\01032}{}$,
$\subspace{10230\\01014}{}$,
$\subspace{10242\\01041}{}$,
$\subspace{10301\\01140}{}$,
$\subspace{10313\\01132}{}$,
$\subspace{10320\\01124}{}$,
$\subspace{10332\\01111}{}$,
$\subspace{10344\\01103}{}$,
$\subspace{13001\\00131}{}$,
$\subspace{13010\\00100}{}$,
$\subspace{13024\\00124}{}$,
$\subspace{13033\\00143}{}$,
$\subspace{13042\\00112}{}$,
$\subspace{10203\\01030}{15}$,
$\subspace{10210\\01012}{}$,
$\subspace{10222\\01044}{}$,
$\subspace{10234\\01021}{}$,
$\subspace{10241\\01003}{}$,
$\subspace{10302\\01144}{}$,
$\subspace{10314\\01131}{}$,
$\subspace{10321\\01123}{}$,
$\subspace{10333\\01110}{}$,
$\subspace{10340\\01102}{}$,
$\subspace{13002\\00122}{}$,
$\subspace{13011\\00141}{}$,
$\subspace{13020\\00110}{}$,
$\subspace{13034\\00134}{}$,
$\subspace{13043\\00103}{}$,
$\subspace{10203\\01043}{15}$,
$\subspace{10210\\01020}{}$,
$\subspace{10222\\01002}{}$,
$\subspace{10234\\01034}{}$,
$\subspace{10241\\01011}{}$,
$\subspace{10302\\01112}{}$,
$\subspace{10314\\01104}{}$,
$\subspace{10321\\01141}{}$,
$\subspace{10333\\01133}{}$,
$\subspace{10340\\01120}{}$,
$\subspace{13002\\00114}{}$,
$\subspace{13011\\00133}{}$,
$\subspace{13020\\00102}{}$,
$\subspace{13034\\00121}{}$,
$\subspace{13043\\00140}{}$,
$\subspace{10203\\01043}{15}$,
$\subspace{10210\\01020}{}$,
$\subspace{10222\\01002}{}$,
$\subspace{10234\\01034}{}$,
$\subspace{10241\\01011}{}$,
$\subspace{10302\\01112}{}$,
$\subspace{10314\\01104}{}$,
$\subspace{10321\\01141}{}$,
$\subspace{10333\\01133}{}$,
$\subspace{10340\\01120}{}$,
$\subspace{13002\\00114}{}$,
$\subspace{13011\\00133}{}$,
$\subspace{13020\\00102}{}$,
$\subspace{13034\\00121}{}$,
$\subspace{13043\\00140}{}$,
$\subspace{10200\\01101}{15}$,
$\subspace{10212\\01114}{}$,
$\subspace{10224\\01122}{}$,
$\subspace{10231\\01130}{}$,
$\subspace{10243\\01143}{}$,
$\subspace{10303\\01010}{}$,
$\subspace{10310\\01033}{}$,
$\subspace{10322\\01001}{}$,
$\subspace{10334\\01024}{}$,
$\subspace{10341\\01042}{}$,
$\subspace{12000\\00101}{}$,
$\subspace{12014\\00132}{}$,
$\subspace{12023\\00113}{}$,
$\subspace{12032\\00144}{}$,
$\subspace{12041\\00120}{}$,
$\subspace{10201\\01114}{15}$,
$\subspace{10213\\01122}{}$,
$\subspace{10220\\01130}{}$,
$\subspace{10232\\01143}{}$,
$\subspace{10244\\01101}{}$,
$\subspace{10301\\01042}{}$,
$\subspace{10313\\01010}{}$,
$\subspace{10320\\01033}{}$,
$\subspace{10332\\01001}{}$,
$\subspace{10344\\01024}{}$,
$\subspace{12001\\00101}{}$,
$\subspace{12010\\00132}{}$,
$\subspace{12024\\00113}{}$,
$\subspace{12033\\00144}{}$,
$\subspace{12042\\00120}{}$,
$\subspace{10202\\01110}{15}$,
$\subspace{10214\\01123}{}$,
$\subspace{10221\\01131}{}$,
$\subspace{10233\\01144}{}$,
$\subspace{10240\\01102}{}$,
$\subspace{10300\\01030}{}$,
$\subspace{10312\\01003}{}$,
$\subspace{10324\\01021}{}$,
$\subspace{10331\\01044}{}$,
$\subspace{10343\\01012}{}$,
$\subspace{12002\\00110}{}$,
$\subspace{12011\\00141}{}$,
$\subspace{12020\\00122}{}$,
$\subspace{12034\\00103}{}$,
$\subspace{12043\\00134}{}$,
$\subspace{10201\\01124}{15}$,
$\subspace{10213\\01132}{}$,
$\subspace{10220\\01140}{}$,
$\subspace{10232\\01103}{}$,
$\subspace{10244\\01111}{}$,
$\subspace{10304\\01041}{}$,
$\subspace{10311\\01014}{}$,
$\subspace{10323\\01032}{}$,
$\subspace{10330\\01000}{}$,
$\subspace{10342\\01023}{}$,
$\subspace{12001\\00112}{}$,
$\subspace{12010\\00143}{}$,
$\subspace{12024\\00124}{}$,
$\subspace{12033\\00100}{}$,
$\subspace{12042\\00131}{}$,
$\subspace{10204\\01122}{15}$,
$\subspace{10211\\01130}{}$,
$\subspace{10223\\01143}{}$,
$\subspace{10230\\01101}{}$,
$\subspace{10242\\01114}{}$,
$\subspace{10301\\01010}{}$,
$\subspace{10313\\01033}{}$,
$\subspace{10320\\01001}{}$,
$\subspace{10332\\01024}{}$,
$\subspace{10344\\01042}{}$,
$\subspace{12004\\00144}{}$,
$\subspace{12013\\00120}{}$,
$\subspace{12022\\00101}{}$,
$\subspace{12031\\00132}{}$,
$\subspace{12040\\00113}{}$,
$\subspace{10200\\01141}{15}$,
$\subspace{10212\\01104}{}$,
$\subspace{10224\\01112}{}$,
$\subspace{10231\\01120}{}$,
$\subspace{10243\\01133}{}$,
$\subspace{10300\\01011}{}$,
$\subspace{10312\\01034}{}$,
$\subspace{10324\\01002}{}$,
$\subspace{10331\\01020}{}$,
$\subspace{10343\\01043}{}$,
$\subspace{12000\\00140}{}$,
$\subspace{12014\\00121}{}$,
$\subspace{12023\\00102}{}$,
$\subspace{12032\\00133}{}$,
$\subspace{12041\\00114}{}$,
$\subspace{10202\\01142}{15}$,
$\subspace{10214\\01100}{}$,
$\subspace{10221\\01113}{}$,
$\subspace{10233\\01121}{}$,
$\subspace{10240\\01134}{}$,
$\subspace{10300\\01022}{}$,
$\subspace{10312\\01040}{}$,
$\subspace{10324\\01013}{}$,
$\subspace{10331\\01031}{}$,
$\subspace{10343\\01004}{}$,
$\subspace{12002\\00123}{}$,
$\subspace{12011\\00104}{}$,
$\subspace{12020\\00130}{}$,
$\subspace{12034\\00111}{}$,
$\subspace{12043\\00142}{}$,
$\subspace{10203\\01204}{15}$,
$\subspace{10210\\01243}{}$,
$\subspace{10222\\01232}{}$,
$\subspace{10234\\01221}{}$,
$\subspace{10241\\01210}{}$,
$\subspace{10203\\01432}{}$,
$\subspace{10210\\01433}{}$,
$\subspace{10222\\01434}{}$,
$\subspace{10234\\01430}{}$,
$\subspace{10241\\01431}{}$,
$\subspace{10404\\01302}{}$,
$\subspace{10411\\01312}{}$,
$\subspace{10423\\01322}{}$,
$\subspace{10430\\01332}{}$,
$\subspace{10442\\01342}{}$,
$\subspace{10203\\01204}{15}$,
$\subspace{10210\\01243}{}$,
$\subspace{10222\\01232}{}$,
$\subspace{10234\\01221}{}$,
$\subspace{10241\\01210}{}$,
$\subspace{10203\\01432}{}$,
$\subspace{10210\\01433}{}$,
$\subspace{10222\\01434}{}$,
$\subspace{10234\\01430}{}$,
$\subspace{10241\\01431}{}$,
$\subspace{10404\\01302}{}$,
$\subspace{10411\\01312}{}$,
$\subspace{10423\\01322}{}$,
$\subspace{10430\\01332}{}$,
$\subspace{10442\\01342}{}$,
$\subspace{10202\\01231}{15}$,
$\subspace{10214\\01220}{}$,
$\subspace{10221\\01214}{}$,
$\subspace{10233\\01203}{}$,
$\subspace{10240\\01242}{}$,
$\subspace{10202\\01423}{}$,
$\subspace{10214\\01424}{}$,
$\subspace{10221\\01420}{}$,
$\subspace{10233\\01421}{}$,
$\subspace{10240\\01422}{}$,
$\subspace{10403\\01334}{}$,
$\subspace{10410\\01344}{}$,
$\subspace{10422\\01304}{}$,
$\subspace{10434\\01314}{}$,
$\subspace{10441\\01324}{}$,
$\subspace{10201\\01242}{15}$,
$\subspace{10213\\01231}{}$,
$\subspace{10220\\01220}{}$,
$\subspace{10232\\01214}{}$,
$\subspace{10244\\01203}{}$,
$\subspace{10201\\01423}{}$,
$\subspace{10213\\01424}{}$,
$\subspace{10220\\01420}{}$,
$\subspace{10232\\01421}{}$,
$\subspace{10244\\01422}{}$,
$\subspace{10400\\01344}{}$,
$\subspace{10412\\01304}{}$,
$\subspace{10424\\01314}{}$,
$\subspace{10431\\01324}{}$,
$\subspace{10443\\01334}{}$,
$\subspace{10201\\01242}{15}$,
$\subspace{10213\\01231}{}$,
$\subspace{10220\\01220}{}$,
$\subspace{10232\\01214}{}$,
$\subspace{10244\\01203}{}$,
$\subspace{10201\\01423}{}$,
$\subspace{10213\\01424}{}$,
$\subspace{10220\\01420}{}$,
$\subspace{10232\\01421}{}$,
$\subspace{10244\\01422}{}$,
$\subspace{10400\\01344}{}$,
$\subspace{10412\\01304}{}$,
$\subspace{10424\\01314}{}$,
$\subspace{10431\\01324}{}$,
$\subspace{10443\\01334}{}$,
$\subspace{10204\\01243}{15}$,
$\subspace{10211\\01232}{}$,
$\subspace{10223\\01221}{}$,
$\subspace{10230\\01210}{}$,
$\subspace{10242\\01204}{}$,
$\subspace{10204\\01432}{}$,
$\subspace{10211\\01433}{}$,
$\subspace{10223\\01434}{}$,
$\subspace{10230\\01430}{}$,
$\subspace{10242\\01431}{}$,
$\subspace{10402\\01342}{}$,
$\subspace{10414\\01302}{}$,
$\subspace{10421\\01312}{}$,
$\subspace{10433\\01322}{}$,
$\subspace{10440\\01332}{}$,
$\subspace{10303\\01301}{15}$,
$\subspace{10310\\01331}{}$,
$\subspace{10322\\01311}{}$,
$\subspace{10334\\01341}{}$,
$\subspace{10341\\01321}{}$,
$\subspace{10403\\01241}{}$,
$\subspace{10410\\01213}{}$,
$\subspace{10422\\01230}{}$,
$\subspace{10434\\01202}{}$,
$\subspace{10441\\01224}{}$,
$\subspace{10402\\01413}{}$,
$\subspace{10414\\01411}{}$,
$\subspace{10421\\01414}{}$,
$\subspace{10433\\01412}{}$,
$\subspace{10440\\01410}{}$,
$\subspace{10304\\01304}{15}$,
$\subspace{10311\\01334}{}$,
$\subspace{10323\\01314}{}$,
$\subspace{10330\\01344}{}$,
$\subspace{10342\\01324}{}$,
$\subspace{10400\\01203}{}$,
$\subspace{10412\\01220}{}$,
$\subspace{10424\\01242}{}$,
$\subspace{10431\\01214}{}$,
$\subspace{10443\\01231}{}$,
$\subspace{10400\\01423}{}$,
$\subspace{10412\\01421}{}$,
$\subspace{10424\\01424}{}$,
$\subspace{10431\\01422}{}$,
$\subspace{10443\\01420}{}$,
$\subspace{10304\\01304}{15}$,
$\subspace{10311\\01334}{}$,
$\subspace{10323\\01314}{}$,
$\subspace{10330\\01344}{}$,
$\subspace{10342\\01324}{}$,
$\subspace{10400\\01203}{}$,
$\subspace{10412\\01220}{}$,
$\subspace{10424\\01242}{}$,
$\subspace{10431\\01214}{}$,
$\subspace{10443\\01231}{}$,
$\subspace{10400\\01423}{}$,
$\subspace{10412\\01421}{}$,
$\subspace{10424\\01424}{}$,
$\subspace{10431\\01422}{}$,
$\subspace{10443\\01420}{}$,
$\subspace{10300\\01312}{15}$,
$\subspace{10312\\01342}{}$,
$\subspace{10324\\01322}{}$,
$\subspace{10331\\01302}{}$,
$\subspace{10343\\01332}{}$,
$\subspace{10403\\01204}{}$,
$\subspace{10410\\01221}{}$,
$\subspace{10422\\01243}{}$,
$\subspace{10434\\01210}{}$,
$\subspace{10441\\01232}{}$,
$\subspace{10402\\01431}{}$,
$\subspace{10414\\01434}{}$,
$\subspace{10421\\01432}{}$,
$\subspace{10433\\01430}{}$,
$\subspace{10440\\01433}{}$,
$\subspace{10301\\01334}{15}$,
$\subspace{10313\\01314}{}$,
$\subspace{10320\\01344}{}$,
$\subspace{10332\\01324}{}$,
$\subspace{10344\\01304}{}$,
$\subspace{10404\\01231}{}$,
$\subspace{10411\\01203}{}$,
$\subspace{10423\\01220}{}$,
$\subspace{10430\\01242}{}$,
$\subspace{10442\\01214}{}$,
$\subspace{10404\\01423}{}$,
$\subspace{10411\\01421}{}$,
$\subspace{10423\\01424}{}$,
$\subspace{10430\\01422}{}$,
$\subspace{10442\\01420}{}$,
$\subspace{10303\\01331}{15}$,
$\subspace{10310\\01311}{}$,
$\subspace{10322\\01341}{}$,
$\subspace{10334\\01321}{}$,
$\subspace{10341\\01301}{}$,
$\subspace{10401\\01213}{}$,
$\subspace{10413\\01230}{}$,
$\subspace{10420\\01202}{}$,
$\subspace{10432\\01224}{}$,
$\subspace{10444\\01241}{}$,
$\subspace{10404\\01412}{}$,
$\subspace{10411\\01410}{}$,
$\subspace{10423\\01413}{}$,
$\subspace{10430\\01411}{}$,
$\subspace{10442\\01414}{}$.

\smallskip

$n_5(5,2;39)\ge 969$,$\left[\!\begin{smallmatrix}10000\\00100\\04410\\00001\\00044\end{smallmatrix}\!\right]$:
$\subspace{01001\\00101}{15}$,
$\subspace{01001\\00134}{}$,
$\subspace{01003\\00132}{}$,
$\subspace{01010\\00113}{}$,
$\subspace{01010\\00141}{}$,
$\subspace{01012\\00144}{}$,
$\subspace{01021\\00101}{}$,
$\subspace{01024\\00103}{}$,
$\subspace{01024\\00120}{}$,
$\subspace{01030\\00113}{}$,
$\subspace{01033\\00110}{}$,
$\subspace{01033\\00132}{}$,
$\subspace{01042\\00122}{}$,
$\subspace{01044\\00120}{}$,
$\subspace{01042\\00144}{}$,
$\subspace{01002\\00102}{5}$,
$\subspace{01011\\00114}{}$,
$\subspace{01020\\00121}{}$,
$\subspace{01034\\00133}{}$,
$\subspace{01043\\00140}{}$,
$\subspace{01002\\00102}{5}$,
$\subspace{01011\\00114}{}$,
$\subspace{01020\\00121}{}$,
$\subspace{01034\\00133}{}$,
$\subspace{01043\\00140}{}$,
$\subspace{01002\\00102}{5}$,
$\subspace{01011\\00114}{}$,
$\subspace{01020\\00121}{}$,
$\subspace{01034\\00133}{}$,
$\subspace{01043\\00140}{}$,
$\subspace{01203\\00011}{3}$,
$\subspace{01304\\00010}{}$,
$\subspace{01420\\00001}{}$,
$\subspace{10000\\00012}{3}$,
$\subspace{10000\\00013}{}$,
$\subspace{10000\\00014}{}$,
$\subspace{10013\\00104}{15}$,
$\subspace{10013\\00111}{}$,
$\subspace{10013\\00123}{}$,
$\subspace{10013\\00130}{}$,
$\subspace{10013\\00142}{}$,
$\subspace{10024\\01004}{}$,
$\subspace{10024\\01013}{}$,
$\subspace{10024\\01022}{}$,
$\subspace{10024\\01031}{}$,
$\subspace{10024\\01040}{}$,
$\subspace{10023\\01100}{}$,
$\subspace{10023\\01113}{}$,
$\subspace{10023\\01121}{}$,
$\subspace{10023\\01134}{}$,
$\subspace{10023\\01142}{}$,
$\subspace{10020\\00100}{15}$,
$\subspace{10020\\00112}{}$,
$\subspace{10020\\00124}{}$,
$\subspace{10020\\00131}{}$,
$\subspace{10020\\00143}{}$,
$\subspace{10033\\01000}{}$,
$\subspace{10033\\01014}{}$,
$\subspace{10033\\01023}{}$,
$\subspace{10033\\01032}{}$,
$\subspace{10033\\01041}{}$,
$\subspace{10002\\01103}{}$,
$\subspace{10002\\01111}{}$,
$\subspace{10002\\01124}{}$,
$\subspace{10002\\01132}{}$,
$\subspace{10002\\01140}{}$,
$\subspace{10021\\00103}{15}$,
$\subspace{10021\\00110}{}$,
$\subspace{10021\\00122}{}$,
$\subspace{10021\\00134}{}$,
$\subspace{10021\\00141}{}$,
$\subspace{10043\\01003}{}$,
$\subspace{10043\\01012}{}$,
$\subspace{10043\\01021}{}$,
$\subspace{10043\\01030}{}$,
$\subspace{10043\\01044}{}$,
$\subspace{10041\\01102}{}$,
$\subspace{10041\\01110}{}$,
$\subspace{10041\\01123}{}$,
$\subspace{10041\\01131}{}$,
$\subspace{10041\\01144}{}$,
$\subspace{10030\\00103}{15}$,
$\subspace{10030\\00110}{}$,
$\subspace{10030\\00122}{}$,
$\subspace{10030\\00134}{}$,
$\subspace{10030\\00141}{}$,
$\subspace{10022\\01003}{}$,
$\subspace{10022\\01012}{}$,
$\subspace{10022\\01021}{}$,
$\subspace{10022\\01030}{}$,
$\subspace{10022\\01044}{}$,
$\subspace{10003\\01102}{}$,
$\subspace{10003\\01110}{}$,
$\subspace{10003\\01123}{}$,
$\subspace{10003\\01131}{}$,
$\subspace{10003\\01144}{}$,
$\subspace{10032\\00103}{15}$,
$\subspace{10032\\00110}{}$,
$\subspace{10032\\00122}{}$,
$\subspace{10032\\00134}{}$,
$\subspace{10032\\00141}{}$,
$\subspace{10042\\01003}{}$,
$\subspace{10042\\01012}{}$,
$\subspace{10042\\01021}{}$,
$\subspace{10042\\01030}{}$,
$\subspace{10042\\01044}{}$,
$\subspace{10031\\01102}{}$,
$\subspace{10031\\01110}{}$,
$\subspace{10031\\01123}{}$,
$\subspace{10031\\01131}{}$,
$\subspace{10031\\01144}{}$,
$\subspace{10034\\00103}{15}$,
$\subspace{10034\\00110}{}$,
$\subspace{10034\\00122}{}$,
$\subspace{10034\\00134}{}$,
$\subspace{10034\\00141}{}$,
$\subspace{10012\\01003}{}$,
$\subspace{10012\\01012}{}$,
$\subspace{10012\\01021}{}$,
$\subspace{10012\\01030}{}$,
$\subspace{10012\\01044}{}$,
$\subspace{10014\\01102}{}$,
$\subspace{10014\\01110}{}$,
$\subspace{10014\\01123}{}$,
$\subspace{10014\\01131}{}$,
$\subspace{10014\\01144}{}$,
$\subspace{10100\\00012}{3}$,
$\subspace{11000\\00014}{}$,
$\subspace{14402\\00013}{}$,
$\subspace{10204\\00012}{3}$,
$\subspace{12004\\00014}{}$,
$\subspace{13302\\00013}{}$,
$\subspace{10303\\00012}{3}$,
$\subspace{12202\\00013}{}$,
$\subspace{13003\\00014}{}$,
$\subspace{10303\\00012}{3}$,
$\subspace{12202\\00013}{}$,
$\subspace{13003\\00014}{}$,
$\subspace{10001\\01204}{15}$,
$\subspace{10001\\01210}{}$,
$\subspace{10001\\01221}{}$,
$\subspace{10001\\01232}{}$,
$\subspace{10001\\01243}{}$,
$\subspace{10044\\01302}{}$,
$\subspace{10044\\01312}{}$,
$\subspace{10044\\01322}{}$,
$\subspace{10044\\01332}{}$,
$\subspace{10044\\01342}{}$,
$\subspace{10010\\01430}{}$,
$\subspace{10010\\01431}{}$,
$\subspace{10010\\01432}{}$,
$\subspace{10010\\01433}{}$,
$\subspace{10010\\01434}{}$,
$\subspace{10040\\01203}{15}$,
$\subspace{10040\\01214}{}$,
$\subspace{10040\\01220}{}$,
$\subspace{10040\\01231}{}$,
$\subspace{10040\\01242}{}$,
$\subspace{10004\\01304}{}$,
$\subspace{10004\\01314}{}$,
$\subspace{10004\\01324}{}$,
$\subspace{10004\\01334}{}$,
$\subspace{10004\\01344}{}$,
$\subspace{10011\\01420}{}$,
$\subspace{10011\\01421}{}$,
$\subspace{10011\\01422}{}$,
$\subspace{10011\\01423}{}$,
$\subspace{10011\\01424}{}$,
$\subspace{10100\\01000}{15}$,
$\subspace{10112\\01014}{}$,
$\subspace{10124\\01023}{}$,
$\subspace{10131\\01032}{}$,
$\subspace{10143\\01041}{}$,
$\subspace{10401\\01140}{}$,
$\subspace{10413\\01124}{}$,
$\subspace{10420\\01103}{}$,
$\subspace{10432\\01132}{}$,
$\subspace{10444\\01111}{}$,
$\subspace{14001\\00112}{}$,
$\subspace{14010\\00100}{}$,
$\subspace{14024\\00143}{}$,
$\subspace{14033\\00131}{}$,
$\subspace{14042\\00124}{}$,
$\subspace{10100\\01004}{15}$,
$\subspace{10112\\01013}{}$,
$\subspace{10124\\01022}{}$,
$\subspace{10131\\01031}{}$,
$\subspace{10143\\01040}{}$,
$\subspace{10403\\01134}{}$,
$\subspace{10410\\01113}{}$,
$\subspace{10422\\01142}{}$,
$\subspace{10434\\01121}{}$,
$\subspace{10441\\01100}{}$,
$\subspace{14003\\00130}{}$,
$\subspace{14012\\00123}{}$,
$\subspace{14021\\00111}{}$,
$\subspace{14030\\00104}{}$,
$\subspace{14044\\00142}{}$,
$\subspace{10101\\01002}{15}$,
$\subspace{10113\\01011}{}$,
$\subspace{10120\\01020}{}$,
$\subspace{10132\\01034}{}$,
$\subspace{10144\\01043}{}$,
$\subspace{10400\\01133}{}$,
$\subspace{10412\\01112}{}$,
$\subspace{10424\\01141}{}$,
$\subspace{10431\\01120}{}$,
$\subspace{10443\\01104}{}$,
$\subspace{14000\\00114}{}$,
$\subspace{14014\\00102}{}$,
$\subspace{14023\\00140}{}$,
$\subspace{14032\\00133}{}$,
$\subspace{14041\\00121}{}$,
$\subspace{10101\\01012}{15}$,
$\subspace{10113\\01021}{}$,
$\subspace{10120\\01030}{}$,
$\subspace{10132\\01044}{}$,
$\subspace{10144\\01003}{}$,
$\subspace{10401\\01123}{}$,
$\subspace{10413\\01102}{}$,
$\subspace{10420\\01131}{}$,
$\subspace{10432\\01110}{}$,
$\subspace{10444\\01144}{}$,
$\subspace{14001\\00110}{}$,
$\subspace{14010\\00103}{}$,
$\subspace{14024\\00141}{}$,
$\subspace{14033\\00134}{}$,
$\subspace{14042\\00122}{}$,
$\subspace{10102\\01031}{15}$,
$\subspace{10114\\01040}{}$,
$\subspace{10121\\01004}{}$,
$\subspace{10133\\01013}{}$,
$\subspace{10140\\01022}{}$,
$\subspace{10403\\01113}{}$,
$\subspace{10410\\01142}{}$,
$\subspace{10422\\01121}{}$,
$\subspace{10434\\01100}{}$,
$\subspace{10441\\01134}{}$,
$\subspace{14003\\00123}{}$,
$\subspace{14012\\00111}{}$,
$\subspace{14021\\00104}{}$,
$\subspace{14030\\00142}{}$,
$\subspace{14044\\00130}{}$,
$\subspace{10101\\01041}{15}$,
$\subspace{10113\\01000}{}$,
$\subspace{10120\\01014}{}$,
$\subspace{10132\\01023}{}$,
$\subspace{10144\\01032}{}$,
$\subspace{10401\\01132}{}$,
$\subspace{10413\\01111}{}$,
$\subspace{10420\\01140}{}$,
$\subspace{10432\\01124}{}$,
$\subspace{10444\\01103}{}$,
$\subspace{14001\\00131}{}$,
$\subspace{14010\\00124}{}$,
$\subspace{14024\\00112}{}$,
$\subspace{14033\\00100}{}$,
$\subspace{14042\\00143}{}$,
$\subspace{10102\\01044}{15}$,
$\subspace{10114\\01003}{}$,
$\subspace{10121\\01012}{}$,
$\subspace{10133\\01021}{}$,
$\subspace{10140\\01030}{}$,
$\subspace{10403\\01131}{}$,
$\subspace{10410\\01110}{}$,
$\subspace{10422\\01144}{}$,
$\subspace{10434\\01123}{}$,
$\subspace{10441\\01102}{}$,
$\subspace{14003\\00110}{}$,
$\subspace{14012\\00103}{}$,
$\subspace{14021\\00141}{}$,
$\subspace{14030\\00134}{}$,
$\subspace{14044\\00122}{}$,
$\subspace{10101\\01102}{15}$,
$\subspace{10113\\01123}{}$,
$\subspace{10120\\01144}{}$,
$\subspace{10132\\01110}{}$,
$\subspace{10144\\01131}{}$,
$\subspace{10404\\01021}{}$,
$\subspace{10411\\01012}{}$,
$\subspace{10423\\01003}{}$,
$\subspace{10430\\01044}{}$,
$\subspace{10442\\01030}{}$,
$\subspace{11001\\00134}{}$,
$\subspace{11010\\00141}{}$,
$\subspace{11024\\00103}{}$,
$\subspace{11033\\00110}{}$,
$\subspace{11042\\00122}{}$,
$\subspace{10100\\01141}{15}$,
$\subspace{10112\\01112}{}$,
$\subspace{10124\\01133}{}$,
$\subspace{10131\\01104}{}$,
$\subspace{10143\\01120}{}$,
$\subspace{10400\\01011}{}$,
$\subspace{10412\\01002}{}$,
$\subspace{10424\\01043}{}$,
$\subspace{10431\\01034}{}$,
$\subspace{10443\\01020}{}$,
$\subspace{11000\\00140}{}$,
$\subspace{11014\\00102}{}$,
$\subspace{11023\\00114}{}$,
$\subspace{11032\\00121}{}$,
$\subspace{11041\\00133}{}$,
$\subspace{10101\\01141}{15}$,
$\subspace{10113\\01112}{}$,
$\subspace{10120\\01133}{}$,
$\subspace{10132\\01104}{}$,
$\subspace{10144\\01120}{}$,
$\subspace{10401\\01002}{}$,
$\subspace{10413\\01043}{}$,
$\subspace{10420\\01034}{}$,
$\subspace{10432\\01020}{}$,
$\subspace{10444\\01011}{}$,
$\subspace{11001\\00133}{}$,
$\subspace{11010\\00140}{}$,
$\subspace{11024\\00102}{}$,
$\subspace{11033\\00114}{}$,
$\subspace{11042\\00121}{}$,
$\subspace{10102\\01142}{15}$,
$\subspace{10114\\01113}{}$,
$\subspace{10121\\01134}{}$,
$\subspace{10133\\01100}{}$,
$\subspace{10140\\01121}{}$,
$\subspace{10401\\01013}{}$,
$\subspace{10413\\01004}{}$,
$\subspace{10420\\01040}{}$,
$\subspace{10432\\01031}{}$,
$\subspace{10444\\01022}{}$,
$\subspace{11002\\00111}{}$,
$\subspace{11011\\00123}{}$,
$\subspace{11020\\00130}{}$,
$\subspace{11034\\00142}{}$,
$\subspace{11043\\00104}{}$,
$\subspace{10102\\01143}{15}$,
$\subspace{10114\\01114}{}$,
$\subspace{10121\\01130}{}$,
$\subspace{10133\\01101}{}$,
$\subspace{10140\\01122}{}$,
$\subspace{10400\\01033}{}$,
$\subspace{10412\\01024}{}$,
$\subspace{10424\\01010}{}$,
$\subspace{10431\\01001}{}$,
$\subspace{10443\\01042}{}$,
$\subspace{11002\\00101}{}$,
$\subspace{11011\\00113}{}$,
$\subspace{11020\\00120}{}$,
$\subspace{11034\\00132}{}$,
$\subspace{11043\\00144}{}$,
$\subspace{10104\\01140}{15}$,
$\subspace{10111\\01111}{}$,
$\subspace{10123\\01132}{}$,
$\subspace{10130\\01103}{}$,
$\subspace{10142\\01124}{}$,
$\subspace{10400\\01000}{}$,
$\subspace{10412\\01041}{}$,
$\subspace{10424\\01032}{}$,
$\subspace{10431\\01023}{}$,
$\subspace{10443\\01014}{}$,
$\subspace{11004\\00112}{}$,
$\subspace{11013\\00124}{}$,
$\subspace{11022\\00131}{}$,
$\subspace{11031\\00143}{}$,
$\subspace{11040\\00100}{}$,
$\subspace{10104\\01140}{15}$,
$\subspace{10111\\01111}{}$,
$\subspace{10123\\01132}{}$,
$\subspace{10130\\01103}{}$,
$\subspace{10142\\01124}{}$,
$\subspace{10400\\01000}{}$,
$\subspace{10412\\01041}{}$,
$\subspace{10424\\01032}{}$,
$\subspace{10431\\01023}{}$,
$\subspace{10443\\01014}{}$,
$\subspace{11004\\00112}{}$,
$\subspace{11013\\00124}{}$,
$\subspace{11022\\00131}{}$,
$\subspace{11031\\00143}{}$,
$\subspace{11040\\00100}{}$,
$\subspace{10100\\01203}{15}$,
$\subspace{10112\\01231}{}$,
$\subspace{10124\\01214}{}$,
$\subspace{10131\\01242}{}$,
$\subspace{10143\\01220}{}$,
$\subspace{10100\\01423}{}$,
$\subspace{10112\\01420}{}$,
$\subspace{10124\\01422}{}$,
$\subspace{10131\\01424}{}$,
$\subspace{10143\\01421}{}$,
$\subspace{10201\\01304}{}$,
$\subspace{10213\\01324}{}$,
$\subspace{10220\\01344}{}$,
$\subspace{10232\\01314}{}$,
$\subspace{10244\\01334}{}$,
$\subspace{10104\\01212}{15}$,
$\subspace{10111\\01240}{}$,
$\subspace{10123\\01223}{}$,
$\subspace{10130\\01201}{}$,
$\subspace{10142\\01234}{}$,
$\subspace{10104\\01400}{}$,
$\subspace{10111\\01402}{}$,
$\subspace{10123\\01404}{}$,
$\subspace{10130\\01401}{}$,
$\subspace{10142\\01403}{}$,
$\subspace{10200\\01313}{}$,
$\subspace{10212\\01333}{}$,
$\subspace{10224\\01303}{}$,
$\subspace{10231\\01323}{}$,
$\subspace{10243\\01343}{}$,
$\subspace{10103\\01221}{15}$,
$\subspace{10110\\01204}{}$,
$\subspace{10122\\01232}{}$,
$\subspace{10134\\01210}{}$,
$\subspace{10141\\01243}{}$,
$\subspace{10103\\01432}{}$,
$\subspace{10110\\01434}{}$,
$\subspace{10122\\01431}{}$,
$\subspace{10134\\01433}{}$,
$\subspace{10141\\01430}{}$,
$\subspace{10204\\01322}{}$,
$\subspace{10211\\01342}{}$,
$\subspace{10223\\01312}{}$,
$\subspace{10230\\01332}{}$,
$\subspace{10242\\01302}{}$,
$\subspace{10103\\01221}{15}$,
$\subspace{10110\\01204}{}$,
$\subspace{10122\\01232}{}$,
$\subspace{10134\\01210}{}$,
$\subspace{10141\\01243}{}$,
$\subspace{10103\\01432}{}$,
$\subspace{10110\\01434}{}$,
$\subspace{10122\\01431}{}$,
$\subspace{10134\\01433}{}$,
$\subspace{10141\\01430}{}$,
$\subspace{10204\\01322}{}$,
$\subspace{10211\\01342}{}$,
$\subspace{10223\\01312}{}$,
$\subspace{10230\\01332}{}$,
$\subspace{10242\\01302}{}$,
$\subspace{10103\\01221}{15}$,
$\subspace{10110\\01204}{}$,
$\subspace{10122\\01232}{}$,
$\subspace{10134\\01210}{}$,
$\subspace{10141\\01243}{}$,
$\subspace{10103\\01432}{}$,
$\subspace{10110\\01434}{}$,
$\subspace{10122\\01431}{}$,
$\subspace{10134\\01433}{}$,
$\subspace{10141\\01430}{}$,
$\subspace{10204\\01322}{}$,
$\subspace{10211\\01342}{}$,
$\subspace{10223\\01312}{}$,
$\subspace{10230\\01332}{}$,
$\subspace{10242\\01302}{}$,
$\subspace{10103\\01221}{15}$,
$\subspace{10110\\01204}{}$,
$\subspace{10122\\01232}{}$,
$\subspace{10134\\01210}{}$,
$\subspace{10141\\01243}{}$,
$\subspace{10103\\01432}{}$,
$\subspace{10110\\01434}{}$,
$\subspace{10122\\01431}{}$,
$\subspace{10134\\01433}{}$,
$\subspace{10141\\01430}{}$,
$\subspace{10204\\01322}{}$,
$\subspace{10211\\01342}{}$,
$\subspace{10223\\01312}{}$,
$\subspace{10230\\01332}{}$,
$\subspace{10242\\01302}{}$,
$\subspace{10104\\01233}{15}$,
$\subspace{10111\\01211}{}$,
$\subspace{10123\\01244}{}$,
$\subspace{10130\\01222}{}$,
$\subspace{10142\\01200}{}$,
$\subspace{10104\\01441}{}$,
$\subspace{10111\\01443}{}$,
$\subspace{10123\\01440}{}$,
$\subspace{10130\\01442}{}$,
$\subspace{10142\\01444}{}$,
$\subspace{10201\\01330}{}$,
$\subspace{10213\\01300}{}$,
$\subspace{10220\\01320}{}$,
$\subspace{10232\\01340}{}$,
$\subspace{10244\\01310}{}$,
$\subspace{10102\\01300}{15}$,
$\subspace{10114\\01340}{}$,
$\subspace{10121\\01330}{}$,
$\subspace{10133\\01320}{}$,
$\subspace{10140\\01310}{}$,
$\subspace{10303\\01222}{}$,
$\subspace{10310\\01233}{}$,
$\subspace{10322\\01244}{}$,
$\subspace{10334\\01200}{}$,
$\subspace{10341\\01211}{}$,
$\subspace{10302\\01443}{}$,
$\subspace{10314\\01442}{}$,
$\subspace{10321\\01441}{}$,
$\subspace{10333\\01440}{}$,
$\subspace{10340\\01444}{}$,
$\subspace{10101\\01324}{15}$,
$\subspace{10113\\01314}{}$,
$\subspace{10120\\01304}{}$,
$\subspace{10132\\01344}{}$,
$\subspace{10144\\01334}{}$,
$\subspace{10303\\01203}{}$,
$\subspace{10310\\01214}{}$,
$\subspace{10322\\01220}{}$,
$\subspace{10334\\01231}{}$,
$\subspace{10341\\01242}{}$,
$\subspace{10304\\01421}{}$,
$\subspace{10311\\01420}{}$,
$\subspace{10323\\01424}{}$,
$\subspace{10330\\01423}{}$,
$\subspace{10342\\01422}{}$,
$\subspace{10102\\01321}{15}$,
$\subspace{10114\\01311}{}$,
$\subspace{10121\\01301}{}$,
$\subspace{10133\\01341}{}$,
$\subspace{10140\\01331}{}$,
$\subspace{10303\\01213}{}$,
$\subspace{10310\\01224}{}$,
$\subspace{10322\\01230}{}$,
$\subspace{10334\\01241}{}$,
$\subspace{10341\\01202}{}$,
$\subspace{10301\\01413}{}$,
$\subspace{10313\\01412}{}$,
$\subspace{10320\\01411}{}$,
$\subspace{10332\\01410}{}$,
$\subspace{10344\\01414}{}$,
$\subspace{10102\\01324}{15}$,
$\subspace{10114\\01314}{}$,
$\subspace{10121\\01304}{}$,
$\subspace{10133\\01344}{}$,
$\subspace{10140\\01334}{}$,
$\subspace{10301\\01242}{}$,
$\subspace{10313\\01203}{}$,
$\subspace{10320\\01214}{}$,
$\subspace{10332\\01220}{}$,
$\subspace{10344\\01231}{}$,
$\subspace{10300\\01420}{}$,
$\subspace{10312\\01424}{}$,
$\subspace{10324\\01423}{}$,
$\subspace{10331\\01422}{}$,
$\subspace{10343\\01421}{}$,
$\subspace{10104\\01331}{15}$,
$\subspace{10111\\01321}{}$,
$\subspace{10123\\01311}{}$,
$\subspace{10130\\01301}{}$,
$\subspace{10142\\01341}{}$,
$\subspace{10301\\01230}{}$,
$\subspace{10313\\01241}{}$,
$\subspace{10320\\01202}{}$,
$\subspace{10332\\01213}{}$,
$\subspace{10344\\01224}{}$,
$\subspace{10301\\01414}{}$,
$\subspace{10313\\01413}{}$,
$\subspace{10320\\01412}{}$,
$\subspace{10332\\01411}{}$,
$\subspace{10344\\01410}{}$,
$\subspace{10203\\01001}{15}$,
$\subspace{10210\\01033}{}$,
$\subspace{10222\\01010}{}$,
$\subspace{10234\\01042}{}$,
$\subspace{10241\\01024}{}$,
$\subspace{10302\\01130}{}$,
$\subspace{10314\\01122}{}$,
$\subspace{10321\\01114}{}$,
$\subspace{10333\\01101}{}$,
$\subspace{10340\\01143}{}$,
$\subspace{13002\\00101}{}$,
$\subspace{13011\\00120}{}$,
$\subspace{13020\\00144}{}$,
$\subspace{13034\\00113}{}$,
$\subspace{13043\\00132}{}$,
$\subspace{10203\\01001}{15}$,
$\subspace{10210\\01033}{}$,
$\subspace{10222\\01010}{}$,
$\subspace{10234\\01042}{}$,
$\subspace{10241\\01024}{}$,
$\subspace{10302\\01130}{}$,
$\subspace{10314\\01122}{}$,
$\subspace{10321\\01114}{}$,
$\subspace{10333\\01101}{}$,
$\subspace{10340\\01143}{}$,
$\subspace{13002\\00101}{}$,
$\subspace{13011\\00120}{}$,
$\subspace{13020\\00144}{}$,
$\subspace{13034\\00113}{}$,
$\subspace{13043\\00132}{}$,
$\subspace{10200\\01024}{15}$,
$\subspace{10212\\01001}{}$,
$\subspace{10224\\01033}{}$,
$\subspace{10231\\01010}{}$,
$\subspace{10243\\01042}{}$,
$\subspace{10300\\01114}{}$,
$\subspace{10312\\01101}{}$,
$\subspace{10324\\01143}{}$,
$\subspace{10331\\01130}{}$,
$\subspace{10343\\01122}{}$,
$\subspace{13000\\00132}{}$,
$\subspace{13014\\00101}{}$,
$\subspace{13023\\00120}{}$,
$\subspace{13032\\00144}{}$,
$\subspace{13041\\00113}{}$,
$\subspace{10202\\01040}{15}$,
$\subspace{10214\\01022}{}$,
$\subspace{10221\\01004}{}$,
$\subspace{10233\\01031}{}$,
$\subspace{10240\\01013}{}$,
$\subspace{10301\\01121}{}$,
$\subspace{10313\\01113}{}$,
$\subspace{10320\\01100}{}$,
$\subspace{10332\\01142}{}$,
$\subspace{10344\\01134}{}$,
$\subspace{13001\\00104}{}$,
$\subspace{13010\\00123}{}$,
$\subspace{13024\\00142}{}$,
$\subspace{13033\\00111}{}$,
$\subspace{13042\\00130}{}$,
$\subspace{10202\\01041}{15}$,
$\subspace{10214\\01023}{}$,
$\subspace{10221\\01000}{}$,
$\subspace{10233\\01032}{}$,
$\subspace{10240\\01014}{}$,
$\subspace{10302\\01132}{}$,
$\subspace{10314\\01124}{}$,
$\subspace{10321\\01111}{}$,
$\subspace{10333\\01103}{}$,
$\subspace{10340\\01140}{}$,
$\subspace{13002\\00131}{}$,
$\subspace{13011\\00100}{}$,
$\subspace{13020\\00124}{}$,
$\subspace{13034\\00143}{}$,
$\subspace{13043\\00112}{}$,
$\subspace{10202\\01043}{15}$,
$\subspace{10214\\01020}{}$,
$\subspace{10221\\01002}{}$,
$\subspace{10233\\01034}{}$,
$\subspace{10240\\01011}{}$,
$\subspace{10304\\01104}{}$,
$\subspace{10311\\01141}{}$,
$\subspace{10323\\01133}{}$,
$\subspace{10330\\01120}{}$,
$\subspace{10342\\01112}{}$,
$\subspace{13004\\00140}{}$,
$\subspace{13013\\00114}{}$,
$\subspace{13022\\00133}{}$,
$\subspace{13031\\00102}{}$,
$\subspace{13040\\00121}{}$,
$\subspace{10203\\01040}{15}$,
$\subspace{10210\\01022}{}$,
$\subspace{10222\\01004}{}$,
$\subspace{10234\\01031}{}$,
$\subspace{10241\\01013}{}$,
$\subspace{10304\\01134}{}$,
$\subspace{10311\\01121}{}$,
$\subspace{10323\\01113}{}$,
$\subspace{10330\\01100}{}$,
$\subspace{10342\\01142}{}$,
$\subspace{13004\\00123}{}$,
$\subspace{13013\\00142}{}$,
$\subspace{13022\\00111}{}$,
$\subspace{13031\\00130}{}$,
$\subspace{13040\\00104}{}$,
$\subspace{10201\\01103}{15}$,
$\subspace{10213\\01111}{}$,
$\subspace{10220\\01124}{}$,
$\subspace{10232\\01132}{}$,
$\subspace{10244\\01140}{}$,
$\subspace{10300\\01023}{}$,
$\subspace{10312\\01041}{}$,
$\subspace{10324\\01014}{}$,
$\subspace{10331\\01032}{}$,
$\subspace{10343\\01000}{}$,
$\subspace{12001\\00100}{}$,
$\subspace{12010\\00131}{}$,
$\subspace{12024\\00112}{}$,
$\subspace{12033\\00143}{}$,
$\subspace{12042\\00124}{}$,
$\subspace{10202\\01112}{15}$,
$\subspace{10214\\01120}{}$,
$\subspace{10221\\01133}{}$,
$\subspace{10233\\01141}{}$,
$\subspace{10240\\01104}{}$,
$\subspace{10301\\01020}{}$,
$\subspace{10313\\01043}{}$,
$\subspace{10320\\01011}{}$,
$\subspace{10332\\01034}{}$,
$\subspace{10344\\01002}{}$,
$\subspace{12002\\00140}{}$,
$\subspace{12011\\00121}{}$,
$\subspace{12020\\00102}{}$,
$\subspace{12034\\00133}{}$,
$\subspace{12043\\00114}{}$,
$\subspace{10203\\01111}{15}$,
$\subspace{10210\\01124}{}$,
$\subspace{10222\\01132}{}$,
$\subspace{10234\\01140}{}$,
$\subspace{10241\\01103}{}$,
$\subspace{10304\\01023}{}$,
$\subspace{10311\\01041}{}$,
$\subspace{10323\\01014}{}$,
$\subspace{10330\\01032}{}$,
$\subspace{10342\\01000}{}$,
$\subspace{12003\\00124}{}$,
$\subspace{12012\\00100}{}$,
$\subspace{12021\\00131}{}$,
$\subspace{12030\\00112}{}$,
$\subspace{12044\\00143}{}$,
$\subspace{10200\\01121}{15}$,
$\subspace{10212\\01134}{}$,
$\subspace{10224\\01142}{}$,
$\subspace{10231\\01100}{}$,
$\subspace{10243\\01113}{}$,
$\subspace{10304\\01013}{}$,
$\subspace{10311\\01031}{}$,
$\subspace{10323\\01004}{}$,
$\subspace{10330\\01022}{}$,
$\subspace{10342\\01040}{}$,
$\subspace{12000\\00123}{}$,
$\subspace{12014\\00104}{}$,
$\subspace{12023\\00130}{}$,
$\subspace{12032\\00111}{}$,
$\subspace{12041\\00142}{}$,
$\subspace{10200\\01121}{15}$,
$\subspace{10212\\01134}{}$,
$\subspace{10224\\01142}{}$,
$\subspace{10231\\01100}{}$,
$\subspace{10243\\01113}{}$,
$\subspace{10304\\01013}{}$,
$\subspace{10311\\01031}{}$,
$\subspace{10323\\01004}{}$,
$\subspace{10330\\01022}{}$,
$\subspace{10342\\01040}{}$,
$\subspace{12000\\00123}{}$,
$\subspace{12014\\00104}{}$,
$\subspace{12023\\00130}{}$,
$\subspace{12032\\00111}{}$,
$\subspace{12041\\00142}{}$,
$\subspace{10203\\01130}{15}$,
$\subspace{10210\\01143}{}$,
$\subspace{10222\\01101}{}$,
$\subspace{10234\\01114}{}$,
$\subspace{10241\\01122}{}$,
$\subspace{10302\\01001}{}$,
$\subspace{10314\\01024}{}$,
$\subspace{10321\\01042}{}$,
$\subspace{10333\\01010}{}$,
$\subspace{10340\\01033}{}$,
$\subspace{12003\\00101}{}$,
$\subspace{12012\\00132}{}$,
$\subspace{12021\\00113}{}$,
$\subspace{12030\\00144}{}$,
$\subspace{12044\\00120}{}$,
$\subspace{10203\\01130}{15}$,
$\subspace{10210\\01143}{}$,
$\subspace{10222\\01101}{}$,
$\subspace{10234\\01114}{}$,
$\subspace{10241\\01122}{}$,
$\subspace{10302\\01001}{}$,
$\subspace{10314\\01024}{}$,
$\subspace{10321\\01042}{}$,
$\subspace{10333\\01010}{}$,
$\subspace{10340\\01033}{}$,
$\subspace{12003\\00101}{}$,
$\subspace{12012\\00132}{}$,
$\subspace{12021\\00113}{}$,
$\subspace{12030\\00144}{}$,
$\subspace{12044\\00120}{}$,
$\subspace{10200\\01202}{15}$,
$\subspace{10212\\01241}{}$,
$\subspace{10224\\01230}{}$,
$\subspace{10231\\01224}{}$,
$\subspace{10243\\01213}{}$,
$\subspace{10201\\01411}{}$,
$\subspace{10213\\01412}{}$,
$\subspace{10220\\01413}{}$,
$\subspace{10232\\01414}{}$,
$\subspace{10244\\01410}{}$,
$\subspace{10403\\01331}{}$,
$\subspace{10410\\01341}{}$,
$\subspace{10422\\01301}{}$,
$\subspace{10434\\01311}{}$,
$\subspace{10441\\01321}{}$,
$\subspace{10201\\01213}{15}$,
$\subspace{10213\\01202}{}$,
$\subspace{10220\\01241}{}$,
$\subspace{10232\\01230}{}$,
$\subspace{10244\\01224}{}$,
$\subspace{10201\\01414}{}$,
$\subspace{10213\\01410}{}$,
$\subspace{10220\\01411}{}$,
$\subspace{10232\\01412}{}$,
$\subspace{10244\\01413}{}$,
$\subspace{10402\\01311}{}$,
$\subspace{10414\\01321}{}$,
$\subspace{10421\\01331}{}$,
$\subspace{10433\\01341}{}$,
$\subspace{10440\\01301}{}$,
$\subspace{10202\\01210}{15}$,
$\subspace{10214\\01204}{}$,
$\subspace{10221\\01243}{}$,
$\subspace{10233\\01232}{}$,
$\subspace{10240\\01221}{}$,
$\subspace{10202\\01432}{}$,
$\subspace{10214\\01433}{}$,
$\subspace{10221\\01434}{}$,
$\subspace{10233\\01430}{}$,
$\subspace{10240\\01431}{}$,
$\subspace{10401\\01312}{}$,
$\subspace{10413\\01322}{}$,
$\subspace{10420\\01332}{}$,
$\subspace{10432\\01342}{}$,
$\subspace{10444\\01302}{}$,
$\subspace{10200\\01243}{15}$,
$\subspace{10212\\01232}{}$,
$\subspace{10224\\01221}{}$,
$\subspace{10231\\01210}{}$,
$\subspace{10243\\01204}{}$,
$\subspace{10202\\01431}{}$,
$\subspace{10214\\01432}{}$,
$\subspace{10221\\01433}{}$,
$\subspace{10233\\01434}{}$,
$\subspace{10240\\01430}{}$,
$\subspace{10403\\01302}{}$,
$\subspace{10410\\01312}{}$,
$\subspace{10422\\01322}{}$,
$\subspace{10434\\01332}{}$,
$\subspace{10441\\01342}{}$,
$\subspace{10201\\01244}{15}$,
$\subspace{10213\\01233}{}$,
$\subspace{10220\\01222}{}$,
$\subspace{10232\\01211}{}$,
$\subspace{10244\\01200}{}$,
$\subspace{10204\\01442}{}$,
$\subspace{10211\\01443}{}$,
$\subspace{10223\\01444}{}$,
$\subspace{10230\\01440}{}$,
$\subspace{10242\\01441}{}$,
$\subspace{10403\\01330}{}$,
$\subspace{10410\\01340}{}$,
$\subspace{10422\\01300}{}$,
$\subspace{10434\\01310}{}$,
$\subspace{10441\\01320}{}$,
$\subspace{10203\\01244}{15}$,
$\subspace{10210\\01233}{}$,
$\subspace{10222\\01222}{}$,
$\subspace{10234\\01211}{}$,
$\subspace{10241\\01200}{}$,
$\subspace{10200\\01440}{}$,
$\subspace{10212\\01441}{}$,
$\subspace{10224\\01442}{}$,
$\subspace{10231\\01443}{}$,
$\subspace{10243\\01444}{}$,
$\subspace{10400\\01300}{}$,
$\subspace{10412\\01310}{}$,
$\subspace{10424\\01320}{}$,
$\subspace{10431\\01330}{}$,
$\subspace{10443\\01340}{}$,
$\subspace{10303\\01303}{15}$,
$\subspace{10310\\01333}{}$,
$\subspace{10322\\01313}{}$,
$\subspace{10334\\01343}{}$,
$\subspace{10341\\01323}{}$,
$\subspace{10404\\01212}{}$,
$\subspace{10411\\01234}{}$,
$\subspace{10423\\01201}{}$,
$\subspace{10430\\01223}{}$,
$\subspace{10442\\01240}{}$,
$\subspace{10400\\01401}{}$,
$\subspace{10412\\01404}{}$,
$\subspace{10424\\01402}{}$,
$\subspace{10431\\01400}{}$,
$\subspace{10443\\01403}{}$,
$\subspace{10300\\01310}{15}$,
$\subspace{10312\\01340}{}$,
$\subspace{10324\\01320}{}$,
$\subspace{10331\\01300}{}$,
$\subspace{10343\\01330}{}$,
$\subspace{10402\\01233}{}$,
$\subspace{10414\\01200}{}$,
$\subspace{10421\\01222}{}$,
$\subspace{10433\\01244}{}$,
$\subspace{10440\\01211}{}$,
$\subspace{10404\\01443}{}$,
$\subspace{10411\\01441}{}$,
$\subspace{10423\\01444}{}$,
$\subspace{10430\\01442}{}$,
$\subspace{10442\\01440}{}$,
$\subspace{10300\\01310}{15}$,
$\subspace{10312\\01340}{}$,
$\subspace{10324\\01320}{}$,
$\subspace{10331\\01300}{}$,
$\subspace{10343\\01330}{}$,
$\subspace{10402\\01233}{}$,
$\subspace{10414\\01200}{}$,
$\subspace{10421\\01222}{}$,
$\subspace{10433\\01244}{}$,
$\subspace{10440\\01211}{}$,
$\subspace{10404\\01443}{}$,
$\subspace{10411\\01441}{}$,
$\subspace{10423\\01444}{}$,
$\subspace{10430\\01442}{}$,
$\subspace{10442\\01440}{}$,
$\subspace{10300\\01310}{15}$,
$\subspace{10312\\01340}{}$,
$\subspace{10324\\01320}{}$,
$\subspace{10331\\01300}{}$,
$\subspace{10343\\01330}{}$,
$\subspace{10402\\01233}{}$,
$\subspace{10414\\01200}{}$,
$\subspace{10421\\01222}{}$,
$\subspace{10433\\01244}{}$,
$\subspace{10440\\01211}{}$,
$\subspace{10404\\01443}{}$,
$\subspace{10411\\01441}{}$,
$\subspace{10423\\01444}{}$,
$\subspace{10430\\01442}{}$,
$\subspace{10442\\01440}{}$,
$\subspace{10304\\01311}{15}$,
$\subspace{10311\\01341}{}$,
$\subspace{10323\\01321}{}$,
$\subspace{10330\\01301}{}$,
$\subspace{10342\\01331}{}$,
$\subspace{10402\\01213}{}$,
$\subspace{10414\\01230}{}$,
$\subspace{10421\\01202}{}$,
$\subspace{10433\\01224}{}$,
$\subspace{10440\\01241}{}$,
$\subspace{10402\\01414}{}$,
$\subspace{10414\\01412}{}$,
$\subspace{10421\\01410}{}$,
$\subspace{10433\\01413}{}$,
$\subspace{10440\\01411}{}$,
$\subspace{10300\\01333}{15}$,
$\subspace{10312\\01313}{}$,
$\subspace{10324\\01343}{}$,
$\subspace{10331\\01323}{}$,
$\subspace{10343\\01303}{}$,
$\subspace{10403\\01240}{}$,
$\subspace{10410\\01212}{}$,
$\subspace{10422\\01234}{}$,
$\subspace{10434\\01201}{}$,
$\subspace{10441\\01223}{}$,
$\subspace{10404\\01401}{}$,
$\subspace{10411\\01404}{}$,
$\subspace{10423\\01402}{}$,
$\subspace{10430\\01400}{}$,
$\subspace{10442\\01403}{}$,
$\subspace{10302\\01341}{15}$,
$\subspace{10314\\01321}{}$,
$\subspace{10321\\01301}{}$,
$\subspace{10333\\01331}{}$,
$\subspace{10340\\01311}{}$,
$\subspace{10404\\01224}{}$,
$\subspace{10411\\01241}{}$,
$\subspace{10423\\01213}{}$,
$\subspace{10430\\01230}{}$,
$\subspace{10442\\01202}{}$,
$\subspace{10402\\01412}{}$,
$\subspace{10414\\01410}{}$,
$\subspace{10421\\01413}{}$,
$\subspace{10433\\01411}{}$,
$\subspace{10440\\01414}{}$,
$\subspace{11202\\00011}{3}$,
$\subspace{11410\\00001}{}$,
$\subspace{13403\\00010}{}$,
$\subspace{12103\\00010}{3}$,
$\subspace{14130\\00001}{}$,
$\subspace{14302\\00011}{}$.

\smallskip

$n_5(5,2;44)\ge 1096$,$\left[\!\begin{smallmatrix}10000\\00100\\04410\\00001\\00044\end{smallmatrix}\!\right]$:
$\subspace{01001\\00104}{15}$,
$\subspace{01004\\00101}{}$,
$\subspace{01004\\00123}{}$,
$\subspace{01010\\00111}{}$,
$\subspace{01013\\00113}{}$,
$\subspace{01013\\00130}{}$,
$\subspace{01022\\00120}{}$,
$\subspace{01024\\00123}{}$,
$\subspace{01022\\00142}{}$,
$\subspace{01031\\00104}{}$,
$\subspace{01031\\00132}{}$,
$\subspace{01033\\00130}{}$,
$\subspace{01040\\00111}{}$,
$\subspace{01040\\00144}{}$,
$\subspace{01042\\00142}{}$,
$\subspace{01000\\00112}{15}$,
$\subspace{01000\\00140}{}$,
$\subspace{01002\\00143}{}$,
$\subspace{01011\\00100}{}$,
$\subspace{01014\\00102}{}$,
$\subspace{01014\\00124}{}$,
$\subspace{01020\\00112}{}$,
$\subspace{01023\\00114}{}$,
$\subspace{01023\\00131}{}$,
$\subspace{01032\\00121}{}$,
$\subspace{01034\\00124}{}$,
$\subspace{01032\\00143}{}$,
$\subspace{01041\\00100}{}$,
$\subspace{01041\\00133}{}$,
$\subspace{01043\\00131}{}$,
$\subspace{01000\\00124}{5}$,
$\subspace{01014\\00131}{}$,
$\subspace{01023\\00143}{}$,
$\subspace{01032\\00100}{}$,
$\subspace{01041\\00112}{}$,
$\subspace{01004\\00130}{5}$,
$\subspace{01013\\00142}{}$,
$\subspace{01022\\00104}{}$,
$\subspace{01031\\00111}{}$,
$\subspace{01040\\00123}{}$,
$\subspace{10000\\00001}{3}$,
$\subspace{10000\\00010}{}$,
$\subspace{10000\\00011}{}$,
$\subspace{10000\\00001}{3}$,
$\subspace{10000\\00010}{}$,
$\subspace{10000\\00011}{}$,
$\subspace{10000\\00012}{3}$,
$\subspace{10000\\00013}{}$,
$\subspace{10000\\00014}{}$,
$\subspace{10001\\00012}{3}$,
$\subspace{10001\\00014}{}$,
$\subspace{10002\\00013}{}$,
$\subspace{10002\\00010}{3}$,
$\subspace{10003\\00011}{}$,
$\subspace{10030\\00001}{}$,
$\subspace{10002\\00011}{3}$,
$\subspace{10003\\00010}{}$,
$\subspace{10020\\00001}{}$,
$\subspace{10002\\00012}{3}$,
$\subspace{10002\\00014}{}$,
$\subspace{10004\\00013}{}$,
$\subspace{10002\\00012}{3}$,
$\subspace{10002\\00014}{}$,
$\subspace{10004\\00013}{}$,
$\subspace{10021\\00104}{15}$,
$\subspace{10021\\00111}{}$,
$\subspace{10021\\00123}{}$,
$\subspace{10021\\00130}{}$,
$\subspace{10021\\00142}{}$,
$\subspace{10043\\01004}{}$,
$\subspace{10043\\01013}{}$,
$\subspace{10043\\01022}{}$,
$\subspace{10043\\01031}{}$,
$\subspace{10043\\01040}{}$,
$\subspace{10041\\01100}{}$,
$\subspace{10041\\01113}{}$,
$\subspace{10041\\01121}{}$,
$\subspace{10041\\01134}{}$,
$\subspace{10041\\01142}{}$,
$\subspace{10031\\00102}{15}$,
$\subspace{10031\\00114}{}$,
$\subspace{10031\\00121}{}$,
$\subspace{10031\\00133}{}$,
$\subspace{10031\\00140}{}$,
$\subspace{10032\\01002}{}$,
$\subspace{10032\\01011}{}$,
$\subspace{10032\\01020}{}$,
$\subspace{10032\\01034}{}$,
$\subspace{10032\\01043}{}$,
$\subspace{10042\\01104}{}$,
$\subspace{10042\\01112}{}$,
$\subspace{10042\\01120}{}$,
$\subspace{10042\\01133}{}$,
$\subspace{10042\\01141}{}$,
$\subspace{10034\\00100}{15}$,
$\subspace{10034\\00112}{}$,
$\subspace{10034\\00124}{}$,
$\subspace{10034\\00131}{}$,
$\subspace{10034\\00143}{}$,
$\subspace{10012\\01000}{}$,
$\subspace{10012\\01014}{}$,
$\subspace{10012\\01023}{}$,
$\subspace{10012\\01032}{}$,
$\subspace{10012\\01041}{}$,
$\subspace{10014\\01103}{}$,
$\subspace{10014\\01111}{}$,
$\subspace{10014\\01124}{}$,
$\subspace{10014\\01132}{}$,
$\subspace{10014\\01140}{}$,
$\subspace{10203\\00012}{3}$,
$\subspace{12003\\00014}{}$,
$\subspace{13300\\00013}{}$,
$\subspace{10203\\00012}{3}$,
$\subspace{12003\\00014}{}$,
$\subspace{13300\\00013}{}$,
$\subspace{10301\\00012}{3}$,
$\subspace{12203\\00013}{}$,
$\subspace{13001\\00014}{}$,
$\subspace{10301\\00012}{3}$,
$\subspace{12203\\00013}{}$,
$\subspace{13001\\00014}{}$,
$\subspace{10022\\01200}{15}$,
$\subspace{10022\\01211}{}$,
$\subspace{10022\\01222}{}$,
$\subspace{10022\\01233}{}$,
$\subspace{10022\\01244}{}$,
$\subspace{10030\\01300}{}$,
$\subspace{10030\\01310}{}$,
$\subspace{10030\\01320}{}$,
$\subspace{10030\\01330}{}$,
$\subspace{10030\\01340}{}$,
$\subspace{10003\\01440}{}$,
$\subspace{10003\\01441}{}$,
$\subspace{10003\\01442}{}$,
$\subspace{10003\\01443}{}$,
$\subspace{10003\\01444}{}$,
$\subspace{10023\\01201}{15}$,
$\subspace{10023\\01212}{}$,
$\subspace{10023\\01223}{}$,
$\subspace{10023\\01234}{}$,
$\subspace{10023\\01240}{}$,
$\subspace{10024\\01303}{}$,
$\subspace{10024\\01313}{}$,
$\subspace{10024\\01323}{}$,
$\subspace{10024\\01333}{}$,
$\subspace{10024\\01343}{}$,
$\subspace{10013\\01400}{}$,
$\subspace{10013\\01401}{}$,
$\subspace{10013\\01402}{}$,
$\subspace{10013\\01403}{}$,
$\subspace{10013\\01404}{}$,
$\subspace{10040\\01204}{15}$,
$\subspace{10040\\01210}{}$,
$\subspace{10040\\01221}{}$,
$\subspace{10040\\01232}{}$,
$\subspace{10040\\01243}{}$,
$\subspace{10004\\01302}{}$,
$\subspace{10004\\01312}{}$,
$\subspace{10004\\01322}{}$,
$\subspace{10004\\01332}{}$,
$\subspace{10004\\01342}{}$,
$\subspace{10011\\01430}{}$,
$\subspace{10011\\01431}{}$,
$\subspace{10011\\01432}{}$,
$\subspace{10011\\01433}{}$,
$\subspace{10011\\01434}{}$,
$\subspace{10044\\01201}{15}$,
$\subspace{10044\\01212}{}$,
$\subspace{10044\\01223}{}$,
$\subspace{10044\\01234}{}$,
$\subspace{10044\\01240}{}$,
$\subspace{10010\\01303}{}$,
$\subspace{10010\\01313}{}$,
$\subspace{10010\\01323}{}$,
$\subspace{10010\\01333}{}$,
$\subspace{10010\\01343}{}$,
$\subspace{10001\\01400}{}$,
$\subspace{10001\\01401}{}$,
$\subspace{10001\\01402}{}$,
$\subspace{10001\\01403}{}$,
$\subspace{10001\\01404}{}$,
$\subspace{10104\\01030}{15}$,
$\subspace{10111\\01044}{}$,
$\subspace{10123\\01003}{}$,
$\subspace{10130\\01012}{}$,
$\subspace{10142\\01021}{}$,
$\subspace{10401\\01144}{}$,
$\subspace{10413\\01123}{}$,
$\subspace{10420\\01102}{}$,
$\subspace{10432\\01131}{}$,
$\subspace{10444\\01110}{}$,
$\subspace{14001\\00122}{}$,
$\subspace{14010\\00110}{}$,
$\subspace{14024\\00103}{}$,
$\subspace{14033\\00141}{}$,
$\subspace{14042\\00134}{}$,
$\subspace{10104\\01030}{15}$,
$\subspace{10111\\01044}{}$,
$\subspace{10123\\01003}{}$,
$\subspace{10130\\01012}{}$,
$\subspace{10142\\01021}{}$,
$\subspace{10401\\01144}{}$,
$\subspace{10413\\01123}{}$,
$\subspace{10420\\01102}{}$,
$\subspace{10432\\01131}{}$,
$\subspace{10444\\01110}{}$,
$\subspace{14001\\00122}{}$,
$\subspace{14010\\00110}{}$,
$\subspace{14024\\00103}{}$,
$\subspace{14033\\00141}{}$,
$\subspace{14042\\00134}{}$,
$\subspace{10104\\01030}{15}$,
$\subspace{10111\\01044}{}$,
$\subspace{10123\\01003}{}$,
$\subspace{10130\\01012}{}$,
$\subspace{10142\\01021}{}$,
$\subspace{10401\\01144}{}$,
$\subspace{10413\\01123}{}$,
$\subspace{10420\\01102}{}$,
$\subspace{10432\\01131}{}$,
$\subspace{10444\\01110}{}$,
$\subspace{14001\\00122}{}$,
$\subspace{14010\\00110}{}$,
$\subspace{14024\\00103}{}$,
$\subspace{14033\\00141}{}$,
$\subspace{14042\\00134}{}$,
$\subspace{10104\\01032}{15}$,
$\subspace{10111\\01041}{}$,
$\subspace{10123\\01000}{}$,
$\subspace{10130\\01014}{}$,
$\subspace{10142\\01023}{}$,
$\subspace{10402\\01111}{}$,
$\subspace{10414\\01140}{}$,
$\subspace{10421\\01124}{}$,
$\subspace{10433\\01103}{}$,
$\subspace{10440\\01132}{}$,
$\subspace{14002\\00131}{}$,
$\subspace{14011\\00124}{}$,
$\subspace{14020\\00112}{}$,
$\subspace{14034\\00100}{}$,
$\subspace{14043\\00143}{}$,
$\subspace{10104\\01043}{15}$,
$\subspace{10111\\01002}{}$,
$\subspace{10123\\01011}{}$,
$\subspace{10130\\01020}{}$,
$\subspace{10142\\01034}{}$,
$\subspace{10401\\01112}{}$,
$\subspace{10413\\01141}{}$,
$\subspace{10420\\01120}{}$,
$\subspace{10432\\01104}{}$,
$\subspace{10444\\01133}{}$,
$\subspace{14001\\00114}{}$,
$\subspace{14010\\00102}{}$,
$\subspace{14024\\00140}{}$,
$\subspace{14033\\00133}{}$,
$\subspace{14042\\00121}{}$,
$\subspace{10100\\01114}{15}$,
$\subspace{10112\\01130}{}$,
$\subspace{10124\\01101}{}$,
$\subspace{10131\\01122}{}$,
$\subspace{10143\\01143}{}$,
$\subspace{10400\\01024}{}$,
$\subspace{10412\\01010}{}$,
$\subspace{10424\\01001}{}$,
$\subspace{10431\\01042}{}$,
$\subspace{10443\\01033}{}$,
$\subspace{11000\\00132}{}$,
$\subspace{11014\\00144}{}$,
$\subspace{11023\\00101}{}$,
$\subspace{11032\\00113}{}$,
$\subspace{11041\\00120}{}$,
$\subspace{10100\\01114}{15}$,
$\subspace{10112\\01130}{}$,
$\subspace{10124\\01101}{}$,
$\subspace{10131\\01122}{}$,
$\subspace{10143\\01143}{}$,
$\subspace{10400\\01024}{}$,
$\subspace{10412\\01010}{}$,
$\subspace{10424\\01001}{}$,
$\subspace{10431\\01042}{}$,
$\subspace{10443\\01033}{}$,
$\subspace{11000\\00132}{}$,
$\subspace{11014\\00144}{}$,
$\subspace{11023\\00101}{}$,
$\subspace{11032\\00113}{}$,
$\subspace{11041\\00120}{}$,
$\subspace{10101\\01120}{15}$,
$\subspace{10113\\01141}{}$,
$\subspace{10120\\01112}{}$,
$\subspace{10132\\01133}{}$,
$\subspace{10144\\01104}{}$,
$\subspace{10404\\01034}{}$,
$\subspace{10411\\01020}{}$,
$\subspace{10423\\01011}{}$,
$\subspace{10430\\01002}{}$,
$\subspace{10442\\01043}{}$,
$\subspace{11001\\00121}{}$,
$\subspace{11010\\00133}{}$,
$\subspace{11024\\00140}{}$,
$\subspace{11033\\00102}{}$,
$\subspace{11042\\00114}{}$,
$\subspace{10101\\01120}{15}$,
$\subspace{10113\\01141}{}$,
$\subspace{10120\\01112}{}$,
$\subspace{10132\\01133}{}$,
$\subspace{10144\\01104}{}$,
$\subspace{10404\\01034}{}$,
$\subspace{10411\\01020}{}$,
$\subspace{10423\\01011}{}$,
$\subspace{10430\\01002}{}$,
$\subspace{10442\\01043}{}$,
$\subspace{11001\\00121}{}$,
$\subspace{11010\\00133}{}$,
$\subspace{11024\\00140}{}$,
$\subspace{11033\\00102}{}$,
$\subspace{11042\\00114}{}$,
$\subspace{10101\\01120}{15}$,
$\subspace{10113\\01141}{}$,
$\subspace{10120\\01112}{}$,
$\subspace{10132\\01133}{}$,
$\subspace{10144\\01104}{}$,
$\subspace{10404\\01034}{}$,
$\subspace{10411\\01020}{}$,
$\subspace{10423\\01011}{}$,
$\subspace{10430\\01002}{}$,
$\subspace{10442\\01043}{}$,
$\subspace{11001\\00121}{}$,
$\subspace{11010\\00133}{}$,
$\subspace{11024\\00140}{}$,
$\subspace{11033\\00102}{}$,
$\subspace{11042\\00114}{}$,
$\subspace{10101\\01120}{15}$,
$\subspace{10113\\01141}{}$,
$\subspace{10120\\01112}{}$,
$\subspace{10132\\01133}{}$,
$\subspace{10144\\01104}{}$,
$\subspace{10404\\01034}{}$,
$\subspace{10411\\01020}{}$,
$\subspace{10423\\01011}{}$,
$\subspace{10430\\01002}{}$,
$\subspace{10442\\01043}{}$,
$\subspace{11001\\00121}{}$,
$\subspace{11010\\00133}{}$,
$\subspace{11024\\00140}{}$,
$\subspace{11033\\00102}{}$,
$\subspace{11042\\00114}{}$,
$\subspace{10102\\01131}{15}$,
$\subspace{10114\\01102}{}$,
$\subspace{10121\\01123}{}$,
$\subspace{10133\\01144}{}$,
$\subspace{10140\\01110}{}$,
$\subspace{10403\\01044}{}$,
$\subspace{10410\\01030}{}$,
$\subspace{10422\\01021}{}$,
$\subspace{10434\\01012}{}$,
$\subspace{10441\\01003}{}$,
$\subspace{11002\\00110}{}$,
$\subspace{11011\\00122}{}$,
$\subspace{11020\\00134}{}$,
$\subspace{11034\\00141}{}$,
$\subspace{11043\\00103}{}$,
$\subspace{10104\\01130}{15}$,
$\subspace{10111\\01101}{}$,
$\subspace{10123\\01122}{}$,
$\subspace{10130\\01143}{}$,
$\subspace{10142\\01114}{}$,
$\subspace{10401\\01001}{}$,
$\subspace{10413\\01042}{}$,
$\subspace{10420\\01033}{}$,
$\subspace{10432\\01024}{}$,
$\subspace{10444\\01010}{}$,
$\subspace{11004\\00101}{}$,
$\subspace{11013\\00113}{}$,
$\subspace{11022\\00120}{}$,
$\subspace{11031\\00132}{}$,
$\subspace{11040\\00144}{}$,
$\subspace{10102\\01211}{15}$,
$\subspace{10114\\01244}{}$,
$\subspace{10121\\01222}{}$,
$\subspace{10133\\01200}{}$,
$\subspace{10140\\01233}{}$,
$\subspace{10101\\01442}{}$,
$\subspace{10113\\01444}{}$,
$\subspace{10120\\01441}{}$,
$\subspace{10132\\01443}{}$,
$\subspace{10144\\01440}{}$,
$\subspace{10201\\01300}{}$,
$\subspace{10213\\01320}{}$,
$\subspace{10220\\01340}{}$,
$\subspace{10232\\01310}{}$,
$\subspace{10244\\01330}{}$,
$\subspace{10104\\01212}{15}$,
$\subspace{10111\\01240}{}$,
$\subspace{10123\\01223}{}$,
$\subspace{10130\\01201}{}$,
$\subspace{10142\\01234}{}$,
$\subspace{10104\\01400}{}$,
$\subspace{10111\\01402}{}$,
$\subspace{10123\\01404}{}$,
$\subspace{10130\\01401}{}$,
$\subspace{10142\\01403}{}$,
$\subspace{10200\\01313}{}$,
$\subspace{10212\\01333}{}$,
$\subspace{10224\\01303}{}$,
$\subspace{10231\\01323}{}$,
$\subspace{10243\\01343}{}$,
$\subspace{10100\\01224}{15}$,
$\subspace{10112\\01202}{}$,
$\subspace{10124\\01230}{}$,
$\subspace{10131\\01213}{}$,
$\subspace{10143\\01241}{}$,
$\subspace{10100\\01414}{}$,
$\subspace{10112\\01411}{}$,
$\subspace{10124\\01413}{}$,
$\subspace{10131\\01410}{}$,
$\subspace{10143\\01412}{}$,
$\subspace{10202\\01321}{}$,
$\subspace{10214\\01341}{}$,
$\subspace{10221\\01311}{}$,
$\subspace{10233\\01331}{}$,
$\subspace{10240\\01301}{}$,
$\subspace{10100\\01224}{15}$,
$\subspace{10112\\01202}{}$,
$\subspace{10124\\01230}{}$,
$\subspace{10131\\01213}{}$,
$\subspace{10143\\01241}{}$,
$\subspace{10100\\01414}{}$,
$\subspace{10112\\01411}{}$,
$\subspace{10124\\01413}{}$,
$\subspace{10131\\01410}{}$,
$\subspace{10143\\01412}{}$,
$\subspace{10202\\01321}{}$,
$\subspace{10214\\01341}{}$,
$\subspace{10221\\01311}{}$,
$\subspace{10233\\01331}{}$,
$\subspace{10240\\01301}{}$,
$\subspace{10100\\01232}{15}$,
$\subspace{10112\\01210}{}$,
$\subspace{10124\\01243}{}$,
$\subspace{10131\\01221}{}$,
$\subspace{10143\\01204}{}$,
$\subspace{10100\\01432}{}$,
$\subspace{10112\\01434}{}$,
$\subspace{10124\\01431}{}$,
$\subspace{10131\\01433}{}$,
$\subspace{10143\\01430}{}$,
$\subspace{10200\\01332}{}$,
$\subspace{10212\\01302}{}$,
$\subspace{10224\\01322}{}$,
$\subspace{10231\\01342}{}$,
$\subspace{10243\\01312}{}$,
$\subspace{10103\\01234}{15}$,
$\subspace{10110\\01212}{}$,
$\subspace{10122\\01240}{}$,
$\subspace{10134\\01223}{}$,
$\subspace{10141\\01201}{}$,
$\subspace{10103\\01400}{}$,
$\subspace{10110\\01402}{}$,
$\subspace{10122\\01404}{}$,
$\subspace{10134\\01401}{}$,
$\subspace{10141\\01403}{}$,
$\subspace{10202\\01333}{}$,
$\subspace{10214\\01303}{}$,
$\subspace{10221\\01323}{}$,
$\subspace{10233\\01343}{}$,
$\subspace{10240\\01313}{}$,
$\subspace{10101\\01244}{15}$,
$\subspace{10113\\01222}{}$,
$\subspace{10120\\01200}{}$,
$\subspace{10132\\01233}{}$,
$\subspace{10144\\01211}{}$,
$\subspace{10101\\01441}{}$,
$\subspace{10113\\01443}{}$,
$\subspace{10120\\01440}{}$,
$\subspace{10132\\01442}{}$,
$\subspace{10144\\01444}{}$,
$\subspace{10202\\01340}{}$,
$\subspace{10214\\01310}{}$,
$\subspace{10221\\01330}{}$,
$\subspace{10233\\01300}{}$,
$\subspace{10240\\01320}{}$,
$\subspace{10103\\01242}{15}$,
$\subspace{10110\\01220}{}$,
$\subspace{10122\\01203}{}$,
$\subspace{10134\\01231}{}$,
$\subspace{10141\\01214}{}$,
$\subspace{10103\\01423}{}$,
$\subspace{10110\\01420}{}$,
$\subspace{10122\\01422}{}$,
$\subspace{10134\\01424}{}$,
$\subspace{10141\\01421}{}$,
$\subspace{10200\\01344}{}$,
$\subspace{10212\\01314}{}$,
$\subspace{10224\\01334}{}$,
$\subspace{10231\\01304}{}$,
$\subspace{10243\\01324}{}$,
$\subspace{10102\\01304}{15}$,
$\subspace{10114\\01344}{}$,
$\subspace{10121\\01334}{}$,
$\subspace{10133\\01324}{}$,
$\subspace{10140\\01314}{}$,
$\subspace{10302\\01214}{}$,
$\subspace{10314\\01220}{}$,
$\subspace{10321\\01231}{}$,
$\subspace{10333\\01242}{}$,
$\subspace{10340\\01203}{}$,
$\subspace{10304\\01424}{}$,
$\subspace{10311\\01423}{}$,
$\subspace{10323\\01422}{}$,
$\subspace{10330\\01421}{}$,
$\subspace{10342\\01420}{}$,
$\subspace{10102\\01304}{15}$,
$\subspace{10114\\01344}{}$,
$\subspace{10121\\01334}{}$,
$\subspace{10133\\01324}{}$,
$\subspace{10140\\01314}{}$,
$\subspace{10302\\01214}{}$,
$\subspace{10314\\01220}{}$,
$\subspace{10321\\01231}{}$,
$\subspace{10333\\01242}{}$,
$\subspace{10340\\01203}{}$,
$\subspace{10304\\01424}{}$,
$\subspace{10311\\01423}{}$,
$\subspace{10323\\01422}{}$,
$\subspace{10330\\01421}{}$,
$\subspace{10342\\01420}{}$,
$\subspace{10102\\01304}{15}$,
$\subspace{10114\\01344}{}$,
$\subspace{10121\\01334}{}$,
$\subspace{10133\\01324}{}$,
$\subspace{10140\\01314}{}$,
$\subspace{10302\\01214}{}$,
$\subspace{10314\\01220}{}$,
$\subspace{10321\\01231}{}$,
$\subspace{10333\\01242}{}$,
$\subspace{10340\\01203}{}$,
$\subspace{10304\\01424}{}$,
$\subspace{10311\\01423}{}$,
$\subspace{10323\\01422}{}$,
$\subspace{10330\\01421}{}$,
$\subspace{10342\\01420}{}$,
$\subspace{10102\\01304}{15}$,
$\subspace{10114\\01344}{}$,
$\subspace{10121\\01334}{}$,
$\subspace{10133\\01324}{}$,
$\subspace{10140\\01314}{}$,
$\subspace{10302\\01214}{}$,
$\subspace{10314\\01220}{}$,
$\subspace{10321\\01231}{}$,
$\subspace{10333\\01242}{}$,
$\subspace{10340\\01203}{}$,
$\subspace{10304\\01424}{}$,
$\subspace{10311\\01423}{}$,
$\subspace{10323\\01422}{}$,
$\subspace{10330\\01421}{}$,
$\subspace{10342\\01420}{}$,
$\subspace{10102\\01304}{15}$,
$\subspace{10114\\01344}{}$,
$\subspace{10121\\01334}{}$,
$\subspace{10133\\01324}{}$,
$\subspace{10140\\01314}{}$,
$\subspace{10302\\01214}{}$,
$\subspace{10314\\01220}{}$,
$\subspace{10321\\01231}{}$,
$\subspace{10333\\01242}{}$,
$\subspace{10340\\01203}{}$,
$\subspace{10304\\01424}{}$,
$\subspace{10311\\01423}{}$,
$\subspace{10323\\01422}{}$,
$\subspace{10330\\01421}{}$,
$\subspace{10342\\01420}{}$,
$\subspace{10103\\01312}{15}$,
$\subspace{10110\\01302}{}$,
$\subspace{10122\\01342}{}$,
$\subspace{10134\\01332}{}$,
$\subspace{10141\\01322}{}$,
$\subspace{10300\\01221}{}$,
$\subspace{10312\\01232}{}$,
$\subspace{10324\\01243}{}$,
$\subspace{10331\\01204}{}$,
$\subspace{10343\\01210}{}$,
$\subspace{10302\\01433}{}$,
$\subspace{10314\\01432}{}$,
$\subspace{10321\\01431}{}$,
$\subspace{10333\\01430}{}$,
$\subspace{10340\\01434}{}$,
$\subspace{10103\\01312}{15}$,
$\subspace{10110\\01302}{}$,
$\subspace{10122\\01342}{}$,
$\subspace{10134\\01332}{}$,
$\subspace{10141\\01322}{}$,
$\subspace{10300\\01221}{}$,
$\subspace{10312\\01232}{}$,
$\subspace{10324\\01243}{}$,
$\subspace{10331\\01204}{}$,
$\subspace{10343\\01210}{}$,
$\subspace{10302\\01433}{}$,
$\subspace{10314\\01432}{}$,
$\subspace{10321\\01431}{}$,
$\subspace{10333\\01430}{}$,
$\subspace{10340\\01434}{}$,
$\subspace{10103\\01312}{15}$,
$\subspace{10110\\01302}{}$,
$\subspace{10122\\01342}{}$,
$\subspace{10134\\01332}{}$,
$\subspace{10141\\01322}{}$,
$\subspace{10300\\01221}{}$,
$\subspace{10312\\01232}{}$,
$\subspace{10324\\01243}{}$,
$\subspace{10331\\01204}{}$,
$\subspace{10343\\01210}{}$,
$\subspace{10302\\01433}{}$,
$\subspace{10314\\01432}{}$,
$\subspace{10321\\01431}{}$,
$\subspace{10333\\01430}{}$,
$\subspace{10340\\01434}{}$,
$\subspace{10204\\01002}{15}$,
$\subspace{10211\\01034}{}$,
$\subspace{10223\\01011}{}$,
$\subspace{10230\\01043}{}$,
$\subspace{10242\\01020}{}$,
$\subspace{10301\\01104}{}$,
$\subspace{10313\\01141}{}$,
$\subspace{10320\\01133}{}$,
$\subspace{10332\\01120}{}$,
$\subspace{10344\\01112}{}$,
$\subspace{13001\\00102}{}$,
$\subspace{13010\\00121}{}$,
$\subspace{13024\\00140}{}$,
$\subspace{13033\\00114}{}$,
$\subspace{13042\\00133}{}$,
$\subspace{10204\\01014}{15}$,
$\subspace{10211\\01041}{}$,
$\subspace{10223\\01023}{}$,
$\subspace{10230\\01000}{}$,
$\subspace{10242\\01032}{}$,
$\subspace{10300\\01111}{}$,
$\subspace{10312\\01103}{}$,
$\subspace{10324\\01140}{}$,
$\subspace{10331\\01132}{}$,
$\subspace{10343\\01124}{}$,
$\subspace{13000\\00112}{}$,
$\subspace{13014\\00131}{}$,
$\subspace{13023\\00100}{}$,
$\subspace{13032\\00124}{}$,
$\subspace{13041\\00143}{}$,
$\subspace{10204\\01014}{15}$,
$\subspace{10211\\01041}{}$,
$\subspace{10223\\01023}{}$,
$\subspace{10230\\01000}{}$,
$\subspace{10242\\01032}{}$,
$\subspace{10300\\01111}{}$,
$\subspace{10312\\01103}{}$,
$\subspace{10324\\01140}{}$,
$\subspace{10331\\01132}{}$,
$\subspace{10343\\01124}{}$,
$\subspace{13000\\00112}{}$,
$\subspace{13014\\00131}{}$,
$\subspace{13023\\00100}{}$,
$\subspace{13032\\00124}{}$,
$\subspace{13041\\00143}{}$,
$\subspace{10204\\01014}{15}$,
$\subspace{10211\\01041}{}$,
$\subspace{10223\\01023}{}$,
$\subspace{10230\\01000}{}$,
$\subspace{10242\\01032}{}$,
$\subspace{10300\\01111}{}$,
$\subspace{10312\\01103}{}$,
$\subspace{10324\\01140}{}$,
$\subspace{10331\\01132}{}$,
$\subspace{10343\\01124}{}$,
$\subspace{13000\\00112}{}$,
$\subspace{13014\\00131}{}$,
$\subspace{13023\\00100}{}$,
$\subspace{13032\\00124}{}$,
$\subspace{13041\\00143}{}$,
$\subspace{10200\\01022}{15}$,
$\subspace{10212\\01004}{}$,
$\subspace{10224\\01031}{}$,
$\subspace{10231\\01013}{}$,
$\subspace{10243\\01040}{}$,
$\subspace{10303\\01142}{}$,
$\subspace{10310\\01134}{}$,
$\subspace{10322\\01121}{}$,
$\subspace{10334\\01113}{}$,
$\subspace{10341\\01100}{}$,
$\subspace{13003\\00123}{}$,
$\subspace{13012\\00142}{}$,
$\subspace{13021\\00111}{}$,
$\subspace{13030\\00130}{}$,
$\subspace{13044\\00104}{}$,
$\subspace{10201\\01024}{15}$,
$\subspace{10213\\01001}{}$,
$\subspace{10220\\01033}{}$,
$\subspace{10232\\01010}{}$,
$\subspace{10244\\01042}{}$,
$\subspace{10303\\01122}{}$,
$\subspace{10310\\01114}{}$,
$\subspace{10322\\01101}{}$,
$\subspace{10334\\01143}{}$,
$\subspace{10341\\01130}{}$,
$\subspace{13003\\00101}{}$,
$\subspace{13012\\00120}{}$,
$\subspace{13021\\00144}{}$,
$\subspace{13030\\00113}{}$,
$\subspace{13044\\00132}{}$,
$\subspace{10201\\01024}{15}$,
$\subspace{10213\\01001}{}$,
$\subspace{10220\\01033}{}$,
$\subspace{10232\\01010}{}$,
$\subspace{10244\\01042}{}$,
$\subspace{10303\\01122}{}$,
$\subspace{10310\\01114}{}$,
$\subspace{10322\\01101}{}$,
$\subspace{10334\\01143}{}$,
$\subspace{10341\\01130}{}$,
$\subspace{13003\\00101}{}$,
$\subspace{13012\\00120}{}$,
$\subspace{13021\\00144}{}$,
$\subspace{13030\\00113}{}$,
$\subspace{13044\\00132}{}$,
$\subspace{10204\\01022}{15}$,
$\subspace{10211\\01004}{}$,
$\subspace{10223\\01031}{}$,
$\subspace{10230\\01013}{}$,
$\subspace{10242\\01040}{}$,
$\subspace{10300\\01134}{}$,
$\subspace{10312\\01121}{}$,
$\subspace{10324\\01113}{}$,
$\subspace{10331\\01100}{}$,
$\subspace{10343\\01142}{}$,
$\subspace{13000\\00104}{}$,
$\subspace{13014\\00123}{}$,
$\subspace{13023\\00142}{}$,
$\subspace{13032\\00111}{}$,
$\subspace{13041\\00130}{}$,
$\subspace{10201\\01101}{15}$,
$\subspace{10213\\01114}{}$,
$\subspace{10220\\01122}{}$,
$\subspace{10232\\01130}{}$,
$\subspace{10244\\01143}{}$,
$\subspace{10304\\01033}{}$,
$\subspace{10311\\01001}{}$,
$\subspace{10323\\01024}{}$,
$\subspace{10330\\01042}{}$,
$\subspace{10342\\01010}{}$,
$\subspace{12001\\00120}{}$,
$\subspace{12010\\00101}{}$,
$\subspace{12024\\00132}{}$,
$\subspace{12033\\00113}{}$,
$\subspace{12042\\00144}{}$,
$\subspace{10203\\01102}{15}$,
$\subspace{10210\\01110}{}$,
$\subspace{10222\\01123}{}$,
$\subspace{10234\\01131}{}$,
$\subspace{10241\\01144}{}$,
$\subspace{10304\\01044}{}$,
$\subspace{10311\\01012}{}$,
$\subspace{10323\\01030}{}$,
$\subspace{10330\\01003}{}$,
$\subspace{10342\\01021}{}$,
$\subspace{12003\\00103}{}$,
$\subspace{12012\\00134}{}$,
$\subspace{12021\\00110}{}$,
$\subspace{12030\\00141}{}$,
$\subspace{12044\\00122}{}$,
$\subspace{10200\\01114}{15}$,
$\subspace{10212\\01122}{}$,
$\subspace{10224\\01130}{}$,
$\subspace{10231\\01143}{}$,
$\subspace{10243\\01101}{}$,
$\subspace{10300\\01024}{}$,
$\subspace{10312\\01042}{}$,
$\subspace{10324\\01010}{}$,
$\subspace{10331\\01033}{}$,
$\subspace{10343\\01001}{}$,
$\subspace{12000\\00132}{}$,
$\subspace{12014\\00113}{}$,
$\subspace{12023\\00144}{}$,
$\subspace{12032\\00120}{}$,
$\subspace{12041\\00101}{}$,
$\subspace{10202\\01113}{15}$,
$\subspace{10214\\01121}{}$,
$\subspace{10221\\01134}{}$,
$\subspace{10233\\01142}{}$,
$\subspace{10240\\01100}{}$,
$\subspace{10304\\01040}{}$,
$\subspace{10311\\01013}{}$,
$\subspace{10323\\01031}{}$,
$\subspace{10330\\01004}{}$,
$\subspace{10342\\01022}{}$,
$\subspace{12002\\00130}{}$,
$\subspace{12011\\00111}{}$,
$\subspace{12020\\00142}{}$,
$\subspace{12034\\00123}{}$,
$\subspace{12043\\00104}{}$,
$\subspace{10203\\01123}{15}$,
$\subspace{10210\\01131}{}$,
$\subspace{10222\\01144}{}$,
$\subspace{10234\\01102}{}$,
$\subspace{10241\\01110}{}$,
$\subspace{10303\\01012}{}$,
$\subspace{10310\\01030}{}$,
$\subspace{10322\\01003}{}$,
$\subspace{10334\\01021}{}$,
$\subspace{10341\\01044}{}$,
$\subspace{12003\\00110}{}$,
$\subspace{12012\\00141}{}$,
$\subspace{12021\\00122}{}$,
$\subspace{12030\\00103}{}$,
$\subspace{12044\\00134}{}$,
$\subspace{10202\\01134}{15}$,
$\subspace{10214\\01142}{}$,
$\subspace{10221\\01100}{}$,
$\subspace{10233\\01113}{}$,
$\subspace{10240\\01121}{}$,
$\subspace{10303\\01013}{}$,
$\subspace{10310\\01031}{}$,
$\subspace{10322\\01004}{}$,
$\subspace{10334\\01022}{}$,
$\subspace{10341\\01040}{}$,
$\subspace{12002\\00142}{}$,
$\subspace{12011\\00123}{}$,
$\subspace{12020\\00104}{}$,
$\subspace{12034\\00130}{}$,
$\subspace{12043\\00111}{}$,
$\subspace{10203\\01144}{15}$,
$\subspace{10210\\01102}{}$,
$\subspace{10222\\01110}{}$,
$\subspace{10234\\01123}{}$,
$\subspace{10241\\01131}{}$,
$\subspace{10302\\01030}{}$,
$\subspace{10314\\01003}{}$,
$\subspace{10321\\01021}{}$,
$\subspace{10333\\01044}{}$,
$\subspace{10340\\01012}{}$,
$\subspace{12003\\00122}{}$,
$\subspace{12012\\00103}{}$,
$\subspace{12021\\00134}{}$,
$\subspace{12030\\00110}{}$,
$\subspace{12044\\00141}{}$,
$\subspace{10204\\01144}{15}$,
$\subspace{10211\\01102}{}$,
$\subspace{10223\\01110}{}$,
$\subspace{10230\\01123}{}$,
$\subspace{10242\\01131}{}$,
$\subspace{10303\\01003}{}$,
$\subspace{10310\\01021}{}$,
$\subspace{10322\\01044}{}$,
$\subspace{10334\\01012}{}$,
$\subspace{10341\\01030}{}$,
$\subspace{12004\\00141}{}$,
$\subspace{12013\\00122}{}$,
$\subspace{12022\\00103}{}$,
$\subspace{12031\\00134}{}$,
$\subspace{12040\\00110}{}$,
$\subspace{10201\\01212}{15}$,
$\subspace{10213\\01201}{}$,
$\subspace{10220\\01240}{}$,
$\subspace{10232\\01234}{}$,
$\subspace{10244\\01223}{}$,
$\subspace{10202\\01402}{}$,
$\subspace{10214\\01403}{}$,
$\subspace{10221\\01404}{}$,
$\subspace{10233\\01400}{}$,
$\subspace{10240\\01401}{}$,
$\subspace{10403\\01343}{}$,
$\subspace{10410\\01303}{}$,
$\subspace{10422\\01313}{}$,
$\subspace{10434\\01323}{}$,
$\subspace{10441\\01333}{}$,
$\subspace{10202\\01210}{15}$,
$\subspace{10214\\01204}{}$,
$\subspace{10221\\01243}{}$,
$\subspace{10233\\01232}{}$,
$\subspace{10240\\01221}{}$,
$\subspace{10202\\01432}{}$,
$\subspace{10214\\01433}{}$,
$\subspace{10221\\01434}{}$,
$\subspace{10233\\01430}{}$,
$\subspace{10240\\01431}{}$,
$\subspace{10401\\01312}{}$,
$\subspace{10413\\01322}{}$,
$\subspace{10420\\01332}{}$,
$\subspace{10432\\01342}{}$,
$\subspace{10444\\01302}{}$,
$\subspace{10200\\01224}{15}$,
$\subspace{10212\\01213}{}$,
$\subspace{10224\\01202}{}$,
$\subspace{10231\\01241}{}$,
$\subspace{10243\\01230}{}$,
$\subspace{10200\\01414}{}$,
$\subspace{10212\\01410}{}$,
$\subspace{10224\\01411}{}$,
$\subspace{10231\\01412}{}$,
$\subspace{10243\\01413}{}$,
$\subspace{10404\\01321}{}$,
$\subspace{10411\\01331}{}$,
$\subspace{10423\\01341}{}$,
$\subspace{10430\\01301}{}$,
$\subspace{10442\\01311}{}$,
$\subspace{10203\\01224}{15}$,
$\subspace{10210\\01213}{}$,
$\subspace{10222\\01202}{}$,
$\subspace{10234\\01241}{}$,
$\subspace{10241\\01230}{}$,
$\subspace{10204\\01411}{}$,
$\subspace{10211\\01412}{}$,
$\subspace{10223\\01413}{}$,
$\subspace{10230\\01414}{}$,
$\subspace{10242\\01410}{}$,
$\subspace{10402\\01301}{}$,
$\subspace{10414\\01311}{}$,
$\subspace{10421\\01321}{}$,
$\subspace{10433\\01331}{}$,
$\subspace{10440\\01341}{}$,
$\subspace{10201\\01240}{15}$,
$\subspace{10213\\01234}{}$,
$\subspace{10220\\01223}{}$,
$\subspace{10232\\01212}{}$,
$\subspace{10244\\01201}{}$,
$\subspace{10203\\01404}{}$,
$\subspace{10210\\01400}{}$,
$\subspace{10222\\01401}{}$,
$\subspace{10234\\01402}{}$,
$\subspace{10241\\01403}{}$,
$\subspace{10402\\01303}{}$,
$\subspace{10414\\01313}{}$,
$\subspace{10421\\01323}{}$,
$\subspace{10433\\01333}{}$,
$\subspace{10440\\01343}{}$,
$\subspace{10201\\01242}{15}$,
$\subspace{10213\\01231}{}$,
$\subspace{10220\\01220}{}$,
$\subspace{10232\\01214}{}$,
$\subspace{10244\\01203}{}$,
$\subspace{10201\\01423}{}$,
$\subspace{10213\\01424}{}$,
$\subspace{10220\\01420}{}$,
$\subspace{10232\\01421}{}$,
$\subspace{10244\\01422}{}$,
$\subspace{10400\\01344}{}$,
$\subspace{10412\\01304}{}$,
$\subspace{10424\\01314}{}$,
$\subspace{10431\\01324}{}$,
$\subspace{10443\\01334}{}$,
$\subspace{10203\\01244}{15}$,
$\subspace{10210\\01233}{}$,
$\subspace{10222\\01222}{}$,
$\subspace{10234\\01211}{}$,
$\subspace{10241\\01200}{}$,
$\subspace{10200\\01440}{}$,
$\subspace{10212\\01441}{}$,
$\subspace{10224\\01442}{}$,
$\subspace{10231\\01443}{}$,
$\subspace{10243\\01444}{}$,
$\subspace{10400\\01300}{}$,
$\subspace{10412\\01310}{}$,
$\subspace{10424\\01320}{}$,
$\subspace{10431\\01330}{}$,
$\subspace{10443\\01340}{}$,
$\subspace{10204\\01241}{15}$,
$\subspace{10211\\01230}{}$,
$\subspace{10223\\01224}{}$,
$\subspace{10230\\01213}{}$,
$\subspace{10242\\01202}{}$,
$\subspace{10201\\01413}{}$,
$\subspace{10213\\01414}{}$,
$\subspace{10220\\01410}{}$,
$\subspace{10232\\01411}{}$,
$\subspace{10244\\01412}{}$,
$\subspace{10404\\01301}{}$,
$\subspace{10411\\01311}{}$,
$\subspace{10423\\01321}{}$,
$\subspace{10430\\01331}{}$,
$\subspace{10442\\01341}{}$,
$\subspace{10303\\01303}{15}$,
$\subspace{10310\\01333}{}$,
$\subspace{10322\\01313}{}$,
$\subspace{10334\\01343}{}$,
$\subspace{10341\\01323}{}$,
$\subspace{10404\\01212}{}$,
$\subspace{10411\\01234}{}$,
$\subspace{10423\\01201}{}$,
$\subspace{10430\\01223}{}$,
$\subspace{10442\\01240}{}$,
$\subspace{10400\\01401}{}$,
$\subspace{10412\\01404}{}$,
$\subspace{10424\\01402}{}$,
$\subspace{10431\\01400}{}$,
$\subspace{10443\\01403}{}$,
$\subspace{10301\\01314}{15}$,
$\subspace{10313\\01344}{}$,
$\subspace{10320\\01324}{}$,
$\subspace{10332\\01304}{}$,
$\subspace{10344\\01334}{}$,
$\subspace{10402\\01203}{}$,
$\subspace{10414\\01220}{}$,
$\subspace{10421\\01242}{}$,
$\subspace{10433\\01214}{}$,
$\subspace{10440\\01231}{}$,
$\subspace{10401\\01422}{}$,
$\subspace{10413\\01420}{}$,
$\subspace{10420\\01423}{}$,
$\subspace{10432\\01421}{}$,
$\subspace{10444\\01424}{}$,
$\subspace{10300\\01322}{15}$,
$\subspace{10312\\01302}{}$,
$\subspace{10324\\01332}{}$,
$\subspace{10331\\01312}{}$,
$\subspace{10343\\01342}{}$,
$\subspace{10404\\01243}{}$,
$\subspace{10411\\01210}{}$,
$\subspace{10423\\01232}{}$,
$\subspace{10430\\01204}{}$,
$\subspace{10442\\01221}{}$,
$\subspace{10401\\01434}{}$,
$\subspace{10413\\01432}{}$,
$\subspace{10420\\01430}{}$,
$\subspace{10432\\01433}{}$,
$\subspace{10444\\01431}{}$,
$\subspace{10301\\01330}{15}$,
$\subspace{10313\\01310}{}$,
$\subspace{10320\\01340}{}$,
$\subspace{10332\\01320}{}$,
$\subspace{10344\\01300}{}$,
$\subspace{10402\\01244}{}$,
$\subspace{10414\\01211}{}$,
$\subspace{10421\\01233}{}$,
$\subspace{10433\\01200}{}$,
$\subspace{10440\\01222}{}$,
$\subspace{10403\\01442}{}$,
$\subspace{10410\\01440}{}$,
$\subspace{10422\\01443}{}$,
$\subspace{10434\\01441}{}$,
$\subspace{10441\\01444}{}$,
$\subspace{10301\\01330}{15}$,
$\subspace{10313\\01310}{}$,
$\subspace{10320\\01340}{}$,
$\subspace{10332\\01320}{}$,
$\subspace{10344\\01300}{}$,
$\subspace{10402\\01244}{}$,
$\subspace{10414\\01211}{}$,
$\subspace{10421\\01233}{}$,
$\subspace{10433\\01200}{}$,
$\subspace{10440\\01222}{}$,
$\subspace{10403\\01442}{}$,
$\subspace{10410\\01440}{}$,
$\subspace{10422\\01443}{}$,
$\subspace{10434\\01441}{}$,
$\subspace{10441\\01444}{}$,
$\subspace{10301\\01331}{15}$,
$\subspace{10313\\01311}{}$,
$\subspace{10320\\01341}{}$,
$\subspace{10332\\01321}{}$,
$\subspace{10344\\01301}{}$,
$\subspace{10400\\01202}{}$,
$\subspace{10412\\01224}{}$,
$\subspace{10424\\01241}{}$,
$\subspace{10431\\01213}{}$,
$\subspace{10443\\01230}{}$,
$\subspace{10402\\01411}{}$,
$\subspace{10414\\01414}{}$,
$\subspace{10421\\01412}{}$,
$\subspace{10433\\01410}{}$,
$\subspace{10440\\01413}{}$,
$\subspace{10301\\01331}{15}$,
$\subspace{10313\\01311}{}$,
$\subspace{10320\\01341}{}$,
$\subspace{10332\\01321}{}$,
$\subspace{10344\\01301}{}$,
$\subspace{10400\\01202}{}$,
$\subspace{10412\\01224}{}$,
$\subspace{10424\\01241}{}$,
$\subspace{10431\\01213}{}$,
$\subspace{10443\\01230}{}$,
$\subspace{10402\\01411}{}$,
$\subspace{10414\\01414}{}$,
$\subspace{10421\\01412}{}$,
$\subspace{10433\\01410}{}$,
$\subspace{10440\\01413}{}$,
$\subspace{10303\\01330}{15}$,
$\subspace{10310\\01310}{}$,
$\subspace{10322\\01340}{}$,
$\subspace{10334\\01320}{}$,
$\subspace{10341\\01300}{}$,
$\subspace{10403\\01200}{}$,
$\subspace{10410\\01222}{}$,
$\subspace{10422\\01244}{}$,
$\subspace{10434\\01211}{}$,
$\subspace{10441\\01233}{}$,
$\subspace{10400\\01443}{}$,
$\subspace{10412\\01441}{}$,
$\subspace{10424\\01444}{}$,
$\subspace{10431\\01442}{}$,
$\subspace{10443\\01440}{}$.

\smallskip

$n_5(5,2;48)\ge 1203$,$\left[\!\begin{smallmatrix}10000\\00100\\04410\\00001\\00044\end{smallmatrix}\!\right]$:
$\subspace{00101\\00012}{3}$,
$\subspace{01001\\00014}{}$,
$\subspace{01101\\00013}{}$,
$\subspace{01000\\00100}{15}$,
$\subspace{01000\\00111}{}$,
$\subspace{01004\\00112}{}$,
$\subspace{01014\\00112}{}$,
$\subspace{01013\\00124}{}$,
$\subspace{01014\\00123}{}$,
$\subspace{01023\\00124}{}$,
$\subspace{01022\\00131}{}$,
$\subspace{01023\\00130}{}$,
$\subspace{01032\\00131}{}$,
$\subspace{01031\\00143}{}$,
$\subspace{01032\\00142}{}$,
$\subspace{01040\\00100}{}$,
$\subspace{01041\\00104}{}$,
$\subspace{01041\\00143}{}$,
$\subspace{01001\\00101}{15}$,
$\subspace{01001\\00134}{}$,
$\subspace{01003\\00132}{}$,
$\subspace{01010\\00113}{}$,
$\subspace{01010\\00141}{}$,
$\subspace{01012\\00144}{}$,
$\subspace{01021\\00101}{}$,
$\subspace{01024\\00103}{}$,
$\subspace{01024\\00120}{}$,
$\subspace{01030\\00113}{}$,
$\subspace{01033\\00110}{}$,
$\subspace{01033\\00132}{}$,
$\subspace{01042\\00122}{}$,
$\subspace{01044\\00120}{}$,
$\subspace{01042\\00144}{}$,
$\subspace{01003\\00104}{15}$,
$\subspace{01004\\00103}{}$,
$\subspace{01003\\00110}{}$,
$\subspace{01012\\00111}{}$,
$\subspace{01013\\00110}{}$,
$\subspace{01012\\00122}{}$,
$\subspace{01021\\00123}{}$,
$\subspace{01022\\00122}{}$,
$\subspace{01021\\00134}{}$,
$\subspace{01030\\00130}{}$,
$\subspace{01031\\00134}{}$,
$\subspace{01030\\00141}{}$,
$\subspace{01044\\00103}{}$,
$\subspace{01040\\00141}{}$,
$\subspace{01044\\00142}{}$,
$\subspace{10000\\00001}{3}$,
$\subspace{10000\\00010}{}$,
$\subspace{10000\\00011}{}$,
$\subspace{10000\\00012}{3}$,
$\subspace{10000\\00013}{}$,
$\subspace{10000\\00014}{}$,
$\subspace{10000\\00012}{3}$,
$\subspace{10000\\00013}{}$,
$\subspace{10000\\00014}{}$,
$\subspace{10001\\00011}{3}$,
$\subspace{10004\\00010}{}$,
$\subspace{10010\\00001}{}$,
$\subspace{10001\\00013}{3}$,
$\subspace{10003\\00012}{}$,
$\subspace{10003\\00014}{}$,
$\subspace{10002\\00012}{3}$,
$\subspace{10002\\00014}{}$,
$\subspace{10004\\00013}{}$,
$\subspace{10023\\00101}{15}$,
$\subspace{10023\\00113}{}$,
$\subspace{10023\\00120}{}$,
$\subspace{10023\\00132}{}$,
$\subspace{10023\\00144}{}$,
$\subspace{10013\\01001}{}$,
$\subspace{10013\\01010}{}$,
$\subspace{10013\\01024}{}$,
$\subspace{10013\\01033}{}$,
$\subspace{10013\\01042}{}$,
$\subspace{10024\\01101}{}$,
$\subspace{10024\\01114}{}$,
$\subspace{10024\\01122}{}$,
$\subspace{10024\\01130}{}$,
$\subspace{10024\\01143}{}$,
$\subspace{10043\\00102}{15}$,
$\subspace{10043\\00114}{}$,
$\subspace{10043\\00121}{}$,
$\subspace{10043\\00133}{}$,
$\subspace{10043\\00140}{}$,
$\subspace{10041\\01002}{}$,
$\subspace{10041\\01011}{}$,
$\subspace{10041\\01020}{}$,
$\subspace{10041\\01034}{}$,
$\subspace{10041\\01043}{}$,
$\subspace{10021\\01104}{}$,
$\subspace{10021\\01112}{}$,
$\subspace{10021\\01120}{}$,
$\subspace{10021\\01133}{}$,
$\subspace{10021\\01141}{}$,
$\subspace{10104\\00012}{3}$,
$\subspace{11004\\00014}{}$,
$\subspace{14400\\00013}{}$,
$\subspace{10203\\00012}{3}$,
$\subspace{12003\\00014}{}$,
$\subspace{13300\\00013}{}$,
$\subspace{10303\\00012}{3}$,
$\subspace{12202\\00013}{}$,
$\subspace{13003\\00014}{}$,
$\subspace{10401\\00012}{3}$,
$\subspace{11100\\00013}{}$,
$\subspace{14001\\00014}{}$,
$\subspace{10002\\01204}{15}$,
$\subspace{10002\\01210}{}$,
$\subspace{10002\\01221}{}$,
$\subspace{10002\\01232}{}$,
$\subspace{10002\\01243}{}$,
$\subspace{10033\\01302}{}$,
$\subspace{10033\\01312}{}$,
$\subspace{10033\\01322}{}$,
$\subspace{10033\\01332}{}$,
$\subspace{10033\\01342}{}$,
$\subspace{10020\\01430}{}$,
$\subspace{10020\\01431}{}$,
$\subspace{10020\\01432}{}$,
$\subspace{10020\\01433}{}$,
$\subspace{10020\\01434}{}$,
$\subspace{10003\\01204}{15}$,
$\subspace{10003\\01210}{}$,
$\subspace{10003\\01221}{}$,
$\subspace{10003\\01232}{}$,
$\subspace{10003\\01243}{}$,
$\subspace{10022\\01302}{}$,
$\subspace{10022\\01312}{}$,
$\subspace{10022\\01322}{}$,
$\subspace{10022\\01332}{}$,
$\subspace{10022\\01342}{}$,
$\subspace{10030\\01430}{}$,
$\subspace{10030\\01431}{}$,
$\subspace{10030\\01432}{}$,
$\subspace{10030\\01433}{}$,
$\subspace{10030\\01434}{}$,
$\subspace{10011\\01203}{15}$,
$\subspace{10011\\01214}{}$,
$\subspace{10011\\01220}{}$,
$\subspace{10011\\01231}{}$,
$\subspace{10011\\01242}{}$,
$\subspace{10040\\01304}{}$,
$\subspace{10040\\01314}{}$,
$\subspace{10040\\01324}{}$,
$\subspace{10040\\01334}{}$,
$\subspace{10040\\01344}{}$,
$\subspace{10004\\01420}{}$,
$\subspace{10004\\01421}{}$,
$\subspace{10004\\01422}{}$,
$\subspace{10004\\01423}{}$,
$\subspace{10004\\01424}{}$,
$\subspace{10042\\01203}{15}$,
$\subspace{10042\\01214}{}$,
$\subspace{10042\\01220}{}$,
$\subspace{10042\\01231}{}$,
$\subspace{10042\\01242}{}$,
$\subspace{10032\\01304}{}$,
$\subspace{10032\\01314}{}$,
$\subspace{10032\\01324}{}$,
$\subspace{10032\\01334}{}$,
$\subspace{10032\\01344}{}$,
$\subspace{10031\\01420}{}$,
$\subspace{10031\\01421}{}$,
$\subspace{10031\\01422}{}$,
$\subspace{10031\\01423}{}$,
$\subspace{10031\\01424}{}$,
$\subspace{10044\\01201}{15}$,
$\subspace{10044\\01212}{}$,
$\subspace{10044\\01223}{}$,
$\subspace{10044\\01234}{}$,
$\subspace{10044\\01240}{}$,
$\subspace{10010\\01303}{}$,
$\subspace{10010\\01313}{}$,
$\subspace{10010\\01323}{}$,
$\subspace{10010\\01333}{}$,
$\subspace{10010\\01343}{}$,
$\subspace{10001\\01400}{}$,
$\subspace{10001\\01401}{}$,
$\subspace{10001\\01402}{}$,
$\subspace{10001\\01403}{}$,
$\subspace{10001\\01404}{}$,
$\subspace{10101\\01003}{15}$,
$\subspace{10113\\01012}{}$,
$\subspace{10120\\01021}{}$,
$\subspace{10132\\01030}{}$,
$\subspace{10144\\01044}{}$,
$\subspace{10403\\01144}{}$,
$\subspace{10410\\01123}{}$,
$\subspace{10422\\01102}{}$,
$\subspace{10434\\01131}{}$,
$\subspace{10441\\01110}{}$,
$\subspace{14003\\00141}{}$,
$\subspace{14012\\00134}{}$,
$\subspace{14021\\00122}{}$,
$\subspace{14030\\00110}{}$,
$\subspace{14044\\00103}{}$,
$\subspace{10102\\01011}{15}$,
$\subspace{10114\\01020}{}$,
$\subspace{10121\\01034}{}$,
$\subspace{10133\\01043}{}$,
$\subspace{10140\\01002}{}$,
$\subspace{10401\\01133}{}$,
$\subspace{10413\\01112}{}$,
$\subspace{10420\\01141}{}$,
$\subspace{10432\\01120}{}$,
$\subspace{10444\\01104}{}$,
$\subspace{14001\\00121}{}$,
$\subspace{14010\\00114}{}$,
$\subspace{14024\\00102}{}$,
$\subspace{14033\\00140}{}$,
$\subspace{14042\\00133}{}$,
$\subspace{10100\\01024}{15}$,
$\subspace{10112\\01033}{}$,
$\subspace{10124\\01042}{}$,
$\subspace{10131\\01001}{}$,
$\subspace{10143\\01010}{}$,
$\subspace{10400\\01114}{}$,
$\subspace{10412\\01143}{}$,
$\subspace{10424\\01122}{}$,
$\subspace{10431\\01101}{}$,
$\subspace{10443\\01130}{}$,
$\subspace{14000\\00132}{}$,
$\subspace{14014\\00120}{}$,
$\subspace{14023\\00113}{}$,
$\subspace{14032\\00101}{}$,
$\subspace{14041\\00144}{}$,
$\subspace{10100\\01024}{15}$,
$\subspace{10112\\01033}{}$,
$\subspace{10124\\01042}{}$,
$\subspace{10131\\01001}{}$,
$\subspace{10143\\01010}{}$,
$\subspace{10400\\01114}{}$,
$\subspace{10412\\01143}{}$,
$\subspace{10424\\01122}{}$,
$\subspace{10431\\01101}{}$,
$\subspace{10443\\01130}{}$,
$\subspace{14000\\00132}{}$,
$\subspace{14014\\00120}{}$,
$\subspace{14023\\00113}{}$,
$\subspace{14032\\00101}{}$,
$\subspace{14041\\00144}{}$,
$\subspace{10103\\01023}{15}$,
$\subspace{10110\\01032}{}$,
$\subspace{10122\\01041}{}$,
$\subspace{10134\\01000}{}$,
$\subspace{10141\\01014}{}$,
$\subspace{10401\\01111}{}$,
$\subspace{10413\\01140}{}$,
$\subspace{10420\\01124}{}$,
$\subspace{10432\\01103}{}$,
$\subspace{10444\\01132}{}$,
$\subspace{14001\\00124}{}$,
$\subspace{14010\\00112}{}$,
$\subspace{14024\\00100}{}$,
$\subspace{14033\\00143}{}$,
$\subspace{14042\\00131}{}$,
$\subspace{10100\\01032}{15}$,
$\subspace{10112\\01041}{}$,
$\subspace{10124\\01000}{}$,
$\subspace{10131\\01014}{}$,
$\subspace{10143\\01023}{}$,
$\subspace{10400\\01132}{}$,
$\subspace{10412\\01111}{}$,
$\subspace{10424\\01140}{}$,
$\subspace{10431\\01124}{}$,
$\subspace{10443\\01103}{}$,
$\subspace{14000\\00124}{}$,
$\subspace{14014\\00112}{}$,
$\subspace{14023\\00100}{}$,
$\subspace{14032\\00143}{}$,
$\subspace{14041\\00131}{}$,
$\subspace{10104\\01030}{15}$,
$\subspace{10111\\01044}{}$,
$\subspace{10123\\01003}{}$,
$\subspace{10130\\01012}{}$,
$\subspace{10142\\01021}{}$,
$\subspace{10401\\01144}{}$,
$\subspace{10413\\01123}{}$,
$\subspace{10420\\01102}{}$,
$\subspace{10432\\01131}{}$,
$\subspace{10444\\01110}{}$,
$\subspace{14001\\00122}{}$,
$\subspace{14010\\00110}{}$,
$\subspace{14024\\00103}{}$,
$\subspace{14033\\00141}{}$,
$\subspace{14042\\00134}{}$,
$\subspace{10102\\01044}{15}$,
$\subspace{10114\\01003}{}$,
$\subspace{10121\\01012}{}$,
$\subspace{10133\\01021}{}$,
$\subspace{10140\\01030}{}$,
$\subspace{10403\\01131}{}$,
$\subspace{10410\\01110}{}$,
$\subspace{10422\\01144}{}$,
$\subspace{10434\\01123}{}$,
$\subspace{10441\\01102}{}$,
$\subspace{14003\\00110}{}$,
$\subspace{14012\\00103}{}$,
$\subspace{14021\\00141}{}$,
$\subspace{14030\\00134}{}$,
$\subspace{14044\\00122}{}$,
$\subspace{10102\\01044}{15}$,
$\subspace{10114\\01003}{}$,
$\subspace{10121\\01012}{}$,
$\subspace{10133\\01021}{}$,
$\subspace{10140\\01030}{}$,
$\subspace{10403\\01131}{}$,
$\subspace{10410\\01110}{}$,
$\subspace{10422\\01144}{}$,
$\subspace{10434\\01123}{}$,
$\subspace{10441\\01102}{}$,
$\subspace{14003\\00110}{}$,
$\subspace{14012\\00103}{}$,
$\subspace{14021\\00141}{}$,
$\subspace{14030\\00134}{}$,
$\subspace{14044\\00122}{}$,
$\subspace{10104\\01112}{15}$,
$\subspace{10111\\01133}{}$,
$\subspace{10123\\01104}{}$,
$\subspace{10130\\01120}{}$,
$\subspace{10142\\01141}{}$,
$\subspace{10401\\01043}{}$,
$\subspace{10413\\01034}{}$,
$\subspace{10420\\01020}{}$,
$\subspace{10432\\01011}{}$,
$\subspace{10444\\01002}{}$,
$\subspace{11004\\00114}{}$,
$\subspace{11013\\00121}{}$,
$\subspace{11022\\00133}{}$,
$\subspace{11031\\00140}{}$,
$\subspace{11040\\00102}{}$,
$\subspace{10104\\01113}{15}$,
$\subspace{10111\\01134}{}$,
$\subspace{10123\\01100}{}$,
$\subspace{10130\\01121}{}$,
$\subspace{10142\\01142}{}$,
$\subspace{10400\\01013}{}$,
$\subspace{10412\\01004}{}$,
$\subspace{10424\\01040}{}$,
$\subspace{10431\\01031}{}$,
$\subspace{10443\\01022}{}$,
$\subspace{11004\\00104}{}$,
$\subspace{11013\\00111}{}$,
$\subspace{11022\\00123}{}$,
$\subspace{11031\\00130}{}$,
$\subspace{11040\\00142}{}$,
$\subspace{10104\\01121}{15}$,
$\subspace{10111\\01142}{}$,
$\subspace{10123\\01113}{}$,
$\subspace{10130\\01134}{}$,
$\subspace{10142\\01100}{}$,
$\subspace{10401\\01022}{}$,
$\subspace{10413\\01013}{}$,
$\subspace{10420\\01004}{}$,
$\subspace{10432\\01040}{}$,
$\subspace{10444\\01031}{}$,
$\subspace{11004\\00130}{}$,
$\subspace{11013\\00142}{}$,
$\subspace{11022\\00104}{}$,
$\subspace{11031\\00111}{}$,
$\subspace{11040\\00123}{}$,
$\subspace{10102\\01131}{15}$,
$\subspace{10114\\01102}{}$,
$\subspace{10121\\01123}{}$,
$\subspace{10133\\01144}{}$,
$\subspace{10140\\01110}{}$,
$\subspace{10403\\01044}{}$,
$\subspace{10410\\01030}{}$,
$\subspace{10422\\01021}{}$,
$\subspace{10434\\01012}{}$,
$\subspace{10441\\01003}{}$,
$\subspace{11002\\00110}{}$,
$\subspace{11011\\00122}{}$,
$\subspace{11020\\00134}{}$,
$\subspace{11034\\00141}{}$,
$\subspace{11043\\00103}{}$,
$\subspace{10102\\01131}{15}$,
$\subspace{10114\\01102}{}$,
$\subspace{10121\\01123}{}$,
$\subspace{10133\\01144}{}$,
$\subspace{10140\\01110}{}$,
$\subspace{10403\\01044}{}$,
$\subspace{10410\\01030}{}$,
$\subspace{10422\\01021}{}$,
$\subspace{10434\\01012}{}$,
$\subspace{10441\\01003}{}$,
$\subspace{11002\\00110}{}$,
$\subspace{11011\\00122}{}$,
$\subspace{11020\\00134}{}$,
$\subspace{11034\\00141}{}$,
$\subspace{11043\\00103}{}$,
$\subspace{10102\\01132}{15}$,
$\subspace{10114\\01103}{}$,
$\subspace{10121\\01124}{}$,
$\subspace{10133\\01140}{}$,
$\subspace{10140\\01111}{}$,
$\subspace{10402\\01014}{}$,
$\subspace{10414\\01000}{}$,
$\subspace{10421\\01041}{}$,
$\subspace{10433\\01032}{}$,
$\subspace{10440\\01023}{}$,
$\subspace{11002\\00100}{}$,
$\subspace{11011\\00112}{}$,
$\subspace{11020\\00124}{}$,
$\subspace{11034\\00131}{}$,
$\subspace{11043\\00143}{}$,
$\subspace{10102\\01142}{15}$,
$\subspace{10114\\01113}{}$,
$\subspace{10121\\01134}{}$,
$\subspace{10133\\01100}{}$,
$\subspace{10140\\01121}{}$,
$\subspace{10401\\01013}{}$,
$\subspace{10413\\01004}{}$,
$\subspace{10420\\01040}{}$,
$\subspace{10432\\01031}{}$,
$\subspace{10444\\01022}{}$,
$\subspace{11002\\00111}{}$,
$\subspace{11011\\00123}{}$,
$\subspace{11020\\00130}{}$,
$\subspace{11034\\00142}{}$,
$\subspace{11043\\00104}{}$,
$\subspace{10104\\01213}{15}$,
$\subspace{10111\\01241}{}$,
$\subspace{10123\\01224}{}$,
$\subspace{10130\\01202}{}$,
$\subspace{10142\\01230}{}$,
$\subspace{10101\\01412}{}$,
$\subspace{10113\\01414}{}$,
$\subspace{10120\\01411}{}$,
$\subspace{10132\\01413}{}$,
$\subspace{10144\\01410}{}$,
$\subspace{10202\\01331}{}$,
$\subspace{10214\\01301}{}$,
$\subspace{10221\\01321}{}$,
$\subspace{10233\\01341}{}$,
$\subspace{10240\\01311}{}$,
$\subspace{10104\\01213}{15}$,
$\subspace{10111\\01241}{}$,
$\subspace{10123\\01224}{}$,
$\subspace{10130\\01202}{}$,
$\subspace{10142\\01230}{}$,
$\subspace{10101\\01412}{}$,
$\subspace{10113\\01414}{}$,
$\subspace{10120\\01411}{}$,
$\subspace{10132\\01413}{}$,
$\subspace{10144\\01410}{}$,
$\subspace{10202\\01331}{}$,
$\subspace{10214\\01301}{}$,
$\subspace{10221\\01321}{}$,
$\subspace{10233\\01341}{}$,
$\subspace{10240\\01311}{}$,
$\subspace{10102\\01223}{15}$,
$\subspace{10114\\01201}{}$,
$\subspace{10121\\01234}{}$,
$\subspace{10133\\01212}{}$,
$\subspace{10140\\01240}{}$,
$\subspace{10104\\01403}{}$,
$\subspace{10111\\01400}{}$,
$\subspace{10123\\01402}{}$,
$\subspace{10130\\01404}{}$,
$\subspace{10142\\01401}{}$,
$\subspace{10202\\01343}{}$,
$\subspace{10214\\01313}{}$,
$\subspace{10221\\01333}{}$,
$\subspace{10233\\01303}{}$,
$\subspace{10240\\01323}{}$,
$\subspace{10103\\01221}{15}$,
$\subspace{10110\\01204}{}$,
$\subspace{10122\\01232}{}$,
$\subspace{10134\\01210}{}$,
$\subspace{10141\\01243}{}$,
$\subspace{10103\\01432}{}$,
$\subspace{10110\\01434}{}$,
$\subspace{10122\\01431}{}$,
$\subspace{10134\\01433}{}$,
$\subspace{10141\\01430}{}$,
$\subspace{10204\\01322}{}$,
$\subspace{10211\\01342}{}$,
$\subspace{10223\\01312}{}$,
$\subspace{10230\\01332}{}$,
$\subspace{10242\\01302}{}$,
$\subspace{10104\\01221}{15}$,
$\subspace{10111\\01204}{}$,
$\subspace{10123\\01232}{}$,
$\subspace{10130\\01210}{}$,
$\subspace{10142\\01243}{}$,
$\subspace{10101\\01430}{}$,
$\subspace{10113\\01432}{}$,
$\subspace{10120\\01434}{}$,
$\subspace{10132\\01431}{}$,
$\subspace{10144\\01433}{}$,
$\subspace{10200\\01342}{}$,
$\subspace{10212\\01312}{}$,
$\subspace{10224\\01332}{}$,
$\subspace{10231\\01302}{}$,
$\subspace{10243\\01322}{}$,
$\subspace{10101\\01231}{15}$,
$\subspace{10113\\01214}{}$,
$\subspace{10120\\01242}{}$,
$\subspace{10132\\01220}{}$,
$\subspace{10144\\01203}{}$,
$\subspace{10101\\01423}{}$,
$\subspace{10113\\01420}{}$,
$\subspace{10120\\01422}{}$,
$\subspace{10132\\01424}{}$,
$\subspace{10144\\01421}{}$,
$\subspace{10204\\01334}{}$,
$\subspace{10211\\01304}{}$,
$\subspace{10223\\01324}{}$,
$\subspace{10230\\01344}{}$,
$\subspace{10242\\01314}{}$,
$\subspace{10103\\01230}{15}$,
$\subspace{10110\\01213}{}$,
$\subspace{10122\\01241}{}$,
$\subspace{10134\\01224}{}$,
$\subspace{10141\\01202}{}$,
$\subspace{10100\\01412}{}$,
$\subspace{10112\\01414}{}$,
$\subspace{10124\\01411}{}$,
$\subspace{10131\\01413}{}$,
$\subspace{10143\\01410}{}$,
$\subspace{10204\\01301}{}$,
$\subspace{10211\\01321}{}$,
$\subspace{10223\\01341}{}$,
$\subspace{10230\\01311}{}$,
$\subspace{10242\\01331}{}$,
$\subspace{10103\\01230}{15}$,
$\subspace{10110\\01213}{}$,
$\subspace{10122\\01241}{}$,
$\subspace{10134\\01224}{}$,
$\subspace{10141\\01202}{}$,
$\subspace{10100\\01412}{}$,
$\subspace{10112\\01414}{}$,
$\subspace{10124\\01411}{}$,
$\subspace{10131\\01413}{}$,
$\subspace{10143\\01410}{}$,
$\subspace{10204\\01301}{}$,
$\subspace{10211\\01321}{}$,
$\subspace{10223\\01341}{}$,
$\subspace{10230\\01311}{}$,
$\subspace{10242\\01331}{}$,
$\subspace{10103\\01230}{15}$,
$\subspace{10110\\01213}{}$,
$\subspace{10122\\01241}{}$,
$\subspace{10134\\01224}{}$,
$\subspace{10141\\01202}{}$,
$\subspace{10100\\01412}{}$,
$\subspace{10112\\01414}{}$,
$\subspace{10124\\01411}{}$,
$\subspace{10131\\01413}{}$,
$\subspace{10143\\01410}{}$,
$\subspace{10204\\01301}{}$,
$\subspace{10211\\01321}{}$,
$\subspace{10223\\01341}{}$,
$\subspace{10230\\01311}{}$,
$\subspace{10242\\01331}{}$,
$\subspace{10101\\01301}{15}$,
$\subspace{10113\\01341}{}$,
$\subspace{10120\\01331}{}$,
$\subspace{10132\\01321}{}$,
$\subspace{10144\\01311}{}$,
$\subspace{10301\\01241}{}$,
$\subspace{10313\\01202}{}$,
$\subspace{10320\\01213}{}$,
$\subspace{10332\\01224}{}$,
$\subspace{10344\\01230}{}$,
$\subspace{10304\\01413}{}$,
$\subspace{10311\\01412}{}$,
$\subspace{10323\\01411}{}$,
$\subspace{10330\\01410}{}$,
$\subspace{10342\\01414}{}$,
$\subspace{10101\\01301}{15}$,
$\subspace{10113\\01341}{}$,
$\subspace{10120\\01331}{}$,
$\subspace{10132\\01321}{}$,
$\subspace{10144\\01311}{}$,
$\subspace{10301\\01241}{}$,
$\subspace{10313\\01202}{}$,
$\subspace{10320\\01213}{}$,
$\subspace{10332\\01224}{}$,
$\subspace{10344\\01230}{}$,
$\subspace{10304\\01413}{}$,
$\subspace{10311\\01412}{}$,
$\subspace{10323\\01411}{}$,
$\subspace{10330\\01410}{}$,
$\subspace{10342\\01414}{}$,
$\subspace{10103\\01311}{15}$,
$\subspace{10110\\01301}{}$,
$\subspace{10122\\01341}{}$,
$\subspace{10134\\01331}{}$,
$\subspace{10141\\01321}{}$,
$\subspace{10304\\01213}{}$,
$\subspace{10311\\01224}{}$,
$\subspace{10323\\01230}{}$,
$\subspace{10330\\01241}{}$,
$\subspace{10342\\01202}{}$,
$\subspace{10304\\01414}{}$,
$\subspace{10311\\01413}{}$,
$\subspace{10323\\01412}{}$,
$\subspace{10330\\01411}{}$,
$\subspace{10342\\01410}{}$,
$\subspace{10100\\01320}{15}$,
$\subspace{10112\\01310}{}$,
$\subspace{10124\\01300}{}$,
$\subspace{10131\\01340}{}$,
$\subspace{10143\\01330}{}$,
$\subspace{10301\\01222}{}$,
$\subspace{10313\\01233}{}$,
$\subspace{10320\\01244}{}$,
$\subspace{10332\\01200}{}$,
$\subspace{10344\\01211}{}$,
$\subspace{10301\\01441}{}$,
$\subspace{10313\\01440}{}$,
$\subspace{10320\\01444}{}$,
$\subspace{10332\\01443}{}$,
$\subspace{10344\\01442}{}$,
$\subspace{10100\\01320}{15}$,
$\subspace{10112\\01310}{}$,
$\subspace{10124\\01300}{}$,
$\subspace{10131\\01340}{}$,
$\subspace{10143\\01330}{}$,
$\subspace{10301\\01222}{}$,
$\subspace{10313\\01233}{}$,
$\subspace{10320\\01244}{}$,
$\subspace{10332\\01200}{}$,
$\subspace{10344\\01211}{}$,
$\subspace{10301\\01441}{}$,
$\subspace{10313\\01440}{}$,
$\subspace{10320\\01444}{}$,
$\subspace{10332\\01443}{}$,
$\subspace{10344\\01442}{}$,
$\subspace{10103\\01322}{15}$,
$\subspace{10110\\01312}{}$,
$\subspace{10122\\01302}{}$,
$\subspace{10134\\01342}{}$,
$\subspace{10141\\01332}{}$,
$\subspace{10302\\01210}{}$,
$\subspace{10314\\01221}{}$,
$\subspace{10321\\01232}{}$,
$\subspace{10333\\01243}{}$,
$\subspace{10340\\01204}{}$,
$\subspace{10300\\01431}{}$,
$\subspace{10312\\01430}{}$,
$\subspace{10324\\01434}{}$,
$\subspace{10331\\01433}{}$,
$\subspace{10343\\01432}{}$,
$\subspace{10103\\01322}{15}$,
$\subspace{10110\\01312}{}$,
$\subspace{10122\\01302}{}$,
$\subspace{10134\\01342}{}$,
$\subspace{10141\\01332}{}$,
$\subspace{10302\\01210}{}$,
$\subspace{10314\\01221}{}$,
$\subspace{10321\\01232}{}$,
$\subspace{10333\\01243}{}$,
$\subspace{10340\\01204}{}$,
$\subspace{10300\\01431}{}$,
$\subspace{10312\\01430}{}$,
$\subspace{10324\\01434}{}$,
$\subspace{10331\\01433}{}$,
$\subspace{10343\\01432}{}$,
$\subspace{10101\\01333}{15}$,
$\subspace{10113\\01323}{}$,
$\subspace{10120\\01313}{}$,
$\subspace{10132\\01303}{}$,
$\subspace{10144\\01343}{}$,
$\subspace{10304\\01234}{}$,
$\subspace{10311\\01240}{}$,
$\subspace{10323\\01201}{}$,
$\subspace{10330\\01212}{}$,
$\subspace{10342\\01223}{}$,
$\subspace{10304\\01400}{}$,
$\subspace{10311\\01404}{}$,
$\subspace{10323\\01403}{}$,
$\subspace{10330\\01402}{}$,
$\subspace{10342\\01401}{}$,
$\subspace{10202\\01000}{15}$,
$\subspace{10214\\01032}{}$,
$\subspace{10221\\01014}{}$,
$\subspace{10233\\01041}{}$,
$\subspace{10240\\01023}{}$,
$\subspace{10303\\01111}{}$,
$\subspace{10310\\01103}{}$,
$\subspace{10322\\01140}{}$,
$\subspace{10334\\01132}{}$,
$\subspace{10341\\01124}{}$,
$\subspace{13003\\00100}{}$,
$\subspace{13012\\00124}{}$,
$\subspace{13021\\00143}{}$,
$\subspace{13030\\00112}{}$,
$\subspace{13044\\00131}{}$,
$\subspace{10203\\01001}{15}$,
$\subspace{10210\\01033}{}$,
$\subspace{10222\\01010}{}$,
$\subspace{10234\\01042}{}$,
$\subspace{10241\\01024}{}$,
$\subspace{10302\\01130}{}$,
$\subspace{10314\\01122}{}$,
$\subspace{10321\\01114}{}$,
$\subspace{10333\\01101}{}$,
$\subspace{10340\\01143}{}$,
$\subspace{13002\\00101}{}$,
$\subspace{13011\\00120}{}$,
$\subspace{13020\\00144}{}$,
$\subspace{13034\\00113}{}$,
$\subspace{13043\\00132}{}$,
$\subspace{10203\\01001}{15}$,
$\subspace{10210\\01033}{}$,
$\subspace{10222\\01010}{}$,
$\subspace{10234\\01042}{}$,
$\subspace{10241\\01024}{}$,
$\subspace{10302\\01130}{}$,
$\subspace{10314\\01122}{}$,
$\subspace{10321\\01114}{}$,
$\subspace{10333\\01101}{}$,
$\subspace{10340\\01143}{}$,
$\subspace{13002\\00101}{}$,
$\subspace{13011\\00120}{}$,
$\subspace{13020\\00144}{}$,
$\subspace{13034\\00113}{}$,
$\subspace{13043\\00132}{}$,
$\subspace{10204\\01004}{15}$,
$\subspace{10211\\01031}{}$,
$\subspace{10223\\01013}{}$,
$\subspace{10230\\01040}{}$,
$\subspace{10242\\01022}{}$,
$\subspace{10303\\01121}{}$,
$\subspace{10310\\01113}{}$,
$\subspace{10322\\01100}{}$,
$\subspace{10334\\01142}{}$,
$\subspace{10341\\01134}{}$,
$\subspace{13003\\00111}{}$,
$\subspace{13012\\00130}{}$,
$\subspace{13021\\00104}{}$,
$\subspace{13030\\00123}{}$,
$\subspace{13044\\00142}{}$,
$\subspace{10201\\01020}{15}$,
$\subspace{10213\\01002}{}$,
$\subspace{10220\\01034}{}$,
$\subspace{10232\\01011}{}$,
$\subspace{10244\\01043}{}$,
$\subspace{10304\\01133}{}$,
$\subspace{10311\\01120}{}$,
$\subspace{10323\\01112}{}$,
$\subspace{10330\\01104}{}$,
$\subspace{10342\\01141}{}$,
$\subspace{13004\\00133}{}$,
$\subspace{13013\\00102}{}$,
$\subspace{13022\\00121}{}$,
$\subspace{13031\\00140}{}$,
$\subspace{13040\\00114}{}$,
$\subspace{10201\\01020}{15}$,
$\subspace{10213\\01002}{}$,
$\subspace{10220\\01034}{}$,
$\subspace{10232\\01011}{}$,
$\subspace{10244\\01043}{}$,
$\subspace{10304\\01133}{}$,
$\subspace{10311\\01120}{}$,
$\subspace{10323\\01112}{}$,
$\subspace{10330\\01104}{}$,
$\subspace{10342\\01141}{}$,
$\subspace{13004\\00133}{}$,
$\subspace{13013\\00102}{}$,
$\subspace{13022\\00121}{}$,
$\subspace{13031\\00140}{}$,
$\subspace{13040\\00114}{}$,
$\subspace{10203\\01023}{15}$,
$\subspace{10210\\01000}{}$,
$\subspace{10222\\01032}{}$,
$\subspace{10234\\01014}{}$,
$\subspace{10241\\01041}{}$,
$\subspace{10303\\01132}{}$,
$\subspace{10310\\01124}{}$,
$\subspace{10322\\01111}{}$,
$\subspace{10334\\01103}{}$,
$\subspace{10341\\01140}{}$,
$\subspace{13003\\00112}{}$,
$\subspace{13012\\00131}{}$,
$\subspace{13021\\00100}{}$,
$\subspace{13030\\00124}{}$,
$\subspace{13044\\00143}{}$,
$\subspace{10202\\01042}{15}$,
$\subspace{10214\\01024}{}$,
$\subspace{10221\\01001}{}$,
$\subspace{10233\\01033}{}$,
$\subspace{10240\\01010}{}$,
$\subspace{10303\\01143}{}$,
$\subspace{10310\\01130}{}$,
$\subspace{10322\\01122}{}$,
$\subspace{10334\\01114}{}$,
$\subspace{10341\\01101}{}$,
$\subspace{13003\\00113}{}$,
$\subspace{13012\\00132}{}$,
$\subspace{13021\\00101}{}$,
$\subspace{13030\\00120}{}$,
$\subspace{13044\\00144}{}$,
$\subspace{10201\\01103}{15}$,
$\subspace{10213\\01111}{}$,
$\subspace{10220\\01124}{}$,
$\subspace{10232\\01132}{}$,
$\subspace{10244\\01140}{}$,
$\subspace{10300\\01023}{}$,
$\subspace{10312\\01041}{}$,
$\subspace{10324\\01014}{}$,
$\subspace{10331\\01032}{}$,
$\subspace{10343\\01000}{}$,
$\subspace{12001\\00100}{}$,
$\subspace{12010\\00131}{}$,
$\subspace{12024\\00112}{}$,
$\subspace{12033\\00143}{}$,
$\subspace{12042\\00124}{}$,
$\subspace{10201\\01103}{15}$,
$\subspace{10213\\01111}{}$,
$\subspace{10220\\01124}{}$,
$\subspace{10232\\01132}{}$,
$\subspace{10244\\01140}{}$,
$\subspace{10300\\01023}{}$,
$\subspace{10312\\01041}{}$,
$\subspace{10324\\01014}{}$,
$\subspace{10331\\01032}{}$,
$\subspace{10343\\01000}{}$,
$\subspace{12001\\00100}{}$,
$\subspace{12010\\00131}{}$,
$\subspace{12024\\00112}{}$,
$\subspace{12033\\00143}{}$,
$\subspace{12042\\00124}{}$,
$\subspace{10203\\01100}{15}$,
$\subspace{10210\\01113}{}$,
$\subspace{10222\\01121}{}$,
$\subspace{10234\\01134}{}$,
$\subspace{10241\\01142}{}$,
$\subspace{10303\\01004}{}$,
$\subspace{10310\\01022}{}$,
$\subspace{10322\\01040}{}$,
$\subspace{10334\\01013}{}$,
$\subspace{10341\\01031}{}$,
$\subspace{12003\\00123}{}$,
$\subspace{12012\\00104}{}$,
$\subspace{12021\\00130}{}$,
$\subspace{12030\\00111}{}$,
$\subspace{12044\\00142}{}$,
$\subspace{10200\\01112}{15}$,
$\subspace{10212\\01120}{}$,
$\subspace{10224\\01133}{}$,
$\subspace{10231\\01141}{}$,
$\subspace{10243\\01104}{}$,
$\subspace{10304\\01034}{}$,
$\subspace{10311\\01002}{}$,
$\subspace{10323\\01020}{}$,
$\subspace{10330\\01043}{}$,
$\subspace{10342\\01011}{}$,
$\subspace{12000\\00102}{}$,
$\subspace{12014\\00133}{}$,
$\subspace{12023\\00114}{}$,
$\subspace{12032\\00140}{}$,
$\subspace{12041\\00121}{}$,
$\subspace{10204\\01120}{15}$,
$\subspace{10211\\01133}{}$,
$\subspace{10223\\01141}{}$,
$\subspace{10230\\01104}{}$,
$\subspace{10242\\01112}{}$,
$\subspace{10300\\01020}{}$,
$\subspace{10312\\01043}{}$,
$\subspace{10324\\01011}{}$,
$\subspace{10331\\01034}{}$,
$\subspace{10343\\01002}{}$,
$\subspace{12004\\00114}{}$,
$\subspace{12013\\00140}{}$,
$\subspace{12022\\00121}{}$,
$\subspace{12031\\00102}{}$,
$\subspace{12040\\00133}{}$,
$\subspace{10204\\01120}{15}$,
$\subspace{10211\\01133}{}$,
$\subspace{10223\\01141}{}$,
$\subspace{10230\\01104}{}$,
$\subspace{10242\\01112}{}$,
$\subspace{10300\\01020}{}$,
$\subspace{10312\\01043}{}$,
$\subspace{10324\\01011}{}$,
$\subspace{10331\\01034}{}$,
$\subspace{10343\\01002}{}$,
$\subspace{12004\\00114}{}$,
$\subspace{12013\\00140}{}$,
$\subspace{12022\\00121}{}$,
$\subspace{12031\\00102}{}$,
$\subspace{12040\\00133}{}$,
$\subspace{10202\\01134}{15}$,
$\subspace{10214\\01142}{}$,
$\subspace{10221\\01100}{}$,
$\subspace{10233\\01113}{}$,
$\subspace{10240\\01121}{}$,
$\subspace{10303\\01013}{}$,
$\subspace{10310\\01031}{}$,
$\subspace{10322\\01004}{}$,
$\subspace{10334\\01022}{}$,
$\subspace{10341\\01040}{}$,
$\subspace{12002\\00142}{}$,
$\subspace{12011\\00123}{}$,
$\subspace{12020\\00104}{}$,
$\subspace{12034\\00130}{}$,
$\subspace{12043\\00111}{}$,
$\subspace{10200\\01142}{15}$,
$\subspace{10212\\01100}{}$,
$\subspace{10224\\01113}{}$,
$\subspace{10231\\01121}{}$,
$\subspace{10243\\01134}{}$,
$\subspace{10303\\01031}{}$,
$\subspace{10310\\01004}{}$,
$\subspace{10322\\01022}{}$,
$\subspace{10334\\01040}{}$,
$\subspace{10341\\01013}{}$,
$\subspace{12000\\00130}{}$,
$\subspace{12014\\00111}{}$,
$\subspace{12023\\00142}{}$,
$\subspace{12032\\00123}{}$,
$\subspace{12041\\00104}{}$,
$\subspace{10203\\01141}{15}$,
$\subspace{10210\\01104}{}$,
$\subspace{10222\\01112}{}$,
$\subspace{10234\\01120}{}$,
$\subspace{10241\\01133}{}$,
$\subspace{10303\\01020}{}$,
$\subspace{10310\\01043}{}$,
$\subspace{10322\\01011}{}$,
$\subspace{10334\\01034}{}$,
$\subspace{10341\\01002}{}$,
$\subspace{12003\\00102}{}$,
$\subspace{12012\\00133}{}$,
$\subspace{12021\\00114}{}$,
$\subspace{12030\\00140}{}$,
$\subspace{12044\\00121}{}$,
$\subspace{10200\\01203}{15}$,
$\subspace{10212\\01242}{}$,
$\subspace{10224\\01231}{}$,
$\subspace{10231\\01220}{}$,
$\subspace{10243\\01214}{}$,
$\subspace{10200\\01423}{}$,
$\subspace{10212\\01424}{}$,
$\subspace{10224\\01420}{}$,
$\subspace{10231\\01421}{}$,
$\subspace{10243\\01422}{}$,
$\subspace{10402\\01304}{}$,
$\subspace{10414\\01314}{}$,
$\subspace{10421\\01324}{}$,
$\subspace{10433\\01334}{}$,
$\subspace{10440\\01344}{}$,
$\subspace{10200\\01203}{15}$,
$\subspace{10212\\01242}{}$,
$\subspace{10224\\01231}{}$,
$\subspace{10231\\01220}{}$,
$\subspace{10243\\01214}{}$,
$\subspace{10200\\01423}{}$,
$\subspace{10212\\01424}{}$,
$\subspace{10224\\01420}{}$,
$\subspace{10231\\01421}{}$,
$\subspace{10243\\01422}{}$,
$\subspace{10402\\01304}{}$,
$\subspace{10414\\01314}{}$,
$\subspace{10421\\01324}{}$,
$\subspace{10433\\01334}{}$,
$\subspace{10440\\01344}{}$,
$\subspace{10201\\01200}{15}$,
$\subspace{10213\\01244}{}$,
$\subspace{10220\\01233}{}$,
$\subspace{10232\\01222}{}$,
$\subspace{10244\\01211}{}$,
$\subspace{10201\\01441}{}$,
$\subspace{10213\\01442}{}$,
$\subspace{10220\\01443}{}$,
$\subspace{10232\\01444}{}$,
$\subspace{10244\\01440}{}$,
$\subspace{10401\\01300}{}$,
$\subspace{10413\\01310}{}$,
$\subspace{10420\\01320}{}$,
$\subspace{10432\\01330}{}$,
$\subspace{10444\\01340}{}$,
$\subspace{10202\\01211}{15}$,
$\subspace{10214\\01200}{}$,
$\subspace{10221\\01244}{}$,
$\subspace{10233\\01233}{}$,
$\subspace{10240\\01222}{}$,
$\subspace{10201\\01444}{}$,
$\subspace{10213\\01440}{}$,
$\subspace{10220\\01441}{}$,
$\subspace{10232\\01442}{}$,
$\subspace{10244\\01443}{}$,
$\subspace{10400\\01330}{}$,
$\subspace{10412\\01340}{}$,
$\subspace{10424\\01300}{}$,
$\subspace{10431\\01310}{}$,
$\subspace{10443\\01320}{}$,
$\subspace{10203\\01212}{15}$,
$\subspace{10210\\01201}{}$,
$\subspace{10222\\01240}{}$,
$\subspace{10234\\01234}{}$,
$\subspace{10241\\01223}{}$,
$\subspace{10203\\01400}{}$,
$\subspace{10210\\01401}{}$,
$\subspace{10222\\01402}{}$,
$\subspace{10234\\01403}{}$,
$\subspace{10241\\01404}{}$,
$\subspace{10400\\01313}{}$,
$\subspace{10412\\01323}{}$,
$\subspace{10424\\01333}{}$,
$\subspace{10431\\01343}{}$,
$\subspace{10443\\01303}{}$,
$\subspace{10203\\01213}{15}$,
$\subspace{10210\\01202}{}$,
$\subspace{10222\\01241}{}$,
$\subspace{10234\\01230}{}$,
$\subspace{10241\\01224}{}$,
$\subspace{10202\\01412}{}$,
$\subspace{10214\\01413}{}$,
$\subspace{10221\\01414}{}$,
$\subspace{10233\\01410}{}$,
$\subspace{10240\\01411}{}$,
$\subspace{10404\\01331}{}$,
$\subspace{10411\\01341}{}$,
$\subspace{10423\\01301}{}$,
$\subspace{10430\\01311}{}$,
$\subspace{10442\\01321}{}$,
$\subspace{10201\\01233}{15}$,
$\subspace{10213\\01222}{}$,
$\subspace{10220\\01211}{}$,
$\subspace{10232\\01200}{}$,
$\subspace{10244\\01244}{}$,
$\subspace{10202\\01443}{}$,
$\subspace{10214\\01444}{}$,
$\subspace{10221\\01440}{}$,
$\subspace{10233\\01441}{}$,
$\subspace{10240\\01442}{}$,
$\subspace{10400\\01310}{}$,
$\subspace{10412\\01320}{}$,
$\subspace{10424\\01330}{}$,
$\subspace{10431\\01340}{}$,
$\subspace{10443\\01300}{}$,
$\subspace{10200\\01240}{15}$,
$\subspace{10212\\01234}{}$,
$\subspace{10224\\01223}{}$,
$\subspace{10231\\01212}{}$,
$\subspace{10243\\01201}{}$,
$\subspace{10200\\01400}{}$,
$\subspace{10212\\01401}{}$,
$\subspace{10224\\01402}{}$,
$\subspace{10231\\01403}{}$,
$\subspace{10243\\01404}{}$,
$\subspace{10401\\01343}{}$,
$\subspace{10413\\01303}{}$,
$\subspace{10420\\01313}{}$,
$\subspace{10432\\01323}{}$,
$\subspace{10444\\01333}{}$,
$\subspace{10301\\01303}{15}$,
$\subspace{10313\\01333}{}$,
$\subspace{10320\\01313}{}$,
$\subspace{10332\\01343}{}$,
$\subspace{10344\\01323}{}$,
$\subspace{10403\\01201}{}$,
$\subspace{10410\\01223}{}$,
$\subspace{10422\\01240}{}$,
$\subspace{10434\\01212}{}$,
$\subspace{10441\\01234}{}$,
$\subspace{10403\\01400}{}$,
$\subspace{10410\\01403}{}$,
$\subspace{10422\\01401}{}$,
$\subspace{10434\\01404}{}$,
$\subspace{10441\\01402}{}$,
$\subspace{10301\\01310}{15}$,
$\subspace{10313\\01340}{}$,
$\subspace{10320\\01320}{}$,
$\subspace{10332\\01300}{}$,
$\subspace{10344\\01330}{}$,
$\subspace{10400\\01211}{}$,
$\subspace{10412\\01233}{}$,
$\subspace{10424\\01200}{}$,
$\subspace{10431\\01222}{}$,
$\subspace{10443\\01244}{}$,
$\subspace{10400\\01441}{}$,
$\subspace{10412\\01444}{}$,
$\subspace{10424\\01442}{}$,
$\subspace{10431\\01440}{}$,
$\subspace{10443\\01443}{}$,
$\subspace{10302\\01312}{15}$,
$\subspace{10314\\01342}{}$,
$\subspace{10321\\01322}{}$,
$\subspace{10333\\01302}{}$,
$\subspace{10340\\01332}{}$,
$\subspace{10404\\01210}{}$,
$\subspace{10411\\01232}{}$,
$\subspace{10423\\01204}{}$,
$\subspace{10430\\01221}{}$,
$\subspace{10442\\01243}{}$,
$\subspace{10404\\01432}{}$,
$\subspace{10411\\01430}{}$,
$\subspace{10423\\01433}{}$,
$\subspace{10430\\01431}{}$,
$\subspace{10442\\01434}{}$,
$\subspace{10302\\01312}{15}$,
$\subspace{10314\\01342}{}$,
$\subspace{10321\\01322}{}$,
$\subspace{10333\\01302}{}$,
$\subspace{10340\\01332}{}$,
$\subspace{10404\\01210}{}$,
$\subspace{10411\\01232}{}$,
$\subspace{10423\\01204}{}$,
$\subspace{10430\\01221}{}$,
$\subspace{10442\\01243}{}$,
$\subspace{10404\\01432}{}$,
$\subspace{10411\\01430}{}$,
$\subspace{10423\\01433}{}$,
$\subspace{10430\\01431}{}$,
$\subspace{10442\\01434}{}$,
$\subspace{10302\\01312}{15}$,
$\subspace{10314\\01342}{}$,
$\subspace{10321\\01322}{}$,
$\subspace{10333\\01302}{}$,
$\subspace{10340\\01332}{}$,
$\subspace{10404\\01210}{}$,
$\subspace{10411\\01232}{}$,
$\subspace{10423\\01204}{}$,
$\subspace{10430\\01221}{}$,
$\subspace{10442\\01243}{}$,
$\subspace{10404\\01432}{}$,
$\subspace{10411\\01430}{}$,
$\subspace{10423\\01433}{}$,
$\subspace{10430\\01431}{}$,
$\subspace{10442\\01434}{}$,
$\subspace{10302\\01312}{15}$,
$\subspace{10314\\01342}{}$,
$\subspace{10321\\01322}{}$,
$\subspace{10333\\01302}{}$,
$\subspace{10340\\01332}{}$,
$\subspace{10404\\01210}{}$,
$\subspace{10411\\01232}{}$,
$\subspace{10423\\01204}{}$,
$\subspace{10430\\01221}{}$,
$\subspace{10442\\01243}{}$,
$\subspace{10404\\01432}{}$,
$\subspace{10411\\01430}{}$,
$\subspace{10423\\01433}{}$,
$\subspace{10430\\01431}{}$,
$\subspace{10442\\01434}{}$,
$\subspace{10300\\01344}{15}$,
$\subspace{10312\\01324}{}$,
$\subspace{10324\\01304}{}$,
$\subspace{10331\\01334}{}$,
$\subspace{10343\\01314}{}$,
$\subspace{10402\\01242}{}$,
$\subspace{10414\\01214}{}$,
$\subspace{10421\\01231}{}$,
$\subspace{10433\\01203}{}$,
$\subspace{10440\\01220}{}$,
$\subspace{10402\\01423}{}$,
$\subspace{10414\\01421}{}$,
$\subspace{10421\\01424}{}$,
$\subspace{10433\\01422}{}$,
$\subspace{10440\\01420}{}$,
$\subspace{10300\\01344}{15}$,
$\subspace{10312\\01324}{}$,
$\subspace{10324\\01304}{}$,
$\subspace{10331\\01334}{}$,
$\subspace{10343\\01314}{}$,
$\subspace{10402\\01242}{}$,
$\subspace{10414\\01214}{}$,
$\subspace{10421\\01231}{}$,
$\subspace{10433\\01203}{}$,
$\subspace{10440\\01220}{}$,
$\subspace{10402\\01423}{}$,
$\subspace{10414\\01421}{}$,
$\subspace{10421\\01424}{}$,
$\subspace{10433\\01422}{}$,
$\subspace{10440\\01420}{}$,
$\subspace{10300\\01344}{15}$,
$\subspace{10312\\01324}{}$,
$\subspace{10324\\01304}{}$,
$\subspace{10331\\01334}{}$,
$\subspace{10343\\01314}{}$,
$\subspace{10402\\01242}{}$,
$\subspace{10414\\01214}{}$,
$\subspace{10421\\01231}{}$,
$\subspace{10433\\01203}{}$,
$\subspace{10440\\01220}{}$,
$\subspace{10402\\01423}{}$,
$\subspace{10414\\01421}{}$,
$\subspace{10421\\01424}{}$,
$\subspace{10433\\01422}{}$,
$\subspace{10440\\01420}{}$,
$\subspace{11200\\00011}{3}$,
$\subspace{11440\\00001}{}$,
$\subspace{13400\\00010}{}$,
$\subspace{11202\\00011}{3}$,
$\subspace{11410\\00001}{}$,
$\subspace{13403\\00010}{}$,
$\subspace{11204\\00011}{3}$,
$\subspace{11430\\00001}{}$,
$\subspace{13401\\00010}{}$,
$\subspace{11204\\00011}{3}$,
$\subspace{11430\\00001}{}$,
$\subspace{13401\\00010}{}$,
$\subspace{11204\\00011}{3}$,
$\subspace{11430\\00001}{}$,
$\subspace{13401\\00010}{}$.

\smallskip

$n_5(5,2;54)\ge 1354$,$\left[\!\begin{smallmatrix}10000\\00100\\04410\\00001\\00044\end{smallmatrix}\!\right]$:
$\subspace{01000\\00102}{15}$,
$\subspace{01002\\00122}{}$,
$\subspace{01003\\00143}{}$,
$\subspace{01012\\00100}{}$,
$\subspace{01014\\00114}{}$,
$\subspace{01011\\00134}{}$,
$\subspace{01021\\00112}{}$,
$\subspace{01023\\00121}{}$,
$\subspace{01020\\00141}{}$,
$\subspace{01034\\00103}{}$,
$\subspace{01030\\00124}{}$,
$\subspace{01032\\00133}{}$,
$\subspace{01043\\00110}{}$,
$\subspace{01044\\00131}{}$,
$\subspace{01041\\00140}{}$,
$\subspace{01002\\00102}{5}$,
$\subspace{01011\\00114}{}$,
$\subspace{01020\\00121}{}$,
$\subspace{01034\\00133}{}$,
$\subspace{01043\\00140}{}$,
$\subspace{01003\\00141}{5}$,
$\subspace{01012\\00103}{}$,
$\subspace{01021\\00110}{}$,
$\subspace{01030\\00122}{}$,
$\subspace{01044\\00134}{}$,
$\subspace{01201\\00011}{3}$,
$\subspace{01303\\00010}{}$,
$\subspace{01400\\00001}{}$,
$\subspace{01202\\00011}{3}$,
$\subspace{01301\\00010}{}$,
$\subspace{01410\\00001}{}$,
$\subspace{01202\\00011}{3}$,
$\subspace{01301\\00010}{}$,
$\subspace{01410\\00001}{}$,
$\subspace{10000\\00001}{3}$,
$\subspace{10000\\00010}{}$,
$\subspace{10000\\00011}{}$,
$\subspace{10000\\00001}{3}$,
$\subspace{10000\\00010}{}$,
$\subspace{10000\\00011}{}$,
$\subspace{10000\\00012}{3}$,
$\subspace{10000\\00013}{}$,
$\subspace{10000\\00014}{}$,
$\subspace{10012\\00103}{15}$,
$\subspace{10012\\00110}{}$,
$\subspace{10012\\00122}{}$,
$\subspace{10012\\00134}{}$,
$\subspace{10012\\00141}{}$,
$\subspace{10014\\01003}{}$,
$\subspace{10014\\01012}{}$,
$\subspace{10014\\01021}{}$,
$\subspace{10014\\01030}{}$,
$\subspace{10014\\01044}{}$,
$\subspace{10034\\01102}{}$,
$\subspace{10034\\01110}{}$,
$\subspace{10034\\01123}{}$,
$\subspace{10034\\01131}{}$,
$\subspace{10034\\01144}{}$,
$\subspace{10021\\00101}{15}$,
$\subspace{10021\\00113}{}$,
$\subspace{10021\\00120}{}$,
$\subspace{10021\\00132}{}$,
$\subspace{10021\\00144}{}$,
$\subspace{10043\\01001}{}$,
$\subspace{10043\\01010}{}$,
$\subspace{10043\\01024}{}$,
$\subspace{10043\\01033}{}$,
$\subspace{10043\\01042}{}$,
$\subspace{10041\\01101}{}$,
$\subspace{10041\\01114}{}$,
$\subspace{10041\\01122}{}$,
$\subspace{10041\\01130}{}$,
$\subspace{10041\\01143}{}$,
$\subspace{10021\\00103}{15}$,
$\subspace{10021\\00110}{}$,
$\subspace{10021\\00122}{}$,
$\subspace{10021\\00134}{}$,
$\subspace{10021\\00141}{}$,
$\subspace{10043\\01003}{}$,
$\subspace{10043\\01012}{}$,
$\subspace{10043\\01021}{}$,
$\subspace{10043\\01030}{}$,
$\subspace{10043\\01044}{}$,
$\subspace{10041\\01102}{}$,
$\subspace{10041\\01110}{}$,
$\subspace{10041\\01123}{}$,
$\subspace{10041\\01131}{}$,
$\subspace{10041\\01144}{}$,
$\subspace{10024\\00104}{15}$,
$\subspace{10024\\00111}{}$,
$\subspace{10024\\00123}{}$,
$\subspace{10024\\00130}{}$,
$\subspace{10024\\00142}{}$,
$\subspace{10023\\01004}{}$,
$\subspace{10023\\01013}{}$,
$\subspace{10023\\01022}{}$,
$\subspace{10023\\01031}{}$,
$\subspace{10023\\01040}{}$,
$\subspace{10013\\01100}{}$,
$\subspace{10013\\01113}{}$,
$\subspace{10013\\01121}{}$,
$\subspace{10013\\01134}{}$,
$\subspace{10013\\01142}{}$,
$\subspace{10031\\00103}{15}$,
$\subspace{10031\\00110}{}$,
$\subspace{10031\\00122}{}$,
$\subspace{10031\\00134}{}$,
$\subspace{10031\\00141}{}$,
$\subspace{10032\\01003}{}$,
$\subspace{10032\\01012}{}$,
$\subspace{10032\\01021}{}$,
$\subspace{10032\\01030}{}$,
$\subspace{10032\\01044}{}$,
$\subspace{10042\\01102}{}$,
$\subspace{10042\\01110}{}$,
$\subspace{10042\\01123}{}$,
$\subspace{10042\\01131}{}$,
$\subspace{10042\\01144}{}$,
$\subspace{10033\\00103}{15}$,
$\subspace{10033\\00110}{}$,
$\subspace{10033\\00122}{}$,
$\subspace{10033\\00134}{}$,
$\subspace{10033\\00141}{}$,
$\subspace{10002\\01003}{}$,
$\subspace{10002\\01012}{}$,
$\subspace{10002\\01021}{}$,
$\subspace{10002\\01030}{}$,
$\subspace{10002\\01044}{}$,
$\subspace{10020\\01102}{}$,
$\subspace{10020\\01110}{}$,
$\subspace{10020\\01123}{}$,
$\subspace{10020\\01131}{}$,
$\subspace{10020\\01144}{}$,
$\subspace{10033\\00103}{15}$,
$\subspace{10033\\00110}{}$,
$\subspace{10033\\00122}{}$,
$\subspace{10033\\00134}{}$,
$\subspace{10033\\00141}{}$,
$\subspace{10002\\01003}{}$,
$\subspace{10002\\01012}{}$,
$\subspace{10002\\01021}{}$,
$\subspace{10002\\01030}{}$,
$\subspace{10002\\01044}{}$,
$\subspace{10020\\01102}{}$,
$\subspace{10020\\01110}{}$,
$\subspace{10020\\01123}{}$,
$\subspace{10020\\01131}{}$,
$\subspace{10020\\01144}{}$,
$\subspace{10042\\00101}{15}$,
$\subspace{10042\\00113}{}$,
$\subspace{10042\\00120}{}$,
$\subspace{10042\\00132}{}$,
$\subspace{10042\\00144}{}$,
$\subspace{10031\\01001}{}$,
$\subspace{10031\\01010}{}$,
$\subspace{10031\\01024}{}$,
$\subspace{10031\\01033}{}$,
$\subspace{10031\\01042}{}$,
$\subspace{10032\\01101}{}$,
$\subspace{10032\\01114}{}$,
$\subspace{10032\\01122}{}$,
$\subspace{10032\\01130}{}$,
$\subspace{10032\\01143}{}$,
$\subspace{10103\\00012}{3}$,
$\subspace{11003\\00014}{}$,
$\subspace{14403\\00013}{}$,
$\subspace{10202\\00012}{3}$,
$\subspace{12002\\00014}{}$,
$\subspace{13303\\00013}{}$,
$\subspace{10202\\00012}{3}$,
$\subspace{12002\\00014}{}$,
$\subspace{13303\\00013}{}$,
$\subspace{10400\\00012}{3}$,
$\subspace{11103\\00013}{}$,
$\subspace{14000\\00014}{}$,
$\subspace{10400\\00012}{3}$,
$\subspace{11103\\00013}{}$,
$\subspace{14000\\00014}{}$,
$\subspace{10001\\01203}{15}$,
$\subspace{10001\\01214}{}$,
$\subspace{10001\\01220}{}$,
$\subspace{10001\\01231}{}$,
$\subspace{10001\\01242}{}$,
$\subspace{10044\\01304}{}$,
$\subspace{10044\\01314}{}$,
$\subspace{10044\\01324}{}$,
$\subspace{10044\\01334}{}$,
$\subspace{10044\\01344}{}$,
$\subspace{10010\\01420}{}$,
$\subspace{10010\\01421}{}$,
$\subspace{10010\\01422}{}$,
$\subspace{10010\\01423}{}$,
$\subspace{10010\\01424}{}$,
$\subspace{10014\\01201}{15}$,
$\subspace{10014\\01212}{}$,
$\subspace{10014\\01223}{}$,
$\subspace{10014\\01234}{}$,
$\subspace{10014\\01240}{}$,
$\subspace{10012\\01303}{}$,
$\subspace{10012\\01313}{}$,
$\subspace{10012\\01323}{}$,
$\subspace{10012\\01333}{}$,
$\subspace{10012\\01343}{}$,
$\subspace{10034\\01400}{}$,
$\subspace{10034\\01401}{}$,
$\subspace{10034\\01402}{}$,
$\subspace{10034\\01403}{}$,
$\subspace{10034\\01404}{}$,
$\subspace{10022\\01200}{15}$,
$\subspace{10022\\01211}{}$,
$\subspace{10022\\01222}{}$,
$\subspace{10022\\01233}{}$,
$\subspace{10022\\01244}{}$,
$\subspace{10030\\01300}{}$,
$\subspace{10030\\01310}{}$,
$\subspace{10030\\01320}{}$,
$\subspace{10030\\01330}{}$,
$\subspace{10030\\01340}{}$,
$\subspace{10003\\01440}{}$,
$\subspace{10003\\01441}{}$,
$\subspace{10003\\01442}{}$,
$\subspace{10003\\01443}{}$,
$\subspace{10003\\01444}{}$,
$\subspace{10022\\01200}{15}$,
$\subspace{10022\\01211}{}$,
$\subspace{10022\\01222}{}$,
$\subspace{10022\\01233}{}$,
$\subspace{10022\\01244}{}$,
$\subspace{10030\\01300}{}$,
$\subspace{10030\\01310}{}$,
$\subspace{10030\\01320}{}$,
$\subspace{10030\\01330}{}$,
$\subspace{10030\\01340}{}$,
$\subspace{10003\\01440}{}$,
$\subspace{10003\\01441}{}$,
$\subspace{10003\\01442}{}$,
$\subspace{10003\\01443}{}$,
$\subspace{10003\\01444}{}$,
$\subspace{10024\\01202}{15}$,
$\subspace{10024\\01213}{}$,
$\subspace{10024\\01224}{}$,
$\subspace{10024\\01230}{}$,
$\subspace{10024\\01241}{}$,
$\subspace{10013\\01301}{}$,
$\subspace{10013\\01311}{}$,
$\subspace{10013\\01321}{}$,
$\subspace{10013\\01331}{}$,
$\subspace{10013\\01341}{}$,
$\subspace{10023\\01410}{}$,
$\subspace{10023\\01411}{}$,
$\subspace{10023\\01412}{}$,
$\subspace{10023\\01413}{}$,
$\subspace{10023\\01414}{}$,
$\subspace{10040\\01201}{15}$,
$\subspace{10040\\01212}{}$,
$\subspace{10040\\01223}{}$,
$\subspace{10040\\01234}{}$,
$\subspace{10040\\01240}{}$,
$\subspace{10004\\01303}{}$,
$\subspace{10004\\01313}{}$,
$\subspace{10004\\01323}{}$,
$\subspace{10004\\01333}{}$,
$\subspace{10004\\01343}{}$,
$\subspace{10011\\01400}{}$,
$\subspace{10011\\01401}{}$,
$\subspace{10011\\01402}{}$,
$\subspace{10011\\01403}{}$,
$\subspace{10011\\01404}{}$,
$\subspace{10040\\01203}{15}$,
$\subspace{10040\\01214}{}$,
$\subspace{10040\\01220}{}$,
$\subspace{10040\\01231}{}$,
$\subspace{10040\\01242}{}$,
$\subspace{10004\\01304}{}$,
$\subspace{10004\\01314}{}$,
$\subspace{10004\\01324}{}$,
$\subspace{10004\\01334}{}$,
$\subspace{10004\\01344}{}$,
$\subspace{10011\\01420}{}$,
$\subspace{10011\\01421}{}$,
$\subspace{10011\\01422}{}$,
$\subspace{10011\\01423}{}$,
$\subspace{10011\\01424}{}$,
$\subspace{10044\\01201}{15}$,
$\subspace{10044\\01212}{}$,
$\subspace{10044\\01223}{}$,
$\subspace{10044\\01234}{}$,
$\subspace{10044\\01240}{}$,
$\subspace{10010\\01303}{}$,
$\subspace{10010\\01313}{}$,
$\subspace{10010\\01323}{}$,
$\subspace{10010\\01333}{}$,
$\subspace{10010\\01343}{}$,
$\subspace{10001\\01400}{}$,
$\subspace{10001\\01401}{}$,
$\subspace{10001\\01402}{}$,
$\subspace{10001\\01403}{}$,
$\subspace{10001\\01404}{}$,
$\subspace{10102\\01010}{15}$,
$\subspace{10114\\01024}{}$,
$\subspace{10121\\01033}{}$,
$\subspace{10133\\01042}{}$,
$\subspace{10140\\01001}{}$,
$\subspace{10403\\01122}{}$,
$\subspace{10410\\01101}{}$,
$\subspace{10422\\01130}{}$,
$\subspace{10434\\01114}{}$,
$\subspace{10441\\01143}{}$,
$\subspace{14003\\00144}{}$,
$\subspace{14012\\00132}{}$,
$\subspace{14021\\00120}{}$,
$\subspace{14030\\00113}{}$,
$\subspace{14044\\00101}{}$,
$\subspace{10102\\01010}{15}$,
$\subspace{10114\\01024}{}$,
$\subspace{10121\\01033}{}$,
$\subspace{10133\\01042}{}$,
$\subspace{10140\\01001}{}$,
$\subspace{10403\\01122}{}$,
$\subspace{10410\\01101}{}$,
$\subspace{10422\\01130}{}$,
$\subspace{10434\\01114}{}$,
$\subspace{10441\\01143}{}$,
$\subspace{14003\\00144}{}$,
$\subspace{14012\\00132}{}$,
$\subspace{14021\\00120}{}$,
$\subspace{14030\\00113}{}$,
$\subspace{14044\\00101}{}$,
$\subspace{10104\\01022}{15}$,
$\subspace{10111\\01031}{}$,
$\subspace{10123\\01040}{}$,
$\subspace{10130\\01004}{}$,
$\subspace{10142\\01013}{}$,
$\subspace{10401\\01121}{}$,
$\subspace{10413\\01100}{}$,
$\subspace{10420\\01134}{}$,
$\subspace{10432\\01113}{}$,
$\subspace{10444\\01142}{}$,
$\subspace{14001\\00130}{}$,
$\subspace{14010\\00123}{}$,
$\subspace{14024\\00111}{}$,
$\subspace{14033\\00104}{}$,
$\subspace{14042\\00142}{}$,
$\subspace{10101\\01030}{15}$,
$\subspace{10113\\01044}{}$,
$\subspace{10120\\01003}{}$,
$\subspace{10132\\01012}{}$,
$\subspace{10144\\01021}{}$,
$\subspace{10402\\01131}{}$,
$\subspace{10414\\01110}{}$,
$\subspace{10421\\01144}{}$,
$\subspace{10433\\01123}{}$,
$\subspace{10440\\01102}{}$,
$\subspace{14002\\00103}{}$,
$\subspace{14011\\00141}{}$,
$\subspace{14020\\00134}{}$,
$\subspace{14034\\00122}{}$,
$\subspace{14043\\00110}{}$,
$\subspace{10101\\01030}{15}$,
$\subspace{10113\\01044}{}$,
$\subspace{10120\\01003}{}$,
$\subspace{10132\\01012}{}$,
$\subspace{10144\\01021}{}$,
$\subspace{10402\\01131}{}$,
$\subspace{10414\\01110}{}$,
$\subspace{10421\\01144}{}$,
$\subspace{10433\\01123}{}$,
$\subspace{10440\\01102}{}$,
$\subspace{14002\\00103}{}$,
$\subspace{14011\\00141}{}$,
$\subspace{14020\\00134}{}$,
$\subspace{14034\\00122}{}$,
$\subspace{14043\\00110}{}$,
$\subspace{10104\\01032}{15}$,
$\subspace{10111\\01041}{}$,
$\subspace{10123\\01000}{}$,
$\subspace{10130\\01014}{}$,
$\subspace{10142\\01023}{}$,
$\subspace{10402\\01111}{}$,
$\subspace{10414\\01140}{}$,
$\subspace{10421\\01124}{}$,
$\subspace{10433\\01103}{}$,
$\subspace{10440\\01132}{}$,
$\subspace{14002\\00131}{}$,
$\subspace{14011\\00124}{}$,
$\subspace{14020\\00112}{}$,
$\subspace{14034\\00100}{}$,
$\subspace{14043\\00143}{}$,
$\subspace{10104\\01040}{15}$,
$\subspace{10111\\01004}{}$,
$\subspace{10123\\01013}{}$,
$\subspace{10130\\01022}{}$,
$\subspace{10142\\01031}{}$,
$\subspace{10402\\01134}{}$,
$\subspace{10414\\01113}{}$,
$\subspace{10421\\01142}{}$,
$\subspace{10433\\01121}{}$,
$\subspace{10440\\01100}{}$,
$\subspace{14002\\00123}{}$,
$\subspace{14011\\00111}{}$,
$\subspace{14020\\00104}{}$,
$\subspace{14034\\00142}{}$,
$\subspace{14043\\00130}{}$,
$\subspace{10104\\01040}{15}$,
$\subspace{10111\\01004}{}$,
$\subspace{10123\\01013}{}$,
$\subspace{10130\\01022}{}$,
$\subspace{10142\\01031}{}$,
$\subspace{10402\\01134}{}$,
$\subspace{10414\\01113}{}$,
$\subspace{10421\\01142}{}$,
$\subspace{10433\\01121}{}$,
$\subspace{10440\\01100}{}$,
$\subspace{14002\\00123}{}$,
$\subspace{14011\\00111}{}$,
$\subspace{14020\\00104}{}$,
$\subspace{14034\\00142}{}$,
$\subspace{14043\\00130}{}$,
$\subspace{10104\\01041}{15}$,
$\subspace{10111\\01000}{}$,
$\subspace{10123\\01014}{}$,
$\subspace{10130\\01023}{}$,
$\subspace{10142\\01032}{}$,
$\subspace{10400\\01140}{}$,
$\subspace{10412\\01124}{}$,
$\subspace{10424\\01103}{}$,
$\subspace{10431\\01132}{}$,
$\subspace{10443\\01111}{}$,
$\subspace{14000\\00100}{}$,
$\subspace{14014\\00143}{}$,
$\subspace{14023\\00131}{}$,
$\subspace{14032\\00124}{}$,
$\subspace{14041\\00112}{}$,
$\subspace{10104\\01041}{15}$,
$\subspace{10111\\01000}{}$,
$\subspace{10123\\01014}{}$,
$\subspace{10130\\01023}{}$,
$\subspace{10142\\01032}{}$,
$\subspace{10400\\01140}{}$,
$\subspace{10412\\01124}{}$,
$\subspace{10424\\01103}{}$,
$\subspace{10431\\01132}{}$,
$\subspace{10443\\01111}{}$,
$\subspace{14000\\00100}{}$,
$\subspace{14014\\00143}{}$,
$\subspace{14023\\00131}{}$,
$\subspace{14032\\00124}{}$,
$\subspace{14041\\00112}{}$,
$\subspace{10102\\01104}{15}$,
$\subspace{10114\\01120}{}$,
$\subspace{10121\\01141}{}$,
$\subspace{10133\\01112}{}$,
$\subspace{10140\\01133}{}$,
$\subspace{10403\\01002}{}$,
$\subspace{10410\\01043}{}$,
$\subspace{10422\\01034}{}$,
$\subspace{10434\\01020}{}$,
$\subspace{10441\\01011}{}$,
$\subspace{11002\\00102}{}$,
$\subspace{11011\\00114}{}$,
$\subspace{11020\\00121}{}$,
$\subspace{11034\\00133}{}$,
$\subspace{11043\\00140}{}$,
$\subspace{10100\\01114}{15}$,
$\subspace{10112\\01130}{}$,
$\subspace{10124\\01101}{}$,
$\subspace{10131\\01122}{}$,
$\subspace{10143\\01143}{}$,
$\subspace{10400\\01024}{}$,
$\subspace{10412\\01010}{}$,
$\subspace{10424\\01001}{}$,
$\subspace{10431\\01042}{}$,
$\subspace{10443\\01033}{}$,
$\subspace{11000\\00132}{}$,
$\subspace{11014\\00144}{}$,
$\subspace{11023\\00101}{}$,
$\subspace{11032\\00113}{}$,
$\subspace{11041\\00120}{}$,
$\subspace{10100\\01114}{15}$,
$\subspace{10112\\01130}{}$,
$\subspace{10124\\01101}{}$,
$\subspace{10131\\01122}{}$,
$\subspace{10143\\01143}{}$,
$\subspace{10400\\01024}{}$,
$\subspace{10412\\01010}{}$,
$\subspace{10424\\01001}{}$,
$\subspace{10431\\01042}{}$,
$\subspace{10443\\01033}{}$,
$\subspace{11000\\00132}{}$,
$\subspace{11014\\00144}{}$,
$\subspace{11023\\00101}{}$,
$\subspace{11032\\00113}{}$,
$\subspace{11041\\00120}{}$,
$\subspace{10100\\01114}{15}$,
$\subspace{10112\\01130}{}$,
$\subspace{10124\\01101}{}$,
$\subspace{10131\\01122}{}$,
$\subspace{10143\\01143}{}$,
$\subspace{10400\\01024}{}$,
$\subspace{10412\\01010}{}$,
$\subspace{10424\\01001}{}$,
$\subspace{10431\\01042}{}$,
$\subspace{10443\\01033}{}$,
$\subspace{11000\\00132}{}$,
$\subspace{11014\\00144}{}$,
$\subspace{11023\\00101}{}$,
$\subspace{11032\\00113}{}$,
$\subspace{11041\\00120}{}$,
$\subspace{10100\\01120}{15}$,
$\subspace{10112\\01141}{}$,
$\subspace{10124\\01112}{}$,
$\subspace{10131\\01133}{}$,
$\subspace{10143\\01104}{}$,
$\subspace{10403\\01043}{}$,
$\subspace{10410\\01034}{}$,
$\subspace{10422\\01020}{}$,
$\subspace{10434\\01011}{}$,
$\subspace{10441\\01002}{}$,
$\subspace{11000\\00133}{}$,
$\subspace{11014\\00140}{}$,
$\subspace{11023\\00102}{}$,
$\subspace{11032\\00114}{}$,
$\subspace{11041\\00121}{}$,
$\subspace{10100\\01132}{15}$,
$\subspace{10112\\01103}{}$,
$\subspace{10124\\01124}{}$,
$\subspace{10131\\01140}{}$,
$\subspace{10143\\01111}{}$,
$\subspace{10400\\01032}{}$,
$\subspace{10412\\01023}{}$,
$\subspace{10424\\01014}{}$,
$\subspace{10431\\01000}{}$,
$\subspace{10443\\01041}{}$,
$\subspace{11000\\00124}{}$,
$\subspace{11014\\00131}{}$,
$\subspace{11023\\00143}{}$,
$\subspace{11032\\00100}{}$,
$\subspace{11041\\00112}{}$,
$\subspace{10104\\01144}{15}$,
$\subspace{10111\\01110}{}$,
$\subspace{10123\\01131}{}$,
$\subspace{10130\\01102}{}$,
$\subspace{10142\\01123}{}$,
$\subspace{10401\\01030}{}$,
$\subspace{10413\\01021}{}$,
$\subspace{10420\\01012}{}$,
$\subspace{10432\\01003}{}$,
$\subspace{10444\\01044}{}$,
$\subspace{11004\\00122}{}$,
$\subspace{11013\\00134}{}$,
$\subspace{11022\\00141}{}$,
$\subspace{11031\\00103}{}$,
$\subspace{11040\\00110}{}$,
$\subspace{10103\\01210}{15}$,
$\subspace{10110\\01243}{}$,
$\subspace{10122\\01221}{}$,
$\subspace{10134\\01204}{}$,
$\subspace{10141\\01232}{}$,
$\subspace{10102\\01433}{}$,
$\subspace{10114\\01430}{}$,
$\subspace{10121\\01432}{}$,
$\subspace{10133\\01434}{}$,
$\subspace{10140\\01431}{}$,
$\subspace{10200\\01302}{}$,
$\subspace{10212\\01322}{}$,
$\subspace{10224\\01342}{}$,
$\subspace{10231\\01312}{}$,
$\subspace{10243\\01332}{}$,
$\subspace{10103\\01210}{15}$,
$\subspace{10110\\01243}{}$,
$\subspace{10122\\01221}{}$,
$\subspace{10134\\01204}{}$,
$\subspace{10141\\01232}{}$,
$\subspace{10102\\01433}{}$,
$\subspace{10114\\01430}{}$,
$\subspace{10121\\01432}{}$,
$\subspace{10133\\01434}{}$,
$\subspace{10140\\01431}{}$,
$\subspace{10200\\01302}{}$,
$\subspace{10212\\01322}{}$,
$\subspace{10224\\01342}{}$,
$\subspace{10231\\01312}{}$,
$\subspace{10243\\01332}{}$,
$\subspace{10101\\01223}{15}$,
$\subspace{10113\\01201}{}$,
$\subspace{10120\\01234}{}$,
$\subspace{10132\\01212}{}$,
$\subspace{10144\\01240}{}$,
$\subspace{10101\\01400}{}$,
$\subspace{10113\\01402}{}$,
$\subspace{10120\\01404}{}$,
$\subspace{10132\\01401}{}$,
$\subspace{10144\\01403}{}$,
$\subspace{10201\\01323}{}$,
$\subspace{10213\\01343}{}$,
$\subspace{10220\\01313}{}$,
$\subspace{10232\\01333}{}$,
$\subspace{10244\\01303}{}$,
$\subspace{10104\\01234}{15}$,
$\subspace{10111\\01212}{}$,
$\subspace{10123\\01240}{}$,
$\subspace{10130\\01223}{}$,
$\subspace{10142\\01201}{}$,
$\subspace{10101\\01403}{}$,
$\subspace{10113\\01400}{}$,
$\subspace{10120\\01402}{}$,
$\subspace{10132\\01404}{}$,
$\subspace{10144\\01401}{}$,
$\subspace{10203\\01303}{}$,
$\subspace{10210\\01323}{}$,
$\subspace{10222\\01343}{}$,
$\subspace{10234\\01313}{}$,
$\subspace{10241\\01333}{}$,
$\subspace{10103\\01244}{15}$,
$\subspace{10110\\01222}{}$,
$\subspace{10122\\01200}{}$,
$\subspace{10134\\01233}{}$,
$\subspace{10141\\01211}{}$,
$\subspace{10102\\01442}{}$,
$\subspace{10114\\01444}{}$,
$\subspace{10121\\01441}{}$,
$\subspace{10133\\01443}{}$,
$\subspace{10140\\01440}{}$,
$\subspace{10204\\01330}{}$,
$\subspace{10211\\01300}{}$,
$\subspace{10223\\01320}{}$,
$\subspace{10230\\01340}{}$,
$\subspace{10242\\01310}{}$,
$\subspace{10103\\01244}{15}$,
$\subspace{10110\\01222}{}$,
$\subspace{10122\\01200}{}$,
$\subspace{10134\\01233}{}$,
$\subspace{10141\\01211}{}$,
$\subspace{10102\\01442}{}$,
$\subspace{10114\\01444}{}$,
$\subspace{10121\\01441}{}$,
$\subspace{10133\\01443}{}$,
$\subspace{10140\\01440}{}$,
$\subspace{10204\\01330}{}$,
$\subspace{10211\\01300}{}$,
$\subspace{10223\\01320}{}$,
$\subspace{10230\\01340}{}$,
$\subspace{10242\\01310}{}$,
$\subspace{10103\\01244}{15}$,
$\subspace{10110\\01222}{}$,
$\subspace{10122\\01200}{}$,
$\subspace{10134\\01233}{}$,
$\subspace{10141\\01211}{}$,
$\subspace{10102\\01442}{}$,
$\subspace{10114\\01444}{}$,
$\subspace{10121\\01441}{}$,
$\subspace{10133\\01443}{}$,
$\subspace{10140\\01440}{}$,
$\subspace{10204\\01330}{}$,
$\subspace{10211\\01300}{}$,
$\subspace{10223\\01320}{}$,
$\subspace{10230\\01340}{}$,
$\subspace{10242\\01310}{}$,
$\subspace{10103\\01244}{15}$,
$\subspace{10110\\01222}{}$,
$\subspace{10122\\01200}{}$,
$\subspace{10134\\01233}{}$,
$\subspace{10141\\01211}{}$,
$\subspace{10102\\01442}{}$,
$\subspace{10114\\01444}{}$,
$\subspace{10121\\01441}{}$,
$\subspace{10133\\01443}{}$,
$\subspace{10140\\01440}{}$,
$\subspace{10204\\01330}{}$,
$\subspace{10211\\01300}{}$,
$\subspace{10223\\01320}{}$,
$\subspace{10230\\01340}{}$,
$\subspace{10242\\01310}{}$,
$\subspace{10103\\01244}{15}$,
$\subspace{10110\\01222}{}$,
$\subspace{10122\\01200}{}$,
$\subspace{10134\\01233}{}$,
$\subspace{10141\\01211}{}$,
$\subspace{10102\\01442}{}$,
$\subspace{10114\\01444}{}$,
$\subspace{10121\\01441}{}$,
$\subspace{10133\\01443}{}$,
$\subspace{10140\\01440}{}$,
$\subspace{10204\\01330}{}$,
$\subspace{10211\\01300}{}$,
$\subspace{10223\\01320}{}$,
$\subspace{10230\\01340}{}$,
$\subspace{10242\\01310}{}$,
$\subspace{10100\\01301}{15}$,
$\subspace{10112\\01341}{}$,
$\subspace{10124\\01331}{}$,
$\subspace{10131\\01321}{}$,
$\subspace{10143\\01311}{}$,
$\subspace{10303\\01202}{}$,
$\subspace{10310\\01213}{}$,
$\subspace{10322\\01224}{}$,
$\subspace{10334\\01230}{}$,
$\subspace{10341\\01241}{}$,
$\subspace{10303\\01414}{}$,
$\subspace{10310\\01413}{}$,
$\subspace{10322\\01412}{}$,
$\subspace{10334\\01411}{}$,
$\subspace{10341\\01410}{}$,
$\subspace{10101\\01304}{15}$,
$\subspace{10113\\01344}{}$,
$\subspace{10120\\01334}{}$,
$\subspace{10132\\01324}{}$,
$\subspace{10144\\01314}{}$,
$\subspace{10304\\01220}{}$,
$\subspace{10311\\01231}{}$,
$\subspace{10323\\01242}{}$,
$\subspace{10330\\01203}{}$,
$\subspace{10342\\01214}{}$,
$\subspace{10303\\01420}{}$,
$\subspace{10310\\01424}{}$,
$\subspace{10322\\01423}{}$,
$\subspace{10334\\01422}{}$,
$\subspace{10341\\01421}{}$,
$\subspace{10101\\01304}{15}$,
$\subspace{10113\\01344}{}$,
$\subspace{10120\\01334}{}$,
$\subspace{10132\\01324}{}$,
$\subspace{10144\\01314}{}$,
$\subspace{10304\\01220}{}$,
$\subspace{10311\\01231}{}$,
$\subspace{10323\\01242}{}$,
$\subspace{10330\\01203}{}$,
$\subspace{10342\\01214}{}$,
$\subspace{10303\\01420}{}$,
$\subspace{10310\\01424}{}$,
$\subspace{10322\\01423}{}$,
$\subspace{10334\\01422}{}$,
$\subspace{10341\\01421}{}$,
$\subspace{10104\\01303}{15}$,
$\subspace{10111\\01343}{}$,
$\subspace{10123\\01333}{}$,
$\subspace{10130\\01323}{}$,
$\subspace{10142\\01313}{}$,
$\subspace{10302\\01234}{}$,
$\subspace{10314\\01240}{}$,
$\subspace{10321\\01201}{}$,
$\subspace{10333\\01212}{}$,
$\subspace{10340\\01223}{}$,
$\subspace{10303\\01403}{}$,
$\subspace{10310\\01402}{}$,
$\subspace{10322\\01401}{}$,
$\subspace{10334\\01400}{}$,
$\subspace{10341\\01404}{}$,
$\subspace{10104\\01303}{15}$,
$\subspace{10111\\01343}{}$,
$\subspace{10123\\01333}{}$,
$\subspace{10130\\01323}{}$,
$\subspace{10142\\01313}{}$,
$\subspace{10302\\01234}{}$,
$\subspace{10314\\01240}{}$,
$\subspace{10321\\01201}{}$,
$\subspace{10333\\01212}{}$,
$\subspace{10340\\01223}{}$,
$\subspace{10303\\01403}{}$,
$\subspace{10310\\01402}{}$,
$\subspace{10322\\01401}{}$,
$\subspace{10334\\01400}{}$,
$\subspace{10341\\01404}{}$,
$\subspace{10100\\01314}{15}$,
$\subspace{10112\\01304}{}$,
$\subspace{10124\\01344}{}$,
$\subspace{10131\\01334}{}$,
$\subspace{10143\\01324}{}$,
$\subspace{10303\\01220}{}$,
$\subspace{10310\\01231}{}$,
$\subspace{10322\\01242}{}$,
$\subspace{10334\\01203}{}$,
$\subspace{10341\\01214}{}$,
$\subspace{10300\\01424}{}$,
$\subspace{10312\\01423}{}$,
$\subspace{10324\\01422}{}$,
$\subspace{10331\\01421}{}$,
$\subspace{10343\\01420}{}$,
$\subspace{10101\\01311}{15}$,
$\subspace{10113\\01301}{}$,
$\subspace{10120\\01341}{}$,
$\subspace{10132\\01331}{}$,
$\subspace{10144\\01321}{}$,
$\subspace{10303\\01230}{}$,
$\subspace{10310\\01241}{}$,
$\subspace{10322\\01202}{}$,
$\subspace{10334\\01213}{}$,
$\subspace{10341\\01224}{}$,
$\subspace{10302\\01411}{}$,
$\subspace{10314\\01410}{}$,
$\subspace{10321\\01414}{}$,
$\subspace{10333\\01413}{}$,
$\subspace{10340\\01412}{}$,
$\subspace{10101\\01311}{15}$,
$\subspace{10113\\01301}{}$,
$\subspace{10120\\01341}{}$,
$\subspace{10132\\01331}{}$,
$\subspace{10144\\01321}{}$,
$\subspace{10303\\01230}{}$,
$\subspace{10310\\01241}{}$,
$\subspace{10322\\01202}{}$,
$\subspace{10334\\01213}{}$,
$\subspace{10341\\01224}{}$,
$\subspace{10302\\01411}{}$,
$\subspace{10314\\01410}{}$,
$\subspace{10321\\01414}{}$,
$\subspace{10333\\01413}{}$,
$\subspace{10340\\01412}{}$,
$\subspace{10101\\01314}{15}$,
$\subspace{10113\\01304}{}$,
$\subspace{10120\\01344}{}$,
$\subspace{10132\\01334}{}$,
$\subspace{10144\\01324}{}$,
$\subspace{10301\\01214}{}$,
$\subspace{10313\\01220}{}$,
$\subspace{10320\\01231}{}$,
$\subspace{10332\\01242}{}$,
$\subspace{10344\\01203}{}$,
$\subspace{10301\\01423}{}$,
$\subspace{10313\\01422}{}$,
$\subspace{10320\\01421}{}$,
$\subspace{10332\\01420}{}$,
$\subspace{10344\\01424}{}$,
$\subspace{10101\\01314}{15}$,
$\subspace{10113\\01304}{}$,
$\subspace{10120\\01344}{}$,
$\subspace{10132\\01334}{}$,
$\subspace{10144\\01324}{}$,
$\subspace{10301\\01214}{}$,
$\subspace{10313\\01220}{}$,
$\subspace{10320\\01231}{}$,
$\subspace{10332\\01242}{}$,
$\subspace{10344\\01203}{}$,
$\subspace{10301\\01423}{}$,
$\subspace{10313\\01422}{}$,
$\subspace{10320\\01421}{}$,
$\subspace{10332\\01420}{}$,
$\subspace{10344\\01424}{}$,
$\subspace{10201\\01004}{15}$,
$\subspace{10213\\01031}{}$,
$\subspace{10220\\01013}{}$,
$\subspace{10232\\01040}{}$,
$\subspace{10244\\01022}{}$,
$\subspace{10304\\01142}{}$,
$\subspace{10311\\01134}{}$,
$\subspace{10323\\01121}{}$,
$\subspace{10330\\01113}{}$,
$\subspace{10342\\01100}{}$,
$\subspace{13004\\00104}{}$,
$\subspace{13013\\00123}{}$,
$\subspace{13022\\00142}{}$,
$\subspace{13031\\00111}{}$,
$\subspace{13040\\00130}{}$,
$\subspace{10201\\01004}{15}$,
$\subspace{10213\\01031}{}$,
$\subspace{10220\\01013}{}$,
$\subspace{10232\\01040}{}$,
$\subspace{10244\\01022}{}$,
$\subspace{10304\\01142}{}$,
$\subspace{10311\\01134}{}$,
$\subspace{10323\\01121}{}$,
$\subspace{10330\\01113}{}$,
$\subspace{10342\\01100}{}$,
$\subspace{13004\\00104}{}$,
$\subspace{13013\\00123}{}$,
$\subspace{13022\\00142}{}$,
$\subspace{13031\\00111}{}$,
$\subspace{13040\\00130}{}$,
$\subspace{10200\\01014}{15}$,
$\subspace{10212\\01041}{}$,
$\subspace{10224\\01023}{}$,
$\subspace{10231\\01000}{}$,
$\subspace{10243\\01032}{}$,
$\subspace{10303\\01124}{}$,
$\subspace{10310\\01111}{}$,
$\subspace{10322\\01103}{}$,
$\subspace{10334\\01140}{}$,
$\subspace{10341\\01132}{}$,
$\subspace{13003\\00131}{}$,
$\subspace{13012\\00100}{}$,
$\subspace{13021\\00124}{}$,
$\subspace{13030\\00143}{}$,
$\subspace{13044\\00112}{}$,
$\subspace{10203\\01011}{15}$,
$\subspace{10210\\01043}{}$,
$\subspace{10222\\01020}{}$,
$\subspace{10234\\01002}{}$,
$\subspace{10241\\01034}{}$,
$\subspace{10304\\01120}{}$,
$\subspace{10311\\01112}{}$,
$\subspace{10323\\01104}{}$,
$\subspace{10330\\01141}{}$,
$\subspace{10342\\01133}{}$,
$\subspace{13004\\00102}{}$,
$\subspace{13013\\00121}{}$,
$\subspace{13022\\00140}{}$,
$\subspace{13031\\00114}{}$,
$\subspace{13040\\00133}{}$,
$\subspace{10203\\01011}{15}$,
$\subspace{10210\\01043}{}$,
$\subspace{10222\\01020}{}$,
$\subspace{10234\\01002}{}$,
$\subspace{10241\\01034}{}$,
$\subspace{10304\\01120}{}$,
$\subspace{10311\\01112}{}$,
$\subspace{10323\\01104}{}$,
$\subspace{10330\\01141}{}$,
$\subspace{10342\\01133}{}$,
$\subspace{13004\\00102}{}$,
$\subspace{13013\\00121}{}$,
$\subspace{13022\\00140}{}$,
$\subspace{13031\\00114}{}$,
$\subspace{13040\\00133}{}$,
$\subspace{10200\\01032}{15}$,
$\subspace{10212\\01014}{}$,
$\subspace{10224\\01041}{}$,
$\subspace{10231\\01023}{}$,
$\subspace{10243\\01000}{}$,
$\subspace{10300\\01132}{}$,
$\subspace{10312\\01124}{}$,
$\subspace{10324\\01111}{}$,
$\subspace{10331\\01103}{}$,
$\subspace{10343\\01140}{}$,
$\subspace{13000\\00124}{}$,
$\subspace{13014\\00143}{}$,
$\subspace{13023\\00112}{}$,
$\subspace{13032\\00131}{}$,
$\subspace{13041\\00100}{}$,
$\subspace{10200\\01033}{15}$,
$\subspace{10212\\01010}{}$,
$\subspace{10224\\01042}{}$,
$\subspace{10231\\01024}{}$,
$\subspace{10243\\01001}{}$,
$\subspace{10301\\01143}{}$,
$\subspace{10313\\01130}{}$,
$\subspace{10320\\01122}{}$,
$\subspace{10332\\01114}{}$,
$\subspace{10344\\01101}{}$,
$\subspace{13001\\00101}{}$,
$\subspace{13010\\00120}{}$,
$\subspace{13024\\00144}{}$,
$\subspace{13033\\00113}{}$,
$\subspace{13042\\00132}{}$,
$\subspace{10200\\01033}{15}$,
$\subspace{10212\\01010}{}$,
$\subspace{10224\\01042}{}$,
$\subspace{10231\\01024}{}$,
$\subspace{10243\\01001}{}$,
$\subspace{10301\\01143}{}$,
$\subspace{10313\\01130}{}$,
$\subspace{10320\\01122}{}$,
$\subspace{10332\\01114}{}$,
$\subspace{10344\\01101}{}$,
$\subspace{13001\\00101}{}$,
$\subspace{13010\\00120}{}$,
$\subspace{13024\\00144}{}$,
$\subspace{13033\\00113}{}$,
$\subspace{13042\\00132}{}$,
$\subspace{10202\\01031}{15}$,
$\subspace{10214\\01013}{}$,
$\subspace{10221\\01040}{}$,
$\subspace{10233\\01022}{}$,
$\subspace{10240\\01004}{}$,
$\subspace{10300\\01142}{}$,
$\subspace{10312\\01134}{}$,
$\subspace{10324\\01121}{}$,
$\subspace{10331\\01113}{}$,
$\subspace{10343\\01100}{}$,
$\subspace{13000\\00130}{}$,
$\subspace{13014\\00104}{}$,
$\subspace{13023\\00123}{}$,
$\subspace{13032\\00142}{}$,
$\subspace{13041\\00111}{}$,
$\subspace{10202\\01031}{15}$,
$\subspace{10214\\01013}{}$,
$\subspace{10221\\01040}{}$,
$\subspace{10233\\01022}{}$,
$\subspace{10240\\01004}{}$,
$\subspace{10300\\01142}{}$,
$\subspace{10312\\01134}{}$,
$\subspace{10324\\01121}{}$,
$\subspace{10331\\01113}{}$,
$\subspace{10343\\01100}{}$,
$\subspace{13000\\00130}{}$,
$\subspace{13014\\00104}{}$,
$\subspace{13023\\00123}{}$,
$\subspace{13032\\00142}{}$,
$\subspace{13041\\00111}{}$,
$\subspace{10200\\01100}{15}$,
$\subspace{10212\\01113}{}$,
$\subspace{10224\\01121}{}$,
$\subspace{10231\\01134}{}$,
$\subspace{10243\\01142}{}$,
$\subspace{10300\\01040}{}$,
$\subspace{10312\\01013}{}$,
$\subspace{10324\\01031}{}$,
$\subspace{10331\\01004}{}$,
$\subspace{10343\\01022}{}$,
$\subspace{12000\\00111}{}$,
$\subspace{12014\\00142}{}$,
$\subspace{12023\\00123}{}$,
$\subspace{12032\\00104}{}$,
$\subspace{12041\\00130}{}$,
$\subspace{10200\\01100}{15}$,
$\subspace{10212\\01113}{}$,
$\subspace{10224\\01121}{}$,
$\subspace{10231\\01134}{}$,
$\subspace{10243\\01142}{}$,
$\subspace{10300\\01040}{}$,
$\subspace{10312\\01013}{}$,
$\subspace{10324\\01031}{}$,
$\subspace{10331\\01004}{}$,
$\subspace{10343\\01022}{}$,
$\subspace{12000\\00111}{}$,
$\subspace{12014\\00142}{}$,
$\subspace{12023\\00123}{}$,
$\subspace{12032\\00104}{}$,
$\subspace{12041\\00130}{}$,
$\subspace{10204\\01104}{15}$,
$\subspace{10211\\01112}{}$,
$\subspace{10223\\01120}{}$,
$\subspace{10230\\01133}{}$,
$\subspace{10242\\01141}{}$,
$\subspace{10301\\01002}{}$,
$\subspace{10313\\01020}{}$,
$\subspace{10320\\01043}{}$,
$\subspace{10332\\01011}{}$,
$\subspace{10344\\01034}{}$,
$\subspace{12004\\00102}{}$,
$\subspace{12013\\00133}{}$,
$\subspace{12022\\00114}{}$,
$\subspace{12031\\00140}{}$,
$\subspace{12040\\00121}{}$,
$\subspace{10201\\01111}{15}$,
$\subspace{10213\\01124}{}$,
$\subspace{10220\\01132}{}$,
$\subspace{10232\\01140}{}$,
$\subspace{10244\\01103}{}$,
$\subspace{10302\\01032}{}$,
$\subspace{10314\\01000}{}$,
$\subspace{10321\\01023}{}$,
$\subspace{10333\\01041}{}$,
$\subspace{10340\\01014}{}$,
$\subspace{12001\\00131}{}$,
$\subspace{12010\\00112}{}$,
$\subspace{12024\\00143}{}$,
$\subspace{12033\\00124}{}$,
$\subspace{12042\\00100}{}$,
$\subspace{10201\\01111}{15}$,
$\subspace{10213\\01124}{}$,
$\subspace{10220\\01132}{}$,
$\subspace{10232\\01140}{}$,
$\subspace{10244\\01103}{}$,
$\subspace{10302\\01032}{}$,
$\subspace{10314\\01000}{}$,
$\subspace{10321\\01023}{}$,
$\subspace{10333\\01041}{}$,
$\subspace{10340\\01014}{}$,
$\subspace{12001\\00131}{}$,
$\subspace{12010\\00112}{}$,
$\subspace{12024\\00143}{}$,
$\subspace{12033\\00124}{}$,
$\subspace{12042\\00100}{}$,
$\subspace{10201\\01120}{15}$,
$\subspace{10213\\01133}{}$,
$\subspace{10220\\01141}{}$,
$\subspace{10232\\01104}{}$,
$\subspace{10244\\01112}{}$,
$\subspace{10302\\01011}{}$,
$\subspace{10314\\01034}{}$,
$\subspace{10321\\01002}{}$,
$\subspace{10333\\01020}{}$,
$\subspace{10340\\01043}{}$,
$\subspace{12001\\00102}{}$,
$\subspace{12010\\00133}{}$,
$\subspace{12024\\00114}{}$,
$\subspace{12033\\00140}{}$,
$\subspace{12042\\00121}{}$,
$\subspace{10202\\01124}{15}$,
$\subspace{10214\\01132}{}$,
$\subspace{10221\\01140}{}$,
$\subspace{10233\\01103}{}$,
$\subspace{10240\\01111}{}$,
$\subspace{10300\\01014}{}$,
$\subspace{10312\\01032}{}$,
$\subspace{10324\\01000}{}$,
$\subspace{10331\\01023}{}$,
$\subspace{10343\\01041}{}$,
$\subspace{12002\\00131}{}$,
$\subspace{12011\\00112}{}$,
$\subspace{12020\\00143}{}$,
$\subspace{12034\\00124}{}$,
$\subspace{12043\\00100}{}$,
$\subspace{10200\\01141}{15}$,
$\subspace{10212\\01104}{}$,
$\subspace{10224\\01112}{}$,
$\subspace{10231\\01120}{}$,
$\subspace{10243\\01133}{}$,
$\subspace{10300\\01011}{}$,
$\subspace{10312\\01034}{}$,
$\subspace{10324\\01002}{}$,
$\subspace{10331\\01020}{}$,
$\subspace{10343\\01043}{}$,
$\subspace{12000\\00140}{}$,
$\subspace{12014\\00121}{}$,
$\subspace{12023\\00102}{}$,
$\subspace{12032\\00133}{}$,
$\subspace{12041\\00114}{}$,
$\subspace{10203\\01143}{15}$,
$\subspace{10210\\01101}{}$,
$\subspace{10222\\01114}{}$,
$\subspace{10234\\01122}{}$,
$\subspace{10241\\01130}{}$,
$\subspace{10304\\01010}{}$,
$\subspace{10311\\01033}{}$,
$\subspace{10323\\01001}{}$,
$\subspace{10330\\01024}{}$,
$\subspace{10342\\01042}{}$,
$\subspace{12003\\00132}{}$,
$\subspace{12012\\00113}{}$,
$\subspace{12021\\00144}{}$,
$\subspace{12030\\00120}{}$,
$\subspace{12044\\00101}{}$,
$\subspace{10200\\01204}{15}$,
$\subspace{10212\\01243}{}$,
$\subspace{10224\\01232}{}$,
$\subspace{10231\\01221}{}$,
$\subspace{10243\\01210}{}$,
$\subspace{10204\\01430}{}$,
$\subspace{10211\\01431}{}$,
$\subspace{10223\\01432}{}$,
$\subspace{10230\\01433}{}$,
$\subspace{10242\\01434}{}$,
$\subspace{10401\\01322}{}$,
$\subspace{10413\\01332}{}$,
$\subspace{10420\\01342}{}$,
$\subspace{10432\\01302}{}$,
$\subspace{10444\\01312}{}$,
$\subspace{10200\\01204}{15}$,
$\subspace{10212\\01243}{}$,
$\subspace{10224\\01232}{}$,
$\subspace{10231\\01221}{}$,
$\subspace{10243\\01210}{}$,
$\subspace{10204\\01430}{}$,
$\subspace{10211\\01431}{}$,
$\subspace{10223\\01432}{}$,
$\subspace{10230\\01433}{}$,
$\subspace{10242\\01434}{}$,
$\subspace{10401\\01322}{}$,
$\subspace{10413\\01332}{}$,
$\subspace{10420\\01342}{}$,
$\subspace{10432\\01302}{}$,
$\subspace{10444\\01312}{}$,
$\subspace{10202\\01210}{15}$,
$\subspace{10214\\01204}{}$,
$\subspace{10221\\01243}{}$,
$\subspace{10233\\01232}{}$,
$\subspace{10240\\01221}{}$,
$\subspace{10202\\01432}{}$,
$\subspace{10214\\01433}{}$,
$\subspace{10221\\01434}{}$,
$\subspace{10233\\01430}{}$,
$\subspace{10240\\01431}{}$,
$\subspace{10401\\01312}{}$,
$\subspace{10413\\01322}{}$,
$\subspace{10420\\01332}{}$,
$\subspace{10432\\01342}{}$,
$\subspace{10444\\01302}{}$,
$\subspace{10202\\01214}{15}$,
$\subspace{10214\\01203}{}$,
$\subspace{10221\\01242}{}$,
$\subspace{10233\\01231}{}$,
$\subspace{10240\\01220}{}$,
$\subspace{10203\\01420}{}$,
$\subspace{10210\\01421}{}$,
$\subspace{10222\\01422}{}$,
$\subspace{10234\\01423}{}$,
$\subspace{10241\\01424}{}$,
$\subspace{10402\\01344}{}$,
$\subspace{10414\\01304}{}$,
$\subspace{10421\\01314}{}$,
$\subspace{10433\\01324}{}$,
$\subspace{10440\\01334}{}$,
$\subspace{10203\\01213}{15}$,
$\subspace{10210\\01202}{}$,
$\subspace{10222\\01241}{}$,
$\subspace{10234\\01230}{}$,
$\subspace{10241\\01224}{}$,
$\subspace{10202\\01412}{}$,
$\subspace{10214\\01413}{}$,
$\subspace{10221\\01414}{}$,
$\subspace{10233\\01410}{}$,
$\subspace{10240\\01411}{}$,
$\subspace{10404\\01331}{}$,
$\subspace{10411\\01341}{}$,
$\subspace{10423\\01301}{}$,
$\subspace{10430\\01311}{}$,
$\subspace{10442\\01321}{}$,
$\subspace{10204\\01211}{15}$,
$\subspace{10211\\01200}{}$,
$\subspace{10223\\01244}{}$,
$\subspace{10230\\01233}{}$,
$\subspace{10242\\01222}{}$,
$\subspace{10202\\01442}{}$,
$\subspace{10214\\01443}{}$,
$\subspace{10221\\01444}{}$,
$\subspace{10233\\01440}{}$,
$\subspace{10240\\01441}{}$,
$\subspace{10402\\01300}{}$,
$\subspace{10414\\01310}{}$,
$\subspace{10421\\01320}{}$,
$\subspace{10433\\01330}{}$,
$\subspace{10440\\01340}{}$,
$\subspace{10202\\01232}{15}$,
$\subspace{10214\\01221}{}$,
$\subspace{10221\\01210}{}$,
$\subspace{10233\\01204}{}$,
$\subspace{10240\\01243}{}$,
$\subspace{10201\\01430}{}$,
$\subspace{10213\\01431}{}$,
$\subspace{10220\\01432}{}$,
$\subspace{10232\\01433}{}$,
$\subspace{10244\\01434}{}$,
$\subspace{10402\\01302}{}$,
$\subspace{10414\\01312}{}$,
$\subspace{10421\\01322}{}$,
$\subspace{10433\\01332}{}$,
$\subspace{10440\\01342}{}$,
$\subspace{10203\\01243}{15}$,
$\subspace{10210\\01232}{}$,
$\subspace{10222\\01221}{}$,
$\subspace{10234\\01210}{}$,
$\subspace{10241\\01204}{}$,
$\subspace{10201\\01433}{}$,
$\subspace{10213\\01434}{}$,
$\subspace{10220\\01430}{}$,
$\subspace{10232\\01431}{}$,
$\subspace{10244\\01432}{}$,
$\subspace{10401\\01332}{}$,
$\subspace{10413\\01342}{}$,
$\subspace{10420\\01302}{}$,
$\subspace{10432\\01312}{}$,
$\subspace{10444\\01322}{}$,
$\subspace{10302\\01312}{15}$,
$\subspace{10314\\01342}{}$,
$\subspace{10321\\01322}{}$,
$\subspace{10333\\01302}{}$,
$\subspace{10340\\01332}{}$,
$\subspace{10404\\01210}{}$,
$\subspace{10411\\01232}{}$,
$\subspace{10423\\01204}{}$,
$\subspace{10430\\01221}{}$,
$\subspace{10442\\01243}{}$,
$\subspace{10404\\01432}{}$,
$\subspace{10411\\01430}{}$,
$\subspace{10423\\01433}{}$,
$\subspace{10430\\01431}{}$,
$\subspace{10442\\01434}{}$,
$\subspace{10302\\01312}{15}$,
$\subspace{10314\\01342}{}$,
$\subspace{10321\\01322}{}$,
$\subspace{10333\\01302}{}$,
$\subspace{10340\\01332}{}$,
$\subspace{10404\\01210}{}$,
$\subspace{10411\\01232}{}$,
$\subspace{10423\\01204}{}$,
$\subspace{10430\\01221}{}$,
$\subspace{10442\\01243}{}$,
$\subspace{10404\\01432}{}$,
$\subspace{10411\\01430}{}$,
$\subspace{10423\\01433}{}$,
$\subspace{10430\\01431}{}$,
$\subspace{10442\\01434}{}$,
$\subspace{10304\\01314}{15}$,
$\subspace{10311\\01344}{}$,
$\subspace{10323\\01324}{}$,
$\subspace{10330\\01304}{}$,
$\subspace{10342\\01334}{}$,
$\subspace{10401\\01242}{}$,
$\subspace{10413\\01214}{}$,
$\subspace{10420\\01231}{}$,
$\subspace{10432\\01203}{}$,
$\subspace{10444\\01220}{}$,
$\subspace{10404\\01421}{}$,
$\subspace{10411\\01424}{}$,
$\subspace{10423\\01422}{}$,
$\subspace{10430\\01420}{}$,
$\subspace{10442\\01423}{}$,
$\subspace{10304\\01314}{15}$,
$\subspace{10311\\01344}{}$,
$\subspace{10323\\01324}{}$,
$\subspace{10330\\01304}{}$,
$\subspace{10342\\01334}{}$,
$\subspace{10401\\01242}{}$,
$\subspace{10413\\01214}{}$,
$\subspace{10420\\01231}{}$,
$\subspace{10432\\01203}{}$,
$\subspace{10444\\01220}{}$,
$\subspace{10404\\01421}{}$,
$\subspace{10411\\01424}{}$,
$\subspace{10423\\01422}{}$,
$\subspace{10430\\01420}{}$,
$\subspace{10442\\01423}{}$,
$\subspace{10300\\01320}{15}$,
$\subspace{10312\\01300}{}$,
$\subspace{10324\\01330}{}$,
$\subspace{10331\\01310}{}$,
$\subspace{10343\\01340}{}$,
$\subspace{10403\\01222}{}$,
$\subspace{10410\\01244}{}$,
$\subspace{10422\\01211}{}$,
$\subspace{10434\\01233}{}$,
$\subspace{10441\\01200}{}$,
$\subspace{10403\\01441}{}$,
$\subspace{10410\\01444}{}$,
$\subspace{10422\\01442}{}$,
$\subspace{10434\\01440}{}$,
$\subspace{10441\\01443}{}$,
$\subspace{10300\\01320}{15}$,
$\subspace{10312\\01300}{}$,
$\subspace{10324\\01330}{}$,
$\subspace{10331\\01310}{}$,
$\subspace{10343\\01340}{}$,
$\subspace{10403\\01222}{}$,
$\subspace{10410\\01244}{}$,
$\subspace{10422\\01211}{}$,
$\subspace{10434\\01233}{}$,
$\subspace{10441\\01200}{}$,
$\subspace{10403\\01441}{}$,
$\subspace{10410\\01444}{}$,
$\subspace{10422\\01442}{}$,
$\subspace{10434\\01440}{}$,
$\subspace{10441\\01443}{}$,
$\subspace{10303\\01321}{15}$,
$\subspace{10310\\01301}{}$,
$\subspace{10322\\01331}{}$,
$\subspace{10334\\01311}{}$,
$\subspace{10341\\01341}{}$,
$\subspace{10400\\01224}{}$,
$\subspace{10412\\01241}{}$,
$\subspace{10424\\01213}{}$,
$\subspace{10431\\01230}{}$,
$\subspace{10443\\01202}{}$,
$\subspace{10400\\01414}{}$,
$\subspace{10412\\01412}{}$,
$\subspace{10424\\01410}{}$,
$\subspace{10431\\01413}{}$,
$\subspace{10443\\01411}{}$,
$\subspace{11302\\00010}{3}$,
$\subspace{12310\\00001}{}$,
$\subspace{12403\\00011}{}$,
$\subspace{13103\\00011}{3}$,
$\subspace{13200\\00001}{}$,
$\subspace{14202\\00010}{}$.

\smallskip

$n_5(5,2;58)\ge 1458$,$\left[\!\begin{smallmatrix}10000\\00100\\04410\\00001\\00044\end{smallmatrix}\!\right]$:
$\subspace{00103\\00012}{3}$,
$\subspace{01003\\00014}{}$,
$\subspace{01102\\00013}{}$,
$\subspace{01003\\00113}{15}$,
$\subspace{01000\\00122}{}$,
$\subspace{01001\\00143}{}$,
$\subspace{01010\\00100}{}$,
$\subspace{01012\\00120}{}$,
$\subspace{01014\\00134}{}$,
$\subspace{01024\\00112}{}$,
$\subspace{01021\\00132}{}$,
$\subspace{01023\\00141}{}$,
$\subspace{01032\\00103}{}$,
$\subspace{01033\\00124}{}$,
$\subspace{01030\\00144}{}$,
$\subspace{01044\\00101}{}$,
$\subspace{01041\\00110}{}$,
$\subspace{01042\\00131}{}$,
$\subspace{01002\\00123}{15}$,
$\subspace{01004\\00121}{}$,
$\subspace{01002\\00140}{}$,
$\subspace{01011\\00102}{}$,
$\subspace{01011\\00130}{}$,
$\subspace{01013\\00133}{}$,
$\subspace{01020\\00114}{}$,
$\subspace{01020\\00142}{}$,
$\subspace{01022\\00140}{}$,
$\subspace{01031\\00102}{}$,
$\subspace{01034\\00104}{}$,
$\subspace{01034\\00121}{}$,
$\subspace{01040\\00114}{}$,
$\subspace{01043\\00111}{}$,
$\subspace{01043\\00133}{}$,
$\subspace{10000\\00001}{3}$,
$\subspace{10000\\00010}{}$,
$\subspace{10000\\00011}{}$,
$\subspace{10000\\00001}{3}$,
$\subspace{10000\\00010}{}$,
$\subspace{10000\\00011}{}$,
$\subspace{10000\\00012}{3}$,
$\subspace{10000\\00013}{}$,
$\subspace{10000\\00014}{}$,
$\subspace{10000\\00012}{3}$,
$\subspace{10000\\00013}{}$,
$\subspace{10000\\00014}{}$,
$\subspace{10012\\00102}{15}$,
$\subspace{10012\\00114}{}$,
$\subspace{10012\\00121}{}$,
$\subspace{10012\\00133}{}$,
$\subspace{10012\\00140}{}$,
$\subspace{10014\\01002}{}$,
$\subspace{10014\\01011}{}$,
$\subspace{10014\\01020}{}$,
$\subspace{10014\\01034}{}$,
$\subspace{10014\\01043}{}$,
$\subspace{10034\\01104}{}$,
$\subspace{10034\\01112}{}$,
$\subspace{10034\\01120}{}$,
$\subspace{10034\\01133}{}$,
$\subspace{10034\\01141}{}$,
$\subspace{10020\\00101}{15}$,
$\subspace{10020\\00113}{}$,
$\subspace{10020\\00120}{}$,
$\subspace{10020\\00132}{}$,
$\subspace{10020\\00144}{}$,
$\subspace{10033\\01001}{}$,
$\subspace{10033\\01010}{}$,
$\subspace{10033\\01024}{}$,
$\subspace{10033\\01033}{}$,
$\subspace{10033\\01042}{}$,
$\subspace{10002\\01101}{}$,
$\subspace{10002\\01114}{}$,
$\subspace{10002\\01122}{}$,
$\subspace{10002\\01130}{}$,
$\subspace{10002\\01143}{}$,
$\subspace{10023\\00102}{15}$,
$\subspace{10023\\00114}{}$,
$\subspace{10023\\00121}{}$,
$\subspace{10023\\00133}{}$,
$\subspace{10023\\00140}{}$,
$\subspace{10013\\01002}{}$,
$\subspace{10013\\01011}{}$,
$\subspace{10013\\01020}{}$,
$\subspace{10013\\01034}{}$,
$\subspace{10013\\01043}{}$,
$\subspace{10024\\01104}{}$,
$\subspace{10024\\01112}{}$,
$\subspace{10024\\01120}{}$,
$\subspace{10024\\01133}{}$,
$\subspace{10024\\01141}{}$,
$\subspace{10031\\00100}{15}$,
$\subspace{10031\\00112}{}$,
$\subspace{10031\\00124}{}$,
$\subspace{10031\\00131}{}$,
$\subspace{10031\\00143}{}$,
$\subspace{10032\\01000}{}$,
$\subspace{10032\\01014}{}$,
$\subspace{10032\\01023}{}$,
$\subspace{10032\\01032}{}$,
$\subspace{10032\\01041}{}$,
$\subspace{10042\\01103}{}$,
$\subspace{10042\\01111}{}$,
$\subspace{10042\\01124}{}$,
$\subspace{10042\\01132}{}$,
$\subspace{10042\\01140}{}$,
$\subspace{10032\\00103}{15}$,
$\subspace{10032\\00110}{}$,
$\subspace{10032\\00122}{}$,
$\subspace{10032\\00134}{}$,
$\subspace{10032\\00141}{}$,
$\subspace{10042\\01003}{}$,
$\subspace{10042\\01012}{}$,
$\subspace{10042\\01021}{}$,
$\subspace{10042\\01030}{}$,
$\subspace{10042\\01044}{}$,
$\subspace{10031\\01102}{}$,
$\subspace{10031\\01110}{}$,
$\subspace{10031\\01123}{}$,
$\subspace{10031\\01131}{}$,
$\subspace{10031\\01144}{}$,
$\subspace{10034\\00100}{15}$,
$\subspace{10034\\00112}{}$,
$\subspace{10034\\00124}{}$,
$\subspace{10034\\00131}{}$,
$\subspace{10034\\00143}{}$,
$\subspace{10012\\01000}{}$,
$\subspace{10012\\01014}{}$,
$\subspace{10012\\01023}{}$,
$\subspace{10012\\01032}{}$,
$\subspace{10012\\01041}{}$,
$\subspace{10014\\01103}{}$,
$\subspace{10014\\01111}{}$,
$\subspace{10014\\01124}{}$,
$\subspace{10014\\01132}{}$,
$\subspace{10014\\01140}{}$,
$\subspace{10041\\00102}{15}$,
$\subspace{10041\\00114}{}$,
$\subspace{10041\\00121}{}$,
$\subspace{10041\\00133}{}$,
$\subspace{10041\\00140}{}$,
$\subspace{10021\\01002}{}$,
$\subspace{10021\\01011}{}$,
$\subspace{10021\\01020}{}$,
$\subspace{10021\\01034}{}$,
$\subspace{10021\\01043}{}$,
$\subspace{10043\\01104}{}$,
$\subspace{10043\\01112}{}$,
$\subspace{10043\\01120}{}$,
$\subspace{10043\\01133}{}$,
$\subspace{10043\\01141}{}$,
$\subspace{10043\\00103}{15}$,
$\subspace{10043\\00110}{}$,
$\subspace{10043\\00122}{}$,
$\subspace{10043\\00134}{}$,
$\subspace{10043\\00141}{}$,
$\subspace{10041\\01003}{}$,
$\subspace{10041\\01012}{}$,
$\subspace{10041\\01021}{}$,
$\subspace{10041\\01030}{}$,
$\subspace{10041\\01044}{}$,
$\subspace{10021\\01102}{}$,
$\subspace{10021\\01110}{}$,
$\subspace{10021\\01123}{}$,
$\subspace{10021\\01131}{}$,
$\subspace{10021\\01144}{}$,
$\subspace{10103\\00012}{3}$,
$\subspace{11003\\00014}{}$,
$\subspace{14403\\00013}{}$,
$\subspace{10200\\00012}{3}$,
$\subspace{12000\\00014}{}$,
$\subspace{13304\\00013}{}$,
$\subspace{10300\\00012}{3}$,
$\subspace{12201\\00013}{}$,
$\subspace{13000\\00014}{}$,
$\subspace{10300\\00012}{3}$,
$\subspace{12201\\00013}{}$,
$\subspace{13000\\00014}{}$,
$\subspace{10301\\00012}{3}$,
$\subspace{12203\\00013}{}$,
$\subspace{13001\\00014}{}$,
$\subspace{10001\\01202}{15}$,
$\subspace{10001\\01213}{}$,
$\subspace{10001\\01224}{}$,
$\subspace{10001\\01230}{}$,
$\subspace{10001\\01241}{}$,
$\subspace{10044\\01301}{}$,
$\subspace{10044\\01311}{}$,
$\subspace{10044\\01321}{}$,
$\subspace{10044\\01331}{}$,
$\subspace{10044\\01341}{}$,
$\subspace{10010\\01410}{}$,
$\subspace{10010\\01411}{}$,
$\subspace{10010\\01412}{}$,
$\subspace{10010\\01413}{}$,
$\subspace{10010\\01414}{}$,
$\subspace{10001\\01202}{15}$,
$\subspace{10001\\01213}{}$,
$\subspace{10001\\01224}{}$,
$\subspace{10001\\01230}{}$,
$\subspace{10001\\01241}{}$,
$\subspace{10044\\01301}{}$,
$\subspace{10044\\01311}{}$,
$\subspace{10044\\01321}{}$,
$\subspace{10044\\01331}{}$,
$\subspace{10044\\01341}{}$,
$\subspace{10010\\01410}{}$,
$\subspace{10010\\01411}{}$,
$\subspace{10010\\01412}{}$,
$\subspace{10010\\01413}{}$,
$\subspace{10010\\01414}{}$,
$\subspace{10003\\01201}{15}$,
$\subspace{10003\\01212}{}$,
$\subspace{10003\\01223}{}$,
$\subspace{10003\\01234}{}$,
$\subspace{10003\\01240}{}$,
$\subspace{10022\\01303}{}$,
$\subspace{10022\\01313}{}$,
$\subspace{10022\\01323}{}$,
$\subspace{10022\\01333}{}$,
$\subspace{10022\\01343}{}$,
$\subspace{10030\\01400}{}$,
$\subspace{10030\\01401}{}$,
$\subspace{10030\\01402}{}$,
$\subspace{10030\\01403}{}$,
$\subspace{10030\\01404}{}$,
$\subspace{10003\\01201}{15}$,
$\subspace{10003\\01212}{}$,
$\subspace{10003\\01223}{}$,
$\subspace{10003\\01234}{}$,
$\subspace{10003\\01240}{}$,
$\subspace{10022\\01303}{}$,
$\subspace{10022\\01313}{}$,
$\subspace{10022\\01323}{}$,
$\subspace{10022\\01333}{}$,
$\subspace{10022\\01343}{}$,
$\subspace{10030\\01400}{}$,
$\subspace{10030\\01401}{}$,
$\subspace{10030\\01402}{}$,
$\subspace{10030\\01403}{}$,
$\subspace{10030\\01404}{}$,
$\subspace{10004\\01203}{15}$,
$\subspace{10004\\01214}{}$,
$\subspace{10004\\01220}{}$,
$\subspace{10004\\01231}{}$,
$\subspace{10004\\01242}{}$,
$\subspace{10011\\01304}{}$,
$\subspace{10011\\01314}{}$,
$\subspace{10011\\01324}{}$,
$\subspace{10011\\01334}{}$,
$\subspace{10011\\01344}{}$,
$\subspace{10040\\01420}{}$,
$\subspace{10040\\01421}{}$,
$\subspace{10040\\01422}{}$,
$\subspace{10040\\01423}{}$,
$\subspace{10040\\01424}{}$,
$\subspace{10004\\01203}{15}$,
$\subspace{10004\\01214}{}$,
$\subspace{10004\\01220}{}$,
$\subspace{10004\\01231}{}$,
$\subspace{10004\\01242}{}$,
$\subspace{10011\\01304}{}$,
$\subspace{10011\\01314}{}$,
$\subspace{10011\\01324}{}$,
$\subspace{10011\\01334}{}$,
$\subspace{10011\\01344}{}$,
$\subspace{10040\\01420}{}$,
$\subspace{10040\\01421}{}$,
$\subspace{10040\\01422}{}$,
$\subspace{10040\\01423}{}$,
$\subspace{10040\\01424}{}$,
$\subspace{10023\\01200}{15}$,
$\subspace{10023\\01211}{}$,
$\subspace{10023\\01222}{}$,
$\subspace{10023\\01233}{}$,
$\subspace{10023\\01244}{}$,
$\subspace{10024\\01300}{}$,
$\subspace{10024\\01310}{}$,
$\subspace{10024\\01320}{}$,
$\subspace{10024\\01330}{}$,
$\subspace{10024\\01340}{}$,
$\subspace{10013\\01440}{}$,
$\subspace{10013\\01441}{}$,
$\subspace{10013\\01442}{}$,
$\subspace{10013\\01443}{}$,
$\subspace{10013\\01444}{}$,
$\subspace{10033\\01201}{15}$,
$\subspace{10033\\01212}{}$,
$\subspace{10033\\01223}{}$,
$\subspace{10033\\01234}{}$,
$\subspace{10033\\01240}{}$,
$\subspace{10020\\01303}{}$,
$\subspace{10020\\01313}{}$,
$\subspace{10020\\01323}{}$,
$\subspace{10020\\01333}{}$,
$\subspace{10020\\01343}{}$,
$\subspace{10002\\01400}{}$,
$\subspace{10002\\01401}{}$,
$\subspace{10002\\01402}{}$,
$\subspace{10002\\01403}{}$,
$\subspace{10002\\01404}{}$,
$\subspace{10104\\01021}{15}$,
$\subspace{10111\\01030}{}$,
$\subspace{10123\\01044}{}$,
$\subspace{10130\\01003}{}$,
$\subspace{10142\\01012}{}$,
$\subspace{10403\\01110}{}$,
$\subspace{10410\\01144}{}$,
$\subspace{10422\\01123}{}$,
$\subspace{10434\\01102}{}$,
$\subspace{10441\\01131}{}$,
$\subspace{14003\\00103}{}$,
$\subspace{14012\\00141}{}$,
$\subspace{14021\\00134}{}$,
$\subspace{14030\\00122}{}$,
$\subspace{14044\\00110}{}$,
$\subspace{10104\\01021}{15}$,
$\subspace{10111\\01030}{}$,
$\subspace{10123\\01044}{}$,
$\subspace{10130\\01003}{}$,
$\subspace{10142\\01012}{}$,
$\subspace{10403\\01110}{}$,
$\subspace{10410\\01144}{}$,
$\subspace{10422\\01123}{}$,
$\subspace{10434\\01102}{}$,
$\subspace{10441\\01131}{}$,
$\subspace{14003\\00103}{}$,
$\subspace{14012\\00141}{}$,
$\subspace{14021\\00134}{}$,
$\subspace{14030\\00122}{}$,
$\subspace{14044\\00110}{}$,
$\subspace{10104\\01021}{15}$,
$\subspace{10111\\01030}{}$,
$\subspace{10123\\01044}{}$,
$\subspace{10130\\01003}{}$,
$\subspace{10142\\01012}{}$,
$\subspace{10403\\01110}{}$,
$\subspace{10410\\01144}{}$,
$\subspace{10422\\01123}{}$,
$\subspace{10434\\01102}{}$,
$\subspace{10441\\01131}{}$,
$\subspace{14003\\00103}{}$,
$\subspace{14012\\00141}{}$,
$\subspace{14021\\00134}{}$,
$\subspace{14030\\00122}{}$,
$\subspace{14044\\00110}{}$,
$\subspace{10104\\01021}{15}$,
$\subspace{10111\\01030}{}$,
$\subspace{10123\\01044}{}$,
$\subspace{10130\\01003}{}$,
$\subspace{10142\\01012}{}$,
$\subspace{10403\\01110}{}$,
$\subspace{10410\\01144}{}$,
$\subspace{10422\\01123}{}$,
$\subspace{10434\\01102}{}$,
$\subspace{10441\\01131}{}$,
$\subspace{14003\\00103}{}$,
$\subspace{14012\\00141}{}$,
$\subspace{14021\\00134}{}$,
$\subspace{14030\\00122}{}$,
$\subspace{14044\\00110}{}$,
$\subspace{10103\\01034}{15}$,
$\subspace{10110\\01043}{}$,
$\subspace{10122\\01002}{}$,
$\subspace{10134\\01011}{}$,
$\subspace{10141\\01020}{}$,
$\subspace{10400\\01112}{}$,
$\subspace{10412\\01141}{}$,
$\subspace{10424\\01120}{}$,
$\subspace{10431\\01104}{}$,
$\subspace{10443\\01133}{}$,
$\subspace{14000\\00102}{}$,
$\subspace{14014\\00140}{}$,
$\subspace{14023\\00133}{}$,
$\subspace{14032\\00121}{}$,
$\subspace{14041\\00114}{}$,
$\subspace{10104\\01030}{15}$,
$\subspace{10111\\01044}{}$,
$\subspace{10123\\01003}{}$,
$\subspace{10130\\01012}{}$,
$\subspace{10142\\01021}{}$,
$\subspace{10401\\01144}{}$,
$\subspace{10413\\01123}{}$,
$\subspace{10420\\01102}{}$,
$\subspace{10432\\01131}{}$,
$\subspace{10444\\01110}{}$,
$\subspace{14001\\00122}{}$,
$\subspace{14010\\00110}{}$,
$\subspace{14024\\00103}{}$,
$\subspace{14033\\00141}{}$,
$\subspace{14042\\00134}{}$,
$\subspace{10100\\01040}{15}$,
$\subspace{10112\\01004}{}$,
$\subspace{10124\\01013}{}$,
$\subspace{10131\\01022}{}$,
$\subspace{10143\\01031}{}$,
$\subspace{10400\\01100}{}$,
$\subspace{10412\\01134}{}$,
$\subspace{10424\\01113}{}$,
$\subspace{10431\\01142}{}$,
$\subspace{10443\\01121}{}$,
$\subspace{14000\\00111}{}$,
$\subspace{14014\\00104}{}$,
$\subspace{14023\\00142}{}$,
$\subspace{14032\\00130}{}$,
$\subspace{14041\\00123}{}$,
$\subspace{10100\\01040}{15}$,
$\subspace{10112\\01004}{}$,
$\subspace{10124\\01013}{}$,
$\subspace{10131\\01022}{}$,
$\subspace{10143\\01031}{}$,
$\subspace{10400\\01100}{}$,
$\subspace{10412\\01134}{}$,
$\subspace{10424\\01113}{}$,
$\subspace{10431\\01142}{}$,
$\subspace{10443\\01121}{}$,
$\subspace{14000\\00111}{}$,
$\subspace{14014\\00104}{}$,
$\subspace{14023\\00142}{}$,
$\subspace{14032\\00130}{}$,
$\subspace{14041\\00123}{}$,
$\subspace{10100\\01040}{15}$,
$\subspace{10112\\01004}{}$,
$\subspace{10124\\01013}{}$,
$\subspace{10131\\01022}{}$,
$\subspace{10143\\01031}{}$,
$\subspace{10400\\01100}{}$,
$\subspace{10412\\01134}{}$,
$\subspace{10424\\01113}{}$,
$\subspace{10431\\01142}{}$,
$\subspace{10443\\01121}{}$,
$\subspace{14000\\00111}{}$,
$\subspace{14014\\00104}{}$,
$\subspace{14023\\00142}{}$,
$\subspace{14032\\00130}{}$,
$\subspace{14041\\00123}{}$,
$\subspace{10100\\01040}{15}$,
$\subspace{10112\\01004}{}$,
$\subspace{10124\\01013}{}$,
$\subspace{10131\\01022}{}$,
$\subspace{10143\\01031}{}$,
$\subspace{10400\\01100}{}$,
$\subspace{10412\\01134}{}$,
$\subspace{10424\\01113}{}$,
$\subspace{10431\\01142}{}$,
$\subspace{10443\\01121}{}$,
$\subspace{14000\\00111}{}$,
$\subspace{14014\\00104}{}$,
$\subspace{14023\\00142}{}$,
$\subspace{14032\\00130}{}$,
$\subspace{14041\\00123}{}$,
$\subspace{10100\\01104}{15}$,
$\subspace{10112\\01120}{}$,
$\subspace{10124\\01141}{}$,
$\subspace{10131\\01112}{}$,
$\subspace{10143\\01133}{}$,
$\subspace{10401\\01020}{}$,
$\subspace{10413\\01011}{}$,
$\subspace{10420\\01002}{}$,
$\subspace{10432\\01043}{}$,
$\subspace{10444\\01034}{}$,
$\subspace{11000\\00121}{}$,
$\subspace{11014\\00133}{}$,
$\subspace{11023\\00140}{}$,
$\subspace{11032\\00102}{}$,
$\subspace{11041\\00114}{}$,
$\subspace{10104\\01103}{15}$,
$\subspace{10111\\01124}{}$,
$\subspace{10123\\01140}{}$,
$\subspace{10130\\01111}{}$,
$\subspace{10142\\01132}{}$,
$\subspace{10401\\01014}{}$,
$\subspace{10413\\01000}{}$,
$\subspace{10420\\01041}{}$,
$\subspace{10432\\01032}{}$,
$\subspace{10444\\01023}{}$,
$\subspace{11004\\00143}{}$,
$\subspace{11013\\00100}{}$,
$\subspace{11022\\00112}{}$,
$\subspace{11031\\00124}{}$,
$\subspace{11040\\00131}{}$,
$\subspace{10100\\01124}{15}$,
$\subspace{10112\\01140}{}$,
$\subspace{10124\\01111}{}$,
$\subspace{10131\\01132}{}$,
$\subspace{10143\\01103}{}$,
$\subspace{10404\\01023}{}$,
$\subspace{10411\\01014}{}$,
$\subspace{10423\\01000}{}$,
$\subspace{10430\\01041}{}$,
$\subspace{10442\\01032}{}$,
$\subspace{11000\\00143}{}$,
$\subspace{11014\\00100}{}$,
$\subspace{11023\\00112}{}$,
$\subspace{11032\\00124}{}$,
$\subspace{11041\\00131}{}$,
$\subspace{10100\\01124}{15}$,
$\subspace{10112\\01140}{}$,
$\subspace{10124\\01111}{}$,
$\subspace{10131\\01132}{}$,
$\subspace{10143\\01103}{}$,
$\subspace{10404\\01023}{}$,
$\subspace{10411\\01014}{}$,
$\subspace{10423\\01000}{}$,
$\subspace{10430\\01041}{}$,
$\subspace{10442\\01032}{}$,
$\subspace{11000\\00143}{}$,
$\subspace{11014\\00100}{}$,
$\subspace{11023\\00112}{}$,
$\subspace{11032\\00124}{}$,
$\subspace{11041\\00131}{}$,
$\subspace{10103\\01120}{15}$,
$\subspace{10110\\01141}{}$,
$\subspace{10122\\01112}{}$,
$\subspace{10134\\01133}{}$,
$\subspace{10141\\01104}{}$,
$\subspace{10401\\01011}{}$,
$\subspace{10413\\01002}{}$,
$\subspace{10420\\01043}{}$,
$\subspace{10432\\01034}{}$,
$\subspace{10444\\01020}{}$,
$\subspace{11003\\00102}{}$,
$\subspace{11012\\00114}{}$,
$\subspace{11021\\00121}{}$,
$\subspace{11030\\00133}{}$,
$\subspace{11044\\00140}{}$,
$\subspace{10104\\01121}{15}$,
$\subspace{10111\\01142}{}$,
$\subspace{10123\\01113}{}$,
$\subspace{10130\\01134}{}$,
$\subspace{10142\\01100}{}$,
$\subspace{10401\\01022}{}$,
$\subspace{10413\\01013}{}$,
$\subspace{10420\\01004}{}$,
$\subspace{10432\\01040}{}$,
$\subspace{10444\\01031}{}$,
$\subspace{11004\\00130}{}$,
$\subspace{11013\\00142}{}$,
$\subspace{11022\\00104}{}$,
$\subspace{11031\\00111}{}$,
$\subspace{11040\\00123}{}$,
$\subspace{10103\\01134}{15}$,
$\subspace{10110\\01100}{}$,
$\subspace{10122\\01121}{}$,
$\subspace{10134\\01142}{}$,
$\subspace{10141\\01113}{}$,
$\subspace{10401\\01040}{}$,
$\subspace{10413\\01031}{}$,
$\subspace{10420\\01022}{}$,
$\subspace{10432\\01013}{}$,
$\subspace{10444\\01004}{}$,
$\subspace{11003\\00123}{}$,
$\subspace{11012\\00130}{}$,
$\subspace{11021\\00142}{}$,
$\subspace{11030\\00104}{}$,
$\subspace{11044\\00111}{}$,
$\subspace{10103\\01144}{15}$,
$\subspace{10110\\01110}{}$,
$\subspace{10122\\01131}{}$,
$\subspace{10134\\01102}{}$,
$\subspace{10141\\01123}{}$,
$\subspace{10400\\01044}{}$,
$\subspace{10412\\01030}{}$,
$\subspace{10424\\01021}{}$,
$\subspace{10431\\01012}{}$,
$\subspace{10443\\01003}{}$,
$\subspace{11003\\00134}{}$,
$\subspace{11012\\00141}{}$,
$\subspace{11021\\00103}{}$,
$\subspace{11030\\00110}{}$,
$\subspace{11044\\00122}{}$,
$\subspace{10104\\01141}{15}$,
$\subspace{10111\\01112}{}$,
$\subspace{10123\\01133}{}$,
$\subspace{10130\\01104}{}$,
$\subspace{10142\\01120}{}$,
$\subspace{10404\\01020}{}$,
$\subspace{10411\\01011}{}$,
$\subspace{10423\\01002}{}$,
$\subspace{10430\\01043}{}$,
$\subspace{10442\\01034}{}$,
$\subspace{11004\\00102}{}$,
$\subspace{11013\\00114}{}$,
$\subspace{11022\\00121}{}$,
$\subspace{11031\\00133}{}$,
$\subspace{11040\\00140}{}$,
$\subspace{10104\\01141}{15}$,
$\subspace{10111\\01112}{}$,
$\subspace{10123\\01133}{}$,
$\subspace{10130\\01104}{}$,
$\subspace{10142\\01120}{}$,
$\subspace{10404\\01020}{}$,
$\subspace{10411\\01011}{}$,
$\subspace{10423\\01002}{}$,
$\subspace{10430\\01043}{}$,
$\subspace{10442\\01034}{}$,
$\subspace{11004\\00102}{}$,
$\subspace{11013\\00114}{}$,
$\subspace{11022\\00121}{}$,
$\subspace{11031\\00133}{}$,
$\subspace{11040\\00140}{}$,
$\subspace{10104\\01223}{15}$,
$\subspace{10111\\01201}{}$,
$\subspace{10123\\01234}{}$,
$\subspace{10130\\01212}{}$,
$\subspace{10142\\01240}{}$,
$\subspace{10100\\01404}{}$,
$\subspace{10112\\01401}{}$,
$\subspace{10124\\01403}{}$,
$\subspace{10131\\01400}{}$,
$\subspace{10143\\01402}{}$,
$\subspace{10204\\01333}{}$,
$\subspace{10211\\01303}{}$,
$\subspace{10223\\01323}{}$,
$\subspace{10230\\01343}{}$,
$\subspace{10242\\01313}{}$,
$\subspace{10104\\01224}{15}$,
$\subspace{10111\\01202}{}$,
$\subspace{10123\\01230}{}$,
$\subspace{10130\\01213}{}$,
$\subspace{10142\\01241}{}$,
$\subspace{10102\\01411}{}$,
$\subspace{10114\\01413}{}$,
$\subspace{10121\\01410}{}$,
$\subspace{10133\\01412}{}$,
$\subspace{10140\\01414}{}$,
$\subspace{10201\\01301}{}$,
$\subspace{10213\\01321}{}$,
$\subspace{10220\\01341}{}$,
$\subspace{10232\\01311}{}$,
$\subspace{10244\\01331}{}$,
$\subspace{10102\\01234}{15}$,
$\subspace{10114\\01212}{}$,
$\subspace{10121\\01240}{}$,
$\subspace{10133\\01223}{}$,
$\subspace{10140\\01201}{}$,
$\subspace{10100\\01402}{}$,
$\subspace{10112\\01404}{}$,
$\subspace{10124\\01401}{}$,
$\subspace{10131\\01403}{}$,
$\subspace{10143\\01400}{}$,
$\subspace{10201\\01313}{}$,
$\subspace{10213\\01333}{}$,
$\subspace{10220\\01303}{}$,
$\subspace{10232\\01323}{}$,
$\subspace{10244\\01343}{}$,
$\subspace{10102\\01234}{15}$,
$\subspace{10114\\01212}{}$,
$\subspace{10121\\01240}{}$,
$\subspace{10133\\01223}{}$,
$\subspace{10140\\01201}{}$,
$\subspace{10100\\01402}{}$,
$\subspace{10112\\01404}{}$,
$\subspace{10124\\01401}{}$,
$\subspace{10131\\01403}{}$,
$\subspace{10143\\01400}{}$,
$\subspace{10201\\01313}{}$,
$\subspace{10213\\01333}{}$,
$\subspace{10220\\01303}{}$,
$\subspace{10232\\01323}{}$,
$\subspace{10244\\01343}{}$,
$\subspace{10101\\01242}{15}$,
$\subspace{10113\\01220}{}$,
$\subspace{10120\\01203}{}$,
$\subspace{10132\\01231}{}$,
$\subspace{10144\\01214}{}$,
$\subspace{10102\\01422}{}$,
$\subspace{10114\\01424}{}$,
$\subspace{10121\\01421}{}$,
$\subspace{10133\\01423}{}$,
$\subspace{10140\\01420}{}$,
$\subspace{10203\\01304}{}$,
$\subspace{10210\\01324}{}$,
$\subspace{10222\\01344}{}$,
$\subspace{10234\\01314}{}$,
$\subspace{10241\\01334}{}$,
$\subspace{10101\\01242}{15}$,
$\subspace{10113\\01220}{}$,
$\subspace{10120\\01203}{}$,
$\subspace{10132\\01231}{}$,
$\subspace{10144\\01214}{}$,
$\subspace{10102\\01422}{}$,
$\subspace{10114\\01424}{}$,
$\subspace{10121\\01421}{}$,
$\subspace{10133\\01423}{}$,
$\subspace{10140\\01420}{}$,
$\subspace{10203\\01304}{}$,
$\subspace{10210\\01324}{}$,
$\subspace{10222\\01344}{}$,
$\subspace{10234\\01314}{}$,
$\subspace{10241\\01334}{}$,
$\subspace{10101\\01242}{15}$,
$\subspace{10113\\01220}{}$,
$\subspace{10120\\01203}{}$,
$\subspace{10132\\01231}{}$,
$\subspace{10144\\01214}{}$,
$\subspace{10102\\01422}{}$,
$\subspace{10114\\01424}{}$,
$\subspace{10121\\01421}{}$,
$\subspace{10133\\01423}{}$,
$\subspace{10140\\01420}{}$,
$\subspace{10203\\01304}{}$,
$\subspace{10210\\01324}{}$,
$\subspace{10222\\01344}{}$,
$\subspace{10234\\01314}{}$,
$\subspace{10241\\01334}{}$,
$\subspace{10101\\01242}{15}$,
$\subspace{10113\\01220}{}$,
$\subspace{10120\\01203}{}$,
$\subspace{10132\\01231}{}$,
$\subspace{10144\\01214}{}$,
$\subspace{10102\\01422}{}$,
$\subspace{10114\\01424}{}$,
$\subspace{10121\\01421}{}$,
$\subspace{10133\\01423}{}$,
$\subspace{10140\\01420}{}$,
$\subspace{10203\\01304}{}$,
$\subspace{10210\\01324}{}$,
$\subspace{10222\\01344}{}$,
$\subspace{10234\\01314}{}$,
$\subspace{10241\\01334}{}$,
$\subspace{10101\\01242}{15}$,
$\subspace{10113\\01220}{}$,
$\subspace{10120\\01203}{}$,
$\subspace{10132\\01231}{}$,
$\subspace{10144\\01214}{}$,
$\subspace{10102\\01422}{}$,
$\subspace{10114\\01424}{}$,
$\subspace{10121\\01421}{}$,
$\subspace{10133\\01423}{}$,
$\subspace{10140\\01420}{}$,
$\subspace{10203\\01304}{}$,
$\subspace{10210\\01324}{}$,
$\subspace{10222\\01344}{}$,
$\subspace{10234\\01314}{}$,
$\subspace{10241\\01334}{}$,
$\subspace{10101\\01242}{15}$,
$\subspace{10113\\01220}{}$,
$\subspace{10120\\01203}{}$,
$\subspace{10132\\01231}{}$,
$\subspace{10144\\01214}{}$,
$\subspace{10102\\01422}{}$,
$\subspace{10114\\01424}{}$,
$\subspace{10121\\01421}{}$,
$\subspace{10133\\01423}{}$,
$\subspace{10140\\01420}{}$,
$\subspace{10203\\01304}{}$,
$\subspace{10210\\01324}{}$,
$\subspace{10222\\01344}{}$,
$\subspace{10234\\01314}{}$,
$\subspace{10241\\01334}{}$,
$\subspace{10101\\01242}{15}$,
$\subspace{10113\\01220}{}$,
$\subspace{10120\\01203}{}$,
$\subspace{10132\\01231}{}$,
$\subspace{10144\\01214}{}$,
$\subspace{10102\\01422}{}$,
$\subspace{10114\\01424}{}$,
$\subspace{10121\\01421}{}$,
$\subspace{10133\\01423}{}$,
$\subspace{10140\\01420}{}$,
$\subspace{10203\\01304}{}$,
$\subspace{10210\\01324}{}$,
$\subspace{10222\\01344}{}$,
$\subspace{10234\\01314}{}$,
$\subspace{10241\\01334}{}$,
$\subspace{10103\\01301}{15}$,
$\subspace{10110\\01341}{}$,
$\subspace{10122\\01331}{}$,
$\subspace{10134\\01321}{}$,
$\subspace{10141\\01311}{}$,
$\subspace{10302\\01224}{}$,
$\subspace{10314\\01230}{}$,
$\subspace{10321\\01241}{}$,
$\subspace{10333\\01202}{}$,
$\subspace{10340\\01213}{}$,
$\subspace{10301\\01411}{}$,
$\subspace{10313\\01410}{}$,
$\subspace{10320\\01414}{}$,
$\subspace{10332\\01413}{}$,
$\subspace{10344\\01412}{}$,
$\subspace{10103\\01301}{15}$,
$\subspace{10110\\01341}{}$,
$\subspace{10122\\01331}{}$,
$\subspace{10134\\01321}{}$,
$\subspace{10141\\01311}{}$,
$\subspace{10302\\01224}{}$,
$\subspace{10314\\01230}{}$,
$\subspace{10321\\01241}{}$,
$\subspace{10333\\01202}{}$,
$\subspace{10340\\01213}{}$,
$\subspace{10301\\01411}{}$,
$\subspace{10313\\01410}{}$,
$\subspace{10320\\01414}{}$,
$\subspace{10332\\01413}{}$,
$\subspace{10344\\01412}{}$,
$\subspace{10103\\01311}{15}$,
$\subspace{10110\\01301}{}$,
$\subspace{10122\\01341}{}$,
$\subspace{10134\\01331}{}$,
$\subspace{10141\\01321}{}$,
$\subspace{10304\\01213}{}$,
$\subspace{10311\\01224}{}$,
$\subspace{10323\\01230}{}$,
$\subspace{10330\\01241}{}$,
$\subspace{10342\\01202}{}$,
$\subspace{10304\\01414}{}$,
$\subspace{10311\\01413}{}$,
$\subspace{10323\\01412}{}$,
$\subspace{10330\\01411}{}$,
$\subspace{10342\\01410}{}$,
$\subspace{10103\\01311}{15}$,
$\subspace{10110\\01301}{}$,
$\subspace{10122\\01341}{}$,
$\subspace{10134\\01331}{}$,
$\subspace{10141\\01321}{}$,
$\subspace{10304\\01213}{}$,
$\subspace{10311\\01224}{}$,
$\subspace{10323\\01230}{}$,
$\subspace{10330\\01241}{}$,
$\subspace{10342\\01202}{}$,
$\subspace{10304\\01414}{}$,
$\subspace{10311\\01413}{}$,
$\subspace{10323\\01412}{}$,
$\subspace{10330\\01411}{}$,
$\subspace{10342\\01410}{}$,
$\subspace{10103\\01332}{15}$,
$\subspace{10110\\01322}{}$,
$\subspace{10122\\01312}{}$,
$\subspace{10134\\01302}{}$,
$\subspace{10141\\01342}{}$,
$\subspace{10304\\01204}{}$,
$\subspace{10311\\01210}{}$,
$\subspace{10323\\01221}{}$,
$\subspace{10330\\01232}{}$,
$\subspace{10342\\01243}{}$,
$\subspace{10303\\01434}{}$,
$\subspace{10310\\01433}{}$,
$\subspace{10322\\01432}{}$,
$\subspace{10334\\01431}{}$,
$\subspace{10341\\01430}{}$,
$\subspace{10101\\01340}{15}$,
$\subspace{10113\\01330}{}$,
$\subspace{10120\\01320}{}$,
$\subspace{10132\\01310}{}$,
$\subspace{10144\\01300}{}$,
$\subspace{10303\\01244}{}$,
$\subspace{10310\\01200}{}$,
$\subspace{10322\\01211}{}$,
$\subspace{10334\\01222}{}$,
$\subspace{10341\\01233}{}$,
$\subspace{10303\\01441}{}$,
$\subspace{10310\\01440}{}$,
$\subspace{10322\\01444}{}$,
$\subspace{10334\\01443}{}$,
$\subspace{10341\\01442}{}$,
$\subspace{10101\\01340}{15}$,
$\subspace{10113\\01330}{}$,
$\subspace{10120\\01320}{}$,
$\subspace{10132\\01310}{}$,
$\subspace{10144\\01300}{}$,
$\subspace{10303\\01244}{}$,
$\subspace{10310\\01200}{}$,
$\subspace{10322\\01211}{}$,
$\subspace{10334\\01222}{}$,
$\subspace{10341\\01233}{}$,
$\subspace{10303\\01441}{}$,
$\subspace{10310\\01440}{}$,
$\subspace{10322\\01444}{}$,
$\subspace{10334\\01443}{}$,
$\subspace{10341\\01442}{}$,
$\subspace{10101\\01340}{15}$,
$\subspace{10113\\01330}{}$,
$\subspace{10120\\01320}{}$,
$\subspace{10132\\01310}{}$,
$\subspace{10144\\01300}{}$,
$\subspace{10303\\01244}{}$,
$\subspace{10310\\01200}{}$,
$\subspace{10322\\01211}{}$,
$\subspace{10334\\01222}{}$,
$\subspace{10341\\01233}{}$,
$\subspace{10303\\01441}{}$,
$\subspace{10310\\01440}{}$,
$\subspace{10322\\01444}{}$,
$\subspace{10334\\01443}{}$,
$\subspace{10341\\01442}{}$,
$\subspace{10101\\01340}{15}$,
$\subspace{10113\\01330}{}$,
$\subspace{10120\\01320}{}$,
$\subspace{10132\\01310}{}$,
$\subspace{10144\\01300}{}$,
$\subspace{10303\\01244}{}$,
$\subspace{10310\\01200}{}$,
$\subspace{10322\\01211}{}$,
$\subspace{10334\\01222}{}$,
$\subspace{10341\\01233}{}$,
$\subspace{10303\\01441}{}$,
$\subspace{10310\\01440}{}$,
$\subspace{10322\\01444}{}$,
$\subspace{10334\\01443}{}$,
$\subspace{10341\\01442}{}$,
$\subspace{10203\\01004}{15}$,
$\subspace{10210\\01031}{}$,
$\subspace{10222\\01013}{}$,
$\subspace{10234\\01040}{}$,
$\subspace{10241\\01022}{}$,
$\subspace{10300\\01113}{}$,
$\subspace{10312\\01100}{}$,
$\subspace{10324\\01142}{}$,
$\subspace{10331\\01134}{}$,
$\subspace{10343\\01121}{}$,
$\subspace{13000\\00142}{}$,
$\subspace{13014\\00111}{}$,
$\subspace{13023\\00130}{}$,
$\subspace{13032\\00104}{}$,
$\subspace{13041\\00123}{}$,
$\subspace{10204\\01000}{15}$,
$\subspace{10211\\01032}{}$,
$\subspace{10223\\01014}{}$,
$\subspace{10230\\01041}{}$,
$\subspace{10242\\01023}{}$,
$\subspace{10304\\01132}{}$,
$\subspace{10311\\01124}{}$,
$\subspace{10323\\01111}{}$,
$\subspace{10330\\01103}{}$,
$\subspace{10342\\01140}{}$,
$\subspace{13004\\00143}{}$,
$\subspace{13013\\00112}{}$,
$\subspace{13022\\00131}{}$,
$\subspace{13031\\00100}{}$,
$\subspace{13040\\00124}{}$,
$\subspace{10204\\01002}{15}$,
$\subspace{10211\\01034}{}$,
$\subspace{10223\\01011}{}$,
$\subspace{10230\\01043}{}$,
$\subspace{10242\\01020}{}$,
$\subspace{10301\\01104}{}$,
$\subspace{10313\\01141}{}$,
$\subspace{10320\\01133}{}$,
$\subspace{10332\\01120}{}$,
$\subspace{10344\\01112}{}$,
$\subspace{13001\\00102}{}$,
$\subspace{13010\\00121}{}$,
$\subspace{13024\\00140}{}$,
$\subspace{13033\\00114}{}$,
$\subspace{13042\\00133}{}$,
$\subspace{10204\\01013}{15}$,
$\subspace{10211\\01040}{}$,
$\subspace{10223\\01022}{}$,
$\subspace{10230\\01004}{}$,
$\subspace{10242\\01031}{}$,
$\subspace{10304\\01100}{}$,
$\subspace{10311\\01142}{}$,
$\subspace{10323\\01134}{}$,
$\subspace{10330\\01121}{}$,
$\subspace{10342\\01113}{}$,
$\subspace{13004\\00130}{}$,
$\subspace{13013\\00104}{}$,
$\subspace{13022\\00123}{}$,
$\subspace{13031\\00142}{}$,
$\subspace{13040\\00111}{}$,
$\subspace{10200\\01022}{15}$,
$\subspace{10212\\01004}{}$,
$\subspace{10224\\01031}{}$,
$\subspace{10231\\01013}{}$,
$\subspace{10243\\01040}{}$,
$\subspace{10303\\01142}{}$,
$\subspace{10310\\01134}{}$,
$\subspace{10322\\01121}{}$,
$\subspace{10334\\01113}{}$,
$\subspace{10341\\01100}{}$,
$\subspace{13003\\00123}{}$,
$\subspace{13012\\00142}{}$,
$\subspace{13021\\00111}{}$,
$\subspace{13030\\00130}{}$,
$\subspace{13044\\00104}{}$,
$\subspace{10204\\01023}{15}$,
$\subspace{10211\\01000}{}$,
$\subspace{10223\\01032}{}$,
$\subspace{10230\\01014}{}$,
$\subspace{10242\\01041}{}$,
$\subspace{10301\\01140}{}$,
$\subspace{10313\\01132}{}$,
$\subspace{10320\\01124}{}$,
$\subspace{10332\\01111}{}$,
$\subspace{10344\\01103}{}$,
$\subspace{13001\\00131}{}$,
$\subspace{13010\\00100}{}$,
$\subspace{13024\\00124}{}$,
$\subspace{13033\\00143}{}$,
$\subspace{13042\\00112}{}$,
$\subspace{10200\\01032}{15}$,
$\subspace{10212\\01014}{}$,
$\subspace{10224\\01041}{}$,
$\subspace{10231\\01023}{}$,
$\subspace{10243\\01000}{}$,
$\subspace{10300\\01132}{}$,
$\subspace{10312\\01124}{}$,
$\subspace{10324\\01111}{}$,
$\subspace{10331\\01103}{}$,
$\subspace{10343\\01140}{}$,
$\subspace{13000\\00124}{}$,
$\subspace{13014\\00143}{}$,
$\subspace{13023\\00112}{}$,
$\subspace{13032\\00131}{}$,
$\subspace{13041\\00100}{}$,
$\subspace{10200\\01032}{15}$,
$\subspace{10212\\01014}{}$,
$\subspace{10224\\01041}{}$,
$\subspace{10231\\01023}{}$,
$\subspace{10243\\01000}{}$,
$\subspace{10300\\01132}{}$,
$\subspace{10312\\01124}{}$,
$\subspace{10324\\01111}{}$,
$\subspace{10331\\01103}{}$,
$\subspace{10343\\01140}{}$,
$\subspace{13000\\00124}{}$,
$\subspace{13014\\00143}{}$,
$\subspace{13023\\00112}{}$,
$\subspace{13032\\00131}{}$,
$\subspace{13041\\00100}{}$,
$\subspace{10200\\01032}{15}$,
$\subspace{10212\\01014}{}$,
$\subspace{10224\\01041}{}$,
$\subspace{10231\\01023}{}$,
$\subspace{10243\\01000}{}$,
$\subspace{10300\\01132}{}$,
$\subspace{10312\\01124}{}$,
$\subspace{10324\\01111}{}$,
$\subspace{10331\\01103}{}$,
$\subspace{10343\\01140}{}$,
$\subspace{13000\\00124}{}$,
$\subspace{13014\\00143}{}$,
$\subspace{13023\\00112}{}$,
$\subspace{13032\\00131}{}$,
$\subspace{13041\\00100}{}$,
$\subspace{10200\\01040}{15}$,
$\subspace{10212\\01022}{}$,
$\subspace{10224\\01004}{}$,
$\subspace{10231\\01031}{}$,
$\subspace{10243\\01013}{}$,
$\subspace{10300\\01100}{}$,
$\subspace{10312\\01142}{}$,
$\subspace{10324\\01134}{}$,
$\subspace{10331\\01121}{}$,
$\subspace{10343\\01113}{}$,
$\subspace{13000\\00111}{}$,
$\subspace{13014\\00130}{}$,
$\subspace{13023\\00104}{}$,
$\subspace{13032\\00123}{}$,
$\subspace{13041\\00142}{}$,
$\subspace{10202\\01114}{15}$,
$\subspace{10214\\01122}{}$,
$\subspace{10221\\01130}{}$,
$\subspace{10233\\01143}{}$,
$\subspace{10240\\01101}{}$,
$\subspace{10302\\01010}{}$,
$\subspace{10314\\01033}{}$,
$\subspace{10321\\01001}{}$,
$\subspace{10333\\01024}{}$,
$\subspace{10340\\01042}{}$,
$\subspace{12002\\00120}{}$,
$\subspace{12011\\00101}{}$,
$\subspace{12020\\00132}{}$,
$\subspace{12034\\00113}{}$,
$\subspace{12043\\00144}{}$,
$\subspace{10202\\01114}{15}$,
$\subspace{10214\\01122}{}$,
$\subspace{10221\\01130}{}$,
$\subspace{10233\\01143}{}$,
$\subspace{10240\\01101}{}$,
$\subspace{10302\\01010}{}$,
$\subspace{10314\\01033}{}$,
$\subspace{10321\\01001}{}$,
$\subspace{10333\\01024}{}$,
$\subspace{10340\\01042}{}$,
$\subspace{12002\\00120}{}$,
$\subspace{12011\\00101}{}$,
$\subspace{12020\\00132}{}$,
$\subspace{12034\\00113}{}$,
$\subspace{12043\\00144}{}$,
$\subspace{10202\\01114}{15}$,
$\subspace{10214\\01122}{}$,
$\subspace{10221\\01130}{}$,
$\subspace{10233\\01143}{}$,
$\subspace{10240\\01101}{}$,
$\subspace{10302\\01010}{}$,
$\subspace{10314\\01033}{}$,
$\subspace{10321\\01001}{}$,
$\subspace{10333\\01024}{}$,
$\subspace{10340\\01042}{}$,
$\subspace{12002\\00120}{}$,
$\subspace{12011\\00101}{}$,
$\subspace{12020\\00132}{}$,
$\subspace{12034\\00113}{}$,
$\subspace{12043\\00144}{}$,
$\subspace{10202\\01114}{15}$,
$\subspace{10214\\01122}{}$,
$\subspace{10221\\01130}{}$,
$\subspace{10233\\01143}{}$,
$\subspace{10240\\01101}{}$,
$\subspace{10302\\01010}{}$,
$\subspace{10314\\01033}{}$,
$\subspace{10321\\01001}{}$,
$\subspace{10333\\01024}{}$,
$\subspace{10340\\01042}{}$,
$\subspace{12002\\00120}{}$,
$\subspace{12011\\00101}{}$,
$\subspace{12020\\00132}{}$,
$\subspace{12034\\00113}{}$,
$\subspace{12043\\00144}{}$,
$\subspace{10202\\01114}{15}$,
$\subspace{10214\\01122}{}$,
$\subspace{10221\\01130}{}$,
$\subspace{10233\\01143}{}$,
$\subspace{10240\\01101}{}$,
$\subspace{10302\\01010}{}$,
$\subspace{10314\\01033}{}$,
$\subspace{10321\\01001}{}$,
$\subspace{10333\\01024}{}$,
$\subspace{10340\\01042}{}$,
$\subspace{12002\\00120}{}$,
$\subspace{12011\\00101}{}$,
$\subspace{12020\\00132}{}$,
$\subspace{12034\\00113}{}$,
$\subspace{12043\\00144}{}$,
$\subspace{10202\\01114}{15}$,
$\subspace{10214\\01122}{}$,
$\subspace{10221\\01130}{}$,
$\subspace{10233\\01143}{}$,
$\subspace{10240\\01101}{}$,
$\subspace{10302\\01010}{}$,
$\subspace{10314\\01033}{}$,
$\subspace{10321\\01001}{}$,
$\subspace{10333\\01024}{}$,
$\subspace{10340\\01042}{}$,
$\subspace{12002\\00120}{}$,
$\subspace{12011\\00101}{}$,
$\subspace{12020\\00132}{}$,
$\subspace{12034\\00113}{}$,
$\subspace{12043\\00144}{}$,
$\subspace{10202\\01114}{15}$,
$\subspace{10214\\01122}{}$,
$\subspace{10221\\01130}{}$,
$\subspace{10233\\01143}{}$,
$\subspace{10240\\01101}{}$,
$\subspace{10302\\01010}{}$,
$\subspace{10314\\01033}{}$,
$\subspace{10321\\01001}{}$,
$\subspace{10333\\01024}{}$,
$\subspace{10340\\01042}{}$,
$\subspace{12002\\00120}{}$,
$\subspace{12011\\00101}{}$,
$\subspace{12020\\00132}{}$,
$\subspace{12034\\00113}{}$,
$\subspace{12043\\00144}{}$,
$\subspace{10202\\01114}{15}$,
$\subspace{10214\\01122}{}$,
$\subspace{10221\\01130}{}$,
$\subspace{10233\\01143}{}$,
$\subspace{10240\\01101}{}$,
$\subspace{10302\\01010}{}$,
$\subspace{10314\\01033}{}$,
$\subspace{10321\\01001}{}$,
$\subspace{10333\\01024}{}$,
$\subspace{10340\\01042}{}$,
$\subspace{12002\\00120}{}$,
$\subspace{12011\\00101}{}$,
$\subspace{12020\\00132}{}$,
$\subspace{12034\\00113}{}$,
$\subspace{12043\\00144}{}$,
$\subspace{10202\\01114}{15}$,
$\subspace{10214\\01122}{}$,
$\subspace{10221\\01130}{}$,
$\subspace{10233\\01143}{}$,
$\subspace{10240\\01101}{}$,
$\subspace{10302\\01010}{}$,
$\subspace{10314\\01033}{}$,
$\subspace{10321\\01001}{}$,
$\subspace{10333\\01024}{}$,
$\subspace{10340\\01042}{}$,
$\subspace{12002\\00120}{}$,
$\subspace{12011\\00101}{}$,
$\subspace{12020\\00132}{}$,
$\subspace{12034\\00113}{}$,
$\subspace{12043\\00144}{}$,
$\subspace{10204\\01131}{15}$,
$\subspace{10211\\01144}{}$,
$\subspace{10223\\01102}{}$,
$\subspace{10230\\01110}{}$,
$\subspace{10242\\01123}{}$,
$\subspace{10301\\01044}{}$,
$\subspace{10313\\01012}{}$,
$\subspace{10320\\01030}{}$,
$\subspace{10332\\01003}{}$,
$\subspace{10344\\01021}{}$,
$\subspace{12004\\00110}{}$,
$\subspace{12013\\00141}{}$,
$\subspace{12022\\00122}{}$,
$\subspace{12031\\00103}{}$,
$\subspace{12040\\00134}{}$,
$\subspace{10201\\01200}{15}$,
$\subspace{10213\\01244}{}$,
$\subspace{10220\\01233}{}$,
$\subspace{10232\\01222}{}$,
$\subspace{10244\\01211}{}$,
$\subspace{10201\\01441}{}$,
$\subspace{10213\\01442}{}$,
$\subspace{10220\\01443}{}$,
$\subspace{10232\\01444}{}$,
$\subspace{10244\\01440}{}$,
$\subspace{10401\\01300}{}$,
$\subspace{10413\\01310}{}$,
$\subspace{10420\\01320}{}$,
$\subspace{10432\\01330}{}$,
$\subspace{10444\\01340}{}$,
$\subspace{10201\\01201}{15}$,
$\subspace{10213\\01240}{}$,
$\subspace{10220\\01234}{}$,
$\subspace{10232\\01223}{}$,
$\subspace{10244\\01212}{}$,
$\subspace{10200\\01403}{}$,
$\subspace{10212\\01404}{}$,
$\subspace{10224\\01400}{}$,
$\subspace{10231\\01401}{}$,
$\subspace{10243\\01402}{}$,
$\subspace{10400\\01323}{}$,
$\subspace{10412\\01333}{}$,
$\subspace{10424\\01343}{}$,
$\subspace{10431\\01303}{}$,
$\subspace{10443\\01313}{}$,
$\subspace{10204\\01214}{15}$,
$\subspace{10211\\01203}{}$,
$\subspace{10223\\01242}{}$,
$\subspace{10230\\01231}{}$,
$\subspace{10242\\01220}{}$,
$\subspace{10204\\01423}{}$,
$\subspace{10211\\01424}{}$,
$\subspace{10223\\01420}{}$,
$\subspace{10230\\01421}{}$,
$\subspace{10242\\01422}{}$,
$\subspace{10404\\01314}{}$,
$\subspace{10411\\01324}{}$,
$\subspace{10423\\01334}{}$,
$\subspace{10430\\01344}{}$,
$\subspace{10442\\01304}{}$,
$\subspace{10200\\01224}{15}$,
$\subspace{10212\\01213}{}$,
$\subspace{10224\\01202}{}$,
$\subspace{10231\\01241}{}$,
$\subspace{10243\\01230}{}$,
$\subspace{10200\\01414}{}$,
$\subspace{10212\\01410}{}$,
$\subspace{10224\\01411}{}$,
$\subspace{10231\\01412}{}$,
$\subspace{10243\\01413}{}$,
$\subspace{10404\\01321}{}$,
$\subspace{10411\\01331}{}$,
$\subspace{10423\\01341}{}$,
$\subspace{10430\\01301}{}$,
$\subspace{10442\\01311}{}$,
$\subspace{10200\\01232}{15}$,
$\subspace{10212\\01221}{}$,
$\subspace{10224\\01210}{}$,
$\subspace{10231\\01204}{}$,
$\subspace{10243\\01243}{}$,
$\subspace{10200\\01432}{}$,
$\subspace{10212\\01433}{}$,
$\subspace{10224\\01434}{}$,
$\subspace{10231\\01430}{}$,
$\subspace{10243\\01431}{}$,
$\subspace{10400\\01332}{}$,
$\subspace{10412\\01342}{}$,
$\subspace{10424\\01302}{}$,
$\subspace{10431\\01312}{}$,
$\subspace{10443\\01322}{}$,
$\subspace{10201\\01234}{15}$,
$\subspace{10213\\01223}{}$,
$\subspace{10220\\01212}{}$,
$\subspace{10232\\01201}{}$,
$\subspace{10244\\01240}{}$,
$\subspace{10201\\01400}{}$,
$\subspace{10213\\01401}{}$,
$\subspace{10220\\01402}{}$,
$\subspace{10232\\01403}{}$,
$\subspace{10244\\01404}{}$,
$\subspace{10404\\01333}{}$,
$\subspace{10411\\01343}{}$,
$\subspace{10423\\01303}{}$,
$\subspace{10430\\01313}{}$,
$\subspace{10442\\01323}{}$,
$\subspace{10203\\01234}{15}$,
$\subspace{10210\\01223}{}$,
$\subspace{10222\\01212}{}$,
$\subspace{10234\\01201}{}$,
$\subspace{10241\\01240}{}$,
$\subspace{10202\\01403}{}$,
$\subspace{10214\\01404}{}$,
$\subspace{10221\\01400}{}$,
$\subspace{10233\\01401}{}$,
$\subspace{10240\\01402}{}$,
$\subspace{10401\\01303}{}$,
$\subspace{10413\\01313}{}$,
$\subspace{10420\\01323}{}$,
$\subspace{10432\\01333}{}$,
$\subspace{10444\\01343}{}$,
$\subspace{10201\\01244}{15}$,
$\subspace{10213\\01233}{}$,
$\subspace{10220\\01222}{}$,
$\subspace{10232\\01211}{}$,
$\subspace{10244\\01200}{}$,
$\subspace{10204\\01442}{}$,
$\subspace{10211\\01443}{}$,
$\subspace{10223\\01444}{}$,
$\subspace{10230\\01440}{}$,
$\subspace{10242\\01441}{}$,
$\subspace{10403\\01330}{}$,
$\subspace{10410\\01340}{}$,
$\subspace{10422\\01300}{}$,
$\subspace{10434\\01310}{}$,
$\subspace{10441\\01320}{}$,
$\subspace{10201\\01244}{15}$,
$\subspace{10213\\01233}{}$,
$\subspace{10220\\01222}{}$,
$\subspace{10232\\01211}{}$,
$\subspace{10244\\01200}{}$,
$\subspace{10204\\01442}{}$,
$\subspace{10211\\01443}{}$,
$\subspace{10223\\01444}{}$,
$\subspace{10230\\01440}{}$,
$\subspace{10242\\01441}{}$,
$\subspace{10403\\01330}{}$,
$\subspace{10410\\01340}{}$,
$\subspace{10422\\01300}{}$,
$\subspace{10434\\01310}{}$,
$\subspace{10441\\01320}{}$,
$\subspace{10201\\01244}{15}$,
$\subspace{10213\\01233}{}$,
$\subspace{10220\\01222}{}$,
$\subspace{10232\\01211}{}$,
$\subspace{10244\\01200}{}$,
$\subspace{10204\\01442}{}$,
$\subspace{10211\\01443}{}$,
$\subspace{10223\\01444}{}$,
$\subspace{10230\\01440}{}$,
$\subspace{10242\\01441}{}$,
$\subspace{10403\\01330}{}$,
$\subspace{10410\\01340}{}$,
$\subspace{10422\\01300}{}$,
$\subspace{10434\\01310}{}$,
$\subspace{10441\\01320}{}$,
$\subspace{10301\\01322}{15}$,
$\subspace{10313\\01302}{}$,
$\subspace{10320\\01332}{}$,
$\subspace{10332\\01312}{}$,
$\subspace{10344\\01342}{}$,
$\subspace{10402\\01221}{}$,
$\subspace{10414\\01243}{}$,
$\subspace{10421\\01210}{}$,
$\subspace{10433\\01232}{}$,
$\subspace{10440\\01204}{}$,
$\subspace{10402\\01432}{}$,
$\subspace{10414\\01430}{}$,
$\subspace{10421\\01433}{}$,
$\subspace{10433\\01431}{}$,
$\subspace{10440\\01434}{}$,
$\subspace{10301\\01322}{15}$,
$\subspace{10313\\01302}{}$,
$\subspace{10320\\01332}{}$,
$\subspace{10332\\01312}{}$,
$\subspace{10344\\01342}{}$,
$\subspace{10402\\01221}{}$,
$\subspace{10414\\01243}{}$,
$\subspace{10421\\01210}{}$,
$\subspace{10433\\01232}{}$,
$\subspace{10440\\01204}{}$,
$\subspace{10402\\01432}{}$,
$\subspace{10414\\01430}{}$,
$\subspace{10421\\01433}{}$,
$\subspace{10433\\01431}{}$,
$\subspace{10440\\01434}{}$,
$\subspace{10301\\01322}{15}$,
$\subspace{10313\\01302}{}$,
$\subspace{10320\\01332}{}$,
$\subspace{10332\\01312}{}$,
$\subspace{10344\\01342}{}$,
$\subspace{10402\\01221}{}$,
$\subspace{10414\\01243}{}$,
$\subspace{10421\\01210}{}$,
$\subspace{10433\\01232}{}$,
$\subspace{10440\\01204}{}$,
$\subspace{10402\\01432}{}$,
$\subspace{10414\\01430}{}$,
$\subspace{10421\\01433}{}$,
$\subspace{10433\\01431}{}$,
$\subspace{10440\\01434}{}$,
$\subspace{10301\\01322}{15}$,
$\subspace{10313\\01302}{}$,
$\subspace{10320\\01332}{}$,
$\subspace{10332\\01312}{}$,
$\subspace{10344\\01342}{}$,
$\subspace{10402\\01221}{}$,
$\subspace{10414\\01243}{}$,
$\subspace{10421\\01210}{}$,
$\subspace{10433\\01232}{}$,
$\subspace{10440\\01204}{}$,
$\subspace{10402\\01432}{}$,
$\subspace{10414\\01430}{}$,
$\subspace{10421\\01433}{}$,
$\subspace{10433\\01431}{}$,
$\subspace{10440\\01434}{}$,
$\subspace{10304\\01323}{15}$,
$\subspace{10311\\01303}{}$,
$\subspace{10323\\01333}{}$,
$\subspace{10330\\01313}{}$,
$\subspace{10342\\01343}{}$,
$\subspace{10404\\01223}{}$,
$\subspace{10411\\01240}{}$,
$\subspace{10423\\01212}{}$,
$\subspace{10430\\01234}{}$,
$\subspace{10442\\01201}{}$,
$\subspace{10404\\01400}{}$,
$\subspace{10411\\01403}{}$,
$\subspace{10423\\01401}{}$,
$\subspace{10430\\01404}{}$,
$\subspace{10442\\01402}{}$,
$\subspace{10300\\01342}{15}$,
$\subspace{10312\\01322}{}$,
$\subspace{10324\\01302}{}$,
$\subspace{10331\\01332}{}$,
$\subspace{10343\\01312}{}$,
$\subspace{10401\\01221}{}$,
$\subspace{10413\\01243}{}$,
$\subspace{10420\\01210}{}$,
$\subspace{10432\\01232}{}$,
$\subspace{10444\\01204}{}$,
$\subspace{10404\\01430}{}$,
$\subspace{10411\\01433}{}$,
$\subspace{10423\\01431}{}$,
$\subspace{10430\\01434}{}$,
$\subspace{10442\\01432}{}$,
$\subspace{11200\\00011}{3}$,
$\subspace{11440\\00001}{}$,
$\subspace{13400\\00010}{}$,
$\subspace{11202\\00011}{3}$,
$\subspace{11410\\00001}{}$,
$\subspace{13403\\00010}{}$,
$\subspace{11203\\00011}{3}$,
$\subspace{11420\\00001}{}$,
$\subspace{13402\\00010}{}$,
$\subspace{12101\\00010}{3}$,
$\subspace{14100\\00001}{}$,
$\subspace{14304\\00011}{}$,
$\subspace{12103\\00010}{3}$,
$\subspace{14130\\00001}{}$,
$\subspace{14302\\00011}{}$,
$\subspace{12103\\00010}{3}$,
$\subspace{14130\\00001}{}$,
$\subspace{14302\\00011}{}$.

\smallskip

$n_5(5,2;59)\ge 1484$,$\left[\!\begin{smallmatrix}10000\\00100\\04410\\00001\\00044\end{smallmatrix}\!\right]$:
$\subspace{01000\\00101}{15}$,
$\subspace{01001\\00100}{}$,
$\subspace{01001\\00144}{}$,
$\subspace{01010\\00101}{}$,
$\subspace{01010\\00112}{}$,
$\subspace{01014\\00113}{}$,
$\subspace{01024\\00113}{}$,
$\subspace{01023\\00120}{}$,
$\subspace{01024\\00124}{}$,
$\subspace{01033\\00120}{}$,
$\subspace{01032\\00132}{}$,
$\subspace{01033\\00131}{}$,
$\subspace{01042\\00132}{}$,
$\subspace{01041\\00144}{}$,
$\subspace{01042\\00143}{}$,
$\subspace{01002\\00102}{5}$,
$\subspace{01011\\00114}{}$,
$\subspace{01020\\00121}{}$,
$\subspace{01034\\00133}{}$,
$\subspace{01043\\00140}{}$,
$\subspace{01001\\00113}{5}$,
$\subspace{01010\\00120}{}$,
$\subspace{01024\\00132}{}$,
$\subspace{01033\\00144}{}$,
$\subspace{01042\\00101}{}$,
$\subspace{01001\\00113}{5}$,
$\subspace{01010\\00120}{}$,
$\subspace{01024\\00132}{}$,
$\subspace{01033\\00144}{}$,
$\subspace{01042\\00101}{}$,
$\subspace{01003\\00141}{5}$,
$\subspace{01012\\00103}{}$,
$\subspace{01021\\00110}{}$,
$\subspace{01030\\00122}{}$,
$\subspace{01044\\00134}{}$,
$\subspace{01003\\00141}{5}$,
$\subspace{01012\\00103}{}$,
$\subspace{01021\\00110}{}$,
$\subspace{01030\\00122}{}$,
$\subspace{01044\\00134}{}$,
$\subspace{01003\\00141}{5}$,
$\subspace{01012\\00103}{}$,
$\subspace{01021\\00110}{}$,
$\subspace{01030\\00122}{}$,
$\subspace{01044\\00134}{}$,
$\subspace{01003\\00141}{5}$,
$\subspace{01012\\00103}{}$,
$\subspace{01021\\00110}{}$,
$\subspace{01030\\00122}{}$,
$\subspace{01044\\00134}{}$,
$\subspace{01200\\00011}{3}$,
$\subspace{01300\\00010}{}$,
$\subspace{01440\\00001}{}$,
$\subspace{01202\\00011}{3}$,
$\subspace{01301\\00010}{}$,
$\subspace{01410\\00001}{}$,
$\subspace{01204\\00011}{3}$,
$\subspace{01302\\00010}{}$,
$\subspace{01430\\00001}{}$,
$\subspace{10000\\00001}{3}$,
$\subspace{10000\\00010}{}$,
$\subspace{10000\\00011}{}$,
$\subspace{10000\\00001}{3}$,
$\subspace{10000\\00010}{}$,
$\subspace{10000\\00011}{}$,
$\subspace{10000\\00012}{3}$,
$\subspace{10000\\00013}{}$,
$\subspace{10000\\00014}{}$,
$\subspace{10000\\00012}{3}$,
$\subspace{10000\\00013}{}$,
$\subspace{10000\\00014}{}$,
$\subspace{10002\\00101}{15}$,
$\subspace{10002\\00113}{}$,
$\subspace{10002\\00120}{}$,
$\subspace{10002\\00132}{}$,
$\subspace{10002\\00144}{}$,
$\subspace{10020\\01001}{}$,
$\subspace{10020\\01010}{}$,
$\subspace{10020\\01024}{}$,
$\subspace{10020\\01033}{}$,
$\subspace{10020\\01042}{}$,
$\subspace{10033\\01101}{}$,
$\subspace{10033\\01114}{}$,
$\subspace{10033\\01122}{}$,
$\subspace{10033\\01130}{}$,
$\subspace{10033\\01143}{}$,
$\subspace{10020\\00102}{15}$,
$\subspace{10020\\00114}{}$,
$\subspace{10020\\00121}{}$,
$\subspace{10020\\00133}{}$,
$\subspace{10020\\00140}{}$,
$\subspace{10033\\01002}{}$,
$\subspace{10033\\01011}{}$,
$\subspace{10033\\01020}{}$,
$\subspace{10033\\01034}{}$,
$\subspace{10033\\01043}{}$,
$\subspace{10002\\01104}{}$,
$\subspace{10002\\01112}{}$,
$\subspace{10002\\01120}{}$,
$\subspace{10002\\01133}{}$,
$\subspace{10002\\01141}{}$,
$\subspace{10031\\00103}{15}$,
$\subspace{10031\\00110}{}$,
$\subspace{10031\\00122}{}$,
$\subspace{10031\\00134}{}$,
$\subspace{10031\\00141}{}$,
$\subspace{10032\\01003}{}$,
$\subspace{10032\\01012}{}$,
$\subspace{10032\\01021}{}$,
$\subspace{10032\\01030}{}$,
$\subspace{10032\\01044}{}$,
$\subspace{10042\\01102}{}$,
$\subspace{10042\\01110}{}$,
$\subspace{10042\\01123}{}$,
$\subspace{10042\\01131}{}$,
$\subspace{10042\\01144}{}$,
$\subspace{10031\\00103}{15}$,
$\subspace{10031\\00110}{}$,
$\subspace{10031\\00122}{}$,
$\subspace{10031\\00134}{}$,
$\subspace{10031\\00141}{}$,
$\subspace{10032\\01003}{}$,
$\subspace{10032\\01012}{}$,
$\subspace{10032\\01021}{}$,
$\subspace{10032\\01030}{}$,
$\subspace{10032\\01044}{}$,
$\subspace{10042\\01102}{}$,
$\subspace{10042\\01110}{}$,
$\subspace{10042\\01123}{}$,
$\subspace{10042\\01131}{}$,
$\subspace{10042\\01144}{}$,
$\subspace{10034\\00102}{15}$,
$\subspace{10034\\00114}{}$,
$\subspace{10034\\00121}{}$,
$\subspace{10034\\00133}{}$,
$\subspace{10034\\00140}{}$,
$\subspace{10012\\01002}{}$,
$\subspace{10012\\01011}{}$,
$\subspace{10012\\01020}{}$,
$\subspace{10012\\01034}{}$,
$\subspace{10012\\01043}{}$,
$\subspace{10014\\01104}{}$,
$\subspace{10014\\01112}{}$,
$\subspace{10014\\01120}{}$,
$\subspace{10014\\01133}{}$,
$\subspace{10014\\01141}{}$,
$\subspace{10034\\00102}{15}$,
$\subspace{10034\\00114}{}$,
$\subspace{10034\\00121}{}$,
$\subspace{10034\\00133}{}$,
$\subspace{10034\\00140}{}$,
$\subspace{10012\\01002}{}$,
$\subspace{10012\\01011}{}$,
$\subspace{10012\\01020}{}$,
$\subspace{10012\\01034}{}$,
$\subspace{10012\\01043}{}$,
$\subspace{10014\\01104}{}$,
$\subspace{10014\\01112}{}$,
$\subspace{10014\\01120}{}$,
$\subspace{10014\\01133}{}$,
$\subspace{10014\\01141}{}$,
$\subspace{10043\\00104}{15}$,
$\subspace{10043\\00111}{}$,
$\subspace{10043\\00123}{}$,
$\subspace{10043\\00130}{}$,
$\subspace{10043\\00142}{}$,
$\subspace{10041\\01004}{}$,
$\subspace{10041\\01013}{}$,
$\subspace{10041\\01022}{}$,
$\subspace{10041\\01031}{}$,
$\subspace{10041\\01040}{}$,
$\subspace{10021\\01100}{}$,
$\subspace{10021\\01113}{}$,
$\subspace{10021\\01121}{}$,
$\subspace{10021\\01134}{}$,
$\subspace{10021\\01142}{}$,
$\subspace{10043\\00104}{15}$,
$\subspace{10043\\00111}{}$,
$\subspace{10043\\00123}{}$,
$\subspace{10043\\00130}{}$,
$\subspace{10043\\00142}{}$,
$\subspace{10041\\01004}{}$,
$\subspace{10041\\01013}{}$,
$\subspace{10041\\01022}{}$,
$\subspace{10041\\01031}{}$,
$\subspace{10041\\01040}{}$,
$\subspace{10021\\01100}{}$,
$\subspace{10021\\01113}{}$,
$\subspace{10021\\01121}{}$,
$\subspace{10021\\01134}{}$,
$\subspace{10021\\01142}{}$,
$\subspace{10100\\00012}{3}$,
$\subspace{11000\\00014}{}$,
$\subspace{14402\\00013}{}$,
$\subspace{10104\\00012}{3}$,
$\subspace{11004\\00014}{}$,
$\subspace{14400\\00013}{}$,
$\subspace{10301\\00012}{3}$,
$\subspace{12203\\00013}{}$,
$\subspace{13001\\00014}{}$,
$\subspace{10401\\00012}{3}$,
$\subspace{11100\\00013}{}$,
$\subspace{14001\\00014}{}$,
$\subspace{10401\\00012}{3}$,
$\subspace{11100\\00013}{}$,
$\subspace{14001\\00014}{}$,
$\subspace{10401\\00012}{3}$,
$\subspace{11100\\00013}{}$,
$\subspace{14001\\00014}{}$,
$\subspace{10401\\00012}{3}$,
$\subspace{11100\\00013}{}$,
$\subspace{14001\\00014}{}$,
$\subspace{10010\\01201}{15}$,
$\subspace{10010\\01212}{}$,
$\subspace{10010\\01223}{}$,
$\subspace{10010\\01234}{}$,
$\subspace{10010\\01240}{}$,
$\subspace{10001\\01303}{}$,
$\subspace{10001\\01313}{}$,
$\subspace{10001\\01323}{}$,
$\subspace{10001\\01333}{}$,
$\subspace{10001\\01343}{}$,
$\subspace{10044\\01400}{}$,
$\subspace{10044\\01401}{}$,
$\subspace{10044\\01402}{}$,
$\subspace{10044\\01403}{}$,
$\subspace{10044\\01404}{}$,
$\subspace{10011\\01201}{15}$,
$\subspace{10011\\01212}{}$,
$\subspace{10011\\01223}{}$,
$\subspace{10011\\01234}{}$,
$\subspace{10011\\01240}{}$,
$\subspace{10040\\01303}{}$,
$\subspace{10040\\01313}{}$,
$\subspace{10040\\01323}{}$,
$\subspace{10040\\01333}{}$,
$\subspace{10040\\01343}{}$,
$\subspace{10004\\01400}{}$,
$\subspace{10004\\01401}{}$,
$\subspace{10004\\01402}{}$,
$\subspace{10004\\01403}{}$,
$\subspace{10004\\01404}{}$,
$\subspace{10011\\01201}{15}$,
$\subspace{10011\\01212}{}$,
$\subspace{10011\\01223}{}$,
$\subspace{10011\\01234}{}$,
$\subspace{10011\\01240}{}$,
$\subspace{10040\\01303}{}$,
$\subspace{10040\\01313}{}$,
$\subspace{10040\\01323}{}$,
$\subspace{10040\\01333}{}$,
$\subspace{10040\\01343}{}$,
$\subspace{10004\\01400}{}$,
$\subspace{10004\\01401}{}$,
$\subspace{10004\\01402}{}$,
$\subspace{10004\\01403}{}$,
$\subspace{10004\\01404}{}$,
$\subspace{10022\\01204}{15}$,
$\subspace{10022\\01210}{}$,
$\subspace{10022\\01221}{}$,
$\subspace{10022\\01232}{}$,
$\subspace{10022\\01243}{}$,
$\subspace{10030\\01302}{}$,
$\subspace{10030\\01312}{}$,
$\subspace{10030\\01322}{}$,
$\subspace{10030\\01332}{}$,
$\subspace{10030\\01342}{}$,
$\subspace{10003\\01430}{}$,
$\subspace{10003\\01431}{}$,
$\subspace{10003\\01432}{}$,
$\subspace{10003\\01433}{}$,
$\subspace{10003\\01434}{}$,
$\subspace{10023\\01203}{15}$,
$\subspace{10023\\01214}{}$,
$\subspace{10023\\01220}{}$,
$\subspace{10023\\01231}{}$,
$\subspace{10023\\01242}{}$,
$\subspace{10024\\01304}{}$,
$\subspace{10024\\01314}{}$,
$\subspace{10024\\01324}{}$,
$\subspace{10024\\01334}{}$,
$\subspace{10024\\01344}{}$,
$\subspace{10013\\01420}{}$,
$\subspace{10013\\01421}{}$,
$\subspace{10013\\01422}{}$,
$\subspace{10013\\01423}{}$,
$\subspace{10013\\01424}{}$,
$\subspace{10023\\01203}{15}$,
$\subspace{10023\\01214}{}$,
$\subspace{10023\\01220}{}$,
$\subspace{10023\\01231}{}$,
$\subspace{10023\\01242}{}$,
$\subspace{10024\\01304}{}$,
$\subspace{10024\\01314}{}$,
$\subspace{10024\\01324}{}$,
$\subspace{10024\\01334}{}$,
$\subspace{10024\\01344}{}$,
$\subspace{10013\\01420}{}$,
$\subspace{10013\\01421}{}$,
$\subspace{10013\\01422}{}$,
$\subspace{10013\\01423}{}$,
$\subspace{10013\\01424}{}$,
$\subspace{10030\\01204}{15}$,
$\subspace{10030\\01210}{}$,
$\subspace{10030\\01221}{}$,
$\subspace{10030\\01232}{}$,
$\subspace{10030\\01243}{}$,
$\subspace{10003\\01302}{}$,
$\subspace{10003\\01312}{}$,
$\subspace{10003\\01322}{}$,
$\subspace{10003\\01332}{}$,
$\subspace{10003\\01342}{}$,
$\subspace{10022\\01430}{}$,
$\subspace{10022\\01431}{}$,
$\subspace{10022\\01432}{}$,
$\subspace{10022\\01433}{}$,
$\subspace{10022\\01434}{}$,
$\subspace{10044\\01204}{15}$,
$\subspace{10044\\01210}{}$,
$\subspace{10044\\01221}{}$,
$\subspace{10044\\01232}{}$,
$\subspace{10044\\01243}{}$,
$\subspace{10010\\01302}{}$,
$\subspace{10010\\01312}{}$,
$\subspace{10010\\01322}{}$,
$\subspace{10010\\01332}{}$,
$\subspace{10010\\01342}{}$,
$\subspace{10001\\01430}{}$,
$\subspace{10001\\01431}{}$,
$\subspace{10001\\01432}{}$,
$\subspace{10001\\01433}{}$,
$\subspace{10001\\01434}{}$,
$\subspace{10103\\01012}{15}$,
$\subspace{10110\\01021}{}$,
$\subspace{10122\\01030}{}$,
$\subspace{10134\\01044}{}$,
$\subspace{10141\\01003}{}$,
$\subspace{10402\\01110}{}$,
$\subspace{10414\\01144}{}$,
$\subspace{10421\\01123}{}$,
$\subspace{10433\\01102}{}$,
$\subspace{10440\\01131}{}$,
$\subspace{14002\\00141}{}$,
$\subspace{14011\\00134}{}$,
$\subspace{14020\\00122}{}$,
$\subspace{14034\\00110}{}$,
$\subspace{14043\\00103}{}$,
$\subspace{10102\\01023}{15}$,
$\subspace{10114\\01032}{}$,
$\subspace{10121\\01041}{}$,
$\subspace{10133\\01000}{}$,
$\subspace{10140\\01014}{}$,
$\subspace{10403\\01140}{}$,
$\subspace{10410\\01124}{}$,
$\subspace{10422\\01103}{}$,
$\subspace{10434\\01132}{}$,
$\subspace{10441\\01111}{}$,
$\subspace{14003\\00131}{}$,
$\subspace{14012\\00124}{}$,
$\subspace{14021\\00112}{}$,
$\subspace{14030\\00100}{}$,
$\subspace{14044\\00143}{}$,
$\subspace{10102\\01023}{15}$,
$\subspace{10114\\01032}{}$,
$\subspace{10121\\01041}{}$,
$\subspace{10133\\01000}{}$,
$\subspace{10140\\01014}{}$,
$\subspace{10403\\01140}{}$,
$\subspace{10410\\01124}{}$,
$\subspace{10422\\01103}{}$,
$\subspace{10434\\01132}{}$,
$\subspace{10441\\01111}{}$,
$\subspace{14003\\00131}{}$,
$\subspace{14012\\00124}{}$,
$\subspace{14021\\00112}{}$,
$\subspace{14030\\00100}{}$,
$\subspace{14044\\00143}{}$,
$\subspace{10102\\01023}{15}$,
$\subspace{10114\\01032}{}$,
$\subspace{10121\\01041}{}$,
$\subspace{10133\\01000}{}$,
$\subspace{10140\\01014}{}$,
$\subspace{10403\\01140}{}$,
$\subspace{10410\\01124}{}$,
$\subspace{10422\\01103}{}$,
$\subspace{10434\\01132}{}$,
$\subspace{10441\\01111}{}$,
$\subspace{14003\\00131}{}$,
$\subspace{14012\\00124}{}$,
$\subspace{14021\\00112}{}$,
$\subspace{14030\\00100}{}$,
$\subspace{14044\\00143}{}$,
$\subspace{10102\\01023}{15}$,
$\subspace{10114\\01032}{}$,
$\subspace{10121\\01041}{}$,
$\subspace{10133\\01000}{}$,
$\subspace{10140\\01014}{}$,
$\subspace{10403\\01140}{}$,
$\subspace{10410\\01124}{}$,
$\subspace{10422\\01103}{}$,
$\subspace{10434\\01132}{}$,
$\subspace{10441\\01111}{}$,
$\subspace{14003\\00131}{}$,
$\subspace{14012\\00124}{}$,
$\subspace{14021\\00112}{}$,
$\subspace{14030\\00100}{}$,
$\subspace{14044\\00143}{}$,
$\subspace{10102\\01023}{15}$,
$\subspace{10114\\01032}{}$,
$\subspace{10121\\01041}{}$,
$\subspace{10133\\01000}{}$,
$\subspace{10140\\01014}{}$,
$\subspace{10403\\01140}{}$,
$\subspace{10410\\01124}{}$,
$\subspace{10422\\01103}{}$,
$\subspace{10434\\01132}{}$,
$\subspace{10441\\01111}{}$,
$\subspace{14003\\00131}{}$,
$\subspace{14012\\00124}{}$,
$\subspace{14021\\00112}{}$,
$\subspace{14030\\00100}{}$,
$\subspace{14044\\00143}{}$,
$\subspace{10102\\01023}{15}$,
$\subspace{10114\\01032}{}$,
$\subspace{10121\\01041}{}$,
$\subspace{10133\\01000}{}$,
$\subspace{10140\\01014}{}$,
$\subspace{10403\\01140}{}$,
$\subspace{10410\\01124}{}$,
$\subspace{10422\\01103}{}$,
$\subspace{10434\\01132}{}$,
$\subspace{10441\\01111}{}$,
$\subspace{14003\\00131}{}$,
$\subspace{14012\\00124}{}$,
$\subspace{14021\\00112}{}$,
$\subspace{14030\\00100}{}$,
$\subspace{14044\\00143}{}$,
$\subspace{10102\\01042}{15}$,
$\subspace{10114\\01001}{}$,
$\subspace{10121\\01010}{}$,
$\subspace{10133\\01024}{}$,
$\subspace{10140\\01033}{}$,
$\subspace{10402\\01114}{}$,
$\subspace{10414\\01143}{}$,
$\subspace{10421\\01122}{}$,
$\subspace{10433\\01101}{}$,
$\subspace{10440\\01130}{}$,
$\subspace{14002\\00101}{}$,
$\subspace{14011\\00144}{}$,
$\subspace{14020\\00132}{}$,
$\subspace{14034\\00120}{}$,
$\subspace{14043\\00113}{}$,
$\subspace{10102\\01042}{15}$,
$\subspace{10114\\01001}{}$,
$\subspace{10121\\01010}{}$,
$\subspace{10133\\01024}{}$,
$\subspace{10140\\01033}{}$,
$\subspace{10402\\01114}{}$,
$\subspace{10414\\01143}{}$,
$\subspace{10421\\01122}{}$,
$\subspace{10433\\01101}{}$,
$\subspace{10440\\01130}{}$,
$\subspace{14002\\00101}{}$,
$\subspace{14011\\00144}{}$,
$\subspace{14020\\00132}{}$,
$\subspace{14034\\00120}{}$,
$\subspace{14043\\00113}{}$,
$\subspace{10102\\01042}{15}$,
$\subspace{10114\\01001}{}$,
$\subspace{10121\\01010}{}$,
$\subspace{10133\\01024}{}$,
$\subspace{10140\\01033}{}$,
$\subspace{10402\\01114}{}$,
$\subspace{10414\\01143}{}$,
$\subspace{10421\\01122}{}$,
$\subspace{10433\\01101}{}$,
$\subspace{10440\\01130}{}$,
$\subspace{14002\\00101}{}$,
$\subspace{14011\\00144}{}$,
$\subspace{14020\\00132}{}$,
$\subspace{14034\\00120}{}$,
$\subspace{14043\\00113}{}$,
$\subspace{10100\\01100}{15}$,
$\subspace{10112\\01121}{}$,
$\subspace{10124\\01142}{}$,
$\subspace{10131\\01113}{}$,
$\subspace{10143\\01134}{}$,
$\subspace{10400\\01040}{}$,
$\subspace{10412\\01031}{}$,
$\subspace{10424\\01022}{}$,
$\subspace{10431\\01013}{}$,
$\subspace{10443\\01004}{}$,
$\subspace{11000\\00111}{}$,
$\subspace{11014\\00123}{}$,
$\subspace{11023\\00130}{}$,
$\subspace{11032\\00142}{}$,
$\subspace{11041\\00104}{}$,
$\subspace{10100\\01100}{15}$,
$\subspace{10112\\01121}{}$,
$\subspace{10124\\01142}{}$,
$\subspace{10131\\01113}{}$,
$\subspace{10143\\01134}{}$,
$\subspace{10400\\01040}{}$,
$\subspace{10412\\01031}{}$,
$\subspace{10424\\01022}{}$,
$\subspace{10431\\01013}{}$,
$\subspace{10443\\01004}{}$,
$\subspace{11000\\00111}{}$,
$\subspace{11014\\00123}{}$,
$\subspace{11023\\00130}{}$,
$\subspace{11032\\00142}{}$,
$\subspace{11041\\00104}{}$,
$\subspace{10100\\01100}{15}$,
$\subspace{10112\\01121}{}$,
$\subspace{10124\\01142}{}$,
$\subspace{10131\\01113}{}$,
$\subspace{10143\\01134}{}$,
$\subspace{10400\\01040}{}$,
$\subspace{10412\\01031}{}$,
$\subspace{10424\\01022}{}$,
$\subspace{10431\\01013}{}$,
$\subspace{10443\\01004}{}$,
$\subspace{11000\\00111}{}$,
$\subspace{11014\\00123}{}$,
$\subspace{11023\\00130}{}$,
$\subspace{11032\\00142}{}$,
$\subspace{11041\\00104}{}$,
$\subspace{10100\\01100}{15}$,
$\subspace{10112\\01121}{}$,
$\subspace{10124\\01142}{}$,
$\subspace{10131\\01113}{}$,
$\subspace{10143\\01134}{}$,
$\subspace{10400\\01040}{}$,
$\subspace{10412\\01031}{}$,
$\subspace{10424\\01022}{}$,
$\subspace{10431\\01013}{}$,
$\subspace{10443\\01004}{}$,
$\subspace{11000\\00111}{}$,
$\subspace{11014\\00123}{}$,
$\subspace{11023\\00130}{}$,
$\subspace{11032\\00142}{}$,
$\subspace{11041\\00104}{}$,
$\subspace{10100\\01100}{15}$,
$\subspace{10112\\01121}{}$,
$\subspace{10124\\01142}{}$,
$\subspace{10131\\01113}{}$,
$\subspace{10143\\01134}{}$,
$\subspace{10400\\01040}{}$,
$\subspace{10412\\01031}{}$,
$\subspace{10424\\01022}{}$,
$\subspace{10431\\01013}{}$,
$\subspace{10443\\01004}{}$,
$\subspace{11000\\00111}{}$,
$\subspace{11014\\00123}{}$,
$\subspace{11023\\00130}{}$,
$\subspace{11032\\00142}{}$,
$\subspace{11041\\00104}{}$,
$\subspace{10104\\01110}{15}$,
$\subspace{10111\\01131}{}$,
$\subspace{10123\\01102}{}$,
$\subspace{10130\\01123}{}$,
$\subspace{10142\\01144}{}$,
$\subspace{10403\\01003}{}$,
$\subspace{10410\\01044}{}$,
$\subspace{10422\\01030}{}$,
$\subspace{10434\\01021}{}$,
$\subspace{10441\\01012}{}$,
$\subspace{11004\\00134}{}$,
$\subspace{11013\\00141}{}$,
$\subspace{11022\\00103}{}$,
$\subspace{11031\\00110}{}$,
$\subspace{11040\\00122}{}$,
$\subspace{10103\\01140}{15}$,
$\subspace{10110\\01111}{}$,
$\subspace{10122\\01132}{}$,
$\subspace{10134\\01103}{}$,
$\subspace{10141\\01124}{}$,
$\subspace{10404\\01014}{}$,
$\subspace{10411\\01000}{}$,
$\subspace{10423\\01041}{}$,
$\subspace{10430\\01032}{}$,
$\subspace{10442\\01023}{}$,
$\subspace{11003\\00124}{}$,
$\subspace{11012\\00131}{}$,
$\subspace{11021\\00143}{}$,
$\subspace{11030\\00100}{}$,
$\subspace{11044\\00112}{}$,
$\subspace{10103\\01140}{15}$,
$\subspace{10110\\01111}{}$,
$\subspace{10122\\01132}{}$,
$\subspace{10134\\01103}{}$,
$\subspace{10141\\01124}{}$,
$\subspace{10404\\01014}{}$,
$\subspace{10411\\01000}{}$,
$\subspace{10423\\01041}{}$,
$\subspace{10430\\01032}{}$,
$\subspace{10442\\01023}{}$,
$\subspace{11003\\00124}{}$,
$\subspace{11012\\00131}{}$,
$\subspace{11021\\00143}{}$,
$\subspace{11030\\00100}{}$,
$\subspace{11044\\00112}{}$,
$\subspace{10103\\01140}{15}$,
$\subspace{10110\\01111}{}$,
$\subspace{10122\\01132}{}$,
$\subspace{10134\\01103}{}$,
$\subspace{10141\\01124}{}$,
$\subspace{10404\\01014}{}$,
$\subspace{10411\\01000}{}$,
$\subspace{10423\\01041}{}$,
$\subspace{10430\\01032}{}$,
$\subspace{10442\\01023}{}$,
$\subspace{11003\\00124}{}$,
$\subspace{11012\\00131}{}$,
$\subspace{11021\\00143}{}$,
$\subspace{11030\\00100}{}$,
$\subspace{11044\\00112}{}$,
$\subspace{10103\\01140}{15}$,
$\subspace{10110\\01111}{}$,
$\subspace{10122\\01132}{}$,
$\subspace{10134\\01103}{}$,
$\subspace{10141\\01124}{}$,
$\subspace{10404\\01014}{}$,
$\subspace{10411\\01000}{}$,
$\subspace{10423\\01041}{}$,
$\subspace{10430\\01032}{}$,
$\subspace{10442\\01023}{}$,
$\subspace{11003\\00124}{}$,
$\subspace{11012\\00131}{}$,
$\subspace{11021\\00143}{}$,
$\subspace{11030\\00100}{}$,
$\subspace{11044\\00112}{}$,
$\subspace{10103\\01203}{15}$,
$\subspace{10110\\01231}{}$,
$\subspace{10122\\01214}{}$,
$\subspace{10134\\01242}{}$,
$\subspace{10141\\01220}{}$,
$\subspace{10104\\01422}{}$,
$\subspace{10111\\01424}{}$,
$\subspace{10123\\01421}{}$,
$\subspace{10130\\01423}{}$,
$\subspace{10142\\01420}{}$,
$\subspace{10204\\01314}{}$,
$\subspace{10211\\01334}{}$,
$\subspace{10223\\01304}{}$,
$\subspace{10230\\01324}{}$,
$\subspace{10242\\01344}{}$,
$\subspace{10100\\01211}{15}$,
$\subspace{10112\\01244}{}$,
$\subspace{10124\\01222}{}$,
$\subspace{10131\\01200}{}$,
$\subspace{10143\\01233}{}$,
$\subspace{10100\\01441}{}$,
$\subspace{10112\\01443}{}$,
$\subspace{10124\\01440}{}$,
$\subspace{10131\\01442}{}$,
$\subspace{10143\\01444}{}$,
$\subspace{10204\\01310}{}$,
$\subspace{10211\\01330}{}$,
$\subspace{10223\\01300}{}$,
$\subspace{10230\\01320}{}$,
$\subspace{10242\\01340}{}$,
$\subspace{10103\\01213}{15}$,
$\subspace{10110\\01241}{}$,
$\subspace{10122\\01224}{}$,
$\subspace{10134\\01202}{}$,
$\subspace{10141\\01230}{}$,
$\subspace{10103\\01414}{}$,
$\subspace{10110\\01411}{}$,
$\subspace{10122\\01413}{}$,
$\subspace{10134\\01410}{}$,
$\subspace{10141\\01412}{}$,
$\subspace{10201\\01311}{}$,
$\subspace{10213\\01331}{}$,
$\subspace{10220\\01301}{}$,
$\subspace{10232\\01321}{}$,
$\subspace{10244\\01341}{}$,
$\subspace{10101\\01231}{15}$,
$\subspace{10113\\01214}{}$,
$\subspace{10120\\01242}{}$,
$\subspace{10132\\01220}{}$,
$\subspace{10144\\01203}{}$,
$\subspace{10101\\01423}{}$,
$\subspace{10113\\01420}{}$,
$\subspace{10120\\01422}{}$,
$\subspace{10132\\01424}{}$,
$\subspace{10144\\01421}{}$,
$\subspace{10204\\01334}{}$,
$\subspace{10211\\01304}{}$,
$\subspace{10223\\01324}{}$,
$\subspace{10230\\01344}{}$,
$\subspace{10242\\01314}{}$,
$\subspace{10101\\01231}{15}$,
$\subspace{10113\\01214}{}$,
$\subspace{10120\\01242}{}$,
$\subspace{10132\\01220}{}$,
$\subspace{10144\\01203}{}$,
$\subspace{10101\\01423}{}$,
$\subspace{10113\\01420}{}$,
$\subspace{10120\\01422}{}$,
$\subspace{10132\\01424}{}$,
$\subspace{10144\\01421}{}$,
$\subspace{10204\\01334}{}$,
$\subspace{10211\\01304}{}$,
$\subspace{10223\\01324}{}$,
$\subspace{10230\\01344}{}$,
$\subspace{10242\\01314}{}$,
$\subspace{10101\\01231}{15}$,
$\subspace{10113\\01214}{}$,
$\subspace{10120\\01242}{}$,
$\subspace{10132\\01220}{}$,
$\subspace{10144\\01203}{}$,
$\subspace{10101\\01423}{}$,
$\subspace{10113\\01420}{}$,
$\subspace{10120\\01422}{}$,
$\subspace{10132\\01424}{}$,
$\subspace{10144\\01421}{}$,
$\subspace{10204\\01334}{}$,
$\subspace{10211\\01304}{}$,
$\subspace{10223\\01324}{}$,
$\subspace{10230\\01344}{}$,
$\subspace{10242\\01314}{}$,
$\subspace{10102\\01234}{15}$,
$\subspace{10114\\01212}{}$,
$\subspace{10121\\01240}{}$,
$\subspace{10133\\01223}{}$,
$\subspace{10140\\01201}{}$,
$\subspace{10100\\01402}{}$,
$\subspace{10112\\01404}{}$,
$\subspace{10124\\01401}{}$,
$\subspace{10131\\01403}{}$,
$\subspace{10143\\01400}{}$,
$\subspace{10201\\01313}{}$,
$\subspace{10213\\01333}{}$,
$\subspace{10220\\01303}{}$,
$\subspace{10232\\01323}{}$,
$\subspace{10244\\01343}{}$,
$\subspace{10102\\01234}{15}$,
$\subspace{10114\\01212}{}$,
$\subspace{10121\\01240}{}$,
$\subspace{10133\\01223}{}$,
$\subspace{10140\\01201}{}$,
$\subspace{10100\\01402}{}$,
$\subspace{10112\\01404}{}$,
$\subspace{10124\\01401}{}$,
$\subspace{10131\\01403}{}$,
$\subspace{10143\\01400}{}$,
$\subspace{10201\\01313}{}$,
$\subspace{10213\\01333}{}$,
$\subspace{10220\\01303}{}$,
$\subspace{10232\\01323}{}$,
$\subspace{10244\\01343}{}$,
$\subspace{10104\\01241}{15}$,
$\subspace{10111\\01224}{}$,
$\subspace{10123\\01202}{}$,
$\subspace{10130\\01230}{}$,
$\subspace{10142\\01213}{}$,
$\subspace{10104\\01414}{}$,
$\subspace{10111\\01411}{}$,
$\subspace{10123\\01413}{}$,
$\subspace{10130\\01410}{}$,
$\subspace{10142\\01412}{}$,
$\subspace{10204\\01341}{}$,
$\subspace{10211\\01311}{}$,
$\subspace{10223\\01331}{}$,
$\subspace{10230\\01301}{}$,
$\subspace{10242\\01321}{}$,
$\subspace{10104\\01241}{15}$,
$\subspace{10111\\01224}{}$,
$\subspace{10123\\01202}{}$,
$\subspace{10130\\01230}{}$,
$\subspace{10142\\01213}{}$,
$\subspace{10104\\01414}{}$,
$\subspace{10111\\01411}{}$,
$\subspace{10123\\01413}{}$,
$\subspace{10130\\01410}{}$,
$\subspace{10142\\01412}{}$,
$\subspace{10204\\01341}{}$,
$\subspace{10211\\01311}{}$,
$\subspace{10223\\01331}{}$,
$\subspace{10230\\01301}{}$,
$\subspace{10242\\01321}{}$,
$\subspace{10102\\01301}{15}$,
$\subspace{10114\\01341}{}$,
$\subspace{10121\\01331}{}$,
$\subspace{10133\\01321}{}$,
$\subspace{10140\\01311}{}$,
$\subspace{10304\\01230}{}$,
$\subspace{10311\\01241}{}$,
$\subspace{10323\\01202}{}$,
$\subspace{10330\\01213}{}$,
$\subspace{10342\\01224}{}$,
$\subspace{10300\\01412}{}$,
$\subspace{10312\\01411}{}$,
$\subspace{10324\\01410}{}$,
$\subspace{10331\\01414}{}$,
$\subspace{10343\\01413}{}$,
$\subspace{10101\\01322}{15}$,
$\subspace{10113\\01312}{}$,
$\subspace{10120\\01302}{}$,
$\subspace{10132\\01342}{}$,
$\subspace{10144\\01332}{}$,
$\subspace{10301\\01232}{}$,
$\subspace{10313\\01243}{}$,
$\subspace{10320\\01204}{}$,
$\subspace{10332\\01210}{}$,
$\subspace{10344\\01221}{}$,
$\subspace{10303\\01433}{}$,
$\subspace{10310\\01432}{}$,
$\subspace{10322\\01431}{}$,
$\subspace{10334\\01430}{}$,
$\subspace{10341\\01434}{}$,
$\subspace{10101\\01322}{15}$,
$\subspace{10113\\01312}{}$,
$\subspace{10120\\01302}{}$,
$\subspace{10132\\01342}{}$,
$\subspace{10144\\01332}{}$,
$\subspace{10301\\01232}{}$,
$\subspace{10313\\01243}{}$,
$\subspace{10320\\01204}{}$,
$\subspace{10332\\01210}{}$,
$\subspace{10344\\01221}{}$,
$\subspace{10303\\01433}{}$,
$\subspace{10310\\01432}{}$,
$\subspace{10322\\01431}{}$,
$\subspace{10334\\01430}{}$,
$\subspace{10341\\01434}{}$,
$\subspace{10101\\01323}{15}$,
$\subspace{10113\\01313}{}$,
$\subspace{10120\\01303}{}$,
$\subspace{10132\\01343}{}$,
$\subspace{10144\\01333}{}$,
$\subspace{10302\\01240}{}$,
$\subspace{10314\\01201}{}$,
$\subspace{10321\\01212}{}$,
$\subspace{10333\\01223}{}$,
$\subspace{10340\\01234}{}$,
$\subspace{10301\\01402}{}$,
$\subspace{10313\\01401}{}$,
$\subspace{10320\\01400}{}$,
$\subspace{10332\\01404}{}$,
$\subspace{10344\\01403}{}$,
$\subspace{10101\\01323}{15}$,
$\subspace{10113\\01313}{}$,
$\subspace{10120\\01303}{}$,
$\subspace{10132\\01343}{}$,
$\subspace{10144\\01333}{}$,
$\subspace{10302\\01240}{}$,
$\subspace{10314\\01201}{}$,
$\subspace{10321\\01212}{}$,
$\subspace{10333\\01223}{}$,
$\subspace{10340\\01234}{}$,
$\subspace{10301\\01402}{}$,
$\subspace{10313\\01401}{}$,
$\subspace{10320\\01400}{}$,
$\subspace{10332\\01404}{}$,
$\subspace{10344\\01403}{}$,
$\subspace{10101\\01323}{15}$,
$\subspace{10113\\01313}{}$,
$\subspace{10120\\01303}{}$,
$\subspace{10132\\01343}{}$,
$\subspace{10144\\01333}{}$,
$\subspace{10302\\01240}{}$,
$\subspace{10314\\01201}{}$,
$\subspace{10321\\01212}{}$,
$\subspace{10333\\01223}{}$,
$\subspace{10340\\01234}{}$,
$\subspace{10301\\01402}{}$,
$\subspace{10313\\01401}{}$,
$\subspace{10320\\01400}{}$,
$\subspace{10332\\01404}{}$,
$\subspace{10344\\01403}{}$,
$\subspace{10101\\01323}{15}$,
$\subspace{10113\\01313}{}$,
$\subspace{10120\\01303}{}$,
$\subspace{10132\\01343}{}$,
$\subspace{10144\\01333}{}$,
$\subspace{10302\\01240}{}$,
$\subspace{10314\\01201}{}$,
$\subspace{10321\\01212}{}$,
$\subspace{10333\\01223}{}$,
$\subspace{10340\\01234}{}$,
$\subspace{10301\\01402}{}$,
$\subspace{10313\\01401}{}$,
$\subspace{10320\\01400}{}$,
$\subspace{10332\\01404}{}$,
$\subspace{10344\\01403}{}$,
$\subspace{10104\\01320}{15}$,
$\subspace{10111\\01310}{}$,
$\subspace{10123\\01300}{}$,
$\subspace{10130\\01340}{}$,
$\subspace{10142\\01330}{}$,
$\subspace{10303\\01233}{}$,
$\subspace{10310\\01244}{}$,
$\subspace{10322\\01200}{}$,
$\subspace{10334\\01211}{}$,
$\subspace{10341\\01222}{}$,
$\subspace{10300\\01442}{}$,
$\subspace{10312\\01441}{}$,
$\subspace{10324\\01440}{}$,
$\subspace{10331\\01444}{}$,
$\subspace{10343\\01443}{}$,
$\subspace{10104\\01321}{15}$,
$\subspace{10111\\01311}{}$,
$\subspace{10123\\01301}{}$,
$\subspace{10130\\01341}{}$,
$\subspace{10142\\01331}{}$,
$\subspace{10304\\01241}{}$,
$\subspace{10311\\01202}{}$,
$\subspace{10323\\01213}{}$,
$\subspace{10330\\01224}{}$,
$\subspace{10342\\01230}{}$,
$\subspace{10303\\01411}{}$,
$\subspace{10310\\01410}{}$,
$\subspace{10322\\01414}{}$,
$\subspace{10334\\01413}{}$,
$\subspace{10341\\01412}{}$,
$\subspace{10104\\01321}{15}$,
$\subspace{10111\\01311}{}$,
$\subspace{10123\\01301}{}$,
$\subspace{10130\\01341}{}$,
$\subspace{10142\\01331}{}$,
$\subspace{10304\\01241}{}$,
$\subspace{10311\\01202}{}$,
$\subspace{10323\\01213}{}$,
$\subspace{10330\\01224}{}$,
$\subspace{10342\\01230}{}$,
$\subspace{10303\\01411}{}$,
$\subspace{10310\\01410}{}$,
$\subspace{10322\\01414}{}$,
$\subspace{10334\\01413}{}$,
$\subspace{10341\\01412}{}$,
$\subspace{10203\\01001}{15}$,
$\subspace{10210\\01033}{}$,
$\subspace{10222\\01010}{}$,
$\subspace{10234\\01042}{}$,
$\subspace{10241\\01024}{}$,
$\subspace{10302\\01130}{}$,
$\subspace{10314\\01122}{}$,
$\subspace{10321\\01114}{}$,
$\subspace{10333\\01101}{}$,
$\subspace{10340\\01143}{}$,
$\subspace{13002\\00101}{}$,
$\subspace{13011\\00120}{}$,
$\subspace{13020\\00144}{}$,
$\subspace{13034\\00113}{}$,
$\subspace{13043\\00132}{}$,
$\subspace{10202\\01010}{15}$,
$\subspace{10214\\01042}{}$,
$\subspace{10221\\01024}{}$,
$\subspace{10233\\01001}{}$,
$\subspace{10240\\01033}{}$,
$\subspace{10300\\01101}{}$,
$\subspace{10312\\01143}{}$,
$\subspace{10324\\01130}{}$,
$\subspace{10331\\01122}{}$,
$\subspace{10343\\01114}{}$,
$\subspace{13000\\00101}{}$,
$\subspace{13014\\00120}{}$,
$\subspace{13023\\00144}{}$,
$\subspace{13032\\00113}{}$,
$\subspace{13041\\00132}{}$,
$\subspace{10202\\01010}{15}$,
$\subspace{10214\\01042}{}$,
$\subspace{10221\\01024}{}$,
$\subspace{10233\\01001}{}$,
$\subspace{10240\\01033}{}$,
$\subspace{10300\\01101}{}$,
$\subspace{10312\\01143}{}$,
$\subspace{10324\\01130}{}$,
$\subspace{10331\\01122}{}$,
$\subspace{10343\\01114}{}$,
$\subspace{13000\\00101}{}$,
$\subspace{13014\\00120}{}$,
$\subspace{13023\\00144}{}$,
$\subspace{13032\\00113}{}$,
$\subspace{13041\\00132}{}$,
$\subspace{10202\\01021}{15}$,
$\subspace{10214\\01003}{}$,
$\subspace{10221\\01030}{}$,
$\subspace{10233\\01012}{}$,
$\subspace{10240\\01044}{}$,
$\subspace{10303\\01102}{}$,
$\subspace{10310\\01144}{}$,
$\subspace{10322\\01131}{}$,
$\subspace{10334\\01123}{}$,
$\subspace{10341\\01110}{}$,
$\subspace{13003\\00134}{}$,
$\subspace{13012\\00103}{}$,
$\subspace{13021\\00122}{}$,
$\subspace{13030\\00141}{}$,
$\subspace{13044\\00110}{}$,
$\subspace{10202\\01021}{15}$,
$\subspace{10214\\01003}{}$,
$\subspace{10221\\01030}{}$,
$\subspace{10233\\01012}{}$,
$\subspace{10240\\01044}{}$,
$\subspace{10303\\01102}{}$,
$\subspace{10310\\01144}{}$,
$\subspace{10322\\01131}{}$,
$\subspace{10334\\01123}{}$,
$\subspace{10341\\01110}{}$,
$\subspace{13003\\00134}{}$,
$\subspace{13012\\00103}{}$,
$\subspace{13021\\00122}{}$,
$\subspace{13030\\00141}{}$,
$\subspace{13044\\00110}{}$,
$\subspace{10202\\01021}{15}$,
$\subspace{10214\\01003}{}$,
$\subspace{10221\\01030}{}$,
$\subspace{10233\\01012}{}$,
$\subspace{10240\\01044}{}$,
$\subspace{10303\\01102}{}$,
$\subspace{10310\\01144}{}$,
$\subspace{10322\\01131}{}$,
$\subspace{10334\\01123}{}$,
$\subspace{10341\\01110}{}$,
$\subspace{13003\\00134}{}$,
$\subspace{13012\\00103}{}$,
$\subspace{13021\\00122}{}$,
$\subspace{13030\\00141}{}$,
$\subspace{13044\\00110}{}$,
$\subspace{10203\\01104}{15}$,
$\subspace{10210\\01112}{}$,
$\subspace{10222\\01120}{}$,
$\subspace{10234\\01133}{}$,
$\subspace{10241\\01141}{}$,
$\subspace{10300\\01034}{}$,
$\subspace{10312\\01002}{}$,
$\subspace{10324\\01020}{}$,
$\subspace{10331\\01043}{}$,
$\subspace{10343\\01011}{}$,
$\subspace{12003\\00133}{}$,
$\subspace{12012\\00114}{}$,
$\subspace{12021\\00140}{}$,
$\subspace{12030\\00121}{}$,
$\subspace{12044\\00102}{}$,
$\subspace{10203\\01104}{15}$,
$\subspace{10210\\01112}{}$,
$\subspace{10222\\01120}{}$,
$\subspace{10234\\01133}{}$,
$\subspace{10241\\01141}{}$,
$\subspace{10300\\01034}{}$,
$\subspace{10312\\01002}{}$,
$\subspace{10324\\01020}{}$,
$\subspace{10331\\01043}{}$,
$\subspace{10343\\01011}{}$,
$\subspace{12003\\00133}{}$,
$\subspace{12012\\00114}{}$,
$\subspace{12021\\00140}{}$,
$\subspace{12030\\00121}{}$,
$\subspace{12044\\00102}{}$,
$\subspace{10203\\01104}{15}$,
$\subspace{10210\\01112}{}$,
$\subspace{10222\\01120}{}$,
$\subspace{10234\\01133}{}$,
$\subspace{10241\\01141}{}$,
$\subspace{10300\\01034}{}$,
$\subspace{10312\\01002}{}$,
$\subspace{10324\\01020}{}$,
$\subspace{10331\\01043}{}$,
$\subspace{10343\\01011}{}$,
$\subspace{12003\\00133}{}$,
$\subspace{12012\\00114}{}$,
$\subspace{12021\\00140}{}$,
$\subspace{12030\\00121}{}$,
$\subspace{12044\\00102}{}$,
$\subspace{10201\\01133}{15}$,
$\subspace{10213\\01141}{}$,
$\subspace{10220\\01104}{}$,
$\subspace{10232\\01112}{}$,
$\subspace{10244\\01120}{}$,
$\subspace{10304\\01020}{}$,
$\subspace{10311\\01043}{}$,
$\subspace{10323\\01011}{}$,
$\subspace{10330\\01034}{}$,
$\subspace{10342\\01002}{}$,
$\subspace{12001\\00133}{}$,
$\subspace{12010\\00114}{}$,
$\subspace{12024\\00140}{}$,
$\subspace{12033\\00121}{}$,
$\subspace{12042\\00102}{}$,
$\subspace{10201\\01133}{15}$,
$\subspace{10213\\01141}{}$,
$\subspace{10220\\01104}{}$,
$\subspace{10232\\01112}{}$,
$\subspace{10244\\01120}{}$,
$\subspace{10304\\01020}{}$,
$\subspace{10311\\01043}{}$,
$\subspace{10323\\01011}{}$,
$\subspace{10330\\01034}{}$,
$\subspace{10342\\01002}{}$,
$\subspace{12001\\00133}{}$,
$\subspace{12010\\00114}{}$,
$\subspace{12024\\00140}{}$,
$\subspace{12033\\00121}{}$,
$\subspace{12042\\00102}{}$,
$\subspace{10201\\01133}{15}$,
$\subspace{10213\\01141}{}$,
$\subspace{10220\\01104}{}$,
$\subspace{10232\\01112}{}$,
$\subspace{10244\\01120}{}$,
$\subspace{10304\\01020}{}$,
$\subspace{10311\\01043}{}$,
$\subspace{10323\\01011}{}$,
$\subspace{10330\\01034}{}$,
$\subspace{10342\\01002}{}$,
$\subspace{12001\\00133}{}$,
$\subspace{12010\\00114}{}$,
$\subspace{12024\\00140}{}$,
$\subspace{12033\\00121}{}$,
$\subspace{12042\\00102}{}$,
$\subspace{10201\\01133}{15}$,
$\subspace{10213\\01141}{}$,
$\subspace{10220\\01104}{}$,
$\subspace{10232\\01112}{}$,
$\subspace{10244\\01120}{}$,
$\subspace{10304\\01020}{}$,
$\subspace{10311\\01043}{}$,
$\subspace{10323\\01011}{}$,
$\subspace{10330\\01034}{}$,
$\subspace{10342\\01002}{}$,
$\subspace{12001\\00133}{}$,
$\subspace{12010\\00114}{}$,
$\subspace{12024\\00140}{}$,
$\subspace{12033\\00121}{}$,
$\subspace{12042\\00102}{}$,
$\subspace{10201\\01134}{15}$,
$\subspace{10213\\01142}{}$,
$\subspace{10220\\01100}{}$,
$\subspace{10232\\01113}{}$,
$\subspace{10244\\01121}{}$,
$\subspace{10302\\01040}{}$,
$\subspace{10314\\01013}{}$,
$\subspace{10321\\01031}{}$,
$\subspace{10333\\01004}{}$,
$\subspace{10340\\01022}{}$,
$\subspace{12001\\00123}{}$,
$\subspace{12010\\00104}{}$,
$\subspace{12024\\00130}{}$,
$\subspace{12033\\00111}{}$,
$\subspace{12042\\00142}{}$,
$\subspace{10203\\01142}{15}$,
$\subspace{10210\\01100}{}$,
$\subspace{10222\\01113}{}$,
$\subspace{10234\\01121}{}$,
$\subspace{10241\\01134}{}$,
$\subspace{10301\\01040}{}$,
$\subspace{10313\\01013}{}$,
$\subspace{10320\\01031}{}$,
$\subspace{10332\\01004}{}$,
$\subspace{10344\\01022}{}$,
$\subspace{12003\\00142}{}$,
$\subspace{12012\\00123}{}$,
$\subspace{12021\\00104}{}$,
$\subspace{12030\\00130}{}$,
$\subspace{12044\\00111}{}$,
$\subspace{10203\\01142}{15}$,
$\subspace{10210\\01100}{}$,
$\subspace{10222\\01113}{}$,
$\subspace{10234\\01121}{}$,
$\subspace{10241\\01134}{}$,
$\subspace{10301\\01040}{}$,
$\subspace{10313\\01013}{}$,
$\subspace{10320\\01031}{}$,
$\subspace{10332\\01004}{}$,
$\subspace{10344\\01022}{}$,
$\subspace{12003\\00142}{}$,
$\subspace{12012\\00123}{}$,
$\subspace{12021\\00104}{}$,
$\subspace{12030\\00130}{}$,
$\subspace{12044\\00111}{}$,
$\subspace{10203\\01142}{15}$,
$\subspace{10210\\01100}{}$,
$\subspace{10222\\01113}{}$,
$\subspace{10234\\01121}{}$,
$\subspace{10241\\01134}{}$,
$\subspace{10301\\01040}{}$,
$\subspace{10313\\01013}{}$,
$\subspace{10320\\01031}{}$,
$\subspace{10332\\01004}{}$,
$\subspace{10344\\01022}{}$,
$\subspace{12003\\00142}{}$,
$\subspace{12012\\00123}{}$,
$\subspace{12021\\00104}{}$,
$\subspace{12030\\00130}{}$,
$\subspace{12044\\00111}{}$,
$\subspace{10201\\01204}{15}$,
$\subspace{10213\\01243}{}$,
$\subspace{10220\\01232}{}$,
$\subspace{10232\\01221}{}$,
$\subspace{10244\\01210}{}$,
$\subspace{10202\\01434}{}$,
$\subspace{10214\\01430}{}$,
$\subspace{10221\\01431}{}$,
$\subspace{10233\\01432}{}$,
$\subspace{10240\\01433}{}$,
$\subspace{10402\\01332}{}$,
$\subspace{10414\\01342}{}$,
$\subspace{10421\\01302}{}$,
$\subspace{10433\\01312}{}$,
$\subspace{10440\\01322}{}$,
$\subspace{10200\\01211}{15}$,
$\subspace{10212\\01200}{}$,
$\subspace{10224\\01244}{}$,
$\subspace{10231\\01233}{}$,
$\subspace{10243\\01222}{}$,
$\subspace{10200\\01441}{}$,
$\subspace{10212\\01442}{}$,
$\subspace{10224\\01443}{}$,
$\subspace{10231\\01444}{}$,
$\subspace{10243\\01440}{}$,
$\subspace{10403\\01310}{}$,
$\subspace{10410\\01320}{}$,
$\subspace{10422\\01330}{}$,
$\subspace{10434\\01340}{}$,
$\subspace{10441\\01300}{}$,
$\subspace{10200\\01211}{15}$,
$\subspace{10212\\01200}{}$,
$\subspace{10224\\01244}{}$,
$\subspace{10231\\01233}{}$,
$\subspace{10243\\01222}{}$,
$\subspace{10200\\01441}{}$,
$\subspace{10212\\01442}{}$,
$\subspace{10224\\01443}{}$,
$\subspace{10231\\01444}{}$,
$\subspace{10243\\01440}{}$,
$\subspace{10403\\01310}{}$,
$\subspace{10410\\01320}{}$,
$\subspace{10422\\01330}{}$,
$\subspace{10434\\01340}{}$,
$\subspace{10441\\01300}{}$,
$\subspace{10200\\01211}{15}$,
$\subspace{10212\\01200}{}$,
$\subspace{10224\\01244}{}$,
$\subspace{10231\\01233}{}$,
$\subspace{10243\\01222}{}$,
$\subspace{10200\\01441}{}$,
$\subspace{10212\\01442}{}$,
$\subspace{10224\\01443}{}$,
$\subspace{10231\\01444}{}$,
$\subspace{10243\\01440}{}$,
$\subspace{10403\\01310}{}$,
$\subspace{10410\\01320}{}$,
$\subspace{10422\\01330}{}$,
$\subspace{10434\\01340}{}$,
$\subspace{10441\\01300}{}$,
$\subspace{10200\\01211}{15}$,
$\subspace{10212\\01200}{}$,
$\subspace{10224\\01244}{}$,
$\subspace{10231\\01233}{}$,
$\subspace{10243\\01222}{}$,
$\subspace{10200\\01441}{}$,
$\subspace{10212\\01442}{}$,
$\subspace{10224\\01443}{}$,
$\subspace{10231\\01444}{}$,
$\subspace{10243\\01440}{}$,
$\subspace{10403\\01310}{}$,
$\subspace{10410\\01320}{}$,
$\subspace{10422\\01330}{}$,
$\subspace{10434\\01340}{}$,
$\subspace{10441\\01300}{}$,
$\subspace{10200\\01211}{15}$,
$\subspace{10212\\01200}{}$,
$\subspace{10224\\01244}{}$,
$\subspace{10231\\01233}{}$,
$\subspace{10243\\01222}{}$,
$\subspace{10200\\01441}{}$,
$\subspace{10212\\01442}{}$,
$\subspace{10224\\01443}{}$,
$\subspace{10231\\01444}{}$,
$\subspace{10243\\01440}{}$,
$\subspace{10403\\01310}{}$,
$\subspace{10410\\01320}{}$,
$\subspace{10422\\01330}{}$,
$\subspace{10434\\01340}{}$,
$\subspace{10441\\01300}{}$,
$\subspace{10202\\01210}{15}$,
$\subspace{10214\\01204}{}$,
$\subspace{10221\\01243}{}$,
$\subspace{10233\\01232}{}$,
$\subspace{10240\\01221}{}$,
$\subspace{10202\\01432}{}$,
$\subspace{10214\\01433}{}$,
$\subspace{10221\\01434}{}$,
$\subspace{10233\\01430}{}$,
$\subspace{10240\\01431}{}$,
$\subspace{10401\\01312}{}$,
$\subspace{10413\\01322}{}$,
$\subspace{10420\\01332}{}$,
$\subspace{10432\\01342}{}$,
$\subspace{10444\\01302}{}$,
$\subspace{10204\\01211}{15}$,
$\subspace{10211\\01200}{}$,
$\subspace{10223\\01244}{}$,
$\subspace{10230\\01233}{}$,
$\subspace{10242\\01222}{}$,
$\subspace{10202\\01442}{}$,
$\subspace{10214\\01443}{}$,
$\subspace{10221\\01444}{}$,
$\subspace{10233\\01440}{}$,
$\subspace{10240\\01441}{}$,
$\subspace{10402\\01300}{}$,
$\subspace{10414\\01310}{}$,
$\subspace{10421\\01320}{}$,
$\subspace{10433\\01330}{}$,
$\subspace{10440\\01340}{}$,
$\subspace{10204\\01231}{15}$,
$\subspace{10211\\01220}{}$,
$\subspace{10223\\01214}{}$,
$\subspace{10230\\01203}{}$,
$\subspace{10242\\01242}{}$,
$\subspace{10203\\01421}{}$,
$\subspace{10210\\01422}{}$,
$\subspace{10222\\01423}{}$,
$\subspace{10234\\01424}{}$,
$\subspace{10241\\01420}{}$,
$\subspace{10400\\01304}{}$,
$\subspace{10412\\01314}{}$,
$\subspace{10424\\01324}{}$,
$\subspace{10431\\01334}{}$,
$\subspace{10443\\01344}{}$,
$\subspace{10204\\01231}{15}$,
$\subspace{10211\\01220}{}$,
$\subspace{10223\\01214}{}$,
$\subspace{10230\\01203}{}$,
$\subspace{10242\\01242}{}$,
$\subspace{10203\\01421}{}$,
$\subspace{10210\\01422}{}$,
$\subspace{10222\\01423}{}$,
$\subspace{10234\\01424}{}$,
$\subspace{10241\\01420}{}$,
$\subspace{10400\\01304}{}$,
$\subspace{10412\\01314}{}$,
$\subspace{10424\\01324}{}$,
$\subspace{10431\\01334}{}$,
$\subspace{10443\\01344}{}$,
$\subspace{10300\\01300}{15}$,
$\subspace{10312\\01330}{}$,
$\subspace{10324\\01310}{}$,
$\subspace{10331\\01340}{}$,
$\subspace{10343\\01320}{}$,
$\subspace{10401\\01244}{}$,
$\subspace{10413\\01211}{}$,
$\subspace{10420\\01233}{}$,
$\subspace{10432\\01200}{}$,
$\subspace{10444\\01222}{}$,
$\subspace{10400\\01440}{}$,
$\subspace{10412\\01443}{}$,
$\subspace{10424\\01441}{}$,
$\subspace{10431\\01444}{}$,
$\subspace{10443\\01442}{}$,
$\subspace{10301\\01304}{15}$,
$\subspace{10313\\01334}{}$,
$\subspace{10320\\01314}{}$,
$\subspace{10332\\01344}{}$,
$\subspace{10344\\01324}{}$,
$\subspace{10401\\01214}{}$,
$\subspace{10413\\01231}{}$,
$\subspace{10420\\01203}{}$,
$\subspace{10432\\01220}{}$,
$\subspace{10444\\01242}{}$,
$\subspace{10402\\01424}{}$,
$\subspace{10414\\01422}{}$,
$\subspace{10421\\01420}{}$,
$\subspace{10433\\01423}{}$,
$\subspace{10440\\01421}{}$,
$\subspace{10302\\01312}{15}$,
$\subspace{10314\\01342}{}$,
$\subspace{10321\\01322}{}$,
$\subspace{10333\\01302}{}$,
$\subspace{10340\\01332}{}$,
$\subspace{10404\\01210}{}$,
$\subspace{10411\\01232}{}$,
$\subspace{10423\\01204}{}$,
$\subspace{10430\\01221}{}$,
$\subspace{10442\\01243}{}$,
$\subspace{10404\\01432}{}$,
$\subspace{10411\\01430}{}$,
$\subspace{10423\\01433}{}$,
$\subspace{10430\\01431}{}$,
$\subspace{10442\\01434}{}$,
$\subspace{10302\\01312}{15}$,
$\subspace{10314\\01342}{}$,
$\subspace{10321\\01322}{}$,
$\subspace{10333\\01302}{}$,
$\subspace{10340\\01332}{}$,
$\subspace{10404\\01210}{}$,
$\subspace{10411\\01232}{}$,
$\subspace{10423\\01204}{}$,
$\subspace{10430\\01221}{}$,
$\subspace{10442\\01243}{}$,
$\subspace{10404\\01432}{}$,
$\subspace{10411\\01430}{}$,
$\subspace{10423\\01433}{}$,
$\subspace{10430\\01431}{}$,
$\subspace{10442\\01434}{}$,
$\subspace{10303\\01324}{15}$,
$\subspace{10310\\01304}{}$,
$\subspace{10322\\01334}{}$,
$\subspace{10334\\01314}{}$,
$\subspace{10341\\01344}{}$,
$\subspace{10404\\01203}{}$,
$\subspace{10411\\01220}{}$,
$\subspace{10423\\01242}{}$,
$\subspace{10430\\01214}{}$,
$\subspace{10442\\01231}{}$,
$\subspace{10402\\01421}{}$,
$\subspace{10414\\01424}{}$,
$\subspace{10421\\01422}{}$,
$\subspace{10433\\01420}{}$,
$\subspace{10440\\01423}{}$,
$\subspace{10303\\01324}{15}$,
$\subspace{10310\\01304}{}$,
$\subspace{10322\\01334}{}$,
$\subspace{10334\\01314}{}$,
$\subspace{10341\\01344}{}$,
$\subspace{10404\\01203}{}$,
$\subspace{10411\\01220}{}$,
$\subspace{10423\\01242}{}$,
$\subspace{10430\\01214}{}$,
$\subspace{10442\\01231}{}$,
$\subspace{10402\\01421}{}$,
$\subspace{10414\\01424}{}$,
$\subspace{10421\\01422}{}$,
$\subspace{10433\\01420}{}$,
$\subspace{10440\\01423}{}$,
$\subspace{10303\\01324}{15}$,
$\subspace{10310\\01304}{}$,
$\subspace{10322\\01334}{}$,
$\subspace{10334\\01314}{}$,
$\subspace{10341\\01344}{}$,
$\subspace{10404\\01203}{}$,
$\subspace{10411\\01220}{}$,
$\subspace{10423\\01242}{}$,
$\subspace{10430\\01214}{}$,
$\subspace{10442\\01231}{}$,
$\subspace{10402\\01421}{}$,
$\subspace{10414\\01424}{}$,
$\subspace{10421\\01422}{}$,
$\subspace{10433\\01420}{}$,
$\subspace{10440\\01423}{}$,
$\subspace{10300\\01332}{15}$,
$\subspace{10312\\01312}{}$,
$\subspace{10324\\01342}{}$,
$\subspace{10331\\01322}{}$,
$\subspace{10343\\01302}{}$,
$\subspace{10400\\01232}{}$,
$\subspace{10412\\01204}{}$,
$\subspace{10424\\01221}{}$,
$\subspace{10431\\01243}{}$,
$\subspace{10443\\01210}{}$,
$\subspace{10400\\01432}{}$,
$\subspace{10412\\01430}{}$,
$\subspace{10424\\01433}{}$,
$\subspace{10431\\01431}{}$,
$\subspace{10443\\01434}{}$,
$\subspace{10300\\01332}{15}$,
$\subspace{10312\\01312}{}$,
$\subspace{10324\\01342}{}$,
$\subspace{10331\\01322}{}$,
$\subspace{10343\\01302}{}$,
$\subspace{10400\\01232}{}$,
$\subspace{10412\\01204}{}$,
$\subspace{10424\\01221}{}$,
$\subspace{10431\\01243}{}$,
$\subspace{10443\\01210}{}$,
$\subspace{10400\\01432}{}$,
$\subspace{10412\\01430}{}$,
$\subspace{10424\\01433}{}$,
$\subspace{10431\\01431}{}$,
$\subspace{10443\\01434}{}$,
$\subspace{11203\\00011}{3}$,
$\subspace{11420\\00001}{}$,
$\subspace{13402\\00010}{}$,
$\subspace{11302\\00010}{3}$,
$\subspace{12310\\00001}{}$,
$\subspace{12403\\00011}{}$,
$\subspace{12101\\00010}{3}$,
$\subspace{14100\\00001}{}$,
$\subspace{14304\\00011}{}$,
$\subspace{13103\\00011}{3}$,
$\subspace{13200\\00001}{}$,
$\subspace{14202\\00010}{}$.



\begin{thebibliography}{10}

\bibitem{ball2023additive}
Simeon Ball, Guillermo Gamboa, and Michel Lavrauw.
\newblock On additive {M}{D}{S} codes over small fields.
\newblock {\em Advances in Mathematics of Communications}, 17(4):828--844,
  2023.
\newblock \href {https://doi.org/10.3934/amc.2021024}
  {\path{doi:10.3934/amc.2021024}}.

\bibitem{ball2025griesmer}
Simeon Ball, Michel Lavrauw, and Tabriz Popatia.
\newblock Griesmer type bounds for additive codes over finite fields, integral
  and fractional {M}{D}{S} codes.
\newblock {\em Designs, Codes and Cryptography}, 93(1):175--196, 2025.
\newblock \href {https://doi.org/10.1007/s10623-024-01519-2}
  {\path{doi:10.1007/s10623-024-01519-2}}.

\bibitem{bierbrauer2021optimal}
J{\"u}rgen Bierbrauer, Stefano Marcugini, and Fernanda Pambianco.
\newblock Optimal additive quaternary codes of low dimension.
\newblock {\em IEEE Transactions on Information Theory}, 67(8):5116--5118,
  2021.
\newblock \href {https://doi.org/10.1109/TIT.2021.3085577}
  {\path{doi:10.1109/TIT.2021.3085577}}.

\bibitem{blokhuis2004small}
Aart Blokhuis and Andries~Evert Brouwer.
\newblock Small additive quaternary codes.
\newblock {\em European Journal of Combinatorics}, 25(2):161--167, 2004.
\newblock \href {https://doi.org/10.1016/S0195-6698(03)00096-9}
  {\path{doi:10.1016/S0195-6698(03)00096-9}}.

\bibitem{braun2019new}
Michael Braun.
\newblock New lower bounds on the size of $(n, r)$-arcs in
  $\operatorname{PG}(2, q)$.
\newblock {\em Journal of Combinatorial Designs}, 27(11):682--687, 2019.
\newblock \href {https://doi.org/10.1002/jcd.21672}
  {\path{doi:10.1002/jcd.21672}}.

\bibitem{BvM:SRG}
Andries~Evert Brouwer and Hendrik Van~Maldeghem.
\newblock {\em Strongly Regular Graphs}, volume 182 of {\em Encycl. Math.
  Appl.}
\newblock Cambridge University Press, Cambridge, 2022.
\newblock \href {https://doi.org/10.1017/9781009057226}
  {\path{doi:10.1017/9781009057226}}.

\bibitem{dastbasteh2024new}
Reza Dastbasteh and Petr Lison{\v{e}}k.
\newblock New quantum codes from self-dual codes over $\mathbb{F}_4$.
\newblock {\em Designs, Codes and Cryptography}, 92(3):787--801, 2024.
\newblock \href {https://doi.org/10.1007/s10623-023-01306-5}
  {\path{doi:10.1007/s10623-023-01306-5}}.

\bibitem{polynomialRepresentation}
Reza Dastbasteh and Khalil Shivji.
\newblock Polynomial representation of additive cyclic codes and new quantum
  codes.
\newblock {\em Advances in Mathematics of Communications}, 19(1):49--68, 2025.
\newblock \href {https://doi.org/10.3934/amc.2023036}
  {\path{doi:10.3934/amc.2023036}}.

\bibitem{delsarte1972weights}
Philippe Delsarte.
\newblock Weights of linear codes and strongly regular normed spaces.
\newblock {\em Discrete Mathematics}, 3(1-3):47--64, 1972.
\newblock \href {https://doi.org/10.1016/0012-365X(72)90024-6}
  {\path{doi:10.1016/0012-365X(72)90024-6}}.

\bibitem{dissett2000combinatorial}
Luis~Armando Dissett.
\newblock {\em Combinatorial and computational aspects of finite geometries}.
\newblock PhD thesis, University of Toronto, 2000.
\newblock URL:
  \url{https://utoronto.scholaris.ca/items/be93448a-b88a-461a-877f-f2c9419310d8}.

\bibitem{dodunekov1998codes}
Stefan~Manev Dodunekov and Juriaan Simonis.
\newblock Codes and projective multisets.
\newblock {\em The Electronic Journal of Combinatorics}, 5:1--23, 1998.
\newblock \href {https://doi.org/10.37236/1375} {\path{doi:10.37236/1375}}.

\bibitem{grassl2021algebraic}
Markus Grassl.
\newblock Algebraic quantum codes: {L}inking quantum mechanics and discrete
  mathematics.
\newblock {\em International Journal of Computer Mathematics: Computer Systems
  Theory}, 6(4):243--259, 2021.
\newblock \href {https://doi.org/10.1080/23799927.2020.1850530}
  {\path{doi:10.1080/23799927.2020.1850530}}.

\bibitem{griesmer1960bound}
James~Hugo Griesmer.
\newblock A bound for error-correcting codes.
\newblock {\em IBM Journal of Research and Development}, 4(5):532--542, 1960.
\newblock \href {https://doi.org/10.1147/rd.45.0532}
  {\path{doi:10.1147/rd.45.0532}}.

\bibitem{guan2023some}
Chaofeng Guan, Ruihu Li, Yiting Liu, and Zhi Ma.
\newblock Some quaternary additive codes outperform linear counterparts.
\newblock {\em IEEE Transactions on Information Theory}, 69(11):7122--7131,
  2023.
\newblock \href {https://doi.org/10.1109/TIT.2023.3288377}
  {\path{doi:10.1109/TIT.2023.3288377}}.

\bibitem{guruswami2008explicit}
Venkatesan Guruswami and Atri Rudra.
\newblock Explicit codes achieving list decoding capacity: {E}rror-correction
  with optimal redundancy.
\newblock {\em IEEE Transactions on Information Theory}, 54(1):135--150, 2008.
\newblock \href {https://doi.org/10.1109/TIT.2007.911222}
  {\path{doi:10.1109/TIT.2007.911222}}.

\bibitem{guruswami1999improved}
Venkatesan Guruswami and Madhu Sudan.
\newblock Improved decoding of {R}eed--{S}olomon and {A}lgebraic-{G}eometry
  codes.
\newblock {\em IEEE Transactions on Information Theory}, 45(6):1757--1767,
  1999.
\newblock \href {https://doi.org/10.1109/18.782097}
  {\path{doi:10.1109/18.782097}}.

\bibitem{kopparty2014high}
Swastik Kopparty, Shubhangi Saraf, and Sergey Yekhanin.
\newblock High-rate codes with sublinear-time decoding.
\newblock {\em Journal of the ACM}, 61(5):1--20, 2014.
\newblock \href {https://doi.org/10.1145/262941} {\path{doi:10.1145/262941}}.

\bibitem{krotov2017minimum}
Denis~Stanislavovich Krotov.
\newblock The minimum volume of subspace trades.
\newblock {\em Discrete Mathematics}, 340(12):2723--2731, 2017.
\newblock \href {https://doi.org/10.1016/j.disc.2017.08.012}
  {\path{doi:10.1016/j.disc.2017.08.012}}.

\bibitem{KroMog:multispread}
Denis~Stanislavovich Krotov and Ivan~{Yu}r'evich Mogilnykh.
\newblock Multispreads.
\newblock {\em Finite Fields and their Applications}, 108:102675(1--25), 2025.
\newblock \href {https://doi.org/10.1016/j.ffa.2025.102675}
  {\path{doi:10.1016/j.ffa.2025.102675}}.

\bibitem{kurz2024additive}
Sascha Kurz.
\newblock Additive codes attaining the {G}riesmer bound.
\newblock {\em arXiv preprint 2412.14615}, 2024.
\newblock URL: \url{https://arxiv.org/abs/2412.14615}.

\bibitem{kurz2024optimal}
Sascha Kurz.
\newblock Optimal additive quaternary codes of dimension $3.5$.
\newblock {\em arXiv preprint 2410.07650}, 2024.
\newblock URL: \url{https://arxiv.org/abs/2410.07650}.

\bibitem{li2024ternary}
Zhihao Li, Ruihu Li, Chaofeng Guan, Liangdong Lu, Hao Song, and Qiang Fu.
\newblock Ternary quantum codes constructed from a class of quasi-twisted
  codes.
\newblock {\em IEICE Transactions on Fundamentals of Electronics,
  Communications and Computer Sciences}, 2024.
\newblock \href {https://doi.org/10.1587/transfun.2024EAP1055}
  {\path{doi:10.1587/transfun.2024EAP1055}}.

\bibitem{momihara2013strongly}
Koji Momihara.
\newblock Strongly regular {C}ayley graphs, skew {H}adamard difference sets,
  and rationality of relative {G}auss sums.
\newblock {\em European Journal of Combinatorics}, 34(4):706--723, 2013.
\newblock \href {https://doi.org/10.1016/j.ejc.2012.10.006}
  {\path{doi:10.1016/j.ejc.2012.10.006}}.

\bibitem{momihara2014certain}
Koji Momihara.
\newblock Certain strongly regular {C}ayley graphs on $\mathbb{F}_2^{2(2s+ 1)}$
  from cyclotomy.
\newblock {\em Finite Fields and Their Applications}, 25:280--292, 2014.
\newblock \href {https://doi.org/10.1016/j.ffa.2013.10.006}
  {\path{doi:10.1016/j.ffa.2013.10.006}}.

\bibitem{polhill2008generalizations}
John Polhill.
\newblock Generalizations of partial difference sets from cyclotomy to
  nonelementary abelian $p$-groups.
\newblock {\em The Electronic Journal of Combinatorics}, pages R125--R125,
  2008.
\newblock \href {https://doi.org/10.37236/849} {\path{doi:10.37236/849}}.

\bibitem{rosenbloom1997codes}
Michael~{Yu}r'evich Rosenbloom and Michael~Anatol'evich Tsfasman.
\newblock Codes for the $m$-metric.
\newblock {\em Problems of Information Transmission}, 33(1):45--52, 1997.
\newblock Translated from: \emph{Problemy Peredachi Informatsii}, 33(1):55--63,
  1997, in Russian, URL: \url{https://www.mathnet.ru/eng/ppi359}.

\bibitem{segre1955curve}
Beniamino Segre.
\newblock Curve razionali normali ek-archi negli spazi finiti.
\newblock {\em Annali di Matematica Pura ed Applicata}, 39(1):357--379, 1955.
\newblock \href {https://doi.org/10.1007/BF02410779}
  {\path{doi:10.1007/BF02410779}}.

\bibitem{segre1955ovals}
Beniamino Segre.
\newblock Ovals in a finite projective plane.
\newblock {\em Canadian Journal of Mathematics}, 7:414--416, 1955.
\newblock \href {https://doi.org/10.4153/CJM-1955-045-x}
  {\path{doi:10.4153/CJM-1955-045-x}}.

\bibitem{solomon1965algebraically}
Gustave Solomon and Jack~Justin Stiffler.
\newblock Algebraically punctured cyclic codes.
\newblock {\em Information and Control}, 8(2):170--179, 1965.
\newblock \href {https://doi.org/10.1016/S0019-9958(65)90080-X}
  {\path{doi:10.1016/S0019-9958(65)90080-X}}.

\bibitem{tamo2024tighter}
Itzhak Tamo.
\newblock Tighter list-size bounds for list-decoding and recovery of folded
  {R}eed-{S}olomon and multiplicity codes.
\newblock {\em IEEE Transactions on Information Theory}, 70(12):8659--8668,
  2024.
\newblock \href {https://doi.org/10.1109/TIT.2024.3402171}
  {\path{doi:10.1109/TIT.2024.3402171}}.

\bibitem{thas1971m}
Joseph~Adolphe Thas.
\newblock The $m$-dimensional projective space
  {$S_m(M_n(\operatorname{GF}(q)))$} over the total matrix algebra
  {$M_n(\operatorname{GF}(q))$} of the $n\times n$-matrices with elements in
  the {G}alois field $\operatorname{GF}(q)$.
\newblock {\em Rendiconti di Matematica e delle sue Applicazioni},
  4(6):459--532, 1971.
\newblock URL: \url{https://cage.ugent.be/~jat/}.

\bibitem{thas19734}
Joseph~Adolphe Thas.
\newblock $4$-gonal configurations.
\newblock In {\em Finite geometric structures and their applications}, pages
  249--263. Springer, 1973.
\newblock \href {https://doi.org/10.1007/978-3-642-10973-7_6}
  {\path{doi:10.1007/978-3-642-10973-7_6}}.

\bibitem{thas1974translation}
Joseph~Adolphe Thas.
\newblock Translation $4$-gonal configurations.
\newblock {\em Atti della Accademia Nazionale dei Lincei. Classe di Scienze
  Fisiche, Matematiche e Naturali. Rendiconti}, 56(3):303--314, 1974.
\newblock URL: \url{https://eudml.org/doc/293744}.

\end{thebibliography}
\end{document}